\def\do#1{\csdef{c#1}{\ensuremath{\mathcal{#1}}}}
\def\do#1{\csdef{s#1}{\ensuremath{\mathscr{#1}}}}
\def\do#1{\csdef{f#1}{\ensuremath{\mathsf{#1}}}}
\newcommand{\G}{\mathbb G}
\newcommand{\HH}{\mathbb H}
\newcommand{\CS}{\mathbb S}
\newcommand{\w}{\omega}
\renewcommand{\div}{\operatorname{div}}
\newcommand{\OO}{\mathcal O}
\newcommand{\bb}{\mathfrak b}
\renewcommand{\aa}{\mathfrak a}
\DeclareMathOperator{\rank}{rank}
\DeclareMathOperator{\Crit}{Crit}
\DeclareMathOperator{\pr}{pr}
\DeclareMathOperator{\QAlb}{QAlb}
\DeclareMathOperator{\Supp}{Supp}
\DeclareMathOperator{\NS}{NS}
\DeclareMathOperator{\cNS}{Cartier-NS (X_0)}
\DeclareMathOperator{\ccNS}{Cartier-NS}
\DeclareMathOperator{\wNS}{Weil-NS(X_0)}
\DeclareMathOperator{\wwNS}{Weil-NS}
\DeclareMathOperator{\Weil}{Weil}
\DeclareMathOperator{\Cartier}{Cartier}
\DeclareMathOperator{\Winf}{Weil_\infty (X_0)}
\DeclareMathOperator{\Cinf}{Cartier_\infty (X_0)}
\DeclareMathOperator{\CX}{Cartier (X_0)}
\DeclareMathOperator{\WX}{Weil (X_0)}
\DeclareMathOperator{\L2}{L^2(X_0)}
\DeclareMathOperator{\Hom}{Hom}
\DeclareMathOperator{\LPLus}{\Hom (\Cinf, \R)_{(+)}}
\DeclareMathOperator{\PSL}{PSL}
\DeclareMathOperator{\PGL}{PGL}
\DeclareMathOperator{\Exc}{Exc}
\DeclareMathOperator{\Bir}{Bir}
\DeclareMathOperator{\Tr}{Tr}
\DeclareMathOperator{\Frac}{Frac}
\DeclareMathOperator{\Gal}{Gal}
\DeclareMathOperator{\Sing}{Sing}
\DeclareMathOperator{\Vect}{Vect}
\DeclareMathOperator{\Far}{Far}
\DeclareMathOperator{\qm}{qm}
\newcommand{\Sinf}{\cS_\infty (X_0)}
\newcommand{\Vinf}{\cV_\infty}
\newcommand{\BD}{\partial_X X_0}
\renewcommand{\triangle}{\triangleright}
\DeclareMathOperator{\car}{char}
\newcommand{\an}{{an}}
\newcommand{\oM}{\overline M}
\DeclareMathOperator{\Div}{Div}
\newcommand{\DivInf}{\Div_\infty}
\DeclareMathOperator{\id}{id}
\DeclareMathOperator{\Aut}{Aut}
\DeclareMathOperator{\Pic}{Pic}
\DeclareMathOperator{\ord}{ord}
\DeclareMathOperator{\Top}{top}
\DeclareMathOperator{\End}{End}
\DeclareMathOperator{\spec}{Spec}
\DeclareMathOperator{\supp}{Supp}
\newcommand{\R}{\mathbf{R}}
\newcommand{\C}{\mathbf{C}}
\newcommand{\Q}{\mathbf{Q}}
\newcommand{\Z}{\mathbf{Z}}
\newcommand{\N}{\mathbf{N}}
\newcommand{\A}{\mathbf{A}}
\newcommand{\K}{\mathbf{K}}
\renewcommand{\k}{\mathbf k}
\renewcommand{\P}{\mathbf{P}}
\renewcommand{\phi}{\varphi}
\renewcommand{\epsilon}{\varepsilon}
\renewcommand{\tilde}{\widetilde}
\renewcommand{\hat}{\widehat}
\newcommand{\m}{\mathfrak m}
\newcommand{\p}{\mathfrak p}
\newtheorem{thm}{Theorem}[chapter]
\newtheorem{thm*}{Theorem}
\newtheorem{bigthm}{Theorem}
\newtheorem{prop}[thm]{Proposition}
\newtheorem{cor}[thm]{Corollary}
\newtheorem{lemme}[thm]{Lemma}
\newtheorem{claim}[thm]{Claim}
\theoremstyle{definition}
\newtheorem{dfn}[thm]{Definition}
\newtheorem{ex}[thm]{Example}
\newtheorem{rmq}[thm]{Remark}
\theoremstyle{remark}
  \setlist[enumerate]{label={(\roman*)}}
  \setlist[enumerate,1]{label={(\arabic*)}}
  \setlist[enumerate,1]{label={(\arabic*)}}
  \setlist[enumerate,1]{label={(\arabic*)}}
\numberwithin{section}{chapter}
\numberwithin{subsection}{section}
\numberwithin{subsubsection}{subsection}
\numberwithin{figure}{section}
\renewcommand\subsubsection{\@secnumfont}{\bfseries}%
\renewcommand\subsubsection{\@startsection{subsubsection}{3}
  \z@{.5\linespacing\@plus.7\linespacing}{-.5em}%
  {\normalfont\bfseries}}
\newcommand*{\da@rightarrow}{\mathchar"0\hexnumber@\symAMSa 4B }
\newcommand*{\da@leftarrow}{\mathchar"0\hexnumber@\symAMSa 4C }
\newcommand*{\xdashrightarrow}[2][]{%
  \mathrel{%
    \mathpalette{\da@xarrow{#1}{#2}{}\da@rightarrow{\,}{}}{}%
  }%
}
\newcommand{\xdashleftarrow}[2][]{%
  \mathrel{%
    \mathpalette{\da@xarrow{#1}{#2}\da@leftarrow{}{}{\,}}{}%
  }%
}
\newcommand*{\da@xarrow}[7]{%
  % #1: below
  % #2: above
  % #3: arrow left
  % #4: arrow right
  % #5: space left 
  % #6: space right
  % #7: math style 
  \sbox0{$\ifx#7\scriptstyle\scriptscriptstyle\else\scriptstyle\fi#5#1#6\m@th$}%
  \sbox2{$\ifx#7\scriptstyle\scriptscriptstyle\else\scriptstyle\fi#5#2#6\m@th$}%
  \sbox4{$#7\dabar@\m@th$}%
  \dimen@=\wd0 %
  \ifdim\wd2 >\dimen@
    \dimen@=\wd2 %   
  \fi
  \count@=2 %
  \def\da@bars{\dabar@\dabar@}%
  \@whiledim\count@\wd4<\dimen@\do{%
    \advance\count@\@ne
    \expandafter\def\expandafter\da@bars\expandafter{%
      \da@bars
      \dabar@ 
    }%
  }%  
  \mathrel{#3}%
  \mathrel{%   
    \mathop{\da@bars}\limits
    \ifx\\#1\\%
    \else
      _{\copy0}%
    \fi
    \ifx\\#2\\%
    \else
      ^{\copy2}%
    \fi
  }%   
  \mathrel{#4}%
}
\begin{document}
\author{Marc Abboud \\ Université de Neuchâtel}
\title{On the dynamics of endomorphisms of affine surfaces}
\subjclass[2020]{37F10 32H50 32J05 37P50 37F80 13A18}
\maketitle
\newpage
\tableofcontents

% !TEX root = main.tex
\chapter{Introduction}

In \cite{favreEigenvaluations2007}, Favre and Jonsson developed tools from valuative theory to study the dynamics of a
dominant endomorphism of the complex affine plane. We extend this theory to the case of any affine surface, over any
field. We give a new method to construct an eigenvaluation of an endomorphism. We
generalize the result of Favre and Jonsson and show that the first dynamical degree of a dominant endomorphism of a
normal affine surface is an algebraic integer of degree $\leq 2$. The general method is to construct a compactification
where our endomorphisms admit a fixed point at infinity where the dynamical degree can be computed by studying the local
dynamics at this point. We then apply this construction to the study of the dynamics of automorphisms where we are able
to say much more. In particular, we obtain a new kind of rigidity result: the set of
first dynamical degrees of loxodromic automorphisms of a given affine surface must be contained in the set of integers or in the
set of algebraic integers of degree 2.

\section{Dynamical degrees}
Let $X$ be a smooth projective variety over an algebraically closed field and let $d$ be its dimension.
For $d$ Cartier divisors $D_1, \cdots, D_d$ of $X$ we can define the intersection product $D_1 \cdots
D_d \in \Z$ (see \cite{lazarsfeldPositivityAlgebraicGeometry2004}). If $f: X \dashrightarrow X$ is a
dominant rational transformation of $X$, we define for $0 \leq \ell \leq d$ the $\ell$-th dynamical degree of
$f$ by
\begin{equation}
  \lambda_\ell (f) := \lim_{n \rightarrow \infty} \left( (f^n)^* H^\ell \cdot H^{d-\ell} \right)^{1/n},
  \label{<+label+>}
\end{equation}
where $H$ is an ample divisor over $X$. One can show that these quantities are well defined and do not
depend on the choice of $H$. Furthermore, the dynamical degrees are
birational invariants: if $\phi: X
\dashrightarrow Y$ is a birational map, then
\begin{equation}
  \lambda_l (f) = \lambda_l ( \phi \circ f \circ {\phi}^{-1}), \quad \forall 0 \leq l \leq d.
  \label{<+label+>}
\end{equation}
We have that $\lambda_d (f)$ is the topological degree of $f$ and $\lambda_0 (f) = 1$. The Khovanskii-Teissier inequalities
(see \cite{gromovConvexSetsKahler1990}, \cite{dinhMixedHodgeRiemannBilinear2005}) imply that the sequence
$(\lambda_l)_{0 \leq l \leq d}$ is log-concave; i.e
\begin{equation}
  \frac{\log \lambda_{l-1} + \log
  \lambda_{l+1}}{2} \leq \log \lambda_l, \quad \forall 1 \leq l \leq d-1.
\end{equation}
In particular, one has $\forall 1 \leq l \leq d, \lambda_1 (f)^l \geq \lambda_k (f)$.

Let $X_0$ be a smooth affine variety of dimension $d$ and $f : X_0 \rightarrow X_0$ a dominant endomorphism of
$X_0$. We define the dynamical degrees of $f$ as follows. A \emph{completion} of $X_0$ is a smooth
projective variety $X$
equipped with an open immersion $\iota: X_0 \hookrightarrow X$ such that $\iota(X_0)$ is dense in $X$.
The endomorphism $f$ induces a dominant rational transformation of $X$ via $\tilde f = \iota \circ f
\circ {\iota}^{-1}$ and we define the dynamical degrees
\begin{equation}
  \lambda_l (f) := \lambda_l (\tilde f).
  \label{<+label+>}
\end{equation}
As the dynamical degrees are birational invariants, these quantities do not depend on the choice of the
completion $X$.
The data of these dynamical degrees gives information on the dynamical system. For example over $\C$, Dinh
and Sibony showed in \cite{dinhBorneSuperieurePour2003} that for all dominant rational transformation $f: X
\dashrightarrow X$
\begin{equation}
  h_{\Top} (f) \leq \max_{0 \leq l \leq d} \log (\lambda_l)
  \label{EqInegaliteEntropie}
\end{equation}
where $h_{\Top}$ is the topological entropy of $f$, Gromov showed this result for endomorphisms of $\P^N$
in \cite{gromovEntropyHolomorphicMaps2003}. Yomdin showed in \cite{yomdinVolumeGrowthEntropy1987} that we
have an equality if $f$ is an endomorphism. The inequality is strict in general (see
\cite{guedjEntropieTopologiqueApplications2005}). Recently, Favre, Truong and Xie showed in
\cite{favreTopologicalEntropyRational2022} that the inequality \eqref{EqInegaliteEntropie} still holds in
the non archimedean case; however the equality does not hold even for endomorphisms.

\section{Dynamical degrees on projective surfaces}
A natural question is to ask what numbers can appear as the first dynamical degree of a rational
transformation of a projective surface. For the topological degrees, it is easy to check that any integer $k$ is the
topoological degree of a dominant rational transformation of $\P^2$ (consider $f(x,y) = (x^k, y^k)$ over $\C^2$).

In 2021, Bell, Diller and Jonsson showed in \cite{bellTranscendentalDynamicalDegree2020} that there exists
a dominant rational transformation $\sigma: \P^2 \dashrightarrow \P^2$ such that $\lambda_1 (\sigma)$ is
transcendental. The authors with Krieger showed in \cite{bellTranscendentalDynamicalDegree2020} this
example can be generalised to give an example of a birational transformation of
$\P^N, N \geq 3$ with a transcendental first dynamical degree. However in dimension 2, there are strong
constraints on $\lambda_1(f)$ for $f$ birational. In
\cite{dillerDynamicsBimeromorphicMaps2001}, Diller and Favre showed that the first dynamical degree of a
birational transformation of a projective surface is an algebraic integer, but with arbitrary large degree. Indeed,
Bedford, Kim and McMullen have given
in \cite{bedfordPeriodicitiesLinearFractional2006} and \cite{mcmullenDynamicsBlowupsProjective2007}
examples of birational transformations of projective surfaces with first dynamical degree an algebraic
integer of arbitrary large degree. In particular, Theorem 1.1 of
\cite{mcmullenDynamicsBlowupsProjective2007} states that for all $d \geq 10$ we can find a smooth complex
projective surface with an automorphism with first dynamical degree an algebraic integer of degree $d$. This also holds
in positive characteristic by the main theorem of \cite{cantatRationalSurfacesLarge}. Blanc and Cantat showed in
\cite{blancDynamicalDegreesBirational2013} that the set of all first dynamical degrees of elements of $\Bir(\P^2_K)$ is
a well ordered set if $K$ is infinite.

\section{Dynamical degrees of endomorphisms of affine surfaces and Perron numbers}
The first example of an affine surface is the complex
affine plane $\C^2$. An endomorphism is then a polynomial transformation. Even in that case, the first
dynamical degree is not necessarily an integer. Indeed, let
\begin{equation}
  A = \begin{pmatrix}
    a & b \\
    c & d
  \end{pmatrix}
\end{equation}
be a matrix with nonnegative integer coefficients such that $ad - bc \neq 0$. Consider the following
monomial transformation
\begin{equation}
  f (x,y) = \left( x^a y^b, x^c y^d \right),
  \label{<+label+>}
\end{equation}
then $f^N$ is the monomial transformation where the monomials are given by the coefficients of $A^N$ and
$\lambda_1(f)$ is equal to the spectral radius of $A$. Hence, $\lambda_1(f)$ is an algebraic integer of
degree 2 because it satisfies the equation
\begin{equation}
  \lambda_1 (f)^2 - \Tr (A) \lambda_1 (f) + \det (A) = 0.
  \label{<+label+>}
\end{equation}
It is in fact a \emph{Perron number}. A \footnote{In the litterature, the Galois conjugates of a Perron number
have a \emph{strictly} smaller modulus but we want to include square roots of integers in our definition.}{(weak)} Perron number
is a real algebraic integer $\alpha \geq 1$ such that all its Galois conjugates have complex modulus $\leq \left| \alpha
\right|$.
Thus, there exist polynomial transformations $f$ of the affine plane with $\lambda_1 (f)$ an integer or a Perron number
of degree 2. Favre and Jonsson showed that these are the only two possibilities.

\begin{thm}\cite{favreEigenvaluations2007}
  Let $f : \C^2 \rightarrow \C^2$ be a dominant polynomial transformation, then $\lambda_1 (f)$ is a
  Perron number of degree $\leq 2$.
\end{thm}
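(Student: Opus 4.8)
Write $\lambda_1=\lambda_1(f)$ and $\lambda_2=\lambda_2(f)$. Recall that $\lambda_2$ is the topological degree of $f$, hence a positive integer, and that $\lambda_1=\lim_n(\deg f^n)^{1/n}\geq 1$, where $\deg$ is the algebraic degree of a polynomial self-map. The first move is to apply the log-concavity of the sequence $(\lambda_0,\lambda_1,\lambda_2)$ on a smooth projective completion of $\C^2$; since $\lambda_0=1$ this gives $\lambda_2\leq \lambda_1^2$. If $\lambda_2=\lambda_1^2$, then $\lambda_1=\sqrt{\lambda_2}$ is a root of the monic integer polynomial $T^2-\lambda_2$, hence an algebraic integer of degree $\leq 2$, and it is a weak Perron number, its only possible Galois conjugate being $-\sqrt{\lambda_2}$, of modulus $\lambda_1$ (this is exactly the situation the footnoted convention is designed to cover). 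So from now on I may assume we are in the \emph{hyperbolic} case $\lambda_1^2>\lambda_2$, which is where all the work lies.

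In the hyperbolic case the plan is to follow the valuative approach of Favre and Jonsson. First I would introduce the valuative tree at infinity $\Vinf$: the set of valuations of $\C(x,y)$ that are trivial on $\C$ and whose center on $\P^2$ --- and on every completion of $\C^2$ --- lies on the boundary, suitably normalized. It carries an $\R$-tree structure (via domination of centers in taller and taller completions), together with the skewness and thinness functions along its branches. The endomorphism $f$ induces a self-map $f_\bullet$ of $\Vinf$, sending $v$ to the renormalization of $v\circ f^*$, together with a positive multiplier cocycle $c(f,\cdot)$; the point of the construction is that $\deg(f^n)$ is comparable to $\sup_v\prod_{i<n}c(f^i,f^i_\bullet v)$, so that the rate $\lambda_1$ is read off from the dynamics of $f_\bullet$ on $\Vinf$. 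The central step is then to produce an \emph{eigenvaluation}: a valuation $v_\ast\in\Vinf$ with $f_\bullet v_\ast=v_\ast$ whose multiplier $c(f,v_\ast)$ equals $\lambda_1$. I would obtain $v_\ast$ by a fixed-point argument for the tree map $f_\bullet$ --- roughly, $f_\bullet$ cannot push every valuation toward a single end, so it fixes a point, or a segment that is collapsed under iteration --- and it is exactly the hyperbolicity $\lambda_1^2>\lambda_2$ that forces the fixed point found to be attracting on a neighborhood and its multiplier to be $\lambda_1$ rather than some smaller rate. I expect this step --- existence of the eigenvaluation, together with the identification of its multiplier --- to be the main obstacle: $\Vinf$ is not compact, so the tree fixed-point theorem has to be applied with care, and pinning down the multiplier requires combining the submultiplicative estimates for $\deg f^n$ along $\Vinf$ with the hyperbolicity hypothesis.

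Finally I would read off the arithmetic of $\lambda_1$ from $v_\ast$. On the one hand, the construction shows $\lambda_1$ is the spectral radius of the action of $f^*$ on $\NS(X)\cong\Z^{\rho(X)}$ for a suitable completion $X$ (one making the induced rational map algebraically stable when $v_\ast$ is divisorial, or an appropriate limiting version of this over a tower of models otherwise); hence $\lambda_1$ is an algebraic integer, and since the Galois conjugates of $\lambda_1$ then occur among the eigenvalues of an integer matrix, all of modulus at most its spectral radius, $\lambda_1$ is a weak Perron number. On the other hand, the bound $\deg\lambda_1\leq 2$ comes from the structure of $v_\ast$: in the hyperbolic case $v_\ast$ is not a curve valuation, so it is divisorial, irrational quasimonomial, or infinitely singular, and in each case the dynamically relevant datum attached to $v_\ast$ is of rank two. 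For a divisorial $v_\ast=c\,\ord_E$ realized in $X$, the eigenvaluation equation forces the rank-two sublattice of $\NS(X)$ spanned by $[E]$ and the class of the line at infinity to be $f^*$-invariant, and $f^*$ restricts there to a $2\times 2$ integer matrix having $\lambda_1$ as an eigenvalue; for quasimonomial $v_\ast$ the analogous datum is the pair (skewness, thinness) of $v_\ast$, whose transformation under $f$ is governed by a $2\times 2$ matrix --- the monomial maps $f(x,y)=(x^ay^b,x^cy^d)$ of the introduction being the model case, where this matrix is literally $A$ --- and again $\lambda_1$ is one of its eigenvalues; the infinitely singular case is handled by a limiting version of the same bound. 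Hence $\lambda_1$ is a root of a monic integer polynomial of degree $\leq 2$; combined with the preceding paragraph, $\lambda_1$ is a weak Perron number of degree $\leq 2$, which is the assertion of the theorem.
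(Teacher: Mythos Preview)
Your proposal is essentially the original Favre--Jonsson strategy, and for $\C^2$ it is sound. Note, though, that this theorem appears in the paper only as a citation; the paper's own contribution is the generalization (Theorem~\ref{BigThmDynamicalDegreesEng}), and its method for producing the eigenvaluation is genuinely different from the one you outline.

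The difference is in how $v_*$ is obtained. You appeal to the global $\R$-tree structure of $\hat\cV_\infty$ and a tree fixed-point argument; this works precisely because for $\C^2$ the space $\hat\cV_\infty$ \emph{is} a tree. The paper instead constructs $v_*$ by spectral analysis on the Picard--Manin space: one first finds the nef eigenclass $\theta_*\in\L2$ for $f_*$ with eigenvalue $\lambda_1$ (Theorem~\ref{ThmEigenclasses}), and then shows via the bijection between valuations centered at infinity and linear forms with property $(+)$ (Chapters~\ref{ChapterValuationsAsLinearForms}--\ref{ChapterBijection}) that $\theta_*$ is, modulo $\Cinf^\perp$, of the form $Z_{v_*}$ for a genuine valuation (Theorem~\ref{ThmExistenceEigenvaluationSurfaces}). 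This bypasses the global tree structure, which is the point: for a general affine surface $\hat\cV_\infty$ need not be a tree (e.g.\ the cycle case of \S\ref{SubSecProofCycleCase}), so a tree fixed-point theorem is unavailable. Once $v_*$ exists, the extraction of $\lambda_1$ from the local normal form at $c_X(v_*)$ (Theorem~\ref{ThmLocalFormOfMapAndDynamicalDegreAreQuadraticInteger}) is close in spirit to what you describe.

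Two corrections to your last paragraph. First, in the divisorial case you do not need a rank-two sublattice: if $v_*\sim\ord_E$ then $f_*\ord_E=\lambda_1\ord_E$ gives $\lambda_1=\ord_E(f^*E)\in\Z$ immediately. Second, the $2\times 2$ integer matrix genuinely appears only in the irrational case, and it acts on the monomial weights $(s,t)$ at a satellite point (Lemma~\ref{LemmeActionSurMonomiale}, Proposition~\ref{PropMonomializationOnlyAtTheCenter}), not on a ``(skewness, thinness)'' pair; the infinitely singular case again yields $\lambda_1\in\Z$ directly from the exponent $a$ in the local normal form \eqref{EqPseudoFormeInfSing}, not by a limiting $2\times 2$ argument.
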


The first result of this memoir is to extend this result to all \emph{normal affine surfaces}, in characteristic zero.
Even if the semigroup of endomorphisms can change drastically when one changes the affine surface. For example,
Blanc and Dubouloz, in \cite{blancAffineSurfacesHuge2013}, build smooth affine surfaces with a big group
of automorphisms, much bigger than the one of the affine plane. Bot used this construction to show the
existence of smooth complex rational affine surfaces with uncountably many real forms (see
\cite{botSmoothComplexRational2023}). The results in this paper show that, even though structure wise these
groups are a lot more complicated, from the point of view of the dynamics of a single element, this is not
the case.

\begin{bigthm}\label{BigThmDynamicalDegreesEng}
  Let $X_0$ be a normal affine surface over a field $\k$ of characteristic zero. If $f : X_0 \rightarrow
  X_0$ is a dominant endomorphism, then $\lambda_1(f)$ is a Perron number of degree $\leq 2$.
\end{bigthm}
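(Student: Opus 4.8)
The plan is to follow the valuative strategy of Favre–Jonsson but adapted to a general normal affine surface. The key object is the space of valuations centered at infinity on $X_0$, more precisely on the divisor at infinity $\partial_X X_0 = X \setminus X_0$ for a chosen completion $X$. The dominant endomorphism $f$ acts on the set of such valuations, and I expect to produce an \emph{eigenvaluation}: a valuation $v_*$ (normalized somehow, e.g. $v_*(\text{a function vanishing to order 1 at infinity}) = 1$) together with a real number $c$ such that $f_* v_* = c\, v_*$, where the scalar $c$ will be $\lambda_1(f)$. The degree-$\leq 2$ conclusion will come from understanding what kind of valuation $v_*$ can be: either it is divisorial (in which case, after passing to a suitable model, $f$ or an iterate preserves a divisor at infinity and $\lambda_1$ becomes an eigenvalue of the induced map on a rank-$\leq 2$ "local Picard at infinity" lattice, giving an integer or a quadratic integer), or it is irrational/infinitely singular, in which case a self-similarity / renormalization argument pins down the algebraic structure.

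**First I would** set up the linear-algebraic framework. Fix a completion $X$ and the boundary $\partial_X X_0$; the $f$-equivariant object is the direct limit over all completions dominating $X$ of the Néron–Severi or "Weil/Cartier classes supported at infinity" groups, which carry an intersection form. On this infinite-dimensional space $f^*$ acts by an operator whose "spectral radius" is $\lambda_1(f)$. Concretely, $\lambda_1(f) = \lim_n ((f^n)^* H \cdot H)^{1/n}$ and the Khovanskii–Teissier/log-concavity facts from the introduction plus the birational invariance let me compute this on any model. The heart is the construction of $v_*$: the sequence of normalized classes $(f^n)^* \theta / \|(f^n)^* \theta\|$ should converge (after noting the operator has a nonnegativity property with respect to the cone of effective/nef classes supported at infinity — a Perron–Frobenius-type argument in the style of the eigenvaluation construction) to a nef class which is an eigenvector with eigenvalue $\lambda_1(f)$, and this class corresponds to a valuation $v_*$ via the valuative tree / its higher-dimensional analogue for surface boundaries.

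**Then I would** do the case analysis on $v_*$. Case 1: $v_*$ is divisorial, say $v_* = \ord_E$ for a prime divisor $E$ at infinity on some model. Then $f$ sends $E$ to a point or to (a divisor dominating) $E$; passing to an iterate and a further model, $f$ contracts the complementary boundary components and one gets that $\lambda_1(f)$ is an eigenvalue of $f^*$ acting on the subspace of $\NS$ spanned by the boundary curves containing $E$ in its orbit — but because $v_*$ is an eigenvector and the intersection form restricted to the relevant "$f$-invariant piece at infinity" has signature $(1, n)$ (Hodge index), the eigenvector with maximal eigenvalue spans together with its image at most a rank-$2$ isotropic-or-hyperbolic plane, forcing $\lambda_1(f)$ to satisfy a degree-$\leq 2$ integer polynomial (either $\lambda_1 \in \Z$ or $\lambda_1^2 - t\lambda_1 + q = 0$ with $t, q \in \Z$). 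Case 2: $v_*$ is not divisorial; here I would use the local self-similarity: the germ of $X_0$ at infinity near $v_*$, together with the action of $f$, forces $v_*$ to be a "quasimonomial" valuation whose skewness/thinness is multiplied by $\lambda_1$, and rationality of the combinatorial data of the valuative tree at $v_*$ forces $\lambda_1$ algebraic of degree $\leq 2$ — essentially because the relevant invariant is controlled by a single real parameter transformed by a Möbius-type (degree $\leq 2$) recursion. The Perron property (all Galois conjugates $\leq \lambda_1$ in modulus) follows in both cases from $\lambda_1$ being the spectral radius of a nonnegative-integer matrix (Case 1) or from the log-concavity inequality $\lambda_1^2 \geq \lambda_2 = $ topological degree combined with the fact that a quadratic conjugate smaller in product than the topological degree forces the conjugate to have modulus $\leq \lambda_1$.

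**The hard part will be** the construction and classification of the eigenvaluation $v_*$ at infinity for an \emph{arbitrary} normal affine surface over an \emph{arbitrary} field — in \cite{favreEigenvaluations2007} this rests heavily on the explicit structure of the valuative tree of $\C^2$ at infinity and on analytic arguments. Over a general field and a general normal surface I expect to need: (i) a purely algebraic substitute for the valuative tree, built from the inverse system of boundary divisors under blow-ups, with its tree/$\R$-tree structure coming from the intersection pairing; (ii) a Perron–Frobenius argument valid in this infinite-dimensional setting (compactness of a suitable cone of nef-at-infinity classes, continuity of $f^*$), which replaces the complex-analytic convergence; and (iii) handling the non-normal-crossings and wild-ramification subtleties that can occur in positive characteristic when resolving the boundary. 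The rigidity statement about loxodromic automorphisms should then drop out by running the same machinery for $f$ and $f^{-1}$ simultaneously and noting the two eigenvaluations are distinct, which constrains $\lambda_1(f)$ to a single quadratic field or to $\Z$.
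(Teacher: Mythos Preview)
Your overall strategy---build an eigenvaluation via a Perron--Frobenius argument on the Picard--Manin space, then read off the arithmetic of $\lambda_1$ from the type of $v_*$---is the paper's strategy. But there are two genuine gaps.

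First, the eigenvaluation machinery does not apply to an arbitrary normal affine surface. The paper needs the hypotheses $\k[X_0]^\times = \k^\times$ and $\Pic^0(X_0)=0$ (equivalently $\QAlb(X_0)=0$) for the embedding $\DivInf(X)\hookrightarrow \NS(X)$ to be injective, which is what lets valuations correspond to divisor classes. When these fail, the paper does \emph{not} try to build $v_*$; instead it uses the quasi-Albanese morphism to show that either $f$ preserves a fibration over a curve (so $\lambda_1\in\Z$ by a relative dynamical degree argument), or $f$ has finite order, or $X_0\simeq\G_m^2$. You also need to separate off the case $\lambda_1^2=\lambda_2$: the spectral gap $\lambda_1^2>\lambda_2$ is required for the uniqueness and construction of $v_*$ via Theorem~\ref{ThmEigenclasses}, and when $\lambda_1^2=\lambda_2$ one simply observes $\lambda_1=\sqrt{\lambda_2}$ with $\lambda_2\in\Z$.

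Second, your case analysis on $v_*$ is off. There are \emph{three} relevant types, and the conclusions differ. If $v_*$ is divisorial, the paper gets $\lambda_1\in\Z$ directly: on a model where $v_*=\ord_E$, one has $f^*E=\lambda_1 E+R$ and reads off $\lambda_1$ as a single ramification index, not as an eigenvalue of a rank-$2$ Hodge-index argument. If $v_*$ is infinitely singular, it is \emph{not} quasimonomial and there is no M\"obius recursion; instead one finds a completion where $c_X(v_*)$ is a free point on a single divisor $E$, the local form is $f(x,y)=(x^a\phi,\ldots)$, and $\lambda_1=a\in\Z$. Only when $v_*$ is irrational does one get a genuine $2\times 2$ picture: $c_X(v_*)=E\cap F$ is a satellite point, the local form is pseudomonomial with matrix $A\in M_2(\Z_{\ge 0})$, and $\lambda_1$ is the spectral radius of $A$, hence a quadratic Perron number. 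Your proposal merges the last two cases and attaches the wrong mechanism to each.
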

We will also deal with positive
characteristic in the paper but the result is not as easily statable as there are more cases to treat. In particular, Theorem
\ref{BigThmDynamicalDegreesEng} holds for the affine plane $\A^2_\k$ over any field $\k$ (see Theorem
\ref{BigThmExistenceValuationPropreEng}).

The proof uses valuative techniques which we describe in the next section. We also obtain results on the dynamics of $f$. For any completion $X$ of
$X_0$, the endomorphism $f$
extends to a rational transformation of $X$, it has a finite number of indeterminacy points at infinity i.e on $X
\setminus X_0$. One cannot hope in general to find a
completion $X$ such that $f$ extends to a regular endomorphism of $X$. The strategy of proof consists of studying the
dynamics of $f$ at infinity. More specificallly, we find good
completions where $f$ has an attracting fixed point at infinity, i.e a point $p \in X \setminus X_0$ where the lift
$f: X \dashrightarrow X$ of $f$ is defined at $p$ and $f(p) = p$ we can then study the local dynamics at $p$ to compute
the first dynamical degree of $f$. Theorem \ref{BigThmAutomorphismIntroEng} below provides
a precise statement in the case of automorphisms; the most general results will be described in Chapter
\ref{ChapterAutomorphisms}.

\section{The dynamical spectrum of the algebraic torus}
If $V$ is an algebraic variety, let $\End(V)$ be the semigroup of dominant endomorphisms of $V$. We define the
\emph{dynamical spectrum} of $V$ by
\begin{equation}
  \Lambda (V) := \left\{ \lambda_1 (f) : f \in \End (V) \right\}.
  \label{<+label+>}
\end{equation}
As every $2\times 2$ matrix with integer coefficients induces a monomial endomorphism of the algebraic torus $\G_m^2$,
we have that $\Lambda (\G_m^2)$ is the set of Perron numbers of degree $\leq 2$. By Theorem
\ref{BigThmDynamicalDegreesEng}, this shows that $\Lambda (\G_m^2)$ is maximal among the dynamical spectra of normal
affine surfaces. One might wonder if this is a characterization of the algebraic torus but we show that this is not
the case. In fact, we show that any Perron number of degree 2 is the dynamical degree of a monomial transformation of
$\A^2_\k$ with positive coefficients. Thus we prove the following result.

\begin{thm}\label{BigThmDynamicalSpectrum}
  For any field $\k$,
  \begin{equation}
    \Lambda (\A^2_\k) = \Lambda (\G_{m,\k}^2).
    \label{<+label+>}
  \end{equation}
\end{thm}

\section{Existence of an eigenvaluation}
Let $A$ be the ring of regular functions of a normal affine surface $X_0$ over an algebraically closed
field $\k$. A \emph{valuation} is a map $v : A
\rightarrow \R \cup \left\{ \infty \right\}$ such that
\begin{enumerate}
  \item $v(PQ) =v(P) +v(Q)$;
  \item $v (P+Q) \geq \min (v(P),v(Q))$;
  \item $v(0) = \infty$;
  \item $v_{| \k^\times} = 0$
\end{enumerate}
Two valuations $v$ and $\mu$ are \emph{equivalent} if there exists $t >0$ such that $v = t \mu$.
For example, if $X$ is a completion of $X_0$, for each irreducible curve $E \subset X$, the map $\ord_E$
defined by $\ord_E(P)$ being the order of vanishing of $P$ along $E$ is a valuation. Any valuation of the
form $\lambda \ord_E$ with $\lambda > 0$ is called \emph{divisorial}. If $f$ is an endomorphism of $X_0$,
then $f$ induces a ring homomorphism $f^* : A \rightarrow A$. We can then define the pushforward $f_* v$
of a valuation $v$ by
\begin{equation}
  f_* v (P) = v (f^* P).
  \label{<+label+>}
\end{equation}

We say that a valuation is \emph{centered at infinity} if there exists $P \in A$ such that $v(P) <0$. If
$X$ is a completion of $X_0$, the divisorial valuations centered at infinity are exactly the one
corresponding to the irreducible components of $X \setminus X_0$. Let $\cV_\infty$ the set of valuations
centered at infinity and $\hat
\cV_\infty$ the set of valuations centered at infinity modulo equivalence. Suppose for the sake of
simplicity that $f$ is an automorphism of $X_0$, then $f_*$ induces a bijection of $\cV_\infty$ and of
$\hat \cV_\infty$ which will in fact be a homeomorphism for a topology that will be described in Chapter \ref{ChapterValuations}.

If $X_0$ is the complex affine plane, then Favre and Jonsson proved the existence of a valuation $v_* \in \cV_\infty$
such that $f_* v_* = \lambda_1 (f) v_*$. Such a valuation is called an \emph{eigenvaluation} of $f$. To
do so, they show in \cite{favreValuativeTree2004} that $\hat \cV_\infty$ has a real tree structure and
$f_*$ is compatible with this structure. The existence of $v_*$ follows from a fixed point theorem on
trees. The existence of this eigenvaluation has a big impact on the dynamics of $f$. In particular, it
allows one to find a good completion $X$ of $\C^2$ and a point $q \in X \setminus \C^2$ (a point at infinity) which is
an attracting fixed point for the dynamics of $f$ (extended to $X$ as a rational map). Xie uses this construction to
prove the Zariski dense orbits conjecture and the dynamical
Mordell-Lang conjecture for polynomial endomorphisms of the complex affine plane
(\cite{xieExistenceZariskiDense2017}). Jonsson and Wulcan use these techniques to build canonical heights
for polynomial endomorphisms of the complex affine plane with small topological degree in \cite{jonssonCanonicalHeightsPlane2012}.

\begin{bigthm}\label{BigThmExistenceValuationPropreEng} \label{BIGTHMEXISTENCEVALUATIONPROPREENG}
  Let $X_0$ be a normal affine surface over an algebraically closed field $\k$ (of any characteristic) and
  let $f$ be a dominant endomorphism of $X_0$. Suppose that
  \begin{enumerate}
    \item $\k[X_0]^\times = \k^\times$.
    \item For any completion $X$ of $X_0$, $\Pic^0 (X) = 0$.
    \item $\lambda_1 (f)^2 > \lambda_2(f)$.
  \end{enumerate}
  Then, there exists a nef valuation $v_*$, unique up to equivalence, such that
  \begin{equation}
    f_* (v) = \lambda_1(f) v_*
  \end{equation}
  and $\lambda_1 (f)$ is a Perron number of degree $\leq 2$.
\end{bigthm}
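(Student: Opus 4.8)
\emph{Linearization.} I would first replace the nonlinear space $\hat\cV_\infty$ by the real Hilbert space $\L2$ carrying a continuous intersection pairing, constructed in Chapter~\ref{ChapterValuations}: it is the completion of the direct limit $\Cinf=\varinjlim_X\NS(X)$ over the (normal) projective completions $X$ of $X_0$, with transition maps the pullbacks along the birational morphisms restricting to the identity on $X_0$. A valuation centered at infinity gives a class of the dual space $\Winf$, which sits inside $\L2$ as well. The two standing hypotheses are exactly what makes this linearization faithful: (2), $\Pic^0(X)=0$ on every completion, forces numerical and linear equivalence to coincide, so a class is recovered from its intersection numbers and $\Cinf$ is well behaved; (1), $\k[X_0]^\times=\k^\times$, kills the nonzero principal divisors supported at infinity, so that $\varinjlim_X\DivInf(X)$ injects into $\L2$ and the pairing has signature $(1,\infty)$ there, by the Hodge index theorem. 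In this step I would also extend $f^*$ and its intersection-adjoint $f_*$ to bounded operators on $\L2$, note that $f_*$ preserves the closed nef cone at infinity, and record the identity $f_*f^*=\lambda_2(f)\,\id$ coming from the projection formula.

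\emph{Spectral gap.} From the numerical definition of the dynamical degrees and the submultiplicativity of $n\mapsto(f^n)^*$ one gets that the spectral radius of $f^*$, hence of its adjoint $f_*$, on $\L2$ equals $\lambda_1(f)$; together with $f_*f^*=\lambda_2(f)\,\id$ this shows that the part of the spectrum of $f_*$ that the dynamics preserves lies on the circle of radius $\sqrt{\lambda_2(f)}$. Hypothesis (3), $\lambda_1(f)^2>\lambda_2(f)$, is precisely the statement $\lambda_1(f)>\sqrt{\lambda_2(f)}$: the spectral radius is strictly larger than this neutral radius. A Perron--Frobenius argument in the Hilbert space, exploiting the strict invariance of the nef cone, then upgrades this to the statement that $\lambda_1(f)$ is a \emph{simple, isolated} eigenvalue of $f_*$ whose eigenline is spanned by a nef class $\theta_*\in\L2$; equivalently $\lambda_1(f)^{-n}f_*^{\,n}$ converges in operator norm to the spectral projection onto $\R\theta_*$. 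Concretely $\theta_*=\lim_{n\to\infty}\lambda_1(f)^{-n}f_*^{\,n}\omega$ for any ample class $\omega$ on a completion, and (3) is exactly what makes this sequence Cauchy with nonzero limit.

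\emph{Back to valuations.} It remains to transport $\theta_*$ through the dictionary between valuations centered at infinity and nef classes of $\L2$ — the analogue for $X_0$ of the Favre--Jonsson incarnation map $v\mapsto Z_v$, again valid thanks to (1) and (2). A projection-formula computation shows that $v\mapsto Z_v$ conjugates the pushforward of valuations, $f_*v(P)=v(f^*P)$, to the operator $f_*$ on classes; since $\theta_*$ is nef it equals $Z_{v_*}$ for a valuation $v_*\in\cV_\infty$, and the relation $f_*\theta_*=\lambda_1(f)\theta_*$ translates into $f_*v_*=\lambda_1(f)\,v_*$. Simplicity of the eigenvalue gives uniqueness of $v_*$ up to equivalence. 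Replacing the fixed-point-on-trees argument of \cite{favreEigenvaluations2007} by this limit of normalized pushforwards is the new method advertised in the introduction, and it is what makes the proof work uniformly in the characteristic of $\k$ and for singular $X_0$.

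\emph{Main obstacle.} The genuinely hard part is the linearization itself: building the incarnation dictionary between $\cV_\infty$ and nef classes of $\L2$ for an \emph{arbitrary} normal affine surface over an \emph{arbitrary} field. The boundary of a completion can be arbitrarily complicated, and for singular $X_0$ one must work with Mumford's $\Q$-valued intersection theory and distinguish Cartier from Weil classes at infinity; establishing that $v\mapsto Z_v$ is a bijection onto the nef cone, and that $\cV_\infty$ is closed in $\L2$, requires a careful compactness argument over the cofiltered system of completions, and it is here — not merely as a convenience — that hypotheses (1) and (2) are used. The secondary difficulty is that $\L2$ is infinite-dimensional, so "spectral radius $\lambda_1(f)$ with a gap" must be upgraded, via a Perron--Frobenius theorem for a cone in a Hilbert space, to an honest eigenvector together with the operator-norm convergence of $\lambda_1(f)^{-n}f_*^{\,n}$; hypothesis (3) is the exact input that licenses this.
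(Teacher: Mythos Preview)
Your overall architecture matches the paper's: linearize into $\L2$, use the spectral gap from hypothesis (3) to extract the nef eigenclass $\theta_*$ (this is Theorem~\ref{ThmEigenclasses}, quoted from \cite{boucksomDegreeGrowthMeromorphic2008,dangSpectralInterpretationsDynamical2021}), then transport back through the valuation/class dictionary. But the ``Back to valuations'' step contains a real gap.

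You assert that the incarnation map $v\mapsto Z_v$ is a bijection onto the nef cone, and conclude ``since $\theta_*$ is nef it equals $Z_{v_*}$''. Neither half is correct. First, under (1)--(2) there is an orthogonal splitting $\wNS_\R = V\operp \Winf_\R$ with $V$ a finite-dimensional piece coming from classes on a fixed completion orthogonal to the boundary (Proposition~\ref{PropDecompositionPicardManin}); $\theta_*$ has a component $w\in V=\Cinf^\perp$, and only its $\Winf$-part can be of the form $Z_{v_*}$. The paper's statement is $\theta_*=w+Z_{v_*}$, not $\theta_*=Z_{v_*}$. Second, and more seriously, the image of $v\mapsto Z_v$ is \emph{not} the nef cone: the paper characterizes it (Chapters~\ref{ChapterValuationsAsLinearForms}--\ref{ChapterBijection}) as the set $\hom(\Cinf,\R)_{(+)}$ of linear forms $L$ with $L(D)\ge 0$ for $D\ge 0$ \emph{and} the tropical condition $L(D\wedge D')=\min(L(D),L(D'))$. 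This min-condition is what forces $v_L(P+Q)\ge\min(v_L(P),v_L(Q))$ when you invert the map; nefness alone does not give it, and a nef class need not come from a valuation.

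Accordingly, the paper does not argue ``$\theta_*$ nef $\Rightarrow$ $\theta_*=Z_{v_*}$''. Instead it picks a prime divisor $E$ at infinity with $Z_{\ord_E}\cdot\theta^*>0$, sets $v_n=\lambda_1^{-n}(f^n)_*\ord_E$, and uses the asymptotic (\ref{EqAsymptoticPushForward}) to show $Z_{v_n}\to Z$ (the $\Winf$-part of $\theta_*$) in the weak-$*$ topology. Each $Z_{v_n}$ lies in $\hom(\Cinf,\R)_{(+)}$ because $v_n$ is a valuation; this set is weak-$*$ closed (Corollary~\ref{CorLPlusClosedSubset}); hence $Z$ also satisfies $(+)$, and the inverse constructed in Chapter~\ref{ChapterLinearFormsToValuations} produces $v_*$. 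Uniqueness then follows exactly as you say, from simplicity of the eigenvalue. So your limit $\lim\lambda_1^{-n}f_*^{\,n}\omega$ is the right object, but the reason it yields a valuation is closedness of the $(+)$-cone, not a nef/valuation correspondence.
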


The techniques we use do not use the global geometry of $\hat \cV_\infty$ because it is not necessarily a tree
anymore. If $X$ is a completion of $X_0$ and $v$ is a valuation centered at infinity, we associate in a canonical way a divisor
$Z_{v, X}$ of $X$ supported outside of $X_0$. If $\pi: Y \rightarrow X$ is
another completion of $X_0$ obtained from blowing up points of $X$ at infinity, and $\pi_* Z_{v, Y} = Z_{v, X}$
(see Proposition \ref{PropValuationForCartierDivisorOverOneCompletion}), a valuation is nef if for every $X$
$Z_{v,X}$ is nef. This construction involves the
Picard-Manin space of $X_0$. We give a brief description of this space. Consider the direct limit
\begin{equation}
  \cNS = \varinjlim_{X} \NS (X)_\R
  \label{<+label+>}
\end{equation}
indexed by all the completions of $X_0$. This is an infinite dimension real vector space. The intersection form can be
extended in a natural way to $\cNS$ and the Hodge Index theorem states that it is a non-degenerate form of Minkowski
type, i.e its signature is $(1, \infty)$. We can use the intersection form to define a norm on $\cNS$ and the Picard
Manin space of $X_0$ will be the completion of $\cNS$ with respect to this norm. It has a structure of a Hilbert space
and any dominant endomorphism $f$ of $X_0$ induces two bounded operators $f^*, f_*$ on it.
The spectral analysis of the operators $f_*, f^*$  (see
\cite{boucksomDegreeGrowthMeromorphic2008, cantatGroupesTransformationsBirationnelles2011}) allows one to
construct the eigenvaluation $v_*$ and show its uniqueness. Namely, $\lambda_1(f)$ is the spectral radius of $f^*$ and
$f_*$ and when $\lambda_1^2 > \lambda_2$, there is a spectral gap property. The eigenvalue $\lambda_1$ is simple for
$f^*$ and $f_*$. This process is similar to the techniques of
\cite{dangSpectralInterpretationsDynamical2021} \S 6. These techniques were used by Gignac and Ruggiero in
\cite{gignacLocalDynamicsNoninvertible2021} to study the local dynamics of non invertible germs near a normal
singularity in dimension 2. This memoir can be considered to be the global counterpart to the local techniques developed
by these two authors. Our construction of the valuation $v_*$ is however different.

\section{Discussion on the assumptions of Theorem \ref{BigThmExistenceValuationPropreEng}}
The assumptions of Theorem \ref{BigThmExistenceValuationPropreEng} may seem arbitrary but they are not
restrictive. Indeed, if assumption (1) or (2) is not satisfied, then one can show that $f$ preserves a
fibration over a \footnote{a quasi-abelian variety is an algebraic group such that there exists an
algebraic torus $T$ and an abelian variety $A$ such that the sequence of algebraic groups $0 \to T \to X \to A \to
0$ is exact.}{\emph{quasi-abelian variety}}. We can decompose the dynamics of $f$ with this fibration and it
becomes easier to study. This is done in Chapter \ref{ChapterGeneralCase}, we show that the only case with interesting dynamics
is when $X_0$ is the algebraic torus $\G_m^2$.

\begin{bigthm}\label{BigThmClassificationQAlbNonTrivial}
  Let $X_0$ be a normal affine surface over an algebraically closed field of characteristic zero. Suppose that $X_0$ does not satisfy
  Conditions (1) or (2) of Theorem \ref{BigThmExistenceValuationPropreEng}, then either
  \begin{enumerate}
    \item $X_0$ is of log general type. Every dominant endomorphism of $X_0$ is an automorphism and $\Aut(X_0)$ is a
      finite group. Thus, $\lambda_1(f) = 1$ for every dominant endomorphism $f$ of $X_0$.
      \item There exists a curve $C$ and a fibration $\pi : X_0 \dashrightarrow C$ preserved by every endomorphism of
        $X_0$. This means that for every endomorphism $f$ of $X_0$, there exists an endomorphism $g : C \rightarrow C$
        such that $\pi \circ f = g \circ \pi$. In that case $\lambda_1(f)$ is always an integer.
      \item $X_0 \simeq \G_m^2$.
    \end{enumerate}
\end{bigthm}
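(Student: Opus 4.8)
The plan is to use the theory of log Kodaira dimension / logarithmic ramification to split into the three cases. Fix a completion $X$ of $X_0$ with simple normal crossing boundary $D = X \setminus X_0$, and recall that the failure of conditions (1) or (2) is what produces a nontrivial ``quasi-Albanese'' type map. More precisely, if $\k[X_0]^\times \ne \k^\times$ there is a nonconstant invertible function, hence a dominant morphism $X_0 \to \G_m$; and if $\Pic^0(X) \ne 0$ for some completion, there is a nontrivial abelian part, giving a nonconstant morphism to an abelian variety. Combining these, one gets a nontrivial morphism $\qalb \colon X_0 \to G$ to a quasi-abelian variety $G$, and by its functoriality every dominant endomorphism $f$ of $X_0$ descends to an isogeny-type endomorphism of $G$ (after translation). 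The dimension of the image is $1$ or $2$; I would organize the proof by that dimension.

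First I would treat the case $\dim \overline{\qalb(X_0)} = 2$. Then $\qalb$ is generically finite onto its image inside the quasi-abelian surface $G$. Pulling back an invariant $2$-form (or the flat structure) shows $X_0$ has log Kodaira dimension $\ge 0$; I would then argue that either it is exactly $\G_m^2$ (when $\qalb$ is an isomorphism, using that a quasi-abelian surface dominated birationally by an affine surface with the right invariants must be $\G_m^2$, e.g.\ via Iitaka's classification of open algebraic surfaces) or $X_0$ is of log general type. In the log general type subcase, a theorem of Iitaka/Kobayashi–Ochiai type gives that $\Aut(X_0)$ is finite and every dominant endomorphism is an automorphism, which is conclusion (1). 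The point $\G_m^2$ versus log general type dichotomy here is essentially: a smooth affine surface with a generically finite map to an abelian variety or $\G_m^2$ and $\overline\kappa \ge 0$ is, up to finite étale cover issues, either $\G_m^2$ itself or has $\overline\kappa = 2$; I would cite the structure theory of open surfaces for the intermediate cases not occurring when the quasi-Albanese is already $2$-dimensional.

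Next, the case $\dim \overline{\qalb(X_0)} = 1$. Then the Stein factorization of $X_0 \to \overline{\qalb(X_0)}$ gives a smooth affine curve $C$ and a dominant morphism $\pi \colon X_0 \to C$ with connected fibres, and $C$ maps finitely to a quasi-abelian curve, so $C$ is either $\G_m$ or $\G_m$ minus points / an affine elliptic-type curve — in any case $C$ has $\overline\kappa(C) \ge 0$. By functoriality of the quasi-Albanese and the universal property of Stein factorization, any dominant endomorphism $f$ of $X_0$ satisfies $\pi \circ f = g \circ \pi$ for a unique dominant $g \colon C \to C$. This is conclusion (2); it remains to show $\lambda_1(f)$ is always an integer. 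For that I would use the product formula / fibred structure: relative to the fibration $\pi$, $\lambda_1(f) = \max(\deg g, \lambda_{\mathrm{rel}}(f))$ where $\deg g$ is the topological degree of $g$ on the curve (an integer) and $\lambda_{\mathrm{rel}}$ is the relative degree on a general fibre, which for a one-dimensional fibre is again just the topological degree of $f$ restricted to that fibre — an integer. Concretely one takes a completion adapted to the fibration and checks that $(f^n)^* H \cdot H$ grows like $\max(\deg g, \deg_{\mathrm{fib}} f)^n$ up to bounded factors.

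The main obstacle I anticipate is the $2$-dimensional quasi-Albanese case: showing that \emph{only} $\G_m^2$ and log general type occur, with nothing in between (no $\overline\kappa = 0, 1$ surfaces other than $\G_m^2$ showing up, and no surface with infinite automorphism group but not $\G_m^2$), and handling the subtlety that $\qalb$ need not be an isomorphism — one may first have to pass to a finite étale cover or to the image variety and descend. This requires invoking the full Iitaka–Kawamata classification of open algebraic surfaces of non-general type (the $\overline\kappa = 0$ surfaces are covered by $\G_m^2$, abelian surfaces, or (minus a curve) a few others, and the affine ones with $\k[X_0]^\times \ne \k^\times$ or $\Pic^0 \ne 0$ are essentially $\G_m^2$), and being careful that ``$f$ dominant endomorphism'' in cases (1) forces automorphism — an endomorphism of a variety of log general type is automatically finite, and a finite self-map of log general type with the right ramification is an isomorphism, by the logarithmic ramification formula. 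A secondary, more bookkeeping-type difficulty is making the descent $\pi \circ f = g \circ \pi$ literally an equality of morphisms (not just rational maps) and identifying $C$ precisely; this should follow from normality of $X_0$ and properness of Stein factorization, but needs the hypotheses (1)/(2) phrased carefully so that the quasi-Albanese really is a morphism on all of $X_0$.
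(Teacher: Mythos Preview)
Your overall strategy via the quasi-Albanese map matches the paper's, and your handling of the one-dimensional image case (Stein factorization, then the product formula $\lambda_1(f) = \max(\deg g, \lambda_{\mathrm{rel}}(f))$) is essentially the same as the paper's.

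The genuine gap is the dichotomy you assert in the two-dimensional case. You claim that when $\dim \overline{\qalb(X_0)} = 2$, either $X_0 \simeq \G_m^2$ or $X_0$ is of log general type, and you flag ruling out intermediate log Kodaira dimensions as ``the main obstacle.'' But this dichotomy is simply false: $\overline\kappa(X_0) = 1$ with $\overline q(X_0) = 2$ genuinely occurs---take for instance $X_0 = \G_m \times (E \setminus \{\mathrm{pt}\})$ with $E$ an elliptic curve, which is affine, has $\overline q = 1+1 = 2$ and $\overline\kappa = 0+1 = 1$---and such surfaces land in conclusion (2), not (1) or (3). The paper handles this by observing that when $\overline\kappa(X_0) = 1$ every dominant endomorphism preserves the log Kodaira--Iitaka fibration, which furnishes the required invariant fibration over a curve. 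So in the two-dimensional case the three values $\overline\kappa \in \{0,1,2\}$ map to conclusions $(3),(2),(1)$ respectively; the middle one does not disappear. The identification $\overline\kappa = 0 \Rightarrow X_0 \simeq \G_m^2$ is precisely Iitaka's numerical characterization of the torus (which needs both $\overline\kappa = 0$ and $\overline q = 2$), not a general structural fact about open surfaces.

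A secondary imprecision: the paper's case split is not by $\dim \overline{q(X_0)}$ but by $\dim V/K$, where $V = \overline{q(X_0)}$ and $K$ is its stabilizer in $\QAlb(X_0)$, using Abramovich's theorem that $V/K$ is of general type. This cleanly handles the possibility that $V$ itself is of general type rather than a coset of a sub-quasi-abelian variety, a case your phrase ``quasi-abelian surface $G$'' glosses over (and note $\QAlb(X_0)$ need not be a surface). When $\dim V/K = 2$ the induced self-map of $V/K$ has finite order and the paper concludes $\lambda_1(f) = 1$ directly; your route can absorb this via $\overline\kappa(X_0) \geq \overline\kappa(V/K) = 2$, but you should make that step explicit.
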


For positive characteristic such a classification also holds if we consider \emph{separable} endomorphisms. However, our proof does
not go through for endomorphisms as we will use log Kodaira-Iitaka
fibration and in positive characteristic a dominant endomorphism need not preserve the log Kodaira-Iitaka fibration
because the pull-back of a nonseparable differential form could be zero.

If Assumption (3) of Theorem \ref{BigThmExistenceValuationPropreEng} is not satisfied, then we have $\lambda_1 (f)^2 = \lambda_2(f)$. In that case,
$\lambda_1(f)$ is automatically a Perron number of degree $\leq 2$ because $\lambda_2(f)$ is the
topological degree of $f$, hence an integer. In the case of the complex affine plane, Favre and Jonsson
manage to classify all polynomial transformations of the complex affine plane for which $\lambda_1^2 =
\lambda_2$: either they preserve a rational fibration, or there exists a completion $X$ of $\A^2_\C$
with at most quotient singularities at infinity such that $f$ extends to an endomorphism of $X$. We
expect that such a classification should exist in general, with one exceptional counterexample: The monomial
transformations of $(\G_m^2)$ that cannot be made algebraically stable (see \cite{favreApplicationsMonomialesDeux2003}).
We conjecture that the only counterexamples of this classification should come from equivariant quotient of these
monomial maps. In the local case of dyamics near a normal singularity Gignac and Ruggiero
(\cite{gignacLocalDynamicsNoninvertible2021}) showed such a classification.
One can notice that in the invertible case, a classification exists: By
\cite{gizatullinApplicationNumericalCriterion1969}, \cite{dillerDynamicsBimeromorphicMaps2001} and
\cite{cantatDynamiqueAutomorphismesSurfaces2001}, every birational transformation $\sigma : X \rightarrow X$ of a smooth
projective surface such that $\lambda_1(\sigma) = 1$ lifts to an automorphism or preserves a rational or elliptic
fibration.

\section{Statement of the theorem in the case of automorphisms}
In the case of loxodromic automorphism (i.e with $\lambda_1 > 1$), we obtain informations on the dynamics. For
this introduction we state the result in the complex case.

\begin{bigthm}\label{BigThmAutomorphismIntroEng}
  Let $X_0$ be a normal affine surface over $\C$ such that $\C [X_0]^\times = \C^\times$. If $f$ is an
  automorphism of $X_0$ such that $\lambda_1 (f) > 1$, then there exists a completion
  $X$ of $X_0$ such that $X$ is smooth outside the singular points of $X_0$ and
  \begin{enumerate}
    \item $f$ admits a unique attracting fixed point $p \in X (\C) \setminus X_0(\C)$ at infinity.
  \item An iterate of $f$ contracts $X \setminus X_0$ to $p$.
    \item There exists local analytic coordinates centered at $p$ such that $f$ is locally of the form
  \begin{enumerate}
      \item \begin{equation}
        f (z,w) = (z^a w^b, z^c w^d)         \label{EqFormeNormaleMonomiale}
      \end{equation}
      with $a,b,c,d$ integers $\geq 1$, in that case $\lambda_1(f)$ is the spectral radius of $\begin{pmatrix}
        a & b \\
        c & d
      \end{pmatrix}$. In particular, $\lambda_1 (f) \in \R \setminus \Q$, it is a Perron number of degree 2.
    \item or
      \begin{equation}
        f (z,w) = (z^a, \lambda z^c w + P(z))
        \label{EqFormeNormaleInfSingEng}
      \end{equation}
  with $a \geq 2, c \geq 1$ and $P \not \equiv 0$ a polynomial, in that case $\lambda_1(f) =a$ is an integer.
  \end{enumerate}
\item The attracting fixed points of $f$ and ${f}^{-1}$ are distinct.
\item The local normal form of ${f}^{-1}$ at its attracting fixed point is of the same form as $f$.
\end{enumerate}
\end{bigthm}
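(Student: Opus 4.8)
The plan is to reduce the general automorphism statement to the construction of a suitable completion where $f$ has an attracting fixed point at infinity, and then to analyze the local dynamics at that point. I would proceed as follows.

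\medskip

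\textbf{Step 1: Produce the eigenvaluation and the attracting completion.} Since $f$ is an automorphism, $\lambda_2(f) = 1$, so assumption (3) of Theorem \ref{BigThmExistenceValuationPropreEng} ($\lambda_1(f)^2 > \lambda_2(f)$) holds because $\lambda_1(f) > 1$; assumption (1) is the hypothesis $\C[X_0]^\times = \C^\times$; and assumption (2), $\Pic^0(X) = 0$ for every completion $X$, should be seen to follow automatically under these hypotheses (a surface with nontrivial $\Pic^0$ carries an Albanese-type map giving units or forcing $\lambda_1$ to be an integer, which one rules out — this is the content of the chapter on the general case). Thus Theorem \ref{BigThmExistenceValuationPropreEng} applies and yields an eigenvaluation $v_*$ with $f_* v_* = \lambda_1(f) v_*$. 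The next move is to use the valuative/Picard-Manin machinery: the class $Z_{v_*}$ in the Picard-Manin space is an eigenvector of $f_*$ for the eigenvalue $\lambda_1(f) > 1$, and by the spectral gap it is the unique such (up to scaling) and isotropic. From $v_*$ one constructs a completion $X$ in which $v_*$ becomes divisorial — more precisely, one shows $v_*$ is either divisorial or an "infinitely singular"/irrational valuation, and in the former case a single curve $E$ at infinity realizes it. If $v_*$ is divisorial one blows up so that $E$ appears; the center of $v_*$ on that $X$ is then an attracting fixed point $p$ of the lift of $f$. The uniqueness of the attracting fixed point (item (1)) follows from uniqueness of the eigenvaluation. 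Item (2), that an iterate of $f$ contracts $X \setminus X_0$ to $p$, should come from the fact that $f_*$ acts on the (finite-dimensional) space spanned by the boundary components with spectral radius $\lambda_1(f)$ and $Z_{v_*}$ the dominant eigendirection, so forward orbits of boundary classes converge to $\R_{\geq 0} Z_{v_*}$; combined with properness of the boundary this forces contraction after finitely many steps.

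\medskip

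\textbf{Step 2: Local normal form at $p$.} At the attracting fixed point $p$ on $X$, write $f$ in local coordinates. The point $p$ lies on at most two boundary curves. The key dichotomy is whether $p$ is a \emph{corner} (intersection of two boundary components) or a smooth point of the boundary. If $v_*$ is divisorial and $p$ is a corner, the two local branches are permuted or fixed by $f$, and analyzing the induced action on the two "slopes" gives a $2\times 2$ integer matrix $\begin{pmatrix} a & b \\ c & d \end{pmatrix}$ with nonnegative entries; pulling back the two local defining equations $z, w$ one gets $f(z,w) = (\text{unit}\cdot z^a w^b, \text{unit}\cdot z^c w^d)$, and after a local analytic change of coordinates (absorbing the units, which is possible because the linear part is diagonal with the relevant eigenvalue behaviour controlled) one reaches the monomial form \eqref{EqFormeNormaleMonomiale}; then $\lambda_1(f)$ is the spectral radius of the matrix, and irrationality follows since an automorphism has $\det = \pm 1$ in the relevant sense while the matrix has $\det = ad - bc$ — here the non-integrality is forced by $\lambda_1 > 1$ together with the structure (if the matrix were triangular we would be in the other case). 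If instead $p$ is a smooth point of a single boundary curve $E$, one gets a form where $E = \{z = 0\}$ is sent into itself with $z \mapsto z^a$, $a \geq 2$, and the transverse direction is skew-linear: $w \mapsto \lambda z^c w + P(z)$, which is \eqref{EqFormeNormaleInfSingEng}; here $\lambda_1(f) = a$, the local degree along $E$. Items (3a) and (3b) are exactly these two cases. The fact that $P \not\equiv 0$ in case (3b) comes from $f$ being an automorphism (if $P \equiv 0$ the map would contract too much / fail to be birational near $p$, or $v_*$ would actually be divisorial-corner, contradiction).

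\medskip

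\textbf{Step 3: Comparison of $f$ and $f^{-1}$.} Apply Steps 1–2 to $f^{-1}$, which also has $\lambda_1(f^{-1}) = \lambda_1(f) > 1$, to get an attracting fixed point $q$ for $f^{-1}$ with its own normal form. For item (4), that $p \neq q$: the point $q$ is a \emph{repelling} fixed point for $f$; if $p = q$ then $f$ would have a fixed point that is simultaneously attracting and repelling in complementary directions, but the eigenvaluation for $f$ and the one for $f^{-1}$ correspond to different isotropic eigendirections in the Picard–Manin space (eigenvalues $\lambda_1(f)$ for $f_*$ versus $\lambda_1(f)$ for $(f^{-1})_* = (f^*)$... ), and these are distinct unless $f_*$ has a single isotropic eigendirection, which the Hodge-index/Minkowski structure forbids when $\lambda_1 > 1$ (a hyperbolic isometry of a Minkowski space has exactly two fixed points on the boundary of the hyperbolic space, and they are distinct). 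For item (5), that $f^{-1}$ has the same type of normal form as $f$: in the monomial case, inverting a monomial map gives a monomial map (the matrix $A^{-1}$ — but to keep nonnegative integer entries and land on a fixed point one argues that the attracting fixed point of $f^{-1}$ also sits at a corner, because the corner structure is symmetric); in the skew-linear case \eqref{EqFormeNormaleInfSingEng} one checks directly that a map of that shape, near its repelling point, after passing to $f^{-1}$ and to the relevant completion, again has shape \eqref{EqFormeNormaleInfSingEng} (the monomial and skew-linear classes are each stable under the operation "invert and renormalize at the opposite fixed point" — this can be verified by a short computation with the explicit forms, distinguishing the two cases by whether $\lambda_1$ is rational).

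\medskip

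The main obstacle I anticipate is \textbf{Step 2}, turning the abstract eigenvaluation into concrete local analytic coordinates and extracting exactly the two normal forms. In particular one must (i) show the eigenvaluation is either divisorial or of the specific "infinitely singular" type that produces \eqref{EqFormeNormaleInfSingEng} — ruling out genuinely irrational valuations for automorphisms, which uses that $\lambda_1(f)$ is a quadratic integer (Theorem \ref{BigThmDynamicalDegreesEng}) and that irrational (non-quadratic-divisorial) eigenvaluations would force transcendental or higher-degree behaviour; and (ii) perform the linearization/normalization of the units in the monomial case and the Poincaré–Dulac-type normalization in the skew-linear case, checking that the obstructions (resonances) vanish or are absorbed precisely when $a \geq 2$, $c \geq 1$. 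Steps 1 and 3 are more formal: Step 1 is an application of the already-established Picard–Manin spectral theory plus a standard contraction-at-infinity argument, and Step 3 is the classical "hyperbolic isometry has two distinct endpoints" picture transported to this setting.
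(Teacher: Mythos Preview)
Your overall architecture (eigenvaluation $\to$ good completion $\to$ local normal form $\to$ compare $f$ and $f^{-1}$) matches the paper's, but you have the valuation-type dichotomy wrong, and this error propagates through Steps 1 and 2.

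You write that $v_*$ is ``either divisorial or infinitely singular/irrational'', and that the monomial form \eqref{EqFormeNormaleMonomiale} arises when $v_*$ is divisorial with center a corner. This is not correct. If $v_*$ were divisorial its center would be a prime divisor $E$, not a closed point; more importantly, the paper shows (Lemma~\ref{LemmaDivisorialConditionOnDegrees} and Proposition~\ref{PropTypesEigenValuationsAutomorphisms}) that a divisorial eigenvaluation forces $\lambda_1 \le \lambda_2$, which is impossible for a loxodromic automorphism since $\lambda_2 = 1$. So the divisorial case is \emph{excluded} for automorphisms. The actual dichotomy is: $v_*$ \emph{irrational} (center always a satellite point, pseudomonomial form, $\lambda_1$ quadratic irrational) versus $v_*$ \emph{infinitely singular} (center can be made a free point, form \eqref{EqFormeNormaleInfSingEng}, $\lambda_1 \in \Z$). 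Your anticipated obstacle of ``ruling out genuinely irrational valuations for automorphisms'' is therefore exactly backwards: irrational valuations are not ruled out, they are precisely case (3)(a).

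The second gap is structural. The paper does not prove (1)--(5) by local analysis alone; it invokes Gizatullin's classification of boundaries of affine surfaces with large automorphism group (Theorem~\ref{ThmAutomorphismCaseDynamicalCompactifications}). This gives a global dichotomy: for a minimal completion, $\partial_X X_0$ is either a zigzag of rational curves (then every loxodromic $f$ has $v_\pm$ infinitely singular and $\lambda_1 \in \Z$) or a cycle of rational curves (then $v_\pm$ are irrational and $\lambda_1 \notin \Q$). The argument that in the zigzag case $v_*$ cannot be irrational uses combinatorics of almost-standard zigzags and indeterminacy points (Lemmas~\ref{LemmePointDindeterminationAlmostStandardZigzag}--\ref{LemmePointDIndeterminationPasSurDeuxMoinsUn}); in the cycle case, that indeterminacy points must be satellite points (Proposition~\ref{PropCycleCaseIndeterminacyPoints}) forces $v_*$ to be irrational. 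Your Step 2 replaces this with ad hoc local reasoning (``an automorphism has $\det = \pm 1$ in the relevant sense''), which does not pin down the dichotomy. Item (5) then falls out immediately from the global zigzag/cycle alternative, since it depends only on $X_0$ and not on $f$---you do not need to compute with the explicit normal forms. Your Step 3 argument for $p_+ \neq p_-$ via two distinct isotropic directions is correct in spirit and close to the paper's.
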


Theorem \ref{BigThmAutomorphismIntroEng} holds in fact for any complete algebraically closed field (in any characteristic) but
we cannot be as precise with local normal forms in general, see Theorem
\ref{ThmLocalFormOfMapAndDynamicalDegreAreQuadraticInteger} and \ref{ThmAutomorphismCaseDynamicalCompactifications}.

The cases (3)(a) et (3)(b) are mutually exclusive in the following way
\begin{bigthm}\label{BigThmDichotomyAutomorphismEng}
  Let $X_0$ be a normal affine surface over $\C$ such that $\C [X_0]^\times = \C^\times$ and $f \in \Aut(X_0)$
  a loxodromic automorphism. We have the following dichotomy
  \begin{itemize}
    \item If $\lambda_1 (f) \in \Z_{\geq 0}$, then for any loxodromic automorphism $g$ of $X_0$, we have
      $\lambda_1(g) \in \Z_{\geq 0}$ and the local normal form of $g$ at its attracting fixed point is
      given by \eqref{EqFormeNormaleInfSingEng}.
    \item If $\lambda_1 (f) \not \in \Z_{\geq 0}$ then it is a Perron number of degree 2 and this
      holds for any loxodromic automorphism $g$ of $X_0$. In particular, the local normal form of $g$ at
      its attracting fixed point is given by \eqref{EqFormeNormaleMonomiale}.
  \end{itemize}
\end{bigthm}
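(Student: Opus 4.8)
The plan is to feed the local normal forms already obtained for loxodromic automorphisms into an argument showing that the \emph{type} of the normal form is an invariant of the surface $X_0$, not of the individual automorphism. First I would dispose of the base field and of the degenerate surfaces. Since $\lambda_1$, the property of being loxodromic, the arithmetic nature of $\lambda_1$, and the condition $\k[X_0]^\times=\k^\times$ are all unaffected by extension of scalars, I may assume $\k$ algebraically closed, and complete if convenient, so that Theorems \ref{ThmLocalFormOfMapAndDynamicalDegreAreQuadraticInteger} and \ref{ThmAutomorphismCaseDynamicalCompactifications} are available verbatim. If $X_0$ does not satisfy condition (2) of Theorem \ref{BigThmExistenceValuationPropreEng}, then, since it does satisfy (1), the structure theorem recalled after that theorem applies and the dichotomy is immediate: in the log general type case $\Aut(X_0)$ is finite and has no loxodromic element; in the fibered case $\lambda_1(g)\in\Z$ for every endomorphism $g$, placing us in the first bullet; and $X_0\simeq\G_m^2$ forces $\lambda_1(g)$ to be a quadratic unit — hence a Perron number of degree $2$ that is not an integer — for every loxodromic automorphism, placing us in the second bullet. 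So from now on I assume $X_0$ satisfies (1) and (2).

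Next I would set up the reduction to a single statement. For a loxodromic automorphism $g$ one has $\lambda_2(g)=1<\lambda_1(g)^2$, so hypothesis (3) of Theorem \ref{BigThmExistenceValuationPropreEng} holds automatically and $g$ admits a unique eigenvaluation $v_g^+$; moreover, by Theorems \ref{ThmLocalFormOfMapAndDynamicalDegreAreQuadraticInteger} and \ref{ThmAutomorphismCaseDynamicalCompactifications}, there is a completion $X_g$ of $X_0$ in which $g$ has a unique attracting fixed point $p_g$ at infinity and exactly one of the following holds.
\begin{itemize}
  \item \textbf{Type (a):} $p_g$ is a crossing point of $X_g\setminus X_0$, the local form of $g$ at $p_g$ is \eqref{EqFormeNormaleMonomiale}, $v_g^+$ is a non-divisorial quasi-monomial valuation, and $\lambda_1(g)$ is an irrational quadratic Perron number.
  \item \textbf{Type (b):} $p_g$ lies on a single branch $E_g$ of $X_g\setminus X_0$, the local form of $g$ at $p_g$ is \eqref{EqFormeNormaleInfSingEng}, $v_g^+=\ord_{E_g}$ is divisorial, and $\lambda_1(g)=a\in\Z_{\geq 2}$.
\end{itemize}
Hence $\lambda_1(g)\in\Z_{\geq 0}$ precisely when $g$ is of type (b), i.e.\ precisely when $v_g^+$ is divisorial; and when $\lambda_1(g)\notin\Z_{\geq0}$ it is a Perron number of degree $2$ by Theorem \ref{BigThmDynamicalDegreesEng}. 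Therefore Theorem \ref{BigThmDichotomyAutomorphismEng} amounts to the single assertion that all loxodromic automorphisms of $X_0$ have the same type, the rest of the statement being a translation of the two types through the normal forms.

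The core of the proof is that a given $X_0$ cannot carry both a type-(a) and a type-(b) loxodromic automorphism. The route I would take is combinatorial. Let $\Gamma_X$ be the dual graph of the reduced boundary $X\setminus X_0$ of a completion $X$. Its first Betti number $b_1(\Gamma_X)$ depends only on $X_0$: two completions are dominated by a common one obtained by blowing up points over the boundary, and each such blow-up either subdivides an edge (blow-up of a node) or attaches a leaf (blow-up of a smooth point of a component), leaving $b_1$ unchanged. I would then show that $b_1(\Gamma_X)=0$ when $X_0$ carries a type-(b) loxodromic automorphism and $b_1(\Gamma_X)>0$ when it carries a type-(a) one. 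In the type-(b) case one may take $X_g$ so that $g$ is algebraically stable, and by part (2) of the automorphism theorem an iterate of $g$ contracts all of $X_g\setminus X_0$ to the \emph{smooth} point $p_g$ of the single curve $E_g$; tracking which boundary components are contracted and how $g^*$ permutes the others forces $\Gamma_{X_g}$ to be a tree. In the type-(a) case the monomial form \eqref{EqFormeNormaleMonomiale}, in which both coordinate branches at $p_g$ lie in the boundary and are attracted into $p_g$, together with the contraction of $X_g\setminus X_0$, produces a loop in $\Gamma_{X_g}$ — this is the toroidal-at-infinity picture underlying monomial maps, exactly as $\G_m^2$ sits in a complete toric surface with cyclic boundary. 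Granting this, a type-(a) $f$ and a type-(b) $g$ would give $b_1(\Gamma_{X_f})>0=b_1(\Gamma_{X_g})$, contradicting the invariance. (A more dynamical alternative runs through the Picard--Manin space: $\Aut(X_0)$ acts there by isometries of a hyperbolic space; a type-(b) automorphism, being algebraically stable on a completion $X$, has its attracting nef class in $\NS(X)_\Q$ — the $\lambda_1$-eigenvector of an integral operator for the integral eigenvalue $\lambda_1$ — while a type-(a) automorphism is algebraically stable on no completion and its attracting class is irrational and not represented in any $\NS(X)_\R$; a ping-pong argument between $f$ and a hypothetical automorphism of the opposite type, after replacing them by iterates with disjoint pairs of fixed points on the Picard--Manin space, would then produce a loxodromic automorphism whose attracting eigenvaluation is at once divisorial and non-divisorial.)

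The step I expect to be the real obstacle is precisely the equivalence ``$g$ of type (b) $\iff$ $\Gamma_X$ is a tree'': that the good completion of a type-(b) automorphism genuinely has tree-like boundary, and conversely that the monomial form \eqref{EqFormeNormaleMonomiale} together with the contraction statement really forces a loop. Both directions require a careful analysis of the tower of point blow-ups at infinity realizing the attracting fixed point — which boundary components survive, which are created, how $g^*$ acts on them, and how all of this is constrained by \eqref{EqFormeNormaleMonomiale} respectively \eqref{EqFormeNormaleInfSingEng} — and this is where the specific geometry of the two normal forms must be used in full.
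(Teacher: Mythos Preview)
Your overall strategy is the right one and matches the paper: the type of the normal form is governed by the combinatorics of the boundary of a minimal completion, and that combinatorics is an invariant of $X_0$. The paper proves exactly this in Theorem~\ref{ThmAutomorphismCaseDynamicalCompactifications}: for a minimal completion $X$, the boundary $\partial_X X_0$ is either a zigzag (then $\lambda_1\in\Z$ and the normal form is \eqref{EqFormeNormaleInfSingEng}) or a cycle of rational curves (then $\lambda_1$ is an irrational quadratic integer and the normal form is \eqref{EqFormeNormaleMonomiale}). Your invariant $b_1(\Gamma_X)$ is precisely what distinguishes these two shapes.

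There is, however, a factual error in your setup that you should fix: in type (b) the eigenvaluation $v_g^+$ is \emph{not} divisorial. For a loxodromic automorphism $\lambda_2=1<\lambda_1$, and Lemma~\ref{LemmaDivisorialConditionOnDegrees} (or directly Proposition~\ref{PropTypesEigenValuationsAutomorphisms}) forbids a divisorial eigenvaluation. In type (b), $v_g^+$ is infinitely singular, centered at a free point of $E_g$; the integrality of $\lambda_1$ comes from the local form, not from $v_g^+$ being $\ord_{E_g}$. This also undermines your Picard--Manin alternative: $\theta^+$ is never a Cartier class in this situation (Corollary~\ref{CorEbeddingIntoWinf}), so the dichotomy ``attracting class in some $\NS(X)_\Q$ vs.\ in none'' is not the right invariant.

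For the step you flag as the obstacle, your sketch is too optimistic. The contraction of $\partial_X X_0$ to a point holds in \emph{both} types, so it cannot by itself force $b_1=0$; and the presence of two boundary branches through $p_g$ in type (a) does not by itself produce a global loop. The paper's input here is Gizatullin's classification of the curve $E(\iota)$ (\S\ref{SubSecGizatullinBoundary}): once one knows $E(\iota)=\partial_X X_0$ and that it must be a zigzag or a cycle, the two directions are handled separately. In the cycle case, Proposition~\ref{PropCycleCaseIndeterminacyPoints} pins the indeterminacy (hence the center of $v_+$) at satellite points on \emph{every} completion, forcing $v_+$ to be irrational. In the zigzag case, a delicate analysis via almost-standard zigzags and elementary links (\S\ref{SubSecProofZigzagCase}) shows the sequence of centers of $v_+$ hits infinitely many free points, ruling out the irrational type. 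If you want to bypass Gizatullin, you would need an independent argument that an infinitely singular eigenvaluation forces a tree boundary and an irrational one forces a loop; this is plausible but is not the one-line consequence of the normal forms that your sketch suggests.
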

This result actually holds for any algebraically closed field (of any characteristic).

\subsection*{Plan of the memoir}
This memoir is divided into two parts. In the first part we establish the main definitions and results needed for the
proofs of the theorems stated in this introduction. In Chapter \ref{ChapterDefinitions}, we define completions of an
affine surface and introduce the Picard
Manin space of an affine surface. In Chapter \ref{ChapterValuations}-\ref{ChapterTopologiesValuations}, we define
valuations and explain the geometry of the space of valuations centered at
infinity of an affine
surface.  The
main result of this part is that a valuation induces a linear form with special properties on the space of divisors at
infinity and that this process is bijective. This is the goal of Chapters
\ref{ChapterValuationsAsLinearForms}-\ref{ChapterBijection}.  

The second part is dedicated to the proofs of the theorems of this introduction using the results established in the
first part. We construct the eigenvaluation and prove Theorems \ref{BigThmDynamicalDegreesEng},
\ref{BigThmDynamicalSpectrum} and \ref{BigThmExistenceValuationPropreEng} in Chapters \ref{ChapterGeneralCase} and
\ref{ChapterDynamicsQuasiAlbTrivial}. The main part is to show that the space of valuations centered at infinity embeds
into the Picard-Manin space of $X_0$. This is done in Chapter \ref{ChapterDynamicsQuasiAlbTrivial}. This construction
uses the Hodge index theorem, we associate for every valuation
$v$ a $b$-divisor $Z_v$ and we give a numerical criterion to show when $Z_v$ is nef. This is the global counterpart of
\cite{gignacLocalDynamicsNoninvertible2021} where Gignac and Ruggiero developped these techniques to study the dynamics
of germs of normal surfaces singularities. This construction only works when the \emph{Quasi-Albanese} variety of $X_0$
is trivial (which is for example the case for the affine plane) so we need to deal with the case where it is not. This
is a case which does not appear for the affine plane and is treated in Chapter \ref{ChapterGeneralCase}. In Chapter
\ref{ChapterLocalNormalForms}, we show that the eigenvaluation
constructed is an attracting fixed point in the space of valuations and derive results on the dynamics at infinity of
our endomorphisms. This is the same strategy as in \cite{favreEigenvaluations2007} but our techniques differ. The work
of Favre and Jonsson is based on the fact that the space of valuations centered at infinity of the affine plane is a
real tree and that the action of an endomorphism of the affine plane on this tree is sufficiently regular to find an
attracting fixed point.
Our approach is based on \cite{boucksomDegreeGrowthMeromorphic2008} where it is shown in the case
$\lambda_1^2 (f) >\lambda_2 (f)$, the spectral radius of the operator $f_*$ on the Picard-Manin space of $X_0$ is equal
to $\lambda_1$ and furthermore it is a simple eigenvalue. We show that the associated eigenvector $\theta_*$ is of the
form $Z_{v_*}$ for $v_*$ centered at infinity and this is the desired eigenvaluations. We study examples in Chapter
\ref{ChapterExamples}. For examples over the complex affine plane, we
refer to \cite{favreEigenvaluations2007} and \cite{favreDynamicalCompactificationsMathbf2011}. For examples of affine
surfaces with interesting automorphisms group, we refer to \cite{blancAffineSurfacesHuge2013}. We then apply these
results to the study of the dynamics of loxodromic automorphisms of affine surfaces using previous results from
Gizatullin. Theorems \ref{BigThmAutomorphismIntroEng} and \ref{BigThmDichotomyAutomorphismEng} are proven in Chapter
\ref{ChapterAutomorphisms}, where we apply the techniques of Chapter \ref{ChapterLocalNormalForms}.

One caveat of the approach in this memoir is that we do not obtain informations on the dynamics when $\lambda_1^2 = \lambda_2$, in that case the
techniques presented here do not allow one to construct an eigenvaluation. This is because in that case the eigenvalue
$\lambda_1$ is not simple anymore and the associated eigenvectors are not necessarily attractive. What is
probably necessary to do is to generalise the techniques of \cite{gignacLocalDynamicsNoninvertible2021} to affine surfaces.

\subsection*{Acknowledgments}
This work was done during my PhD under the supervision of Serge Cantat and Junyi Xie whom I thank deeply. I would also
like to thank Charles Favre for answering some of my questions when I showed him this work. Part of this memoir was
written during my visit at Beijing International Center for Mathematical Research which I thank for
its welcome. During this visit, I had very fruitful discussions with Matteo Ruggiero which helped improving some
results of the initial version of this memoir. I also thank the France 2030 framework programme Centre Henri
Lebesgue ANR-11-LABX-0020-01 and
European Research Council (ERCGOAT101053021) for creating an attractive mathematical environment. Finally, the author
acknowledges support by the Swiss National Science Foundation Grant “Birational transformations of higher dimensional
varieties” 200020-214999. I finally thank the anonymous reviewer for his/her very thorough report on a first version of
this memoir which has helped improved considerably the presentation.

% !TEX root = main.tex
\part{Valuations and Algebraic geometry}
This first part will establish all the prelimary tools and results necessary for the proofs of the theorems stated in
the introduction. The general idea is that we start with a dynamical system given by a normal affine surface $X_0$ and a
dominant endomorphism $f$, the endomorphism $f$ induces a transformation on several spaces associated to $X_0$.

For every completion $X,Y$ of $X_0$, $f$ lifts to a rational map $f:X \dashrightarrow Y$ which induces a pullback and a
pushforward on the Néron-Séveri spaces. There is not a canonical choice of a completion of $X_0$, so the right space to
consider is (a completion of) the direct limit of the Néron-Severi groups of completion of $X_0$. This space is called
the \emph{Picard-Manin} space of $X_0$ and $f$ defines a pullback and a pushforward operator on it. The first dynamical
degree is the spectral radius of the operator induced by $f$ on the Picard-Manin space. This is done in Chapter
\ref{ChapterDefinitions}.

In Chapter \ref{ChapterValuations}, we study the set of valuations over the ring of regular
functions of $X_0$. We will focus on a special subset called the space of valuations centered at infinity. The main
example of valuations centered at infinity are the valuations induced by the irreducible component of $X \setminus X_0$
where $X$ is a completion of $X_0$. These ones are called \emph{divisorial} valuations. The
endomorphism $f$ defines a pushforward operator on this space. In Chapter \ref{ChapterValuationTree} and
\ref{ChapterTopologiesValuations} we describe the different topologies on this space and discuss its local tree
structure following the work of Favre and Jonsson on the valuative tree in \cite{favreValuativeTree2004}.

In Chapter \ref{ChapterValuationsAsLinearForms}, Chapter \ref{ChapterLinearFormsToValuations} and Chapter
\ref{ChapterBijection} we show how these two spaces are related. Namely, a valuation centered at infinity induces a linear
form with special properties on divisor supported at infinity for every completion $X$ of $X_0$ and this process is
bijective. The action of $f$ on the space of valuations and on the space of divisors are compatible via this bijection.

\chapter{Results from Algebraic Geometry}
In this chapter, we recall several results from Algebraic Geometry that will be used throughout this memoir.
Let $\k$ be an algebraically closed field. A \emph{variety} is an integral scheme of finite type
over $\k$. A surface is a variety of dimension 2. An affine variety over $\k$ is a variety $X_0 = \spec A$ with $A$ a
finitely generated $\k$-algebra. We will denote by $\k[X_0]$ the ring of regular functions of the affine variety $X_0$.

\section{Bertini}
\begin{thm}[Bertini's Theorem, \cite{hartshorneAlgebraicGeometry1977}]\label{ThmBertini}
  Let $X \subset \P^N$ be a smooth quasi-projective variety over an algebraically closed field $\k$. The set of
  hyperplanes $H$ of $\P^N$ such that the intersection $H \cap X$ is a smooth irreducible subvariety of $X$ is a dense
  open subset of $\P \Gamma (\P^N, \OO (1))$.
\end{thm}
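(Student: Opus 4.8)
The plan is to separate the two requirements on $H$ --- that $H \cap X$ be smooth, and that it be irreducible --- since each of them is an open condition on $[H] \in (\P^N)^\vee := \P\,\Gamma(\P^N, \OO(1))$, so that it suffices to produce a dense open locus for each and then intersect. The smoothness part is the classical incidence variety argument (Hartshorne's Theorem II.8.18), which is characteristic-free; the irreducibility part is the second theorem of Bertini, applied to the very ample system $|\OO(1)|$.

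For smoothness I would set
\[
  \mathcal{I} := \bigl\{ (x, [H]) \in X \times (\P^N)^\vee \ :\ x \in H \text{ and } H \cap X \text{ is singular at } x \bigr\}.
\]
For a fixed closed point $x$, smoothness of $X$ at $x$ gives an embedded projective tangent space $\mathbf{T}_x X \subseteq \P^N$ of dimension $d := \dim X$, and $H \cap X$ is singular at $x$ precisely when $H \supseteq \mathbf{T}_x X$; the set of such $H$ is a linear subspace of $(\P^N)^\vee$ of dimension $N - 1 - d$. Hence the first projection $\mathcal{I} \to X$ is surjective with all fibers of dimension $N - 1 - d$, so $\dim \mathcal{I} = N - 1$, and the closure of the image of the second projection $\mathcal{I} \to (\P^N)^\vee$ is a proper closed subset $\Sigma$. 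For $[H] \notin \Sigma$ lying also outside the proper closed locus of hyperplanes containing $X$, the intersection $H \cap X$ is smooth of pure dimension $d - 1$. The only points that need care here are that $\mathcal{I}$ is Zariski-closed in $X \times (\P^N)^\vee$ and the fiber-dimension count, both of which come down to expressing the conditions $x \in H$ and $\mathbf{T}_x X \subseteq H$ in homogeneous coordinates.

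For irreducibility I may assume $d \ge 2$ (the smoothness part already says everything needed when $d \le 1$, and the irreducibility assertion presupposes $d \ge 2$ in any case, as in the applications in this memoir). Since $\OO_X(1)$ is very ample, the system $|\OO_X(1)|$ is base-point-free and its associated morphism is the embedding $X \hookrightarrow \P^N$, whose image has dimension $\ge 2$; by the second Bertini theorem the general member of the system is then irreducible. Equivalently, one may pass to the projective closure $\overline{X}$ and invoke the connectedness theorem for hyperplane sections of irreducible projective varieties of dimension $\ge 2$, which combined with the smoothness obtained above yields irreducibility; here one checks that the boundary $\overline{X} \setminus X$ meets a general $H$ in dimension $\le d - 2$, so that nothing is lost upon restricting back to $X$.

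Finally, the smooth locus is open by construction, and the locus of $[H]$ with $H \cap X$ geometrically connected is open because the number of connected components of the fibers of the universal section is constructible and upper semicontinuous; intersecting the two dense opens yields the asserted dense open subset of $\P\,\Gamma(\P^N, \OO(1))$. The step I expect to be the genuine obstacle is irreducibility in characteristic $p > 0$: one cannot extract it from generic smoothness of the full family of hyperplane sections, which can fail in positive characteristic for linear systems that are not very ample, so the argument must route through the second Bertini theorem (or the connectedness theorem) and use crucially that $|\OO(1)|$ is very ample, hence that its associated morphism is a closed immersion. Everything else --- the dimension count for $\mathcal{I}$ and the openness statements --- is routine.
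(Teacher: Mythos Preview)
The paper does not supply a proof of this statement: it is quoted as a known result with a citation to Hartshorne, and the text moves immediately to the next section. There is therefore nothing to compare against. Your proposal is the standard argument --- Hartshorne's incidence-variety dimension count for smoothness, combined with the second Bertini/connectedness theorem for irreducibility --- and it is correct, including your identification of the characteristic-$p$ subtlety in the irreducibility step.
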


\section{Local power series and local coordinates}

Let $X$ be a variety and $x \in X$ a closed point. We will write $\OO_{X, x}$ for the ring of germs of regular
functions at $x$. A \emph{regular sequence} of $\OO_{X,x}$ is a sequence $t_1, \cdots, t_r \in \OO_{X,x}$ such that
$t_1$ is not a zero divisor in $\OO_{X,x}$ and for all $i \geq 2, t_i$ is not a zero divisor in $\OO_{X, x} /
(t_1, \cdots, t_{i-1})$ (see \cite{hartshorneAlgebraicGeometry1977} p.184). The point $x$ is \emph{regular} if the local
ring $\OO_{X, x}$ is regular, i.e there exists a regular sequence of length $\dim \OO_{X,x}$.

\begin{thm}[Cohen's structure theorem, \cite{hartshorneAlgebraicGeometry1977}, Theorem 5.5A]\label{ThmCompletionLocalRing}
  Let $R$ be a complete regular local $\k$-algebra of dimension $n$ with maximal ideal $\m$, then the completion of $R$ with
  respect to the $\m$-adic topology is isomorphic to $\k \left[ \left[ t_1, \cdots, t_n \right] \right]$ where
  $(t_1, \cdots, t_n)$ is a regular sequence of $R$.
\end{thm}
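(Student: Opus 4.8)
The plan is to construct an explicit $\k$-algebra isomorphism $\Phi\colon \k[[T_1,\dots,T_n]] \xrightarrow{\sim} \widehat R$ onto the $\m$-adic completion $\widehat R$ of $R$, sending $T_i \mapsto t_i$ for a fixed regular sequence $t_1,\dots,t_n$ generating $\m$; such a sequence exists precisely because $R$ is regular of dimension $n$. Throughout I use that $\k$ is a coefficient field: in the situation where the statement is applied, $R$ is essentially of finite type over the algebraically closed field $\k$, so $\k \hookrightarrow R \twoheadrightarrow R/\m$ is an isomorphism, whence $\k \subset R \subset \widehat R$ with $\widehat R/\widehat\m \cong \k$, where $\widehat\m = \m\widehat R$.

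First I would check $\Phi$ is well defined. Since every monomial $t^\alpha$ of degree $d$ lies in $\m^d$, for any $\sum_\alpha a_\alpha T^\alpha \in \k[[T_1,\dots,T_n]]$ the partial sums of $\sum_\alpha a_\alpha t^\alpha$ are Cauchy in the complete ring $\widehat R$ and so converge; this defines a continuous $\k$-algebra homomorphism carrying the maximal ideal into $\widehat\m$.

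The heart of the argument is to show $\Phi$ is bijective by passing to associated graded rings for the maximal-ideal-adic filtrations. Both $\k[[T_1,\dots,T_n]]$ and $\widehat R$ are separated and complete for these filtrations, so it suffices to prove that the induced map of graded rings
\begin{equation}
  \operatorname{gr}(\Phi)\colon \k[T_1,\dots,T_n] \longrightarrow \bigoplus_{d\ge 0} \m^d/\m^{d+1}
\end{equation}
is an isomorphism, and then deduce bijectivity of $\Phi$ itself by successive approximation (lift an element of $\widehat R$ modulo $\widehat\m$, correct the error modulo $\widehat\m^2$, iterate, and pass to the limit) together with the observation that a nonzero power series in $\ker\Phi$ would have a leading term killed by $\operatorname{gr}(\Phi)$. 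In degree $1$, $\operatorname{gr}(\Phi)$ sends $T_i$ to the class of $t_i$ in $\m/\m^2$; since the $t_i$ generate $\m$, these classes span $\m/\m^2$, and more generally the degree-$d$ monomials in them span $\m^d/\m^{d+1}$ over $\k$, so $\operatorname{gr}(\Phi)$ is surjective. Its injectivity is exactly the statement that $\operatorname{gr}_\m R$ is a polynomial ring on $n$ indeterminates, which is one of the standard characterizations of a regular local ring of dimension $n$; alternatively, injectivity follows from a dimension count, as $\widehat R$ is a Noetherian local domain of Krull dimension $n$ and a surjection of $n$-dimensional Noetherian local domains is an isomorphism.

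The only genuinely delicate input is the existence of the coefficient field $\k \hookrightarrow \widehat R$ mapping isomorphically onto the residue field: in the equicharacteristic, finite-type situation at hand this is free because $\k$ is literally a subring of $R$ with $\k \cong R/\m$, whereas in the general Cohen structure theorem it is the crux. Granting this, the proof is just the formalism of adic completions together with the regularity of $R$.
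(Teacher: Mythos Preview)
The paper does not give its own proof of this statement: it is quoted from Hartshorne (Theorem II.5.5A) as a background result, with no argument supplied. Your sketch is a correct outline of the standard equicharacteristic Cohen structure theorem---define the continuous $\k$-algebra map from the formal power series ring, reduce bijectivity to the associated graded, and invoke the characterization of regularity as $\operatorname{gr}_\m R$ being polynomial---and the caveat you flag about the coefficient field is exactly the point where the general case requires more work.
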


Let $X$ be a surface and $x$ a regular point of $X$. Then, we will say that $(z,w)$ are \emph{local coordinates} at
$x$ if $(z,w)$ is a regular sequence of $\OO_{X,x}$. If $(z,w)$ is a regular sequence of the completion
$\hat{\OO_{X,x}}$ we will say that they are local \emph{formal} coordinates. By Theorem \ref{ThmCompletionLocalRing},
${\hat{\OO_{X,x}}}$ is isomorphic to $\k [ [ z,w ] ]$. Finally, If $\k = \C_v$ is a complete
algebraically closed field characteristic zero,  we consider the local ring of germs of \emph{holomorphic} functions at
$x$, this is the
subring of $\hat{\OO_{X,x}}$ of power series with a positive radius of convergence. We denote it by
$\OO_{X,x}^{hol}$ it is also a local ring of dimension 2, if $(z,w)$ is a regular sequence of $\OO_{X,x}^{hol}$, we
say that $(z,w)$ are local \emph{analytic coordinates}. If $E,F$ are two germs of reduced irreducible curves at $x$
(algebraic, analytic of formal) we will say that $(z,w)$ are \emph{associated} to $(E,F)$ if $z = 0$ is a local equation
of $E$ and $w = 0$ is a local equation of $F$.

\section{Boundary}
\begin{prop}[\cite{goodmanAffineOpenSubsets1969}, Proposition 1 and 2] \label{PropBoundaryIsCurve}
  Let $X_0$ be an affine variety and let $\iota: X_0 \hookrightarrow X$ be an open embedding into a projective
  variety, then the subvariety $X \setminus X_0$ is connected and of pure codimension 1.
\end{prop}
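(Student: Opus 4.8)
We prove the two assertions separately.

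\emph{Pure codimension one.} Let $D_1,\dots,D_r$ be the irreducible components of $X\setminus X_0$ of codimension one and set $X' := X\setminus(D_1\cup\cdots\cup D_r)$. This is an open (hence irreducible) subvariety of $X$ with $X_0\subseteq X'$, and $X'\setminus X_0$ is closed in $X'$ of pure codimension $\ge 2$: indeed, no codimension $\ge 2$ component $W$ of $X\setminus X_0$ can be contained in some $D_i$ (else $W$ would not be a component), so $W\cap X'$ is dense in $W$ and has the same codimension. It therefore suffices to show $X'=X_0$. Let $\nu\colon \tilde X'\to X'$ be the normalization. Then $\tilde X'$ is a normal irreducible variety; $\tilde X_0:=\nu^{-1}(X_0)$ is open and dense in $\tilde X'$, is finite over $X_0$ and hence affine; and $\tilde X'\setminus\tilde X_0=\nu^{-1}(X'\setminus X_0)$ is still of pure codimension $\ge 2$, since $\nu$ is finite and finite morphisms do not drop dimension. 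Because $\tilde X'$ is normal and $\tilde X'\setminus\tilde X_0$ has codimension $\ge 2$, restriction of regular functions $\OO(\tilde X')\to\OO(\tilde X_0)$ is an isomorphism (algebraic Hartogs). Since $\tilde X_0$ is affine, the composite
\[
  \tilde X_0\hookrightarrow \tilde X'\longrightarrow \spec\OO(\tilde X')=\spec\OO(\tilde X_0)=\tilde X_0
\]
induces the identity on rings of regular functions, hence is the identity; thus $\tilde X_0\hookrightarrow\tilde X'$ is a section of a separated morphism, therefore a closed immersion. Being simultaneously an open immersion with dense image into the irreducible reduced scheme $\tilde X'$, it is an isomorphism. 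Hence $\tilde X_0=\tilde X'$, so $X_0=X'$ because $\nu$ is surjective, and $X\setminus X_0=D_1\cup\cdots\cup D_r$ is of pure codimension one.

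\emph{Connectedness.} Here the substantive case is $\dim X\ge 2$ (for $\dim X\le 1$ the complement is a finite set and the statement must be read accordingly). I would invoke Goodman's main construction: since $X_0$ is an affine open subset of the complete variety $X$, there is a blow-up $\pi\colon Y\to X$ with centre contained in $X\setminus X_0$, which is an isomorphism over $X_0$, such that $E:=\pi^{-1}(X\setminus X_0)$ is the support of an effective ample Cartier divisor $D$ on $Y$. Now $\dim Y=\dim X\ge 2$, and the support of an effective ample divisor on a projective variety of dimension $\ge 2$ is connected: after replacing $D$ by a very ample multiple this is the connectedness theorem for hyperplane sections of an irreducible projective variety, and it can also be read off from the exact sequence $0\to\OO_Y(-nD)\to\OO_Y\to\OO_{nD}\to 0$ together with $H^1(Y,\OO_Y(-nD))=0$ for $n\gg 0$, which forces $H^0(Y,\OO_{nD})=\k$. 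Hence $E$ is connected, and therefore $X\setminus X_0=\pi(E)$ is connected, being the image of a connected set under the surjective continuous map $\pi$.

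The genuinely non-formal ingredient is the fact quoted from Goodman: that the boundary of an affine open subset of a complete variety becomes the support of an ample divisor after a suitable blow-up centred on that boundary. Granting it, the connectedness half is the short cohomological argument above, and the codimension half is essentially formal, resting only on normalization and on the extension of regular functions across a closed subset of codimension $\ge 2$. The degenerate low-dimensional cases ($\dim X\in\{0,1\}$) should be handled by hand, as indicated.
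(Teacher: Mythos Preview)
The paper does not supply its own proof of this proposition; it is quoted directly from Goodman with a citation, so there is nothing in the memoir to compare against. I therefore assess your argument on its own merits.

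Your proof of pure codimension one is correct and is the standard argument: strip off the codimension-one components to get $X'$, normalize so that algebraic Hartogs applies, and then use that $\tilde X_0$ is affine to realise the open immersion $\tilde X_0\hookrightarrow\tilde X'$ as a section of the separated morphism $\tilde X'\to\spec\OO(\tilde X')=\tilde X_0$, hence a closed immersion, hence an isomorphism. All the verifications (separatedness of the relevant morphism, affineness of $\tilde X_0$, preservation of codimension under the finite map $\nu$) go through.

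For connectedness your strategy is the right one and is essentially Goodman's: blow up so that the boundary supports an effective ample Cartier divisor, then invoke connectedness of ample divisors in dimension $\ge 2$. There is, however, a gap in the cohomological justification. The vanishing $H^1(Y,\OO_Y(-nD))=0$ for $n\gg 0$ is the Enriques--Severi--Zariski lemma, and its usual proof requires $Y$ to be \emph{normal}; your $Y$, being a blow-up of a possibly singular projective variety, need not be. The repair is immediate: pass to the normalization $\nu\colon\tilde Y\to Y$. Then $\nu^*D$ is still effective, Cartier and ample (pullback of ample along a finite morphism is ample), $\tilde Y$ is normal irreducible of dimension $\ge 2$, and now the exact sequence gives $H^0(\OO_{n\nu^*D})=H^0(\OO_{\tilde Y})=\k$ for $n\gg 0$, so $\operatorname{Supp}(\nu^*D)=\nu^{-1}(\operatorname{Supp} D)$ is connected. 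Its image under the surjections $\nu$ and then $\pi$ is $X\setminus X_0$, which is therefore connected.

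You are right to flag that the connectedness assertion fails for $\dim X\le 1$ (e.g.\ $\G_m\subset\P^1$ has two boundary points). The memoir only invokes the proposition for surfaces, so this does no damage, but strictly speaking the stated proposition needs the hypothesis $\dim X\ge 2$ for the connectedness half.
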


Set
\begin{equation}
  \BD := X \setminus X_0,
\end{equation}
 we call it the \emph{boundary} of $X_0$ in $X$; by Proposition \ref{PropBoundaryIsCurve} it is a possibly reducible curve
 when $X_0$ is a surface.

\begin{thm}[\cite{goodmanAffineOpenSubsets1969}] \label{ThmGoodmanExistenceAmpleDivisorAtInfinity}
    Let $X$ be a normal proper surface and $U$ an open dense affine subset of $X$ (that is an open dense
    subset of $X$ that is also an affine variety) such that every point of $V := X \setminus U$ is factorial (i.e its
    local ring is a UFD), then there exists an ample effective divisor $H$ on $X$ such that $\supp H = V$.
\end{thm}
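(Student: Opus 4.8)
The plan is to realize $H$ as a suitable combination of the irreducible components of $V$ and use the Nakai–Moishezon criterion on the normal surface $X$. Write $V = \sum_{i=1}^r E_i$ as the reduced sum of its irreducible components (each $E_i$ a curve, by Proposition \ref{PropBoundaryIsCurve}). Since $U = X \setminus V$ is affine, I expect $V$ to support \emph{some} effective divisor that is positive on curves; the strategy is to bootstrap from the following basic fact: because $U$ is affine, for any closed point $x \in U$ and any curve $C \subset X$ not contained in $V$, $C \cap U$ is affine and hence $C \not\subset V$ means $C$ meets $V$, so one can find a section of a high power of an ample line bundle on $X$ cutting out a divisor whose restriction to $U$ is (up to the affine structure) trivial. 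More precisely, I would fix an ample divisor $A$ on $X$; for $n \gg 0$ the linear system $|nA|$ is base-point free and very ample, and the key point is to produce a member $D \in |nA|$ with $\Supp D \subseteq V$. This is exactly where local factoriality of $V$ enters: on the affine $U = X\setminus V$, the restriction $A|_U$ is a line bundle on an affine variety, so it is generated by global sections; the obstruction to extending a trivializing section across $V$ is a divisor supported on $V$, and local factoriality guarantees that this ``pole divisor'' is Cartier in a neighborhood of each point of $V$, so it defines an honest Cartier divisor $D$ on $X$ with $D \sim nA$ (after adjusting by a principal divisor coming from a regular function on $U$) and $\Supp D \subseteq V$.

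With such a $D = \sum_i a_i E_i$, $a_i \geq 0$, linearly equivalent to the ample divisor $nA$, I would then take $H := D$ (or a small perturbation of it) and verify $\Supp H = V$, i.e.\ that every $a_i > 0$: if some $a_i = 0$ then $E_i \subset U$ would be a complete curve inside an affine variety, which is impossible, so in fact $D$ automatically has full support $V$. Ampleness of $H$ is then immediate since $H \sim nA$ and ampleness is a numerical/linear-equivalence property (the Nakai–Moishezon criterion on the normal proper surface $X$ depends only on the numerical class, which equals that of $nA$). So once $D$ is constructed the rest is formal.

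The main obstacle is the construction of $D \in |nA|$ supported on $V$, and specifically checking that the pole divisor of the trivializing section of $A|_U$ along $V$ is Cartier on $X$ — this is the only place the hypothesis that $V$ is locally factorial is used, and it must be used essentially, since on a general normal surface a Weil divisor supported on $V$ need not be Cartier and the argument would break. The clean way to organize this: cover $V$ by affine opens $W_\alpha \subset X$ such that each $W_\alpha \cap U$ is affine; on $W_\alpha \cap U$ pick a trivialization $s_\alpha$ of $\OO_X(nA)$; the divisor $\div(s_\alpha)$ on $W_\alpha$ is effective, supported on $V \cap W_\alpha$, and \emph{Cartier} precisely because $\OO_{W_\alpha}$ is a UFD at points of $V$ (local factoriality of $V$ passes to a neighborhood, as unique factorization of the local rings along $V$ lets one write each height-one prime as principal); patching these (after rescaling by regular units on the overlaps inside $U$, which exist since $U$ is affine and the $s_\alpha$ differ by invertible functions on $U$) yields the global Cartier divisor $D$. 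I would also double-check the edge case where $nA|_U$ is already trivial as a line bundle, where $D$ is literally the divisor of poles of a global regular function on $U$ extended to $X$. Apart from this, the proof is a direct application of the Nakai–Moishezon criterion on normal surfaces, for which $X$ being normal and proper of dimension $2$ is exactly the setting needed.
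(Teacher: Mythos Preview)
The paper does not prove this theorem; it is cited from Goodman without argument. So I focus on whether your proposal stands on its own.

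There is a genuine gap at the key step. Producing $D \in |nA|$ with $\Supp D \subseteq V$ is equivalent to showing that $\OO_X(nA)|_U$ is \emph{trivial}, but you only know it is generated by global sections (automatic on an affine scheme), and these are different conditions. Concretely, take $X = \P^1 \times \P^1$ and $V = \Delta$ the diagonal; then $U$ is smooth affine, $V$ is locally factorial, and $\Pic(U) \cong \Pic(X)/\langle [\Delta]\rangle \cong \Z$. An ample divisor $A$ of class $(2,1)$ restricts to a generator of $\Pic(U)$, so no multiple $nA$ is linearly equivalent to a divisor supported on $\Delta$, and your construction cannot produce $D$. The patching argument does not help: local trivializations $s_\alpha$ of a line bundle always exist, and their ratios $s_\alpha/s_\beta$ form exactly the transition cocycle of the bundle --- they glue to something trivial precisely when the bundle is already globally trivial. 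Local factoriality of $V$ is irrelevant here; the obstruction lives in $\Pic(U)$. (A secondary issue: your argument for $a_i > 0$ via ``$E_i \subset U$'' is a non sequitur, since $E_i \subset V$ by definition regardless of $a_i$.)

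Goodman's route is different in spirit: rather than moving a pre-chosen ample class onto $V$, one uses a closed embedding $U \hookrightarrow \A^N$ to obtain regular functions $f_1, \ldots, f_N \in \OO(U)$ whose pole divisors on $X$ are supported on $V$ and Cartier (this is where local factoriality actually enters). These give a rational map $X \dashrightarrow \P^N$ restricting to a closed immersion on $U$, and for surfaces one then argues directly that a suitable effective combination of the boundary components $E_i$ is ample --- the divisor is built from the affine structure of $U$, not transported from elsewhere.
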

In particular, the condition on the theorem holds if the singular points of $X$ are contained in $U$.
In fact, Goodman shows that Theorem \ref{ThmGoodmanExistenceAmpleDivisorAtInfinity} holds in higher dimension with the
only difference that you may need to do some blow-ups at infinity to find an ample divisor.

\section{Surfaces}

\begin{thm}[\cite{hartshorneAlgebraicGeometry1977} Proposition 5.3]\label{ThmCompositionBlowUps}
  Let $g: S_1 \rightarrow S_2$ be a birational morphism between smooth projective surfaces. Then, $g$ is a composition of
  blow-ups of points and of an automorphism of $S_2$. Furthermore, if $h : S_1 \dashrightarrow S_2$ is a birational map,
  then there exists a sequence of blow-ups $\pi: S_3 \rightarrow S_1$ such that $h \circ \pi: S_3 \rightarrow S_2$ is
  regular and $S_3$ can be chosen minimal for this property.
\end{thm}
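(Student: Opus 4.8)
The plan is to prove the two assertions in order: the first by a descending induction powered by a local factorization lemma, the second by reducing it to the first through the graph of $h$.

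For the first assertion, write $\rho$ for the rank of the N\'eron-Severi group of a smooth projective surface (equivalently, its second Betti number); since pullback by a birational morphism is injective on N\'eron-Severi groups, $\rho(S_1)\geq\rho(S_2)$, and I would induct on the nonnegative integer $n=\rho(S_1)-\rho(S_2)$. If $g$ is an isomorphism there is nothing to prove, so suppose not; then $g^{-1}$ fails to be a morphism at some $P\in S_2$, and the crucial input is the \emph{local factorization lemma}: $g^{-1}(P)$ contains a curve, and $g$ factors as $S_1\xrightarrow{g'}\operatorname{Bl}_P S_2\xrightarrow{\pi}S_2$ with $g'$ a birational morphism. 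Granting it, $\operatorname{Bl}_P S_2$ is a smooth projective surface with $\rho(\operatorname{Bl}_P S_2)=\rho(S_2)+1$, so $g'$ has invariant $n-1$; by induction $g'$ is a composition of blow-ups and an automorphism, hence so is $g=\pi\circ g'$. In the base case $n=0$ the lemma cannot apply, so $g$ must already be an isomorphism; the induction terminates since each blow-up raises $\rho$ by one and $\rho(S_1)$ is finite.

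For the second assertion, let $\Gamma\subset S_1\times S_2$ be the Zariski closure of the graph of $h$ over its domain of definition, with the two projections $p_1\colon\Gamma\to S_1$ and $p_2\colon\Gamma\to S_2$; both are birational, $p_1$ is a morphism that is an isomorphism over the complement of the finite indeterminacy locus of $h$, and $p_2=h\circ p_1$ as rational maps. Since $\Gamma$ may be singular, take a resolution $\nu\colon S_3\to\Gamma$ with $S_3$ smooth projective (resolution of singularities of surfaces over an algebraically closed field); then $\pi:=p_1\circ\nu\colon S_3\to S_1$ is a birational morphism of smooth projective surfaces, so by the first assertion a composition of blow-ups of points of $S_1$ (absorbing into $h$ the automorphism it supplies), and $h\circ\pi=p_2\circ\nu$ is a morphism. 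For minimality, note that if some $(-1)$-curve $E\subset S_3$ contracted by $\pi$ is also contracted by $h\circ\pi$, then $E$ can be blown down by Castelnuovo's criterion and both $\pi$ and $h\circ\pi$ factor through the blow-down, yielding a solution of strictly smaller $\rho$; since $\rho$ of a solution cannot fall below $\rho(S_1)$, repeating this reaches a minimal $(S_3,\pi)$.

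The main obstacle is the local factorization lemma, which is where the surface hypothesis is indispensable; it is the content of \cite[V.5.1--5.3]{hartshorneAlgebraicGeometry1977}, and I would argue it as follows. First, $g^{-1}(P)$ contains a curve: if $g^{-1}(P)=\{Q\}$ were a single point, then $g$ would be finite over a neighbourhood of $P$ (proper with finite fibres) and birational between normal surfaces, hence an isomorphism over a neighbourhood of $P$, so $g^{-1}$ would be a morphism at $P$ --- a contradiction. For the factorization, consider the induced rational map $g'\colon S_1\dashrightarrow\operatorname{Bl}_P S_2$; it is a morphism off a finite set, and I would check it has no indeterminacy by a local computation in coordinates $(u,v)$ centred at $P$: on a smooth surface the ideal $g^{-1}\mathfrak m_P\cdot\mathcal O_{S_1}$ is, near each point of $g^{-1}(P)$, principal --- this is exactly where $\dim=2$ and local factoriality enter, to exclude isolated or embedded points of the base locus --- so $g^*u$ and $g^*v$, which generate that principal ideal, have regular ratio on one of the two standard affine charts of $\operatorname{Bl}_P S_2$, and $g'$ is defined there mapping into that chart. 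This yields the factorization and completes the argument.
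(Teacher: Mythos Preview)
The paper does not prove this statement; it is quoted from \cite{hartshorneAlgebraicGeometry1977} and used as input, so there is no ``paper's proof'' to compare against. Your overall architecture is the standard one from \cite[V, \S5]{hartshorneAlgebraicGeometry1977}: factor a birational morphism through the blow-up of a fundamental point of $g^{-1}$, induct on $\rho(S_1)-\rho(S_2)$, and for the rational-map statement pass to (a resolution of) the graph and reduce to the first part. The minimality argument via Castelnuovo is fine.

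There is, however, a genuine gap in your treatment of the local factorization lemma. You assert that $g^{-1}\mathfrak m_P\cdot\mathcal O_{S_1}$ is principal near each $Q\in g^{-1}(P)$, and justify it by ``$\dim=2$ and local factoriality \ldots\ to exclude isolated or embedded points of the base locus''. But this is exactly what has to be shown, not a consequence of factoriality. In the UFD $\mathcal O_{S_1,Q}$ write $g^*u=ha$, $g^*v=hb$ with $h=\gcd(g^*u,g^*v)$; then $(g^*u,g^*v)=(h)\cdot(a,b)$ is principal iff one of $a,b$ is a unit, and nothing you have said rules out $a,b\in\mathfrak m_Q$ (in which case they form a regular sequence and $\mathfrak m_Q$ is an embedded prime of the fibre). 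By the universal property of blowing up, invertibility of $g^{-1}\mathfrak m_P\cdot\mathcal O_{S_1}$ is \emph{equivalent} to $g'=\pi^{-1}\circ g$ being a morphism, so your justification is circular. Hartshorne's actual argument at this step is a contradiction argument: assuming $g'$ has a fundamental point $Q$, one first shows (via the graph and Zariski's Main Theorem, his V.5.1--5.2) that the correspondence sends $Q$ onto the entire exceptional curve $E\subset\operatorname{Bl}_P S_2$, and then obtains a contradiction from the inclusion $\mathcal O_{S_2,P}\hookrightarrow\mathcal O_{S_1,Q}$ of $2$-dimensional regular local rings with the same fraction field together with $g^*\mathfrak m_P\subset\mathfrak m_Q^{\,2}$. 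That step is short but not vacuous; your sketch elides it.
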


\begin{prop}\label{PropTechniqueContractionDeCourbes}
  Let $g : S_1 \dashrightarrow S_2$ be a birational map. Let $\pi: S_3 \rightarrow S_1$ be a minimal resolution of
  indeterminacies of $g$ such that the lift $h : S_3 \rightarrow S_2$ of $g$ is regular. Then, the first curve
  contracted by $h$ must be the strict transform of a curve in $S_1$.
\end{prop}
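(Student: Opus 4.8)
The plan is to argue by contradiction, exploiting the \emph{minimality} of $\pi$. By Theorem \ref{ThmCompositionBlowUps} the birational morphism $h$ factors as $S_3 \xrightarrow{\sigma} S_3' \xrightarrow{h'} S_2$, where $\sigma$ is the contraction of a $(-1)$-curve $F\subset S_3$ — ``the first curve contracted by $h$'' — and $h'$ is again a birational morphism. If $h$ is an isomorphism the statement is vacuous, and if $\pi$ is an isomorphism there are no $\pi$-exceptional curves and the statement is trivial, so assume neither. Showing that $F$ is the strict transform of a curve in $S_1$ amounts to showing that $\pi(F)$ is a curve, i.e. $F\notin \Exc(\pi)$. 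So suppose, for contradiction, that $F$ is a $(-1)$-curve contained in $\Exc(\pi)$.

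The main point will be the following fact about birational morphisms of smooth projective surfaces: \emph{a $(-1)$-curve $F$ lying in $\Exc(\pi)$ can be contracted compatibly with $\pi$}, that is, $\pi = \bar\pi\circ\sigma$ for some birational morphism $\bar\pi : S_3'\to S_1$. To prove this I would use Theorem \ref{ThmCompositionBlowUps} to write $\pi$ as a tower of point blow-ups $S_3 = X_n\to\cdots\to X_0 = S_1$ with exceptional curves $\mathcal E_i\subset X_i$; the prime components of $\Exc(\pi)$ are the strict transforms of the $\mathcal E_i$ in $S_3$, so $F$ is the strict transform of some $\mathcal E_i$. Each later blow-up whose centre lies on the strict transform of $\mathcal E_i$ lowers its self-intersection by one, and $F^2 = -1 = \mathcal E_i^2$; hence no centre $q_j$ with $j>i$ lies on the strict transform of $\mathcal E_i$, so no later centre is infinitely near $q_i$, and the $i$-th blow-up commutes past all subsequent ones. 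Performing it last realises $S_3$ as the blow-up at a single point of a smooth projective surface $\bar S$ that admits a birational morphism to $S_1$ (obtained by the remaining blow-ups), with $F$ as its exceptional curve; thus $\bar S = S_3'$ and $\pi = \bar\pi\circ\sigma$. This bookkeeping with infinitely near points is the step I expect to require the most care, although it is essentially standard.

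Granting this fact, the conclusion is immediate: $\bar\pi : S_3'\to S_1$ is a birational morphism and $g\circ\bar\pi = g\circ\pi\circ\sigma^{-1} = h\circ\sigma^{-1} = h'$ is regular. Hence $S_3'$, obtained from $S_3$ by a nontrivial blow-down, is still a resolution of the indeterminacies of $g$ (the lift of $g$ to $S_3'$ is regular), contradicting the minimality of $\pi$. Therefore $F\notin\Exc(\pi)$, i.e. $F$ is the strict transform of a curve in $S_1$. Note that the argument applies verbatim to \emph{any} $(-1)$-curve of $S_3$ through which $h$ factors, so the precise meaning of ``first contracted curve'' is immaterial.
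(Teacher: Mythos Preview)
The paper states this proposition without proof, treating it as a standard fact from the birational geometry of surfaces (it sits between Theorem~\ref{ThmCompositionBlowUps} and the Castelnuovo criterion, both quoted from \cite{hartshorneAlgebraicGeometry1977}). Your argument is correct and is the standard one: the key point is indeed that a $(-1)$-curve in $\Exc(\pi)$ can be contracted \emph{first}, and your justification via reordering the tower of blow-ups is sound. The one step you flag as delicate---``no later centre is infinitely near $q_i$''---does require a short induction: once you know that none of $q_{i+1},\dots,q_{j-1}$ map to $q_i$, the morphism $X_{j-1}\to X_i$ is an isomorphism over a neighbourhood of $\mathcal E_i$, so the preimage of $\mathcal E_i$ coincides with its strict transform, and then $q_j$ not lying on the strict transform forces its image in $X_i$ to avoid $\mathcal E_i$. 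With that made explicit, the proof is complete.
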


Recall the Castelnuovo criterion

\begin{thm}[\cite{hartshorneAlgebraicGeometry1977} Theorem V.5.7]
  Let $C$ be a curve in a projective surface $S$ such that $C \simeq \P^1$ and $C^2 = -1$, then there exists a
  projective surface $S'$, a birational morphism $\pi : S \rightarrow S'$ and a point $p \in S'$ such that $S$ is
  isomorphic via $\pi$ to the blow up of $p$ and $C$ is the exceptional divisor under this isomorphism.
\end{thm}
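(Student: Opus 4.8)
The plan is the classical one: realize the contraction of $C$ as the morphism attached to a carefully chosen base-point-free linear system, check that the image surface is smooth at the point to which $C$ collapses, and finally invoke Theorem \ref{ThmCompositionBlowUps} to recognize that morphism as a single blow-up of a point. (As usual I take $S$ smooth projective, which is also the setting of Theorem \ref{ThmCompositionBlowUps}.)

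I would first fix a very ample divisor $H$ on $S$; replacing it by a large multiple (still very ample), Serre vanishing lets me assume in addition that $H^1(S, \OO_S(H)) = 0$ and that $k := H \cdot C$ is as large as desired. Put $H' := H + kC$. Two numerical facts drive everything: $H' \cdot C = k + kC^2 = 0$, and for $0 \le i \le k$ one has $\OO_C(H + iC) \cong \OO_{\P^1}(k - i)$, of nonnegative degree. Feeding these into the exact sequences
\[
  0 \to \OO_S(H + (i-1)C) \to \OO_S(H + iC) \to \OO_C(H + iC) \to 0
\]
and inducting on $i$ from $H^1(\OO_S(H)) = 0$, I obtain $H^1(S, \OO_S(H + iC)) = 0$ and surjectivity of $H^0(\OO_S(H + iC)) \to H^0(\OO_C(H + iC))$ for all $0 \le i \le k$.

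Next I would show $|H'|$ is base-point-free. Off $C$: multiplying a section of $\OO_S(H)$ by the $k$-th power of the tautological section of $\OO_S(C)$ gives a section of $\OO_S(H')$ with divisor $D + kC$, $D \in |H|$, and these cover $S \setminus C$ since $|H|$ is base-point-free. Along $C$: the trivial bundle $\OO_C(H') \cong \OO_{\P^1}$ has a nowhere-zero section which, by the surjectivity above, lifts to a section of $\OO_S(H')$ nonvanishing on $C$. Hence $|H'|$ defines a morphism $\phi \colon S \to \P^N$; set $S' := \phi(S) \subset \P^N$, a projective surface. Since $\OO_S(H')|_C$ is trivial, $\phi$ is constant on $C$, say $\phi(C) = \{p\}$. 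Using that $H$ is very ample (so it separates points and tangent vectors) and that adding $kC$ changes nothing near a point off $C$, I would check that $\phi$ separates points and tangent vectors on $S \setminus C$, hence restricts to an isomorphism $S \setminus C \xrightarrow{\sim} S' \setminus \{p\}$; in particular $\phi$ is birational, and as its only positive-dimensional fibre is the connected curve $C$ one gets $\phi_* \OO_S = \OO_{S'}$, so $S'$ is normal.

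The crux — and the one genuinely delicate step — is to show $S'$ is smooth at $p$, i.e. that $\OO_{S', p}$ is regular of dimension $2$; equivalently $\dim_\k \m_p/\m_p^2 \le 2$. Since $C^2 = -1$, the conormal sheaf $\mathcal{I}_C/\mathcal{I}_C^2 \cong \OO_C(-C)$ is $\OO_{\P^1}(1)$, whence $\mathcal{I}_C^{\,n}/\mathcal{I}_C^{\,n+1} \cong \OO_{\P^1}(n)$; using $\mathcal{I}_C^{\,n} \otimes \OO_S(H') \cong \OO_S(H + (k-n)C)$ and rerunning the cohomology argument above, the restriction maps $H^0(\mathcal{I}_C^{\,n} \otimes \OO_S(H')) \to H^0(\OO_{\P^1}(n))$ are surjective (with vanishing obstruction) for $0 \le n \le k$. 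Combined with $\phi_*\OO_S = \OO_{S'}$ and $\m_p\OO_S = \mathcal{I}_C$ near $C$, this matches the $\m_p$-adic filtration of $\OO_{S',p}$ with the $\mathcal{I}_C$-adic one, identifying the associated graded ring with $\bigoplus_{n \ge 0} H^0(\P^1, \OO_{\P^1}(n)) = \k[x,y]$; hence $\dim_\k \m_p/\m_p^2 = 2$ and $\OO_{S',p}$ is regular. Finally, $\phi \colon S \to S'$ is then a birational morphism of smooth projective surfaces, so by Theorem \ref{ThmCompositionBlowUps} it is a composition of point blow-ups together with an isomorphism; as its exceptional locus is the single irreducible curve $C$, exactly one blow-up occurs, and its exceptional divisor is $C$. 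Thus $\phi$ realizes $S$ as the blow-up of $S'$ at $p$ with exceptional curve $C$, as claimed. Everything other than the smoothness of $S'$ at $p$ is formal once the opening cohomological vanishing is in place.
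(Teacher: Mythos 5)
The paper gives no proof of this statement — it is quoted verbatim from Hartshorne, Theorem V.5.7 — and your sketch is essentially Hartshorne's own argument: contract $C$ via the base-point-free system $|H+kC|$ with $k = H\cdot C$, prove smoothness of the image at $p$ by computing the (formal) local ring from $\mathcal{I}_C^{\,n}/\mathcal{I}_C^{\,n+1}\cong\OO_{\P^1}(n)$, and identify the resulting birational morphism of smooth surfaces as a single blow-up. The sketch is correct and consistent with the cited source.
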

We will use these results for the study of automorphisms of affine surfaces as they induce birational maps.
Understanding the combinatorics of the blow ups and contractions induced by the automorphism will allow us to understand
their dynamics.

Our work relies heavily on the elimination of indeterminacies for rational morphism. Since we are in dimension 2, it
exists in any characteristic.
\begin{thm}
  Let $f : S_1 \dashrightarrow S_2$ be a dominant rational morphism between projective varieties over an algebraically
  closed field of any characteristic, then there exists a
  sequence of blow-ups $\pi: S \rightarrow S_1$ such that $f \circ \pi: S \rightarrow S_2$ is regular.
\end{thm}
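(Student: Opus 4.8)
The plan is to reduce to the case where $S_2=\P^N$ and $S_1$ is a smooth projective surface, and then run the classical induction: express $f$ by a movable linear system, blow up one indeterminacy point at a time, and observe that the self-intersection of a general member of the system is a non-negative integer which strictly decreases at each blow-up.

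First I would fix a closed embedding $S_2\hookrightarrow\P^N$. Since a morphism out of a (reduced, irreducible) projective surface that agrees with $f$ on a dense open set has image contained in the closure of $f(S_1)\subseteq S_2$, it suffices to resolve $f$ as a rational map $S_1\dashrightarrow\P^N$; its target being $S_2$ will come for free at the end. Replacing $S_1$ by a resolution of singularities (which exists for surfaces in any characteristic, by Abhyankar–Lipman) and using that the generic point is unchanged, I may assume $S_1$ is a smooth projective surface; this is anyway the only case we shall use. On a smooth surface the indeterminacy locus has codimension $2$, so $f^*\OO_{\P^N}(1)$ extends to a line bundle $L$ on $S_1$ and $f$ is given by global sections $s_0,\dots,s_N\in H^0(S_1,L)$, i.e.\ $f=[s_0:\cdots:s_N]$. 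After removing the fixed part (the largest effective divisor $\le\operatorname{div}(s_i)$ for all $i$), I may assume the $s_i$ have no common divisorial factor, so the base scheme $Z=V(s_0,\dots,s_N)$ is $0$-dimensional (two effective divisors without common component meet in finitely many points), and $Z$ is exactly the indeterminacy locus of $f$. Writing $D$ for a general member of the system $|s_0,\dots,s_N|$, two general members are effective without common component, hence meet non-negatively, so $D^2\ge 0$.

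Then I would induct on $D^2\in\Z_{\ge 0}$. If $Z=\emptyset$ then $f$ is already a morphism. Otherwise pick $p\in Z$, let $\eta:S_1'\to S_1$ be the blow-up at $p$ with exceptional curve $E$, and set $m=\min_i\ord_p(s_i)\ge 1$, the multiplicity of the system at $p$. Then each $\eta^*s_i$ vanishes along $E$ to order $\ge m$; dividing by the $m$-th power of a local equation of $E$ yields sections $s_i'$ of $\eta^*L\otimes\OO(-mE)$, and by maximality of $m$ a general member of the pulled-back system has multiplicity exactly $m$ along $E$, so the $s_i'$ still have no common divisorial factor and $E$ is not a fixed component. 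Hence $f\circ\eta=[s_0':\cdots:s_N']$ is again given by a movable linear system with $0$-dimensional base locus, whose general member is $D'=\eta^*D-mE$, so $D'^2=D^2-m^2\le D^2-1$, and still $D'^2\ge 0$. After finitely many such blow-ups the base locus becomes empty, giving $\pi=\eta_1\circ\cdots\circ\eta_r:S\to S_1$, a composition of point blow-ups, with $f\circ\pi:S\to\P^N$ a morphism; by the reduction above its image lies in $S_2$, so $f\circ\pi:S\to S_2$ is regular. The main obstacle is the inductive step: checking that after blowing up one indeterminacy point the defining system stays movable (no fixed component) with $0$-dimensional base locus, so that the invariant $D^2\ge 0$ persists and genuinely drops; the reduction to a smooth $S_1$ and the inequality $D^2\ge 0$ for a movable system are the remaining ingredients, both standard for surfaces in arbitrary characteristic.
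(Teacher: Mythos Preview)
The paper does not actually prove this theorem: it is stated as a known fact (introduced by the sentence ``Since we are in dimension 2, it exists in any characteristic'') and immediately followed by the next section, with no \texttt{proof} environment. So there is no ``paper's own proof'' to compare against.

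Your argument is the classical one (essentially Beauville, \emph{Complex Algebraic Surfaces}, Theorem II.7, or the discussion around Hartshorne V.5) and it is correct. The reduction to a smooth $S_1$ via resolution of surface singularities in arbitrary characteristic is legitimate for the paper's purposes, since all completions used later are assumed smooth along the boundary anyway. The key inductive step---that after blowing up a base point the system $|\eta^*D - mE|$ is again movable with $0$-dimensional base locus, so that $(D')^2 \ge 0$ and the invariant strictly drops---is handled correctly: the choice $m = \min_i \ord_p(s_i)$ guarantees some $s_i'$ does not vanish identically on $E$, hence $E$ is not a fixed component, and any other potential fixed component would already have been one for the original system. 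One minor remark: strictly speaking the statement in the paper does not assume $S_1$ smooth, and your passage through a resolution $\tilde S_1 \to S_1$ produces a tower of blow-ups over $\tilde S_1$, not over $S_1$ itself; but since every use of this theorem in the paper is for smooth surfaces (completions of the normal affine surface $X_0$, smooth along the boundary), this is harmless in context.
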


\section{Rigid contracting germs in dimension 2 and local normal forms}\label{SubSecContractingRigidGerm}
Let $(\k, | \cdot |)$ be a complete metrised field of characteristic zero. We define the euclidian topology
on $\k^2$ as the topology induced by the norm $\|(x,y) \| = \max  (\left| x \right|, \left| y \right|)$.
Let $f : (\k^2, 0) \rightarrow (\k^2, 0)$ be the germ of an analytic function fixing the origin, i.e a convergent power
series in the coordinates axis $(x,y)$ such that $f(0,0) = (0,0)$. The \emph{critical
set} $\Crit(f)$ of $f$ is the set where the Jacobian of $f$ vanishes. A germ is said to be \emph{rigid} if the
generalised critical set $\cup_{n \geq 0} f^{-n} (\Crit (f)) = \cup_{n \geq 1} \Crit(f^n)$ is a divisor with simple
normal crossings (see \cite{favreClassification2dimensionalContracting2000}). Since $\car \k = 0$, $\Crit(f)$ is a union
of finitely many irreducible germ of curves.

A germ is \emph{contracting} if there exists an open (euclidian) neighbourhood $U$ of $0$ such that $f (U) \Subset U$. In
\cite{favreClassification2dimensionalContracting2000}, Favre classified all the complex rigid contracting germs in dimension 2
up to holomorphic conjugacy. Ruggiero extended partially the classification in higher dimension in
\cite{ruggieroContractingRigidGerms2013} and showed that, in dimension 2, it holds over any algebraically closed
complete metrised field
of characteristic zero. For this section, $\C$ will denote any algebraically closed complete metrised field of characteristic zero.
In dimension 2, there are 7 possible possibilities which we call \emph{local normal forms}. We are
interested in 3 of them that will appear in this text. However since we do not only work in characteristic zero, we
start by more general local forms that works over any field and show their counterpart over $\C$.

\textbf{First normal form.--} Let $f$ be the germ of a regular map at a smooth point of a projective surface over an
algebraically closed field $\k$. Suppose
that there are local coordinates $(z,w)$ at the origin such that $f$ contracts $\left\{ z=0
\right\}$ with an index of ramification $a \geq 2$, $f$ admits no invariant curves and no other curves is contracted to
the origin or sent to $w=0$, then $f$ is of the form
\begin{equation}
  f(z,w) = (z^a \phi(z,w), z^c (z \psi_1 (z,w) + w \psi_2 (z,w))
  \label{EqPseudoFormeInfSing}
\end{equation}
with $\phi$ invertible, $\psi_1$ regular and $\psi_2 (0,w) \neq 0$ and all these functions are regular. If $\k = \C$,
this germ of regular function is actually rigid and in the classification of Favre this local normal form
corresponds to Class 2 of Table II in \cite{favreClassification2dimensionalContracting2000}. It is analytically conjugated to
\begin{equation}
  f(x,y) = (x^a, \lambda x^c y + P (x))
  \label{EqLocalNormalFormInfinitelySingular}
\end{equation}
with $a \geq 2, c \geq 1, \lambda \in \k^\times$ and $P$ is a polynomial such that $P(0) = 0$.  This is the local normal
form of a Hénon map at its attracting fixed point in $\P^2$ (see \cite{favreClassification2dimensionalContracting2000}
\S 2).

\textbf{Second normal form.--} If $f$ is a germ of a
regular function such that there exists local coordinates $(z,w)$ at the origin with both axis $\left\{ z=0
\right\}$ and $\left\{ w =0 \right\}$ contracted and they are the only two germs of curves contracted. Then, $f$ is
of the following pseudomonomial form
\begin{equation}
  f(z,w) = \left( z^{a_{11}} w^{a_{12}} \phi(z,w), z^{a_{21}} w^{a_{22}} \psi(z,w) \right)
  \label{EqPseudoFormeMonomiale}
\end{equation}
with $\phi, \psi$ invertible regular functions and $a_{ij} \in \Z_{\geq 0}$. Suppose now that $\k = \C$ and that $ad -bc
\neq 0$, then \eqref{EqPseudoFormeMonomiale} is rigid and analytically conjugated to the monomial normal form
\begin{equation}
            f (x,y) = (x^{a_{11}} y^{a_{12}}, x^{a_{21}} y^{a_{22}})
  \label{EqLocalNormalFormMonomial}
\end{equation}
 The germ of curves $\left\{ x = 0 \right\},
 \left\{ y = 0 \right\}$ are contracted to the origin. We have $\Crit (f^n) = \left\{ xy = 0 \right\}$. If $\C$ is the
 field of complex numbers, we can
characterize the matrix $A$ given by $(a_{ij})$ in the following way. The local fundamental group of $ (\C^2, 0)
\setminus \left\{ xy =0 \right\}$ is isomorphic to $\Z^2$. The action of $f_*$ on $\Z^2$ is given by the matrix $A$ and
we have that $\left| \det A \right|$ is equal to the topological degree of $f$. This corresponds to Class 6 of Table II of
\cite{favreClassification2dimensionalContracting2000}.

\textbf{Third normal form.--} The third one is
\begin{equation}
           f (x,y) = (x^a y^b \phi, y^c \psi)   \label{EqLocalNormalFormDivisorial}
\end{equation}
with $a\geq 2, b,c \geq 1$ and $\phi, \psi$ are germs of invertible regular functions
vanishing at the origin. We have that $\left\{ y = 0 \right\}$ is contracted to the origin. The germ $E = \left\{ x = 0
\right\}$ is $f$-invariant with a ramification index equal to $a$. The origin is a noncritical fixed point of
$f_{|E}$ if and only if $c = 1$.

If $\k = \C$, We have $\Crit(f^n) = \left\{ xy = 0 \right\}$. If $c =1$, this
germ is rigid but not necessarily contracting. It is contracting if and only if $\left| \psi(0) \right| < 1$. If the
germ is contracting then the germ is analytically conjugated to this normal form
\begin{equation}
  f(z,w) = \left( z^a w^b, \psi(0) w\right)
  \label{<+label+>}
\end{equation}
with the same numbers $a,b$ as in Equation \ref{EqLocalNormalFormDivisorial}. This corresponds to Class 5 of
Table II in \cite{favreClassification2dimensionalContracting2000}.

\chapter{Divisors at infinity and Picard-Manin space}\label{ChapterDefinitions}
In this chapter, we introduce the notion of completions of an affine surface $X_0$. They are essentialy projective
compatifications of $X_0$ and form a projective set. The Picard-Manin space of $X_0$ will be a completion of the direct limit of
the Néron Sévéri groups of the completions of $X_0$. It is a Hilbert space on which every endomorphism of $X_0$ acts in
a natural way.
  Let $\k$ be an algebraically closed field of any characteristic and let $X_0$ be a normal affine surface over $\k$. We will
  denote by $\k[X_0]$ the ring of regular functions on $X_0$.

\section{Completions and divisors at infinity}\label{SubSecCompletions}

A \emph{completion} of $X_0$ is the data of a projective surface $X$ with an open
embedding $\iota: X_0 \hookrightarrow X$ such that $\iota(X_0)$ is an open dense subset of $X$ and such
that there exists an open smooth neighbourhood of $\BD$ in $X$.  We will say that a
completion is \emph{good} if $\partial_{X} X_0$ is an effective divisor with simple normal
crossings. From any completion of $X$, one obtains a good one by a finite number of blow ups at infinity (i.e on
$\BD$) see for example \cite{hartshorneAlgebraicGeometry1977} Theorem 3.9 p.391.

Let $X$ be a completion of $X_0$ with the
embedding $\iota_{X}: X_0 \rightarrow X$, we will still denote $\iota_{X}(X_0)$ by $X_0$ and we will denote
by $\OO_{X}(X_0)$ the subring of $\k(X)$ of functions $f \in \k (X)$ which are regular on $X_0$.
By Proposition \ref{PropBoundaryIsCurve}, the boundary $\BD$ is a possibly reducible
connected curve. We denote by $\Div (X)$ the group of divisors of $X$ and by
$\Div_\infty (X)$ the subgroup of divisors of $X$ supported on $\partial_X X_0$. For $\A = \Z, \Q,\R$, we set $\Div
(X)_\A := \Div (X) \otimes \A$ and $\DivInf(X)_\A = \DivInf(X) \otimes \A$. Let $E_1, \cdots, E_m$ be the
irreducible components of $\partial_{X} X_0$ (we will call them the \emph{prime divisors at infinity}). Any element of
$\Div_\infty (X)_\A$ is of the form $D = \sum_i a_i(D) E_i$ with $a_i (D) \in \A$. We will write $\ord_{E_i} (D)$ for
$a_i (D)$ of $D$ at $E_i$.
For a family $(D_j)_{j \in J}$ of elements of $\DivInf (X)$ the coefficients $a_i (D)$ are integers; so, using the
natural order on $\Z$, we
define the supremum $\bigvee_{j \in J} D_j$ and the infimum $\bigwedge_{j \in J} D_j$ by

\begin{equation}
  \bigvee_j D_j = \sum_i \sup(\ord_{E_i}(D_j)) E_i \quad \text { and } \quad \bigwedge_j D_j = \sum_i
  \inf(\ord_{E_i}(D_j)) E_i
\end{equation}

It only exists if each $(\ord_{E_i}(D_j))_{j \in J}$ is bounded respectively from above or from below. If
$\bigwedge_j D_j$ (respectively $\bigvee_j D_j$) is well defined we say that the family $(D_j)$ is
\emph{bounded from below (from above)}. Notice that we only define supremum and infimum for family of divisors with
coefficients in $\Z$.

\section{Morphisms between completions, Weil, Cartier divisors}\label{SectionCompletions}
\medskip

  \paragraph{Some notations} If $\pi: Y \rightarrow X$ is a projective birational morphism between
  smooth projective surfaces  and $D_{X}$ is a divisor on $X$, we will denote by $\pi^* D_X$ the \emph{pull-back}
  of $D_{X}$ under $\pi$ and if $D_{X}$ is effective, then $\pi ' (D_{X})$ will be the strict transform of
  $D_{X}$ under $\pi$. For any projective surface $Z$, if $D_{Z}$ is a divisor on $Z$, we will denote by
  $\OO_{Z} (D_{Z})$ the invertible sheaf on $Z$ associated to $D_{Z}$.

Let $X_1, X_2$ be two completions of $X_0$ with their embeddings $\iota_1, \iota_2$. There exists a unique birational
map $\pi: X_1 \dashrightarrow X_2$ such that the diagram
\begin{equation}
  \label{DiagMorphismCompletions}
  \begin{tikzcd}
    X_1 \ar[r, dashed, "{\pi}"] & X_2 \\
    X_0 \ar[u, "{\iota_1}", hook] \ar[r, equal, "{\id}"] & X_0 \ar[u, "{\iota_2}", hook]
  \end{tikzcd}
\end{equation}
commutes. If $\pi$ is a morphism, we call it a \emph{morphism of completions}. In that case we say that $X_1$ is
\emph{above} $X_2$. By Theorem \ref{ThmCompositionBlowUps}, $ {\pi}^{-1}$ is a composition of
blow-ups; since $\pi$ is an isomorphism over $X_0$, the centers of these blowups are above
$\partial_{X_2} X_0$. Conversely, let $X$ be a completion of $X_0$ with an embedding $\iota: X_0 \hookrightarrow
X$, let $\pi: Y \rightarrow X$ be the blowup of $X$ at a point $p \in \BD$, then $Y$ with the embedding
${\pi}^{-1} \circ \iota: X_0 \rightarrow Y$ is a completion of $X_0$ and $\pi$ is a morphism of completions. For a morphism of
completions $\pi :Y \rightarrow X$, we will write $\Exc (\pi) \subset Y$ for the exceptional locus of $\pi$.

\begin{lemme}\label{LemmaProjectiveSystem}
  The system of completions of $X_0$ is a projective system: For any two completions $X_1, X_2$ of $X_0$ there
  exists a completion $X_3$ above $X_1$ and $X_2$.
\end{lemme}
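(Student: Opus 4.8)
The plan is to reduce the statement to the existence of a common resolution of two birational maps between projective surfaces, which is exactly Theorem~\ref{ThmCompositionBlowUps}. Let $X_1, X_2$ be two completions of $X_0$, with open immersions $\iota_1 : X_0 \hookrightarrow X_1$ and $\iota_2 : X_0 \hookrightarrow X_2$. As recalled after diagram~\eqref{DiagMorphismCompletions}, there is a unique birational map $\pi : X_1 \dashrightarrow X_2$ compatible with the two embeddings, namely $\pi = \iota_2 \circ \iota_1^{-1}$ on the common open set $X_0$. First I would apply the second half of Theorem~\ref{ThmCompositionBlowUps} to $\pi$: there is a sequence of blow-ups $\rho : X_3' \rightarrow X_1$ such that $h := \pi \circ \rho : X_3' \rightarrow X_2$ is a morphism. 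Then $\rho$ and $h$ are both birational morphisms of smooth projective surfaces, and by construction $\rho$ and $h$ agree with the identity on the copy of $X_0$ sitting inside $X_1$ (resp. $X_2$) — more precisely, $\rho$ is an isomorphism over $\iota_1(X_0)$ and $h$ is an isomorphism over $\iota_2(X_0)$, since $\pi$ itself is an isomorphism there and the resolution only modifies points lying over the indeterminacy locus, which is contained in $\partial_{X_1} X_0$.

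Next I would equip $X_3'$ with the structure of a completion of $X_0$. Set $\iota_3 := \rho^{-1} \circ \iota_1 : X_0 \hookrightarrow X_3'$; this is an open immersion onto a dense open subset because $\rho$ is an isomorphism over $\iota_1(X_0)$. To see that $X_3'$ is a completion in the sense of \S\ref{SubSecCompletions}, one needs a smooth open neighbourhood of $\partial_{X_3'} X_0$ in $X_3'$: but $X_3'$ is obtained from $X_1$ by blowing up points, hence is itself smooth everywhere (blow-ups of smooth surfaces at closed points are smooth), so this holds trivially; in fact one can even arrange $X_3'$ to be a good completion by finitely many further blow-ups at infinity, though that is not needed here. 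With this embedding, $\rho : X_3' \rightarrow X_1$ fits into the commuting square~\eqref{DiagMorphismCompletions} for the pair $(X_3', X_1)$, so it is a morphism of completions; likewise $h : X_3' \rightarrow X_2$ fits into the corresponding square for $(X_3', X_2)$ since $h \circ \iota_3 = \pi \circ \rho \circ \rho^{-1} \circ \iota_1 = \pi \circ \iota_1 = \iota_2$ on $X_0$. Hence $X_3 := X_3'$ lies above both $X_1$ and $X_2$, which is the assertion.

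The only genuinely non-formal ingredient is Theorem~\ref{ThmCompositionBlowUps} (elimination of indeterminacies for birational maps of smooth projective surfaces, valid in any characteristic), which is quoted, so there is no real obstacle. The one point requiring a line of care is the claim that $\rho$ and $h$ are isomorphisms over the respective copies of $X_0$: this is because the minimal resolution of indeterminacies of $\pi$ only blows up points in (or infinitely near to) the indeterminacy locus $\mathrm{Ind}(\pi)$, and $\mathrm{Ind}(\pi) \subset X_1 \setminus \iota_1(X_0) = \partial_{X_1} X_0$ since $\pi$ restricts to the isomorphism $\iota_2 \circ \iota_1^{-1}$ on the open set $\iota_1(X_0)$; symmetrically $h$ contracts only curves lying over $\partial_{X_2} X_0$. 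Everything else is a diagram chase with the defining square~\eqref{DiagMorphismCompletions}.
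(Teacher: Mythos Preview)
Your proposal is correct and follows essentially the same approach as the paper: resolve the birational map $\pi : X_1 \dashrightarrow X_2$ via Theorem~\ref{ThmCompositionBlowUps}, equip the resulting surface with the obvious embedding of $X_0$, and check both structure maps are morphisms of completions by the same diagram chase. You give a bit more care to the smoothness of $X_3$ and to why the resolution is an isomorphism over $X_0$, but the argument is the same.
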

\begin{proof}
  Let $X_1$, $X_2$ be two completions of $X_0$, let $\pi: X_1 \dashrightarrow X_2$ be the birational map from
  Diagram \ref{DiagMorphismCompletions}. By Theorem \ref{ThmCompositionBlowUps}, there exists a sequence of blow-ups
  $\pi_1 : X_3 \rightarrow X_1$ such that $g = \pi_1 \circ \pi: X_3 \rightarrow X_2$ is regular. It is clear
  that $\pi_1$ is a morphism of completions since by definition $\iota_{X_3} =: \iota_3 = \iota_{1} \circ {\pi_1}^-1$. The
  map $g$ is also a morphism of completion because by construction $g = \pi \circ \pi_1$ and $\iota_2 = \pi \circ
\iota_1$, therefore $\iota_3 = {\pi_1}^{-1} \circ \iota_1 =   {g}^{-1} \circ \pi \circ \iota_1 = {g}^{-1} \circ \iota_2 $
\end{proof}

If $\pi : X_1 \rightarrow X_2$ is a morphism of completions. We can define (see \cite{fultonIntersectionTheory1998},
Section 1.4) the pushforward $\pi_*: \Div(X_1)_\A
\rightarrow \Div(X_2)_\A$ and pullback $\pi^*: \Div(X_2)_\A \rightarrow \Div(X_1)_\A$ of divisors. They define
group homomorphisms
\begin{equation}
  \pi_* : \Div_\infty (X_1)_\A \twoheadrightarrow \Div_\infty (X_2)_\A \quad \text{and} \quad \pi^* :
  \Div_\infty (X_2)_\A \hookrightarrow \Div_\infty (X_1)_\A;
  \label{EqHomomorphisAtInfinity}
\end{equation}
the map $\pi^*$ is often called the \emph{total transform}.
Recall that (\cite{hartshorneAlgebraicGeometry1977} Proposition 3.2 p.386)
\begin{equation}\label{EqId}
  \pi_* \pi^* = \id_{\Div(X_2)_\A}.
\end{equation}

Let $X$ be a completion of $X_0$ and $P \in \k[X_0]$, then $({\iota_X}^{-1})^* (P) \in \k(X)$. We set $(\iota_X)_* :=
({\iota_X}^{-1})^*$ and we denote by
  $\div_{X}(P) := \div ((\iota_{X})_* P)$ the divisor of the rational function $P$ in $X$. In
  particular, if $\pi: Y \rightarrow X$ is a morphism of completions above $X_0$, then by Diagram
  \eqref{DiagMorphismCompletions}, one has $\iota_Y =
  {\pi}^{-1} \circ \iota_X$. Therefore $\div_Y (P) = \div (({\pi}^{-1} \circ \iota_X)_* (P)) =  \div(\pi^* \left(
      (\iota_X)_* (P) \right)) = \pi^* \div_X (P)$. We
  will write $\div_{\infty, X} (P) \in
  \DivInf (X)$ the divisor on $X$ supported at infinity such that
  \[
    \div_X (P) = D + \div_{\infty, X} (P)
  \]
  where $D$ is an effective divisor and no components of its support is in $\BD$.

  \begin{ex}
    Let $X_0 = \A^2 = \spec \k [x,y]$ and let $P = xy$. Take the completion $\P^2$ of $\A^2$ with homogeneous coordinates
    $X,Y,Z$ such that $x= X/Y$ and $y = Y/ Z$. Then,
    \begin{equation}
      \div_{\P^2} (P) = \left\{ X = 0 \right\} + \left\{ Y = 0 \right\} - 2 \left\{ Z = 0 \right\}
      \label{<+label+>}
    \end{equation}
    and $\div_{\infty, \P^2} (P) = -2 \left\{ Z = 0 \right\}$.
    Let $\pi : X \rightarrow \P^2$ be the blow-up of $[1:0:0]$, we can take $W$ to be the subscheme of $\P^2
    \times \P^1$ given by the equation
    \begin{equation}
      UZ = V Y
      \label{<+label+>}
    \end{equation}
    where $U,V$ are the homogeneous coordinates of $\P^1$. Then $\pi$ is the projection onto the first factor. We take
    the affine chart $X = 1$ in $\P^2$ with affine coordinates $y' = Y/ X$ and $z ' = Z / X$. Take the
    chart $U= 1$ with affine coordinate $v$ in $\P^1$, then $W \cap \left\{ X =1 \right\} \times \left\{ U = 1
    \right\}$ is an affine chart of $W$  with coordinates $v, y'$ and we have the relation $z' = v y'$; $y' = 0$ is a
  local equation of the exceptional divisor and $v = 0$ is a local equation of the strict transform of $z' = 0$.
    \begin{equation}
      \pi^* (P) = \pi^* \left(\frac{y'}{(z')^2}\right) = \frac{y'}{v^2 (y')^2} = \frac{1}{v^2 y'}
      \label{ }
    \end{equation}
    Therefore,
    \begin{equation}
      \div_X (P) = \pi ' \left\{ X= 0 \right\} + \pi ' \left\{ Y= 0 \right\} - 2 \pi ' \left\{Z = 0\right\} - \tilde E = \pi^*
      (\div_{\P^2} (P))
      \label{<+label+>}
    \end{equation}
    and
    \begin{equation}
      \div_{\infty, X} (P) = -2 \pi ' \left\{ Z = 0 \right\} - \tilde E
      \label{<+label+>}
    \end{equation}
  \end{ex}

  The system of completions of $X_0$ is a projective system by Lemma \ref{LemmaProjectiveSystem}. Consider the system of
  groups $(\Div_\infty (X))_\A$ for $X$ a completion of $X_0$ with compatibility morphisms
  \begin{equation}
    \pi_* : \DivInf (X)
    \rightarrow \DivInf(Y)
  \end{equation}
  for any morphism of
  completions $\pi : X \rightarrow Y$. This is a projective system of groups.
  Analogously, the same system of groups with $\pi^*$ as compatibility morphisms is an inductive system.
  We define the space of Cartier and Weil divisors at infinity of $X_0$ by
  \begin{equation}
    \Cinf_\A = \varinjlim_{X} \DivInf(X)_\A, \text{ and } \Winf_\A = \varprojlim_X \DivInf(X)_\A.
    \label{<+label+>}
  \end{equation}
Concretely, an element $D \in \Winf_\A$ is a collection $D = (D_{X})$ such that $D_{X}$ is
an element of $\DivInf(X)_\A$ for every completion $X$ of $X_0$ and such that for any morphism of completions $\pi:
X \rightarrow Y$, $\pi_* D_{X} = D_{Y}$; $D_X$ is called the \emph{incarnation} of $D$ in $X$. An element of
$\Cinf_\A$ is the data of a completion $X$ and a divisor $D \in \DivInf(X)$ where two pairs $(X, D)$ and $(X', D')$ are
equivalent if there exists a completion $Z$ above $X$ and $X'$ with morphisms of completion $\pi: Z \rightarrow
X, \pi ': Z \rightarrow X'$ such that $\pi^* D = (\pi ')^*  D'$. We will say that $D \in \Cinf_\A$ is
\emph{defined} over a completion $X$ if $D$ is the equivalence class of $(X, D_X)$ for some $D_X \in
\DivInf(X)_\A$. We have a natural inclusion
\begin{equation}
\phi: \Cinf_\A \hookrightarrow \Winf_\A \end{equation}
defined as follows. If $(X,D) \in \Cinf_\A$, then we need to define the incarnation $\phi(D)_Y$ for any completion
$Y$. First of all, set $\phi(D)_X = D$. Then, for any completion $Y$, by Lemma \ref{LemmaProjectiveSystem}, there
exists a completion $Z$ above $Y$ and $X$; denote by $\pi_Y: Z \rightarrow Y$ and $\pi_Z : Z \rightarrow
X$ the respective morphism of completions. We define $\phi (D)_Y := (\pi_Y)_* \pi_X^* D$. This does not depend on the
choice of $Z$ because of Equation \eqref{EqId}. In the rest of the paper, we will drop the notation $\phi(D)$ and
denote by $D$ the image of $(X,D)$ in $\Winf_\A$. We equip $\Winf_\A$ with the projective limit topology.

In the same manner we define $\CX_\A := \varinjlim
\Div(X)_\A$ and $\WX_\A := \varprojlim \Div (X)_\A$ and we have the following commutative diagram
\begin{equation}
    \begin{tikzcd}
      \Cinf_\A \ar[r, hook] \ar[d]{c} & \Winf_\A \ar[d, hook] \\
      \CX_\A \ar[r, hook] & \WX_\A
    \end{tikzcd}
  \label{<+label+>}
\end{equation}

\begin{rmq}
We have that $\Cinf_\A = \Cinf \otimes \A$ but $\Winf_\A $ is strictly larger than $\Winf \otimes \A$ when $\A = \Q, \R$.
Indeed, let $W_1, \ldots, W_r \in \Winf$, $\lambda_1, \ldots, \lambda_r \in \A$ and set $W := \sum_i \lambda_i W_i$.
Then, for every completion $X$ and for every prime divisor $E$ at infinity in $X$ we have
\begin{equation}
  \ord_E (W_X) = \ord_E (\sum_i \lambda_i W_{i, X}) = \sum_i \lambda_i \ord_E (W_{i, X}) \in \Z \lambda_1 + \cdots
  + \Z \lambda_r
  \label{<+label+>}
\end{equation}
In particular, the group $G (W)$ generated by $\left( \ord_E (W_X) \right)_{(X,E)}$ for all completions $X$ and
all prime divisor $E$ at infinity in $X$ is a finitely generated subgroup of $\R$. Now pick a completion $X_1$ and
consider a sequence of blow ups $\pi_n : X_{n+1} \rightarrow X_n$ starting with $X_1$. Let $E_n$ be the
exceptional divisor of $\pi_n$. We still denote by $E_n$ the strict transform of $E_n$ in every $X_m, m \geq n+1$.
Define the Weil divisor $W \in \Winf_\A$ such that its incarnation in $X_{n+1}$ is $W_{X_{n+1}} = \sum_{k = 1}^{n}
\frac{1}{k} E_k$. Then, $G (W)$ is not finitely generated, therefore $W \not \in \Winf \otimes \A$.
\end{rmq}

An element $D$ of $\Winf_\A$ with $\A = \Z, \Q, \R$  is called \emph{effective} (denoted by $D \geq 0$) if its incarnation in
every completion $X$ is effective; if $D$ belongs to $\Cinf_\R$ this is equivalent to $D_X \geq 0$ for
one completion $X$ where $D$ is defined. If $D_1, D_2 \in \Winf_\A$, we will write $W_1 \geq W_2$ for $W_1 - W_2 \geq
0$.

\section{A canonical basis}\label{SubSecCanonicalBasisOfDivisors}
Let $X$ be a completion of $X_0$, we define $\cD_{X,\infty}$ as follows. Elements of $\cD_{X,\infty}$ are equivalence classes of
prime divisors \emph{exceptional above} $X$ at infinity in completions $\pi_Y: Y \rightarrow X$ above $X$
where two prime divisors $E$ and $E'$ belonging respectively to $Y$ and $Y'$ are equivalent if the birational map
${\pi_{Y'}}^{-1} \circ \pi_Y : Y \dashrightarrow Y'$ induces an isomorphism $ {\pi_{Y '}}^{-1} \circ \pi_Y : E
\rightarrow E'$. We call $\cD_{X,\infty}$ the \emph{set of prime divisors above $X$}. We also define $\cD_\infty (X_0)$ as
the set of equivalence classes of prime divisors at infinity modulo the same equivalence relation. We write
$\A^{\cD_{X,\infty}}$ for the set of functions $\cD_{X,\infty} \rightarrow \A$ and $\A^{(\cD_{X,\infty})}$ for the subset of functions with
finite support.

\begin{prop}
  If $X$ is a completion of $X_0$, then
  \begin{equation}
    \Cinf_\A = \DivInf(X)_\A \oplus \A^{(\cD_{X,\infty})}, \quad \text{ and } \Winf_\A = \DivInf(X)_\A \oplus \A^{\cD_{X,\infty}}.
    \label{<+label+>}
  \end{equation}
  This is a homeomorphism with respect to the product topology of $\A^{\cD_{X,\infty}}$.
\end{prop}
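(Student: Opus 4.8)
The plan is to produce the direct sum decomposition by explicitly describing, for a fixed completion $X$, how an arbitrary Weil (resp. Cartier) divisor at infinity splits into a part coming from $X$ itself and a part supported on the divisors exceptional above $X$. First I would recall that every completion $Y$ appearing in the inverse (resp. direct) limit can be dominated by one obtained from $X$ by a sequence of blow-ups at infinity: given any $Y$, Lemma~\ref{LemmaProjectiveSystem} furnishes a common completion $Z$ above both $X$ and $Y$, and by Theorem~\ref{ThmCompositionBlowUps} the morphism $Z \to X$ is a composition of blow-ups whose centers lie over $\BD$. Hence in computing $\Winf_\A = \varprojlim \DivInf(Y)_\A$ it suffices to take the limit over the cofinal subsystem of completions $Y$ lying above $X$, and similarly $\Cinf_\A$ is the union of the $\DivInf(Y)_\A$ over such $Y$.

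Next I would define the two maps realizing the decomposition. Given a completion $\pi_Y : Y \to X$ above $X$, any $D_Y \in \DivInf(Y)_\A$ can be written uniquely as $D_Y = \pi_Y^* (\pi_Y)_* D_Y + R_Y$, where $R_Y$ is supported on $\Exc(\pi_Y)$; this uses $(\pi_Y)_* \pi_Y^* = \id$ from Equation~\eqref{EqId} and the fact that the exceptional prime divisors together with the strict transforms of the prime divisors at infinity of $X$ form a basis of $\DivInf(Y)_\A$. The component $(\pi_Y)_* D_Y$ lives in $\DivInf(X)_\A$ and is compatible across the system, so it assembles to an element of $\DivInf(X)_\A$; the remainder $R_Y$ records, for each prime divisor $E$ exceptional above $X$ that is already visible on $Y$, the coefficient $\ord_E(D_Y)$, and these coefficients are compatible under further blow-ups, so they define a function $\cD_X \to \A$. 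For Weil divisors this function is arbitrary; for Cartier divisors, $D$ is defined on some fixed $Y$, so only finitely many exceptional primes are involved and the function has finite support — this is exactly the distinction between $\A^{\cD_X}$ and $\A^{(\cD_X)}$. Conversely, a pair $(D_X, \xi) \in \DivInf(X)_\A \times \A^{\cD_X}$ determines a Weil divisor: its incarnation in $Y$ is $\pi_Y^* D_X$ plus the divisor $\sum_E \xi(E) E$ over the (finitely many, on a fixed $Y$) exceptional primes $E$ visible on $Y$. One checks these two constructions are mutually inverse and $\A$-linear.

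The main point requiring care — and the step I expect to be the real content — is the compatibility and well-definedness of the remainder term across the inverse system: namely that for a tower $Z \to Y \to X$, pushing $R_Z$ forward along $Z \to Y$ recovers $R_Y$, equivalently that the decomposition $D_Y = \pi_Y^* (\pi_Y)_* D_Y + R_Y$ is functorial in $Y$. This reduces to the two facts that $(\pi_{Z/X})_* = (\pi_{Y/X})_* \circ (\pi_{Z/Y})_*$ and that $\pi_{Z/Y}^*$ sends the strict transform on $Y$ of an exceptional prime $E$ above $X$ to its strict transform on $Z$ plus a divisor supported on $\Exc(\pi_{Z/Y})$, which is again in the span of exceptional primes above $X$. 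Granting this, the isomorphism of topological groups follows because the inverse limit topology on $\Winf_\A$ matches, under the decomposition, the product topology on $\DivInf(X)_\A \times \A^{\cD_X}$: a basic open set in the inverse limit is pulled back from $\DivInf(Y)_\A$ for a single $Y$ above $X$, and on such a $Y$ only the coefficient along $X$ and finitely many exceptional coordinates are constrained, which is precisely a basic open set for the product topology. The Cartier statement is then the restriction to the dense subgroup $\DivInf(X)_\A \oplus \A^{(\cD_X)}$.
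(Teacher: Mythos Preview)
Your decomposition $D_Y = \pi_Y^*(\pi_Y)_* D_Y + R_Y$ is correct and does produce a bijection $\Winf_\A \simeq \DivInf(X)_\A \oplus \A^{\cD_X}$: the coefficient of the strict transform of $E$ in $R_Y$ is independent of $Y$ once $E$ is visible, and your compatibility check goes through. (There is a small slip: what $R_Y$ records is $\ord_E(R_Y)=\ord_E(D_Y)-\ord_E(\pi_Y^*D_X)$, not $\ord_E(D_Y)$; but this does not affect the Weil argument.)

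The genuine gap is in the Cartier half. Your claim that ``for Cartier divisors, $D$ is defined on some fixed $Y$, so only finitely many exceptional primes are involved and the function has finite support'' is false for your choice of coordinates. Take $D_X=0$ and let $D$ be the Cartier class $(X_{E_1},E_1)$ for a single exceptional prime $E_1$ above $X$. Then $R_Y=\pi_{Y/X_{E_1}}^*E_1$, and this total transform has nonzero coefficient along \emph{every} exceptional prime lying over a point of $E_1$; hence your $\xi$ has infinite support. Conversely, the Weil class you assign to $(0,\delta_{E_1})$ has incarnation equal to the \emph{strict} transform of $E_1$ on each $Y$, which is not a Cartier class (its incarnation on a blow-up of a point of $E_1$ is not the pullback of its incarnation below). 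So with your parametrization the image of $\Cinf_\A$ is not $\DivInf(X)_\A\oplus\A^{(\cD_X)}$, and the restricted map is neither onto nor does its inverse land in $\Cinf_\A$.

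The paper avoids this by choosing a different basis for the exceptional part: for each $E\in\cD_X$ it takes $\alpha_E:=(X_E,E)$, the Cartier class of $E$ on the \emph{minimal} completion $X_E$ where $E$ first appears. In this basis the transition maps of the direct limit become the identity (since $\pi^*$ sends $\pi_{Y/X_E}^*E$ to $\pi_{Z/X_E}^*E$), so a Cartier class is exactly a finite linear combination. The change of basis between the paper's $\alpha_E$ and your strict-transform coordinates is unipotent but not finitely supported, which is precisely why your coordinates work for $\Winf$ but not for $\Cinf$.
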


\begin{proof}
  Following \cite{boucksomDegreeGrowthMeromorphic2008} Proposition 1.4, for any $E \in \cD_{X,\infty}$ there exists a minimal
  completion $X_E$ above $X$ such that $E$ is a prime divisor in $X_E$. We denote by $\alpha_E \in \Cinf$ the
  element $\alpha_E := (X_E, E)$. Let $E_1, \ldots, E_r$ be the prime divisor at infinity in $X$, then
  \begin{equation}
    (E_0,
    \ldots, E_r) \cup \left\{ \alpha_E: E \in \cD_{X,\infty} \right\}
  \end{equation}
  is a $\A$-basis of $\Cinf_\A$. In the same fashion we
  obtain the second homeomorphism.
\end{proof}

\begin{rmq}
  Since for any completion $X$, one can find a good completion $Y$ above $X$ and the blow up of a
  good completion is still a good completion, the projective system of good
  completions is cofinal in the projective system of completions, so in the rest of the paper any completion that we take
  will be a good completion.
\end{rmq}

If $f : X_0 \rightarrow X_0$ is a dominant endomorphism, then we can define
\begin{equation}
  f^* : \Cinf_\A \rightarrow \Cinf_\A
\end{equation}
 as follows. Let $D = (X, D_X) \in \Cinf_\A$. Let $Y$ be a completion of $X_0$
such that the lift $F: Y \rightarrow X$ of $f$ is regular, then we define
\begin{equation}
  f^* D := (Y, F^* D_X) \in \Cinf_\A.
  \label{<+label+>}
\end{equation}
This does not depend on the choice of $Y$.
The pushforward operator is more tricky to define as $f$ might not be proper. We need to blow-up above $X_0$ to define
it properly. See \S \ref{SecPicardManin}.

\section{Local version of the canonical basis}\label{SubSecLocalCanonicalBasisOfDivisors}
Let $(X, p)$ be the germ of a smooth surface. For any birational morphism $Y\rightarrow (X,p)$ such that $\pi$ induces
an isomorphism $\pi : Y \setminus \pi^{-1}(p) \rightarrow X \setminus \left\{ p \right\}$ we define $\Div_p
(Y)_\A$ as the set of $\A$-exceptional divisors in $Y$. Similar to the previous sections we define
\begin{equation}
  \Cartier(X,p) = \varinjlim_{Y \rightarrow (X,p)} \Div_p (Y)_\A, \quad \Weil(X,p) = \varprojlim_{Y \rightarrow
  (X,p)} \Div_p (Y)_\A
  \label{<+label+>}
\end{equation}
 We can define the set $\cD_{X,p}$ of prime divisors above $p$ as follows. Elements
 of $\cD_{X,p}$ are equivalence classes of prime divisors $E \subset Y$ for any birational model $Y$ of $(X,p)$.

 Now, if $X$ is a completion of $X_0$ and $p$ is a smooth point of $X$, $(X,p)$ defines a germ of a smooth surface and
 we have a natural embedding $\sD_{X,p} \hookrightarrow \sD_X$. Furthermore if $p \not \in X_0$, then we have natural
 embeddings $\Cartier(X,p)_\A \hookrightarrow \Cinf_\A$ and $\Weil(X,p)_\A \hookrightarrow \Winf_\A$.

\begin{prop}
  If $X$ is a completion of $X_0$, then $\cD_{X, \infty} = \bigsqcup_{p \in \BD} \cD_{X, p} $ and
  \begin{align}
    \Cartier(X, p)_\A &= (\A)^{(\cD_{X,p})} \\
    \Weil(X, p)_\A &= (\A)^{\cD_{X,p}}
    \label{<+label+>}
  \end{align}
\end{prop}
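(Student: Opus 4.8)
The plan is to prove both assertions—the decomposition $\cD_X = \bigsqcup_{p \in \BD} \cD_{X,p}$ and the two identifications of $\Cartier(X,p)_\A$ and $\Weil(X,p)_\A$ with the function spaces on $\cD_{X,p}$—by reducing everything to the local structure of blow-ups above a single point $p$, using the canonical basis from the previous proposition as the main engine.

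First I would establish the disjoint union. Given a prime divisor $E \in \cD_X$, represented by a prime divisor at infinity in some completion $\pi_Y : Y \to X$ above $X$, the image $\pi_Y(E)$ is an irreducible closed subset of $\BD$, hence either a point of $\BD$ or a whole prime divisor at infinity in $X$. In the latter case $E$ would itself be (equivalent to) a prime divisor of $X$, contradicting that elements of $\cD_X$ are \emph{exceptional above} $X$; so $\pi_Y(E) = p$ for a unique $p \in \BD$. This assignment $E \mapsto p$ is well defined on equivalence classes: if $E$ and $E'$ are identified via ${\pi_{Y'}}^{-1} \circ \pi_Y$ inducing an isomorphism $E \to E'$, then compatibility of the birational maps with the morphisms to $X$ forces $\pi_Y(E) = \pi_{Y'}(E')$. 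Conversely, $E$ lies in $\cD_{X,p}$ precisely when it appears in the exceptional locus of some $\pi : (Y, \Exc(\pi)) \to (X,p)$; any prime divisor exceptional above $X$ is exceptional above its center $p$, after replacing $Y$ by a higher completion that only blows up over $p$ (factor $\pi_Y$ through the blow-up of $p$ and then restrict attention to the part of the fiber lying over $p$). This gives the two inclusions $\cD_X \subseteq \bigsqcup_p \cD_{X,p}$ and $\cD_{X,p} \subseteq \cD_X$ with disjointness from uniqueness of $p$.

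Next, for the divisor identifications, I would combine the decomposition just proved with the previous proposition: $\Cinf_\A = \DivInf(X)_\A \oplus \A^{(\cD_X)}$ and $\Winf_\A = \DivInf(X)_\A \oplus \A^{\cD_X}$, with the second factor spanned by the canonical elements $\alpha_E$, $E \in \cD_X$. The subspace $\Weil(X,p)_\A$ is by definition the set of $D$ with $D_X = 0$ and whose support in every $Y$ above $X$ consists exactly of the prime divisors contracted to $p$; unwinding the canonical basis expansion $D = \sum_{E \in \cD_X} c_E \alpha_E$ (with $c_E \in \A$), the condition $D_X = 0$ kills the $\DivInf(X)_\A$ component, and the support condition is equivalent to $c_E = 0$ whenever $E \notin \cD_{X,p}$, i.e. whenever the center of $E$ in $X$ is not $p$. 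Here I would use that $\alpha_E$ for $E$ with center $p$ has incarnation in any $Y$ supported in the fiber over $p$, while $\alpha_{E'}$ for $E'$ with center $p' \ne p$ contributes divisors disjoint from that fiber—so no cancellation between the two groups can occur. This yields $\Weil(X,p)_\A = \A^{\cD_{X,p}}$, and intersecting with $\Cinf_\A$ (finite support) gives $\Cartier(X,p)_\A = \A^{(\cD_{X,p})}$.

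The main obstacle I anticipate is the careful bookkeeping in the support condition: one must check that a Weil divisor at infinity with $D_X = 0$ and support in fibers over $p$ genuinely has \emph{all} its canonical-basis coefficients concentrated on $\cD_{X,p}$, which requires knowing that for $E \in \cD_{X,p}$ the incarnation of $\alpha_E$ in \emph{every} completion $Y$ above $X$ is supported over $p$ (not just in the minimal $Y_E$), and that the pushforward/pullback compatibility morphisms preserve the decomposition by center. This is a consequence of the fact that blow-ups above a point $p$ do not affect the fibers over other points of $\BD$, together with the functoriality of $\pi_*$ and $\pi^*$ recorded in Section \ref{SectionCompletions}; once that is in hand the algebra is a direct-sum bookkeeping argument. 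The homeomorphism statements for the product topologies, if needed here, follow immediately from the corresponding statement in the previous proposition restricted to the factor indexed by $\cD_{X,p}$.
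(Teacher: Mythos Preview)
Your argument is correct. The paper states this proposition without proof, treating it as an immediate consequence of the preceding proposition (the canonical basis decomposition $\Cinf_\A = \DivInf(X)_\A \oplus \A^{(\cD_X)}$ and $\Winf_\A = \DivInf(X)_\A \oplus \A^{\cD_X}$) together with the definitions; your proposal supplies exactly the expected details by reducing to that decomposition and then sorting the basis elements $\alpha_E$ according to the center $p = \pi_Y(E)$.
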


\section{Supremum and infimum of divisors}\label{SubSecSupremumAndInfimum}

Let $(D_i)_{i \in I}$ be a family of elements of $\Winf$ such that for all completions $X$, the
family $(D_{i,X})$ is bounded from below, we define $\bigwedge_{i \in I}
D_i$ with its incarnation in $X$ being
\begin{equation}
  \left(\bigwedge D_i\right)_{X} = \bigwedge_i D_{i, X}.
\end{equation}
 We have an analogous definition for $\bigvee_i D_i$ when each $(D_{i,X})$ is bounded from
above.

\begin{lemme}\label{LemmeMinOfCartierIsCartier}
  If $D,D' \in \Cinf$, then $D \wedge D', D \vee D' \in \Cinf$.
\end{lemme}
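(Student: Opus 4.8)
The plan is to reduce to the case where $D$ and $D'$ are defined over a single completion, and then to show that after finitely many blow-ups at infinity the infimum of the total transforms of $D$ and $D'$ becomes the total transform of one divisor at infinity. Since $\Cinf$ is an $\A$-submodule of $\Winf$, the case of $\vee$ will follow formally from that of $\wedge$.

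\textbf{Reduction.} By Lemma \ref{LemmaProjectiveSystem} there is a completion $X$ above the completions over which $D$ and $D'$ are defined, and by the definition of the equivalence relation in $\Cinf$ both $D$ and $D'$ are then defined over $X$, say $D=(X,D_X)$, $D'=(X,D'_X)$ with $D_X,D'_X\in\DivInf(X)$. Adding the Cartier divisor $n\,\BD$ to $D_X$ and $D'_X$ for $n$ large changes $D\wedge D'$ by $(X,n\,\BD)\in\Cinf$ (because $\min(a+c,b+c)=\min(a,b)+c$ on every prime divisor, in every incarnation), so we may assume $D_X,D'_X\geq 0$. Finally, on every completion $Y$ one has $D_Y\vee D'_Y=D_Y+D'_Y-D_Y\wedge D'_Y$, so once $D\wedge D'\in\Cinf$ we get $D\vee D'=D+D'-D\wedge D'\in\Cinf$; it therefore suffices to treat the infimum.

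\textbf{Principalization.} Set $\Delta:=D_X\wedge D'_X\in\DivInf(X)$; then $A:=D_X-\Delta$ and $A':=D'_X-\Delta$ are effective with no common prime component, so the ideal sheaf $\mathfrak b:=\OO_X(-A)+\OO_X(-A')$ has cosupport a finite set $S\subseteq\BD$, and $\mathfrak a:=\OO_X(-D_X)+\OO_X(-D'_X)=\OO_X(-\Delta)\cdot\mathfrak b$ (check locally: the $\gcd$ of local equations of $D_X$ and $D'_X$ is a local equation of $\Delta$). By principalization of ideal sheaves on a surface, achieved by a finite sequence of blow-ups of points lying over $S$, all of them at infinity, there is a morphism of completions $\pi:Y\to X$ with $\pi^{-1}\mathfrak b\cdot\OO_Y=\OO_Y(-G)$ for an effective $G\in\DivInf(Y)$ supported on $\pi^{-1}(S)$; hence $\pi^{-1}\mathfrak a\cdot\OO_Y=\OO_Y(-\pi^*\Delta-G)$.

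\textbf{Stabilization.} For any completion $\rho:Z\to Y$ above $Y$ and any point $q$ at infinity on $Z$, the divisors $D_Z=(\pi\rho)^*D_X$ and $D'_Z=(\pi\rho)^*D'_X$ have local equations $h,h'$ in the regular (hence factorial) local ring $\OO_{Z,q}$, and $(h,h')=\pi^{-1}\mathfrak a\cdot\OO_{Z,q}=(u)$ for $u$ a local equation of $(\pi\rho)^*\Delta+\rho^*G$. In a factorial ring $(h,h')=(u)$ forces $\gcd(h,h')=u$ up to a unit, so $(D\wedge D')_Z=D_Z\wedge D'_Z=(\pi\rho)^*\Delta+\rho^*G=\rho^*(\pi^*\Delta+G)$. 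Thus every incarnation of $D\wedge D'$ above $Y$ is the total transform of $(D\wedge D')_Y=\pi^*\Delta+G$, i.e. $D\wedge D'=(Y,\pi^*\Delta+G)\in\Cinf$. The crux is exactly this Principalization/Stabilization step: $D\wedge D'$ is controlled by the ideal sheaf $\mathfrak a=\OO_X(-D_X)+\OO_X(-D'_X)$, whose non-divisorial part $\mathfrak b$ has $0$-dimensional cosupport and so becomes invertible after finitely many point blow-ups; once $\mathfrak a$ is invertible it stays invertible under further pullback, which is precisely the statement that the infimum of the total transforms has stopped growing.
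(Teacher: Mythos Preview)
Your proof is correct and follows the same overall strategy as the paper: reduce to effective divisors defined over a common completion, form the ideal sheaf $\mathfrak a=\OO_X(-D_X)+\OO_X(-D'_X)$, and principalize it by blowing up at infinity. The paper blows up $\mathfrak a$ directly; you first peel off the divisorial part $\Delta=D_X\wedge D'_X$ and principalize the residual $\mathfrak b$ with $0$-dimensional cosupport, which is equivalent.

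The genuine difference lies in the verification that the resulting divisor actually represents $D\wedge D'$ in $\Winf$. The paper postpones this to Claim~\ref{ClaimPullBackOfIdealSheafIsMinimumCartierClass}, proved only after developing valuations as linear forms on divisors and on fractional ideals: one checks $L_{v}(D_Y)=\min(L_v(D),L_v(D'))$ for every divisorial valuation $v$ and invokes Proposition~\ref{PropCaracterisationMinimumParValuationDivisorielle}. Your argument is more elementary and self-contained: in each factorial local ring $\OO_{Z,q}$ above $Y$, the pulled-back ideal $(h,h')$ is principal with generator $u$, and in a UFD the identity $(h,h')=(u)$ forces $u$ to be an associate of $\gcd(h,h')$ (since $u\mid h,h'$ and $\gcd(h,h')\mid u=ah+bh'$), whence $D_Z\wedge D'_Z$ is the total transform from $Y$. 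This avoids the forward reference to valuation theory at the cost of a slightly more hands-on local computation; either approach is clean, but yours keeps the lemma logically independent of the later chapters.
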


\begin{proof} It suffices to show that $D \wedge D' \in \Cinf$ because $D \vee D' = - (-D \wedge -D')$. So
  take $D, D' \in \Cinf$, we have to show that $D \wedge D'$ belongs to $\Cinf$.

  Now, it suffices to show this for $D, D'$ effective, indeed let $X$ be a completion such that $D$ and $D'$ are defined
  over $X$. Then, there exists $D_2 \in \Div_\infty (X)$ such that $D - D_2$ and $D' - D_2$ are
  effective. Indeed, take $D_2$ as the Cartier class determined by $D \wedge D'$ in $X$, Then
  \begin{equation}
    D \wedge
    D' = (D - D_2) \wedge (D' - D_2) + D_2.
  \end{equation}

  Therefore, suppose $D, D'$ are effective. Then $\aa = \OO_{X} (-D) + \OO_{X} (- D')$ is a coherent sheaf of ideals
  such that $\aa_{|X_0} = \OO_{X_0}$, let $\pi: Y \rightarrow X$ be the blow-up along $\aa$. Since $\aa_{|X_0}$ is
  trivial, $\pi$ is an isomorphism over $X_0$, therefore $Y$ is a completion
  of $X_0$ with respect to the embedding $\iota_Y := {\pi}^{-1} \circ \iota_X$ and $\pi$ is a morphism of completions.
  The only thing is that $Y$ might not be smooth at infinity, so consider $\omega :Z \rightarrow Y$ a desingularisation
  of $Y$ which is in particular an isomorphism above $X_0$.
  Then, $\pi^* \aa \cdot \OO_{Y}$ is an invertible sheaf over $Y$ trivial over $X_0$, so there exists a divisor
  $D_{Y} \in \DivInf(Y)$ such that $\pi^* \aa = \OO_{Y} (- D_{Y})$. Now, define $D_Z = \omega^* D_Y$.

  \begin{claim}\label{ClaimPullBackOfIdealSheafIsMinimumCartierClass}
    The Cartier class in $\Cinf$ induced by $D_{Z}$ is $D \wedge D'$.
  \end{claim}
  This is shown for example in the proof of \cite[Lemma 2.6]{boucksomDifferentiabilityVolumesDivisors2009}.
  We postpone the proof of this claim to the end of Chapter \ref{ChapterValuationsAsLinearForms}, page \pageref{ProofClaim}.
\end{proof}

\begin{ex}
  Let $X$ be a completion that contains two prime divisors $E, E'$ at infinity in $X$ such that they
  intersect (transversely) at a point $p$. The sheaf of ideals $\aa = \OO_X(-E) + \OO_X (-E')$ is the ideal of regular
  functions vanishing at $p$. The blow up of $\aa$ is exactly the blow up $\pi: Y \rightarrow X$ at $p$ since
  by universal property of the blow-up $\pi^* \aa = \OO_Y (-\tilde E)$ where $\tilde E$ is the exceptional divisor above
  $p$. If we still denote by $E, E',
  \tilde E$ the elements they define in $\Cinf$, then $E \wedge E' = \tilde E$.
\end{ex}

Let $X$ be a good completion of $X_0$. Let $D_1, D_2 \in \DivInf(X)$. Let $E,F$ be two prime divisors at infinity
that intersect. We say that $(D_1, D_2)$ is \emph{well ordered} at $E \cap F$ if
\begin{equation}
  \ord_E (D_1) < \ord_E (D_2) \Leftrightarrow \ord_F(D_1) < \ord_F (D_2).
  \label{<+label+>}
\end{equation}
We say that $(D_1, D_2)$ is a \emph{well
ordered} pair if it is well ordered at $E \cap F$ for every prime divisor $E,F$ at infinity that intersect.
\begin{lemme}
  The class $D_1 \wedge D_2$ is defined in $X$ if and only if $\left( D_1, D_2 \right)$ is a well
  ordered pair if and only if $D_1 \vee D_2$ is well defined in $X$.
\end{lemme}
\begin{proof}
  Suppose for example that $D_1 \vee D_2$ is defined in $X$ and that $D_1, D_2$ is not a well ordered pair and let
  $E,F$ be two prime divisors at infinity that intersect such
  that at $E \cap F, D_i = \alpha_i E + \beta_i F$ with $\alpha_1 < \alpha_2$ and $\beta_1 > \beta_2$. Then, $D_1 \vee
  D_2 = \alpha_2 E + \beta_1 F$. Let $\tilde E$ be the exceptional divisor above $E \cap F$, then we have
  $\ord_{\tilde E} (D_1 \vee D_2) = \alpha_2 + \beta_1$. But
  \begin{equation}
    \ord_{\tilde E} D_i = \alpha_i + \beta_i < \alpha_2 + \beta_1 = \ord_{\tilde E} (D_1 \vee D_2).
  \end{equation}
  This is a contradiction.
\end{proof}

\begin{dfn}\label{DefSInf}
  Let $\Sinf$ be the semigroup of $\Winf$ of elements $D \in \Winf$ such that there exists a (potentially
  uncountable) family $(D_i)_{i \in I} \subset \Cinf$ such that

  \begin{equation}
    D = \bigvee_I D_i
  \end{equation}

\end{dfn}

\begin{prop}
  \begin{enumerate}
    \item $\Cinf \subset \Sinf$.
    \item For $a,b \geq 0$ and $ D, D' \in \Sinf$, one has $a D + b D' \in \Sinf$.
    \item If $D_i \in \Sinf$ for each $i \in I$ and $(D_i)$ is bounded from above then $\bigvee_{i \in I} D_i \in \Sinf$.
    \item If $D, D' \in \Sinf$, then $D \wedge D' \in \Sinf$.
  \end{enumerate}
\end{prop}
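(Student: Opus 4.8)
The plan is to verify each of the four statements in turn, using the characterization of $\Sinf$ as the set of suprema of families of Cartier divisors at infinity, together with the lattice properties of $\Winf$ established in the preceding lemmas.

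\textbf{Statements (1) and (3).} Item (1) is immediate: if $D \in \Cinf$ then $D = \bigvee_{\{i\}} D$ is a supremum of a (one-element) family of Cartier divisors, so $D \in \Sinf$. For item (3), suppose each $D_i = \bigvee_{j \in J_i} C_{ij}$ with $C_{ij} \in \Cinf$, and suppose $(D_i)_{i \in I}$ is bounded from above. Then the combined family $(C_{ij})_{i \in I, j \in J_i}$ is also bounded from above (each incarnation is dominated by that of $\bigvee_i D_i$), and I claim $\bigvee_{i,j} C_{ij} = \bigvee_i D_i$. This is checked incarnation by incarnation in each completion $X$: $\ord_E\big(\bigvee_{i,j} C_{ij}\big) = \sup_{i,j} \ord_E(C_{ij}) = \sup_i \sup_j \ord_E(C_{ij}) = \sup_i \ord_E(D_{i,X}) = \ord_E\big((\bigvee_i D_i)_X\big)$ for every prime divisor $E$ at infinity. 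Hence $\bigvee_i D_i$ is a supremum of a family in $\Cinf$, so it lies in $\Sinf$.

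\textbf{Statement (2).} Given $D = \bigvee_{i \in I} C_i$ and $D' = \bigvee_{j \in J} C'_j$ with $C_i, C'_j \in \Cinf$, and $a, b \geq 0$, I want $aD + bD' = \bigvee_{(i,j)} (aC_i + bC'_j)$. Again this is verified on incarnations: since $a, b \geq 0$, $\sup_{i,j}\big(a\,\ord_E(C_i) + b\,\ord_E(C'_j)\big) = a \sup_i \ord_E(C_i) + b \sup_j \ord_E(C'_j)$, which is exactly $a\,\ord_E(D_X) + b\,\ord_E(D'_X) = \ord_E\big((aD+bD')_X\big)$. One should check the family $(aC_i + bC'_j)$ is bounded from above, which follows from $D, D' \in \Winf$ having well-defined incarnations. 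Note this step uses crucially that $aC_i + bC'_j \in \Cinf$, i.e. that $\Cinf$ is closed under nonnegative linear combinations — which holds since $\Cinf_\R$ is a vector space and Cartier divisors form a subgroup at each finite level.

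\textbf{Statement (4).} This is the main obstacle, since an infimum does not interact as cleanly with suprema as a sum does. Write $D = \bigvee_{i \in I} C_i$ and $D' = \bigvee_{j \in J} C'_j$. The natural guess is $D \wedge D' = \bigvee_{(i,j)} (C_i \wedge C'_j)$, where $C_i \wedge C'_j \in \Cinf$ by Lemma \ref{LemmeMinOfCartierIsCartier}. To prove this identity I would again work incarnation by incarnation: fix a completion $X$ and a prime divisor $E$ at infinity, and show $\min\big(\sup_i \ord_E(C_i), \sup_j \ord_E(C'_j)\big) = \sup_{i,j} \min\big(\ord_E(C_i), \ord_E(C'_j)\big)$. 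The inequality $\geq$ is the delicate one: the right-hand side is a supremum over pairs, and for a given $E$ the suprema $\sup_i \ord_E(C_i)$ and $\sup_j \ord_E(C'_j)$ need not be attained simultaneously by a single pair $(i,j)$ at \emph{this particular} $E$. The point is that $\Sinf$ membership only requires equality of \emph{all} incarnations of a single Weil divisor with the supremum, so I should instead argue as follows: the family $(C_i \wedge C'_j)$ is bounded above (by $D$, hence has a well-defined supremum $S \in \Winf$ by the hypothesis of the corresponding definition), and $S \leq D \wedge D'$ trivially since $C_i \wedge C'_j \leq C_i \leq D$ and similarly $\leq D'$. For the reverse inequality $D \wedge D' \leq S$, fix a completion $X$ and prime divisor $E$; set $m = \ord_E((D\wedge D')_X) = \min(\ord_E(D_X), \ord_E(D'_X))$. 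Choose $i$ with $\ord_E(C_{i,X})$ close to $\ord_E(D_X) \geq m$ and $j$ with $\ord_E(C'_{j,X})$ close to $\ord_E(D'_X) \geq m$; but since these are integer-valued, "close" means we can pick $i, j$ with $\ord_E(C_{i,X}) \geq \min(m, \ord_E(D_X))$... here I must be careful: I want $\ord_E(C_{i,X}) \geq m$ and $\ord_E(C'_{j,X}) \geq m$, which is possible precisely because $\sup_i \ord_E(C_{i,X}) = \ord_E(D_X) \geq m$ (a sup of integers $\geq m$ contains a term $\geq m$) and likewise for $j$; then $\ord_E((C_i \wedge C'_j)_X) = \min(\ord_E(C_{i,X}), \ord_E(C'_{j,X})) \geq m$, so $\ord_E(S_X) \geq m$. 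Since this holds for every $(X, E)$, we get $S \geq D \wedge D'$, hence $S = D \wedge D' \in \Sinf$. The integrality of the coefficients $\ord_E(\cdot)$ is what makes this clean; it is exactly why the definition restricts to $\Z$-valued suprema, and I would highlight this as the crux of the argument.
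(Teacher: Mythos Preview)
Your proof is correct and follows the same approach as the paper, which argues (1)--(3) by the identical manipulations and dismisses (4) as a corollary of Lemma~\ref{LemmeMinOfCartierIsCartier} without spelling out the identity $D \wedge D' = \bigvee_{i,j}(C_i \wedge C'_j)$ that you carefully verify. One small remark: your emphasis on integrality as ``the crux'' in (4) is misplaced---the equality $\min(\sup_i a_i, \sup_j b_j) = \sup_{i,j}\min(a_i,b_j)$ holds for arbitrary real-valued families (approximate each sup within $\epsilon$), so the argument would go through even without integer coefficients.
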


\begin{proof}
  The first assertion is trivial as for $D \in \Cinf, D = \bigvee D$. For Property (2), let $X$ be a
  completion of $X_0$ then $\bigvee_i a D_{i, X} + \bigvee_j b D'_{j, X} = \bigvee_{i,j} (a
  D_i + b D'_j)_X$. For Property (3), if $D_i = \bigvee_j D_{i,j}$, then $\bigvee_i D_i = \bigvee_{(i,j)}
  D_{i,j}$. Finally, the fourth assertion is a corollary of Lemma \ref{LemmeMinOfCartierIsCartier}.
\end{proof}

%\begin{ex}
%  We have $\Sinf \not \subseteq \Winf$. Let $X_0 = \A^2$ and $X = \P^2$. Let $E_0$ denote the line at infinity, a
%  canonical divisor in $\P^2$ is given by $K_{\P^2} = - 3E_0$. We can define an element $K \in \Winf$ by taking for any
%  completion $Y$ of $\A^2$ the canonical divisor supported at infinity. More precisely, let $Y$ is any completion of
%  $\A^2$ above $\P^2$. We still denote by $E_0$ the strict transform of $E_0$ in $Y$. Then, $K_Y$ is of
%  the form
%
%\begin{equation}
%  K_Y = -3 E_0 + \sum_{E \subset \BD, E \neq E_0} E.
%  \label{<+label+>}
%\end{equation}
%
%Suppose that $K = \sup_i (D_i)$ for some $D_i \in \Cinf$. Let $D \in (D_i)$ such that $D$ is defined over some
%completion $Y$ and for some prime divisor $E \neq E_0$ at infinity, $\ord_E (D) =1$. Then, we must have $K \geq D$
%meaning that for any completion $Z$, $K_Z \geq D_Z$. Consider the following blow ups. Let $\pi_1 : Y_1
%\rightarrow Y$ be the blow-up of a point $p$ of $E$ that does not belong to any other divisor at infinity. Let $\tilde
%E$ be the exceptional divisor of $\pi$. Now let $\pi_2 : Y_2 \rightarrow Y_1$ be the blow-up at $\pi_1 ' E \cap
%\tilde E$ and let $\tilde F$ be the exceptional divisor of $\pi_2$. Then, $\ord_{\tilde F} (K_{Y_2}) = 1$ but
%$\ord_{\tilde F} (D_{Y_2}) = \ord_{\tilde F} ( (\pi_2 \circ \pi_1)^* D) = 2$ and this is a contradiction.
%
%\end{ex}

\section{Picard-Manin Space}\label{SecPicardManin}
\subsection{Cartier and Weil classes over a smooth projective surface}
  Let $X$ be a smooth surface and let $\NS (X)$ be the Néron-Severi group of $X$.
We have a perfect pairing given by the intersection form
  \begin{equation}
    \NS (X)_\R \times \NS (X)_\R \rightarrow \R.
    \label{<+label+>}
  \end{equation}

Recall the Hodge index theorem
\begin{thm}[Hodge Index Theorem, \cite{hartshorneAlgebraicGeometry1977} Theorem 1.9 p.364]\label{ThmHodgeIndex}
  Let $X$ be a smooth projective surface over an algebraically closed field. Let $\alpha \in
  \NS (X) \setminus \left\{ 0 \right\}$ and let $H$ be an ample divisor on $X$. If $\alpha \cdot H =0$, then
  \begin{equation}
    \alpha^2 < 0.
    \label{<+label+>}
  \end{equation}
  In particular, the signature of the quadratic form induced by the intersection form is $\left( 1, \rho -1 \right)$
  where $\rho$ is the rank of $\NS (X)$.
\end{thm}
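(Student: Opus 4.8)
The plan is to reduce the whole statement to a single input coming from Riemann--Roch — that a divisor of positive self-intersection is, up to sign and a positive multiple, linearly equivalent to a nonzero effective divisor — and then to read off the signature by elementary linear algebra together with a rational-density argument. First I would record that $H^2>0$: after replacing $H$ by a positive multiple we may assume it very ample, so that $X$ embeds in some $\P^N$ with $H^2$ a positive multiple of $\deg X>0$.

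The heart of the matter will be the claim: \emph{if $D$ is a divisor on $X$ with $D^2>0$, then $D\cdot H\neq 0$.} To prove this I would first note that $D$ is not in the radical of the intersection form (a radical class squares to $0$), and since the ample cone is a nonempty open — hence spanning — subset of $\NS(X)_\R$, there is an ample divisor $A$ with $D\cdot A\neq 0$; changing $D$ to $-D$ if needed, assume $D\cdot A>0$. Riemann--Roch on the surface gives
\[
\chi(\OO_X(nD))=\tfrac12\,nD\cdot(nD-K_X)+\chi(\OO_X)=\tfrac{n^2}{2}D^2+O(n),
\]
which tends to $+\infty$, whereas $(K_X-nD)\cdot A=K_X\cdot A-n\,(D\cdot A)\to-\infty$. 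Since a nonzero effective divisor meets the ample $A$ positively, for $n\gg 0$ the class $K_X-nD$ is neither $0$ nor linearly equivalent to a nonzero effective divisor, so $h^0(\OO_X(K_X-nD))=h^2(\OO_X(nD))=0$ by Serre duality, and therefore $h^0(\OO_X(nD))\geq\chi(\OO_X(nD))\to+\infty$. Thus $nD$ is linearly equivalent to a nonzero effective divisor for $n\gg 0$, and intersecting with the ample $H$ gives $n\,(D\cdot H)>0$, proving the claim. I expect this ``numerical positivity $\Rightarrow$ effectivity'' step to be the only real obstacle; everything afterwards is bookkeeping.

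To finish, set $H^\perp=\{\alpha\in\NS(X)_\R:\alpha\cdot H=0\}$. The claim, applied to integral and hence (by scaling) rational classes, says $H^\perp$ contains no rational class of positive square; since $\alpha\mapsto\alpha\cdot H$ is a rational linear functional, $H^\perp$ is defined over $\Q$, so rational classes are dense in it, and as $\{\beta\in H^\perp:\beta^2>0\}$ is open it must be empty — i.e. the intersection form is negative semidefinite on $H^\perp$. Next, suppose $\alpha\in H^\perp$, $\alpha\neq 0$, had $\alpha^2=0$: pick $E$ with $\alpha\cdot E\neq 0$, rescale so $\alpha\cdot E=1$, and put $F_a=a\alpha-\tfrac{E\cdot H}{H^2}\,H+E$; then $F_a\cdot H=0$ for every $a$ while $F_a^2=2a+(\text{const})$, so $F_a\in H^\perp$ has positive square and is nonzero for $a\gg 0$ — a contradiction. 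Hence $\alpha\cdot H=0$, $\alpha\neq 0$ force $\alpha^2<0$, the asserted inequality (and, taking $\alpha$ in the radical, this shows the radical is trivial). Finally, since $H^2\neq 0$ we have $H\notin H^\perp$, so $\NS(X)_\R=\R H\oplus H^\perp$ is an orthogonal decomposition on which the form is positive definite on the $1$-dimensional summand and negative definite on the $(\rho-1)$-dimensional one; hence the signature is $(1,\rho-1)$.
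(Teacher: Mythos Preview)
Your proof is correct and is essentially the classical argument from Hartshorne (V.1.9): the Riemann--Roch step showing that $D^2>0$ forces $nD$ (or $-nD$) effective, hence $D\cdot H\neq 0$, followed by the linear-algebra extraction of the signature. The paper does not give its own proof of this theorem --- it is simply quoted from \cite{hartshorneAlgebraicGeometry1977} as background --- so there is nothing to compare against beyond noting that you have reproduced the standard source's argument.

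One small remark: your parenthetical ``taking $\alpha$ in the radical, this shows the radical is trivial'' is slightly circular, since the step ``pick $E$ with $\alpha\cdot E\neq 0$'' already presupposes nondegeneracy. In the paper's setup this is harmless, because $\NS(X)_\R$ is taken with the intersection form as a perfect pairing (i.e.\ modulo numerical equivalence), so nondegeneracy is built in; but you should not claim to be \emph{deriving} it from your argument.
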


A class $\alpha \in \NS(X)$ is nef if for all irreducible curve $C \subset X, \alpha
\cdot [C] \geq 0$.
If $\pi : Y \rightarrow X$ is a birational morphisms we have two group homomorphisms
  \begin{equation}
    \pi_* : \NS (Y)_\A \rightarrow \NS(X)_\A, \pi^* : \NS (X)_\A \rightarrow \NS (Y)_\A
    \label{<+label+>}
  \end{equation}
  with the following properties
  \begin{enumerate}
    \item $\pi_* \circ \pi^* = \id_{\NS(X)_\A}$
    \item $\pi^* \alpha \cdot \pi^* \beta = \alpha \cdot \beta$
    \item $\pi^* \alpha \cdot \beta = \alpha \cdot \pi_* \beta$ (Projection Formula)
  \end{enumerate}

Furthermore, if $\pi : Y \rightarrow X$ is the blow up of one point, let $\tilde E$ be the exceptional divisor, then
\begin{equation}
  [\tilde E] ^2 = -1, \text{ and } \NS (Y)_\A = \pi^* \NS(X)_\A \operp \A \cdot [\tilde E]
  \label{EqIntersectionAfterBlowUp}
\end{equation}
Therefore, the system of groups $(\NS (Y))_{Y \rightarrow X}$ with compatibility morphisms $\pi_*$ is a projective
system of groups and $(\NS(Y))_{Y \rightarrow X}$ with compatibility morphisms $\pi^*$ is an inductive system of groups.

\begin{dfn}
  The spaces of Cartier and Weil classes of $X$ are defined as
    \begin{equation}
      \ccNS(X)_\A := \varinjlim_{Y \rightarrow X} \NS (Y)_\A, \quad \wwNS(X)_\A = \varprojlim_{Y \rightarrow
      X} \NS (X)_\A.
      \label{<+label+>}
    \end{equation}
\end{dfn}
We equip $\wwNS_\A$ with the topology of the projective limit.
An element of $\wwNS_\A$ is a family $\alpha = (\alpha_Y)_{Y}$ where $\alpha_Y \in
\NS (Y)_\A$ such that for all $\pi : Y \rightarrow X$, we have
\begin{equation*}
  \pi_* \alpha_Y = \alpha_X.
  \label{<+label+>}
\end{equation*}
We call $\alpha_Y$ the \emph{incarnation} of $\alpha$ in $Y$.

An element of $\ccNS(Y)_\A$ is the data of a birational morphism $Y \rightarrow X$ and a class $\alpha \in
\NS(Y)_\A$ with the following equivalence relation: $(Y, \alpha) \simeq (W, \beta)$ if there exists a smooth projective
surface $Z$ with birational morphisms
\begin{equation*}
  \pi_Y : Z \rightarrow Y, \quad \pi_W : Z \rightarrow W
  \label{<+label+>}
\end{equation*}
such that $\pi_Y^* \alpha = \pi_W^* \beta$. We say that the Cartier class is defined (by $\alpha$) in $Y$. We have
a natural embedding
\begin{equation}
\ccNS (X)_\A \hookrightarrow \wwNS(X)_\A.
  \label{<+label+>}
\end{equation}
We have a pairing
  \begin{equation}
    \wwNS(X)_\R \times \ccNS(X)_\R \rightarrow \R
    \label{<+label+>}
  \end{equation}
  given by the following: let $\alpha \in \wwNS(X)_\R$ and $\beta \in \ccNS(X)_\R$; let $Y$ be a model above $X$ such
  that $\beta$ is defined i.e $\beta = (Y, \beta_Y)$; then
  \begin{equation}
    \alpha \cdot \beta := \alpha_Y \cdot \beta_Y.
    \label{<+label+>}
  \end{equation}
  This is well defined because if $\pi : Z \rightarrow Y$ then
  \begin{equation}
    \alpha_Z \cdot \beta_Z = \alpha_Z \cdot \pi^* \beta_Y = \pi_* \alpha_Z \cdot \beta_Y = \alpha_Y \cdot
    \beta_Y    \label{<+label+>}
  \end{equation}
  by the projection formula.

  An element $\alpha \in \wwNS(X)_\R$ is \emph{nef} if for all completion $X$, $\alpha_X$ is nef.

\begin{prop}[\cite{boucksomDegreeGrowthMeromorphic2008} Proposition 1.7]\label{PropPerfectPairing}
  The intersection pairing
  \begin{equation}
    \wwNS(X)_\R \times \ccNS(X)_\R \rightarrow \R
  \end{equation}
  is a perfect pairing and it induces a homeomorphism $\wwNS_\R \simeq \ccNS_\R^*$ endowed with the
  weak-$*$ topology.
\end{prop}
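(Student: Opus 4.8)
The plan is to exploit the fact that both spaces are inverse/direct limits of the finite-dimensional Néron–Severi spaces $\NS(X)_\R$, on each of which the intersection form is a perfect pairing by the Hodge Index Theorem (\ref{ThmHodgeIndex}). First I would recall the general principle: if $(V_X, \pi_*)$ is an inverse system of finite-dimensional vector spaces with surjective transition maps and $(V_X, \pi^*)$ the dual direct system (so that the transition maps are the transposes with respect to perfect pairings $V_X \times V_X \to \R$), then $\varprojlim V_X$ is naturally the algebraic dual of $\varinjlim V_X$, and the projective-limit topology on $\varprojlim V_X$ corresponds to the weak-$*$ topology on $(\varinjlim V_X)^*$. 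So the content is to check that our situation is an instance of this, i.e. that the pairing $\NS(Y)_\R \times \NS(Y)_\R \to \R$ is compatible with the structure maps $\pi_*, \pi^*$ in exactly the right way. This is precisely the projection formula $\pi^*\alpha \cdot \beta = \alpha \cdot \pi_*\beta$ together with $\pi_* \circ \pi^* = \id$, both recalled just above the statement.

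Concretely I would argue in three steps. \textbf{Step 1 (well-definedness and the map to the dual).} The pairing $\alpha \cdot \beta := \alpha_X \cdot \beta_X$ for $\beta$ defined on $X$ has already been checked to be independent of $X$ via the projection formula; this gives a linear map $\Phi \colon \wNS_\R \to \cNS_\R^*$, $\Phi(\alpha)(\beta) = \alpha \cdot \beta$, and $\Phi(\alpha)$ is weak-$*$ continuous essentially by definition since each $\beta$ factors through some finite stage. \textbf{Step 2 (injectivity of $\Phi$).} Suppose $\alpha \in \wNS_\R$ pairs to zero with every $\beta \in \cNS_\R$. For a fixed completion $X$ and any $\gamma \in \NS(X)_\R$, regard $\gamma$ as a Cartier class; then $0 = \alpha \cdot \gamma = \alpha_X \cdot \gamma$. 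Since the intersection form on the finite-dimensional space $\NS(X)_\R$ is non-degenerate (Hodge Index Theorem), $\alpha_X = 0$; as this holds for every $X$, $\alpha = 0$. \textbf{Step 3 (surjectivity of $\Phi$ and the homeomorphism).} Given a weak-$*$ continuous functional $\ell \colon \cNS_\R \to \R$, its restriction to each $\NS(X)_\R$ is linear, hence (by non-degeneracy of the intersection form on $\NS(X)_\R$) represented by a unique class $\alpha_X \in \NS(X)_\R$ with $\ell(\gamma) = \alpha_X \cdot \gamma$ for $\gamma \in \NS(X)_\R$. For $\pi \colon Y \to X$ and $\gamma \in \NS(X)_\R$ one computes $\alpha_X \cdot \gamma = \ell(\gamma) = \ell(\pi^*\gamma) = \alpha_Y \cdot \pi^*\gamma = \pi_*\alpha_Y \cdot \gamma$ (using $\pi^*\gamma = \gamma$ in $\cNS_\R$ and the projection formula), so $\pi_*\alpha_Y = \alpha_X$ by non-degeneracy again; thus $(\alpha_X)_X$ defines an element $\alpha \in \wNS_\R$ with $\Phi(\alpha) = \ell$. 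Finally, a basic neighbourhood of $0$ in the projective-limit topology is the preimage of a neighbourhood of $0$ in some $\NS(X)_\R$, which under $\Phi$ matches a basic weak-$*$ neighbourhood cut out by the finitely many functionals dual to a basis of $\NS(X)_\R$; hence $\Phi$ is a homeomorphism.

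The one genuine subtlety — the "main obstacle" — is \textbf{Step 3}: one must make sure that an \emph{arbitrary} weak-$*$ continuous functional $\ell$ on $\cNS_\R = \varinjlim \NS(X)_\R$, with no a priori boundedness across the system, really does assemble into a compatible family $(\alpha_X)$, i.e. that the compatibility $\pi_*\alpha_Y = \alpha_X$ is automatic. This is exactly where the interplay $\pi^* = {}^t\pi_*$ (projection formula) is essential, and it is why the statement is about \emph{Weil} classes (the full inverse limit) rather than just finitely-supported data. I would also remark that continuity of $\ell$ on $\cNS_\R$ with the direct-limit topology is equivalent to linearity on each finite stage, so no extra hypothesis is hidden there. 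Everything else is the standard duality between $\varprojlim$ and $\varinjlim$ of finite-dimensional spaces.
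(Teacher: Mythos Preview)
Your argument is correct. The paper does not actually give its own proof of this proposition; it simply cites \cite{boucksomDegreeGrowthMeromorphic2008}, Proposition~1.7, and moves on. Your three-step argument is essentially the standard proof one finds in that reference: non-degeneracy of the intersection form on each $\NS(X)_\R$ (Hodge index) plus the projection formula $\pi^*\alpha\cdot\beta=\alpha\cdot\pi_*\beta$ give exactly the adjunction needed to identify $\varprojlim \NS(X)_\R$ with the algebraic dual of $\varinjlim \NS(X)_\R$, and the topology check is routine.

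One small remark on presentation: in Step~3 you speak of a ``weak-$*$ continuous'' functional $\ell$ on $\cNS_\R$, but $\cNS_\R$ itself carries no a priori topology in the statement---the weak-$*$ topology is on its dual. What you really use (and correctly note at the end) is that \emph{any} linear functional on a direct limit of finite-dimensional spaces restricts to a linear functional on each stage, so there is no continuity hypothesis to impose. The compatibility $\pi_*\alpha_Y=\alpha_X$ then follows, as you say, because $(X,\gamma)$ and $(Y,\pi^*\gamma)$ represent the same element of $\cNS_\R$, so $\ell$ takes the same value on both. This is not really an obstacle; it is automatic from the definition of the direct limit.
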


Similarly as in \S\ref{SubSecCanonicalBasisOfDivisors} we have a more explicit
description of the space of Cartier and Weil classes of $X$. Let $\sD_X$ be the set of equivalence class of prime
divisors above $X$, that is all exceptional divisors appearing in models $Y$ above $X$ modulo equivalence.

\begin{prop}\label{PropDescriptionNeronSeveri}
  Let $X$ be a smooth projective surface, then
  \begin{equation}
    \ccNS(X)_\A = \NS (X)_\A \operp \A^{(\cD_X)}, \quad \wwNS(X)_\A = \NS (X)_\A \operp \A^{\cD_X}.
    \label{<+label+>}
  \end{equation}
  Moreover, the intersection product is negative definite over $\A^{(\cD_X)}$ and $\left\{ \alpha_E : E \in \cD_X
  \right\}$ is an orthonormal basis for the quadratic form $\alpha \in \A^{(\cD_X)} \mapsto - \alpha^2$.
\end{prop}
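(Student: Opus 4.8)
The plan is to bootstrap the statement for a single completion from the relative case of one blow-up, then pass to the limit using the canonical basis $\{\alpha_E : E \in \cD_X\}$ constructed in \S\ref{SubSecCanonicalBasisOfDivisors}. First I would fix the completion $X$ and recall that $\cD_X$ indexes, for each prime divisor $E$ exceptional above $X$, a minimal completion $X_E \to X$ where $E$ first appears, giving the Cartier class $\alpha_E = (X_E, E) \in \cNS$. The key structural input is Equation \eqref{EqIntersectionAfterBlowUp}: if $\pi : Y \to X$ is a single blow-up with exceptional divisor $\tilde E$, then $\NS(Y)_\A = \pi^* \NS(X)_\A \operp \A \cdot [\tilde E]$, the sum is orthogonal, and $[\tilde E]^2 = -1$. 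Iterating this along any tower of blow-ups above $X$, one gets that for every completion $Y$ above $X$, $\NS(Y)_\A = \pi^* \NS(X)_\A \operp \bigoplus_{E} \A \cdot [\text{(relevant exceptional classes)}]$, where the exceptional classes are mutually orthogonal, each of self-intersection $-1$, and orthogonal to $\pi^* \NS(X)_\A$.

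Next I would take the direct limit over all $Y$ above $X$ to obtain the Cartier statement: $\cNS_\A = \NS(X)_\A \operp \A^{(\cD_X)}$, where the second factor is the $\A$-span of the $\alpha_E$. Here one must check that the classes $\alpha_E$ are well-defined independently of the completion in which $E$ is realized (this is exactly the equivalence relation defining $\cD_X$ and $\cNS$) and that they remain orthonormal for $-\alpha^2$ in the limit; but orthogonality and self-intersection are computed in any completion containing both divisors, so by projection formula and property (2) of $\pi^*$ they are stable under passing to a higher completion. The decomposition $\NS(X)_\A \operp \A^{(\cD_X)}$ being a direct sum follows from the orthogonal-direct-sum decomposition at each finite stage, and the fact that $\NS(X)_\A$ injects into $\cNS_\A$ (which is part of the setup, via $\cNS \hookrightarrow \wNS$). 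For the Weil statement $\wNS_\A = \NS(X)_\A \operp \A^{\cD_X}$, I would dualize: by Proposition \ref{PropPerfectPairing}, $\wNS_\R = \cNS_\R^*$ with the weak-$*$ topology, and the canonical basis $(E_1,\dots,E_r) \cup \{\alpha_E\}$ of $\cNS$ gives, via the perfect pairing, a product decomposition of the dual; the claim about the topology being the product topology on $\A^{\cD_X}$ is then immediate from the definition of the projective-limit/weak-$*$ topology. (Alternatively one can argue directly: a Weil class is a compatible system $(\alpha_Y)$, its incarnation $\alpha_X$ lives in $\NS(X)_\A$, and the remaining data is exactly an arbitrary function assigning a coefficient to each $E \in \cD_X$, subject to no finiteness.)

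The negative-definiteness of the intersection form on $\A^{(\cD_X)}$ and the orthonormality of $\{\alpha_E\}$ for $\alpha \mapsto -\alpha^2$ is then the combinatorial heart: it reduces to showing $\alpha_E \cdot \alpha_{E'} = -\delta_{E,E'}$ and $\alpha_E \cdot \pi^* \NS(X)_\A = 0$. The orthogonality to $\pi^*\NS(X)_\A$ and the normalization $\alpha_E^2 = -1$ come straight from \eqref{EqIntersectionAfterBlowUp} applied in $X_E$; the cross term $\alpha_E \cdot \alpha_{E'} = 0$ for $E \neq E'$ requires realizing both $E$ and $E'$ in a common completion $Z$ above $X$ and checking that their strict transforms there are disjoint (or more precisely have intersection zero) — this is where one uses that distinct elements of $\cD_X$ correspond to "independent" infinitely-near points, and it is essentially the content of \cite{boucksomDegreeGrowthMeromorphic2008} Proposition 1.4 already invoked for the basis. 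I expect this last point — carefully tracking that the $\alpha_E$ for distinct $E$ are genuinely orthogonal in the limit, handling the case where one of $E,E'$ is infinitely near the other — to be the only real subtlety; everything else is a routine passage to the limit from \eqref{EqIntersectionAfterBlowUp} together with the projection formula.
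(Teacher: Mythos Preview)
Your proposal is correct and follows essentially the same route as the paper: iterate Equation \eqref{EqIntersectionAfterBlowUp} along towers of blow-ups above $X$ and invoke the projection formula. The subtlety you flag about the cross terms is not one: the class $\alpha_E$ in a higher completion $Z$ is the \emph{pullback} of $[E]$ from $X_E$, not its strict transform, so $\alpha_E \cdot \alpha_{E'} = 0$ for $E \neq E'$ falls out immediately from the iterated orthogonal decomposition and the projection formula (property (2) of $\pi^*$), regardless of whether one divisor is infinitely near the other --- there is nothing to check about disjointness of strict transforms. The only cosmetic difference from the paper is that it deduces negative definiteness from the Hodge index theorem (the exceptional part is orthogonal to the pullback of an ample class on $X$), whereas you obtain it as a consequence of orthonormality; both arguments are valid.
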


\begin{proof}
  The decomposition follows from Equation \eqref{EqIntersectionAfterBlowUp}. The fact that the intersection form is
  negative definite follows from the existence of an ample divisor on $X$, the Hodge Index theorem and the projection
  formula. The fact that $\left\{ \alpha_E : E \in
  \cD_X \right\}$ is an orthonormal basis is again a consequence of the projection formula and Equation
  \eqref{EqIntersectionAfterBlowUp}.
\end{proof}

\subsection{Local Cartier and Weil classes}
Let $X$ be a smooth projective surface and let $p \in X$ be a closed point. Then, by Proposition
\ref{PropDescriptionNeronSeveri} we have the canonical embeddings
\begin{equation}
  \Cartier(X,p)_\A \hookrightarrow \ccNS(X)_\A, \quad \Weil(X, p)_\A \hookrightarrow \wwNS(X)_\A
  \label{<+label+>}
\end{equation}

\begin{prop}\label{PropLocalPicardManinSpace}
  The space $\Cartier(X,p)_\R$ is an infinite dimensional $\R$-vector space and the intersection product
  defines a negative definite quadratic form over it. The set $\left\{ \alpha_E : E \in \cD_{X,p} \right\}$ is an
  orthonormal basis for the scalar product $\alpha \mapsto - \alpha^2$. Furthermore, the pairing
  \begin{equation}
    \Weil (X,p)_\R \times \Cartier(X,p)_\R \rightarrow \R
    \label{<+label+>}
  \end{equation}
  is perfect.
\end{prop}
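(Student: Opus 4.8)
The plan is to reduce Proposition~\ref{PropLocalPicardManinSpace} to the already established description of the global spaces $\cNS_\R$ and $\wNS_\R$ from Proposition~\ref{PropDescriptionNeronSeveri}, together with the identification $\cD_X = \bigsqcup_{q \in \BD} \cD_{X,q}$ and the associated splittings $\Cartier(X,q)_\R = \R^{(\cD_{X,q})}$, $\Weil(X,q)_\R = \R^{\cD_{X,q}}$. The point is that $\Cartier(X,p)_\R$ is, by definition, precisely the summand $\R^{(\cD_{X,p})}$ sitting inside $\cNS_\R = \NS(X)_\R \operp \R^{(\cD_X)}$, and $\R^{(\cD_X)} = \bigoplus_{q \in \BD} \R^{(\cD_{X,q})}$ is an \emph{orthogonal} direct sum for the quadratic form $\alpha \mapsto -\alpha^2$ by the last sentence of Proposition~\ref{PropDescriptionNeronSeveri}. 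So almost every assertion is inherited by restriction to a single orthogonal factor.

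First I would note that $\Cartier(X,p)_\R$ is infinite dimensional because there are infinitely many inequivalent prime divisors exceptional above the single point $p$ (blow up $p$, then a point on the exceptional curve not lying on $\BD$, and so on indefinitely, producing an infinite chain of pairwise inequivalent classes $\alpha_E$ with $E \in \cD_{X,p}$). Second, Proposition~\ref{PropDescriptionNeronSeveri} tells us that $\{\alpha_E : E \in \cD_X\}$ is an orthonormal basis of $\R^{(\cD_X)}$ for $\alpha \mapsto -\alpha^2$; since $\Cartier(X,p)_\R = \R^{(\cD_{X,p})}$ is spanned by the sub-collection $\{\alpha_E : E \in \cD_{X,p}\}$, the restriction of an orthonormal basis to a spanning subset of a subspace is an orthonormal basis of that subspace, and in particular the form stays negative definite there. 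This handles the first three sentences of the statement.

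For the perfectness of the pairing $\Weil(X,p)_\R \times \Cartier(X,p)_\R \to \R$, I would argue exactly as in Proposition~\ref{PropPerfectPairing} but in the split form: under the decompositions $\wNS_\R = \NS(X)_\R \operp \R^{\cD_X}$ and $\cNS_\R = \NS(X)_\R \operp \R^{(\cD_X)}$ the global pairing is the orthogonal sum of the perfect pairing on $\NS(X)_\R$ and the standard pairing $\R^{\cD_X} \times \R^{(\cD_X)} \to \R$ given by $-\sum_E x_E y_E$ in the basis $(\alpha_E)$. The latter decomposes further as an orthogonal sum over $q \in \BD$ of the pairings $\R^{\cD_{X,q}} \times \R^{(\cD_{X,q})} \to \R$, each of which is perfect: a linear functional on $\R^{(\cD_{X,p})}$ is just a function $\cD_{X,p} \to \R$, realized by the corresponding element of $\R^{\cD_{X,p}} = \Weil(X,p)_\R$, and the nondegeneracy in the other direction is immediate because the $\alpha_E$ are a basis. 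So the pairing $\Weil(X,p)_\R \times \Cartier(X,p)_\R \to \R$ is perfect.

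The only genuinely delicate point — the ``main obstacle'' — is making sure the orthogonality $\R^{(\cD_X)} = \bigoplus_q \R^{(\cD_{X,q})}$ really is respected by everything, i.e.\ that a Cartier class supported (in the exceptional sense) over $p$ is $\R$-orthogonal to a Weil class supported over a different point $q \neq p$, and that pairing a Weil class from $\Weil(X,p)_\R$ against a Cartier class from $\Cartier(X,p)_\R$ uses only components over $p$. This follows from the fact that prime divisors exceptional over distinct points $p \ne q$ have disjoint supports in any common completion, hence intersection number zero, so that the Gram matrix is block-diagonal over $\BD$; combined with Proposition~\ref{PropDescriptionNeronSeveri}'s assertion that $(\alpha_E)_{E \in \cD_X}$ is \emph{orthonormal}, this gives the clean block decomposition that lets every clause of the proposition be read off from the global statement. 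I expect the write-up to be short once this block-orthogonality is stated cleanly.
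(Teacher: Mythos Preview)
Your proposal is correct and matches the paper's intended approach: the paper states this proposition without proof, treating it as an immediate consequence of Proposition~\ref{PropDescriptionNeronSeveri} together with the decomposition $\cD_X = \bigsqcup_{q \in \BD} \cD_{X,q}$ and the identifications $\Cartier(X,p)_\R = \R^{(\cD_{X,p})}$, $\Weil(X,p)_\R = \R^{\cD_{X,p}}$ from the preceding proposition. Your write-up makes explicit exactly the block-orthogonality argument (exceptional divisors over distinct points of $\BD$ never meet, so the Gram matrix is block-diagonal) that the paper leaves to the reader.
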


\subsection{Functoriality}
  Let $f : X \dashrightarrow X$ be a dominant rational selfmap of $X$. We define $f^*, f_*$ on the spaces of Cartier and
  Weil classes as
  follows. We first define
  \begin{equation}
    f^* : \ccNS(X)_\R \rightarrow \ccNS(X)_\R.
  \end{equation}
  Let $\beta \in \ccNS(X)_\R$ and let $Y$ be a
  model where $\beta$ is defined. Let $Z$
  be a model of $X$ such that the lift $F: Z \rightarrow Y$ is regular, then we define $f^* \beta$ as the
  Cartier class defined in $Z$ by
  \begin{equation}
    f^* \beta := (Z, F^* \beta_Z)
    \label{<+label+>}
  \end{equation}
  this does not depend on the choice of $Z$. Indeed, if $Z'$ is another model such that $F' : Z ' \rightarrow
  X$ is well defined, then there exists a completion $W$ such that we have the following diagram.
  \begin{equation}
    \begin{tikzcd}
      & & W \ar[ld, "\pi_{Z '}"'] \ar[rd, "\pi_Z"] & & \\
      & Z ' \ar[rd, "\pi '"] \ar[ld, "F '"'] & & Z \ar[ld, "\pi"'] \ar[rd, "F"] \\
      Y & & Y \ar[ll, dashed, "f"'] \ar[rr, dashed, "f"] & & Y
    \end{tikzcd}
  \end{equation}
  Then, the lift of $f: W \dashrightarrow X$ is $F \circ \pi_Z = F' \circ \pi_{Z '}$, hence we get
  \begin{equation}
    \pi_Z^* \circ F^* = \pi_{Z '}^* \circ (F')^*
    \label{<+label+>}
  \end{equation} and the pull back of Cartier classes is well defined.

  Next, we define $f_* : \wwNS(X)_\R \rightarrow \wwNS(X)_\R$. Let $\alpha \in
  \wwNS(X)_\R$ and let $Z,Y$ be models of $X$ such that the lift $F : Z
  \rightarrow Y$ is regular, then the incarnation of $f_* \alpha$ in $Y$ is
  \begin{equation}
    (f_* \alpha)_Y := F_* \alpha_Z.
    \label{<+label+>}
  \end{equation}
  Again, this does not depend on the choice of $Z$ by a similar argument as for the pullback. We have the following
  proposition.
  \begin{prop}[\cite{boucksomDegreeGrowthMeromorphic2008} Section 2]\label{PropOperateurSurPicardManin} We have the
    following properties.
    \begin{itemize}
      \item The operator $f^*$ extends to a continuous operator
        \begin{equation}
          f^* : \wwNS(X)_\R \rightarrow \wwNS(X)_\R.
        \end{equation}
      \item the operator $f_*$ restricts to a continuous operator
        \begin{equation}
          f_* : \ccNS(X)_\R \rightarrow \ccNS(X)_\R
        \end{equation}
      \item Let $\alpha \in \wwNS(X)$, let $X, Y$ be completions of $X_0$ such that the lift $f : X \dashrightarrow Y$ does
        not contract any curves, then
    \begin{equation}
      (f^* \alpha)_X = (f^*\alpha_Y)_X
      \label{<+label+>}
    \end{equation}
    \end{itemize}
  \end{prop}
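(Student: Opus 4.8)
This is carried out in \cite[\S 2]{boucksomDegreeGrowthMeromorphic2008}; here is the shape of the argument. Everything rests on the adjunction
\begin{equation}
  (f_*\alpha)\cdot\beta \;=\; \alpha\cdot(f^*\beta),\qquad \alpha\in\wNS_\R,\ \beta\in\cNS_\R,
\end{equation}
together with the perfectness of the pairing $\wNS_\R\times\cNS_\R\to\R$ (Proposition \ref{PropPerfectPairing}), which identifies $\wNS_\R$ with the dual of $\cNS_\R$. To prove the adjunction I would pick a completion $X$ on which $\beta$ is defined and a completion $Y$ carrying a regular lift $F\colon Y\to X$ of $f$; such a $Y$ exists by elimination of indeterminacies in dimension two, which holds in any characteristic, and it is a completion of $X_0$ since $f$ is regular on $X_0$ and so all the blown-up points lie at infinity. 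Then $(f_*\alpha)_X=F_*\alpha_Y$ while $f^*\beta=(Y,F^*\beta_X)$, and the asserted equality is exactly the projection formula $F_*\alpha_Y\cdot\beta_X=\alpha_Y\cdot F^*\beta_X$ for the generically finite morphism $F$.

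The substantive point is the second item: that $f_*$ sends a Cartier class to a Cartier class. Given $\beta\in\cNS_\R$ defined on $X$, and $F\colon Y\to X$, $\pi\colon Y\to X$ as above, I would show that $f_*\beta$ equals the Cartier class cut out on $X$ by $F_*\pi^*\beta_X$; this reduces to checking, for an arbitrary completion $X'$, that the incarnation in $X'$ of that Cartier class agrees with $(f_*\beta)_{X'}$ computed directly, and the check amounts to propagating the identities $\pi_*\pi^*=\id$ and the projection formula through the projective system of completions. The one place where it matters that $f$ is an \emph{endomorphism} of $X_0$, rather than just a rational self-map of its completions, is that the lift $F$, being a model of the finite morphism $f$ on $X_0$, contracts no curve meeting $X_0$: every curve it contracts lies in $\partial_Y X_0$, which is what keeps the bookkeeping finite and forces the incarnations to match. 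I expect this matching to be the main obstacle of the proof.

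Granting the second item, the first follows by duality: I would define $f^*\colon\wNS_\R\to\wNS_\R$ as the transpose of $f_*\colon\cNS_\R\to\cNS_\R$ for the perfect pairing; it is automatically continuous, and the adjunction of the first paragraph shows that on $\cNS_\R$ it restricts to the operator already defined there. Equivalently — and this is the viewpoint one uses later for the spectral analysis — one checks that $f^*$ is bounded on $\cNS_\R$ for the norm built from the intersection form: pulling back along a regular lift of topological degree $\lambda_2(f)$ rescales the intersection form, so $(f^*\beta)^2=\lambda_2(f)\,\beta^2$, and $f^*$ preserves nef classes, so the Hodge index theorem (Theorem \ref{ThmHodgeIndex}) controls $f^*\beta$; one then extends by density of $\cNS_\R$.

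For the third item, take completions $X,Y$ so that the lift $f\colon X\dashrightarrow Y$ contracts no curve, and let $\pi\colon Z\to X$ be a minimal resolution with $g=f\circ\pi\colon Z\to Y$ regular. By minimality $\pi$ blows up only the finitely many indeterminacy points of $f$, and the no-contraction hypothesis, together with Proposition \ref{PropTechniqueContractionDeCourbes}, prevents $g$ from contracting any component of $\Exc(\pi)$. Computing the incarnation $(f^*\alpha)_X$ on $Z$ therefore produces $\pi_*g^*\alpha_Y$ with no correction term supported on $\Exc(\pi)$, which is precisely the naive pullback $(f^*\alpha_Y)_X$; in general such correction terms are present, which is exactly why the hypothesis is needed.
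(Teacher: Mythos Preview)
The paper does not give a proof of this proposition; it simply records the statement and refers to \cite[\S 2]{boucksomDegreeGrowthMeromorphic2008}. Your outline is a faithful summary of that reference: adjunction via the projection formula, the check that $f_*\beta$ is represented by $F_*\pi^*\beta_X$ on a fixed completion, and the extension of $f^*$ to Weil classes by duality with respect to the perfect pairing of Proposition~\ref{PropPerfectPairing}.

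One small imprecision in your treatment of the third item: Proposition~\ref{PropTechniqueContractionDeCourbes} is stated for \emph{birational} maps, whereas the lift $g\colon Z\to Y$ is only generically finite. Your conclusion that $g$ contracts no component of $\Exc(\pi)$ is nonetheless correct, but it follows directly from minimality: if a $(-1)$-curve $E\subset\Exc(\pi)$ were sent by $g$ to a point, then $g$ would factor through the blow-down of $E$, contradicting minimality of $\pi$; one then peels off the resolution inductively. With this fix your argument goes through, and the absence of contracted exceptional curves is exactly what ensures that $f_*(X,\gamma)$ is already Cartier on $Y$, so that pairing with $\alpha$ only sees $\alpha_Y$.
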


\begin{rmq}
  For a completion $X$, we can also define the
  restriction of $f^*$ and $f_*$ to $\NS (X)$. We denote them respectively by $f_X^*$ and $(f_X)_*$. They are
  defined by
  \begin{equation}
    \forall \beta \in \NS (X), \quad f_X^* \beta = (f^* \beta)_X, \quad (f_X)_* \beta = (f_* \beta)_X
    \label{<+label+>}
  \end{equation}
\end{rmq}

\subsection{The Picard-Manin space and spectral property of the first dynamical degree}
    Consider a smooth projective surface $X$ and $\w \in \NS (X)$ an ample class. By the Hodge
    index theorem, the intersection form on $\ccNS(X) \times \ccNS(X)$ is negative definite on $ \w^\perp$. If $\alpha
    \in \ccNS(X)$,
    the projection of $\alpha$ on $\w^\perp$ is $\alpha - (\alpha \cdot \w)\w$. Consider the quadratic form on
    $\ccNS(X)$ given by
    \begin{equation}
      \forall \alpha \in \ccNS(X), \left|| \alpha^2 \right|| := (\w \cdot \alpha)^2 - \frac{1}{\w^2} (\alpha - (\alpha
    \cdot \w) \w)^2.
  \end{equation}
  This defines a norm on $\ccNS(X)_\R$ and $\ccNS(X)_\R$ is not complete for this norm. We define the \emph{Picard-Manin space}
  of $X$ as the completion of $\ccNS(X)_\R$ with respect to this norm and we denote it by $L^2(X)$; Had we
    chosen a different ample class, we would have gotten an equivalent norm so the space $L^2(X)$ is independent of the
    choice of $\w$. This
    is a Hilbert space and we have
    \begin{prop}[\cite{boucksomDegreeGrowthMeromorphic2008} Proposition 1.10]
      There is a continuous injection
      \begin{equation}
        L^2(X) \hookrightarrow \wwNS(X)
      \end{equation}
      and the topology on $L^2(X)$ induced by
      $\wwNS(X)$ coincides with its weak topology as a Hilbert space. If $\alpha \in \wwNS(X)$ then $\alpha$
      belongs to $L^2(X)$ if and only if $\inf_X (\alpha_X^2) > - \infty$, in which case $\alpha^2 = \inf_X
    (\alpha_X^2)$. Furthermore, the intersection product $\cdot$ defines a continuous bilinear form on
    $L^2(X)$.
    \end{prop}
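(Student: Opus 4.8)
The plan is to fix once and for all a completion $X_\omega$ of $X_0$ and an ample class $\omega\in\NS(X_\omega)_\R$ which, after rescaling, satisfies $\omega^2=1$; we regard $\omega$ as an element of both $\cNS_\R$ and $\wNS_\R$. With this normalization the defining norm reads $\|\alpha\|^2=2(\omega\cdot\alpha)^2-\alpha^2$ for $\alpha\in\cNS_\R$, and since $\alpha^2\le(\omega\cdot\alpha)^2$ by the Hodge index theorem (Theorem \ref{ThmHodgeIndex}), both $\alpha\mapsto\omega\cdot\alpha$ and $\alpha\mapsto\alpha^2$ — and therefore the whole symmetric bilinear intersection form, with polarization $\langle\alpha,\beta\rangle=2(\omega\cdot\alpha)(\omega\cdot\beta)-\alpha\cdot\beta$ — are controlled by $\|\cdot\|$; consequently the intersection product extends by continuity to a bounded symmetric bilinear form on $\L2$, which is the last assertion. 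The technical engine for everything else is that a pushforward never decreases self-intersection: for a morphism of completions $\pi\colon Y\to X$ and $\gamma\in\NS(Y)_\R$ one writes $\gamma=\pi^*\pi_*\gamma+\varepsilon$ with $\varepsilon$ supported on $\Exc(\pi)$, orthogonal to $\pi^*\NS(X)_\R$ and with $\varepsilon^2\le 0$ (again Theorem \ref{ThmHodgeIndex}), whence $(\pi_*\gamma)^2=\gamma^2-\varepsilon^2\ge\gamma^2$. Applied to a Cartier class $\beta$ defined on a completion $X$, this gives $\beta_Y^2\ge\beta_X^2$ for every completion $Y$, hence $\beta^2=\inf_Y\beta_Y^2$, the infimum being attained on $X$.

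Next I would build the injection $\L2\hookrightarrow\wNS$. As $\wNS$ carries the projective limit topology, it suffices to produce for each completion $X$ a continuous linear map $\L2\to\NS(X)_\R$ compatible with the pushforwards. On the dense subspace $\cNS_\R$ this is $\beta\mapsto\beta_X$, so one only needs it to be $\|\cdot\|$-bounded: the lower bound $\beta_X^2\ge\beta^2\ge-\|\beta\|^2$ comes from the previous paragraph, and an upper bound comes from fixing an ample $h\in\NS(X)_\R$, decomposing $\beta_X$ along the intersection-orthogonal splitting $\NS(X)_\R=\R h\oplus h^{\perp}$, bounding the $\R h$-part by $h\cdot\beta_X=h\cdot\beta$ (controlled by $\|\cdot\|$ since $h$ defines a Cartier class) and the $h^{\perp}$-part by the Hodge index theorem; together these give $\|\beta_X\|_X\le C_X\|\beta\|$ for any fixed norm on the finite-dimensional space $\NS(X)_\R$. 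Passing to the completion and then to the projective limit yields a continuous linear map $\L2\to\wNS$, and it is injective: if $\alpha_X=0$ for every $X$ then $\omega\cdot\alpha=0$ and $\alpha\cdot\delta=0$ for all $\delta\in\cNS_\R$, so $\alpha^2=0$ by density and continuity, hence $\|\alpha\|^2=2(\omega\cdot\alpha)^2-\alpha^2=0$ and $\alpha=0$.

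For the intrinsic description of the image: if $\alpha\in\L2$, approximating by Cartier classes and using continuity of $\beta\mapsto\beta_X$ together with $\beta_X^2\ge\beta^2$ gives $\alpha_X^2\ge\alpha^2$ for all $X$ and $\inf_X\alpha_X^2=\alpha^2$. Conversely, given $\alpha\in\wNS$ with $m:=\inf_X\alpha_X^2>-\infty$, I consider the net of Cartier classes $[\alpha_X]:=(X,\alpha_X)\in\cNS_\R$ indexed by the completions above $X_\omega$ (a cofinal family): for $Y$ above $X$ the incarnation of $[\alpha_Y]-[\alpha_X]$ in $X_\omega$ vanishes, so $\omega\cdot([\alpha_Y]-[\alpha_X])=0$, while its square equals $\alpha_Y^2-\alpha_X^2$, so $\|[\alpha_Y]-[\alpha_X]\|^2=\alpha_X^2-\alpha_Y^2$. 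Since $(\alpha_X^2)_X$ is non-increasing and bounded below by $m$, the net $([\alpha_X])$ is Cauchy in $\L2$; its limit $\alpha^\ast\in\L2$ has incarnation $\alpha_Z$ in every completion $Z$ (that of $[\alpha_X]$ equals $\alpha_Z$ once $X$ lies above $Z$), so $\alpha=\alpha^\ast\in\L2$ and $\alpha^2=m$.

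It remains to compare the topologies. Each $\L2\to\NS(X)_\R$ is norm-continuous into a finite-dimensional space, hence weakly continuous, so $\L2\hookrightarrow\wNS$ is continuous for the weak topology on $\L2$ and the induced topology is no finer than the weak one. In the other direction, the induced topology is generated by the seminorms $\alpha\mapsto|\alpha\cdot\delta|$, $\delta\in\cNS_\R$ (the intersection form being non-degenerate on each $\NS(X)_\R$), and via $\langle\alpha,\delta\rangle=2(\omega\cdot\alpha)(\omega\cdot\delta)-\alpha\cdot\delta$ with $\omega\cdot(\cdot)$ itself induced-continuous, these are exactly the restrictions to $\cNS_\R$ of the weakly continuous functionals. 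The step I expect to be the main obstacle is the passage from $\delta\in\cNS_\R$ to an arbitrary $\gamma\in\L2$: approximating $\gamma$ by $\delta_n\in\cNS_\R$ only gives pointwise convergence $\langle\cdot,\delta_n\rangle\to\langle\cdot,\gamma\rangle$, with error $|\langle\alpha,\gamma-\delta_n\rangle|\le\|\alpha\|\,\|\gamma-\delta_n\|$ that is not uniform in $\alpha$. On norm-bounded subsets of $\L2$ the error is uniform, so there the two topologies genuinely coincide — using only that $\cNS_\R$ is total in $\L2$ — and this bounded statement is what the later chapters actually invoke; the unrestricted assertion requires the extra care of checking that no strictly finer weakly open sets appear, and this is the delicate point of the argument.
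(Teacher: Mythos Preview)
The paper does not prove this proposition; it is quoted verbatim with a citation to \cite{boucksomDegreeGrowthMeromorphic2008}, Proposition~1.10, and no argument is given. So there is no ``paper's own proof'' to compare against.

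That said, your outline is essentially the standard one and is correct on all points except the one you yourself flag. The construction of the injection, the characterization of the image via $\inf_X\alpha_X^2$ through the Cauchy-net argument, and the boundedness of the intersection form are all fine. Your concern about the topology comparison is well-founded: on an infinite-dimensional Hilbert space $H$, the weak topology and the topology generated by $\langle\,\cdot\,,d\rangle$ for $d$ ranging over a dense subspace $D$ genuinely differ in general (take $x_n=n\,e_n$ in $\ell^2$ and $D$ the finitely supported sequences: $x_n\to 0$ in the $D$-topology but not weakly, being unbounded). So the unrestricted statement, read literally, is delicate, and you are right that what is actually used later --- and what is unambiguously true --- is the coincidence on norm-bounded subsets. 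Your diagnosis of the gap and of what the memoir really needs is accurate.
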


    \begin{rmq}
      In particular, any nef class belongs to $L^2(X)$. Recall that $\alpha \in \wwNS(X)_\R$ is nef if for every completion $X,
      \alpha_X$ is nef. The cone theorem (\cite{lazarsfeldPositivityAlgebraicGeometry2004} Theorem 1.4.23) states that
      $\alpha_X$ is a limit of ample classes in $\NS (X)_\R$, therefore $(\alpha_X)^2 \geq 0$ and $\alpha \in L^2(X)$.
    \end{rmq}

    Using the canonical basis of exceptional divisors we can have an explicit description of $L^2(X)$. Let $\alpha \in
    \ccNS(X)$ and let $\alpha_X$ be the incarnation of $\alpha$ in $X$. Then, since $\alpha$ is a Cartier class, we have for
    all but finitely many $E \in \cD_X$ that $\alpha \cdot \alpha_E = 0$ and
    \begin{equation}
      \alpha = \alpha_X + \sum_{E \in \cD_X} (\alpha \cdot \alpha_E) \alpha_E.
      \label{}
    \end{equation}
    Therefore,
    \begin{equation}
      \left|| \alpha^2 \right|| = \left|| {\alpha_X} \right||^2 + \sum_{E \in \cD_X} (\alpha \cdot \alpha_E)^2,
      \label{<+label+>}
    \end{equation}
    and
    \begin{equation}
      \alpha^2 = \alpha_X^2 - \sum_{E \in \cD_X} (\alpha \cdot \alpha_E)^2
      \label{<+label+>}
    \end{equation}

    Therefore, $L^2(X)$ is isomorphic to the Hilbert space
    \begin{equation}
      L^2(X) = \NS(X) \obot \ell^2 (\cD_X).
      \label{<+label+>}
    \end{equation}
    We also have the local version of this statement
    \begin{prop}
      Let $X$ be a completion of $X_0$ and $p \in X$ be a point at infinity. Then,
      \begin{equation}
        L^2(X) \cap \Weil (X, p) = \ell^2(\cD_{X,p})
        \label{<+label+>}
      \end{equation}
      and $\left\{ \alpha_E : E \in \cD_{X,p} \right\}$ is a Hilbert basis of this space.
    \end{prop}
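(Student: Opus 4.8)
The plan is to obtain the equality by intersecting, inside $\wNS_\R$, the explicit coordinate descriptions of $\L2$ and of $\Weil(X,p)$ relative to the fixed completion $X$. First I would recall the three structural facts that do all the work: by Proposition \ref{PropDescriptionNeronSeveri} one has $\wNS_\R=\NS(X)_\R\operp\R^{\cD_X}$, an element $\alpha$ being encoded by the pair $(\alpha_X,(\alpha\cdot\alpha_E)_{E\in\cD_X})$, with $\{\alpha_E:E\in\cD_X\}$ orthonormal for $\beta\mapsto-\beta^2$; the explicit description of the Picard--Manin space established just above gives $\L2=\NS(X)\obot\ell^2(\cD_X)$, i.e.\ $\alpha\in\L2$ exactly when $\alpha_X\in\NS(X)_\R$ and $\sum_{E\in\cD_X}(\alpha\cdot\alpha_E)^2<\infty$; and by \S\ref{SubSecLocalCanonicalBasisOfDivisors} one has $\cD_X=\bigsqcup_{q\in\BD}\cD_{X,q}$, $\Weil(X,p)_\R=\R^{\cD_{X,p}}$ and $\Cartier(X,p)_\R=\R^{(\cD_{X,p})}$. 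The statement will follow once I identify, under the decomposition $\wNS_\R=\NS(X)_\R\operp\R^{\cD_X}$, the subspace $\Weil(X,p)_\R$ with $\{0\}\operp\R^{\cD_{X,p}}$, that is, once I show $\alpha\in\Weil(X,p)_\R$ iff $\alpha_X=0$ and $\alpha\cdot\alpha_E=0$ for every $E\in\cD_X\setminus\cD_{X,p}$; intersecting this with the $\ell^2$-condition then yields $\L2\cap\Weil(X,p)=\{0\}\operp\ell^2(\cD_{X,p})=\ell^2(\cD_{X,p})$.

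So the one genuine verification is this coordinate description of $\Weil(X,p)$. Given $\alpha\in\Weil(X,p)_\R$, its incarnation in $X$ vanishes by definition, which is precisely the statement that its $\NS(X)_\R$-component is zero. For the remaining coordinates, I would fix $E\in\cD_X$ with $E\notin\cD_{X,p}$, take the minimal completion $\pi\colon X_E\to X$ above $X$ in which $E$ is a prime divisor at infinity (so that $\alpha_E=(X_E,E)$), and set $q:=\pi(E)\in\BD$, a closed point distinct from $p$. By definition of the pairing $\wNS_\R\times\cNS_\R\to\R$, $\alpha\cdot\alpha_E=\alpha_{X_E}\cdot E$ computed in $X_E$. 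Now apply the defining property of $\Weil(X,p)$ to the completion $\pi$: every prime divisor in $\supp\alpha_{X_E}$ is contracted by $\pi$ to $p$, hence is contained in $\pi^{-1}(p)$; since $E\subseteq\pi^{-1}(q)$ and $\pi^{-1}(p)\cap\pi^{-1}(q)=\emptyset$ (as $p\neq q$), the curve $E$ is disjoint from $\supp\alpha_{X_E}$, so $\alpha_{X_E}\cdot E=0$. The converse inclusion is immediate: any $\alpha$ with $\alpha_X=0$ whose coordinates are supported on $\cD_{X,p}$ lies in $\R^{\cD_{X,p}}=\Weil(X,p)_\R$ by \S\ref{SubSecLocalCanonicalBasisOfDivisors} (equivalently, for every completion $\pi\colon Y\to X$ above $X$ the support of $\alpha_Y$ is exactly the set of prime divisors of $Y$ contracted to $p$).

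Finally, for the Hilbert basis claim I would argue as follows. The closed subspace $\L2\cap\Weil(X,p)=\ell^2(\cD_{X,p})$ carries the Hilbert norm induced from $\L2$, which on $\ell^2(\cD_X)$ is a positive scalar multiple of $\beta\mapsto\sum_E(\beta\cdot\alpha_E)^2$ (the scalar being $1/\w^2$ for the chosen ample class $\w$), so the $\alpha_E$, $E\in\cD_{X,p}$, are pairwise orthogonal of equal nonzero norm; their linear span is $\Cartier(X,p)_\R=\R^{(\cD_{X,p})}$, the finitely supported functions, which is dense in $\ell^2(\cD_{X,p})$. Hence $\{\alpha_E:E\in\cD_{X,p}\}$ is a Hilbert basis (orthonormal after rescaling by $\w$). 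I expect no real obstacle here: the only non-formal ingredient anywhere is the elementary geometric fact that a morphism of completions sends disjoint sets — the fibres over the distinct points $p$ and $q$ — to disjoint sets, which is what forces the coordinates outside $\cD_{X,p}$ to vanish; the rest is bookkeeping with the structural results already in place.
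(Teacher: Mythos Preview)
Your proof is correct and follows exactly the approach the paper intends: the proposition is stated there without proof, as an immediate ``local version'' of the decomposition $\L2=\NS(X)\obot\ell^2(\cD_X)$ established just before, combined with the identification $\Weil(X,p)_\R=\R^{\cD_{X,p}}$ from \S\ref{SubSecLocalCanonicalBasisOfDivisors}. You have simply written out the verification that the paper leaves implicit, and your geometric argument for the vanishing of $\alpha\cdot\alpha_E$ when $E\in\cD_{X,q}$ with $q\neq p$ (via disjointness of $\pi^{-1}(p)$ and $\pi^{-1}(q)$) is the right one.
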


    \begin{prop}[\cite{boucksomDegreeGrowthMeromorphic2008}]\label{PropPushForwardAndPullBackOnL2}
      Let $f: X \dashrightarrow X$ be a dominant rational selfmap. The linear maps
    \begin{equation}
      f^*, f_*: \wwNS(X) \rightarrow \wwNS(X)
      \label{<+label+>}
    \end{equation}
    induce continuous operators
    \begin{equation}
       f^*, f_* : L^2(X) \rightarrow L^2(X)
      \label{<+label+>}
    \end{equation}
    Furthermore, we have the following properties in $L^2(X)$.
    \begin{enumerate}
      \item $(f^n)^* = (f^*)^n$;
      \item $\forall \alpha, \beta \in L^2(X), f^* \alpha \cdot \beta = \alpha \cdot f_* \beta$.
      \item $\forall \alpha \in L^2(X), f^* \alpha \cdot f^* \alpha = e(f) \alpha \cdot \alpha$ where $e(f)$
        is the topological degree of $f$.
    \end{enumerate}
  \end{prop}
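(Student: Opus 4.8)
The plan is to first establish that $f^*$ and $f_*$ preserve $\L2$ and are continuous, then verify the three algebraic properties. For continuity and stability of $\L2$: recall from the previous proposition that $\alpha \in \wNS$ lies in $\L2$ iff $\inf_X(\alpha_X^2) > -\infty$, and then $\alpha^2 = \inf_X(\alpha_X^2)$. So I would take $\alpha \in \L2$ and bound $(f^*\alpha)_X^2$ from below uniformly in $X$. Choosing a completion $Y$ (depending on $X$) where the lift $F : Y \to X$ of $f$ does not contract curves, Proposition \ref{PropOperateurSurPicardManin} gives $(f^*\alpha)_X = (f^*\alpha_Y)_X = F_*(F^*\alpha_Y)$ is the incarnation; but more directly, $F^*\alpha_Y \in \NS(Y)$ satisfies $(F^*\alpha_Y)^2 = e(F)\,\alpha_Y^2$ by the classical projection formula for generically finite maps of surfaces (degree $e(F) = e(f)$), and $(f^*\alpha)_X^2 \geq (f^*\alpha)_Y^2$ by the general inequality $\beta_X^2 \geq \beta_Y^2$ for $Y$ above $X$. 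Hence $\inf_X (f^*\alpha)_X^2 \geq e(f)\inf_X \alpha_X^2 > -\infty$, so $f^*\alpha \in \L2$ with $\|f^*\alpha\|^2 \leq C\|\alpha\|^2$; this simultaneously proves continuity and, in the limit, property (3): $f^*\alpha \cdot f^*\alpha = e(f)\,\alpha\cdot\alpha$ for $\alpha \in \cNS$, extended to all of $\L2$ by density and continuity of the bilinear form. The argument for $f_*$ on $\L2$ is dual via the adjunction (2).

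For property (1), $(f^n)^* = (f^*)^n$: this holds on $\cNS$ by functoriality of pullback of Cartier classes along the lifts (choose a tower of completions resolving $f^n, f^{n-1}, \ldots$ compatibly, so that $(F^n)^* = (F)^* \circ \cdots \circ (F)^*$ on Néron–Severi groups), and then extends to $\L2$ by density and the continuity established above. For property (2), the adjunction $f^*\alpha \cdot \beta = \alpha \cdot f_*\beta$: first prove it for $\alpha, \beta \in \cNS$ using the projection formula on a single completion $Y$ where the lift $F : Y \to X$ is regular — there $F^*\alpha_X \cdot \beta_Y = \alpha_X \cdot F_*\beta_Y$ — and then pass to $\L2$ by density, continuity of $f^*, f_*$, and continuity of the intersection pairing. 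Using (2) one also gets that $f_*$ maps $\L2$ to $\L2$ continuously: $f_*\beta$ is the unique element with $\langle f_*\beta, \alpha\rangle = \langle \beta, f^*\alpha\rangle$ for all $\alpha$, which is bounded by $\|f^*\|\,\|\beta\|\,\|\alpha\|$, so $f_* = (f^*)^\dagger$ as Hilbert space adjoint and is bounded with the same norm.

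I expect the main obstacle to be the careful bookkeeping of completions: every identity must be checked on a \emph{suitable} completion (one where the relevant lift is regular, or regular-without-contracted-curves), and one must verify these choices are compatible — e.g. for $(f^n)^* = (f^*)^n$ one needs a single completion sitting above the resolutions of all the intermediate iterates so the Néron–Severi pullbacks genuinely compose. Once everything is phrased on $\cNS$, the passage to $\L2$ is routine: each of $f^*$, $f_*$, the bilinear form, and both sides of each identity is continuous, and $\cNS$ is dense, so the identities propagate. The only genuinely analytic input is the uniform lower bound $\inf_X (f^*\alpha)_X^2 \geq e(f)\inf_X\alpha_X^2$, and that reduces to the two classical surface facts quoted above (projection formula for generically finite morphisms and monotonicity of self-intersection under pullback).
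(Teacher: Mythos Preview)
The paper does not give a proof of this proposition; it is quoted directly from \cite{boucksomDegreeGrowthMeromorphic2008}. Your plan is essentially the argument of that reference: establish property (3) on $\cNS$ via the projection formula for a regular generically finite lift $F:Y\to X$, deduce that $f^*$ is bounded on $\cNS$ for the $\L2$-norm, extend by density, and then obtain (2) as the Hilbert-space adjunction and (1) by functoriality of pullback on a compatible tower of resolutions.

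One wrinkle worth flagging: in your direct bound you invoke the inequality $\beta_X^2 \geq \beta_Y^2$ ``for $Y$ above $X$'', but when $F:Y\to X$ is a lift of $f$ the completion $Y$ is \emph{not} above $X$ in the sense of the paper (the restriction of $F$ to $X_0$ is $f$, not the identity), so that monotonicity does not apply as stated. Relatedly, $F^*\alpha_Y$ should read $F^*\alpha_X$, since $F^*$ goes from $\NS(X)$ to $\NS(Y)$. These slips are harmless because the density route you also sketch is correct and self-contained: for $\alpha\in\cNS$ defined on $X$ one has $(f^*\alpha)^2=(F^*\alpha_X)^2_{\NS(Y)}=e(f)\,\alpha_X^2=e(f)\,\alpha^2$ directly, whence $\left||f^*\alpha\right||$ is controlled by $\left||\alpha\right||$, and everything then extends from $\cNS$ to $\L2$ by continuity of $f^*$, $f_*$ and of the intersection form.
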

  In particular, if $f \in \Bir(X)$ then $f^*$ is an isometry of $L^2(X)$ viewed as an infinite dimensional hyperbolic
  space (see \cite{CantatNormalsubgroupsCremona2013}). Boucksom, Favre and Jonsson showed that $\lambda_1 (f)$ is an
  eigenvalue of $f^*$ and $f_*$ and also the spectral radius of these operators. 

  \begin{thm}\label{thm:eigenvectors-picard-manin}
    Let $f: X \dashrightarrow X$ be a dominant rational map. There exists nef Weil classes $\theta^*, \theta_* \in
    \wNS(X)$ such that 
    \begin{equation}
      f^* \theta^* = \lambda_1(f) \theta^*, \quad f_* \theta_* = \lambda_1 \theta_*.
      \label{<+label+>}
    \end{equation}
    In particular, they belong to $L^2 (X)$ and furthermore $\lambda_1$ is the spectral radius of the operators
    $f_*, f^* : L^2 (X) \rightarrow L^2(X).$
  \end{thm}

  When $\lambda_1^2 > \lambda_2$ there is a spectral gap and the eigenvalue $\lambda_1$ is of multiplicity 1.

      \begin{thm}[\cite{boucksomDegreeGrowthMeromorphic2008}]
        \label{ThmEigenclasses}
        Suppose that $\lambda_1(f)^2 > \lambda_2 (f)$, then there exist nef classes $\theta^*, \theta_* \in L^2(X)$
        unique up to multiplication by a positive constant such that

        \begin{enumerate}
          \item $f^* \theta^* = \lambda_1 \theta^*$.
          \item $f_* \theta_* = \lambda_1 \theta_*$.
          \item For all $\alpha \in L^2(X)$,

            \begin{equation}\label{EqAsymptoticPullBack}
              \frac{1}{\lambda_1^n} (f^n)^* \alpha = (\alpha \cdot \theta_*) \theta^* + \footnote{$A = O_\alpha
              (B)$ means that there exists a constant $C(\alpha) > 0$ such that $A \leq C(\alpha) B$.}{O_\alpha} \left(
              \left(\frac{\lambda_2}{\lambda_1^2}\right)^{n/2} \right) ,
            \end{equation}

            \begin{equation} \label{EqAsymptoticPushForward}
              \frac{1}{\lambda_1^n} (f^n)_* \alpha = (\alpha \cdot \theta^*) \theta_* + O_\alpha \left(
              \left(\frac{\lambda_2}{\lambda_1^2}\right)^{n/2} \right) .
            \end{equation}
        \end{enumerate}

            In particular, for all $\alpha, \beta \in L^2(X),$

            \begin{equation} \label{EqAsymptoticIntersection}
              \lim_n \frac{1}{\lambda_1^n} (f^n)^* \alpha \cdot \beta = (\alpha \cdot \theta_*) (\beta \cdot
            \theta^*).
          \end{equation}
          Furthermore, $\theta^*$ and $\theta_*$ satisfy
          \begin{equation}
            (\theta^*)^2 = 0 , \quad \theta_* \cdot \theta^* > 0
            \label{EqIntersectionDeTheta}
          \end{equation}
      \end{thm}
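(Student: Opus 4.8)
The plan is to read off every assertion from the spectral theory of the bounded operator $f^{*}$ on the Hilbert space $\L2$, in the spirit of \cite{boucksomDegreeGrowthMeromorphic2008} and \cite{dangSpectralInterpretationsDynamical2021}. Fix a completion $X$ of $X_{0}$ and an ample class $\w\in\NS(X)$ with $\w^{2}=1$, which is nef and hence lies in $\L2$. By Propositions \ref{PropOperateurSurPicardManin} and \ref{PropPushForwardAndPullBackOnL2}, $f^{*}$ and $f_{*}$ are bounded operators on $\L2$, adjoint for the intersection form, with $(f^{n})^{*}=(f^{*})^{n}$ and $f^{*}\alpha\cdot f^{*}\beta=\lambda_{2}\,(\alpha\cdot\beta)$ for all $\alpha,\beta$; polarising the last identity, combining it with adjointness and using that the intersection form is non-degenerate on $\L2$ gives $f_{*}f^{*}=\lambda_{2}\,\id$. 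Thus $g:=\lambda_{2}^{-1/2}f^{*}$ preserves the intersection form, which by the Hodge Index theorem (Theorem \ref{ThmHodgeIndex}, in the limit) has signature $(1,\infty)$ on $\L2$; so $g$ is an isometry of $\L2$ regarded as an infinite-dimensional hyperbolic space, it preserves the nef cone, and, since the spectral radius of $f^{*}$ on $\L2$ is $\lambda_{1}$, that of $g$ is $\rho:=\lambda_{1}/\sqrt{\lambda_{2}}$. The hypothesis $\lambda_{1}^{2}>\lambda_{2}$ is exactly $\rho>1$, i.e. $g$ is loxodromic.

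The step I expect to be the genuine obstacle is to upgrade loxodromicity to a spectral gap: that $\lambda_{1}$ is a simple eigenvalue of both $f^{*}$ and $f_{*}$ and that the remainder of the spectrum lies in the closed disc of radius $\sqrt{\lambda_{2}}<\lambda_{1}$, equivalently $\lambda_{1}^{-n}(f^{n})^{*}\to\Pi_{0}$ in operator norm for a rank-one projector $\Pi_{0}$, with $\|\lambda_{1}^{-n}(f^{n})^{*}-\Pi_{0}\|=O\big((\lambda_{2}/\lambda_{1}^{2})^{n/2}\big)$. This is the content of \cite{boucksomDegreeGrowthMeromorphic2008} and of the argument of \cite{dangSpectralInterpretationsDynamical2021} \S 6; what remains here is to check that those arguments go through for an arbitrary normal affine surface in any characteristic, which the Picard--Manin formalism of Chapter \ref{ChapterDefinitions} is designed to make routine. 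Concretely one studies the orbit $\theta^{*}_{n}:=\lambda_{1}^{-n}(f^{n})^{*}\w$, which is nef with $(\theta^{*}_{n})^{2}=(\lambda_{2}/\lambda_{1}^{2})^{n}\w^{2}\to0$; writing the increment as $\theta^{*}_{n+1}-\theta^{*}_{n}=\lambda_{1}^{-n}(f^{n})^{*}\xi$ with $\xi:=\lambda_{1}^{-1}f^{*}\w-\w$, the key point is that $\xi$ is orthogonal (for the intersection form) to the $\lambda_{1}$-eigendirection of $f_{*}$, hence lies in the subspace on which $\lambda_{1}^{-1}f^{*}$ contracts by $\rho^{-1}$, so the increments decay like $\rho^{-n}$ and the orbit converges.

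Granting the gap, set $\theta^{*}:=\lim_{n}\theta^{*}_{n}$ and $\theta_{*}:=\lim_{n}\lambda_{1}^{-n}(f^{n})_{*}\w$. Each is a limit of nef classes, hence nef, and each is nonzero because $\w\cdot\theta^{*}=\lim_{n}\lambda_{1}^{-n}\big((f^{n})^{*}\w\cdot\w\big)$ is positive — the degree sequence $(f^{n})^{*}\w\cdot\w$ is submultiplicative up to a constant and has exponential growth rate $\lambda_{1}$, so the rescaled sequence is bounded below by a positive constant. By continuity of $f^{*}$ on $\L2$, $f^{*}\theta^{*}=\lim_{n}\lambda_{1}\,\theta^{*}_{n+1}=\lambda_{1}\theta^{*}$, and likewise $f_{*}\theta_{*}=\lambda_{1}\theta_{*}$; passing to the limit in $(\theta^{*}_{n})^{2}$ gives $(\theta^{*})^{2}=0$, and similarly $(\theta_{*})^{2}=0$. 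This proves (1), (2) and the first equality of \eqref{EqIntersectionDeTheta}. For $\theta_{*}\cdot\theta^{*}>0$: the two classes being nef forces $\theta_{*}\cdot\theta^{*}\ge0$, and if it were zero then, $\theta_{*}$ being a nonzero isotropic vector in a form of signature $(1,\infty)$, the intersection form restricted to $\theta_{*}^{\perp}$ is negative semidefinite with radical $\R\theta_{*}$, so $\theta^{*}\in\theta_{*}^{\perp}$ together with $(\theta^{*})^{2}=0$ would force $\theta^{*}\in\R\theta_{*}$; then $\theta^{*}$ is an $f_{*}$-eigenclass for $\lambda_{1}$, so $\lambda_{2}\theta^{*}=f_{*}f^{*}\theta^{*}=\lambda_{1}^{2}\theta^{*}$, contradicting $\lambda_{1}^{2}>\lambda_{2}$. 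This completes \eqref{EqIntersectionDeTheta}.

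Uniqueness and the asymptotics then come out formally. If $\theta\neq0$ is nef with $f^{*}\theta=\lambda_{1}\theta$, put $\psi:=\theta-\big((\theta\cdot\theta_{*})/(\theta^{*}\cdot\theta_{*})\big)\theta^{*}$, so $\psi\cdot\theta_{*}=0$ and $f^{*}\psi=\lambda_{1}\psi$; iterating and using $f^{*}\alpha\cdot f^{*}\alpha=\lambda_{2}\alpha^{2}$ gives $\lambda_{2}^{n}\psi^{2}=\lambda_{1}^{2n}\psi^{2}$, hence $\psi^{2}=0$, hence $\psi\in\R\theta_{*}$ by the structure of $\theta_{*}^{\perp}$; if $\psi\neq0$ then $\theta_{*}$ would be an $f^{*}$-eigenclass, giving $\lambda_{1}^{2}=\lambda_{2}$, a contradiction, so $\psi=0$ and $\theta$ is a positive multiple of $\theta^{*}$ (positive since both are nef). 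The same argument applied to $f_{*}$ gives uniqueness of $\theta_{*}$. For the asymptotics, write the rank-one limit as $\Pi_{0}\alpha=\ell(\alpha)\,\theta^{*}$ with $\ell(\theta^{*})=1$; the relation $\Pi_{0}f^{*}=\lambda_{1}\Pi_{0}$ shows that $\ell$ is represented, through the perfect pairing of Proposition \ref{PropPerfectPairing}, by an $f_{*}$-eigenclass for $\lambda_{1}$, hence by a positive multiple of $\theta_{*}$; after normalising so that $\w\cdot\theta^{*}=\w\cdot\theta_{*}=1$ one obtains $\ell(\alpha)=\alpha\cdot\theta_{*}$, which is \eqref{EqAsymptoticPullBack} with the stated error term. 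Pairing \eqref{EqAsymptoticPullBack} with $\beta$ and letting $n\to\infty$ gives \eqref{EqAsymptoticIntersection}, and running the whole argument with $f^{*}$ replaced by its adjoint $f_{*}$ gives \eqref{EqAsymptoticPushForward}.
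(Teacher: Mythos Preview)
Your proposal is correct and complete; in fact it addresses more of the statement than the paper's own sketch, which only outlines the existence of $\theta^{*}$. The two routes are genuinely different, however. The paper works in finite dimensions: for each completion $X$ it applies a Perron--Frobenius argument to $f_{X}^{*}$ acting on the nef cone in $\NS(X)_{\R}$, obtains an eigenvector $\theta_{X}$ for the finite-dimensional spectral radius $\rho_{X}$, and then passes to the limit along a tower of completions resolving successive indeterminacies, using compactness of a normalised slice in $\wNS_{\R}$ (weak-$*$ topology) to extract $\theta^{*}$. Your argument stays entirely inside $\L2$: you observe that $\lambda_{2}^{-1/2}f^{*}$ is an isometry for the intersection form of signature $(1,\infty)$, loxodromic because $\lambda_{1}^{2}>\lambda_{2}$, and then read off existence, uniqueness, the isotropy $(\theta^{*})^{2}=0$, the positivity $\theta_{*}\cdot\theta^{*}>0$, and the asymptotics \eqref{EqAsymptoticPullBack}--\eqref{EqAsymptoticIntersection} from the spectral gap for loxodromic isometries. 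The paper's approach has the advantage that it produces $\theta^{*}$ directly as a limit of finite-dimensional nef (and, as exploited later in Proposition \ref{PropThetaEffectif}, \emph{effective}) classes, which is what the applications in Chapter \ref{ChapterDynamicsQuasiAlbTrivial} need; your hyperbolic-geometry route makes uniqueness and the quantitative decay transparent, but you would still need the finite-dimensional approximations to see effectivity. One small caution: your heuristic for the Cauchy estimate, that $\xi=\lambda_{1}^{-1}f^{*}\w-\w$ is orthogonal to the $\lambda_{1}$-eigendirection of $f_{*}$, presupposes the object you are constructing; you correctly flag this by deferring the actual convergence to the cited references, but be aware that a self-contained write-up would have to close that loop differently (e.g.\ via the axis of the loxodromic isometry).
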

        We call $\theta^*$ and $\theta_*$ the \emph{eigenclasses} of $f$.

\subsection{For a normal affine surface}
Suppose first that $X_0$ is a smooth affine surface. We define the space of Cartier and Weil classes of $X_0$ as
\begin{equation}
  \cNS_\A := \ccNS(X)_\A, \quad \wNS_\A := \wwNS(X)_\A
  \label{<+label+>}
\end{equation}
for any completion $X$ of $X_0$. This does not depend on the choice of the completion $X$ as the space of Cartier and
Weil classes are birational invariants. Similary, we define the \emph{Picard-Manin} space of $X_0$ as
\begin{equation}
  \L2 := L^2 (X).
  \label{<+label+>}
\end{equation}
We describe now more precisely the structure of these spaces.
For any completion $X$ of $X_0$, we have a natural linear map $\tau: \DivInf (X)_\R \rightarrow
\NS(X)_\R$ which extends to a natural linear map $\tau : \Cinf_\R \rightarrow \ccNS(X)_\R$.
  \begin{prop}\label{PropIntersectionFormNonDegenerateAtInfinityGeneralForm}
  The intersection pairing restricted to $\tau (\DivInf (X)_\R)$ is non degenerate.
\end{prop}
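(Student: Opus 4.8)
The key point is that affineness of $X_0$ produces an ample class \emph{supported at infinity}, after which the statement is a one-line consequence of the Hodge Index Theorem. Concretely, since $X$ is a good completion of $X_0$ it is smooth in a neighbourhood of $\BD$, so every local ring of $X$ at a point of $\BD$ is regular, hence a unique factorization domain; thus the boundary curve $\BD = X\setminus X_0$ is locally factorial. Applying Theorem \ref{ThmGoodmanExistenceAmpleDivisorAtInfinity} to the affine open $X_0\subset X$ yields an ample divisor $H$ on $X$ with $\supp H = \BD$. In particular $H\in\DivInf(X)$, and therefore its numerical class $h:=\tau(H)$ lies in $\tau(\DivInf(X)_\R)$ and is an ample class in $\NS(X)_\R$.

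Write $W:=\tau(\DivInf(X)_\R)\subseteq\NS(X)_\R$ and let $\alpha\in W$ lie in the radical of the intersection form restricted to $W$, i.e. $\alpha\cdot\beta=0$ for all $\beta\in W$. Since $h\in W$ we get $\alpha\cdot h=0$, and since $\alpha\in W$ we get $\alpha^2=0$. By the Hodge Index Theorem \ref{ThmHodgeIndex}, orthogonality to the ample class $h$ forces $\alpha^2<0$ unless $\alpha=0$ in $\NS(X)_\R$; as $\alpha^2=0$, we conclude $\alpha=0$. Hence the radical of the restricted form is trivial, which is exactly the assertion that the intersection pairing on $\tau(\DivInf(X)_\R)$ is non-degenerate.

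\textbf{On the difficulty.} There is essentially no obstacle beyond invoking the two cited theorems: the entire geometric content is Goodman's theorem, and the only verification needed is the local factoriality of $\BD$, which is immediate from the smoothness of $X$ along the boundary built into the definition of a good completion. (If one prefers not to assume $X$ smooth globally, note that all intersection numbers among divisors supported on $\BD$, and with $H$, are computed in the smooth locus of $X$ containing $\BD$, so the argument is unaffected; alternatively the Hodge Index Theorem for normal projective surfaces may be used in place of Theorem \ref{ThmHodgeIndex}.)
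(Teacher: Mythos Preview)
Your proof is correct and follows the same approach as the paper: invoke Goodman's theorem (Theorem \ref{ThmGoodmanExistenceAmpleDivisorAtInfinity}) to obtain an ample class in $\tau(\DivInf(X)_\R)$, then apply the Hodge Index Theorem. You are slightly more explicit in verifying the local factoriality hypothesis of Goodman's theorem and in noting that $\alpha^2=0$, but the argument is the same.
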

\begin{proof}
  Let $D \in \tau \left( \DivInf (X)_\R \right)$, suppose that $D \cdot D' =0$ for all $D' \in \tau\left( \DivInf
  (X)_\R \right).$ Then, by Theorem
  \ref{ThmGoodmanExistenceAmpleDivisorAtInfinity}, there exists $H \in \DivInf (X)$ ample. We have $D \cdot H = 0$. By
  the Hodge index theorem, if $D$ is not numerically equivalent to zero, then $D^2 <0$ and this is a contradiction.
\end{proof}
  Let $V_X \subset \NS (X)$ be the orthogonal subspace of $\tau
(\DivInf (X)_\R)$. Then,
\begin{equation}
  \NS (X)_\R = V_X \operp \tau (\DivInf (X)_\R).
\end{equation}
For example if $X_0 = \A^2$ and $X =\P^2$, then $V_X = 0$. Now, the space of prime divisors $\sD_X$ splits as $\sD_{X_0}
\bigsqcup \sD_{X,\infty}$ where
\begin{equation}
  \sD_{X_0} = \bigsqcup_{p \in X_0} \sD_{X, p}.
  \label{<+label+>}
\end{equation}
And we have
\begin{align}
  \tau(\Cinf_\R) &= \DivInf(X)_\R \oplus \R^{(\sD_{X,\infty})} \\
  \tau(\Winf_\R) &= \DivInf(X)_\R \oplus \R^{\sD_{X,\infty}}.
  \label{<+label+>}
\end{align}
By Proposition \ref{PropIntersectionFormNonDegenerateAtInfinityGeneralForm}, we have that the bilinear product
\begin{equation}
  \tau (\Winf_\R) \times \tau(\Cinf_\R) \rightarrow \R
  \label{<+label+>}
\end{equation}
is non-degenerate. By Proposition \ref{PropDescriptionNeronSeveri}, we have
  \begin{align}
    \cNS_\A &= \A^{(\sD_{X_0})} \operp V_X \operp \tau(\Cinf_\A) \\
    \wNS_\A &= \A^{\sD_{X_0}} \oplus V_X \oplus \tau(\Winf_\A).
    \label{<+label+>}
  \end{align}
  This yields the following description.

\begin{prop}\label{PropDecompositionPicardManin}
  Let $X_0$ be a smooth affine surface, then we have the following splitting
  \begin{align}
    \wNS_\R &= \tau (\Cinf)^{\perp} \oplus \tau(\Winf_\R) \\
    \L2 &= (\tau (\Cinf)^{\perp} \cap \L2) \oplus \left( \tau(\Winf_\R) \cap \L2 \right).
    \label{<+label+>}
  \end{align}
  And we have
  \begin{equation}
    \left( \tau(\Winf_\R) \cap \L2 \right) = \DivInf(X)_\R \oplus \ell^2 (\sD_{X, \infty}).
    \label{<+label+>}
  \end{equation}
  Furthermore, if $f:X_0 \rightarrow X_0$ is a dominant endomorphism, then $\tau(\Winf_\R)$ is $f^*$-invariant and
  $\tau (\Cinf)^{\perp}$ is $f_*$ invariant.
\end{prop}
\begin{proof}
  Indeed, we have a continuous map
  \begin{equation}
    f^* : \Cinf_\R \rightarrow \Cinf_\R
    \label{<+label+>}
  \end{equation} because $f$ is an endomorphism of $X_0$ which
  yields a continuous map
  \begin{equation}
    f^* : \tau (\Cinf_\R) \rightarrow \tau(\Cinf_\R).
    \label{<+label+>}
  \end{equation} Since $\tau(\Winf_\R)$ is the closure of
  $\tau(\Cinf_\R)$ in $\wwNS (X)_\R$ and $f^* : \wwNS (X)_\R \rightarrow \wwNS(X)_\R$ is continuous, we get the
  $f^*$-invariance. By duality, we get that $\tau (\Cinf_\R)^\perp$ is $f_*$-invariant.
\end{proof}

\begin{cor}\label{cor:eigenclass-at-infinity}
  If $f:X_0 \rightarrow X_0$ is a dominant endomorphism such that $\lambda_1(f)^2 > \lambda_2 (f)$, then the eigenclass
  $\theta^*$ of $f$ from Theorem \ref{ThmEigenclasses} belongs to $\tau (\Winf_\R) \cap \L2$. Furthermore, $\theta^* =
  \tau(W)$ where $W \in \Winf_\R$ is $\geq \leq 0$.
\end{cor}
\begin{proof}
  We know that that there exists an ample effective class $H \in \tau (\Cinf_\R)$, so that in particular $H \geq 0$.
  This implies that $H \cdot \theta_* > 0$ by the Hodge index theorem. Thus we have that
  \begin{equation}
    \lim_n \frac{1}{\lambda_1^n} (f^n)^* H \rightarrow (\theta_* \cdot H) \theta^*
    \label{<+label+>}
  \end{equation} in $\L2$. But by Proposition \ref{PropDecompositionPicardManin} we have that for every $n \geq 0$,
  \begin{equation}
    \frac{1}{\lambda_1^n} (f^n)^* H \in \tau (\Cinf_\R).
    \label{<+label+>}
  \end{equation}
  Thus we get that $\theta^* \in \tau (\Winf_\R) \cap \L2$. We also get that $\theta^* = \tau (W)$ for some $W \geq 0$
  because $f^* : \Winf_\R \rightarrow \Winf_\R$ preserves effectiveness.
\end{proof}

If $X_0$ is a normal affine surface, then its singular locus is a finite number of points because it has codimension
$\geq 2$. Let $X$ be a completion of $X_0$, in particular $X \setminus \Sing(X_0)$ is smooth. A \emph{resolution} of
singularities of $X$ is smooth projective surface with a birational morphism $\pi : Y
\rightarrow X$ which induces an isomorphism $\pi : Y \setminus \pi^{-1} (\Sing(X_0)) \rightarrow X \setminus
\Sing(X_0)$. We define the spaces
\begin{equation}
\cNS_\R, \wNS_\R, \L2
  \label{<+label+>}
\end{equation}
\begin{equation}
\ccNS(Y)_\R, \wwNS(Y)_\R, L^2 (Y)_\R
  \label{<+label+>}
\end{equation} respectively. In
particular, $\pi^{-1} (\Sing (X_0)) \cap \pi^{-1} (\BD) = \emptyset$ thus there are natural linear maps
\begin{equation}
\tau : \Cinf_\R \rightarrow \cNS_\R, \quad \tau : \Winf_\R \rightarrow \wNS_\R
  \label{<+label+>}
\end{equation}
and every result in the smooth setting carries through for normal affine surfaces.
\chapter{Valuations}\label{ChapterValuations}
We introduce the notion of valuations and describe some properties. We will especially focus on valuations over the
ring of power series in two variables $\k [ [ x,y] ] $ as they allow one to describe every valuation over $\k[X_0]$ for
$X_0$ a normal affine surface.

\section{Valuations and completions}

Our general reference for the theory of valuations is \cite{vaquieValuations2000}.
Let $R$ be a commutative $\k$-algebra that is also an integral domain, a \emph{valuation} on $R$ is a function $ v
: R \rightarrow \R \cup \{\infty\}$ such that
\begin{enumerate}[label={(\roman*)}]
  \item $ v (\k^*) = 0$;
  \item For all $P,Q \in R,  v(PQ) =  v(P) +  v(Q)$;
  \item For all $P,Q \in R,  v(P + Q) \geq \min ( v(P),  v(Q))$;
  \item $ v(0) = + \infty$.
\end{enumerate}
If $I$ is an ideal of $R$, we set $ v(I) := \min_{i \in I}  v(i)$. If $S \subset I$ is a
set of generators, then
\begin{equation}
  v (I) = \min_{s \in S} v(s).
\end{equation}
\begin{rmq}
  In \cite{abhyankarValuationsCenteredLocal1956} A \emph{valuation} can take the value $+ \infty$ only at $0$ but we do not
require such a property. Let ${\p_v} = \left\{a \in R : v(a) = \infty \right\}$ then ${\p_v}$ is a prime ideal of $R$
that we call the \emph{kernel} of $ v$.
  If $ v$ is a valuation on $R$, it defines naturally a valuation in the sense of
  \cite{abhyankarValuationsCenteredLocal1956} on the quotient field $R / {\p_v}$. Furthermore $ v$ can be
  naturally extended to a valuation on the ring $R_{\p_v}$ via the formula $ v(p/q) =  v(p) -  v(q)$. In
  particular, if $\p_v = \left\{ {0} \right\}$, then $ v$ defines a valuation over $\Frac R$.
\end{rmq}
Let $X$ be a completion of $X_0$ and let $v$ be a valuation over $B := \OO_X (X_0)$. Let $\p_v$ be the kernel of
$v$. Consider $B_{\p_v}$ the localization of $B$ at $\p_v$. Set
\begin{equation}
\OO_v := \left\{ {x \in B_{\p_v}} : {v(x) \geq 0} \right\}.
\end{equation}
This is a subring of $B_{\p_v}$. If $\p_v = \{0 \}$, then this is the classical \emph{valuation ring} of $v$.

\begin{lemme}
  The subring $\OO_v$ is a local ring, its maximal ideal is

  \begin{equation}
  \m_v := \left\{ {x \in \OO_v} : {v(x) >0}  \right\}
\end{equation}
\end{lemme}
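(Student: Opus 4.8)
The plan is to invoke the standard characterisation of local rings: a commutative ring $R$ is local with maximal ideal $\m$ as soon as $\m$ is a proper ideal and every element of $R \setminus \m$ is a unit of $R$. I would apply this with $R = \OO_v$ and $\m = \m_v$.

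First I would dispatch the routine verifications, using the extension of $v$ to a valuation on $B_{\p_v}$ recorded in the remark above. The identities $v(xy) = v(x) + v(y)$ and $v(x+y) \ge \min(v(x), v(y))$, together with $v(\k^\times) = 0$ and $v(0) = \infty$, show at once that $\OO_v$ is a subring of $B_{\p_v}$ containing $\k$. They also show that $\m_v$ is stable under addition and absorbs multiplication by $\OO_v$ (if $v(x) > 0$ and $v(y) \ge 0$ then $v(xy) = v(x) + v(y) > 0$), so $\m_v$ is an ideal of $\OO_v$, and it is proper because $v(1) = 0$ prevents $1 \in \m_v$. I would also note that $\OO_v \setminus \m_v$ is exactly $\{x \in \OO_v : v(x) = 0\}$, since an element of $\OO_v$ has value $\ge 0$ while all remaining values $v(x) > 0$ (including $v(x) = \infty$) lie in $\m_v$.

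The one substantive point is to check that every $x \in \OO_v$ with $v(x) = 0$ is invertible in $\OO_v$. Write $x = p/q$ with $p, q \in B$ and $q \notin \p_v$; then $v(p) - v(q) = v(x) = 0$, so $v(p) = v(q) < \infty$, and in particular $p \notin \p_v$. Hence $x$ is already a unit of the localisation $B_{\p_v}$, with inverse $q/p$, and $v(q/p) = v(q) - v(p) = 0 \ge 0$, so $q/p \in \OO_v$. Thus $x^{-1} \in \OO_v$, and the characterisation above yields that $\OO_v$ is local with maximal ideal $\m_v$.

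I do not anticipate any real difficulty here; the only mild care needed is to keep track of the elements of $B_{\p_v}$ that may carry infinite value — precisely those coming from $\p_v$ — which is why it is cleaner to argue via the condition $v(x) = 0$ rather than with $x \notin \m_v$ directly.
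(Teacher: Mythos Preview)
Your proposal is correct and follows the same approach as the paper: reduce to showing that every $x \in \OO_v$ with $v(x)=0$ is a unit, and conclude from $v(x^{-1}) = -v(x) = 0$. The paper's proof is a one-liner that leaves the routine verifications (and the check that $x^{-1}$ actually lies in $B_{\p_v}$) implicit, whereas you spell them out, but the substance is identical.
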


\begin{proof}
  It suffices to show that if $v(x) =0$, then $x$ is invertible in $\OO_v$ but this is obvious since $v({x}^{-1}) = -v(x) =
  0$.
\end{proof}
One defines naturally a valuation $v$ on $C := B / {\p_v}$, let $L$ be the fraction field of $C$ and $\OO$ be the valuation
ring of $L$ with respect to $v$. Then, we have the natural isomorphisms
\begin{equation}
L \simeq B_{\p_v} / {\p_v} \text{ and } \OO_v / {\p_v} \simeq \OO.
\end{equation}
Geometrically, the Zariski closure of ${\p_v}$ inside $X$ defines an irreducible closed
subscheme $Y$ of $X$ and $L$ is isomorphic to the field of rational functions on $Y$.

Two valuations $v_1, v_2$ are \emph{equivalent} if there exists a real number $\lambda >0$ such that
$v_1 = \lambda v_2$.
Let $R, R'$ be two integral domains with a homomorphism of schemes $\phi : \spec R' \rightarrow \spec R$;
it induces a ring homomorphism $\phi^*: R \rightarrow R'$. If
$ v$ is a valuation on $R'$ we define $\phi_*  v$ the \emph{pushforward} by $\phi$ of $ v$ by
\begin{equation}
  \forall P \in R, \phi_* v (P) =  v (\phi^*(P)).
\end{equation}

 Let $X_0$ be an affine surface. Denote by
$\cV$ the set of valuations on $\k[X_0]$. We equip this space with the topology of weak convergence, that is the
coarsest topology such that the evaluation map $v \in \cV \mapsto v(P)$ is continuous for all $P \in \k[X_0]$. If $f$ is an
endomorphism of $X_0$, then $f$ induces a continuous map $f_* : \cV \rightarrow \cV$.

  Via the natural isomorphism $\iota_{X}^* : \OO_X (X_0) \rightarrow \k[X_0]$, every $v \in \cV$ induces a valuation
$(\iota_{X})_*  v$ on $\OO_X (X_0)$, namely
\begin{equation}
\forall P \in \OO_X (X_0), \quad (\iota_{X})_*  v (P) :=  v (\iota_{X}^* P).
\end{equation}
We will denote $(\iota_{X})_*  v$ by $v_{X}$ for every valuation $ v$ on $\k[X_0]$.
\begin{rmq}\label{RmqMemeValuationApresEclatement}
  Take a morphism of completions $\pi: X_1 \rightarrow X_2$ and $ v$ a
  valuation on $\k[X_0]$. Then, $(\iota_{X_2})_*  v = ({\pi}^{-1} \circ \iota_{X_1})_*  v$. In particular $\pi_*
  v_{X_2} = v_{X_1}$.
\end{rmq}

\begin{rmq}
  In the language of Berkovich theory, the set $\cV$ is the Berkovich analytification of $X_0$ over $\k$ where we have
  endowed $\k$ with the trivial valuation (see \cite{berkovichSpectralTheoryAnalytic2012}).
\end{rmq}

\begin{ex}[Divisorial valuations]
  Let $X$ be a completion of $X_0$ and $E$ be a prime divisor of $X$. Let $\ord_E$ be the valuation on
  $\k(X)$ such that for any $f \in \k(X), \ord_E (f)$ is the order of vanishing of $f$ along $E$. Any valuation $v$ on
  $\k[X_0]$ such that $v_X$ is equivalent to $\ord_E$ for some prime divisor $E$ in some completion $X$ is called a
  \emph{divisorial} valuation. In that case $\p_v = \{ 0 \}$ and $v$ extends uniquely to a valuation on $\Frac \k[X_0]$. For
  example if $X_0 = \A^2$ and $X = \P^2$, then let $L_\infty$ be the line at infinity, we have $ \forall P \in \k
  [x,y], \ord_{L_\infty} (P) = - \deg (P)$. If instead we take the completion $P^1 \times \P^1$, decompose $\A^2 = \A^1
  \times \A^1$ and let $x,y$ be the affine coordinate of $\A^2$ each being an affine coordinate of $\A^1$. Let $L_x =
  \left\{ \infty \right\} \times \P^1$ and $L_y = \P^1 \times \left\{ \infty \right\}$, then
  \begin{equation}
    \forall P \in \k[x,y], \ord_{L_x} (P) = - \deg_x (P), \quad \ord_{L_y} (P) = - \deg_y (P)
    \label{<+label+>}
  \end{equation}
  where $\deg_x$ (respectively $\deg_y$) is the degree with respect to the variable $x$ (respectively $y$).
\end{ex}

\begin{ex}[Curve valuations]
  Let $X$ be a completion of $X_0$, let $p \in \BD$ and $C$ be the germ of a (formal) curve at $p$. This means that $C$
  is defined as $\phi = 0$ for $\phi$ in the completion $\hat
  \OO_{X,p}$ of the local ring $\OO_{X,p}$ at $p$. If $\psi \in
  \hat \OO_{X,p}$ is another germ of a formal curve at $p$, we define the intersection number at $p$ by
  \begin{equation}
    \left\{ \phi = 0 \right\} \cdot_p \left\{ \psi = 0 \right\} := \dim_\k \hat \OO_{X,p} / \langle
    \phi, \psi \rangle.
    \label{}
  \end{equation}
  This number is equal to $\infty$ exactly when one of the germs is included in the other or equivalently its defining
  equation divides the other's. We first define a valuation $v_{C,p}$ on $\hat \OO_{X,p}$ by

  \begin{equation}
    v_{C,p} (\psi) = \left\{ {\psi=0} \cdot_p C  \right\}.
  \end{equation}
 Suppose $\phi$ is not divisible by the local equation of any component of $\BD$. For any $P
 \in \OO_X (X_0)$, $P$ can be written as $P = \psi_1^{\alpha_1} \cdots \psi_r^{\alpha_r}$ with $\psi_i \in \hat
 \OO_{X,p}$ irreducible and $\alpha_i \in \Z$. We define
 \begin{equation} v_{C,p} (P) := \sum_i \alpha_i v_{C,p} (\psi_i) \in \R \cup \{\infty\}
 \end{equation}
  Then $v_{C,p}$ is a valuation on $\OO_X (X_0)$. Any valuation on $\k[X_0]$ such that $v_X$ is equivalent to $
  v_{C,p}$ is called a \emph{curve} valuation. If $v$ is a valuation such that $\p_v \neq \left\{ 0 \right\}$, then
  $v$ is a curve valuation (see \cite{favreValuativeTree2004} and Proposition \ref{PropClassificationValuation} below).
  We will make the following distinction, if $C$ is defined by $\phi \in \OO_{X, p}$ we will say that $v_{C,p}$ is an
  \emph{algebraic} curve valuation. Otherwise, we will say that it is a \emph{formal} curve valuation.

  If $\phi$ was divisible by the local equation of a component of $\BD$, then $v_{C,p}$ would not define a valuation
  on $\k[X_0]$ as some regular functions $P \in \k[X_0]$ would have a pole along $C$ and $v (P)$ would be equal to $- \infty$.
\end{ex}

\section{Valuations over $\k [ [ x,y ] ]$}\label{SubSecValuationsOnLocalRing}
We recall some results about
valuations from \cite{favreValuativeTree2004} and \cite{favreEigenvaluations2007}. Let $R$ be a regular complete local
ring with maximal ideal $\m$. We say that a valuation on $R$ is \emph{centered} if $v \geq 0$ and $v_{|\m} > 0$. Here we
set $R := \k [ [ x,y] ]$ for our regular complete local ring. Its maximal ideal is $\m := (x,y)$ we will study the set of centered
valuations on $R$.

\begin{prop}[Proposition 2.10 \cite{favreValuativeTree2004}, \cite{spivakovskyValuationsFunctionFields1990}]
  Any valuation on $\k[x,y]$ centered at the origin extends uniquely to a centered valuation on $R$ as follows. Let
  $\phi \in R$ and let $\phi_n$ be the polynomial of degree $n$ such that $\phi = \lim \phi_n$. Then,
  \begin{equation}
    v (\phi) = \lim_{n \rightarrow \infty} \min (v (\phi_n), n).
    \label{<+label+>}
  \end{equation}
\end{prop}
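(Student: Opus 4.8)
The plan is to prove that any valuation $v$ on $\k[x,y]$ centered at the origin extends uniquely to a centered valuation on $R = \k[[x,y]]$ via the stated limit formula. I would proceed in three main stages: first establish that the formula makes sense (the limit exists), then verify it defines a valuation, and finally prove uniqueness of the extension.

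\emph{Existence of the limit and well-definedness.} Fix $\phi \in R$ and write $\phi = \lim_n \phi_n$ where $\phi_n \in \k[x,y]$ is the truncation of $\phi$ to terms of degree $\leq n$; note $\phi_{n+1} - \phi_n$ is homogeneous of degree $n+1$ (or zero), so $v(\phi_{n+1} - \phi_n) \geq (n+1)\cdot v(\m) \geq n+1$ since $v$ is centered. First I would observe that the sequence $a_n := \min(v(\phi_n), n)$ is eventually monotone: writing $\phi_{n+1} = \phi_n + (\phi_{n+1}-\phi_n)$, the ultrametric inequality gives $v(\phi_{n+1}) \geq \min(v(\phi_n), n+1)$, and symmetrically $v(\phi_n) \geq \min(v(\phi_{n+1}), n+1)$; combined with the cap at $n$ versus $n+1$ this forces $a_n$ to be nondecreasing once $a_n < n$, and if $a_n = n$ for all $n$ then the limit is $+\infty$. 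Hence $v(\phi) := \lim_n a_n \in \R_{\geq 0} \cup \{\infty\}$ is well defined, and it does not depend on any choices since the truncation $\phi_n$ is canonical. It also agrees with the original $v$ on polynomials, since for $\phi \in \k[x,y]$ of degree $d$ we have $\phi_n = \phi$ for $n \geq d$, so $a_n = \min(v(\phi), n) \to v(\phi)$.

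\emph{Verifying the valuation axioms.} Axiom (i) ($v$ trivial on $\k^\times$) and (iv) ($v(0)=\infty$) are immediate. For the ultrametric inequality (iii), given $\phi, \psi \in R$ with truncations $\phi_n, \psi_n$, we have $(\phi+\psi)_n = \phi_n + \psi_n$, so $v((\phi+\psi)_n) \geq \min(v(\phi_n), v(\psi_n))$, whence $\min(v((\phi+\psi)_n), n) \geq \min(v(\phi_n), n, v(\psi_n), n)$; passing to the limit gives $v(\phi+\psi) \geq \min(v(\phi), v(\psi))$. The multiplicativity axiom (ii) is the delicate point: $(\phi\psi)_n$ is not $\phi_n\psi_n$ but rather the truncation of $\phi_n\psi_n$, i.e. $\phi_n\psi_n$ minus its terms of degree $> n$, and those discarded terms have $v$-value $\geq n+1$ since each monomial has degree $\geq n+1$. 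So $v((\phi\psi)_n) = v(\phi_n\psi_n) = v(\phi_n) + v(\psi_n)$ once we know $v(\phi_n\psi_n) < n+1$ (using that $v$ is already a valuation on polynomials), and then $\min(v((\phi\psi)_n), n)$ and $\min(v(\phi_n), n) + \min(v(\psi_n), n)$ have the same limit. The case where one of $v(\phi), v(\psi)$ is $\infty$ needs a separate but easy argument: if $v(\phi_n) \geq n$ for all $n$, then $v(\phi_n\psi_n) \geq v(\phi_n) \geq n$ (taking $v(\psi_n) \geq 0$ for $n$ large, as $\psi_n \to \psi \in R$ forces $v(\psi_n) \geq 0$ eventually, or handle $v(\psi)=\infty$ symmetrically), so $v(\phi\psi)=\infty$. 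The main obstacle here is bookkeeping the interaction between the degree-truncation and the cap at $n$, making sure the ``error terms'' of degree $> n$ are always negligible because their $v$-value exceeds $n$.

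\emph{Uniqueness.} Suppose $w$ is any centered valuation on $R$ restricting to $v$ on $\k[x,y]$. For $\phi \in R$, since $\phi - \phi_n$ lies in $\m^{n+1}$, we get $w(\phi - \phi_n) \geq (n+1) w(\m) \geq n+1$. By the ultrametric inequality, $w(\phi) \geq \min(w(\phi_n), n+1) = \min(v(\phi_n), n+1)$, and if $w(\phi_n) < n+1$ then in fact $w(\phi) = w(\phi_n) = v(\phi_n)$ because $w(\phi) = w(\phi_n + (\phi - \phi_n))$ with the two summands having distinct $w$-values. Taking $n \to \infty$ and comparing with the formula for $v(\phi)$ shows $w(\phi) = v(\phi)$: if $v(\phi) < \infty$, then eventually $v(\phi_n) = v(\phi) < n$, forcing $w(\phi) = v(\phi)$; if $v(\phi) = \infty$, then $v(\phi_n) \geq n$ for all $n$ gives $w(\phi) \geq n$ for all $n$, so $w(\phi) = \infty$. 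This completes the argument; I would cite \cite{favreValuativeTree2004} and \cite{spivakovskyValuationsFunctionFields1990} for the statement and note that the proof is a direct verification.
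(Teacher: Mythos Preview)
The paper does not supply its own proof of this proposition; it is stated with attribution to \cite{favreValuativeTree2004} and \cite{spivakovskyValuationsFunctionFields1990} and then used as a black box. Your direct verification therefore goes beyond what the paper itself does, and the three-stage structure (existence of the limit, verification of the valuation axioms, uniqueness via the ultrametric estimate on $\phi-\phi_n$) is the natural one.

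There is one recurring slip worth flagging. You repeatedly invoke the bound $v(\text{terms of degree} \geq n+1) \geq n+1$, for instance in ``$v(\phi_{n+1}-\phi_n)\geq (n+1)\cdot v(\m)\geq n+1$ since $v$ is centered'' and again in the multiplicativity and uniqueness steps. But in this paper ``centered'' only means $v_{|\m}>0$, not $v(\m)\geq 1$, so the second inequality can fail. The repair is painless: either normalize to $v(\m)=1$ at the outset and observe that the limit formula is invariant under positive rescaling of $v$ (if the limit $\lim_n\min(v(\phi_n),n)$ is finite then $v(\phi_n)$ eventually stabilizes below $n$, and if it is infinite then $v(\phi_n)\to\infty$; in either case replacing $v$ by $\lambda v$ multiplies the limit by $\lambda$), or carry the constant $c=v(\m)>0$ through all the degree bounds, replacing $n+1$ by $(n+1)c$ where needed. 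With that adjustment your argument is correct.
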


\begin{cor}
  Let $R'$ be a regular local ring of dimension 2 over $\k$, then the  $\m_{R'}$-adic completion $\hat{R'}$ of $R'$ is
  isomorphic to $R$. Any centered valuation on $R'$ extends uniquely to a centered valuation on $\hat{R'}$.
\end{cor}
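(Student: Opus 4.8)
The first assertion is immediate from Theorem \ref{ThmCompletionLocalRing}. Since $R'$ is regular of dimension $2$, it admits a regular system of parameters $(x,y)$, and that theorem furnishes a $\k$-algebra isomorphism $\hat{R'} \cong \k[[x,y]] = R$ carrying $x$ and $y$ to the two variables; I will use it to identify $\hat{R'}$ with $R$. Moreover the canonical $\k$-algebra homomorphism $\k[x,y] \to R'$ sending the variables to $x$ and $y$, composed with the inclusion $R' \hookrightarrow \hat{R'} = R$ (injective, e.g. by Krull's intersection theorem), is the tautological inclusion of polynomials into power series, hence injective. So we obtain a chain of subrings $\k[x,y] \subseteq R' \subseteq R$.

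For the extension statement, the plan is to reduce to the preceding Proposition (Favre--Jonsson, Proposition 2.10). Let $v$ be a centered valuation on $R'$, i.e.\ $v \geq 0$ on $R'$ and $v > 0$ on $\m_{R'}$. Its restriction $v_0$ to $\k[x,y]$ is a valuation centered at the origin, since $(x,y) \subseteq \m_{R'}$. By that Proposition, $v_0$ extends uniquely to a centered valuation $\hat v$ on $R = \hat{R'}$, given by $\hat v(\phi) = \lim_n \min(v_0(\phi_n),n)$, where $\phi_n \in \k[x,y]$ is the degree-$n$ truncation of $\phi$. I claim $\hat v$ restricts to $v$ on $R'$, which is exactly what we want. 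Fix $\phi \in R'$. Completion does not change the Artinian quotient $R'/\m_{R'}^{n+1} \cong \hat{R'}/\m_{R'}^{n+1}\hat{R'}$, so $\m_{R'}^{n+1}\hat{R'} \cap R' = \m_{R'}^{n+1}$; as $\phi - \phi_n \in (x,y)^{n+1}\hat{R'} = \m_{R'}^{n+1}\hat{R'}$ and lies in $R'$, we get $\phi - \phi_n \in \m_{R'}^{n+1}$, hence $v(\phi-\phi_n) \geq (n+1)c$ with $c := v(\m_{R'}) = \min(v(x),v(y)) > 0$. Thus $v(\phi - \phi_n) \to \infty$, and the ultrametric inequality then forces $v(\phi_n) = v(\phi)$ for all large $n$ when $v(\phi) < \infty$, and $v(\phi_n) \to \infty$ when $v(\phi) = \infty$; in both cases $\min(v(\phi_n),n) \to v(\phi)$. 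Since $v_0(\phi_n) = v(\phi_n)$, this yields $\hat v(\phi) = v(\phi)$.

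Uniqueness is then formal: any centered valuation $w$ on $\hat{R'}$ extending $v$ restricts on $\k[x,y]$ to $v_0$ and is centered, so by the uniqueness clause of the Proposition $w = \hat v$.

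The only point that is not completely routine is matching $\hat v$ with $v$ on all of $R'$ rather than merely on the polynomial subring $\k[x,y]$: this requires controlling $v$ on the tails $\phi - \phi_n$, and it is here that one uses that the $\m_{R'}$-adic topology on $R'$ is induced by that of $\hat{R'}$, via the identity $\m_{R'}^{n+1}\hat{R'} \cap R' = \m_{R'}^{n+1}$ (equivalently, faithful flatness of completion). Once $v(\phi - \phi_n) \to \infty$ is established, everything else is the elementary ultrametric bookkeeping already encoded in the Favre--Jonsson limit formula.
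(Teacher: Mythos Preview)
Your proof is correct and follows the same approach as the paper: restrict $v$ to $\k[x,y]\subset R'$ and apply the preceding Proposition. In fact you are more careful than the paper's own proof, which stops after invoking the Proposition without verifying that the resulting extension $\hat v$ agrees with $v$ on all of $R'$ (and not merely on $\k[x,y]$); your argument via $\m_{R'}^{n+1}\hat{R'}\cap R'=\m_{R'}^{n+1}$ and the ultrametric inequality supplies exactly this missing check.
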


\begin{proof}
  Let $(x,y)$ be a regular sequence of $R'$, that is $\m_{R'} = (x,y)$.  It
  exists because $R'$ is a regular local ring of dimension 2. Then, by Theorem \ref{ThmCompletionLocalRing}, $\hat R'$
  is isomorphic to $\k [ [ x,y ] ]$. Let $v$
  be a centered valuation on $R'$. We have that $\k [x,y] \subset R'$, so $v$ induces a valuation on $\k [x,y]$ that is
  centered at the origin and we can apply the previous proposition to conclude.
\end{proof}

  Let $p$ be a regular point on a surface $X$ and let $R = \hat{\OO_{X, p}}$ we define 4 types of valuations over $R$.
  \subsection{Divisorial valuations} A valuation $v$ over $R$ is \emph{divisorial} if there exists a sequence of
      blow-up $\pi : (Y, \Exc(\pi)) \rightarrow (X, x)$ such that $v$ is equivalent to $\pi_* \ord_E$ for some
      prime divisor $E \subset \Exc(\pi)$.

      \subsection{Quasimonomial valuations}
      Let $\pi : (Y, \Exc(\pi)) \rightarrow (X, x)$ be a sequence of blow-ups and let $q \in \Exc(\pi)$. A
      \emph{monomial} valuation at $q$ is a valuation $v$ on $\hat{O_{Y, q}}$ such that there exists $s, t > 0$,
      \begin{equation}
        v \left(\sum_{i,j} a_{ij} x^i y^j\right) = \min \left\{ si + tj : a_{ij} \neq 0 \right\}
        \label{<+label+>}
      \end{equation}
      for some local coordinates at $q$. We write $v = v_{s,t}$.

      A valuation over $\hat{\OO_{X, p}}$ is called \emph{quasimonomial} if there exists a sequence of blow-ups $\pi :
      \left( Y, \Exc(\pi) \right) \rightarrow \left( X, p \right)$ such that $v = \pi_* v_{s,t}$. Quasimonomial
      valuations split into two categories: if $s/t \in \Q$, one can show actually that $v$ is divisorial. Otherwise
      $s/t \in \R \setminus \Q$, $v$ is not divisorial and we say that it is \emph{irrational}.

      \subsection{Curve valuations}
      Let $\phi \in \hat{\m_p}$ be irreducible, we define $v_\phi$ by
      \begin{equation}
        \forall \psi \in \hat{\OO_{X, p}}, \quad v_\phi (\psi) = \frac{\left\{ \phi = 0 \right\} \cdot \left\{ \psi = 0
        \right\}}{m(\phi)}
        \label{<+label+>}
      \end{equation}
      where $m(\phi)$ is the order of vanishing of $\phi$ at the origin. A \emph{curve} valuation is a valuation
      equivalent to $v_\phi$ for some $\phi \in \hat{\m_p}$ irreducible.

      \subsection{Infinitely singular valuations}
      These are all the remaining valuations. They have a nice description in term of Puiseux series (see
      \cite{favreValuativeTree2004} Section 4.1 for more details). Briefly, to any valuation $v$ of $\k [ [ x,y ] ]$, one can
      associated a generalised power series
      \begin{equation}
        \hat \phi = \sum_j a_j x^{\beta_j}
        \label{<+label+>}
      \end{equation}
      with $a_j \in \k$ and $\beta_j \in \Q$. The \emph{infinitely singular} valuations are exactly the
      valuations such that $\lim_j \beta_j \neq + \infty$.

\begin{prop}[\cite{favreValuativeTree2004}]\label{PropClassificationValuation}
  There are four types of centered valuations on $R$: divisorial, irrational, curve valuations and infinitely singular
  valuations. The only type of valuation $v$ such that $\p_v = \left\{ v = + \infty \right\} \neq 0$ are curve valuations
\end{prop}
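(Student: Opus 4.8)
The plan is to reconstruct, for an arbitrary centered valuation $v$ on $R := \k[[x,y]]$, a possibly infinite \emph{sequence of key polynomials} approximating $v$, and to read the type of $v$ off the combinatorics of that sequence; this is the strategy of Favre--Jonsson that the memoir follows in Chapter \ref{ChapterValuationTree}. Since neither the type nor the bad ideal $\p_v$ changes under scaling, we may normalize $v$ so that $v(\m) = 1$ (normalizing, we implicitly discard the valuation that is $+\infty$ on all of $\m$), and after exchanging $x$ and $y$ if necessary we may assume $v(x) = 1 \leq v(y) =: \beta$. If $\beta \in \R \setminus \Q$, the real numbers $i + j\beta$ for $(i,j) \in \N^2$ are pairwise distinct, so for $\phi = \sum a_{ij} x^i y^j$ the quantity $\min\{i + j\beta : a_{ij} \neq 0\}$ is attained by a single monomial and no cancellation occurs; hence $v = v_{1,\beta}$ is quasimonomial with irrational ratio, it takes finite values on $R \setminus \{0\}$, so $\p_v = \{0\}$. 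This settles the irrational case.

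Assume now $\beta = p/q \in \Q$ in lowest terms. Then $v(y^q/x^p) = 0$, so there is a unique $\theta \in \k^\times$ with $v(y^q - \theta x^p) > p$; set $P := y^q - \theta x^p$, $\beta' := v(P) > p \geq \beta$, and iterate, producing at each step a new key polynomial together with its value, which increases strictly. Geometrically, each step is one more blow-up above a point of the exceptional divisor created at the previous step, and the dictionary between such towers of blow-ups and growing sequences of key polynomials is exactly what organizes the four cases. Three mutually exclusive outcomes arise: (a) the sequence of values stabilizes at a rational number after finitely many steps, the tower of blow-ups terminates, and $v = \pi_* \ord_E$ for a prime divisor $E$ on it, so $v$ is divisorial; (b) the sequence terminates by producing an irreducible $\phi$ with $v(\phi) = +\infty$ --- equivalently the associated generalized power series $\sum_j a_j x^{\beta_j}$ has $\lim_j \beta_j = +\infty$ --- and then $v = v_\phi$ is a curve valuation; (c) the sequence is infinite with $\lim_j \beta_j \neq +\infty$, which is the infinitely singular case. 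In cases (a) and (c) the construction exhibits $v(\psi)$ as a finite infimum over the finitely many key polynomials relevant to a truncation of $\psi$, for every $\psi \neq 0$, so $\p_v = \{0\}$; in case (b), $\p_v = (\phi) \neq \{0\}$. As the four cases are exhaustive and $\p_v \neq \{0\}$ occurs only in (b), this proves both assertions of the proposition. (For the last sentence one may also argue directly: if $\p_v \neq \{0\}$ then, $v$ being normalized, $\p_v$ is a nonzero prime strictly inside $\m$, hence of height one, hence principal, and tracing the construction shows its generator is the terminal key polynomial.)

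The main obstacle is the inductive step. One has to verify that each newly produced key polynomial genuinely computes $v$ with no hidden cancellation, that the value attached to it is well defined and strictly larger than the previous one, and --- the only genuinely characteristic-sensitive point --- that the terminating/non-terminating dichotomy in (b) versus (c) really does separate curve valuations from infinitely singular ones. In characteristic $p$ ordinary Puiseux series with bounded denominators do not suffice even for algebraic curve valuations, so the whole argument has to be run with generalized power series $\sum_j a_j x^{\beta_j}$, $\beta_j \in \Q$, the criterion $\lim_j \beta_j = +\infty$ being what characterizes the curve valuations among all valuations whose key-polynomial sequence terminates. Everything else is formal once this machinery is in place.
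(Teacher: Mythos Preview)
The paper gives no proof of this proposition: it is stated with a citation to \cite{favreValuativeTree2004} and nothing follows it except a remark on Krull valuations. There is therefore no argument in the paper to compare yours against; what you have sketched is essentially the key-polynomial (SKP) approach of the cited monograph, which is the intended reference.

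On the sketch itself, one point is imprecise. The assertion ``there is a unique $\theta \in \k^\times$ with $v(y^q - \theta x^p) > p$'' can fail: if the residue class of $y^q/x^p$ in $\kappa_v$ is transcendental over $\k$, no such $\theta$ exists, and this is precisely one of the ways the process terminates with $v$ divisorial (the Abhyankar case $\trdeg(\kappa_v/\k)=1$). Your case~(a), ``the sequence of values stabilizes at a rational number'', does not make this termination mechanism explicit. Likewise the irrational case can arise at any step, not only the first: if some later $\beta'$ is irrational, one stops there with a quasimonomial valuation on the tower built so far. The correct trichotomy at each step is: the new value is irrational (stop, irrational type), or it is $+\infty$ (stop, curve type), or no $\theta$ exists (stop, divisorial), or one continues; if the process never stops, the dichotomy $\sup_j \beta_j = +\infty$ versus $<+\infty$ separates curve from infinitely singular. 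With these refinements your outline matches \cite{favreValuativeTree2004}, Chapter~2, and your last paragraph correctly identifies where the genuine work lies.
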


\begin{rmq}\label{RmqValuationDeCourbeKrull}
  Instead of looking at valuations over $R$ with values in $\R$, we can look at valuations with values in a totally ordered
  abelian group $\Gamma$, these are called \emph{Krull valuations} (see \cite{favreValuativeTree2004}, section 1.3) and
  they have the advantage to always extend to $\Frac R$. We
  can make any curve valuation into a Krull valuation by the following procedure (see \cite{favreValuativeTree2004},
  section 1.5.5): Let $\phi \in \m$ and consider the curve valuation $v_\phi$. Let $\Gamma = \Z \times \Q$ with the
  lexicographical order, we define $\hat v_\phi: R \rightarrow \Gamma$ as follows. For any $\psi \in R$, there exists
  an integer $k \in \N$ such that
  \begin{equation}
    \psi = \phi^k \hat \psi
    \label{<+label+>}
  \end{equation}
  where $\hat \psi$ is not divisible by $\phi$. Set
  \begin{equation}
    \hat v (\psi) := (k, v_{\phi} (\hat \psi))
    \label{<+label+>}
  \end{equation}
  Notice that $v_\phi (\psi) = \infty \Leftrightarrow p_1 (\hat v_\phi (\psi)) >0$ where $p_1 : \Gamma \rightarrow \Z$
  is the projection to the first coordinate and if $v_\phi (\psi) < + \infty$, then $\hat v_\phi (\psi) = (0, v_\phi
  (\psi))$.
  We will not need Krull valuations in the rest of the text. But this argument comes in handy for the proof of
  Proposition \ref{PropFiniteNumberOfPreimages} so we state it here.
\end{rmq}

\section{The center of a valuation}

Let $X$ be a completion of $X_0$ and let $v$ be a valuation on $\OO_X (X_0)$.
A \emph{center} of $v$ on $X$ is a scheme-theoretic point $p \in X$ such that $\OO_v$ dominates the local ring
$\OO_{X,p}$ (i.e $\OO_{X,p} \subset \OO_v$ and $\m_p \subset \m_v$). If such a
$p$ exists then $v$ induces a \emph{centered} valuation on $\OO_{X, p}$ (cf \ref{SubSecValuationsOnLocalRing}) and in
particular for any open affine subset $U \subset X$ that contains $p$, $v$ induces a valuation on $\OO_X(U)$ via the
inclusion $\OO_{X}(U)  \subset \OO_{X,p}$.

\begin{lemme}\label{LemmeCentreValuation}
  The center of $v$ on $X$ always exists and is unique.
\end{lemme}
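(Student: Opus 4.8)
The plan is to quotient out the bad ideal of $v$ and invoke the valuative criterion of properness. Write $B := \OO_X(X_0)$ and let $\p_v \subset B$ be the bad ideal of $v$. As recalled before the lemma, $v$ induces a genuine valuation $\bar v$ on $L := \Frac(B/\p_v)$, with valuation ring $\OO \subseteq L$; the extension of $v$ to $B_{\p_v}$ satisfies $v(g) = \bar v(\bar g)$, where $\bar g \in L$ is the class of $g$, and $\OO_v/\p_v B_{\p_v} \simeq \OO$. Let $Z \subseteq X$ be the Zariski closure of $V(\p_v) \subseteq X_0$ with its reduced structure; it is an integral projective $\k$-scheme with $\k(Z) = L$, its generic point $\eta$ lies in $X_0$ (since $Z \cap X_0$ is dense in $Z$), and $\OO_{X,\eta} = B_{\p_v}$, the surjection $\OO_{X,\eta} \twoheadrightarrow \OO_{Z,\eta} = L$ being restriction of rational functions to $Z$. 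Note also $\k \subseteq \OO$, as $v|_{\k^\times} = 0$.

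For existence, apply the valuative criterion of properness — in the form valid for an arbitrary valuation ring — to the proper $\k$-scheme $Z$ together with the generic point $\spec L \to Z$ and the structure map $\spec \OO \to \spec \k$. This produces a morphism $\spec \OO \to Z$ over $\k$, compatible with $\spec L \to Z$; its closed point maps to a point $p \in Z$ and, reading the morphism on generic points, one gets $\OO_{Z,p} \subseteq \OO$ with $\m_{Z,p} = \m_\OO \cap \OO_{Z,p}$. Now view $p$ as a point of $X$. Since $p \in Z = \overline{\{\eta\}}$, the point $\eta$ is a generization of $p$, hence $\OO_{X,p} \subseteq \OO_{X,\eta} = B_{\p_v}$; thus $v$ is defined on every $g \in \OO_{X,p}$ and $v(g) = \bar v(g|_Z)$, where $g|_Z \in \OO_{Z,p}$ is the restriction of $g$ to $Z$ (well defined, because every neighbourhood of $p$ in $Z$ contains $\eta$). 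For $g \in \OO_{X,p}$ this gives $g|_Z \in \OO_{Z,p} \subseteq \OO$, so $v(g) \geq 0$; for $g \in \m_p$ it gives $g|_Z \in \m_{Z,p} \subseteq \m_\OO$, so $v(g) > 0$. Hence $\OO_{X,p} \subseteq \OO_v$ and $\m_p \subseteq \m_v$, i.e. $p$ is a center of $v$ on $X$.

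For uniqueness I use only that $X$ is separated, being projective. Suppose $p_1$ and $p_2$ are centers of $v$. Choose affine open neighbourhoods $U_i = \spec A_i \ni p_i$, with $p_i$ given by the prime $\p_i \subset A_i$; domination yields $(A_i)_{\p_i} = \OO_{X,p_i} \subseteq \OO_v$ and $\m_v \cap A_i = \p_i$. By separatedness (\cite{hartshorneAlgebraicGeometry1977}), $U_1 \cap U_2$ is affine with coordinate ring the subring $A_1 A_2 \subseteq \k(X)$ generated by $A_1 \cup A_2$, and $A_1 A_2 \subseteq \OO_v$. The prime $\mathfrak{q} := \m_v \cap A_1 A_2$ of $A_1 A_2$ contracts to $\p_1$ on $A_1$ and to $\p_2$ on $A_2$, so the point of $U_1 \cap U_2$ it determines restricts to $p_1$ in $U_1$ and to $p_2$ in $U_2$; as both restrictions are the image of this point in $X$, we get $p_1 = p_2$.

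The one step requiring care is the bookkeeping around the bad ideal: that $v$ descends to a genuine valuation $\bar v$ on $L$ with $v(g) = \bar v(\bar g)$, that the reduction $\OO_{X,p} \twoheadrightarrow \OO_{Z,p}$ is compatible with $\OO_v \twoheadrightarrow \OO$ precisely because $p \in Z$, and that $\OO_{X,p} \subseteq B_{\p_v}$. When $\p_v = 0$ all of this is vacuous and the statement is just the valuative criterion of properness and separatedness applied to $X$.
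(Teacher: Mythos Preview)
Your proof is correct and rests on the same core idea as the paper's: quotient by the bad ideal $\p_v$ to obtain a genuine valuation ring $\OO \subseteq L$, then apply the valuative criterion of properness. The packaging differs in two minor but pleasant ways. For existence, the paper applies the criterion directly to $X$ (a center $p$ yields a morphism $\spec \OO \to X$ via the composite $\OO_{X,p} \hookrightarrow \OO_v \twoheadrightarrow \OO$, and conversely the image of the closed point under the lift $\spec \OO \to X$ is the center); you instead restrict to the closed subvariety $Z = \overline{V(\p_v)} \subseteq X$, which has the advantage that $\bar v$ is already a bona fide valuation on $\k(Z) = L$, so the criterion applies without further bookkeeping. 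For uniqueness, the paper simply invokes the uniqueness half of the valuative criterion, whereas you unwind separatedness by hand: two centers $p_1, p_2$ give $A_1, A_2 \subseteq \OO_v$, hence a point $\mathfrak q = \m_v \cap A_1A_2$ of $U_1 \cap U_2$ contracting to both $\p_1$ and $\p_2$. Your argument is more explicit and uses only separatedness (not properness) for uniqueness, which is exactly the right hypothesis; the paper's appeal to properness there is slightly imprecise, since uniqueness in the valuative criterion is the separatedness half.
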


\begin{proof}
  Let $\OO_v$ be the subring of $\k(X)$ where $v$ is $\geq 0$; it contains $\k^*$. Let $L = B_{\p_v} / {\p_v}$ and $\OO =
  \OO_v / {\p_v}$. If $p$ is a center of $v$ on $X$ then we have
  the following commutative diagram of ring homomorphism

  \begin{equation}
    \begin{tikzcd}
      \OO_{X,p} \ar[r, hook] & \OO_v \ar[r,twoheadrightarrow] & \OO \ar[r, hook] & L & \ar[l,
      twoheadrightarrow] B_{\p_v}
    \end{tikzcd};
    \label{<+label+>}
  \end{equation}
  inducing the following commutative diagram of scheme morphisms

  \begin{equation}
    \begin{tikzcd}
      \spec L \ar[d] \ar[rrr] & & & X \ar[d] \\
      \spec \OO \ar[r] & \spec \OO_v \ar[r] & \spec \OO_{X,p} \ar[ru] \ar[r] &  \spec \k
    \end{tikzcd}
  \end{equation}

  Since $X$ is proper over $\k$ (it's a projective variety), the valuative criterion of properness
  (\cite{hartshorneAlgebraicGeometry1977})
  shows that if the center exists, then it is unique.
  For the existence, Let $x \in X$
  be the image of the maximal ideal of $\OO$, then $x$ is the center of $v$ on $X$. Indeed, the image of
  $\spec L$ is the prime ideal $\p_v$ of $\OO_X (X_0)$ and $x$ belongs to its closure, therefore
  $\OO_{X,x} \subset B_{\p_v}$ and the morphism of local rings $\OO_{X,x} \rightarrow \OO$ shows that
  $\OO_v$ dominates $\OO_{X,x}$.

\end{proof}

 The \emph{center} of $v$ on $X$ is the center of $v_{X}$ we will denote it by $c_{X} (v)$.

\begin{ex}
  Let $v$ be a divisorial valuation over $\k[X_0]$ and let $X$ be a completion of $X_0$ such that
  $v_X \simeq \ord_E$ for some prime divisor $E$ of $X$, then the center of $v$ on $X$ is the generic point $x_E$ of
  $E$. Indeed, the ring of regular function at the generic point of $E$ is a discrete valuation ring since $E$ is of
  codimension 1. In that
  case, we will identify the center with its closure and say that the center of $v$ on $X$ is the
  prime divisor $E$.
  In fact a valuation is divisorial if and only if its center on some completion of $X_0$ is
  a prime divisor because if $c_X (v) = x_E$, then $v$ and $\ord_E$ defines the same valuation ring which is a
  discrete valuation ring, therefore they are equivalent.
\end{ex}

\begin{ex}
  If $v$ is a curve valuation and $X$ is a completion of $X_0$ such that $(\iota_X)_* v \simeq
  v_{C,p}$, then the center of $v$ on $X$ is the closed point $p$.
\end{ex}

A valuation over $\k[X_0]$ is \emph{centered at infinity} if there exists a completion $X$ such that $c_X (v) \not \in
X_0$.

\begin{cor}
  Let $X_0$ be a normal affine surface, there are exactly four types of valuations centered at infinity over
  $\k[X_0]$: divisorial valuations, irrational valuations, curve valuations and infinitely singular valuations. If $v$ is a
  valuation such that $\p_v \neq \left\{ 0 \right\}$, then $v$ is a curve valuation.
\end{cor}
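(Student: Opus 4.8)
The plan is to reduce this corollary to the local classification of Proposition \ref{PropClassificationValuation}, applied at the center of $v$ on a suitable completion. First I would record that, for a morphism of completions $\pi : Y \to X$, one has $c_X(v) = \pi(c_Y(v))$; this is immediate from the characterization of the center by domination of local rings together with its uniqueness in Lemma \ref{LemmeCentreValuation}. Since $\pi$ is an isomorphism over $X_0$, it follows that for a valuation $v$ centered at infinity the center $c_X(v)$ lies on $\BD$ for \emph{every} completion $X$. Two cases then occur. If $c_X(v)$ is the generic point of a prime divisor $E \subset \BD$ for some completion $X$, then $v_X$ and $\ord_E$ have the same (discrete) valuation ring, so $v$ is divisorial. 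Otherwise $c_X(v) = p$ is a closed point of $\BD$ for every completion; then $v$ induces a centered valuation on $\OO_{X,p}$, which extends uniquely to a centered valuation $\tilde v$ on $\hat{\OO_{X,p}} \simeq \k[[x,y]]$. Proposition \ref{PropClassificationValuation} identifies $\tilde v$, hence $v$, as irrational, a curve valuation, or infinitely singular. Together with the divisorial case this exhausts the four types; they are mutually exclusive because they already are over $\k[[x,y]]$, and the type does not depend on the chosen completion by Remark \ref{RmqMemeValuationApresEclatement} together with the stability of the local classes under pushforward by blow-ups. (Each type does occur at infinity: $\ord_E$ for $E$ a prime divisor at infinity is divisorial, and suitable sequences of blow-ups of closed points of $\BD$ produce the other three.)

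For the last assertion, assume $\p_v \neq \{0\}$. A divisorial valuation extends to $\Frac \k[X_0]$ and has $\p_v = \{0\}$, so by the dichotomy above the center $p = c_X(v)$ is a closed point of $\BD$ for every completion $X$, and $v$ induces $\tilde v$ on $\hat{\OO_{X,p}} \simeq \k[[x,y]]$ as above. I then claim $\p_{\tilde v} \neq \{0\}$. Pick $0 \neq P \in \p_v$ and write the associated rational function $(\iota_X)_* P$ near $p$ as $u \cdot m^{-1}$, where $u \in \OO_{X,p}$ is a germ, $u \neq 0$, and $m$ is a product of local equations at $p$ of the components of $\BD$ through $p$; this is possible because $P$ is regular on $X_0$, so its polar locus is contained in $\BD$. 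Each such local equation has positive and \emph{finite} value under $\tilde v$, since if it had infinite value then $\tilde v$ would be a curve valuation along a branch of $\BD$, which does not define a valuation on $\k[X_0]$. Hence $\tilde v(m) < \infty$, and from $v(P) = \infty$ we get $\tilde v(u) = v(P) + \tilde v(m) = \infty$ with $u \neq 0$, so $\p_{\tilde v} \neq \{0\}$. Proposition \ref{PropClassificationValuation} then gives that $\tilde v$, and therefore $v$, is a curve valuation.

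The only delicate point, and the place where being centered at a \emph{point} of $\BD$ (rather than merely along a boundary divisor) is genuinely used, is this last transfer of the bad ideal from $\k[X_0]$ to $\hat{\OO_{X,p}}$: one must justify the local factorization $(\iota_X)_* P = u\, m^{-1}$ at the closed point $p$ and verify that the boundary monomial $m$ has finite $\tilde v$-value. Everything else is bookkeeping with Lemma \ref{LemmeCentreValuation}, Remark \ref{RmqMemeValuationApresEclatement}, and Proposition \ref{PropClassificationValuation}.
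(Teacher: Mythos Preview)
Your proof is correct and follows essentially the same route as the paper: reduce to the center on a completion, split into the divisorial case (center a prime divisor) versus the closed-point case, and invoke Proposition~\ref{PropClassificationValuation} on $\hat{\OO_{X,p}}\simeq\k[[x,y]]$. The paper's argument is terser and leaves the transfer of the bad ideal from $\k[X_0]$ to $\hat{\OO_{X,p}}$ implicit, whereas you spell out the factorization $(\iota_X)_*P = u\,m^{-1}$ and the finiteness of $\tilde v(m)$; this extra care is justified and correct.
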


\begin{proof}
  let $v$ be a valuation over $\k[X_0]$ and let $c_X (v)$ be its center on some completion $X$. If $c_X (v)$ is a
  prime divisor at infinity then $v$ is divisorial. Otherwise, $c_X (v)$ is a regular point at infinity and $v$
  induces a centered valuation over $\hat{\OO_{X, p}}$. The result follows from the classification of centered
  valuations over $\k [ [ x,y ] ] $ from Proposition \ref{PropClassificationValuation}.
\end{proof}

\begin{rmq}
  One might expect that if $p \in X \setminus X_0$ is a closed point at infinity then every centered valuation on
  $\OO_{X,p}$ defines a valuation over $X_0$ but that is not the case. Namely, for every irreducible component $E$ of the
  boundary $\BD$ such that $p \in E$, the curve valuations $v_{E,p}$ does not define a valuation over $X_0$ because
  there exists $P \in \OO(X_0)$ such that $\ord_E(P) < 0$ and we would get $v_{E,p} (P) = - \infty$ which is not
  allowed.
\end{rmq}

\begin{dfn}
  \begin{itemize}
    \item Let $X$ be a good completion
  of $X_0$ and $p \in
  \BD$ a point at infinity.  Following
  \cite{favreValuativeTree2004}, we say that $p$ is a \emph{free} point if it
  belongs to a unique prime
  divisor at infinity and we say that it is a \emph{satellite point} otherwise, i.e it is the intersection
  point of two prime divisors at infinity.
  \item Let $v$ be a valuation over $\k[X_0]$ centered at infinity. Let $p_1 = c_X (v)$ be its center on $X$ and $X_1 :=
  X$. We define the
  following sequence: If $p_n$ is a prime divisor,
then the sequence stops, else $p_n$ is a closed point of $X_n$ and we define $X_{n+1}$ as the blow up
of $p_n$, then define $p_{n+1} := c_{X_{n+1}} (v)$. This is the \emph{sequence of centers} of $v$ with respect to
$X$.
  \end{itemize}
\end{dfn}

We adopt the following convention: When we write "let $p \in E$ be a free point (at infinity)" this means that $E$
is the unique prime divisor at infinity on which $p$ lies. If we write "let $p = E \cap F$ be a satellite point",
this means that $E$ and $F$ are the two prime divisors at infinity such that $p = E \cap F$ (Recall that we only work
with good completions).

\begin{prop}[\cite{favreValuativeTree2004}, Section 6.2 ] \label{PropSequenceOfInfinitelyNearPoints} Let $v$ be a valuation centered at infinity.
  Let $X$ be a completion of $X_0$ and $(p_n)$ the sequence of centers (above $X$) associated to $v$.
  Then,
  \begin{enumerate}
    \item $v$ is divisorial if and only if the sequence $(p_n)$ is finite.
    \item If $v$ is irrational, then $(p_n)$ contains finitely many free points.
    \item if $v$ is a curve valuation, then $(p_n)$ contains finitely many satellite points.
    \item If $v$ is infinitely singular, then $(p_n)$ contains infinitely many free points.
  \end{enumerate}
\end{prop}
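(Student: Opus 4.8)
The plan is to reduce all four assertions to the local study of valuations on $\k[[x,y]]$ carried out in \cite{favreValuativeTree2004}, Section 6.2, keeping careful track of which local curves are components of the boundary. First I would dispose of the trivial case: if $p_1 = c_X(v)$ is already a prime divisor of $X$, then $(p_n)$ has length one, $v_X$ is equivalent to the corresponding $\ord$, hence $v$ is divisorial and the hypotheses of (2)--(4) are vacuous. So assume $p_1$ is a closed point; since $X$ is a good completion it is a smooth point of $X$, so $R := \hat{\OO_{X,p_1}} \simeq \k[[x,y]]$ by Theorem \ref{ThmCompletionLocalRing} and $v$ induces a centered valuation $\bar v$ on $R$ (cf. \S\ref{SubSecValuationsOnLocalRing}). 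Blowing up a closed point of a surface and recomputing the center of $v$ is a formal-local operation around the exceptional divisor, so the tail $(p_n)_{n\ge 2}$ is exactly the sequence of infinitely near points of $\bar v$, and by Proposition \ref{PropClassificationValuation} the valuation $\bar v$ has the same type as $v$. The one point needing care is the translation of ``free'' versus ``satellite'': for $n\ge 2$ the prime divisors of $\partial_{X_n}X_0$ through $p_n$ are the strict transforms of the exceptional curves $E_2,\dots,E_n$ created along the way together with the strict transforms of the one or two branches of $\BD$ through $p_1$. In particular, if $\bar v$ is a curve valuation $v_C$ then the formal branch $C$ is \emph{not} contained in $\BD$ (otherwise some regular function would have value $-\infty$ for $v_C$), which is precisely what makes the centers trailing the strict transform of $C$ genuinely free; in the other cases the discrepancy with the purely local notion of free/satellite point affects at most one center and is harmless for the finiteness statements.

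With this reduction I would then invoke the local picture type by type. If $\bar v$ is divisorial, say equivalent to $\ord_E$ with $E$ extracted by finitely many point blow-ups, the center of $\bar v$ becomes the generic point of $E$ after finitely many steps, so $(p_n)$ is finite; conversely a finite sequence terminates at a prime divisor, whence $v$ is divisorial by the Example following Lemma \ref{LemmeCentreValuation}. This gives (1). If $\bar v$ is irrational it is quasimonomial: after a finite initial block of blow-ups its center sits at a point where $\bar v = v_{s,t}$ with $s/t\notin\Q$, and the centers of a monomial valuation with irrational weight ratio are produced by iterating the Euclidean step $(s,t)\mapsto(s,t-s)$, each step placing the new center at the intersection of the fresh exceptional curve with an older divisor, i.e. at a satellite point; irrationality forbids termination, so all but finitely many centers are satellite, giving (2). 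If $\bar v = v_C$ is a curve valuation, then after an embedded resolution of $C$ (finitely many blow-ups) the strict transform $\tilde C$ is smooth and transverse to the exceptional configuration, and every further center is $\tilde C\cap(\text{exceptional curve})$, which is a free point since a transverse strict transform avoids the crossing points of the exceptional curves; hence all but finitely many centers are free, giving (3). Finally, if $\bar v$ is infinitely singular I would use its generalized Puiseux expansion $\hat\phi = \sum_j a_j x^{\beta_j}$ with infinitely many terms (\cite{favreValuativeTree2004}, \S4.1): the center sequence successively resolves the truncations $\sum_{j\le k} a_j x^{\beta_j}$, and each time a new term is ``switched on'' the center is forced off the strict transform of the previously relevant divisor, which is a free blow-up; infinitely many terms thus produce infinitely many free centers, which is (4). (Equivalently, one argues by contradiction: only finitely many free centers would make $(p_n)$ eventually all-satellite, and that rigid chain-of-exceptional-curves pattern forces $\bar v$ to be quasimonomial.)

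The main obstacle is the infinitely singular case (4). Unlike (2) and (3), whose center sequences are governed by explicit eventually-periodic combinatorics (the Euclidean algorithm, respectively the resolution of a plane curve), excluding ``eventually all satellite'' genuinely requires the dictionary between valuations of $\k[[x,y]]$ and their generalized Puiseux series developed in \cite{favreValuativeTree2004}. A secondary technical point, easy to overlook, is the bookkeeping in the reduction: one must match the boundary divisors through $p_n$ with the exceptional curves of the blow-ups and use that the defining branch of a curve valuation is never part of $\BD$, for otherwise (3) would come out with satellite rather than free behaviour along $\tilde C$.
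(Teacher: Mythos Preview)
Your proposal is correct and follows the same route as the paper: reduce to the local valuative tree at the first center and invoke \cite{favreValuativeTree2004}, Section~6.2. The paper's own proof is terser---it simply cites Theorem~6.10 of \cite{favreValuativeTree2004} for (2) and (4) and Proposition~6.12 for (3)---while you unpack those references and are more careful about reconciling the global free/satellite dichotomy (relative to $\partial_X X_0$) with the purely local one, a point the paper leaves implicit.
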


\begin{proof}
  Assertion 1 is clear since the sequence $(p_n)$ stops if and only if $p_n$ is a prime divisor at
  infinity. Assertion 2 and 4 follows from \cite{favreValuativeTree2004} Theorem 6.10 and Assertion 3
  follows from \cite{favreValuativeTree2004} Proposition 6.12.
\end{proof}

\begin{prop}\label{prop:irrational-centers-not-same-divisor}
  Let $X$ be a completion of $X_0$ and let $v$ be an irrational valuation such that $c_X(v) = E \cap F$ where $E,F$ are
  two prime divisors at infinity and let $(p_n)$ be the sequence of centers of $v$, then there exists an $n_0$ such that
  $p_{n_0}$ does not belong to the strict stransform of $E$.
\end{prop} 
\begin{proof}
  If that was not the case then let $(X_n, p_n)$ be the sequence of centers of $v$, we still denote by $E$ the strict
  transform of $E$ in every $p_n$, then we actually have that $p_n = E \cap F_n$ where $F_0 = F$ and $F_n$ is the
  exceptional divisor in $X_n$, the birational morphism $\pi_n : X_n \rightarrow X$ in local coordinates at $p_n$ is of
  the form 
  \begin{equation}
    \pi (z,w) = \left( zw^n \phi, w\psi \right) = (x,y)
    \label{<+label+>}
  \end{equation}
  where $(z,w)$ is associated to $(E, F_n)$ and $(x,y)$ is associated to $(E,F)$ and $\phi, psi$ are invertible. Now, we
  have 
  \begin{equation}
    v(x) = v(z) + n v(w) \geq n v(y).
    \label{<+label+>}
  \end{equation}
  Since $v(y) > 0$ we get that $v(x) = + \infty$ and this is a contradiction.
\end{proof}

  \section{Image of a valuation via an endomorphism}
  Let $f: X_0 \rightarrow X_0$ be a endomorphism of $X_0$, it induces a
  map $f_*$ on the space of valuation $f_*: \cV \rightarrow \cV$ via the formula
  \begin{equation}
    \forall P \in \k[X_0], \forall v \in \cV, \quad f_* v (P) = v (f^* P).
  \end{equation}
  We will denote by $f_\bullet$ the induced map $f_\bullet : \hat \cV \rightarrow \hat \cV$.

  \begin{prop}[Proposition 2.4 of \cite{favreEigenvaluations2007}]\label{PropPreservationOfType}
    Suppose that $f$ is dominant, the map $f_*$ preserves the sets of divisorial, of irrational and of infinitely singular
    valuations. If $v_C$ is a
    curve valuation such that $f$ does not contract $C$, then $f_* v_C$ is a curve valuation. If $f$
    contracts $C$, then $f_* v_C$ is a divisorial valuation.

  \end{prop}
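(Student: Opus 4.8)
The plan is to pin down the \emph{type} of a valuation by a short list of numerical invariants and to check that each of these invariants is controlled by the finite field extension $\k(X_0)/f^*\k(X_0)$. Since $f$ is dominant, $f^*\colon\k[X_0]\to\k[X_0]$ is injective and $\k(X_0)$ is a finite extension of $f^*\k(X_0)$, of degree the topological degree $e(f)$. Directly from $f_*v(P)=v(f^*P)$ one reads off that the bad ideal of $f_*v$ is $\p_{f_*v}=(f^*)^{-1}(\p_v)$, so that $\p_{f_*v}=0$ precisely when $\p_v=0$. I would split the argument according to whether $\p_v$ vanishes, because by Proposition~\ref{PropClassificationValuation} a nonzero bad ideal already forces $v$ (and, as will follow, $f_*v$) to be a curve valuation.

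First, suppose $\p_v=0$. Then $v$ is a genuine $\R$-valued valuation of the field $\k(X_0)$, and $f_*v$ is simply its restriction to the subfield $f^*\k(X_0)\cong\k(X_0)$; thus $v$ is an extension of $f_*v$ to the finite extension $\k(X_0)/f^*\k(X_0)$. By the classical theory of extensions of a valuation to a finite extension (the fundamental inequality bounding the sum of products of ramification indices and residue degrees by $e(f)$, together with Abhyankar's inequality), the passage from $f_*v$ to $v$ leaves the rank unchanged (here it is $1$); enlarges the value group by a subgroup of finite index, hence preserves both its rational rank and whether it is finitely generated; and induces a finite, in particular algebraic, residue field extension, hence preserves the transcendence degree of the residue field over $\k$. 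But, by the descriptions recalled in \S\ref{SubSecValuationsOnLocalRing}, among valuations centered at infinity with $\p_v=0$ the four types are distinguished exactly by the triple (rank, rational rank, residue transcendence degree), refined in the last two cases by the finite generation of the value group: divisorial $=(1,1,1)$; irrational $=(1,2,0)$; formal curve $=(1,1,0)$ with finitely generated value group; infinitely singular $=(1,1,0)$ with value group not finitely generated. Hence $v$ and $f_*v$ have the same type. This settles the divisorial, irrational and infinitely singular cases at once, and also the curve-valuation case when $C$ is a formal (non-algebraic) curve — for such $C$ there is nothing to say about contraction, since contracting $C$ would force $\p_{f_*v}\neq 0$.

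Next, suppose $\p_v\neq 0$, so that $v$ is an algebraic curve valuation: $\bar C:=\overline{V(\p_v)}\subset X_0$ is an irreducible curve and, writing $\nu\colon\widetilde{\bar C}\to\bar C$ for the normalisation (extended to normalised completions), one has $v=\lambda\,\ord_{\tilde p}\circ\nu^*$ on $\k[X_0]$ for some $\lambda>0$ and a point $\tilde p$ of $\widetilde{\bar C}$ lying over the center of $v$. Then $f_*v(Q)=v(Q\circ f)=\lambda\,\ord_{\tilde p}\!\bigl((Q\circ f)|_{\widetilde{\bar C}}\bigr)$, so the behaviour of $f_*v$ is governed by the curve map $f|_{\bar C}$. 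If $f$ does not contract $\bar C$, then $f|_{\bar C}$ is dominant onto $\bar C':=\overline{f(\bar C)}$ and lifts to a finite morphism $g$ of the normalised completions; since $\ord_{\tilde p}\circ g^*=e_g(\tilde p)\,\ord_{g(\tilde p)}$ with $e_g(\tilde p)$ the ramification index, $f_*v$ is proportional to the curve valuation attached to $(\bar C',g(\tilde p))$, hence is a curve valuation. If $f$ contracts $\bar C$, I would instead pass to a completion $X$ and a resolution $F\colon Y\to X$ of the lift of $f$: the relevant incarnation of $v$ is the order along the strict transform of $\bar C$ in $Y$, this curve is contracted by $F$, and $f_*v$ is then computed from $\ord\circ F^*$ using the local (pseudo-monomial) structure of $F$ along the contracted curve as in \S\ref{SubSecContractingRigidGerm}, the outcome being a divisorial valuation.

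The hard part is this last case. One first has to fix precisely what it means for $f$ to contract $C$ when $v_C$ is centered at infinity — whether $C$ is to be read as an algebraic curve inside $X_0$ or as a branch of $\partial_X X_0$ in a suitable completion — and then to identify the pushforward of an order-of-vanishing valuation along a curve that a birational morphism blows down; the explicit normal forms of a contracting germ recalled in \S\ref{SubSecContractingRigidGerm} are the natural tool for that computation. By contrast, the three type-preserving assertions and the case in which $f$ does not contract $C$ are routine once the invariant bookkeeping of the second paragraph is in place.
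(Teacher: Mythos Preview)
The paper does not give its own proof of this proposition: it is quoted verbatim from \cite{favreEigenvaluations2007} and immediately followed not by a proof but by a remark explaining that, in the situation where the paper actually uses it (valuations centered at infinity, with $F:X\to Y$ a regular lift), the contraction case cannot occur because the only curves $F$ might contract are components of $\partial_X X_0$, and these do not define valuations on $\k[X_0]$. So there is nothing to compare against; your proposal is a self-contained argument for a statement the paper takes on faith.

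Your approach for the case $\p_v=0$ is the standard one and is correct. Viewing $v$ as an extension of $f_*v$ along the finite extension $\k(X_0)/f^*\k(X_0)$ and invoking $e(w\mid v)f(w\mid v)\le[\k(X_0):f^*\k(X_0)]$ shows that $\Gamma_v/\Gamma_{f_*v}$ is finite and $\kappa_v/\kappa_{f_*v}$ is algebraic; hence rational rank, residue transcendence degree, and finite generation of the value group are all preserved. Your identification of the four types by these invariants is also correct (divisorial $=(1,1)$; irrational $=(2,0)$; formal curve $=(1,0)$ with $\Gamma\simeq\Z$; infinitely singular $=(1,0)$ with $\Gamma$ not finitely generated), so the type-preservation claims follow. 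The non-contracted algebraic-curve case via the normalised curve map is likewise fine.

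The genuine gap is exactly where you locate it. In the contracted case your sketch conflates two different valuations: the curve valuation $v_C$ (intersection multiplicity with $C$ at $p$) and the divisorial valuation $\ord_C$ (order of vanishing along $C$). When $f$ contracts $C$, one has $f^*\m\subset(\phi)$, so $v_C(f^*\psi)=\infty$ for every $\psi\in\m$; the real-valued pushforward is degenerate, and the statement must be read through the rank-$2$ Krull refinement $\hat v_\phi$ of Remark~\ref{RmqValuationDeCourbeKrull}. What survives is the first lexicographic component $\ord_\phi\circ f^*$, and it is \emph{this} valuation that one must show is divisorial. That requires the argument you gesture at (resolve, look at what $F$ does to the strict transform of $C$), but the correct conclusion is $f_*\ord_C=e\cdot\ord_{E}$ for the exceptional divisor $E$ onto which $C$ is blown down, not a computation with $v_C$ itself. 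Since the paper never needs this case, your first two paragraphs already cover everything the paper uses.
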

  We will use this proposition in the following context. Let $X, Y$ be
  two completions of $X_0$ such that the lift $F: X \rightarrow Y$ of $f$ is regular. For any
  point $p \in X \setminus X_0$, we have a map $F_*: \cV_X (p) \rightarrow
  \cV_Y (F(p))$ that preserves the type of the valuations. Suppose that $q =: F(p) \in Y \setminus X_0$. The only curve
  that might be contracted by $F$ to
  $q$ are the divisors at infinity because $F$ induces an endomorphism of $X_0$; but the curve valuation that they
  define do not define valuations on $\k[X_0]$.

  Let us explain the pushforward of a divisorial valuation following \S 2.3 of \cite{favreEigenvaluations2007}. Let $X,
  Y$ be completions of $X_0$ and $E \subset \BD$ an irreducible component such that $f : X \dashrightarrow Y$ does not
  contract $E$. Let $E ' \subset Y$ be the image of $E$ by $f$ and suppose that $E ' \subset \partial_Y X_0$. Then,
  \begin{equation}
    f_* \ord_E = k \ord_{E'}
    \label{<+label+>}
  \end{equation}
  where $k = \ord_E (f_{XY}^* E')$.

  \begin{prop}\label{PropFiniteNumberOfPreimages}
    Let $f : X_0 \rightarrow X_0$ be a dominant endomorphism of topological degree $\lambda_2$. Then, every
    valuation $v$ on $\k[X_0]$ has at most $\lambda_2$
    preimages under $f_*$.
  \end{prop}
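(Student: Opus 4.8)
The plan is to reduce the statement to the classical bound on the number of extensions of a valuation to a finite field extension. Recall that the topological degree $\lambda_2$ of $f$ equals the degree of the induced function field extension: $f^*$ realizes $\k(X_0)$ as a subfield $K_0 := f^*\k(X_0)$ of $\k(X_0)$ with $[\k(X_0):K_0] = \lambda_2$. Since the proposition is used only for valuations $v$ centered at infinity, I would assume $v$ is of this kind; by the Corollary after Lemma \ref{LemmeCentreValuation}, $v$ is then divisorial, irrational, a curve valuation, or infinitely singular, and in each case its bad ideal $\p_v$ is not a maximal ideal.

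First I would attach to each valuation $w$ on $A := \k[X_0]$ a Krull valuation $\bar w$ on the field $\k(X_0)$ from which $w$ can be reconstructed. If $\p_w = \{0\}$, then $w$ extends uniquely to an $\R$-valued valuation on $\k(X_0)$, taken as $\bar w$. If $\p_w \neq \{0\}$, then $w$ is a curve valuation, and Remark \ref{RmqValuationDeCourbeKrull} promotes it to a Krull valuation with values in $\Z \times \Q$ (lexicographic order) on the relevant two-dimensional local ring; this extends to a Krull valuation on $\k(X_0)$, taken as $\bar w$. In both cases the formulas of Remark \ref{RmqValuationDeCourbeKrull} recover $w$ from $\bar w$, so $w \mapsto \bar w$ is injective modulo equivalence; moreover, since $f_*(tw) = t\,f_* w$ for $t > 0$, the fibre $f_*^{-1}(v)$ contains at most one valuation from each equivalence class.

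The key step is to show that if $f_* w = v$, then the restriction of $\bar w$ to $K_0$, transported to $\k(X_0)$ via $(f^*)^{-1}$, depends, up to equivalence, only on $v$. I would argue by the type of $v$, using Proposition \ref{PropPreservationOfType} to control the possible types of $w$. If $v$ is irrational or infinitely singular then so is $w$; both $\bar w$ and the extension $\bar v$ of $v$ are $\R$-valued and $\bar w \circ f^* = v \circ (f^*)^{-1}$ already on $A$, hence on $\k(X_0)$. If $v$ is divisorial, then either $w$ is divisorial (same argument), or $w = v_C$ is a curve valuation whose curve $C$ is contracted by $f$, and a direct computation with a local equation of $C$ shows that $\bar w \circ f^*$ restricted to $A$ agrees up to equivalence with $v \circ (f^*)^{-1}$. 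If $v$ is a curve valuation, then $w = v_C$ is a curve valuation whose curve is not contracted, and the ramification formula $f_* v_C = e\, v_{f(C)}$ — the analogue for curves of the identity $f_*\ord_E = k\,\ord_{E'}$ recalled earlier — gives $\bar w \circ f^* = e\,\bar v$. In every case $\bar w|_{K_0}$ is equivalent, via $(f^*)^{-1}$, to a valuation of $K_0$ determined by $v$ alone.

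Combining the two steps, $w \mapsto \bar w$ embeds $f_*^{-1}(v)$ into the set of extensions to $\k(X_0)$ of one fixed valuation of $K_0$; by the fundamental inequality $\sum_i e_i f_i \le [\k(X_0):K_0]$ for the extensions of a valuation to a finite extension — valid in every characteristic — this set has at most $\lambda_2$ elements, so $v$ has at most $\lambda_2$ preimages under $f_*$. I expect the main obstacle to be the key step above: the case analysis on the type of $w$, the explicit identification of the Krull upgrade of the pushforward of a curve valuation, and the attendant bookkeeping with equivalence classes; the reduction to the valuation theory of fields and the bound on the number of extensions are then routine.
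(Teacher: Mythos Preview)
Your approach is correct and matches the paper's: view each $w \in f_*^{-1}(v)$ as a (Krull) valuation on $K = \k(X_0)$ extending the valuation that $v$ induces on $f^*K$, then bound the number of extensions by $[K : f^*K] = \lambda_2$ via \cite{zariskiCommutativeAlgebra1960}. The paper's argument is shorter because it skips your case analysis: the relation $f_*w = v$ is literally the identity $w(f^*P) = v(P)$ on $\k[X_0]$, which already pins down $\bar w|_{f^*K}$ as $v$ transported through $f^*$, independently of the types of $v$ and $w$. Your type-by-type discussion is not wrong, just redundant --- and the appearance of the ramification index $e$ in your curve-valuation case is a normalisation slip (if $f_*w = v$ exactly, no $e$ enters). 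The one point your care genuinely buys is the observation that a preimage $w$ may be a curve valuation even when $v$ is not (a contracted curve mapping to a divisorial $v$), so the Krull upgrade of Remark~\ref{RmqValuationDeCourbeKrull} is needed for $w$ and not only for $v$; the paper's first paragraph glosses over this, though its second paragraph makes the fix clear.
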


  \begin{proof}
    Suppose first that the valuation $v$ takes the value $+ \infty$ only for $0$. Therefore, it extends to a valuation
    on $K = \Frac \k[X_0]$.
    The extension $f^* K \hookrightarrow K$ is a finite extension of degree $\lambda_2$. The valuation $v$ induces a
    valuation on $f^* K$ and every valuation $w$ such that $f_* w = v$ is an extension of $v_{|f^* K}$ to $K$. By
    \cite{zariskiCommutativeAlgebra1960} Theorem 19 p.55, there are at most $\lambda_2$ extension of $v_{|f^* K}$.

    If now $\p_v = \left\{ v = +\infty \right\} \neq 0$, then we know that $v$ is a curve
  valuation. By Remark \ref{RmqValuationDeCourbeKrull}, $v$ can be made into a Krull valuation $\hat v$. Since $\hat
v$ is a Krull valuation, it extends to a Krull valuation over $K$ and $f_* v$ extends to a Krull valuation over $f^*K$.
The same argument as above still works as \cite{zariskiCommutativeAlgebra1960} deals with Krull valuations.
\end{proof}

\section{Tamely ramified endomorphisms}\label{SubSecTamelyRamified}
Let $K \hookrightarrow L$ be a field extension, let $v$ be a valuation over $K$ and let $w$ be a valuation over $L$ such
that $w_{|K} = v$. If $\Gamma_v$ and $\Gamma_w$ are the value group of $v$ and $w$ respectively, we have $\Gamma_v
\subset \Gamma_w$ and we define the \emph{ramification index} $e(w|v) = [\Gamma_v : \Gamma_w]$.

If $\OO_v$ is the valuation ring of $v$ and $\OO_w$ the valuation ring of $w$. Let $\kappa_v$ be the residue field of
$v$, then we have a field extension $\kappa_v \hookrightarrow \kappa_w$, the \emph{inertia degree} is defined as
$f(w | v) := [\kappa_w : \kappa_v]$. If $L / K$ is finite of degree $n$, then
\begin{equation}
  e(w | v) f(w | v) \leq n.
  \label{<+label+>}
\end{equation}

Now consider a dominant endomorphism $f : X_0 \rightarrow X_0$, let $L = \k(X_0)$ and $K = f^* L$. Following
\cite{cutkoskyMonomialResolutionsMorphisms2000}, we say that $f$ is
\emph{tamely ramified} if $f$ is separable and for every divisorial valuation $v$ of $X_0, e(v | f_* v)$ is not
divisible by $\car \k$ and the residue field extension $\kappa_v / \kappa_{f_* v}$ is separable.

In particular, if $\car \k = 0$ or $f$ is an automorphism, $f$ is automatically tamely ramified. The composition of
tamely ramified endomorphisms is again tamely ramified.

\begin{lemme}\label{lemme:tamely-ramified-separable-over-curves}
  If $f:X_0 \rightarrow X_0$ is a tamely ramified endomorphism and $f_* \ord_E = m \ord_{E'}$ then $\car \k$ does not
  divide $m$ and the induced map $f : E \rightarrow E'$ is separable.
\end{lemme}
\begin{proof}
  The integer $m$ is exactly the ramification index $e(\ord_E | f_* \ord_E) = m$ and the residue field of $\ord_E$
  (resp. $\ord_{E'}$) is exactly the function field of $E$ (resp. $E'$). The lemma follows by the definition of tamely
  ramified.
\end{proof}

\begin{rmq}\label{rmq:tamely-ramified-at-infinity}
  Since we will only study the action of $f$ on valuations centered at infinity, it could make sense to define a notion
  of \emph{tamely ramified at infinity} where we test the conditions of tamely ramifiedness only on divisorial
  valuations centered at infinity. 
\end{rmq}

\section{Berkovich spaces over a trivially valued field}\label{sec:berkovich-spaces}
Although we won't use this point of view here. Everything appearing in this memoir could be done through the theory of
Berkovich spaces, see \cite{berkovichSpectralTheoryAnalytic2012}. Namely, if $X_0 = \spec A$ is an affine surface over a
field $\K$, then we can equip $\K$ with the trivial valuation and then the space of valuations over $X_0$ is exactly the
Berkovich analytification $X_0^{\an}$ with respect to the trivial valuation. Indeed, Berkovich uses seminorms but one
can check that $v$ is a valuation if and only if $e^{-v}$ is a seminorm. The map $v \mapsto \p_v$ is called the
\emph{contraction map} in the language of Berkovich and the map $r_X$ that sends a valuation $v$ to its center
$c_X(v)$ on a completion $X$ is called the \emph{reduction map}. In particular, $\cV_\infty$ is an open subset as it is
equal to $r_X^{-1} (X \setminus X_0)$ and $r_X$ is anti-continuous. 

\chapter{Tree structure on the space of valuations}\label{ChapterValuationTree}
We show that the space of valuation centered at infinity of a normal affine surface $X_0$ has a local tree structure.
Namely, the set of (normalised) valuations centered at a closed point is isomorphic to the \emph{valuative tree}
constructed in \cite{favreValuativeTree2004}. We recall some of its properties.

\section{Trees}
For this section, we refer to \cite{favreValuativeTree2004} Section 3.1.
Let $(\cT, \leq)$ be a partially ordered set, a subset $\cS \subset \cT$ is \emph{full} if for every $\sigma, \sigma '
\in \cS, \tau \in \cT, \sigma \leq \tau \leq \sigma ' \Rightarrow \tau \in \cS$.
\begin{dfn}
  Let $\Lambda = \N, \Q, \R$. An \emph{interval} in $\Lambda$ is a subset $I \subset \Lambda$ such that for all $x,y, z
  \in \Lambda$, if $x \leq y \leq z$ and $x,z \in I$, then $y \in I$. If $(\cT, \leq)$ be a partially ordered set, then
  $(\cT, \leq)$ is a rooted \emph{$\Lambda$-tree} if
  \begin{enumerate}
    \item $\cT$ has a unique minimal element $\tau_0$ called the \emph{root} of $\cT$.
    \item If $\tau \in \cT$, the set $\left\{ \sigma \in \cT : \sigma \leq \tau \right\}$ is
      \footnote{isomorphic here means that there exists an order preserving bijection.}{isomorphic} to an interval
      in $\Lambda$.
    \item Every full, totally ordered subset of $\cT$ is isomorphic to an interval in $\Lambda$.
    \item Every non-empty subset $\cS$ of $\cT$ admits an infimum.
  \end{enumerate}

  \begin{rmq}\label{rmq:fourth-condition}
    The fourth condition in this definition is not present in \cite{favreValuativeTree2004} but it is necessary for the
    theory to be well behaved as being able to take the minimum of two valuations is crucial. It does
    not follow from the first three conditions, see \cite{novacoskiValuationsCenteredTwodimensional}.
  \end{rmq}

  A \emph{parametrised}-$\Lambda$ tree is a rooted $\Lambda$-tree $\cT$ with a map $\alpha : \cT \rightarrow
  \Lambda \cup \left\{ \infty \right\}$ such that the restriction of $\alpha$ to any full totally ordered subset of
  $\cT$ induces a bijection with an interval in $\Lambda$. The map $\alpha$ is called the \emph{parametrisation}.

  A rooted $\R$-tree is called \emph{complete} if every increasing sequence has an upper bound.
\end{dfn}

A \emph{subtree} $\cS$ of a $\Lambda$-tree $\cT$ is a subset such that $(\cS, \leq_{|\cS})$ is a $\Lambda$-tree.
An \emph{inclusion} of trees is an order preserving injection $\iota: \cS \rightarrow \cT$. Where $\cS$ is a
$\Lambda$-tree, and $\cT$ is a $\Lambda '$-tree, we do not require $\Lambda = \Lambda '$. For example $\N \hookrightarrow
\R$ is an inclusion of trees. In particular, if $\Lambda = \Lambda '$, then $\iota(\cS)$ is a subtree of $\cT$.

If $\cT$ is an $\R$-tree and $\tau_1, \tau_2 \in \cT$, then the \emph{minimum}
$\tau_1 \wedge \tau_2  \in \cT$ exists by completeness of $\R$. We define the set
\begin{equation}
  [\tau_1, \tau_2] := \left\{ \tau \in \cT : \tau_1 \wedge \tau_2 \leq \tau \leq \tau_1 \text{ or } \tau_1 \wedge \tau_2
  \leq \tau \leq \tau_2 \right\}
  \label{<+label+>}
\end{equation}
and we call it a \emph{segment}. The segments $[\tau_1, \tau_2), (\tau_1, \tau_2]$ and $(\tau_1, \tau_2)$ are defined
similarly.  A \emph{finite} subtree of $\cT$ is a subtree that consists of a finite union of segments in $\cT$.

If $\cT$ is an $\R$-tree, a \emph{tangent} vector $\overrightarrow{v}$ at $\tau \in \cT$ is an equivalence class of
elements $\tau ' \in \cT \setminus \left\{ \tau \right\}$ where
\begin{equation}
  \tau' \sim \tau '' \Leftrightarrow (\tau, \tau '] \cap (\tau, \tau ''] \neq \emptyset.
\end{equation}
 We define the \emph{weak} topology on $\cT$ by
the topology generated by the sets
\begin{equation}
  U(\overrightarrow v) := \left\{ \tau ' \in \cT \setminus \left\{ \tau \right\}: \tau ' \text{ represents }
  \overrightarrow v \right\}.
  \label{<+label+>}
\end{equation}

\begin{thm}[\cite{favreValuativeTree2004} Proposition 3.12]
  We have the following
  \begin{itemize}
    \item Every rooted $\R$-tree $\cT$ admits a completion $\overline \cT$ that is a complete rooted $\R$-tree.
    \item Every rooted $\Q$-tree $\cT_\Q$ admits a completion $\cT_\R$ into a complete rooted $\R$-tree, i.e there exists an
      order preserving injection $\iota : \cT_\Q \hookrightarrow \cT_\R$ such that
      \begin{enumerate}
        \item If $\tau_0$ is the root of $\cT_\Q, \iota (\tau_0)$ is the root of $\cT_\R$.
        \item $\iota (\cT_\Q)$ is weakly dense in $\cT_\R$
        \item $\cT_\R$ is minimal for this property.
      \end{enumerate}
    \item If $\alpha_\Q : \cT_\Q \rightarrow \Q_+$ is a parametrisation of $\cT_\Q$, then there exists a unique
      parametrisation $\alpha_\R$ of $\cT_\R$ such that $\alpha_\Q = \alpha_\R \circ \iota$.
  \end{itemize}
\end{thm}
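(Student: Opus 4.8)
The plan is to build the completion explicitly as a set of downward-closed chains (``cuts''), verify the tree axioms and the three listed properties, and finally transport the parametrisation. For the first bullet, let $\cT$ be a rooted $\R$-tree with root $\tau_0$. By axiom (2) every set $\{\sigma \le \tau\}$ is order-isomorphic to a closed interval $[0,a]$, so a downward-closed totally ordered subset $S\subseteq\cT$ with no maximum fails to have a supremum in $\cT$ exactly when it has no upper bound in $\cT$ at all; I would therefore set
\[
  \overline{\cT} \;:=\; \cT \;\sqcup\; \{\,S \subseteq \cT : S \neq \emptyset,\ S \text{ downward-closed, totally ordered, with no maximum and no upper bound in }\cT\,\},
\]
identifying $\tau\in\cT$ with $\{\sigma\le\tau\}$ and ordering $\overline{\cT}$ by inclusion (so $\tau\le S$ iff $\tau\in S$). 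One then checks directly: $\{\tau_0\}$ is the unique minimal element; for each $x\in\overline{\cT}$ the set $\{y\le x\}$ is order-isomorphic to an interval in $\R$ (for $x=\tau\in\cT$ this is axiom (2) for $\cT$; for an added point $x=S$ it is $S$ with a top adjoined, order-isomorphic to the corresponding interval together with its right endpoint); and every full totally ordered subset is order-isomorphic to an interval in $\R$, using that such a subset meets the added points in at most one point, necessarily its top. Completeness is built in, since an increasing sequence with no bound in $\cT$ is its own supremum in $\overline{\cT}$, and $\cT\hookrightarrow\overline{\cT}$ is an order-embedding with weakly dense image, each added point $S$ being the limit along $S$ of its own elements.

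\textbf{Completion of a rooted $\Q$-tree.} Given a rooted $\Q$-tree $\cT_\Q$, I would perform a Dedekind completion branchwise: let $\cT_\R$ be the set of nonempty downward-closed totally ordered subsets $S$ of $\cT_\Q$, with two such sets identified whenever they differ by a maximum, so that $\{\sigma\le\tau\}$ and $\{\sigma:\sigma<\tau\}$ represent the same point, namely $\tau$. Order $\cT_\R$ by inclusion of downward closures and let $\iota(\tau)$ be the class of $\{\sigma\le\tau\}$. For each $\tau$ the chain $\{\sigma\le\tau\}$ is order-isomorphic to $[0,1]\cap\Q$, and the classes of cuts contained in it are order-isomorphic to $[0,1]$; these local identifications glue consistently because any $\tau_1,\tau_2\in\cT_\Q$ have a well-defined largest common initial chain $\{\sigma\le\tau_1\}\cap\{\sigma\le\tau_2\}$, which becomes their meet in $\cT_\R$. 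One checks as in the first bullet that $\cT_\R$ is a rooted $\R$-tree, and it is complete since the union of an increasing sequence of cuts is again a cut. Property (1) holds because $\tau_0$ corresponds to the minimal cut; property (2), weak density, holds because each cut is the supremum of the rational points it dominates; property (3), minimality, follows since if $\cT_\Q$ embeds weakly densely into a complete rooted $\R$-tree $\cT'$, then by completeness each of those suprema exists in $\cT'$, and sending a cut to the corresponding supremum defines an order- and topology-preserving map $\cT_\R\to\cT'$ which is the identity on $\iota(\cT_\Q)$.

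\textbf{Extending the parametrisation.} For $\tau\in\cT_\R$ set
\[
  \alpha_\R(\tau) \;:=\; \sup\{\,\alpha_\Q(\sigma) : \sigma\in\cT_\Q,\ \iota(\sigma)\le\tau\,\} \;\in\; \R_+ \cup \{\infty\}.
\]
Since $\alpha_\Q$ restricts to an order-isomorphism of each full totally ordered subset of $\cT_\Q$ onto an interval of $\Q_+$ and $\iota(\cT_\Q)$ is weakly dense, $\alpha_\R$ restricts to an order-isomorphism of each full totally ordered subset of $\cT_\R$ onto an interval of $\R_+$; thus $\alpha_\R$ is a parametrisation and $\alpha_\Q=\alpha_\R\circ\iota$ by construction. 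For uniqueness, any parametrisation $\beta$ of $\cT_\R$ with $\beta\circ\iota=\alpha_\Q$ is monotone and restricts to bijections on chains, hence coincides with $\alpha_\R$ on the dense set $\iota(\cT_\Q)$ and therefore on every branch, i.e.\ on all of $\cT_\R$.

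\textbf{Main obstacle.} I expect the real work to lie in choosing the set of added points precisely so as \emph{not to double points}: taking \emph{all} downward-closed totally ordered subsets would adjoin, for every $\tau$, a spurious second copy of $\tau$ coming from the half-open chain $\{\sigma<\tau\}$, and the resulting poset would violate axiom (3). Pinning down which chains already ``converge'' in $\cT$ — equivalently, setting up the identification of cuts — and then verifying carefully that full totally ordered subsets of $\overline{\cT}$ and of $\cT_\R$ are genuinely order-isomorphic to intervals in $\R$ and that the branchwise Dedekind completions are compatible along the partial order, is the heart of the argument; everything else is a routine check against the definitions of $\Lambda$-tree, of parametrisation, and of the weak topology recalled above.
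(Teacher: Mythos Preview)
The paper does not give its own proof of this statement: it is quoted verbatim as Proposition~3.12 of \cite{favreValuativeTree2004} and used as a black box, so there is nothing in the paper to compare your argument against. Your Dedekind-cut construction is the standard approach and is essentially what Favre--Jonsson do in the cited reference; the identification of the subtlety (avoiding doubled points by quotienting out chains that already converge) is exactly the right point to flag, and the rest is indeed routine verification.
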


\section{The local tree structure of the space of valuations}

We denote by $\cV_0$ the set of centered valuations on $R$ where $R = \k [ [ x,y ] ]$. Define the
\emph{multiplicity valuation} $v_\m$ by $v_\m (\phi) = \max \left\{ n \geq 0 : \phi \in \m^n
\right\}$. We will sometimes write $m (\phi)$ instead of $v_\m (\phi)$.
Let $\cV_\m \subset \cV_0$ be the set of centered valuations on $R$ such that $v(\m) =1 $ and consider the following order
relation on $\cV_\m$:
\begin{equation}
  v \leq w \iff \forall \phi \in R, v(\phi) \leq w(\phi).
\end{equation}
 With this order relation
$\cV$ becomes a complete rooted $\R$-tree called the \emph{valuative tree} (\cite{favreValuativeTree2004} Theorem 3.14)
rooted in $v_\m$.
 The ends of $\cV_\m$ consist of the curve valuations and the infinitely singular ones. The interior points are all
 quasimonomial valuations, all divisorial valuations are branching points whereas all the irrational valuations are
 regular points (i.e admit only two tangent vectors). Define on $\cV_\m$ the following
 function
\begin{equation}
  \alpha (v) := \sup \left\{\frac{v (\phi)}{m (\phi)} : \phi \in \m \right\}.
  \label{<+label+>}
\end{equation}
It is called the \emph{skewness} function (see \cite{favreValuativeTree2004} \S 3.3)

\begin{prop}[Proposition 3.25 of \cite{favreValuativeTree2004}] \label{PropValuationEnFonctionDeAlpha}
  The skewness function $\alpha: \cV_\m \rightarrow [1, + \infty]$ defines a parametrisation of $\cV_\m$. We have the following
  properties.

  \begin{itemize}
    \item  $\alpha (v) = 1 \Leftrightarrow v= v_\m$.
    \item Let $\phi \in \m$ be irreducible and let $v \in \cV_\m$, then
  \begin{equation}
  \forall \phi \in \m, v (\phi) = \alpha(v \wedge v_\phi) m (\phi). \end{equation}
\item If $v$ is divisorial, then $\alpha (v) \in \Q$.
    \item If $v$ is irrational, then $\alpha (v) \in \R \setminus \Q$.
    \item If $v$ is a curve valuation, then $\alpha (v) = + \infty$.
    \item If $v$ is infinitely singular, then $\alpha (v) \in ( 1 , + \infty]$ and every value is realised.
      \item If $\cV_{\m, \div}$ is the subset of $\cV_\m$ consisting of the divisorial valuations, then
  $(\cV_{\m, \div}, \alpha)$ is a parametrised $\Q$-tree.
    \end{itemize}
\end{prop}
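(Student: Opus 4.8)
The plan is to read everything off the normal form of a centered valuation as a \emph{sequence of key polynomials} $(U_j,\beta_j)$ — equivalently, its sequence of centers under successive blow-ups of $(\spec R,\m)$, or its generalized Puiseux expansion. The easy part comes first: since $v_\m$ is the root, every $v\in\cV_\m$ satisfies $v\geq v_\m$, hence $v(\phi)\geq v_\m(\phi)=m(\phi)$ for all $\phi\in\m$, so $\alpha(v)\geq 1$, with equality forcing $v(\phi)\leq m(\phi)$ for all $\phi$, i.e. $v=v_\m$; and if $v\leq w$ then $v(\phi)\leq w(\phi)$ for every $\phi$, so passing to the supremum of the ratios gives $\alpha(v)\leq\alpha(w)$. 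Thus $\alpha$ is order-preserving with $\alpha^{-1}(1)=\{v_\m\}$.

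Next I would establish the evaluation formula $v(\phi)=\alpha(v\wedge v_\phi)\,m(\phi)$ for $\phi$ irreducible. The case $v=v_\psi$ a curve valuation is the heart: there $v_\psi(\phi)/m(\phi)=(\{\psi=0\}\cdot\{\phi=0\})_0/(m(\psi)m(\phi))$ is symmetric in $\psi,\phi$, and one identifies it with $\alpha(v_\psi\wedge v_\phi)$ by induction on the length of the common initial segment of the two Puiseux expansions (equivalently, their common sequence of infinitely near points): each shared blow-up contributes equally to the intersection multiplicity and to the skewness of the meet, and the multiplicity stabilizes once the curves separate, at the value $\alpha$ of the branch point. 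The general $v$ follows either by continuity in $v$ (divisorial and curve valuations are dense, and $v\mapsto v(\phi)$, $v\mapsto v\wedge v_\phi$, $\alpha$ are continuous for the relevant topologies), or directly: when $v\leq v_\phi$ one has $v\wedge v_\phi=v$ and $v(\phi)/m(\phi)=\alpha(v)$ because the skewness of $v$ is realized along $\{\phi=0\}$.

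With the key-polynomial data in hand — $U_0=x$, $U_1=y$, $\deg U_j=n_1\cdots n_j$, and the key inequality $\beta_{j+1}>n_{j+1}\beta_j$ — one shows that the skewness of the $j$-th truncation is computed from $\beta_j$ and $\deg U_j$, so that the key inequality translates into strict monotonicity of these skewnesses, and that along the segment of divisorial valuations interpolating consecutive truncations $\alpha$ is given by an explicit recursion with increments $1/(\deg U_j)^2$ at free centers and rational convex combinations at satellite centers. From this: $\alpha$ is strictly increasing along every chain and injective on chains; divisorial valuations have $\alpha\in\Q$, and conversely a quasimonomial valuation with $\alpha\in\Q$ is divisorial while $\alpha\notin\Q$ corresponds to the irrational ones (matching the excerpt's description of $v_{s,t}$); and the set of values of $\alpha$ along a maximal chain from $v_\m$ to an end is an interval of $[1,\infty]$, because the rational values coming from divisorial valuations are dense and the remaining values are attained as increasing limits by completeness of the $\R$-tree — so $\alpha$ restricts to a bijection of every full totally ordered subset onto an interval, i.e. it is a parametrisation. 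For a curve valuation $v_\phi$ one has $v_\phi(\phi)=(\{\phi=0\}\cdot\{\phi=0\})_0=\infty$, so $\alpha(v_\phi)=\infty$; for an infinitely singular valuation every $r\in(1,\infty]$ is realized by prescribing a generalized Puiseux series whose exponents increase to $r$ (with unbounded denominators when $r=\infty$, so the valuation is genuinely not a curve valuation). Finally, on $\cV_{\m,\div}$ every maximal chain maps bijectively onto an interval of $\Q$ under $\alpha$ (strict monotonicity, every intermediate rational value being produced after finitely many further blow-ups), so $(\cV_{\m,\div},\alpha)$ is a parametrised $\Q$-tree.

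The main obstacle is the second and third steps: putting an arbitrary centered valuation of $\k[[x,y]]$ into the sequence-of-key-polynomials (or infinitely-near-points) normal form and verifying that its skewness is exactly the announced limit in the $\beta_j$ and $\deg U_j$, together with the combinatorial bookkeeping of how $\alpha$ changes at free versus satellite centers. Once that normal form and the evaluation formula are available, monotonicity, the values on curve valuations, and the $\Q$-tree statement are all formal.
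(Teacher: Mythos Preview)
The paper does not give its own proof of this proposition: it is stated with the attribution ``Proposition 3.25 of \cite{favreValuativeTree2004}'' and no argument is supplied, so there is nothing in the paper to compare your proposal against directly. Your outline is essentially a sketch of the proof from Favre--Jonsson's monograph itself --- the sequence of key polynomials / infinitely near points, the recursion for $\alpha$ under free and satellite blow-ups, and the identification of skewness with the symmetric intersection pairing on curve valuations are exactly the ingredients used there (Chapters 3 and 6 of \cite{favreValuativeTree2004}). So your approach is the standard one, and the ``obstacle'' you flag (setting up the key-polynomial normal form and the bookkeeping of $\alpha$ under blow-ups) is precisely the content of those chapters rather than something you would be expected to reproduce here.

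One small point: in your treatment of the infinitely singular case, be careful with the phrase ``exponents increase to $r$''. What characterizes infinitely singular valuations is not the limit of the exponents $\beta_j$ but the fact that the associated generalized Puiseux series is not a genuine Puiseux series (the denominators of the $\beta_j$ are unbounded, equivalently the generic multiplicities $b_j$ go to infinity). The skewness is then $\alpha(v)=\sum_j 1/b_j^2$ (or an equivalent expression in the $\beta_j$ and $n_j$), and it is this series that one arranges to converge to a prescribed $r\in(1,\infty]$ while keeping $b_j\to\infty$. Your parenthetical ``with unbounded denominators when $r=\infty$'' suggests you may have this slightly backwards: the denominators must be unbounded for \emph{every} infinitely singular valuation, and it is the growth rate of the $b_j$ that determines whether $\alpha$ is finite or infinite.
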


We can define two topologies over $\cV_\m$. The first one is the weak topology being the coarsest topology such that for
all $\phi \in R$, the
evaluations map $v \in \cV_\m \mapsto v(\phi)$ is continuous. The second is the weak topology given by the
$\R$-tree structure on $\cV_\m$.

\begin{prop}[\cite{favreValuativeTree2004}, Theorem 5.1]\label{PropWeakTopologyIsWeakTopologyOfTree}
  The weak topology over $\cV_\m$ given by the evaluation maps $v \in \cV_\m \mapsto v(\phi)$ and the weak topology
  induced by the tree structure of $\cV_\m$ are the same.
\end{prop}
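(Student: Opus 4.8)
The plan is to prove that the two topologies on $\cV_\m$ coincide by showing each is finer than the other, the bridge being the identity of Proposition~\ref{PropValuationEnFonctionDeAlpha}: for an irreducible $\phi\in\m$,
\begin{equation}
  v(\phi)=m(\phi)\,\alpha(v\wedge v_\phi),\qquad v\in\cV_\m.
  \label{EqSkewBridge}
\end{equation}
Any $\phi\in R$ is a unit times a finite product of irreducible elements of $\m$, so $v(\phi)$ is a finite sum of terms of the form \eqref{EqSkewBridge}; since a finite sum of continuous maps is continuous and units contribute constants, for the first inclusion it is enough to make a single evaluation $v\mapsto v(\phi)$ with $\phi\in\m$ irreducible continuous for the tree topology, and in the second inclusion the relevant $\phi$ will be produced irreducible from the start.

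For the inclusion $(\text{evaluation topology})\subseteq(\text{tree topology})$ I would observe that, by \eqref{EqSkewBridge}, the map $v\mapsto v(\phi)$ factors as the retraction $v\mapsto v\wedge v_\phi$ of $\cV_\m$ onto the segment $[v_\m,v_\phi]$ followed by $m(\phi)\,\alpha$. Retraction onto a finite subtree of an $\R$-tree is continuous for its weak topology (\cite{favreValuativeTree2004}, \S 3), and $\alpha$ restricted to $[v_\m,v_\phi]$ is continuous, being a parametrisation of that segment (Proposition~\ref{PropValuationEnFonctionDeAlpha}); hence $v\mapsto v(\phi)$ is continuous for the tree topology, and so is every evaluation, which gives the inclusion.

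For the reverse inclusion I would check that each generating open set $U(\overrightarrow v)$ of the tree topology is open for the evaluation topology. Fix $\tau\in\cV_\m$ and a tangent vector $\overrightarrow v$ at $\tau$. Suppose first that $\overrightarrow v$ points away from the root (this includes every tangent vector when $\tau=v_\m$). The nonempty open set $U(\overrightarrow v)$ contains a divisorial valuation $w_0>\tau$, by density of divisorial valuations in the valuative tree (\cite{favreValuativeTree2004}); choosing an irreducible $\phi$ with $v_\phi>w_0$---possible since a divisorial valuation is dominated by curve valuations---the curve valuation $v_\phi$ represents $\overrightarrow v$ at $\tau$, and strict monotonicity of $\alpha$ along the segment $[v_\m,v_\phi]\ni\tau$ turns \eqref{EqSkewBridge} into
\begin{equation}
  U(\overrightarrow v)=\{\,v:v\wedge v_\phi>\tau\,\}=\{\,v:v(\phi)>m(\phi)\,\alpha(\tau)\,\}.
  \label{EqUpSketch}
\end{equation}
If instead $\overrightarrow v$ is the tangent vector at $\tau\neq v_\m$ pointing towards the root and $\tau$ is not an end of the tree, I pick an irreducible $\phi$ with $v_\phi>\tau$ and obtain, again from \eqref{EqSkewBridge} and monotonicity of $\alpha$,
\begin{equation}
  U(\overrightarrow v)=\{\,v:\tau\not\le v\,\}=\{\,v:v(\phi)<m(\phi)\,\alpha(\tau)\,\}.
  \label{EqDownSketch}
\end{equation}
Finally, if $\tau$ is an end of the tree (a curve or infinitely singular valuation) its only tangent vector is the one towards the root and $U(\overrightarrow v)=\cV_\m\setminus\{\tau\}$, which is open because distinct valuations differ on some $\phi$, so the evaluation topology is Hausdorff. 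Since the sets $U(\overrightarrow v)$ generate the tree topology, this gives the second inclusion, and equality follows.

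The reductions involved---to irreducible $\phi$, the Hausdorff treatment of ends, and the continuity of retractions and of $\alpha$ along a segment---are routine. The delicate step is the selection in \eqref{EqUpSketch}--\eqref{EqDownSketch}: realising a prescribed tangent direction at $\tau$ by the curve valuation $v_\phi$ of an honest irreducible power series. This is where one genuinely uses the fine structure of the valuative tree from \cite{favreValuativeTree2004} (density of divisorial valuations and their domination by curve valuations); everything else is bookkeeping with the skewness parametrisation.
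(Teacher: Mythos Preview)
The paper does not give its own proof of this proposition: it is quoted verbatim from \cite{favreValuativeTree2004}, Theorem~5.1, and used as a black box. So there is no ``paper's proof'' to compare against.

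Your argument is correct and is in fact a clean shortcut compared with the original proof in \cite{favreValuativeTree2004}, which proceeds through the machinery of key polynomials and approximating sequences. You bypass all of that by taking Proposition~\ref{PropValuationEnFonctionDeAlpha} (the formula $v(\phi)=m(\phi)\,\alpha(v\wedge v_\phi)$) as given and reading off both inclusions directly from it: evaluation maps factor through a retraction and the parametrisation $\alpha$, while each $U(\overrightarrow v)$ is described by a single inequality $v(\phi)\gtrless m(\phi)\,\alpha(\tau)$ once a suitable curve valuation $v_\phi$ realising the tangent direction is chosen. The only point deserving a remark is that in \eqref{EqDownSketch} you implicitly use that $v\wedge v_\phi$ always lies on the totally ordered segment $[v_\m,v_\phi]$, so that $\alpha(v\wedge v_\phi)\ge\alpha(\tau)$ forces $v\wedge v_\phi\ge\tau$ and hence $v\ge\tau$; this is true but worth making explicit. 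Your treatment of ends via Hausdorffness is fine (and for a curve end $\tau=v_\psi$ one could alternatively take $\phi=\psi$ and read $U(\overrightarrow v)=\{v(\psi)<\infty\}$ directly).
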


Let $X$ be a good completion of $X_0$ and let $p$ be a smooth point of $X$. Take local coordinates
$z,w$ at $p$, then the completion of the local ring $\OO_{X,p}$ with respect the maximal ideal $\m_p$ is
isomorphic to $\k [ [z,w ] ]$. Let $\cV_X (p)$ be the set of valuations $v$ on $\k[X_0]$ centered at $p$. We
will denote by $\cV_X (p; \m_p)$ the subset of $\cV_X (p)$ of valuations $v$ such that $v(\m_p) = 1$.
The space $\cV_X (p; \m_p)$ is an $\R$-tree isomorphic rooted in $v_{\m_p}$. We make its structure precise.

\begin{prop}\label{PropValuativeTreeIsomorphism}
  The $\R$-tree $\cV_X (p; \m_p)$ is \emph{not} complete.
  \begin{enumerate}
    \item If $p \in E$ is a free point then $\cV_X (p; \m_p)$ is
      isomorphic to $\cV_\m \setminus \left\{ v_z \right\}$ where $z$ is a local equation of $E$.
    \item If $p = E \cap F$ is a satellite point, then $\cV_X (p; \m_p)$ is
      isomorphic to $\cV_\m \setminus \left\{ v_z, v_w \right\}$ where $z,w$ are local coordinates at $p$ with $z$ a
      local equation of $E$ and $w$ a local equation of $F$.
  \end{enumerate}
\end{prop}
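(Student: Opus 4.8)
The plan is to reduce everything to the already-developed theory of the valuative tree $\cV_\m$ on $R = \k[[x,y]]$ and then to identify precisely which boundary valuations of $\cV_\m$ fail to define valuations on $\k[X_0]$. First I would recall that, since $p$ is a smooth point of the good completion $X$, Theorem \ref{ThmCompletionLocalRing} gives an isomorphism $\hat{\OO_{X,p}} \simeq \k[[z,w]]$ for any local coordinates $(z,w)$, and by the preceding results (the Proposition of Favre--Jonsson/Spivakovsky and its corollary in Section \ref{SubSecValuationsOnLocalRing}) any centered valuation on $\OO_{X,p}$ extends uniquely to a centered valuation on $\hat{\OO_{X,p}}$. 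Thus the set of valuations on $\OO_{X,p}$ with $v(\m_p)=1$ is in canonical order-preserving bijection with $\cV_\m$, which is a complete rooted $\R$-tree by \cite{favreValuativeTree2004}. The content of the proposition is that, among these, exactly the curve valuations along the local branches of $\BD$ at $p$ do \emph{not} descend to honest $\R$-valued valuations on $\k[X_0]$, and removing them gives $\cV_X(p;\m_p)$; since removing an end from a complete $\R$-tree yields a non-complete $\R$-tree, this also proves the incompleteness assertion.

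Next I would carry out the identification of the ``missing'' valuations. A valuation $v$ centered at $p$ defines a valuation on $\k[X_0] \simeq \OO_X(X_0)$ precisely when $v(P) > -\infty$ for every $P \in \OO_X(X_0)$, equivalently (writing $P$ locally as a product of irreducible germs in $\hat{\OO_{X,p}}$ with integer exponents, as in the curve-valuation example in Chapter \ref{ChapterValuations}) when $v$ does not take the value $+\infty$ on the local equation of any branch of $\BD$ through $p$ — because such a branch $E$ carries a regular function $P \in \OO_X(X_0)$ with $\ord_E(P) < 0$, so $v(P) = -\infty$ is forced if $v$ vanishes on that branch to infinite order, i.e. if $v = v_E$ is the curve valuation along $E$. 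Conversely, every centered valuation on $\hat{\OO_{X,p}}$ other than these finitely many curve valuations satisfies $\p_v$ not containing the local equation of any component of $\BD$, and hence restricts to a genuine valuation of $\k[X_0]$: divisorial, irrational, and infinitely singular valuations have $\p_v = \{0\}$ by Proposition \ref{PropClassificationValuation}, and a curve valuation $v_\phi$ with $\phi$ not a $\BD$-branch has $\p_v$ the prime generated by $\phi$, which still avoids every $\OO_X(X_0)$-regular function. In case (1), $p$ lies on the single prime divisor $E$ at infinity, locally $E = \{z=0\}$, so the only excluded valuation is $v_z$, giving $\cV_X(p;\m_p) \cong \cV_\m \setminus \{v_z\}$; in case (2), $p = E \cap F$ with $E = \{z=0\}$, $F = \{w=0\}$ in suitable coordinates, and exactly $v_z$ and $v_w$ are excluded. (One should note $z,w$ are regular local equations, so $v_z,v_w$ are genuine ends of $\cV_\m$, and that they are distinct since $E \ne F$.)

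Finally I would record the order-theoretic consequences: removing one or two ends from the complete rooted $\R$-tree $\cV_\m$ leaves a rooted $\R$-tree (the root $v_{\m_p}$ survives, the axioms of a $\Lambda$-tree from Section on Trees are preserved under passing to a full subset containing the root), but it is no longer complete — e.g. an increasing sequence of divisorial valuations converging to the removed curve valuation $v_z$ now has no upper bound. The skewness parametrisation $\alpha$ of Proposition \ref{PropValuationEnFonctionDeAlpha} restricts to a parametrisation of the subtree, with the segment toward the deleted end now half-open. The main obstacle I anticipate is not any deep point but rather the careful bookkeeping in the ``conversely'' direction: one must check that for \emph{every} non-$\BD$-branch curve valuation, and in particular for formal (non-algebraic) curve valuations, no regular function on $X_0$ acquires a pole — this follows because a function regular on $X_0$ can only have poles along $\BD$, so its local factorization at $p$ involves the $\BD$-branches with possibly negative exponents but all other irreducible factors with nonnegative exponents, whence $v(P) > -\infty$ as soon as $v$ is finite on the $\BD$-branches; making this argument clean (rather than merely citing the analogous computation in the curve-valuation example) is the step that needs the most care.
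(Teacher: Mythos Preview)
Your proposal is correct and follows essentially the same approach as the paper's proof: identify $\hat\OO_{X,p}$ with $\k[[z,w]]$ via Theorem~\ref{ThmCompletionLocalRing}, then determine exactly which centered valuations on $\k[[z,w]]$ fail to restrict to valuations on $\k[X_0]$, namely the curve valuations along the local branches of $\BD$. The paper makes the ``conversely'' direction you flag as delicate very concrete by noting that every $P\in\k[X_0]$ is locally of the form $\phi/z^a$ (free case) or $\phi/(z^a w^b)$ (satellite case) with $\phi\in\OO_{X,p}$ and $a,b\geq 0$, which immediately shows $v(P)>-\infty$ whenever $v(z)$ (and $v(w)$) are finite; this is a cleaner packaging of exactly the factorization argument you outline, and you would do well to state it that way.
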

\begin{proof}
  If $p \in E$ is a free point, let $z,w$ be local coordinates at $p$ such
  that $z$ is a local equation of $E$. Then, the completion of the local ring at $p$ is isomorphic to $\k [ [ z,w ] ]$
  by Theorem \ref{ThmCompletionLocalRing}. Every $P \in \k[X_0]$ is of the form $P = \frac{\phi}{z^a}$ with $a \geq 0$ and
  $\phi \in \OO_{X,p}$. Hence, a centered valuation on $\k [ [ z, w] ] $ defines a valuation over $\k[X_0]$ if and only if it
  is not the curve valuation $v_z$. Hence we have an isomorphism $\cV_X (p; \m_p) \simeq \cV_\m \setminus \left\{
  v_z \right\}$.

  If $p = E \cap F$ is a satellite point, then let $z,w$ be local coordinates
  at $p$ such that $z$ is a local equation of $E$ and $w$ is a local equation of $F$. Every $P \in \k[X_0]$ is of the form $P
  = \frac{\phi}{z^a w^b}$ where $a,b \geq 0$ and $\phi \in \OO_{X,p}$. Therefore a centered valuation on $\k [ [ z, w ]
  ]$ defines a valuation over $\k[X_0]$ if and only if it is not the curve valuation $v_z$ or $v_w$. Hence we have an
  isomorphism  $\cV_X (p; \m_p) \rightarrow \cV_\m \setminus \left\{ v_z, v_w \right\}$.
\end{proof}

\section{The relative tree with respect to a curve $z=0$}  Let $R = \k \left[ \left[ x,y \right]
\right]$ and let $\m$ be the maximal ideal of $R$. Let $z \in \m$ be irreducible such that $v_\m (z) = 1$. One can
consider the set $\cV_z$ of centered valuations on $R$ such that $v(z) = 1$; we also add the valuation $\ord_z$ to
$\cV_z$ defined by $\ord_z (\phi) = \max \left\{ n  \geq 0 : z^n | \phi \right\}$. (notice that $\ord_z$ is
\emph{not} centered, because for example if $x \neq z, \ord_z (x) = 0$). This is also a tree rooted in $\ord_z$ called
the \emph{relative tree}
(see \cite{favreValuativeTree2004} Proposition 3.61) with the order relation $v \leq_z \mu \Leftrightarrow \forall \phi
\in R, v (\phi) \leq \mu (\phi)$. We can define the weak topology on $\cV_z$ being the coarsest topology such that the
for all $\phi \in R$, the evaluation map $v \in \cV_z \mapsto v(\phi)$ is continuous. There is also the weak topology
given by the tree structure of $\cV_z$.

\begin{prop}[Relative version of \ref{PropWeakTopologyIsWeakTopologyOfTree}]
  The weak topology over $\cV_z$ given by the evaluation maps $v \in \cV_z \mapsto v(\phi)$ and the weak topology
  induced by the tree structure of $\cV_z$ are the same.
\end{prop}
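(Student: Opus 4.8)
The plan is to follow the proof of the absolute statement, Proposition \ref{PropWeakTopologyIsWeakTopologyOfTree} (i.e. \cite[Theorem 5.1]{favreValuativeTree2004}), in the relative setting. Both topologies on $\cV_z$ are given by explicit subbases: the evaluation topology by the sets $\{v : v(\phi) > t\}$ and $\{v : v(\phi) < t\}$ for $\phi \in R$ and $t \in \R$, and the weak tree topology by the sets $U(\vec{v})$ attached to the tangent directions $\vec{v}$ of the $\R$-tree $\cV_z$ rooted at $\ord_z$, so it suffices to express each member of one subbasis in terms of the other. The only new ingredients are the relative analogues of two facts about $\cV_\m$: first, for $\phi \in R$ irreducible and not an associate of $z$, if $v_\phi \in \cV_z$ denotes the corresponding curve valuation normalised so that $v_\phi(z) = 1$, then $v(\phi) = (v \wedge_z v_\phi)(\phi)$ for every $v \in \cV_z$ (the relative analogue of Proposition \ref{PropValuationEnFonctionDeAlpha}); and second, $\mu \mapsto \mu(\phi)$ is a strictly increasing bijection from the segment $[\ord_z, v_\phi]$ onto $[0, +\infty]$. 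Both are obtained by transposing the arguments of \cite{favreValuativeTree2004} to the relative tree $\cV_z$.

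Granting these, I would argue as follows. Using $v(\phi\psi) = v(\phi) + v(\psi)$ one reduces the continuity of the evaluation maps to the case $\phi$ irreducible; the cases $\phi$ a unit or an associate of $z$ give constant maps, and the degenerate values of $t$ give $\emptyset$ or all of $\cV_z$. So fix $\phi$ irreducible, not an associate of $z$, and let $\tau$ be the unique point of $[\ord_z, v_\phi]$ with $\tau(\phi) = t$ (here $t \in [0,+\infty)$). Using the two facts one checks that $\{v : v(\phi) > t\}$ equals $U(\vec{v})$ for $\vec{v}$ the tangent direction at $\tau$ pointing toward $v_\phi$, and that $\{v : v \geq_z \tau\} = \{v : v(\phi) \geq t\}$, whose complement $\{v : v(\phi) < t\}$ is $U(\vec{v}_0)$ for $\vec{v}_0$ the direction at $\tau$ pointing toward the root; hence the evaluation subbasis consists of tree‑open sets, and the evaluation topology is contained in the tree topology. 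For the reverse inclusion, let $\vec{v}$ be a tangent direction at a point $\tau \in \cV_z$. If $\vec{v}$ points toward the root, then $U(\vec{v}) = \{w : \tau \not\leq_z w\} = \bigcup_{\phi \in R}\{w : w(\phi) < \tau(\phi)\}$, a union of evaluation‑open sets. If $\vec{v}$ points away from the root, choose a divisorial $\sigma \in U(\vec{v})$ (divisorial valuations are weakly dense in $\cV_z$) and an irreducible $\phi$ with $v_\phi \geq_z \sigma$ (every divisorial valuation is dominated by a curve valuation); then $\tau \in [\ord_z, v_\phi)$, $\vec{v}$ is the direction toward $v_\phi$ at $\tau$, and by the previous step $U(\vec{v}) = \{w : w(\phi) > \tau(\phi)\}$ is evaluation‑open. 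This gives the equality of the two topologies.

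The only real obstacle is verifying that the two structural facts about the valuative tree carry over verbatim from $\cV_\m$ to the relative tree $\cV_z$; the rest is combinatorial bookkeeping identical to the absolute case. Alternatively, one may transport the absolute statement directly: renormalisation $w \mapsto w/w(\m)$ is a homeomorphism from $\cV_z \setminus \{\ord_z\}$ onto $\cV_\m \setminus \{v_z\}$ for the evaluation topologies, since the rescaling factor $w \mapsto w(\m) = \min(w(x),w(y))$ is continuous and strictly positive there, and one checks that it also respects the underlying tree structure and hence the weak tree topologies; then Proposition \ref{PropWeakTopologyIsWeakTopologyOfTree} gives the claim away from $\ord_z$, and the remaining point $\ord_z$ is handled by a direct comparison of neighbourhood bases as in Proposition \ref{PropValuativeTreeIsomorphism}.
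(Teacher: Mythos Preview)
The paper does not actually prove this proposition: it is stated without proof, as a relative analogue of Proposition~\ref{PropWeakTopologyIsWeakTopologyOfTree}, which in turn is just a citation of \cite[Theorem~5.1]{favreValuativeTree2004}. So there is no ``paper's own proof'' to compare against; the author is simply asserting that the same result holds for $\cV_z$.

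Your proposal is correct and gives exactly the kind of argument one would expect to fill this gap. The first approach---rerunning the proof of \cite[Theorem~5.1]{favreValuativeTree2004} using the relative skewness formula $v(\phi) = \alpha_z(v \wedge_z v_\phi)\, m_z(\phi)$ (this is Proposition~\ref{PropValuationEnFonctionDeAlphaRelative} in the paper)---is the most direct and is presumably what the author has in mind. The second approach, transporting the absolute result via the homeomorphism $N_z : \cV_\m \to \cV_z$ of Proposition~\ref{PropLocalChartForValuations}, also works; the only care needed is that $N_z$ is not an isomorphism of \emph{rooted} trees (the order is reversed on $[v_\m, v_z]$, see Proposition~\ref{PropCompatibilityOrdre}), but the weak tree topology depends only on the unrooted tree structure, which $N_z$ does preserve. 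Your treatment of the boundary point $\ord_z$ is appropriate.
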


\begin{prop}[\cite{favreValuativeTree2004} Lemma 3.59]\label{PropLocalChartForValuations}
We have an onto map $N_z : \cV_0 \rightarrow \cV_z$ defined by

\begin{align*}
  N_z(v) &= v / v(z) \text{ if } v \neq v_z \\
  N_z(v_z) &= \ord_z.
\end{align*}
This map restricts to a homeomorphism $N_z : \cV_\m \rightarrow \cV_z$ with respect to the weak topology. If $w \in \m$ is
irreducible, then the map $N_{z,w} := N_w \circ {N_w}^{-1} : \cV_z \rightarrow \cV_w$ is a homeomorphism for the weak
topology.

\end{prop}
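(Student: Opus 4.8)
The plan is to dispatch the three assertions in order. Surjectivity of $N_z\colon\cV_0\to\cV_z$ is formal: if $w\in\cV_z$ then either $w=\ord_z=N_z(v_z)$, or $w$ is centered with $w(z)=1$ and then $w=N_z(w)$. For the homeomorphism statement in (2) I would first check that $N_z$ restricts to a \emph{bijection} $\cV_\m\to\cV_z$, with inverse $w\mapsto w/w(\m)$ for $w\neq\ord_z$ and $\ord_z\mapsto v_z$. Two elementary facts underpin this: any positive rescaling of a valuation scales $v(\m)$ and $v(z)$ proportionally, so the normalisations $v(\m)=1$ and $v(z)=1$ each single out a unique representative of an equivalence class; and for $v\in\cV_0$ one has $0<v(\m)\le v(z)$ with $v(\m)$ finite, since by Proposition \ref{PropClassificationValuation} the bad ideal $\p_v$ is zero or a height-one prime, hence cannot contain the whole maximal ideal. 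Moreover $v(z)=+\infty$ forces $\p_v\supseteq(z)$, i.e. $v=v_z$, so $N_z$ is well defined on $\cV_\m\setminus\{v_z\}$ by $v\mapsto v/v(z)$ with $v(z)\in[1,+\infty)$; a short computation then gives $N_z\circ(w\mapsto w/w(\m))=\id$ and the converse, and shows the non-centered valuation $\ord_z$ is hit only by $v_z$.

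For continuity of $N_z\colon\cV_\m\to\cV_z$ in the weak topologies it is enough to show $v\mapsto N_z(v)(\phi)$ is continuous for each $\phi\in R$. Factor $\phi=z^k\psi$ with $z\nmid\psi$; then
\[
  N_z(v)(\phi)=k+\frac{v(\psi)}{v(z)}\ \ (v\neq v_z),\qquad N_z(v_z)(\phi)=\ord_z(\phi)=k.
\]
On $\cV_\m\setminus\{v_z\}$ this is a ratio of weakly continuous functions whose denominator obeys $v(z)\ge v(\m)=1$, hence is continuous (if $v(\psi)=+\infty$ then $v$ is a curve valuation along a branch of $\psi$, so $v\neq v_z$ and the ratio is $+\infty$, again varying continuously). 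At $v_z$ one has $v_z(\psi)<\infty$ because $z\nmid\psi$; given $\varepsilon>0$, choosing $N>v_z(\psi)$ and $M>N/\varepsilon$ makes $\{v:v(\psi)<N\}\cap\{v:v(z)>M\}$ an open neighbourhood of $v_z$ on which $v(\psi)/v(z)<\varepsilon$, giving continuity there as well.

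To finish (2) I would invoke that $\cV_\m$, with the weak topology, is compact (part of the valuative-tree analysis in \cite{favreValuativeTree2004}; cf. also Proposition \ref{PropWeakTopologyIsWeakTopologyOfTree}), while $\cV_z$ is Hausdorff because the evaluation maps separate valuations; a continuous bijection from a compact space onto a Hausdorff space is a homeomorphism. Part (3) is then immediate: applying (2) to $w$ in place of $z$ gives a homeomorphism $N_w\colon\cV_\m\to\cV_w$, so $N_{z,w}=N_w\circ N_z^{-1}$ is a composition of homeomorphisms.

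The one genuinely delicate point is the behaviour near the distinguished valuation $v_z$ and the root $\ord_z$, where $v(z)$ degenerates to $+\infty$; the factorisation $\phi=z^k\psi$ is exactly what tames it, writing the normalised value as $k$ plus a remainder $v(\psi)/v(z)$ that dies as $v\to v_z$. If one preferred to avoid the compactness input, one would instead prove directly that $N_z^{-1}$ is continuous, which is the harder direction: at $\ord_z$ both $w(\phi)$ and $w(\m)$ tend to $0$ when $z\nmid\phi$, and one must identify the limit of the ratio with $v_z(\phi)$, most cleanly via the relative skewness parametrisation of $\cV_z$. Routing through compactness of $\cV_\m$ avoids this, so that is the argument I would give.
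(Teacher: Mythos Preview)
The paper does not give its own proof of this proposition: it is stated with a citation to \cite{favreValuativeTree2004}, Lemma 3.59, and used as a black box. So there is no approach in the paper to compare against.

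Your argument is sound. The surjectivity and bijectivity steps are routine once one knows that $v(\m)<\infty$ always and that $v(z)=+\infty$ characterises $v_z$; both follow from the classification in Proposition~\ref{PropClassificationValuation} as you indicate. The factorisation $\phi=z^k\psi$ is exactly the right device for continuity at $v_z$, and the compactness-plus-Hausdorff trick is a clean way to avoid checking continuity of the inverse directly. One small remark: Proposition~\ref{PropWeakTopologyIsWeakTopologyOfTree} itself only asserts that two topologies coincide; the compactness of $\cV_\m$ in the weak topology is a separate (standard) fact from \cite{favreValuativeTree2004}, so you should cite that directly rather than the topology-comparison statement. Apart from this cosmetic point, your proof stands on its own and supplies what the paper omits.
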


 The tree $\cV_z$ comes with a skewness function $\alpha_z : \cV_z \rightarrow [0, + \infty]$ and a multiplicity function $m_z
 (\phi) = v_z (\phi)$. The skewness is defined by
\begin{equation}
  \alpha_z (v) := \sup \left\{ \frac{v (\psi)}{m_z (\psi)} | \psi \in \m \right\}
  \label{<+label+>}
\end{equation}

\begin{prop}[Relative version of Proposition
  \ref{PropValuationEnFonctionDeAlpha}]\label{PropValuationEnFonctionDeAlphaRelative}
  The function $\alpha_z : \cV_z \rightarrow [0, + \infty]$ defines a parametrisation of the tree $\cV_z$. We have the
  following properties.
  \begin{itemize}
    \item $\alpha_z (v) = 0 \Leftrightarrow v = \ord_z$.
      \item Let $\phi \in \m$ be irreducible and let $v \in \cV_z$, then
        \begin{equation}
          v (\phi) = \alpha_z (v \wedge N (v_\phi)) m_z (\phi).
        \end{equation}
        \item If $v$ is divisorial or $v = \ord_z$, then $\alpha_z (v) \in \Q$
          \item If $v$ is irrational, then $\alpha_z (v) \in \R \setminus \Q$.
          \item If $v$ is a curve valuation, then $\alpha_z (v) = +\infty$.
          \item If $v$ is infinitely singular, then $\alpha_z (v) \in (0 , + \infty]$ and every value is realised.
            \item If $\cV_{z, \div}$ is the subset of $\cV_z$ consisting of $\ord_z$ and divisorial valuations,
              then $\left( \cV_{z, \div}, \alpha_z \right)$ is a parametrised $\Q$-tree.
  \end{itemize}
     \end{prop}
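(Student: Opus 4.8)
The plan is to transport the absolute statement (Proposition~\ref{PropValuationEnFonctionDeAlpha}) to the relative setting along the homeomorphism $N_z\colon\cV_\m\to\cV_z$ of Proposition~\ref{PropLocalChartForValuations}, which identifies $\cV_z$ with the valuative tree re-rooted at the curve valuation $v_z$ and re-normalised by $v(z)=1$. The only genuinely new ingredient needed is the dictionary between the skewness functions $\alpha$ and $\alpha_z$ (and the multiplicities $m$ and $m_z$), after which every bullet is obtained by feeding the corresponding bullet of Proposition~\ref{PropValuationEnFonctionDeAlpha} through that dictionary.

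First I would establish the key identity. For $v\in\cV_\m$ with $v\neq v_z$ one has $N_z(v)=v/v(z)$, and, since it suffices to take the supremum defining $\alpha_z$ over irreducible $\psi$, unwinding $m_z(\psi)=v_z(\psi)$ and the absolute formula $w(\psi)=\alpha(w\wedge v_\psi)\,m(\psi)$ (applied to $w=v$ and to $w=v_z$, using $m(z)=v_\m(z)=1$) gives $\alpha_z(N_z(v))=\tfrac1{v(z)}\sup_\psi\frac{\alpha(v\wedge v_\psi)}{\alpha(v_z\wedge v_\psi)}$. The supremum is evaluated inside the tree $\cV_\m$ by pushing the test curve $v_\psi$ into the subtree above $v$: this makes the numerator $\alpha(v)$ and the denominator $\alpha(v\wedge v_z)=v(z)$, so that $\alpha_z(N_z(v))=\alpha(v)/v(z)^2$, while $\alpha_z(\ord_z)=\alpha_z(N_z(v_z)):=0$. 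The same bookkeeping, now carrying an auxiliary curve $v_\phi$ through, produces the relative valuative formula $v(\phi)=\alpha_z\bigl(v\wedge N_z(v_\phi)\bigr)\,m_z(\phi)$.

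It then remains to read off the conclusions. A full totally ordered subset of $\cV_z$ is the $N_z$-image of a geodesic $v_z\to w\to\ell$ of $\cV_\m$ with $w=v_z\wedge\ell$; on the part lying in $[v_\m,v_z]$ one has $v(z)=\alpha(v)$, hence $\alpha_z\circ N_z=1/\alpha$, increasing from $0$; beyond $w$ the value $v(z)=\alpha(w)$ is constant, hence $\alpha_z\circ N_z=\alpha/\alpha(w)^2$, still increasing, and the two pieces match continuously at $w$; so $\alpha_z$ is a parametrisation, vanishing exactly at $\ord_z$. Since $N_z$ only rescales it preserves the type of a valuation, and since the branch points of $\cV_\m$ are divisorial (irrational valuations being regular), $v(z)=\alpha(v\wedge v_z)$ is rational whenever $v\wedge v_z$ is a genuine branch point; combining this with $\alpha_z\circ N_z=\alpha/v(z)^2$ and the classification in Proposition~\ref{PropValuationEnFonctionDeAlpha} yields: $\alpha_z\in\Q$ for $\ord_z$ and for divisorial valuations; $\alpha_z\in\R\setminus\Q$ for irrational ones (the value being $1/\alpha(v)$ or $\alpha(v)$ divided by a rational square); $\alpha_z=+\infty$ for curve valuations $v\neq v_z$; and $\alpha_z\in(0,+\infty]$ for infinitely singular valuations, every value being attained by choosing the branch point on $[v_\m,v_z]$ deep enough and then prescribing the remaining skewness via Proposition~\ref{PropValuationEnFonctionDeAlpha}. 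Finally $\cV_{z,\div}=\{\ord_z\}\cup N_z(\cV_{\m,\div})$, the function $\alpha_z$ takes rational values on it, and divisorial valuations are dense, so $\alpha_z$ identifies every chain of $\cV_{z,\div}$ with an interval of $\Q$. I expect the main difficulty to be precisely the first identity: it rests on a careful analysis of the relative position of $v$, $v_z$ and the test curve inside the valuative tree, whereas everything afterwards is routine transport.
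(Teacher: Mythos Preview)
The paper does not prove this proposition: it is stated as the relative analogue of Proposition~\ref{PropValuationEnFonctionDeAlpha} and implicitly deferred to \cite{favreValuativeTree2004}, where the relative tree is developed in \S3.9. Your approach is the natural one and is correct; in fact the key identity you establish, $\alpha_z(N_z(v))=\alpha(v)/v(z)^2$, is precisely the content of Proposition~\ref{PropRelationSkewness} (quoted in the paper from \cite{favreValuativeTree2004}, Proposition~3.65), so you have essentially rederived that result and then, as you say, read off the remaining bullets by transport along $N_z$.

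One point where you are a bit terse is the evaluation of the supremum $\sup_\psi \alpha(v\wedge v_\psi)/\alpha(v_z\wedge v_\psi)$. Your claim that pushing $v_\psi$ above $v$ realises the sup is correct, but it is worth noting explicitly that for a test curve $v_\psi$ branching off below $w=v\wedge v_z$ the two meets coincide (ratio $1$), for one branching off between $w$ and $v$ the denominator is $\alpha(w)$ while the numerator is at most $\alpha(v)$, and for one branching off between $w$ and $v_z$ the ratio drops below $1$; this case analysis is what pins down the value $\alpha(v)/\alpha(w)=\alpha(v)/v(z)$. Similarly, the claim that every value in $(0,+\infty]$ is hit by an infinitely singular valuation uses that above any divisorial branch point on $[v_\m,v_z]$ one can prescribe the absolute skewness freely (which follows from Proposition~\ref{PropSkewnessRelationAfterBlowUpContractionMap} later in the paper, or directly from \cite{favreValuativeTree2004}); you gesture at this but it could be made more explicit.
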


     \begin{prop}[\cite{favreValuativeTree2004}, Proposition 3.65]\label{PropRelationSkewness}
  We have the following relation
  \begin{equation}
  \forall v \in \cV_0, \quad v (z)^2 \alpha_z \left( \frac{v}{v(z)} \right) = \min\left( v(x), v(y) \right)^2
  \alpha \left( \frac{v}{\min \left( v(x), v(y) \right)} \right)
    \label{<+label+>}
  \end{equation}
  If $w \in \m$ is another irreducible element with $m(w) = 1$, then
  \begin{equation}
    \forall v \in \cV_0, v(z)^2 \alpha_z \left(\frac{v}{v(z)}\right) = v(w)^2 \alpha_w \left( \frac{v}{v(w)} \right).
    \label{<+label+>}
  \end{equation}

\end{prop}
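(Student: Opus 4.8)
The plan is to strip off the normalisation and reduce everything to a single identity inside the absolute valuative tree $\cV_\m$, which I then verify by a short case analysis on the tree. Every $v\in\cV_0$ writes uniquely as $v=v(\m)\,v'$ with $v(\m)=\min(v(x),v(y))>0$ and $v'\in\cV_\m$, and then $v(z)=v(\m)\,v'(z)$ while $v/v(z)=v'/v'(z)$; hence $v(z)^2\alpha_z(v/v(z))=v(\m)^2\,v'(z)^2\alpha_z(v'/v'(z))$ and $v(\m)^2\alpha(v/v(\m))=v(\m)^2\alpha(v')$, so the first displayed identity is equivalent to
\[
  v(z)^2\,\alpha_z\!\left(\frac{v}{v(z)}\right)=\alpha(v)\qquad\text{for every }v\in\cV_\m .
\]
Because the right-hand side $v(\m)^2\alpha(v/v(\m))$ of the original identity does not involve $z$, the second identity of the proposition follows immediately from the first. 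The only delicate point is $v=v_z$, where both sides equal $+\infty$; I would obtain this case by continuity, letting $v$ tend to $v_z$ along the segment $[v_\m,v_z]$ once the generic case is settled.

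To prove the displayed identity I unfold the definition of the relative skewness. For $\mu=v/v(z)\in\cV_z$ one has $\alpha_z(\mu)=v(z)^{-1}\sup_{\psi\in\m}v(\psi)/v_z(\psi)$, and (exactly as for the absolute skewness) the supremum may be restricted to irreducible $\psi$. Since $m(z)=1$, the germ $z=0$ has a well-defined tangent line, hence is transverse to at least one coordinate axis, so $v_z(\m)=1$ and $v_z\in\cV_\m$. Applying Proposition~\ref{PropValuationEnFonctionDeAlpha} to $v$ and to $v_z$ gives $v(\psi)=m(\psi)\,\alpha(v\wedge v_\psi)$ and $v_z(\psi)=m(\psi)\,\alpha(v_z\wedge v_\psi)$ for irreducible $\psi$, and also $v(z)=\alpha(v\wedge v_z)$. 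Substituting, and writing $v_C$ for the curve valuation attached to an irreducible germ $C$,
\[
  v(z)^2\,\alpha_z\!\left(\frac{v}{v(z)}\right)=v(z)\sup_{\psi}\frac{v(\psi)}{v_z(\psi)}=\alpha(v\wedge v_z)\cdot\sup_{C}\frac{\alpha(v\wedge v_C)}{\alpha(v_z\wedge v_C)} .
\]
So the claim reduces to showing that this last supremum equals $\alpha(v)/\alpha(v\wedge v_z)$.

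This is now a purely tree-theoretic statement. Put $\tau_0=v_\m$ and $p=v\wedge v_z$, so that $[\tau_0,v]$ and $[\tau_0,v_z]$ coincide along $[\tau_0,p]$ and then separate at $p$. For an end $v_C$ of $\cV_\m$ there are three possibilities: either $[\tau_0,v_C]$ leaves $[\tau_0,p]$ at a point of $[\tau_0,p)$, or at $p$ in a third direction, in which case $v\wedge v_C=v_z\wedge v_C$ and the ratio is $1$; or $[\tau_0,v_C]$ continues along $[\tau_0,v]$ beyond $p$, forcing $v_z\wedge v_C=p$ and ratio $\alpha(v\wedge v_C)/\alpha(p)\le\alpha(v)/\alpha(p)$; or it continues along $[\tau_0,v_z]$ beyond $p$, forcing $v\wedge v_C=p$ and ratio $\alpha(p)/\alpha(v_z\wedge v_C)\le1$. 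Since $\alpha(v)\ge\alpha(p)\ge\alpha(v_\m)=1$, the supremum is $\le\alpha(v)/\alpha(p)$; and it is attained (or approached) by choosing $v_C=v$ when $v$ is itself a curve valuation, and otherwise curve valuations $v_C$ in the subtree above $v$ with $v\wedge v_C\to v$, using that $\alpha$ is increasing along $[\tau_0,v]$. Multiplying by $\alpha(p)=\alpha(v\wedge v_z)$ yields the identity.

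The main obstacle is bookkeeping rather than substance: making the trichotomy watertight, in particular handling the degenerate configurations $v\le v_z$ (so $p=v$), $v$ an end, or $\alpha=+\infty$, and confirming that curve valuations are dense enough among the ends of $\cV_\m$ to realise the supremum above $v$. These are all standard features of the valuative tree — density of curve valuations among the ends and monotonicity of $\alpha$ along segments — and introduce no genuine new difficulty; indeed the statement is exactly \cite{favreValuativeTree2004}, Proposition~3.65, and may simply be quoted from there.
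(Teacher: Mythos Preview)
The paper does not supply a proof of this proposition; it is stated with a citation to \cite{favreValuativeTree2004}, Proposition~3.65, and used as a black box. Your write-up therefore goes further than the paper does, and your closing remark that one may simply quote the reference already matches what the paper actually does.

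Your self-contained argument is correct. The reduction to $v(z)^2\alpha_z(v/v(z))=\alpha(v)$ for $v\in\cV_\m$ is clean, and the identification $m_z(\psi)=v_z(\psi)$ together with Proposition~\ref{PropValuationEnFonctionDeAlpha} turns the relative skewness into the tree-theoretic supremum $\alpha(v\wedge v_z)\cdot\sup_C \alpha(v\wedge v_C)/\alpha(v_z\wedge v_C)$. The trichotomy on the position of $v_C$ relative to the branch point $p=v\wedge v_z$ is the right way to evaluate this supremum; the upper bound $\alpha(v)/\alpha(p)$ is immediate, and for the matching lower bound the existence of curve valuations $v_C$ with $v\wedge v_C$ arbitrarily close to $v$ (or $v_C>v$ when $v$ is quasimonomial) is exactly the density fact you invoke. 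One small point worth making explicit: at $v=v_z$ the left-hand side is formally $\infty\cdot 0$ since $N_z(v_z)=\ord_z$ has $\alpha_z(\ord_z)=0$, so the continuity argument along $[v_\m,v_z)$ is genuinely needed there and gives $\alpha(v)\to+\infty$ on both sides; you flag this, but it deserves a sentence rather than a parenthesis.
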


\begin{prop}[\cite{favreValuativeTree2004}, Lemma 3.60 and 6.47]\label{PropCompatibilityOrdre}
  The map $N : \cV_\m \rightarrow \cV_z$ is not an isomorphism of trees. The two orders on $\cV_\m$ and $\cV_z$ are
  compatible except on the segments $[v_\m, v_z]$ and $[\ord_z, N(v_\m)]$ where they are reversed. More precisely,
  \begin{enumerate}
    \item $\forall v, \mu \in [v_\m, v_z] \subset \cV_\m, v \leq_{\m} \mu \Leftrightarrow N(v) \geq_z N (\mu)$.
    \item $\forall v_1, v_2 \in \cV_z \setminus \left\{ \ord_z \right\}, v_1 \leq_z v_2 \Leftrightarrow [{N}^{-1}
      (v_1), v_z ] \subset [{N}^{-1} (v_2), v_x]$.
  \end{enumerate}
\end{prop}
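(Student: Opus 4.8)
The plan is to derive both precise assertions, and with them the informal statement, from one structural fact: that $N = N_z$ is an isomorphism of the \emph{underlying} (non-metric, unrooted) $\R$-tree structures of $\cV_\m$ and $\cV_z$, and that it interchanges the distinguished points $v_\m \leftrightarrow N(v_\m)$ and $v_z \leftrightarrow \ord_z$. Granting this, (1) becomes a short skewness computation and (2) is purely formal.

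First I would show that $N$ carries segments to segments. By Proposition \ref{PropLocalChartForValuations}, $N$ is a homeomorphism for the weak topologies, and by Proposition \ref{PropWeakTopologyIsWeakTopologyOfTree} together with its relative counterpart these agree with the topologies defined by the tree structures. In an $\R$-tree endowed with its weak topology the segment between $\tau_1$ and $\tau_2$ has a topological description: a point $x\notin\{\tau_1,\tau_2\}$ lies in $(\tau_1,\tau_2)$ exactly when $\tau_1$ and $\tau_2$ fall into distinct connected components of the complement of $\{x\}$, these components being the sets $U(\overrightarrow v)$ attached to the tangent vectors at $x$. Hence $N$ and $N^{-1}$ preserve segments, so $N([\tau_1,\tau_2])=[N(\tau_1),N(\tau_2)]$ for all $\tau_1,\tau_2$. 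Since $v_\m(z)=1$ by hypothesis, $N(v_\m)=v_\m$, and $N(v_z)=\ord_z$ by the definition of $N$ in Proposition \ref{PropLocalChartForValuations}; in particular $N$ maps $[v_\m,v_z]$ onto $[\ord_z,N(v_\m)]$.

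For (1), complete $z$ to a system of local coordinates $(z,w)$, which is legitimate because $v_\m(z)=1$. For $v\in[v_\m,v_z]$ one has $v(\m)=\min(v(z),v(w))=1$, so Proposition \ref{PropRelationSkewness}, applied with $(z,w)$ in the role of $(x,y)$, gives $v(z)^2\,\alpha_z(N(v))=\alpha(v)$; moreover $v(z)=\alpha(v\wedge v_z)\,m(z)=\alpha(v)$ by Proposition \ref{PropValuationEnFonctionDeAlpha}, since $v\leq_\m v_z$ forces $v\wedge v_z=v$ and $m(z)=1$. Hence $\alpha_z(N(v))=1/\alpha(v)$. As $\alpha$ parametrises the chain $[v_\m,v_z]$ and $\alpha_z$ parametrises the chain $[\ord_z,N(v_\m)]$ (Propositions \ref{PropValuationEnFonctionDeAlpha} and \ref{PropValuationEnFonctionDeAlphaRelative}), for $v,\mu\in[v_\m,v_z]$ we obtain
\[
  v\leq_\m\mu \iff \alpha(v)\leq\alpha(\mu) \iff \tfrac1{\alpha(v)}\geq\tfrac1{\alpha(\mu)} \iff N(v)\geq_z N(\mu),
\]
which is (1); since $v_\m\neq v_z$ this already shows that $N$ is not order preserving, hence not an isomorphism of trees.

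For (2), note that $v_1\leq_z v_2$ means precisely $v_1\in[\ord_z,v_2]$; applying the segment-preserving homeomorphism $N^{-1}$, which sends $\ord_z$ to $v_z$, this is equivalent to $N^{-1}(v_1)\in[v_z,N^{-1}(v_2)]$, i.e. to $[N^{-1}(v_1),v_z]\subseteq[N^{-1}(v_2),v_z]$. The remaining informal assertion is the standard re-rooting principle for trees: moving the root of a tree from $\tau$ to $\tau'$ changes the induced order only for pairs lying on the segment $[\tau,\tau']$; transported through $N$, the order $\leq_z$ is the order on $\cV_\m$ rooted at $v_z$, and the segment joining the two roots $v_\m$ and $v_z$ is exactly $[v_\m,v_z]$, with image $[\ord_z,N(v_\m)]$ in $\cV_z$. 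The point I expect to be the main obstacle is the first step — justifying that a weak homeomorphism of $\R$-trees respects segments — which rests on the purely topological characterisation of segments; everything afterwards is the skewness bookkeeping of Proposition \ref{PropRelationSkewness} together with elementary tree manipulations.
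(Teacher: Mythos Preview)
The paper does not supply its own proof of this proposition; it is stated with a citation to \cite{favreValuativeTree2004}, Lemmas~3.60 and~6.47, and no argument is given. So there is nothing in the paper to compare your approach against.

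Your argument is correct and self-contained within the paper's framework. The key step---that a weak homeomorphism of $\R$-trees preserves segments because $(\tau_1,\tau_2)$ is exactly the set of points whose removal separates $\tau_1$ from $\tau_2$---is standard and well justified by Propositions~\ref{PropWeakTopologyIsWeakTopologyOfTree} and~\ref{PropLocalChartForValuations}. The skewness computation for~(1) is clean: completing $z$ to coordinates $(z,w)$ is legitimate since $v_\m(z)=1$, and then Proposition~\ref{PropRelationSkewness} together with $v(z)=\alpha(v)$ on $[v_\m,v_z]$ gives $\alpha_z(N(v))=1/\alpha(v)$ as you claim. Assertion~(2) then follows formally from segment preservation and $N^{-1}(\ord_z)=v_z$.

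One remark: the paper's statement of~(2) ends with $[N^{-1}(v_2),v_x]$, which appears to be a typo for $[N^{-1}(v_2),v_z]$ (there is no distinguished $x$ in the setup, and the figure immediately following the proposition confirms the intended geometry). Your proof establishes the version with $v_z$, which is the correct one.
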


The situation is summed up in Figure \ref{fig:homeo_relative_tree} where we have put arrows on the branches of the tree to
indicate the order.

\begin{figure}[h]
  \centering
  \includegraphics[scale=0.7]{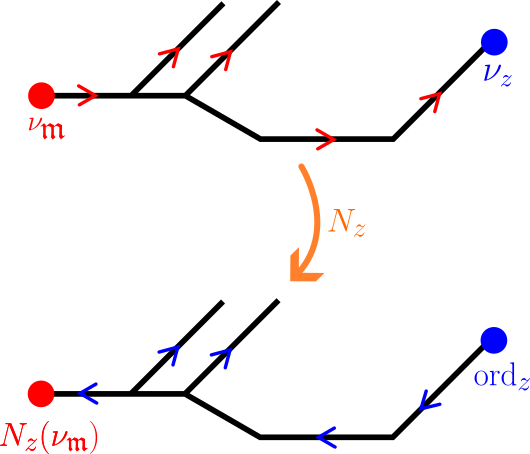}
  \caption{The homeomorphism between $\cV_\m$ and $\cV_z$}
  \label{fig:homeo_relative_tree}
\end{figure}

We will use the relative tree in the following context. Let $E$ be a prime divisor at infinity of some good
completion $X$, let $p$ be a point of
$E$ and let $z,w$ be local coordinates at $p$ such that $E = \left\{ {z = 0} \right\}$. The completion of the local ring at $p$ is
isomorphic to $\k [ [ z, w ] ]$. We define $\cV_X (p; E)$ as
follows; an element of $\cV_X (p;E)$ is either a valuation $v$ on $\k[X_0]$ centered at $p$ such that $v(z) =
1$ or the divisorial valuation $\ord_E$. Notice
that the definition of $\cV_X (p;E)$ does not depend on the local equation $z=0$ of $E$ because the
quotient of two local equations is a regular invertible function.

\begin{prop}\label{PropRelativeValuativeTreeIsomorphism}
  Let $X$ be a completion and let $p \in X$ be a closed point at infinity.
  \begin{enumerate}
    \item If $p \in E$ is a free point, then $\cV_X (p; E)$ is isomorphic to
      $\cV_z$.
    \item If $p = E \cap F$ is a satellite point. Let $z,w$ be local
      coordinates at $p$ such that $z$ is a local equation of $E$ and $w$ a local equation of $F$ then $\cV_X (p; E)$
      is isomorphic to $\cV_z \setminus \left\{ v_w \right\}$ and $\cV_X (p;F)$ is isomorphic to $\cV_w \setminus
      \left\{ v_z \right\}$.
  \end{enumerate}
  The map $N_z : \cV_\m \rightarrow \cV_z$ induces a homeomorphism
  \begin{equation}
    N_{p,E} : \cV_X (p; \m_p) \rightarrow \cV_X
    (p; E) \setminus \left\{ \ord_E \right\}.
  \end{equation}
   Furthermore, if $p = E \cap F$, then the map
  \begin{equation}
    N_{p, F} \circ {N_{p,E}}^{-1}: \cV_X
    (p; E) \setminus \left\{ \ord_E \right\} \rightarrow \cV_X (p; F) \setminus \left\{ \ord_F \right\}
  \end{equation}
  is a homeomorphism.
\end{prop}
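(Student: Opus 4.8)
The plan is to rerun the argument of Proposition~\ref{PropValuativeTreeIsomorphism} while keeping the normalisation $v(z)=1$ in place of $v(\m_p)=1$, and to identify precisely which curve valuations must be discarded. Fix local coordinates $(z,w)$ at $p$ with $z$ a local equation of $E$, and, in the satellite case, $w$ a local equation of $F$; by Theorem~\ref{ThmCompletionLocalRing} this identifies $\hat{\OO_{X,p}}$ with $R=\k[[z,w]]$ and $\m_p$ with $\m=(z,w)$. A valuation on $\k[X_0]$ centered at $p$ dominates $\OO_{X,p}$, hence restricts to a centered valuation there and extends uniquely to a centered valuation on $R$; conversely a centered valuation $v$ on $R$ descends to a valuation on $\k[X_0]$ exactly when it never takes the value $-\infty$ on $\k[X_0]$. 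Writing each $P\in\k[X_0]$ in the form $P=\phi/z^a$ in the free case, resp. $P=\phi/(z^aw^b)$ in the satellite case, with $\phi\in\OO_{X,p}$ and $a,b\ge 0$, the only way to reach $v(P)=-\infty$ is to have $v(z)=+\infty$ (free), resp. $v(z)=+\infty$ or $v(w)=+\infty$ (satellite). By Proposition~\ref{PropClassificationValuation}, $v(z)=+\infty$ forces $z\in\p_v$, hence $v=v_z$, and similarly $v(w)=+\infty$ forces $v=v_w$; and since $F$ is a component of the boundary, Theorem~\ref{ThmGoodmanExistenceAmpleDivisorAtInfinity} produces $P\in\k[X_0]$ with $\ord_F(P)<0$, so $v_w$ genuinely fails to descend.

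Because $v_z(z)=+\infty$, every element of $\cV_z$ (the centered valuations with $v(z)=1$, together with the adjoined root $\ord_z$) is automatically $\ne v_z$, so in the free case restriction of valuations is a well-defined map $\cV_z\to\cV_X(p;E)$, while in the satellite case one first removes $v_w\in\cV_z$, obtaining $\cV_z\setminus\{v_w\}\to\cV_X(p;E)$. This map carries the adjoined root $\ord_z$ to $\ord_E$, since the order of vanishing along $\{z=0\}$ in $R$ restricts on $\k[X_0]$ to the order of vanishing along $E$. It is a bijection: injectivity is uniqueness of the centered extension, and surjectivity holds because any $\mu\in\cV_X(p;E)\setminus\{\ord_E\}$ extends to a centered valuation $\hat\mu$ on $R$ with $\hat\mu(z)=1$, lying in $\cV_z$ and different from $v_w$, whose restriction is $\mu$. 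Both sides carry the pointwise order and restriction to a subring preserves it, so the map is an isomorphism of $\R$-trees; that it is moreover a homeomorphism for the weak topologies is the relative form of the topology comparison in Proposition~\ref{PropValuativeTreeIsomorphism}, using Proposition~\ref{PropWeakTopologyIsWeakTopologyOfTree} and its relative version to pass between the evaluation topology and the tree topology. The statements for $F$ follow by exchanging $z$ and $w$.

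Next I would obtain $N_{p,E}$ as the map induced by $N_z\colon\cV_\m\to\cV_z$. Under the isomorphism $\cV_X(p;\m_p)\cong\cV_\m\setminus\{v_z\}$ in the free case, resp. $\cV_\m\setminus\{v_z,v_w\}$ in the satellite case, of Proposition~\ref{PropValuativeTreeIsomorphism}, the homeomorphism $N_z$ of Proposition~\ref{PropLocalChartForValuations} restricts to a homeomorphism onto $\cV_z\setminus\{\ord_z\}$, resp. $\cV_z\setminus\{\ord_z,v_w\}$ (here $N_z(v_z)=\ord_z$ and $N_z(v_w)=v_w$ because $v_w(z)=1$), which by the previous paragraph is identified with $\cV_X(p;E)\setminus\{\ord_E\}$; composing yields the homeomorphism $N_{p,E}$, given explicitly by $v\mapsto v/v(z)$. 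This is well defined since a centered valuation satisfies $v(z)>0$ and, being distinct from $v_z$, also $v(z)<+\infty$, and its inverse is $\mu\mapsto\mu/\mu(\m_p)$, well defined because $\mu(z)=1$ forces $0<\mu(\m_p)\le 1$. For the last assertion, at a satellite point $N_{p,F}\circ N_{p,E}^{-1}$ is the renormalisation $\mu\mapsto\mu/\mu(w)$, which under the identifications above is the restriction of $N_{z,w}=N_w\circ N_z^{-1}\colon\cV_z\to\cV_w$; as $N_{z,w}$ is a homeomorphism by Proposition~\ref{PropLocalChartForValuations} and sends $\{\ord_z,v_w\}$ to $\{v_z,\ord_w\}$, it restricts to the required homeomorphism $\cV_X(p;E)\setminus\{\ord_E\}\to\cV_X(p;F)\setminus\{\ord_F\}$.

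The main difficulty is purely in the bookkeeping: checking that in the satellite case $v_w$ belongs to $\cV_z$ (hence must be removed) whereas $v_z$ does not, and that the non-centered valuation $\ord_z$ restricts to $\ord_E$; beyond that, every point is either a direct consequence of Propositions~\ref{PropValuativeTreeIsomorphism} and~\ref{PropLocalChartForValuations}, or the same comparison between the weak topology defined by evaluations on $\k[X_0]$ and that defined by evaluations on $\hat{\OO_{X,p}}$ that already underlies Proposition~\ref{PropValuativeTreeIsomorphism}.
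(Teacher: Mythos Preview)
Your proof is correct and follows essentially the same route as the paper: identify $\hat\OO_{X,p}$ with $\k[[z,w]]$, write regular functions as $\phi/z^a$ (resp.\ $\phi/(z^a w^b)$), exclude exactly the curve valuations $v_z$ (resp.\ $v_z,v_w$), and then transport $N_z$ and $N_w\circ N_z^{-1}$ through the resulting identifications. The only differences are that you spell out a few points the paper leaves implicit (why $v_w$ genuinely fails to descend, the explicit inverse $\mu\mapsto\mu/\mu(\m_p)$, and the topology comparison), but the argument is the same.
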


\begin{proof}
    If $p \in E$ is a free point. Let $z, w$ be local coordinates at
      $p$ such that $z$ is a local equation of $E$. The completion of the local ring at $p$ is isomorphic to $\k [ [ z,w
      ] ]$ by Theorem \ref{ThmCompletionLocalRing}. For every $P \in \k[X_0]$, $P$ is of the form $P = \frac{\phi}{z^a}$ where
      $a \geq 0$ and $\phi \in \OO_{X,p}$. Therefore, a centered valuation on $\k [ [ z, w ] ] $ defines a valuation
      over $\k[X_0]$ if and only if it is not the curve valuation $v_z$. Since $v_z \not \in \cV_z$ we have that $\cV_X
      (p; E) \simeq \cV_z$. Call $\sigma : \cV_X (p; E) \rightarrow \cV_z$ the isomorphism. We
      define $N_{p,E}$ as follows. Recall by Proposition \ref{PropLocalChartForValuations} that there is a homeomorphism
      $N : \cV_\m \rightarrow \cV_z$ where in particular $N(v_z) = \ord_z$. Here we have that $\ord_z$ is canonically
      identified with $\ord_E$ and $\cV_X (p; \m_p)$ is isomorphic to $\cV_\m \setminus \left\{ v_z \right\}$, call
      $\iota: \cV_X (p; \m_p) \rightarrow \cV_\m \setminus \left\{ v_z \right\}$ the isomorphism. Define
      \begin{equation}
        N_{p,E}:= {\sigma}^{-1} \circ N \circ \tau : \cV_X (p; \m_p) \rightarrow \cV_X (p; E) \setminus \left\{ \ord_E
        \right\},
        \label{<+label+>}
      \end{equation}
      it is a homeomorphism.

    If $p =E \cap F$ is a satellite point. Let $(z,w)$ be local coordinates
      at $p$ such that $z$ is a local equation of $E$ and $w$ is a local equation of $F$. The completion of the local
      ring at $p$ is isomorphic to $\k [ [ z,w ] ] $ by Theorem \ref{ThmCompletionLocalRing}. Every $P \in \k[X_0]$ is of the
      form $P = \frac{\phi}{z^a w^b}$ where $a,b \geq 0$ and $\phi \in \OO_{X,p}$. Therefore a centered valuation on
      $\k [ [ z , w ] ] $ defines a valuation over $\k[X_0]$ if and only if it is not the curve valuation associated to $z$ or
      $w$. Or $v_z$ does not belong to $\cV_z$ but $v_w$ does. Therefore, $\cV_X (p;E)$ is isomorphic to $\cV_z
      \setminus \left\{ v_w \right\}$. If $N_z : \cV_\m \rightarrow \cV_z$ is the map from Proposition
      \ref{PropLocalChartForValuations}, then $N (v_z) = \ord_z$ and $N(v_w) = v_w$. Therefore, $N_w \circ
      {N_z}^{-1} : \cV_z \rightarrow \cV_w$ is a homeomorphism that sends $\ord_z$ to $v_z$ and $v_w$ to $\ord_w$. Fix
      an isomorphism $\tau_E : \cV_X (p;E)  \rightarrow \cV_z \left\{ v_w \right\}$ and $\tau_F : \cV_X (p; F)
      \rightarrow \cV_w \setminus \cV_z$. We have that the map
      \begin{equation}
        N_{p,F} \circ {N_{p,E}}^{-1} = {\tau_F}^{-1} \circ N_w \circ {N_z}^{-1} \circ \tau_E : \cV_X (p; E) \setminus
        \left\{ \ord_E \right\} \rightarrow \cV_X (p; F) \setminus \left\{ \ord_F \right\}
        \label{<+label+>}
      \end{equation}
      is a homeomorphism.
\end{proof}

\begin{prop}\label{PropValuativeTreeIsRelativeValuativeTreeWRTExceptionalDivisor}
  Let $X$ be a completion of $X_0$ and let $E$ be a prime divisor at infinity. If $p_1, p_2 \in E$ are closed points
  with $p_1 \neq p_2$, then $\cV_X (p_1; E) \cap \cV_X (p_2; E) = \left\{ \ord_E \right\}$. Define the set
  $\cV_X (E;E)$ of valuations $v$ such that $c_X (v) \in E$ and $v(z) = 1$ where $z$ is a local equation of $E$
  at $c_X (v)$. Then
  \begin{equation}
    \cV_X (E;E) = \bigcup_{p \in E} \cV_X (p;E)
    \label{<+label+>}
  \end{equation}
  and it has a natural structure of a rooted $\R$-tree rooted in $\ord_E$. The skewness functions $\alpha_E$ glue together to
  give $\cV_X (E;E)$ the structure of a parametrised rooted tree. Every point $p \in E$ defines a tangent vector at
  $\ord_E$ given by $\cV_X (p;E) \setminus \left\{ \ord_E \right\}$.

  Furthermore, Let $Y$ be a completion of $X_0$ and $q \in Y$ a closed point at infinity. Let $\pi : Z \rightarrow
  Y$ be the blow up of $q$ and let $\tilde E$ be the exceptional divisor of $\pi$. Then, for every $\tilde q \in \tilde
  E$, the map $\pi_\bullet: \cV_Z( \tilde q; \tilde E) \rightarrow \cV_Y (q; \m_q)$ is actually equal to $\pi_*$ and
  they glue together to give a map
  \begin{equation}
    \pi_*: \cV_Z (\tilde E; \tilde E) \rightarrow \cV_Y (q; \m_q),
    \label{EqIsomorphismLocalValuativeTreeAndRelativeValuativeTreeWRTExceptionalDivisor}
  \end{equation}
  which is an isomorphism of trees. We have the relation $\alpha_{\m_q} \circ \pi_* = 1 + \alpha_E$ and $b_{\m_q} \circ
  \pi_* = b_E$.
\end{prop}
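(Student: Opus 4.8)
The plan is to prove the four assertions in order, the last (the blow-up statement) carrying essentially all the content. The disjointness $\cV_X(p_1;E)\cap\cV_X(p_2;E)=\{\ord_E\}$ follows at once from uniqueness of the center (Lemma \ref{LemmeCentreValuation}): an element of the intersection is either $\ord_E$ or a valuation centered simultaneously at the distinct closed points $p_1$ and $p_2$, which is impossible. For $\cV_X(E;E)=\bigcup_{p\in E}\cV_X(p;E)$ I would classify $v\in\cV_X(E;E)$ by its center on $E$. If $c_X(v)$ is a closed point then $v$ lies in the corresponding $\cV_X(p;E)$ by definition. If $c_X(v)$ is the generic point of $E$ then $\OO_v$ dominates the discrete valuation ring $\OO_{X,c_X(v)}$; since $v$ is a genuine valuation on $\k[X_0]$ its bad ideal is trivial (a curve valuation is centered at a closed point, and the curve valuation along $E$ is excluded, taking $-\infty$ on some regular function), so $v$ extends to $\k(X)$, and a valuation that is $\geq 0$ on a DVR and $>0$ on its maximal ideal is proportional to that DVR's valuation; the normalisation $v(z)=1$ then forces $v=\ord_E$, which lies in every $\cV_X(p;E)$. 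The reverse inclusion is immediate.

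For the tree structure, each $\cV_X(p;E)$ is, by Proposition \ref{PropRelativeValuativeTreeIsomorphism}, a rooted $\R$-tree with root $\ord_E$, isomorphic to the relative tree $\cV_z$ (or to $\cV_z\setminus\{v_w\}$ at a satellite point), and by the first assertion these pieces meet only at $\ord_E$. I would then check directly that gluing a family of rooted $\R$-trees along their common root again yields a rooted $\R$-tree for the order $v\leq w\iff v(\phi)\leq w(\phi)$ for all $\phi$: the root $\ord_E$ is the unique minimal element; two valuations lying in different pieces are incomparable, so every initial segment $\{\sigma\leq\tau\}$ and every full totally ordered subset lies in a single piece and is therefore isomorphic to an interval. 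The functions $\alpha_E$ all vanish at $\ord_E$, so they glue to a map whose restriction to any full chain is a parametrisation of a single piece (Proposition \ref{PropValuationEnFonctionDeAlphaRelative}), that is, a parametrisation of the glued tree. Finally $\cV_z$ has exactly one tangent direction at its root $\ord_z$ — deleting the leaf $v_w$ in the satellite case changes nothing — so each $\cV_X(p;E)\setminus\{\ord_E\}$ is precisely one tangent vector of $\cV_X(E;E)$ at $\ord_E$, and $p\mapsto\cV_X(p;E)\setminus\{\ord_E\}$ is the asserted bijection.

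For the blow-up statement, write $\pi:Z\to Y$ for the blow-up of $q$ with exceptional divisor $\tilde E$. The first step is a normalisation comparison: for $v$ with $c_Z(v)\in\tilde E$ and $z$ a local equation of $\tilde E$ at $c_Z(v)$, one has $v(\m_q)=v(z)$, since $\m_q\cdot\OO_Z=\OO_Z(-\tilde E)$ shows the ideal generated by $\m_q$ in $\OO_{Z,c_Z(v)}$ is $(z)$. Hence $\pi_*$ carries $\cV_Z(\tilde q;\tilde E)$ into $\cV_Y(q;\m_q)$ without renormalisation, so $\pi_\bullet=\pi_*$ on it and $\ord_{\tilde E}\mapsto v_{\m_q}$. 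Next, since $\pi$ is an isomorphism over $X_0$ the operator $\pi_*$ is the identity on $\cV$ up to moving centers, and $\pi(c_Z(v))=c_Y(v)$ by uniqueness of centers, so $\pi^{-1}(q)=\tilde E$ gives the equivalence $c_Z(v)\in\tilde E\iff c_Y(v)=q$. Combined with the normalisation comparison, this identifies $\cV_Z(\tilde E;\tilde E)$ with $\cV_Y(q;\m_q)$ as subsets of $\cV$ and $\pi_*$ with the identity; the identity is an order isomorphism because both orders are defined by the same pointwise inequality, hence a homeomorphism and an isomorphism of rooted $\R$-trees. The relation $\alpha_{\m_q}\circ\pi_*=1+\alpha_{\tilde E}$ and the corresponding identity for the multiplicity functions $b$ then follow from the local dictionary between the valuative tree at $q$ and the relative tree with respect to $\tilde E$, that is, Proposition \ref{PropRelationSkewness} and the companion formulas of \cite{favreValuativeTree2004}, read in local coordinates at $q$.

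The main obstacle I expect is the local bookkeeping in this last assertion at the satellite points of $\tilde E$: one must verify that the curve valuations omitted on the $Z$-side — those along the strict transforms of the boundary divisors of $Y$ through $q$ — coincide with the curve valuations omitted on the $Y$-side, namely those along those same divisors at $q$, so that $\cV_Z(\tilde E;\tilde E)=\cV_Y(q;\m_q)$ is a genuine equality of sets and not merely of their "generic parts". This reduces to tracking strict transforms through the blow-up, but it is the one point where the argument is not purely formal.
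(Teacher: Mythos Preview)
Your treatment of the first three assertions is correct and more explicit than the paper, which takes them as essentially evident and only writes out the blow-up statement. The difficulties are in the last part, at two specific points.

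First, the claim that $\pi_*$ is an order isomorphism ``because both orders are defined by the same pointwise inequality'' glosses over the fact that the test rings differ: the order on $\cV_Y(q;\m_q)$ is defined by pointwise comparison on $\hat\OO_{Y,q}$, while the order on each $\cV_Z(\tilde q;\tilde E)$ is defined on $\hat\OO_{Z,\tilde q}$. Via $\pi^*$ the former is a proper subring of the latter, so $\leq_{\tilde E}\Rightarrow\leq_{\m_q}$ is immediate but the converse is not---and not every $\psi\in\hat\OO_{Z,\tilde q}$ is of the form $\pi^*\phi/z^k$, so the obvious trick fails for formal power series. The paper closes this gap by invoking Proposition~\ref{PropCompatibilitéOrdreContractionMap}, which establishes the order compatibility by an induction on the dual graph $\Gamma_{\pi}$ using the geometric order relations of Propositions~\ref{PropOrderRelationAfterOneBlowUp} and~\ref{PropOrderRelationAfterOneBlowUpRelativeCase}. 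Your bijection argument via $\m_q\cdot\OO_Z=\OO_Z(-\tilde E)$ is the right way to see $\pi_\bullet=\pi_*$ and matches the paper's local-coordinate verification, but you still need that proposition (or an equivalent argument) for the order.

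Second, Proposition~\ref{PropRelationSkewness} is the wrong reference for $\alpha_{\m_q}\circ\pi_*=1+\alpha_{\tilde E}$. That proposition compares $\alpha_\m$ and $\alpha_z$ on the \emph{same} local ring for an element $z\in\m$; there is no $z\in\m_q$ cutting out $\tilde E$, since $\tilde E$ lives on $Z$, and if one tries to apply it at $\tilde q$ instead one relates $\alpha_{\tilde E}$ to $\alpha_{\m_{\tilde q}}$, not $\alpha_{\m_q}$. The shift by $1$ is genuinely a blow-up phenomenon, and the paper obtains it from Proposition~\ref{PropSkewnessRelationAfterBlowUpContractionMap} specialised to $E_q=\tilde E$: since $\tilde E$ is the first exceptional divisor one has $b_{\m_q}(\tilde E)=1$ and $\alpha_{\m_q}(v_{\tilde E})=\alpha_{\m_q}(v_{\m_q})=1$, giving $\alpha_{\m_q}(\pi_\bullet v)=1+\alpha_{\tilde E}(v)$; the identity $b_{\m_q}\circ\pi_*=b_{\tilde E}$ is the second formula of the same proposition.
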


We postpone the proof to \S \ref{SecGeometricInterpretationValuativeTree}. If $E \simeq \P^1$, this tree is isomorphic
to the tree of normalised valuations centered at infinity over $\A^2$ constructed in \cite{favreEigenvaluations2007}, Appendix.

\section{The monomial valuations centered at an intersection point at
infinity}\label{SecMonomialValuationsAtSatellitepoint} Let $X$ be a good completion of $X_0$
and let $E,F$ be two divisors at infinity that intersect at a
point $p$. Let $(x,y)$ be local coordinates at $p$ such that $E =  \{x=0\}$ and $F = \{y=0\}$. There
are three spaces to consider: $\cV_X (p, \m_p), \cV_X (p; E)$ and $\cV_X (p;F)$. We explain here how
they are related. For $(s,t) \in [0, +\infty]^2 \setminus \left\{ (0,0), \left( \infty, \infty \right)
\right\}$, we denote by $v_{s,t}$ the monomial valuation defined by
\begin{equation}
  v_{s,t} \left(\sum a_{ij} x^i y^j\right) =
  \min \left\{ si + tj | a_{ij} \neq 0 \right\}.
\end{equation}
 Notice that $v_{0,1} = \ord_F, v_{1,0} = \ord_E, v_{1,
\infty} = v_y, v_{\infty,1} = v_x$. We will denote the set of such valuation by $[\ord_E, \ord_F]$. We
use this notation because of the following: $[\ord_E, \ord_F] \cap \cV_X (p; E)$ consists of the
valuations $v_{1, t}$ for $t \in [0, + \infty)$ and $\left[ \ord_E, \ord_F \right] \cap
\cV_X (p;F)$ consists of the valuations $v_{s,1}$ for $s \in [0, + \infty)$. So they define segments in the
respective trees. In
particular we have
\begin{equation}
  N_{p,F} \circ {N_{p,E}}^{-1} (v_{1,t}) = v_{1/t, 1}, \quad \forall t \in (0, + \infty)
  \label{}
\end{equation}

One can show with the definition of the skewness function $\alpha$ that $\alpha_E (v_{1,t}) = t$. Therefore
we show

  \begin{lemme}\label{LemmelevelfunctionValuationMonomiale}
    Let $v$ be a monomial valuation centered at $p = E \cap F$. One has
    \begin{align*}
      \alpha_E \left(\frac{v}{v(x)}\right) = \frac{v (y)}{v(x)} = \frac{t}{s} \text{ if } v = v_{s,t} \\
      \alpha_F \left(\frac{v}{v(y)}\right) = \frac{v(x)}{v(y)} = \frac{s}{t} \text{ if } v = v_{s,t}
    \end{align*}

    In particular we have that $\alpha_E \left(\frac{v}{v(x)} \right) = { \alpha_F \left( \frac{v}{v(y)}
  \right)}^{-1}$ on $]\ord_E, \ord_F[$.
  \end{lemme}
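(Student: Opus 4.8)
The plan is to reduce the whole statement to the single identity $\alpha_E(v_{1,t}) = t$ recorded just above, together with an explicit description of how the normalisation maps $N_x$ and $N_y$ act on monomial valuations. First I would simply read off, from the defining formula for $v_{s,t}$, the values on the coordinate functions: $v(x) = v_{s,t}(x) = s$ and $v(y) = v_{s,t}(y) = t$. Here I may assume $s,t \in (0,\infty)$, since a monomial valuation centered at $p = E \cap F$ that is neither $\ord_E$ nor $\ord_F$ has both $s$ and $t$ strictly positive and finite; this is exactly the range in which $v(x)$ and $v(y)$ are nonzero and finite, i.e.\ the open segment $]\ord_E,\ord_F[$.

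Next I would compute the normalisation. For every monomial $x^i y^j$ one has $(v_{s,t}/s)(x^i y^j) = (si+tj)/s = i + (t/s)j$, and since dividing by the positive constant $s$ commutes with taking the minimum over the support, this shows $v/v(x) = v_{s,t}/s = v_{1,\,t/s}$ as a valuation in $\cV_X(p;E)$. Applying the identity $\alpha_E(v_{1,u}) = u$ with $u = t/s$ then gives
\[
\alpha_E\!\left(\frac{v}{v(x)}\right) \;=\; \alpha_E\!\left(v_{1,\,t/s}\right) \;=\; \frac{t}{s} \;=\; \frac{v(y)}{v(x)}.
\]
The second identity is the same argument with the roles of $E$ and $F$ (equivalently $x$ and $y$) swapped: $v/v(y) = v_{s,t}/t = v_{s/t,\,1}$ as an element of $\cV_X(p;F)$, and the analogue of the displayed identity for $\cV_X(p;F)$ reads $\alpha_F(v_{u,1}) = u$, so $\alpha_F(v/v(y)) = s/t = v(x)/v(y)$. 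Multiplying the two relations yields $\alpha_E(v/v(x))\cdot\alpha_F(v/v(y)) = (t/s)(s/t) = 1$, which is the stated reciprocity on $]\ord_E,\ord_F[$.

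There is essentially no obstacle here; the only step that is not pure bookkeeping is the identity $\alpha_E(v_{1,t}) = t$, but this is precisely the computation flagged in the sentence preceding the lemma, and if one wants to be self-contained it also follows from Proposition \ref{PropRelationSkewness} applied with $z = x$ together with the elementary fact that $\alpha(v_{1,t}) = t$ for $t \ge 1$ (and its reciprocal $\alpha(v_{t,1}) = t$ for $t \ge 1$), which one checks directly from the definition $\alpha(v) = \sup_{\phi\in\m} v(\phi)/m(\phi)$ by testing on $\phi = y$ to get the lower bound and bounding $v_{1,t}(\phi) \le t\, m(\phi)$ on the support to get the upper bound. The symmetric statement $\alpha_F(v_{u,1}) = u$ is obtained by interchanging $x$ and $y$ in this argument.
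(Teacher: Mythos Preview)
Your proposal is correct and follows exactly the approach the paper intends: the paper does not give a formal proof of this lemma, but immediately before stating it records that ``one can show with the definition of the skewness function $\alpha$ that $\alpha_E(v_{1,t}) = t$,'' and the lemma is then a direct consequence via the normalisation $v_{s,t}/s = v_{1,t/s}$, which is precisely what you carry out. Your write-up is in fact more detailed than the paper's, since you also sketch how to verify $\alpha_E(v_{1,t}) = t$ from the definition of skewness.
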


  \begin{lemme}\label{LemmeActionSurMonomiale}
    Let $f : X_0 \rightarrow X_0$ be a dominant endomorphism of $X_0$. Let $X,Y$ be two completions of $X_0$. Let $E
    \cap F =p \in X, E' \cap F' = q \in Y$ be satellite points at infinity such that $f(p) = q$ and
  \begin{equation}
    f(x,y) = \left( x^a y^b \phi, x^c y ^d \psi \right) = \left( z,w \right)
    \label{<+label+>}
  \end{equation}
  with $ad - bc \neq 0$ where $(x,y)$ is associated to $E,F$ and $(z,w)$ to $E', F'$. Then,
  \begin{equation}
    f_* (v_{s,t}) = v_{as + bt, cs + td}
    \label{<+label+>}
  \end{equation}
  \end{lemme}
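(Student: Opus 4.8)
The plan is to compute $f_* v_{s,t}$ directly from the definition, reducing everything to a computation on the completed local ring at $q := f(p)$. Since $f_* v_{s,t}$ and $v_{as+bt,cs+td}$ are both valuations on $\k[X_0]$, and every $P \in \OO_Y(X_0)$ can be written near $q$ as $P = \phi_P/(z^{a'}w^{b'})$ with $a',b' \geq 0$ and $\phi_P \in \hat{\OO_{Y,q}} \simeq \k[[z,w]]$, it is enough to show that the two valuations agree on $\k[[z,w]]$ (on $z^{-1}$ and $w^{-1}$ both manifestly take the values $-(as+bt)$ and $-(cs+td)$). Here I would use that $f^* z = x^a y^b \phi$ and $f^* w = x^c y^d \psi$ as germs at $p$, and that $\phi,\psi$ are units of $\k[[x,y]]$, hence $\phi(0,0)\neq 0$ and $v_{s,t}(\phi) = v_{s,t}(\psi) = 0$ (the constant term contributes $0$, and $s,t \geq 0$). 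In particular $f_* v_{s,t}(z) = as+bt$ and $f_* v_{s,t}(w) = cs+td$, matching the target valuation on the coordinates.

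Now fix $P = \sum_{i,j\geq 0} c_{ij} z^i w^j \in \k[[z,w]]$ and set $L(i,j) := i(as+bt) + j(cs+td)$, so that $v_{as+bt,cs+td}(P) = \min\{L(i,j) : c_{ij}\neq 0\}$. Applying $f^*$ gives
\begin{equation}
  f^* P = \sum_{i,j} c_{ij}\, x^{ai+cj} y^{bi+dj}\, \phi^i \psi^j .
\end{equation}
Each summand has $v_{s,t}$-value $\geq (ai+cj)s + (bi+dj)t = L(i,j)$, since $\phi^i\psi^j$ is a unit; so the ultrametric inequality yields $v_{s,t}(f^* P) \geq \min\{L(i,j) : c_{ij}\neq 0\} = v_{as+bt,cs+td}(P)$.

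The only real content is the reverse inequality, i.e.\ that there is no cancellation among the terms realising the minimum of $L$. This is exactly where the hypothesis $ad-bc\neq 0$ is used: the linear map $(i,j)\mapsto(ai+cj,\,bi+dj)$, whose matrix has determinant $\pm(ad-bc)$, is injective, so distinct pairs $(i,j)$ contribute distinct monomials $x^{ai+cj}y^{bi+dj}$ to $f^* P$. Writing $\phi_0 := \phi(0,0)\neq 0$, $\psi_0 := \psi(0,0)\neq 0$ and collecting, among the $(i,j)$ with $c_{ij}\neq 0$, those with $L(i,j)$ minimal, the sum of their leading terms $\sum c_{ij}\phi_0^i\psi_0^j\, x^{ai+cj}y^{bi+dj}$ is a nonzero $\k$-linear combination of pairwise distinct monomials, each of $v_{s,t}$-value exactly $\min L$; every remaining contribution (summands with $L(i,j) > \min L$, and the non-constant parts of the units $\phi^i\psi^j$) has strictly larger $v_{s,t}$-value. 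Hence $v_{s,t}(f^* P) = \min\{L(i,j): c_{ij}\neq 0\} = v_{as+bt,cs+td}(P)$, as required. The main obstacle is precisely this no-cancellation step; the rest is the ultrametric inequality and bookkeeping with the units $\phi,\psi$. The same computation goes through whenever the right-hand side $v_{as+bt,cs+td}$ is a well-defined (quasi)monomial or curve valuation; in the degenerate boundary cases where $f$ contracts $E$ or $F$ the statement is read in the range where it makes sense, or recovered by continuity of $f_*$ along the skewness parametrisation of the relevant segment of the valuative tree.
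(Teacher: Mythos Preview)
Your proof is correct and takes a genuinely different route from the paper's. The paper argues indirectly through the tree structure: it normalises to $f_\bullet : \cV_X(p;E) \to \cV_Y(q;E')$, computes $f_\bullet v_{1,t}(w) = (c+dt)/(a+bt)$, and observes that for the uncountably many irrational $t$ with $(c+dt)/(a+bt)$ irrational the image $f_\bullet v_{1,t}$ must actually lie on the segment $[\ord_{E'}, v_w]$ (because irrational valuations are regular points of the valuative tree, so $\alpha_{E'}(f_\bullet v_{1,t} \wedge v_w) \notin \Q$ forces $f_\bullet v_{1,t} \wedge v_w = f_\bullet v_{1,t}$); it then extends to all $t$ by density and continuity of $f_\bullet$. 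Your direct computation bypasses the tree machinery entirely: the injectivity of $(i,j)\mapsto(ai+cj,bi+dj)$ is exactly what prevents cancellation among the leading monomials of $f^*P$, and this is the only place $ad-bc\neq 0$ enters. Your argument is more elementary and self-contained; the paper's approach, by contrast, previews how the valuative tree constrains the image of $f_\bullet$, a mechanism exploited heavily in the later chapters on attractingness of the eigenvaluation. Your final hedge about ``degenerate boundary cases'' is unnecessary: the computation you wrote already covers all $(s,t)$ with $s,t \geq 0$ not both zero, including the divisorial endpoints.
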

  \begin{proof}
    We have the map $f_\bullet : \cV_X (p; E) \rightarrow \cV_X (q; E')$ defined by
    \begin{equation}
      f_\bullet (v) = \frac{f_* v}{a + b v(y)}.
      \label{<+label+>}
    \end{equation}
    In particular, for any $t > 0$
    \begin{equation}
      f_\bullet v_{1,t} (w) = \frac{c + dt}{a + b t}.
      \label{<+label+>}
    \end{equation}
    Since $ad-bc \neq 0$, there exists uncountably many irrational $t > 0$ such that $\frac{c + dt}{a + bt} \not \in
    \R$. Thus,
    \begin{equation}
      \frac{c+ dt}{a + bt} = \alpha_{E'} (f_\bullet v_{1,t} \wedge v_w) \not \in \Q.
      \label{<+label+>}
    \end{equation}
    Since irrational valuations are regular points in the valuative tree we must have $f_\bullet (v_{1,t}) =
    v_{1, \frac{c + dt}{a + bt}}$. Now, the set of such $t$ is dense in $\R_{\geq 0}$ so by continuity we get for all $t
    \geq 0$,
    \begin{equation}
      f_\bullet v_{1,t} = v_{1, \frac{c + td}{a + tb}}
      \label{<+label+>}
    \end{equation}
    and the lemma is proven.
  \end{proof}

  \section{Geometric interpretations of the valuative tree}\label{SecGeometricInterpretationValuativeTree}
Let $X$ be a completion of $X_0$ and let $p \in X$ be a closed point at infinity. We
consider in this section only completions above $X$ that are exceptional above $p$. If $\pi : (Y, \Exc(\pi))
\rightarrow (X, p)$ is such a completion, then we call $\Gamma_\pi$ the dual graph which vertices consist of the
exceptional divisors of $\pi$. Two exceptional divisors are linked by an edge if they intersect. The graph $\Gamma_\pi$
is connected without cycles, it is therefore an $\N$-tree. We set the root of $\Gamma_\pi$ to be the exceptional divisor
$\tilde E$ that appears after blowing up $p$.

If $E$ is a prime divisor at infinity of $X$ such that $p \in E$. We define the dual graph
\begin{equation}
  \Gamma_{\pi,
  E} := \Gamma_\pi \cup \{E\}.
\end{equation}
It is also a $\N$-tree. We set the root of $\Gamma_{\pi, E}$ to be $E$.

\begin{lemme}[\cite{favreValuativeTree2004}, Proposition 6.2]
  Let $\pi : Y \rightarrow (X, p)$ be a completion exceptional above $p$. if $\tau : Z \rightarrow Y$ is the
  blow up of a point in the exceptional locus of $\pi$, then there are natural inclusions of $\N$-trees
  \begin{equation}
    \Gamma_\pi \hookrightarrow \Gamma_{\pi \circ \tau}, \quad \Gamma_{\pi, E} \hookrightarrow \Gamma_{\pi \circ \tau,
    E}.
    \label{<+label+>}
  \end{equation}
  Therefore, the direct limits $\Gamma := \varinjlim_{\pi} \Gamma_\pi$, $\Gamma_E := \varinjlim_\pi \Gamma_{\pi, E}$ are
  well defined. The points of $\Gamma$ are in bijection with $\cD_{X,p}$ and $\Gamma_E = \Gamma \cup \{E\}$ and they
  have a structure of $\Q$-trees.
\end{lemme}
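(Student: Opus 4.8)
The plan is to reduce the statement to the purely local picture at $p$ and then invoke the combinatorial analysis of dual graphs of iterated point blow-ups, which is Proposition~6.2 of \cite{favreValuativeTree2004}. Since $p$ is a smooth point of $X$, any completion $\pi:(Y,\Exc(\pi))\to(X,p)$ exceptional above $p$ is, over a neighbourhood of $p$, a finite composition of blow-ups of infinitely near points of the formal disc $\widehat{\OO_{X,p}}\simeq\k[[x,y]]$ (Theorem~\ref{ThmCompletionLocalRing}), so the statement is insensitive both to the global geometry of $X$ and to the characteristic of $\k$. I would then proceed in four steps: (i) each $\Gamma_\pi$, and each $\Gamma_{\pi,E}=\Gamma_\pi\cup\{E\}$, is a rooted $\N$-tree; (ii) blowing up a point of $\Exc(\pi)$ induces inclusions of $\N$-trees $\Gamma_\pi\hookrightarrow\Gamma_{\pi\circ\tau}$ and $\Gamma_{\pi,E}\hookrightarrow\Gamma_{\pi\circ\tau,E}$; (iii) these inclusions form a directed system whose colimit is $\Gamma$ (resp.\ $\Gamma_E$), with underlying set $\cD_{X,p}$ (resp.\ $\cD_{X,p}\cup\{E\}$); and (iv) the order structure refines to that of a parametrised $\Q$-tree.

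For (i) I would use that the dual graph of a composition of point blow-ups on a smooth surface is connected and acyclic — an easy induction on the number of blow-ups (see \cite{hartshorneAlgebraicGeometry1977}), since each blow-up either attaches a new vertex to a single existing one or inserts a new vertex on a single edge — together with the finiteness of the set of components of $\Exc(\pi)$; a finite rooted tree is in particular a rooted $\N$-tree, with root $\tilde E$ for $\Gamma_\pi$ and $E$ for $\Gamma_{\pi,E}$. For (ii), given $\tau:Z\to Y$ blowing up $q\in\Exc(\pi)$ with exceptional divisor $E'$, the components of $\Exc(\pi)$ survive as their strict transforms, which gives the injection on vertex sets; then I would distinguish two cases. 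If $q$ is a free point on a component $E_i$, then $\Gamma_{\pi\circ\tau}$ is $\Gamma_\pi$ with the leaf $E'$ attached to $E_i$; if $q=E_i\cap E_j$ is a satellite point, then the edge $E_iE_j$ is subdivided by $E'$. In both cases the induced map on the underlying posets is order preserving — in the satellite case $E_i\le E_j$ (say) is preserved with $E'$ slotted strictly in between, and the comparability of any other vertex with $E_i$ and with $E_j$ is unchanged — hence an inclusion of $\N$-trees; adjoining the fixed root $E$ gives the second inclusion.

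For (iii) I would note that any two completions exceptional above $p$ are dominated by a third one exceptional above $p$ (resolve the induced birational map, all of whose centres lie over $p$; cf.\ Lemma~\ref{LemmaProjectiveSystem} and Theorem~\ref{ThmCompositionBlowUps}) and that any morphism between such completions is a composition of blow-ups of points of the successive exceptional loci (Theorem~\ref{ThmCompositionBlowUps}); the inclusions of (ii) therefore assemble into a directed system and $\Gamma:=\varinjlim_\pi\Gamma_\pi$, $\Gamma_E:=\varinjlim_\pi\Gamma_{\pi,E}$ are well defined. Since the vertices of $\Gamma_\pi$ are the components of $\Exc(\pi)$ and the transition maps send a component to its strict transform, the underlying set of $\Gamma$ is $\varinjlim\Exc(\pi)=\cD_{X,p}$, by the very definition of $\cD_{X,p}$; as the transition inclusions fix $E$ and restrict to $\Gamma_\pi\hookrightarrow\Gamma_{\pi\circ\tau}$, we get $\Gamma_E=\Gamma\cup\{E\}$, rooted at $E$. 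For (iv) I would identify each vertex $E'$ of $\Gamma_{\pi,E}$ with the divisorial valuation $\ord_{E'}$ (so that $E$ corresponds to $\ord_E$), centred at $p$ on $X$ and normalised by $v(z)=1$ for $z$ a local equation of $E$; this realises $\Gamma_E$ as $\ord_E$ together with the divisorial points of $\cV_X(p;E)$ (Proposition~\ref{PropRelativeValuativeTreeIsomorphism}), with order matching that of the relative valuative tree. The skewness $\alpha_E$ is $\Q$-valued on divisorial valuations and makes the divisorial part of $\cV_X(p;E)$ a parametrised $\Q$-tree by Proposition~\ref{PropValuationEnFonctionDeAlphaRelative} (and its absolute analogue Proposition~\ref{PropValuationEnFonctionDeAlpha}); hence $\Gamma_E$ with $\alpha_E$ is a parametrised $\Q$-tree, and $\Gamma=\Gamma_E\setminus\{E\}$ re-rooted at $\tilde E$ inherits one as well.

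The step I expect to be the main obstacle is (ii) in the satellite case, where the order-theoretic bookkeeping must be done carefully enough that subdividing an edge genuinely produces an inclusion of rooted $\N$-trees, and then checking in (iv) that the skewness parametrisations of the finite graphs are mutually compatible along the transition maps, so that they really do glue in the colimit. Once the local reduction of the first paragraph is in place, everything else is formal.
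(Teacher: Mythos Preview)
The paper does not supply a proof of this lemma: it is stated with the citation to \cite{favreValuativeTree2004}, Proposition~6.2, and no argument is given. Your proposal is a faithful reconstruction of the standard argument from that reference, and the steps you outline (dual graph is a finite tree by induction on blow-ups; free blow-ups attach a leaf, satellite blow-ups subdivide an edge; the system is directed; the $\Q$-tree structure comes from the skewness parametrisation on divisorial valuations) are correct and match what one finds in Favre--Jonsson.

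One minor remark on your step (iv): you appeal to Propositions~\ref{PropValuationEnFonctionDeAlpha} and~\ref{PropValuationEnFonctionDeAlphaRelative} to endow $\Gamma$ and $\Gamma_E$ with $\Q$-tree structures via skewness, but in the paper's logical order those propositions and the identification of $\Gamma$ with the divisorial valuations (the subsequent lemmas and Theorem~6.22 of \cite{favreValuativeTree2004}) are stated \emph{after} the present lemma. If you want a self-contained argument that does not risk circularity, it is cleaner to construct the $\Q$-tree structure directly on the combinatorial side, as in \cite{favreValuativeTree2004}: the Farey-type parametrisation built from the generic multiplicities (or equivalently the inductive formula of Proposition~\ref{PropSkewnessBlowUp}) already gives a $\Q$-valued, strictly increasing function on each $\Gamma_\pi$ compatible with the inclusions, without needing to know in advance that the vertices are divisorial valuations with rational skewness.
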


\begin{lemme}[\cite{favreValuativeTree2004} Theorem 6.9]
  We have a map $\pi_\bullet: \Gamma_\pi \hookrightarrow \cV_X (p; \m_p)_{\div}$ defined by
  \begin{equation}
    \pi_\bullet (F) = v_F
    \label{<+label+>}
  \end{equation}
  where $v_F$ is the valuation equivalent to $\pi_* \ord_F$ that belongs to $\cV_X (p; \m_p)$. These maps are compatible
with the direct limit and give a map $\Gamma \hookrightarrow \cV_X (p; \m_p)$.
\end{lemme}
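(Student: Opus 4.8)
The plan is to reduce everything to the valuative tree $\cV_\m$ over $\k[[z,w]]$, where the statement is Theorem 6.9 of \cite{favreValuativeTree2004}, and to transport it back through the isomorphism of Proposition \ref{PropValuativeTreeIsomorphism}. First I would make $\pi_\bullet$ explicit. For a completion $\pi\colon(Y,\Exc(\pi))\to(X,p)$ exceptional above $p$ and a prime divisor $F\subset\Exc(\pi)$, the divisorial valuation $\ord_F$ on $\k(Y)=\k(X_0)$ restricts to a valuation on $\k[X_0]=\OO_Y(X_0)$, and since $\pi$ is the identity over $X_0$ this restriction is exactly $\pi_*\ord_F$. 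As $\pi(F)=p$ its centre on $X$ is $p$, and since $F$ meets the centre on $Y$ of $\ord_F$ we get $\ord_F(\m_p\OO_Y)\geq 1$, so $v_F:=\pi_*\ord_F/(\pi_*\ord_F)(\m_p)$ satisfies $v_F(\m_p)=1$. Being divisorial, $v_F$ is not a curve valuation (Proposition \ref{PropClassificationValuation}), in particular it is not $v_z$ nor $v_w$; hence, by the description in Proposition \ref{PropValuativeTreeIsomorphism}, $v_F$ is a genuine element of $\cV_X(p;\m_p)$, in fact of $\cV_X(p;\m_p)_{\div}$, and under that isomorphism it corresponds to the normalised divisorial valuation attached to the exceptional component $F$ in $\cV_\m$.

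Next I would check that $F\mapsto v_F$ is an order-preserving injection of $\N$-trees. Injectivity is immediate: two distinct exceptional components define distinct valuation rings, hence non-proportional, so distinct, points of $\cV_\m$. For the order I would induct on the number of blow-ups composing $\pi$. When $\pi$ is the single blow-up of $p$ one has $\Gamma_\pi=\{\tilde E\}$ and $v_{\tilde E}=v_{\m_p}$ is the root of $\cV_X(p;\m_p)$, so roots match. Passing from $\pi$ to $\pi'=\pi\circ\tau$ with $\tau$ the blow-up of a point $q\in\Exc(\pi)$ adjoins to $\Gamma_\pi$ one vertex $\tilde F$, joined to the vertices of the one or two components through $q$; on the valuation side, $v_{\tilde F}$ is the immediate successor of $v_{F_0}$ in the tangent direction at $v_{F_0}$ cut out by $q$ when $q$ is free on $F_0$, and a point of the open segment $(v_{F_1},v_{F_2})$ when $q=F_1\cap F_2$ is a satellite point, its exact position being read off the monomial valuation at $q$ via Lemma \ref{LemmelevelfunctionValuationMonomiale}. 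In both cases the incidences of $\Gamma_{\pi'}$ match the tree order on $\cV_\m$, so $\pi_\bullet$ is an inclusion of $\N$-trees; this is exactly \cite{favreValuativeTree2004} Theorem 6.9 read through Proposition \ref{PropValuativeTreeIsomorphism}.

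Finally, for the compatibility with the direct limits, I would use that if $\tau\colon Z\to Y$ is a further blow-up over $p$ then for an exceptional component $F$ of $\pi$ the valuation $\ord_F$ on $\k(X_0)$ is unchanged — a divisorial valuation is intrinsically attached to the prime divisor it defines (cf.\ Remark \ref{RmqMemeValuationApresEclatement}) — so $v_F$ is unchanged. Hence the maps $\pi_\bullet$ are compatible with the inclusions $\Gamma_\pi\hookrightarrow\Gamma_{\pi\circ\tau}$ and glue to an injection $\Gamma=\varinjlim_\pi\Gamma_\pi\hookrightarrow\cV_X(p;\m_p)$, with image the set of divisorial valuations centred at $p$. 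The only genuinely non-formal step is the inductive step for satellite points, i.e.\ verifying that the combinatorics of the dual graph is faithfully reflected by the tree order on divisorial valuations; in this relative setting it follows from the skewness parametrisation (Proposition \ref{PropValuationEnFonctionDeAlpha}) together with Lemma \ref{LemmelevelfunctionValuationMonomiale}.
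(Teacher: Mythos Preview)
The paper does not give its own proof of this lemma: it is stated as a direct citation of \cite{favreValuativeTree2004}, Theorem 6.9, and no argument is supplied beyond the reference. Your proposal is correct and is precisely the unfolding of that citation: you transport the question to $\cV_\m$ via Proposition \ref{PropValuativeTreeIsomorphism}, where it becomes the cited theorem, and you sketch the inductive content (free versus satellite blow-ups) that underlies that theorem. This is the same approach the paper takes implicitly by deferring to the reference, so there is nothing to compare.
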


\begin{lemme}
  We have a map $\pi_\bullet: \Gamma_{\pi,E} \hookrightarrow \cV_{E, \div}$ defined by
  \begin{equation}
    \pi_\bullet (F) = v_F
    \label{<+label+>}
  \end{equation}
  where $v_F$ is the valuation equivalent to $\pi_* \ord_F$ that belongs to $\cV_X (p;E)$. These maps are compatible
  with the direct limit and give a map $\Gamma_E \hookrightarrow \cV_X (p; E)$.
\end{lemme}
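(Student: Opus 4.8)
The plan is to reduce the statement to the preceding lemma (the case of $\cV_X(p;\m_p)$) by transporting along the homeomorphism $N_{p,E}\colon \cV_X(p;\m_p)\to\cV_X(p;E)\setminus\{\ord_E\}$ of Proposition~\ref{PropRelativeValuativeTreeIsomorphism}, using the relative-tree facts recalled in \S\ref{SecMonomialValuationsAtSatellitepoint}. First I would check that $\pi_\bullet$ is well defined with values in the divisorial locus. If $F$ is an exceptional divisor of $\pi$, then $\ord_F$, read on $\k(Y)=\k(X)$, is a divisorial valuation whose center on $X$ is $p$ (since $F\subset\Exc(\pi)$ and $\pi(F)=p$); via $\iota_X^*$ it restricts to a divisorial valuation of $\k[X_0]$ centered at $p$, and $\pi_\bullet$ lands in the divisorial locus by \cite{favreValuativeTree2004}. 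For a local equation $z$ of $E$ at $p$ one has $\pi_*\ord_F(z)=\ord_F(\pi^*z)=\ord_F(\pi^*E)>0$, because every exceptional divisor of $\pi$ lies over the point $p\in E$ and hence occurs with positive coefficient in the total transform $\pi^*E$; thus $v_F:=\pi_*\ord_F/\ord_F(\pi^*z)$ is divisorial with $v_F(z)=1$, so $v_F\in\cV_X(p;E)$, and it is not a curve valuation along $E$ (nor along the other boundary divisor through $p$, in the satellite case). For the extra vertex, $\pi_*\ord_E=\ord_E$ is the root of $\cV_X(p;E)$ (here $E$ means its strict transform in $Y$, along which $\pi$ is an isomorphism). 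Injectivity is immediate: distinct prime divisors of $Y$ have distinct discrete valuation rings, hence define inequivalent valuations after normalization, and $\ord_E$ is no $v_F$ because its center on $X$ is the generic point of $E$, not $p$.

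The crux, and the step I expect to be the main obstacle, is that $\pi_\bullet$ preserves the tree order. By the preceding lemma, $\pi_\bullet^{\m}\colon\Gamma_\pi\hookrightarrow\cV_X(p;\m_p)_{\div}$ is an inclusion of $\N$-trees sending the root $\tilde E$ of $\Gamma_\pi$ to the root $v_{\m_p}$ of $\cV_X(p;\m_p)$. A short computation with the two normalizations gives $N_{p,E}\circ\pi_\bullet^{\m}=\pi_\bullet$ on $\Gamma_\pi$, while $\pi_\bullet(E)=\ord_E$. Under the isomorphisms of Proposition~\ref{PropRelativeValuativeTreeIsomorphism}, $N_{p,E}$ is the map $N_z\colon\cV_\m\to\cV_z$, and by Proposition~\ref{PropCompatibilityOrdre} the orders of $\cV_\m$ and $\cV_z$ agree off the segment $[v_\m,v_z]$ and are reversed on it; equivalently, $(\cV_z\setminus\{\ord_z\},\leq_z)$ is $\cV_\m\setminus\{v_z\}$ re-rooted at the leaf $v_z$, with $v_z$ relabelled $\ord_z$. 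On the combinatorial side, $\Gamma_{\pi,E}=\Gamma_\pi\cup\{E\}$ is $\Gamma_\pi$ re-rooted at the new vertex $E$, which is attached along the tangent direction at $v_{\m_p}$ pointing towards $v_z$; hence the re-rooting that produces $\Gamma_{\pi,E}$ from $\Gamma_\pi$ corresponds, under $N_{p,E}\circ\pi_\bullet^{\m}$, to the re-rooting that produces $\cV_z$ from $\cV_\m$, so the order reversal of $N_z$ along $[v_\m,v_z]$ cancels the reversal produced by changing the root, and $\pi_\bullet$ comes out order preserving. Concretely, for $F_1\leq F_2$ in $\Gamma_{\pi,E}$ one checks $v_{F_1}\leq_z v_{F_2}$ by treating separately the case where $v_{F_1}$ and $v_{F_2}$ both lie on the branch of $\cV_z$ issued from $\ord_E$, invoking Proposition~\ref{PropCompatibilityOrdre}(2). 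Alternatively this order statement is the relative-tree counterpart of the results of \cite{favreValuativeTree2004}, \S 6.2 quoted above for $\cV_X(p;\m_p)$, and could simply be cited.

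Finally, compatibility with the direct limit is routine. If $\tau\colon Z\to Y$ is the blow-up of a point of $\Exc(\pi)$, then for every prime divisor $F$ already present on $Y$ (exceptional, or equal to the strict transform of $E$) one has $\tau_*\ord_F=\ord_F$, so $(\pi\circ\tau)_*\ord_F=\pi_*\tau_*\ord_F=\pi_*\ord_F$ and the relevant normalizations coincide; hence $(\pi\circ\tau)_\bullet$ restricts to $\pi_\bullet$ along the inclusion $\Gamma_{\pi,E}\hookrightarrow\Gamma_{\pi\circ\tau,E}$ of \cite{favreValuativeTree2004}, Proposition~6.2. Passing to the direct limit over all completions $\pi$ exceptional above $p$, these maps glue to an inclusion of trees $\pi_\bullet\colon\Gamma_E=\varinjlim_\pi\Gamma_{\pi,E}\hookrightarrow\cV_X(p;E)$, with image in $\cV_{E,\div}$ (the divisorial locus, which contains $\ord_E$).
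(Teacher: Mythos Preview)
The paper gives no proof for this lemma: it is stated as the relative-tree analogue of the preceding lemma (which is simply a citation of \cite{favreValuativeTree2004}, Theorem~6.9), and the reader is meant to infer it by the same mechanism. Your argument is correct and supplies exactly the details the paper suppresses: well-definedness via $\ord_F(\pi^*E)>0$, injectivity from distinct valuation rings, and compatibility with the direct limit via $\tau_*\ord_F=\ord_F$ on strict transforms.

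One remark: you spend most of your effort on order preservation, but that is not part of the present lemma. The $\hookrightarrow$ here only asserts an injective map of sets into the divisorial locus; the statement that $\pi_\bullet$ is an inclusion of \emph{trees} is the content of the subsequent Theorem (citing \cite{favreValuativeTree2004}, Theorem~6.22) and of Proposition~\ref{PropCompatibilitéOrdreContractionMap}. Your re-rooting argument via $N_{p,E}$ and Proposition~\ref{PropCompatibilityOrdre} is a legitimate route to that stronger claim, but for the lemma as stated the first and last paragraphs of your proposal already suffice.
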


\begin{prop}[\cite{favreValuativeTree2004}, Lemma 6.28]\label{PropOrderRelationAfterOneBlowUp}
  Let $\pi : (Y, \Exc(\pi)) \rightarrow (X, p)$ be a completion exceptional above $p$. Let $q \in Y$ be a closed
  point that belongs to the exceptional component of $\pi$. Let $\tilde F$ be the exceptional divisor above $q$.
\begin{enumerate}
  \item If $q \in F$ with $F \in \Gamma_\pi$, then $v_{\tilde F} > v_{F}$.
  \item If $q = F_1 \cap F_2$ with $F_1, F_2 \in \Gamma_\pi$, suppose that $v_{F_1} < v_{F_2}$, then $v_{F_1} <
    v_{\tilde F} < v_{F_2}$.
  \end{enumerate}
\end{prop}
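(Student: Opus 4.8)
The plan is to reduce the statement to the combinatorics of the dual graph, using the geometric description of $\cV_X(p;\m_p)$ recalled just above (following \cite{favreValuativeTree2004}). By the preceding lemmas, the assignment $F\mapsto v_F$ defines an order-preserving injection $\pi_\bullet\colon \Gamma=\varinjlim_\pi\Gamma_\pi\hookrightarrow\cV_X(p;\m_p)$ of $\Q$-trees, and it sends the root $\tilde E$ of $\Gamma_\pi$ (the exceptional divisor of the first blow-up of $p$) to the root $v_{\m_p}$ of $\cV_X(p;\m_p)$, because the exceptional divisor of the blow-up of a smooth surface point carries exactly the multiplicity valuation. Hence, for exceptional divisors above $p$ lying in a common $\Gamma_\pi$, the order of their associated valuations in $\cV_X(p;\m_p)$ is exactly their order in the rooted dual graph $\Gamma_\pi$. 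It therefore suffices to compare the finite trees $\Gamma_\pi$ and $\Gamma_{\pi\circ\tau}$, where $\tau$ is the blow-up of $q$.

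Now I would use the standard local picture of blowing up a smooth point on a simple normal crossings configuration of smooth curves: the new exceptional divisor $\tilde F$ meets the strict transforms of precisely those divisors of $\Exc(\pi)$ passing through $q$, creates no other new adjacencies among divisors exceptional over $p$, and the root of $\Gamma_{\pi\circ\tau}$ is the strict transform of $\tilde E$, so the inclusion $\Gamma_\pi\hookrightarrow\Gamma_{\pi\circ\tau}$ preserves roots. In case~(1), $q$ lies on the single divisor $F\in\Gamma_\pi$, so $\tilde F$ is a new leaf of $\Gamma_{\pi\circ\tau}$ attached to $F$; as $F$ lies on the path from the root to $\tilde F$, we get $F<\tilde F$, hence $v_F<v_{\tilde F}$, the inequality being strict since $\pi_\bullet$ is injective and $\tilde F\neq F$. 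In case~(2), $q=F_1\cap F_2$ with $F_1,F_2\in\Gamma_\pi$; since $\Gamma_\pi$ is a tree, the edge joining $F_1$ and $F_2$ is a parent--child edge, and $v_{F_1}<v_{F_2}$ forces $F_1$ to be the parent of $F_2$. Blowing up $q$ subdivides this edge into the path $F_1$, $\tilde F$, $F_2$, so $F_1<\tilde F<F_2$ in $\Gamma_{\pi\circ\tau}$, whence $v_{F_1}<v_{\tilde F}<v_{F_2}$, again strictly by injectivity.

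The substantive input --- the one step beyond routine blow-up combinatorics --- is the assertion that $\pi_\bullet$ is order-preserving with matching roots, i.e.\ that the valuative-tree order on the divisorial valuations produced by $\pi$ coincides with the combinatorial order of the dual graph rooted at $\tilde E$; this is where the skewness function enters, being strictly monotone along towers of blow-ups at satellite points and jumping past the blown-up valuation at each free-point blow-up. Concretely this already shows up at the level of monomial valuations: in coordinates $(x,y)$ at $q$ adapted to the configuration, $\ord_{\tilde F}$ pushes down to the monomial valuation $v_{1,1}$, which by Lemma~\ref{LemmelevelfunctionValuationMonomiale} lies strictly between $v_{1,0}$ and $v_{0,1}$ on the monomial segment. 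One must also read ``free'' and ``satellite'' for $q$ relative to $\Exc(\pi)$ --- precisely the distinction made by the hypotheses --- so that $\tilde F$ really is a leaf in case~(1) and the edge really does get subdivided in case~(2). If one prefers to avoid the dual-graph dictionary, the same two inequalities follow by localizing at $q$, identifying $\ord_{\tilde F}$ with $v_{1,1}$ as above, and transporting the order to $\cV_X(p;\m_p)$ along the relative-tree isomorphisms of Propositions~\ref{PropRelativeValuativeTreeIsomorphism} and \ref{PropValuativeTreeIsRelativeValuativeTreeWRTExceptionalDivisor}, whose parametrization identity $\alpha_{\m}\circ\pi_*=1+\alpha_E$ rules out any order reversal on the relevant segments.
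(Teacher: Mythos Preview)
The paper does not supply its own proof of this proposition; it is simply cited from \cite{favreValuativeTree2004}, Lemma~6.28, so there is no in-text argument to compare against directly.

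That said, your proposal is circular within the logical structure laid out here. Your main reduction assumes that $\pi_\bullet\colon\Gamma\hookrightarrow\cV_X(p;\m_p)$ is an \emph{order-preserving} injection of $\Q$-trees, but the ``preceding lemmas'' you invoke establish only that $\pi_\bullet$ is a well-defined injection on vertices; the assertion that it preserves the tree order is the content of the isomorphism theorem (Theorem~6.22 in \cite{favreValuativeTree2004}) stated \emph{after} this proposition, and in Favre--Jonsson's development that isomorphism is built up from exactly this kind of one-blow-up computation. You flag this yourself (``the substantive input\ldots is the assertion that $\pi_\bullet$ is order-preserving''), so the dual-graph combinatorics, while correct, never closes the gap. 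Your fallback route through Propositions~\ref{PropValuativeTreeIsRelativeValuativeTreeWRTExceptionalDivisor} and the relative-tree machinery is likewise circular in this paper: the proof of Proposition~\ref{PropValuativeTreeIsRelativeValuativeTreeWRTExceptionalDivisor} is explicitly postponed until after Proposition~\ref{PropCompatibilitéOrdreContractionMap}, whose own inductive proof invokes this very proposition and its relative version at each step.

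A self-contained argument works directly from the definition of the order on $\cV_X(p;\m_p)$: in case~(1) one has $b(\tilde F)=b(F)$, and for every $\phi\in\hat\OO_{X,p}$ the pullback to a neighbourhood of $q$ gives $\ord_{\tilde F}(\phi)\geq\ord_F(\phi)$, with strict inequality for some $\phi$ (any local equation of a curve through $q$ transverse to $F$), so $v_{\tilde F}>v_F$. Case~(2) is the analogous monomial computation at the satellite point, using $b(\tilde F)=b(F_1)+b(F_2)$. Your remark about $\ord_{\tilde F}$ pushing down to $v_{1,1}$ is the right germ of this; the point is to argue from it without appealing to results that come later.
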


\begin{prop}[Relative version of Proposition
  \ref{PropOrderRelationAfterOneBlowUp}]\label{PropOrderRelationAfterOneBlowUpRelativeCase}
  Let $\pi : (Y, \Exc(\pi)) \rightarrow (X, p)$ be a completion exceptional above $p$. Let $q \in \Exc(\pi)$. Let
  $\tilde F$ be the exceptional divisor above $q$.
\begin{enumerate}
  \item If $q \in F$ is a free point with $F \in \Gamma_{\pi, E}$, then $v_{\tilde F} > v_{F}$.
  \item If $q = F_1 \cap F_2$ is a satellite point with $F_1, F_2 \in \Gamma_{\pi, E}$, if $v_{F_1} <
    v_{F_2}$, then $v_{F_1} < v_{\tilde F} < v_{F_2}$.
  \item In particular, if $q = E \cap F$, then $\ord_E < v_{\tilde F} < v_F$.
  \end{enumerate}
\end{prop}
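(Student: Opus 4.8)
The plan is to deduce the statement from its non-relative counterpart, Proposition \ref{PropOrderRelationAfterOneBlowUp}, by transporting everything through the homeomorphism $N_{p,E}\colon \cV_X(p;\m_p)\to \cV_X(p;E)\setminus\{\ord_E\}$ of Proposition \ref{PropRelativeValuativeTreeIsomorphism}, and to obtain (3) as a special case of (2). Indeed, (3) is exactly (2) applied to $F_1=E$ and $F_2=F$: the point $q=E\cap F$ lies on the exceptional divisor $F\in\Gamma_\pi$, hence in $\Exc(\pi)$; both $E$ and $F$ belong to $\Gamma_{\pi,E}$; the element of $\cV_X(p;E)$ attached to $E$ is the root $\ord_E$; and $\ord_E<_E v_F$ because $F\neq E$. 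So it suffices to prove (1) and (2).

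For (2), suppose first that neither $F_1$ nor $F_2$ is $E$, so that $v_{F_1},v_{F_2},v_{\tilde F}$ are divisorial valuations centered at $p$, i.e.\ points of $\cV_X(p;\m_p)$. As $F_1$ and $F_2$ meet, they are adjacent in the dual graph, hence $v_{F_1}$ and $v_{F_2}$ are comparable both for $\leq_{\m_p}$ and for $\leq_E$. After possibly exchanging $F_1$ and $F_2$, Proposition \ref{PropOrderRelationAfterOneBlowUp}(2) shows that $v_{\tilde F}$ lies strictly between $v_{F_1}$ and $v_{F_2}$, i.e.\ on the open arc $(v_{F_1},v_{F_2})$ of the tree; this is a statement about the projective classes of the valuations, insensitive to the normalisation chosen. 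Under the hypothesis $v_{F_1}<_E v_{F_2}$ we have $v_{F_1}\wedge v_{F_2}=v_{F_1}$ in $\cV_X(p;E)$, so $[v_{F_1},v_{F_2}]=\{v: v_{F_1}\leq_E v\leq_E v_{F_2}\}$, whence $v_{F_1}<_E v_{\tilde F}<_E v_{F_2}$. The reason one must pass through the unoriented arc is Proposition \ref{PropCompatibilityOrdre}: the orders $\leq_{\m_p}$ and $\leq_E$ are reversed on the segments $[v_\m,v_z]$ and $[\ord_z,N(v_\m)]$, so the inequalities supplied by Proposition \ref{PropOrderRelationAfterOneBlowUp} cannot be copied verbatim.

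It remains to treat (2)--(3) when $F_1=E$, i.e.\ $q=E\cap F_2$ with $F_2\in\Gamma_\pi$, and case (1). In the first situation $\tilde F$ meets the strict transforms of both $E$ and $F_2$, so in the dual graph of the blow-up of $q$ the vertex $\tilde F$ is inserted between $E$ and $F_2$; since $\ord_E$ is the root of $\cV_X(p;E)$ one has $\ord_E<_E v_{\tilde F}$, and the tangent direction at $v_{F_2}$ towards $v_{\tilde F}$ is the one attached to the satellite point $E\cap F_2$, which is also the direction towards the root $\ord_E$, so $v_{\tilde F}$ lies on the arc $[\ord_E,v_{F_2}]$ and $\ord_E<_E v_{\tilde F}<_E v_{F_2}$. (Equivalently, on $\cV_X(p;E)\simeq\cV_z$ parametrised by the relative skewness $\alpha_E=\alpha_z$ of Proposition \ref{PropValuationEnFonctionDeAlphaRelative}, blowing up $E\cap F_2$ produces a divisorial valuation whose skewness lies strictly between $\alpha_E(\ord_E)=0$ and $\alpha_E(v_{F_2})$, in fact their mediant.) For (1): a free point $q\in\Exc(\pi)$ cannot lie on $E$, for otherwise it would be a satellite point $E\cap(\text{exceptional divisor})$; hence $F\in\Gamma_\pi$ and $v_F,v_{\tilde F}\in\cV_X(p;\m_p)$, and by Proposition \ref{PropOrderRelationAfterOneBlowUp}(1), $v_F<_{\m_p}v_{\tilde F}$. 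The valuation $v_{\tilde F}$ lies in the tangent direction at $v_F$ cut out by $q$; since $q$ is free this direction is not one along which $F$ meets its neighbours in $\Gamma_{\pi,E}$, in particular it is not the direction at $v_F$ pointing towards the root $\ord_E$, so $v_F$ lies on the arc $[\ord_E,v_{\tilde F}]$ and $v_F<_E v_{\tilde F}$.

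The only genuinely delicate point is the discrepancy between the two tree orders $\leq_{\m_p}$ and $\leq_E$ recorded in Proposition \ref{PropCompatibilityOrdre}: any inequality established in $\cV_X(p;\m_p)$ must first be reformulated as a statement about unoriented arcs before it can be re-read in $\cV_X(p;E)$. A minor secondary nuisance is that $E$ itself appears among the divisors in (2)--(3) although its valuation $\ord_E$ is not centered at $p$ and therefore lies outside $\cV_X(p;\m_p)$, which forces the separate elementary treatment of that case.
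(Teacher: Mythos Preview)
Your argument is correct. The paper does not supply a proof of this proposition: it is stated as the ``relative version'' of Proposition~\ref{PropOrderRelationAfterOneBlowUp} (itself quoted from \cite{favreValuativeTree2004}, Lemma~6.28) and left to the reader. Your strategy of transporting the non-relative statement through the homeomorphism $N_{p,E}$, reformulating the order inequalities as membership in the unoriented arc $(v_{F_1},v_{F_2})$ so as to bypass the order reversal of Proposition~\ref{PropCompatibilityOrdre}, and handling the boundary case $F_1=E$ separately, is exactly the kind of adaptation the paper has in mind. The tangent-vector argument you give for~(1) is also the natural one, and is consistent with the identification of tangent directions at a divisorial valuation with closed points on the corresponding divisor used elsewhere in the paper (e.g.\ Proposition~\ref{PropValuativeTreeIsRelativeValuativeTreeWRTExceptionalDivisor}).
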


\begin{thm}[\cite{favreValuativeTree2004}, Theorem 6.22]
  We have an isomorphism of $\Q$-trees
  \begin{equation}
    \Gamma \simeq \cV_X (p; \m_p)_{\div} , \quad \Gamma_E \simeq \cV_X (p; E)_{\div}
    \label{<+label+>}
  \end{equation}
  given by $F \simeq v_F$.  We can take the completion of the $\Q$-trees to get the isomorphism
  \begin{equation}
    \overline \Gamma \simeq \cV_X (p; \m_p), \quad \overline \Gamma_E \simeq \cV_X (p; E)
    \label{<+label+>}
  \end{equation}
\end{thm}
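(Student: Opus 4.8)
The plan is to establish the isomorphism $\Gamma\simeq\cV_X(p;\m_p)_{\div}$, deduce $\Gamma_E\simeq\cV_X(p;E)_{\div}$ by the same argument with the root adjusted, and then pass to completions; this follows \cite{favreValuativeTree2004}, Theorem~6.22, and I would check that the proof there goes through in the present generality. Concretely I would proceed in three steps: first, that the map $\pi_\bullet\colon\Gamma\to\cV_X(p;\m_p)_{\div}$, $F\mapsto v_F$, is a bijection; second, that it is an isomorphism of parametrised $\Q$-trees; third, that it extends to the completions.

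For the bijection: $\pi_\bullet$ has already been built and shown injective in the lemmas above, so only surjectivity needs checking, and this is a matter of unwinding definitions. A centered valuation on $\widehat{\OO_{X,p}}$ is divisorial exactly when it equals $\pi_*\ord_F$ for some completion $\pi\colon(Y,\Exc(\pi))\to(X,p)$ exceptional above $p$ and some prime divisor $F\subset\Exc(\pi)$; and $v\in\cV_X(p;\m_p)$ is divisorial iff the valuation it induces on $\widehat{\OO_{X,p}}$ is, in which case $v=v_F$ for the corresponding class $F\in\cD_{X,p}$ (Proposition~\ref{PropClassificationValuation} together with the definition of divisorial valuations). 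Hence $\pi_\bullet$ is onto $\cV_X(p;\m_p)_{\div}$; in the relative case one additionally sends the root $E$ of $\Gamma_E$ to $\ord_E$.

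The heart of the proof is the second step. The order on $\Gamma=\varinjlim_\pi\Gamma_\pi$ is the combinatorial order of the towers of blow-ups, while the order on $\cV_X(p;\m_p)_{\div}$ is the pointwise order of valuations; I would show these agree — and that $\pi_\bullet^{-1}$ is order preserving too — by induction on the length of a tower $\pi=\pi_n\circ\cdots\circ\pi_1$, showing that $\pi_\bullet$ restricts to an isomorphism of the finite $\N$-tree $\Gamma_\pi$ onto its image. The inductive step is exactly Proposition~\ref{PropOrderRelationAfterOneBlowUp}: blowing up a free point $q\in F$ produces a new vertex $\tilde F$ with $v_{\tilde F}>v_F$ sitting in a previously unoccupied tangent direction at $v_F$, which is precisely how $\tilde F$ attaches to $F$ in $\Gamma_{\pi\circ\tau}$; blowing up a satellite point $q=F_1\cap F_2$ with $v_{F_1}<v_{F_2}$ produces $\tilde F$ with $v_{F_1}<v_{\tilde F}<v_{F_2}$, again matching the combinatorics. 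Passing to the direct limit yields an order-isomorphism $\Gamma\simeq\cV_X(p;\m_p)_{\div}$; it respects parametrisations because the skewness function is $\Q$-valued precisely on the divisorial valuations (Proposition~\ref{PropValuationEnFonctionDeAlpha}). For $\Gamma_E$ I would run the same induction rooted at $E$ (resp.\ $\ord_E$), using the relative Proposition~\ref{PropOrderRelationAfterOneBlowUpRelativeCase}, whose item~(3) handles the first blow-up $q=E\cap F$. I expect the main obstacle to be the bookkeeping in this induction — tracking the free/satellite dichotomy, and in the relative tree choosing roots so that the order relations, which get reversed on the segments $[v_\m,v_z]$ and $[\ord_z,N(v_\m)]$ by Proposition~\ref{PropCompatibilityOrdre}, are applied consistently.

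For the third step: by Proposition~\ref{PropValuativeTreeIsomorphism} the tree $\cV_X(p;\m_p)$ is the valuative tree $\cV_\m$ with one or two curve-valuation ends deleted, in which the divisorial valuations are weakly dense (compatibly with the weak topology by Proposition~\ref{PropWeakTopologyIsWeakTopologyOfTree}); up to those deleted ends it is the completion of its divisorial $\Q$-subtree. Applying the uniqueness of the completion of a parametrised $\Q$-tree (Proposition~3.12 of \cite{favreValuativeTree2004}) to the parametrisation-preserving isomorphism obtained above, I would conclude that it extends uniquely to the completions, giving $\overline\Gamma\simeq\cV_X(p;\m_p)$; the same argument with Proposition~\ref{PropRelativeValuativeTreeIsomorphism} in place of Proposition~\ref{PropValuativeTreeIsomorphism} gives $\overline\Gamma_E\simeq\cV_X(p;E)$.
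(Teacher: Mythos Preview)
The paper does not supply its own proof of this theorem: it is stated with the citation \cite{favreValuativeTree2004}, Theorem~6.22, and used as a black box. Your three-step outline---bijection, order-isomorphism by induction on towers of blow-ups via Propositions~\ref{PropOrderRelationAfterOneBlowUp} and~\ref{PropOrderRelationAfterOneBlowUpRelativeCase}, then passage to completions---is the argument of the cited reference and is sound.

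One small correction: your invocation of Proposition~\ref{PropCompatibilityOrdre} is misplaced. That proposition concerns the comparison between the two \emph{different} normalisations $\cV_\m$ and $\cV_z$ under the homeomorphism $N$, where the orders genuinely disagree on a segment. In the present statement you are comparing the combinatorial order on $\Gamma_E$ with the valuation order on $\cV_X(p;E)_{\div}$, both rooted at $E$ (resp.\ $\ord_E$); no reversal occurs, and Proposition~\ref{PropOrderRelationAfterOneBlowUpRelativeCase} alone handles the inductive step cleanly.
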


\begin{prop}\label{PropCompatibilitéOrdreContractionMap}
  Let $X$ be a completion of $X_0$ and let $p \in X$ be a closed point at infinity. Let $\cV_*$ be either
  $\cV_X (p; \m_p)$ or $\cV_X (p;E)$ for some prime divisor $E$ at infinity such that $p \in E$. Let
  $\Gamma_*$ be either $\Gamma$ or $\Gamma_E$. Let $\pi : (Y, \Exc(\pi)) \rightarrow (X, p)$ be a completion
  exceptional above $p$. Let $q \in \Exc (\pi)$ be a closed point. The map $\pi$ induces a map $\pi_* : \cV_Y (q)
  \rightarrow \cV_X (p)$.
  \begin{enumerate}
    \item If $q \in E_q$ is a free point with $E_q \in \Gamma_*$, then we have an inclusion map
    $\pi_\bullet: \cV_Y (q; E_q) \hookrightarrow \cV_*$. The order relation in $\cV_Y (q; E_q)$ and $\cV_*$
    are compatible and $\pi_\bullet$ is an inclusion of trees.
  \item If $q =E_q \cap F_q$ is a satellite point with $E_q,F_q \in \Gamma_*$, then, if $v_{E_q} <_* v_{F_q}$, the
    order relations on $\cV_*$ and $\cV_Y (q; E_q)$ are compatible and $\pi_\bullet : \cV_Y (q; E_q) \hookrightarrow
    \cV_*$ is an inclusion of trees.
  \end{enumerate}
\end{prop}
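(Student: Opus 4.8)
The plan is to write $\pi_\bullet$ down explicitly and then check that it is an order-preserving injection, first on divisorial valuations (via Propositions \ref{PropOrderRelationAfterOneBlowUp} and \ref{PropOrderRelationAfterOneBlowUpRelativeCase}) and then on all of $\cV_Y(q;E_q)$ by density.

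Since $\pi$ is exceptional above $p$ and $q\in\Exc(\pi)$, we have $\pi(q)=p$, so for every valuation $v$ centered at $q$ on $Y$ the ring $\OO_v$ dominates $\OO_{Y,q}$, hence also $\OO_{X,p}$ through $\pi^{\#}\colon\OO_{X,p}\hookrightarrow\OO_{Y,q}$, so $c_X(\pi_*v)=p$. Rescaling $\pi_*v$ by $v(\m_p)$ if $\cV_*=\cV_X(p;\m_p)$, or by $v(z_E)$ (with $z_E$ a local equation of $E$ at $p$) if $\cV_*=\cV_X(p;E)$, yields a well-defined element of $\cV_*$; the root $\ord_{E_q}$ is sent to the divisorial valuation $v_{E_q}$, which belongs to $\cV_*$ precisely because $E_q\in\Gamma_*$. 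This is $\pi_\bullet$. It is injective because $\pi_*$ does not change the valuation up to the identifications $\k[X_0]\simeq\OO_X(X_0)\simeq\OO_Y(X_0)$ and rescaling is injective on equivalence classes; one checks that its image is the subtree $\{v_{E_q}\}\cup\{v\in\cV_*:c_Y(v)=q\}$ of $\cV_*$ (note that $v_z$, the curve valuation along $E_q$, and the curve valuation along $F_q$, are excluded from $\cV_*$ because $E_q,F_q\subset\BD$).

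For divisorial valuations: by the relative description of \S\ref{SecGeometricInterpretationValuativeTree}, the divisorial valuations of $\cV_Y(q;E_q)$ together with $\ord_{E_q}$ form a parametrised $\Q$-tree rooted at $\ord_{E_q}$, equal to the direct limit of the dual graphs of completions exceptional above $q$; composing such a completion with $\pi$ realises the same exceptional divisors above $p$, so $\pi_\bullet$ identifies this $\Q$-tree with a sub-$\Q$-tree of $(\cV_*)_{\div}\simeq\Gamma_*$. The $\Q$-tree order is determined by the one-blow-up rules of Propositions \ref{PropOrderRelationAfterOneBlowUp} and \ref{PropOrderRelationAfterOneBlowUpRelativeCase}: over a free point of $F$ the new divisor lies just above $v_F$, and over a satellite point $F_1\cap F_2$ with $v_{F_1}<v_{F_2}$ it lies strictly between $v_{F_1}$ and $v_{F_2}$; these rules hold verbatim inside $\cV_*$, so I argue by induction on the number of blow-ups. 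The only delicate step is the first blow-up above $q$, where the root $\ord_{E_q}$ of $\cV_Y(q;E_q)$ corresponds to $v_{E_q}$, which is not the root of $\cV_*$: in case (1), $q$ is free on $E_q$ and the new divisor $\tilde F$ has $v_{\tilde F}>v_{E_q}$ in $\cV_*$ (Propositions \ref{PropOrderRelationAfterOneBlowUp}(1), \ref{PropOrderRelationAfterOneBlowUpRelativeCase}(1)), matching $v_{\tilde F}>\ord_{E_q}$; in case (2), $q=E_q\cap F_q$ with $E_q,F_q\in\Gamma_*$, and by \ref{PropOrderRelationAfterOneBlowUpRelativeCase}(2) and (3) together with the hypothesis $v_{E_q}<_*v_{F_q}$ we get $v_{E_q}<_*v_{\tilde F}<_*v_{F_q}$, again matching $v_{\tilde F}>\ord_{E_q}$. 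Once the orders among $E_q$ and the divisors already produced above $q$ agree in both trees, every subsequent blow-up (free, at an intersection of two such divisors, or at an intersection with $E_q$) places its divisor consistently; hence $\pi_\bullet$ both preserves and reflects the order on $\cV_Y(q;E_q)_{\div}$.

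Finally, divisorial valuations are weakly dense in $\cV_Y(q;E_q)$ and in $\cV_*$, the weak topology agreeing with the tree topology (relative version of Proposition \ref{PropWeakTopologyIsWeakTopologyOfTree}), and $\pi_\bullet$ is continuous, being $\pi_*$ followed by the renormalisation above. For $v,w\in\cV_Y(q;E_q)$, choosing increasing divisorial valuations converging to $v$ along $[\ord_{E_q},v]$ and to $w$ along $[\ord_{E_q},w]$, the chain of equivalences $v\leq w\iff[\ord_{E_q},v]\subseteq[\ord_{E_q},w]\iff\text{eventually }v_n\leq w_n\iff\text{eventually }\pi_\bullet(v_n)\leq\pi_\bullet(w_n)\iff\pi_\bullet(v)\leq\pi_\bullet(w)$ (the last step by continuity and the order being closed, using that the image is a subtree) shows that $\pi_\bullet\colon\cV_Y(q;E_q)\hookrightarrow\cV_*$ is an inclusion of trees with compatible orders. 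The heart of the argument is case (2): one must recognise that the tangent direction at $v_{E_q}$ in $\cV_*$ pointing toward the center $q$ is the one containing $v_{F_q}$, which forces the hypothesis $v_{E_q}<_*v_{F_q}$; without it that direction is the one toward the root of $\cV_*$ and the two orders are reversed along it, exactly as the orders on $\cV_\m$ and $\cV_z$ are reversed along $[v_\m,v_z]$ in Proposition \ref{PropCompatibilityOrdre}. Everything else is bookkeeping with the one-blow-up rules and the density of divisorial valuations.
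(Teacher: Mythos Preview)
Your approach is the same as the paper's: reduce to divisorial valuations and run an induction on blow-ups above $q$ using Propositions \ref{PropOrderRelationAfterOneBlowUp} and \ref{PropOrderRelationAfterOneBlowUpRelativeCase}. The density extension at the end is fine (the paper simply asserts ``we only need to show compatibility on divisorial valuations'' and leaves it at that).

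There is, however, a genuine gap in your induction for case (2). You write that once the orders among $E_q$ and the divisors already produced above $q$ agree, ``every subsequent blow-up (free, at an intersection of two such divisors, or at an intersection with $E_q$) places its divisor consistently''. You have omitted a fourth possibility: the blow-up center $q'$ can be the intersection $F\cap F_q$ of some $F\in\Gamma_{\tau,E_q}$ with the strict transform of $F_q$. Such a $q'$ is a \emph{free} point with respect to $\Gamma_{\tau,E_q}$ (since $F_q\notin\Gamma_{\tau,E_q}$), so in $\cV_Y(q;E_q)$ the new divisor $\tilde F$ satisfies $v_{\tilde F}>_{E_q}v_F$. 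But in $\Gamma_*$ the point $q'$ is \emph{satellite} (since $F_q\in\Gamma_*$), and to apply the satellite rule there you need to know whether $v_F<_*v_{F_q}$ or $v_{F_q}<_*v_F$. Your induction hypothesis ``the orders agree on divisors already produced'' says nothing about this, because $F_q$ is not among those divisors.

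The paper fixes this by carrying a second invariant through the induction: for every $F\in\Gamma_{\tau,E_q}$ with $F\cap F_q\neq\emptyset$, one has $v_F<_*v_{F_q}$. This is trivially true at the start (only $E_q$ meets $F_q$, and $v_{E_q}<_*v_{F_q}$ by hypothesis), and it propagates because whenever you blow up $F\cap F_q$ the new $\tilde F$ is the unique divisor now meeting $F_q$, and the satellite rule gives $v_F<_*v_{\tilde F}<_*v_{F_q}$. With this extra invariant the $F\cap F_q$ case goes through: free rule in $\cV_Y(q;E_q)$ gives $v_{\tilde F}>_{E_q}v_F$, satellite rule in $\cV_*$ gives $v_F<_*v_{\tilde F}$, and the orders remain compatible. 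Add this invariant to your induction and the proof is complete.
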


\begin{proof}
  We only need to show that the orders are compatible on the divisorial valuations of $\cV_Y (q; E_q)$. Therefore we
  show the following,

  \begin{claim}\label{ClaimOrdreCompatible}
    For every completion $\tau: (Z, \Exc(\tau)) \rightarrow (Y, q)$ exceptional above $q$, we have the following
    \begin{enumerate}
      \item For all $ F_1, F_2 \in \Gamma_{\tau, E_q},$
        \begin{equation}
          v_{F_1} <_* v_{F_2} \Leftrightarrow v_{F_1} <_{E_q} v_{F_2}
          \label{<+label+>}
        \end{equation}
      \item If $F \in \Gamma_{\tau, E_q}$ satisfies $F \cap F_q \neq \emptyset$, then \begin{equation}
          v_F < v_{F_q}
          \label{<+label+>}
        \end{equation}
    \end{enumerate}
  \end{claim}
  Here there is a slight abuse of notation as we denote by $v_{F_i}$ the image of $F_i$ both in $\cV_Y (q; E_q)$ and
$\cV_*$. This is done to lighten notations, we believe that it does not provide any confusion.

We prove this by induction on the number of blow ups above $q$. If $\tau = \id$, then $\ord_{E_q}$ is the root of
$\cV_Y (q; E_q)$ and $v_{E_q} < v_{F_q}$ by assumption so there is nothing to do.

Let $\tau : (Z, \Exc (\tau)) \rightarrow (Y, q)$ be a completion exceptional
above $q$ such that Claim \eqref{ClaimOrdreCompatible} is true.
 Let $q' \in \Exc(\tau)$ be a
closed point, let $\tau ' : Z' \rightarrow Z$ be the blow up of $q'$ and let $\tilde F$ be the exceptional divisor
above $q'$.
\begin{itemize}
  \item If $q' \in F$ is a free point with $F \in \Gamma_{\tau, E_q}$, then by Proposition
    \ref{PropOrderRelationAfterOneBlowUpRelativeCase} we have
    \begin{equation}
      v_F <_{E_q} v_{\tilde F}
      \label{<+label+>}
    \end{equation}
    Now we have two possibilities.
    \begin{itemize}
      \item If $q'$ is also a free point with respect to $\Gamma_*$, then by Proposition
        \ref{PropOrderRelationAfterOneBlowUp} and \ref{PropOrderRelationAfterOneBlowUpRelativeCase} we also get
        \begin{equation}
          v_{F} <_* v_{\tilde F}.
          \label{<+label+>}
        \end{equation}
    Since $\tilde F \cap F_q = \emptyset$, Claim \ref{ClaimOrdreCompatible} is shown for $\Gamma_{\tau \circ \tau ', E_q}$.
  \item If $q'$ is the satellite point $F \cap F_q$, then by induction hypothesis we have $v_F <_* v_{F_q}$ and
    therefore $\tilde F \cap F_q \neq \emptyset$ and by Proposition \ref{PropOrderRelationAfterOneBlowUp} and
    \ref{PropOrderRelationAfterOneBlowUpRelativeCase} we get
    \begin{equation}
      v_F <_* v_{\tilde F} <_* v_{F_q}
      \label{<+label+>}
    \end{equation}
    So Claim \ref{ClaimOrdreCompatible} is shown for $\Gamma_{\tau \circ \tau ' , E_q}$.
    \end{itemize}

  \item If $q'$ is a satellite point. Let $F_1, F_2 \in \Gamma_{\tau, E_q}$ such that $q = F_1 \cap F_2$. Suppose
    without loss of generality that $v_{F_1} <_{E_q} v_{F_2}$, then by the induction hypothesis we have $v_{F_1} <_*
    v_{F_2}$ and by Proposition \ref{PropOrderRelationAfterOneBlowUp} and
    \ref{PropOrderRelationAfterOneBlowUpRelativeCase}, we get
    \begin{equation}
      v_{F_1} <_{E_q} v_{\tilde F} <_{E_q} v_{F_2} \text{ and } v_{F_1} <_* v_{\tilde F} <_* v_{F_2}.
      \label{<+label+>}
    \end{equation}
    Since $\tilde F \cap F_q = \emptyset$ we have proven Claim \ref{ClaimOrdreCompatible} for $\Gamma_{\tau \circ \tau
    ', E_q}$.
\end{itemize}
\end{proof}

\begin{proof}[Proof of Proposition \ref{PropValuativeTreeIsRelativeValuativeTreeWRTExceptionalDivisor}]
  Let $Y$ be a completion of $X_0$ and let $q \in Y$ be a closed point at infinity. Let $\pi : Z \rightarrow Y$
  be the blow up of $q$. Let $\tilde E$ be the exceptional divisor and let $\tilde q \in \tilde E$ be a closed point.
  Apply Proposition \ref{PropCompatibilitéOrdreContractionMap} with $\cV_* = \cV_Y (q; \m_q)$. The map $\pi_\bullet :
  \cV_Z (\tilde q; \tilde E) \rightarrow \cV_Y (q; \m_q)$ is an inclusion of trees. There exists local coordinates
  $z,w$ at $q$ and $x,y$ at $p$ such that $\pi (z,w) = (z, zw)$ where $z$ is a local equation of $\tilde E$. We
  therefore get
  \begin{equation}
    v (z) = 1 \Leftrightarrow \min (\pi_* v (x), \pi_* v (y)) = 1.
    \label{<+label+>}
  \end{equation}

  Hence, $\pi_\bullet = \pi_*$ and $\pi_* (\ord_{\tilde E}) = v_{\m_q}$. Therefore we can glue these maps to obtain an
  isomorphism of trees
  \begin{equation}
    \pi_*: \cV_Z (\tilde E; \tilde E) \rightarrow \cV_Y (q; \m_q)
    \label{<+label+>}
  \end{equation}
  We get the relation on the skewness functions by Proposition \ref{PropSkewnessRelationAfterBlowUpContractionMap} which
  will be proven in the next section.
\end{proof}

\section{Properties of skewness}
We have two valuative tree structures. We describe some properties of the skewness function for these two structures and
how they behave after blowing up. Fix a completion $X$, let $p \in X$ be a closed point at infinity and let $E$ be
a prime divisor at infinity in $X$ such that $p \in E$. In
accordance with the notations of the previous section, set $\Gamma = \cD_{X,p}$ and $\Gamma_E = \cD_{X, p} \cup
\left\{ E \right\}$.

\begin{dfn}
  If $F \in \Gamma$ is a prime divisor above $p$, we define the \emph{generic multiplicity} $b(F)$
  inductively as follows.
  \begin{itemize}
    \item $b(\tilde E) = 1$ where $\tilde E$ is the exceptional divisor above $p$.
    \item If $q \in F$ is a free point with $F \in \Gamma$, then $b(\tilde F) = b(F)$ where
      $\tilde F$ is the exceptional divisor above $q$.
    \item If $q = F_1 \cap F_2$ is a satellite point with $F_1, F_2 \in \Gamma$, then $b(\tilde F) = b(F_1) +
      b(F_2)$.
  \end{itemize}
  If $v \in \cV_X (p; \m_p)$ is divisorial then we define $b(v) := b(E)$ where $E$ is the center of $v$ in
  some completion above $X$.
  \label{<+label+>}
\end{dfn}

\begin{dfn}
  If $F \in \Gamma_E$, we define the \emph{relative generic multiplicity} $b_E (F)$ inductively as follows.
  \begin{itemize}
    \item $b_E (E) = 1$.
    \item If $q \in F$ is a free point with $F \in \Gamma_E$, then $b_E (\tilde F) = b_E (F)$.
    \item If $q = F_1 \cap F_2$ is a satellite point with $F_1, F_2 \in \Gamma_E$, then $b_E (\tilde F) = b_E
      (F_1) + b_E (F_2)$.
  \end{itemize}
  If $v \in \cV_X (p; E_z)$ is divisorial, then we set $b_E (v) := b_E (F)$ where $F$ is the center of $v$ in some
  completion above $X$.
\end{dfn}
Figure \ref{fig:algo_generic_multiplicity} sums up the definition of the generic multiplicity.

\begin{figure}[h]
  \centering
  \includegraphics[scale=0.8]{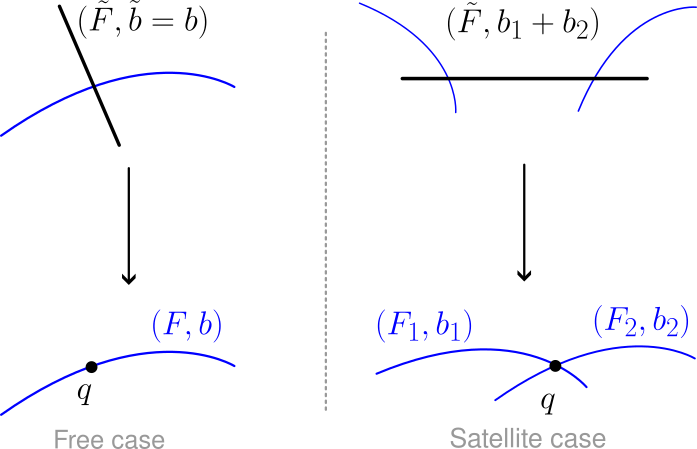}
  \caption{Algorithm for computing the generic multiplicity}
  \label{fig:algo_generic_multiplicity}
\end{figure}

The term \emph{generic multiplicity} is justified by the following proposition.

\begin{prop}[\cite{favreValuativeTree2004} Proposition 6.26]\label{PropInterpretationGenericMultiplicity}
  Let $v \in \cV_X (p; \m_p)$ be divisorial, let $E \in \Gamma$ be the center of $v$ over some completion $\pi: Y
  \rightarrow X$ above $X$. Then,
  \begin{equation}
    \pi_* \ord_E (\m_p) = b (v)
\label{<+label+>}
\end{equation}
\end{prop}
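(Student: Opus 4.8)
The plan is to translate the assertion into a statement about the inverse image ideal sheaf $\m_p\cdot\OO_Y$ and then to prove that statement by induction on the number of blow-ups composing $\pi$. First, since $\pi:Y\to X$ is birational, for every $g\in\m_p\subset\OO_{X,p}\subset\k(X)=\k(Y)$ one has $\pi_*\ord_E(g)=\ord_E(g)$, the order of vanishing of the rational function $g$ along the prime divisor $E\subset Y$; hence
\[
  \pi_*\ord_E(\m_p)=\min_{g\in\m_p}\ord_E(g)=\ord_E\bigl(\m_p\cdot\OO_Y\bigr).
\]
Because $\pi$ is exceptional above $p$, the first blow-up in any factorization of $\pi$ into point blow-ups must be centered at $p$ itself (a center elsewhere would produce an exceptional divisor not lying over $p$), so $\m_p\cdot\OO_Y$ is an invertible sheaf; write $\m_p\cdot\OO_Y=\OO_Y(-Z_Y)$ with $Z_Y$ the unique effective divisor it defines, necessarily supported on $\Exc(\pi)$. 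It then suffices to prove
\[
  Z_Y=\sum_{F\in\Exc(\pi)}b(F)\,F,
\]
since this gives $\pi_*\ord_E(\m_p)=\ord_E(Z_Y)=b(E)=b(v)$.

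I would prove this formula by induction on the number $n$ of blow-ups in $\pi$. When $n=1$, $Y$ is the blow-up of $p$ and the universal property of blowing up an ideal sheaf gives $\m_p\cdot\OO_Y=\OO_Y(-\tilde E)$ for the exceptional curve $\tilde E$, matching $b(\tilde E)=1$. For the inductive step, factor $\pi=\pi_0\circ\sigma$ where $\sigma:Y\to Y_0$ blows up a closed point $q\in\Exc(\pi_0)$ with exceptional divisor $\tilde F$, and assume the formula for $\pi_0:Y_0\to X$. Since $\m_p\cdot\OO_{Y_0}=\OO_{Y_0}(-Z_{Y_0})$ is invertible, its extension to $Y$ is its pullback, so $Z_Y=\sigma^*Z_{Y_0}=\sigma'(Z_{Y_0})+\operatorname{mult}_q(Z_{Y_0})\,\tilde F$. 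Along the strict transform of any $F\in\Exc(\pi_0)$ the coefficient is unchanged, and that strict transform represents the same class as $F$ in $\cD_{X,p}$, so its generic multiplicity is still $b(F)$; this agrees with the induction hypothesis. It remains to check $\operatorname{mult}_q(Z_{Y_0})=b(\tilde F)$. Here I use that $Z_{Y_0}$ is supported on $\Exc(\pi_0)$ and that, working with good completions, the exceptional divisors form a simple normal crossings configuration. If $q$ is a free point lying on a single $F\in\Gamma$, then $Z_{Y_0}=b(F)\,F$ near $q$ with $F$ smooth there, so $\operatorname{mult}_q(Z_{Y_0})=b(F)=b(\tilde F)$; if $q=F_1\cap F_2$ is a satellite point, then $Z_{Y_0}=b(F_1)\,F_1+b(F_2)\,F_2$ near $q$ with $F_1,F_2$ meeting transversally, so $\operatorname{mult}_q(Z_{Y_0})=b(F_1)+b(F_2)=b(\tilde F)$. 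In both cases this is precisely the recursive clause defining the generic multiplicity, which completes the induction.

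The step I expect to require the most care is establishing that $\m_p\cdot\OO_Y$ is invertible with defining divisor supported on $\Exc(\pi)$ — that is, that a completion exceptional above $p$ factors through the single blow-up of $p$ — together with the local multiplicity computation at satellite points, where the simple normal crossings structure is exactly what forces $\operatorname{mult}_q(Z_{Y_0})=b(F_1)+b(F_2)$ rather than something larger. One should also note explicitly that the coefficient of a prime divisor in $Z_Y$ depends only on its class in $\cD_{X,p}$, so that the symbol $b(F)$ is unambiguous under strict transform; this is routine but underlies the induction.
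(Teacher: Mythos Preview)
The paper does not supply its own proof of this proposition; it is quoted directly from Favre--Jonsson, \emph{The Valuative Tree}, Proposition 6.26. Your argument is correct and is essentially the standard one: the recursive definition of the generic multiplicity $b$ is exactly engineered so that $b(F)$ coincides with the coefficient of $F$ in the effective divisor $Z_Y$ determined by $\m_p\cdot\OO_Y=\OO_Y(-Z_Y)$, and your induction on the number of blow-ups makes this transparent via the pullback formula $\sigma^*Z_{Y_0}=\sigma'(Z_{Y_0})+\operatorname{mult}_q(Z_{Y_0})\,\tilde F$. The one point you flag as delicate---that a completion exceptional above $p$ must factor through the blow-up of $p$, so that $\m_p\cdot\OO_Y$ is invertible---is indeed the only place requiring a moment's thought, and your justification (the first center in any factorization into point blow-ups must be $p$ itself, since every exceptional component maps to $p$) is correct.
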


\begin{prop}[Relative version of Proposition
  \ref{PropInterpretationGenericMultiplicity}]\label{PropInterpretationRelativeGenericMultiplicity}
  If $v \in \cV_X (p; E)$ is divisorial, let $F$ be the center of $v$ over some completion $\pi: Y \rightarrow
  X$ above $X$. Then,
  \begin{equation}
    \pi_* \ord_F (z) = b_E (F)
    \label{<+label+>}
  \end{equation}
  where $z \in \OO_{X, p}$ is a local equation of $E$. This means that $\ord_F (\pi^* E) = b_E (F)$.
\end{prop}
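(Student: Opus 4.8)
The plan is to establish the equivalent formulation $\ord_F(\pi^*E)=b_E(F)$ by induction on the number of point blow-ups composing $\pi$, in parallel with the proof of Proposition \ref{PropInterpretationGenericMultiplicity}. The two statements are the same: since $z\in\OO_{X,p}$ is a local equation of $E$, one has $\div(z)=E$ over a neighbourhood $U$ of $p$, hence $\div(\pi^*z)=\pi^*E$ over $\pi^{-1}(U)$, so $\pi_*\ord_F(z)=\ord_F(\pi^*z)=\ord_F(\pi^*E)$ for every prime divisor $F$ above $p$ (such an $F$ lies over $p\in U$). It thus suffices to track the coefficient of each prime divisor above $p$ in the total transform $\pi^*E$.

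For the base case, $\pi=\id_X$ and $F=E$: since $X$ is a good completion, $E$ is smooth at $p$, and $\ord_E(\pi^*E)=\ord_E(E)=1=b_E(E)$. For the inductive step, let $\pi:(Y,\Exc\pi)\to(X,p)$ be exceptional above $p$ and suppose $\ord_F(\pi^*E)=b_E(F)$ for all $F\in\Gamma_{\pi,E}$. Because $\pi^*E$ is supported on $\pi'E\cup\Exc\pi$, whose components are exactly the elements of $\Gamma_{\pi,E}$, we may write $\pi^*E=\sum_F a_FF$ with $a_F=b_E(F)$ for $F\in\Gamma_{\pi,E}$ and $a_F=0$ for the remaining prime divisors at infinity of $Y$. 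Let $q\in\Exc\pi$ be a closed point and $\tau:Z\to Y$ the blow-up of $q$, with exceptional divisor $\tilde F$. As the blow-up of a good completion is again good, every prime divisor at infinity of $Y$ through $q$ is smooth there, so $\tau^*F=\tau'F+\operatorname{mult}_q(F)\tilde F$ with $\operatorname{mult}_q(F)$ equal to $1$ if $q\in F$ and to $0$ otherwise; since none of the strict transforms $\tau'F$ equals $\tilde F$,
\[
  \ord_{\tilde F}\bigl((\pi\circ\tau)^*E\bigr)=\ord_{\tilde F}\bigl(\tau^*(\pi^*E)\bigr)=\sum_{F:\,q\in F}a_F=\sum_{\substack{F\in\Gamma_{\pi,E}\\ q\in F}}b_E(F),
\]
the last step because divisors outside $\Gamma_{\pi,E}$ carry coefficient $0$ in $\pi^*E$. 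If $q$ lies on a single divisor $F_1\in\Gamma_{\pi,E}$ this is $b_E(F_1)$; if $q=F_1\cap F_2$ with $F_1,F_2\in\Gamma_{\pi,E}$ it is $b_E(F_1)+b_E(F_2)$; in both cases it equals $b_E(\tilde F)$ by the defining recursion of the relative generic multiplicity, while the coefficients of the divisors already present in $Y$ are unchanged. This closes the induction.

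The only delicate point is the bookkeeping at the blown-up point $q$: when $p$ is a satellite point of $X$, the strict transform of the other boundary component of $X$ through $p$ can meet the exceptional locus of $\pi$, and one must check that such divisors, which lie outside $\Gamma_{\pi,E}$, do not interfere. They do not, because their coefficient in $\pi^*E$ is $0$; consequently $q$ may be treated as free or satellite according only to how many divisors of $\Gamma_{\pi,E}$ pass through it, in exact agreement with the recursion defining $b_E$. The simple-normal-crossings property of good completions is what forces every local multiplicity occurring in the computation to equal $1$, which is precisely what makes the recursion reproduce the coefficients of $\pi^*E$.
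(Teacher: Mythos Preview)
Your argument is correct, and it is the natural one: the paper does not supply its own proof of this proposition, recording it only as the relative analogue of Proposition~\ref{PropInterpretationGenericMultiplicity} (itself quoted from \cite{favreValuativeTree2004}). Your induction on the number of blow-ups, tracking the coefficient of each exceptional component in $\pi^*E$ and matching it against the recursion defining $b_E$, is exactly how one fills in the details.

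One small phrasing issue: in the inductive step you write ``let $q\in\Exc\pi$'', but when $\pi=\id_X$ this set is empty and the first point to blow up is $p$ itself. Writing $q\in\pi^{-1}(p)$ instead covers both the first blow-up and all subsequent ones uniformly. Your final paragraph, observing that boundary components outside $\Gamma_{\pi,E}$ (such as the strict transform of a second divisor through $p$ when $p$ is a satellite point) carry coefficient $0$ in $\pi^*E$ and therefore that free/satellite should be read relative to $\Gamma_{\pi,E}$, addresses a genuine subtlety and does so correctly.
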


From now on we write $\cV_*$ for either $\cV_X (p; \m_p)$ and $\cV_X (p; E)$ and we write $\alpha_*,
b_*$ for the skewness function and the generic multiplicity function associated to the tree structure.

For a valuation $v \in \cV_*$, we define the \emph{infinitely near sequence} of $v$ as follows, set $v_0 = v_*$ the
root of $\cV_*$ and let $p_n$ be the sequence of centers above $X$ associated to $v$. Let $E_n$ be the exceptional
divisor above $p_n$. Set $v_n = \frac{1}{b_* (E_n)} \ord_{E_n}$, if $v$ is quasimonomial $(v_n)$ is the infinitely near
sequence of $v$, in particular it is finite if and only if $v$ is divisorial. If $v$ is a curve valuation or
infinitely singular we define the infinitely near sequence of $v$ as the subsequence of $v_n$ where $c_{X_n} (v)$
is a free point (at infinity).

\begin{prop}\label{PropInfinitelyNearSequence}
  Let $v \in \cV_*$ and let $v_n$ be its infinitely near sequence
  \begin{itemize}
    \item the sequence $v_n := \frac{1}{b_n} \ord_{E_n}$ converges weakly towards $v$.
    \item $\alpha_*(v) = \lim_n \alpha_*(v_n)$.
  \end{itemize}
\end{prop}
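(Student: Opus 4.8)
The plan is to reduce the statement to the corresponding facts about the valuative tree $\cV_\m$ (respectively the relative tree $\cV_z$) proved by Favre--Jonsson in \cite{favreValuativeTree2004}, transported via the isomorphisms of trees established in this chapter. First I would fix notation: let $(p_n)$ be the sequence of centers of $v$ above $X$, let $E_n$ be the exceptional divisor produced by blowing up $p_n$, let $b_n = b_*(E_n)$, and set $v_n = b_n^{-1}\ord_{E_n}$. By Proposition \ref{PropSequenceOfInfinitelyNearPoints} the qualitative shape of $(p_n)$ depends on the type of $v$, and by definition the infinitely near sequence is either $(v_n)$ itself (quasimonomial case) or the subsequence of those $v_n$ whose center $p_n$ is a free point (curve or infinitely singular case). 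In all cases the $v_n$ form an increasing chain in $\cV_*$: this is exactly Proposition \ref{PropOrderRelationAfterOneBlowUp} (and its relative version Proposition \ref{PropOrderRelationAfterOneBlowUpRelativeCase}), applied inductively, since $p_{n}$ is a point lying on $E_{n-1}$ (and on one more exceptional divisor in the satellite case), so $v_{n-1} < v_n$, and moreover $v_n < v$ because $v$ is centered at $p_n$ for every $n$ hence lies in the tangent direction at $v_{n-1}$ determined by $p_n$.

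For the convergence statement, I would argue that the only valuation in $\cV_*$ which is an upper bound for the whole chain $(v_n)$ and is compatible with having center $p_n$ at every stage is $v$ itself. Concretely: suppose $w \ge v_n$ for all $n$; then for every $n$, $w$ has center $p_n$ on $X_n$ (being above $v_n = b_n^{-1}\ord_{E_n}$ forces the center to lie on $E_n$, and being above $v_{n+1}$ forces it onto the point $p_{n+1}$), so $w$ and $v$ have the same sequence of centers, hence the same skewness on the chain, hence $w = v$ by the tree axioms (the segment $[\,v_*, v\,]$ is totally ordered and parametrised by skewness via Proposition \ref{PropValuationEnFonctionDeAlpha}, resp. Proposition \ref{PropValuationEnFonctionDeAlphaRelative}, so a point is determined by its position). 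Since $\cV_*$ carries the weak topology which agrees with the tree topology (Proposition \ref{PropWeakTopologyIsWeakTopologyOfTree} and its relative analogue), an increasing chain with least upper bound $v$ converges weakly to $v$; when $(v_n)$ is only a subsequence of the full center-sequence one checks that deleting the satellite terms does not change the supremum, since between two consecutive free-point blow-ups all intermediate divisorial valuations lie on the segment joining the corresponding $v_n$'s.

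The skewness identity $\alpha_*(v) = \lim_n \alpha_*(v_n)$ then follows: $\alpha_*$ is a parametrisation of the tree, hence continuous and monotone along the increasing segment $[\,v_*, v\,]$, and takes the value $\alpha_*(v)$ at the endpoint which is the weak limit of the $v_n$. For the quasimonomial case this is immediate; for the curve-valuation case $\alpha_*(v) = +\infty$ and one must check $\alpha_*(v_n) \to \infty$, which holds because each free-point blow-up strictly increases skewness by a definite amount governed by the generic multiplicities $b_n$ (via $\alpha_*(v_{n+1}) - \alpha_*(v_n) = 1/(b_n b_{n+1})$ at free points and the relation $\alpha_\m \circ \pi_* = 1 + \alpha_E$ of Proposition \ref{PropValuativeTreeIsRelativeValuativeTreeWRTExceptionalDivisor}); for the infinitely singular case the limit is finite or infinite according to the Puiseux data, and the continuity of $\alpha_*$ at the limit point handles both. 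The main obstacle I anticipate is the bookkeeping in the non-quasimonomial cases — verifying carefully that passing to the free-point subsequence still yields a cofinal increasing sequence in $[\,v_*, v\,]$ and that no skewness is "lost at infinity" through the infinitely many satellite blow-ups clustered between consecutive free points; this is where I would lean most heavily on Favre--Jonsson's analysis of the structure of infinitely near sequences in \cite{favreValuativeTree2004}, Sections 6.2 and 6.4, transported through the tree isomorphisms of the present chapter.
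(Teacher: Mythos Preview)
Your argument has a genuine gap in the irrational case. You claim that ``in all cases the $v_n$ form an increasing chain in $\cV_*$'' and that $v_n < v$ for every $n$, but this is false when $v$ is irrational. Recall that for quasimonomial $v$ the infinitely near sequence is the \emph{full} sequence $(v_n)$, not a free-point subsequence, and that eventually every center $p_n$ is a satellite point (Proposition~\ref{PropSequenceOfInfinitelyNearPoints}). At a satellite point $p_n = F_1 \cap F_2$ with $v_{F_1} < v_{F_2}$, Proposition~\ref{PropOrderRelationAfterOneBlowUp}(2) gives $v_{F_1} < v_{\tilde F} < v_{F_2}$, so the new term $v_{n}$ lies strictly \emph{between} the two previous adjacent divisorial valuations; in particular it may be either above or below $v$, and above or below $v_{n-1}$. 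The sequence $(v_n)$ oscillates around $v$ in the manner of continued-fraction convergents, and neither ``$v_{n-1} < v_n$'' nor ``$v_n < v$'' holds in general.

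The paper handles this by splitting $(v_n)_{n \ge N_0}$ into an increasing subsequence $(v_k^+)$ and a decreasing subsequence $(v_k^-)$, both converging to $v$, and then squeezing $\alpha_*(v_k^+) < \alpha_*(v) < \alpha_*(v_k^-)$. Your monotone-supremum argument does work for the curve and infinitely singular cases (where the free-point subsequence is indeed increasing and bounded above by $v$), and is essentially what the paper does there too; but you need a separate treatment for the irrational case along the lines above.
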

\begin{proof}
  The infinitely near sequence is constructed in Section 6.2.2 of \cite{favreValuativeTree2004} (this sequence does not
  have a name in \cite{favreValuativeTree2004}). The fact $v_n$ converges weakly towards $v$ is shown there. To show
  the statement for skewness, we split the proof with respect to the type of $v$.

  If $v$ is a curve valuation or an infinitely singular one, then $v_n < v$ and $v_n$ increases towards $v$. Since
  $\alpha$ induces an order preserving bijection of the segment $[v_*, v]$. We have that $\alpha (v_n) \leq \alpha
  (v)$ and is increasing. So it converges towards a real number $\alpha_0 \in [\alpha_* (v_*), \alpha_* (v)]$. If
  $\alpha_0 < \alpha_* (v)$, then $v_n \leq v_0 < v$ where $\alpha_* (v_0) = \alpha_0$ and this is absurd.

  If $v$ is irrational, then there exists $N_0$ such that for all $n \geq N_0, p_n$ is a satellite point. We can split
  the sequence $(v_n)_{n \geq N_0}$ into two subsequences $(v^+_k), (v^-_k)$ such that $v^+_k$ is increasing and
  converges towards $v$ and $(v^-_k)$ is decreasing and converges towards $v$. We therefore get
  \begin{equation}
    \alpha (v_k^+) < \alpha (v) < \alpha (v_k^-)
    \label{<+label+>}
  \end{equation}
  and it is clear that $\lim_k \alpha (v_k^+) = \lim_k \alpha (v_k^-) = \alpha (v)$.
\end{proof}

We will say that two divisorial valuations $v, v'$ are \emph{adjacent} if there exists a completion $Y$ above $X$
such that the centers of $v$ and $v'$ are both prime divisors and they intersect.

\begin{prop}[\cite{favreValuativeTree2004}, Corollary 6.39]\label{PropSkewnessBlowUp}
  Let $v, v' \in \cV_*$. Assume $v < v'$ and that they are adjacent, then
  \begin{equation}
    \alpha_*(v') - \alpha_*(v) = \frac{1}{b_* (v) b_* (v')}
    \label{<+label+>}
  \end{equation}
\end{prop}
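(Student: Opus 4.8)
The plan is to reduce the statement to a combinatorial assertion about the dual‑graph $\Q$‑tree of prime divisors above a point, and then to settle that by induction on the number of point blow‑ups. Write $\cV_*$ for $\cV_X(p;\m_p)$, resp.\ $\cV_X(p;E_0)$, as in the convention preceding the statement. Since $v<v'$ are adjacent, by definition there is a completion $\pi:Y\to X$ obtained by finitely many blow‑ups of points above $p$ in which the centers $E,E'$ of $v,v'$ are prime divisors with $E\cap E'\neq\emptyset$. Transporting the skewness $\alpha_*$ and the generic multiplicity $b_*$ along the isomorphism $\Gamma_*\simeq\cV_{*,\mathrm{div}}$ of \S\ref{SecGeometricInterpretationValuativeTree}, it suffices to prove: \emph{whenever two distinct prime divisors $E\neq E'$ above $p$ appear and meet in a common completion $Y$ and $v_E< v_{E'}$, then $\alpha_*(v_{E'})-\alpha_*(v_E)=\frac{1}{b_*(E)\,b_*(E')}$.} Granting this, the Proposition follows at once (and one recovers \cite{favreValuativeTree2004}, Corollary~6.39 intrinsically).

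I induct on the least number $N$ of point blow‑ups over $p$ needed to produce such a pair. For the base case $N$ is minimal, so the younger of $E,E'$ is the exceptional divisor of the blow‑up of a free point of the root divisor, and one checks the increment by hand: in $\cV_X(p;\m_p)$ one has $b_*(v_{\m_p})=1$, $\alpha_*(v_{\m_p})=1$, and after one free blow‑up $b_*=1$, $\alpha_*=2$; in $\cV_X(p;E_0)$ one has $b_*(\ord_{E_0})=1$, $\alpha_*(\ord_{E_0})=0$, and the blow‑up of $p$ produces $\tilde F$ with $\ord_{\tilde F}(z)=1$ for $z$ a local equation of $E_0$, hence $b_{E_0}(\tilde F)=1$ by Proposition~\ref{PropInterpretationRelativeGenericMultiplicity} and $\alpha_{E_0}(v_{\tilde F})=1$; in both cases the increment is $1=\frac{1}{1\cdot1}$. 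For the inductive step write $Y=\mathrm{Bl}_q Y'$ with $Y'$ realising the pair in $N-1$ blow‑ups; by minimality the exceptional divisor of $Y\to Y'$ is one of $E,E'$, call it $E_1$ and the other $E_0$, and since $E_1\cap E_0\neq\emptyset$ the center $q$ lies on the strict transform of $E_0$ in $Y'$. Two cases. (a) $q$ is a \emph{free} point of that strict transform: then $b_*(E_1)=b_*(E_0)$ and Proposition~\ref{PropOrderRelationAfterOneBlowUp}/\ref{PropOrderRelationAfterOneBlowUpRelativeCase} gives $v_{E_0}< v_{E_1}$, which together with $v_E< v_{E'}$ forces $(E_0,E_1)=(E,E')$. (b) $q=E_0\cap G$ is a \emph{satellite} point for a second prime divisor $G$ at infinity in $Y'$: then $b_*(E_1)=b_*(E_0)+b_*(G)$, the divisors $E_0$ and $G$ are adjacent (they meet at $q$ in $Y'$) and realised in $\leq N-1$ blow‑ups, so by the inductive hypothesis their skewnesses differ by $\frac{1}{b_*(E_0)b_*(G)}$, and Proposition~\ref{PropOrderRelationAfterOneBlowUpRelativeCase}(2)--(3) places $v_{E_1}$ strictly between $v_{E_0}$ and $v_G$; relabelling so that $v_{E_0}< v_{E_1}< v_G$ (consistent with $v_E< v_{E'}$) we thus have $\alpha_*(v_G)-\alpha_*(v_{E_0})=\frac{1}{b_*(E_0)b_*(G)}$.

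The only genuinely non‑formal input left is the value of $\alpha_*(v_{E_1})$. In case (b), since $b_*(E_1)=b_*(E_0)+b_*(G)$ one has the identity $\frac{1}{b_*(E_0)b_*(E_1)}+\frac{1}{b_*(E_1)b_*(G)}=\frac{1}{b_*(E_0)b_*(G)}$, so, combined with the inductive value just displayed, it is enough to prove $\alpha_*(v_{E_1})-\alpha_*(v_{E_0})=\frac{1}{b_*(E_0)b_*(E_1)}$; thus in both cases everything reduces to a single one‑blow‑up increment. I would compute it in local coordinates $(x,y)$ at $q$ with $\{x=0\}$ the strict transform of $E_0$ (and $\{y=0\}=G$ in case (b)): by Proposition~\ref{PropInterpretationGenericMultiplicity}/\ref{PropInterpretationRelativeGenericMultiplicity} one may write $v_{E_0}=b_*(E_0)^{-1}\ord_{E_0}$ and $v_G=b_*(G)^{-1}\ord_G$, while $\pi_*\ord_{E_1}$ is the monomial valuation $v_{1,1}$ in $(x,y)$; feeding these into the monomial calculus of \S\ref{SecMonomialValuationsAtSatellitepoint} (Lemma~\ref{LemmelevelfunctionValuationMonomiale} for the behaviour of skewness along a segment of monomial valuations), using Proposition~\ref{PropRelationSkewness} to convert between the normalisations $v(\m_p)=1$ and $v(z)=1$, and the formula $v(\phi)=\alpha(v\wedge v_\phi)m(\phi)$ of Proposition~\ref{PropValuationEnFonctionDeAlpha}, one reads off the claimed jump. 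The main obstacle is precisely this coordinate computation: the tree‑ and multiplicity‑bookkeeping supplies only one linear relation among the two increments created by a satellite blow‑up, so one must enter the definition of skewness at least once, and the delicate part is tracking the normalisations correctly when passing between the local trees at $p$ and at $q$.
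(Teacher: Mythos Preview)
The paper does not give its own proof of this proposition: it is quoted verbatim from \cite{favreValuativeTree2004}, Corollary~6.39, and then \emph{used} as an input in the proofs of Proposition~\ref{PropSkewnessRelationAfterBlowUpContractionMap}, Proposition~\ref{PropMonomialValuationIsSegment}, and Theorem~\ref{ThmAutoIntersectionIsSkewness}. So there is no ``paper's proof'' to compare to; one can only assess your argument on its own merits.

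Your inductive scaffolding is correct: for any adjacent pair one of the two divisors is the most recent exceptional divisor, and the Farey identity $\frac{1}{b_*(E_0)b_*(E_1)}+\frac{1}{b_*(E_1)b_*(G)}=\frac{1}{b_*(E_0)b_*(G)}$ (with $b_*(E_1)=b_*(E_0)+b_*(G)$) does reduce the satellite case to computing one of the two new increments. But note that the induction does essentially no work: in the free case (a) you use no inductive hypothesis at all, and in the satellite case (b) the inductive hypothesis only says that the \emph{other} new increment comes for free once you have computed the first one. In both cases you still need the direct ``one blow-up increment'', which is precisely the statement for an adjacent pair. So the entire content is in the coordinate computation you defer to the last paragraph.

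That computation is viable, but you should be careful about circularity within this paper's logical order. The tools you cite by name---Lemma~\ref{LemmelevelfunctionValuationMonomiale}, Proposition~\ref{PropRelationSkewness}, Proposition~\ref{PropValuationEnFonctionDeAlpha}---are indeed independent of Proposition~\ref{PropSkewnessBlowUp}. However, to pass from the local picture at $q$ back to skewness in $\cV_*$ at $p$ you implicitly need how $\alpha_*$ transforms along $\pi_\bullet$, and in this paper that is Proposition~\ref{PropSkewnessRelationAfterBlowUpContractionMap}, whose proof \emph{invokes} Proposition~\ref{PropSkewnessBlowUp}. Likewise Equation~\eqref{EqSkewnessOnSegment} in Proposition~\ref{PropMonomialValuationIsSegment} is proved using Proposition~\ref{PropSkewnessBlowUp}. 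To avoid this loop you must carry out the increment computation from the definition $\alpha_*(v)=\sup_\phi v(\phi)/m_*(\phi)$ directly (choosing a curve $\phi$ with $v_\phi>v_{E_1}$ and tracking $\ord_{E_1}(\phi)$ and $m_*(\phi)$ through the tower of blow-ups), or---as Favre--Jonsson actually do---introduce the thinness parametrization $A_*$ first (which is defined combinatorially by exactly the Farey rule) and then prove $A_*=1+\int m_*\,d\alpha_*$, from which the increment formula drops out since $m_*\equiv b_*(E_0)$ on the relevant segment.
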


\begin{prop}[\cite{favreValuativeTree2004}, Theorem 6.51]\label{PropSkewnessRelationAfterBlowUpContractionMap}
   Let $\pi: Y \rightarrow X$
  be a completion above $X$ and let $q \in E_q$ be a
  free point of $Y$ such that $\pi (E_q) = p$.  By Proposition
  \ref{PropCompatibilitéOrdreContractionMap}, $\pi_\bullet : \cV_Y (q; E_q) \rightarrow \cV_*$ is
  an inclusion of trees.

  \begin{enumerate}
    \item The normalization of $\pi_* \ord_{E_q}$ (to get a valuation in $\cV_*$) is
      \begin{equation}
        \pi_\bullet \ord_{E_q} =: v_{E_q} = \frac{1}{b_* (E_q)} \pi_* \ord_{E_q}.
        \label{<+label+>}
      \end{equation}
      \item \begin{align}
    \forall v \in \cV_Y (p; E), \quad \alpha_* (\pi_\bullet v) &= \alpha_{*} (v_{E_q}) + \frac{1}{b_*(E_q)^2}
    \alpha_{E_q} (v) \\
    b_* (\pi_\bullet v) &= b_* (E_q) b_{E_1} (v)
    \label{<+label+>}
  \end{align}
  \end{enumerate}
\end{prop}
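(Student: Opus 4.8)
The plan is to obtain the normalization in item (1) directly from the interpretation of the generic multiplicity, to establish the two formulas of item (2) first for divisorial valuations by a simultaneous induction on blow-ups, and then to extend the skewness formula to all valuations by density.

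For item (1): since $\pi(E_q)=p$, the center of $\ord_{E_q}$ on $X$ is $p$, so $\pi_\bullet\ord_{E_q}$ is the unique representative of $\pi_*\ord_{E_q}$ belonging to $\cV_*$. When $\cV_*=\cV_X(p;\m_p)$ this representative is $\pi_*\ord_{E_q}/\bigl(\pi_*\ord_{E_q}(\m_p)\bigr)$, and Proposition \ref{PropInterpretationGenericMultiplicity} gives $\pi_*\ord_{E_q}(\m_p)=b(E_q)=b_*(E_q)$; when $\cV_*=\cV_X(p;E)$ the same argument applies with $\m_p$ replaced by a local equation $z$ of $E$ at $p$, using Proposition \ref{PropInterpretationRelativeGenericMultiplicity}. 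In both cases $\pi_\bullet\ord_{E_q}=\tfrac{1}{b_*(E_q)}\pi_*\ord_{E_q}$, the asserted value of $v_{E_q}$.

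For item (2) I treat first a divisorial $v\in\cV_Y(q;E_q)$, realized by a finite sequence of point blow-ups above $q$, and induct on its length. For length $0$ we have $v=\ord_{E_q}$, $\pi_\bullet v=v_{E_q}$, $\alpha_{E_q}(\ord_{E_q})=0$ and $b_{E_q}(E_q)=1$, so both formulas follow from item (1). For the inductive step, blow up a further point $q'$ in the exceptional locus above $q$. Since $q$ is a free point of $Y$, every prime divisor through $q'$ is either (the strict transform of) $E_q$ or a divisor exceptional over $q$, so by Proposition \ref{PropCompatibilitéOrdreContractionMap} and Claim \ref{ClaimOrdreCompatible} the combinatorial type of $q'$---free on a divisor $F$, or satellite at $F_1\cap F_2$---and the divisors meeting at $q'$ are the same whether read off in the graph of divisors above $q$ rooted at $E_q$ or in the graph $\Gamma_*$ ($=\Gamma$ or $\Gamma_E$) underlying $\cV_*$. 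Feeding this into the recursive definitions of $b_{E_q}$ and of $b_*$ propagates the identity $b_*(\pi_\bullet v')=b_*(E_q)\,b_{E_q}(v')$ from the divisor(s) through $q'$ (possibly $E_q$ itself, which is the base case) to the new divisorial valuation $v'$. For the skewness, Proposition \ref{PropSkewnessBlowUp} expresses the increment of $\alpha_*$, resp. of $\alpha_{E_q}$, across each pair of adjacent divisorial valuations created by blowing up $q'$ in terms of the generic multiplicities of the two valuations; combined with the multiplicity identity just obtained, every increment of $\alpha_*\circ\pi_\bullet$ is exactly $b_*(E_q)^{-2}$ times the corresponding increment of $\alpha_{E_q}$. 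Summing these increments along the finite chain of adjacent divisorial valuations running from $\ord_{E_q}$ to $v'$, and using $\alpha_{E_q}(\ord_{E_q})=0$ together with $\pi_\bullet\ord_{E_q}=v_{E_q}$, gives $\alpha_*(\pi_\bullet v')=\alpha_*(v_{E_q})+b_*(E_q)^{-2}\alpha_{E_q}(v')$.

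Finally, to pass to an arbitrary $v\in\cV_Y(q;E_q)$ (the multiplicity formula is only claimed for divisorial valuations, so nothing is needed there), fix an end $v'$ of $\cV_Y(q;E_q)$ with $v$ on the segment $[\ord_{E_q},v']$. On this segment $\alpha_{E_q}$ is a homeomorphism onto an interval, the divisorial valuations are dense by Proposition \ref{PropInfinitelyNearSequence}, and $\alpha_*\circ\pi_\bullet$ is non-decreasing as a function of the parameter $\alpha_{E_q}$, since $\pi_\bullet$ preserves the order and $\alpha_*$ parametrises $\cV_*$. A non-decreasing function that agrees with the continuous affine map $t\mapsto\alpha_*(v_{E_q})+b_*(E_q)^{-2}t$ on a dense set agrees with it everywhere, which yields the formula in general. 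The delicate point throughout is the inductive step: one must track the blow-up combinatorics and the generic multiplicities relative to $Y$ against those relative to $X$, the subtlety being that $E_q$ is the root of $\cV_Y(q;E_q)$ but merely one more exceptional divisor inside $\cV_*$---which is exactly the configuration that Proposition \ref{PropCompatibilitéOrdreContractionMap} and Claim \ref{ClaimOrdreCompatible} are set up to control.
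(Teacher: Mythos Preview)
Your proof is correct and follows essentially the same approach as the paper: induction on blow-ups above $q$ for the divisorial case, using that $q$ being free forces the combinatorial type (free/satellite) of any further blow-up to agree in $\Gamma_{E_q}$ and in $\Gamma_*$, then propagating the multiplicity identity via the recursive definitions and the skewness identity via Proposition~\ref{PropSkewnessBlowUp}. The only notable difference is in the extension to non-divisorial $v$: the paper invokes infinitely-near sequences (Proposition~\ref{PropInfinitelyNearSequence}) directly, while you argue by density of divisorial valuations on the segment $[\ord_{E_q},v']$ together with monotonicity of $\alpha_*\circ\pi_\bullet$; both are equally valid and essentially the same idea, though your citation of Proposition~\ref{PropInfinitelyNearSequence} for density on segments is slightly loose (what you really want is the $\Q$-tree structure from Proposition~\ref{PropValuationEnFonctionDeAlphaRelative}).
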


\begin{proof}
  It suffices to show this formula for every divisorial valuation $v \in \cV_Y (q; E_q)$ and then use infinitely near
  sequences by Proposition
  \ref{PropInfinitelyNearSequence}. We prove the result by induction on the number of blow-ups above $q$. Namely we show
  the following
  \begin{claim}
    For every completion $\tau : (Z, \Exc(\tau)) \rightarrow (Y, q)$ exceptional above $q$, for every $F \in
    \Gamma_{\tau, E_q}$,
    \begin{align}
      b_{*} (F) &= b_{E_q}(F) b_*(E_q) \label{EqGenericMultiplicity} \\
      \alpha_* (v_F) &= \alpha_* (v_{E_q}) + \frac{1}{b_* (E_q)} \alpha_{E_q} (v_F) \label{EqSkewness}
    \end{align}
  \end{claim}
  If $\tau = \id : Y \rightarrow Y$, then $\Gamma_{\tau, E_q} = \left\{ E_q \right\}$. We have by definition that
  $b_{E_q} (E_q) = 1, \alpha_{E_q} (\ord_{E_q}) = 0$. Therefore Equations \eqref{EqGenericMultiplicity} and
  \eqref{EqSkewness} holds.

  Suppose the claim to be true for a completion $\tau: (Z, \Exc(\tau)) \rightarrow (Y, q)$ exceptional above $q$. Let
  $\tau ' : Z ' \rightarrow Z$ be the blow up of a closed point $q ' \in \Exc (\tau)$. Let $\tilde E$ be the
  exceptional divisor above $q'$.

  If $q' \in F$ is a free point with $F \in \Gamma_{\tau, E}$, then $q'$ is also a free point with respect to
  $\Gamma_{*, \pi \circ \tau}$ because $q \in Y$ is a free point. Therefore by definition
  \begin{equation}
    b_*(\tilde E) = b_* (F), \quad b_{E_q} (\tilde E) = b_{E_q} (F)
    \label{<+label+>}
  \end{equation}
  So Equation \eqref{EqGenericMultiplicity} is true for $\tilde E$ by induction. Now, by Proposition \ref{PropSkewnessBlowUp}
  \begin{equation}
    \alpha_* (v_{\tilde E}) = \alpha_* (v_F) + \frac{1}{b_*(F) b_* (E_q)}, \quad \alpha_{E_q} (v_{\tilde E}) =
    \alpha_{E_q} (v_F) + \frac{1}{b_{E_q} (\tilde E) b_{E_q} (F)}
    \label{<+label+>}
  \end{equation}
  By induction, Equation \eqref{EqSkewness} is true for $\tilde E$.

  If $q' = F_1 \cap F_2$ is a satellite point with $F_1, F_2 \in \Gamma_{\tau, E_q}$, then
  \begin{equation}
    b_* (\tilde E) = b_* (F_1) + b_* (F_2), \quad b_{E_q} (\tilde E) = b_{E_q} (F_1) + b_{E_q} (F_2)
    \label{<+label+>}
  \end{equation}
  So by induction Equation \eqref{EqGenericMultiplicity} holds for $\tilde E$. Suppose without loss of generality that
  $v_{F_1} < v_{F_2}$ both in $\cV_*$ and $\cV_Y (q; E_q)$. This is possible by Proposition
  \ref{PropCompatibilitéOrdreContractionMap}. By Proposition \ref{PropSkewnessBlowUp}
  \begin{equation}
    \alpha_* (v_{\tilde E}) = \alpha_* (v_{F_1}) + \frac{1}{b_*(F_1) b_* (\tilde E)}, \quad \alpha_{E_q}
    (v_{\tilde E}) = \alpha_{E_q} (v_{F_1}) + \frac{1}{b_{E_q} (F_1) b_{E_q} (\tilde E)}.
    \label{<+label+>}
  \end{equation}
  Therefore, Equation \eqref{EqSkewness} holds for $\tilde E$. And the claim is shown by induction.
\end{proof}

\begin{prop}\label{PropFiniteSkewnessEverywhere}
  Let $v$ be a valuation over $\k[X_0]$ centered at infinity. Let $X$ be a completion of $X_0$ and let $E$ be a prime
  divisor of $X$ at infinity such that $\tilde v \in \cV_X (E; E)$ for some valuation $\tilde v$ equivalent to
  $v$. If $\alpha_E (\tilde v) < + \infty$, then for every completion $Y$ of $X_0$ if $\tilde v \in \cV_Y
  (F, F)$ for some prime divisor $F$ at infinity in $Y$, then $\alpha_F (\tilde v) < +\infty$.
\end{prop}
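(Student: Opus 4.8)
The plan is to reduce the statement, for $\tilde v$ \emph{infinitely singular}, to the fact that finiteness of the skewness can be detected in the multiplicity trees $\cV_Z(c_Z(\tilde v);\m_{c_Z(\tilde v)})$, and that this finiteness is propagated along a sequence of point blow-ups. First I would dispose of the other types. By Proposition \ref{PropClassificationValuation} and its corollary for valuations centered at infinity, the type of $\tilde v$ is intrinsic. If $\tilde v$ is divisorial or irrational, then for any completion $Y$ with $\tilde v\in\cV_Y(F;F)$, Proposition \ref{PropValuationEnFonctionDeAlphaRelative} gives $\alpha_F(\tilde v)\in\Q$ or $\R\setminus\Q$, hence finite, so there is nothing to prove. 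If $\tilde v$ is a curve valuation, the same proposition forces $\alpha_E(\tilde v)=+\infty$, contradicting the hypothesis. So we may assume $\tilde v$ infinitely singular; then $\p_{\tilde v}=\{0\}$ and $\tilde v$ is not equivalent to any $\ord_C$, so on \emph{every} completion $Z$ the center $c_Z(\tilde v)$ is a closed point of $\BD$ (a center which were the generic point of a curve would make $\OO_{\tilde v}$ equal to that DVR, forcing $\tilde v$ divisorial).

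Next I would reduce relative skewness to skewness in a multiplicity tree. Write $p:=c_X(\tilde v)$, a free or satellite point of $E$, and let $z$ be a local equation of $E$ at $p$; since $\tilde v$ is infinitely singular one has $\tilde v(z),\tilde v(\m_p)\in(0,+\infty)$, and under the identification of $\cV_X(p;E)$ with a relative tree (Proposition \ref{PropRelativeValuativeTreeIsomorphism}) the relation of Proposition \ref{PropRelationSkewness} at $p$ reads
\begin{equation*}
  \tilde v(z)^2\,\alpha_E\!\left(\tfrac{\tilde v}{\tilde v(z)}\right)=\tilde v(\m_p)^2\,\alpha_{\m_p}\!\left(\tfrac{\tilde v}{\tilde v(\m_p)}\right),
\end{equation*}
so $\alpha_E(\tilde v)<+\infty$ if and only if $\alpha_{\m_p}(\tilde v)<+\infty$. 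The identical reduction applies at $q:=c_Y(\tilde v)$. Hence it suffices to prove that, for $\tilde v$ infinitely singular, finiteness of $\alpha_{\m_{c_Z(\tilde v)}}(\tilde v)$ does not depend on the completion $Z$.

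For this, given $X$ and $Y$ I would choose by Lemma \ref{LemmaProjectiveSystem} a completion $Z$ above both, with morphisms $\pi_X:Z\to X$ and $\pi_Y:Z\to Y$; by functoriality of centers (the valuative criterion of properness used in the proof of Lemma \ref{LemmeCentreValuation}) we have $\pi_X(c_Z(\tilde v))=p$ and $\pi_Y(c_Z(\tilde v))=q$. Factor $\pi_X$ through Theorem \ref{ThmCompositionBlowUps} as a chain of single point blow-ups $Z=Z_n\to\cdots\to Z_1=X$ and set $r_i:=c_{Z_i}(\tilde v)$, so that each $\sigma_i:Z_{i+1}\to Z_i$ sends $r_{i+1}$ to $r_i$. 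If $\sigma_i$ blows up a point different from $r_i$, it is a local isomorphism near $r_i$, so the completed local rings at $r_i$ and $r_{i+1}$ agree (with the same induced valuation) and $\alpha_{\m_{r_i}}(\tilde v)=\alpha_{\m_{r_{i+1}}}(\tilde v)$. If $\sigma_i$ blows up $r_i$ itself, with exceptional divisor $\tilde E_i$, then $r_{i+1}$ is a closed point of $\tilde E_i$ because $\tilde v$ is not divisorial; Proposition \ref{PropValuativeTreeIsRelativeValuativeTreeWRTExceptionalDivisor} gives $\alpha_{\m_{r_i}}(\tilde v)=1+\alpha_{\tilde E_i}(\tilde v)$ in the appropriate normalisations, while Proposition \ref{PropRelationSkewness} applied at $r_{i+1}$ (replacing the divisor $\tilde E_i$ by $\m_{r_{i+1}}$, the relevant values again lying in $(0,+\infty)$) shows $\alpha_{\tilde E_i}(\tilde v)<+\infty$ iff $\alpha_{\m_{r_{i+1}}}(\tilde v)<+\infty$. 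In either case finiteness of the skewness is preserved; chaining over $i$, then repeating for $\pi_Y$, and combining with the two reductions above yields $\alpha_E(\tilde v)<+\infty\iff\alpha_F(\tilde v)<+\infty$.

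The main obstacle is bookkeeping of normalisations: one must check at every stage that the quantities one divides by ($\tilde v(z)$, $\tilde v(\m_{r_i})$, and the analogous quantities after each blow-up) are genuinely positive and finite — which is exactly where the reduction to the infinitely singular case is being used — and that the pushforward map of Proposition \ref{PropValuativeTreeIsRelativeValuativeTreeWRTExceptionalDivisor} is the correctly normalised representative in $\cV_{Z_i}(r_i;\m_{r_i})$, so that the relation $\alpha_{\m_{r_i}}=1+\alpha_{\tilde E_i}$ can be read off as stated. Once these points are settled, the argument is purely a concatenation of the skewness identities already established.
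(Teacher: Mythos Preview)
Your argument is correct. The case analysis (divisorial/irrational trivially finite, curve valuations excluded by hypothesis, reduce to infinitely singular) is identical to the paper's, and your bookkeeping concerns about normalisations are genuine but resolvable exactly as you indicate: for an infinitely singular $\tilde v$ the center on every completion is a closed point and all the local values $\tilde v(z)$, $\tilde v(\m_{r_i})$ lie in $(0,+\infty)$, so the ratios in Proposition~\ref{PropRelationSkewness} and the identification in Proposition~\ref{PropValuativeTreeIsRelativeValuativeTreeWRTExceptionalDivisor} are well-defined at each step.

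Where you differ from the paper is in the mechanism for the infinitely singular case. You pass through the multiplicity tree $\cV_Z(c_Z(\tilde v);\m_{c_Z(\tilde v)})$ via Proposition~\ref{PropRelationSkewness}, and then propagate finiteness of $\alpha_{\m_{r_i}}$ one blow-up at a time along a factored chain $Z\to\cdots\to X$, using the relation $\alpha_{\m_q}=1+\alpha_{\tilde E}$ from Proposition~\ref{PropValuativeTreeIsRelativeValuativeTreeWRTExceptionalDivisor} together with another application of Proposition~\ref{PropRelationSkewness} at each exceptional divisor. The paper instead exploits Proposition~\ref{PropSequenceOfInfinitelyNearPoints}: since $\tilde v$ is infinitely singular, above any completion there is a further completion where $c(\tilde v)$ is a \emph{free} point on some divisor $F$; one application of Proposition~\ref{PropSkewnessRelationAfterBlowUpContractionMap} then writes both $\alpha_E(\tilde v)$ and $\alpha_{E'}(\tilde v)$ as $\alpha_*(v_F)+b_*(F)^{-2}\alpha_F(\tilde v)$ for finite constants depending on the tree, so each is finite iff $\alpha_F(\tilde v)$ is. This is shorter because it handles an entire tower of blow-ups in a single affine identity rather than inducting step by step; your route, by contrast, avoids invoking the free-point property and works uniformly through arbitrary blow-ups.
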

\begin{proof}
  If $v$ is quasimonomial, this is immediate as for any prime divisor $E$ at infinity and any closed point $p \in E$,
  we have that $\alpha_E (v) < + \infty$ for $v = \ord_E$ or $v$ quasimonomial centered at $p$. If $v$ is a
  curve valuation, then $\alpha_E (v) = + \infty$ for any prime divisor $E$ of any completion $X$ such that $c_X
  (v) \in E$. So it remains to show the result for $v$ an infinitely singular valuation.

  We show that if $\pi: Y \rightarrow X$ is a completion above $X$, then $\alpha_{E'} (v) < + \infty
  \Leftrightarrow \alpha_E (v) < +\infty$ where $E'$ is a prime divisor
  of $Y$ at infinity such that some multiple of $v$ belongs to $\cV_Y (E', E')$. Let $p = c_X (v)$ and $q =
  c_Y (v)$. Since $v$ is infinitely singular, by Proposition \ref{PropSequenceOfInfinitelyNearPoints} there exists a
  completion $\tau : (Z, \Exc (\tau)) \rightarrow (Y, q)$ exceptional above $q$ such that $c_Z (v)$ is a free
  point $q'$ lying over a unique prime divisor $F$ at infinity. We apply Proposition
  \ref{PropSkewnessRelationAfterBlowUpContractionMap}. We have that
  \begin{align}
    \alpha_E (v) &= \alpha_E (v_{F}) + \frac{1}{b_E (F)^2} \alpha_{F} (v) \\
    \alpha_{E '} (v) &= \alpha_{E'} (v_F) + \frac{1}{b_{E'} (F)^2} \alpha_F (v)
    \label{<+label+>}
  \end{align}
  Thus $\alpha_E (v) < + \infty \Leftrightarrow \alpha_F (v) < +\infty \Leftrightarrow
  \alpha_{E'} (v) < + \infty$.
\end{proof}

\begin{prop}[\cite{favreValuativeTree2004} Proposition 6.35]\label{PropMonomialValuationIsSegment}
   Let $\pi : (Y, \Exc(\pi)) \rightarrow (X, p)$ be a completion exceptional
  above $p$. Let $q = E \cap F \in \Exc(\pi)$ be a satellite point  with $E,F \in
  \Gamma_{*, \pi}$. Define $v_E = \frac{1}{b_* (E)} \pi_* \ord_E$ and $v_F = \frac{1}{b_* (F)} \pi_* \ord_F$. Let $z,w$ be
local coordinates at $q$ associated to ($E$, $F$). Let $v_{s,t}$
  be the monomial valuation centered at $q$ such that $v(z) = s$ and $v(w) = t$. Then, the map $\pi_*$ induces a
  homeomorphism from the set $\left\{ v_{s,t} | s, t \geq 0, s b_*(E) + t b_*(F) = 1 \right\}$ and $[v_E, v_F]
  \subset \cV_*$ for the weak topology.

  Furthermore, the skewness function is given by
  \begin{equation}
    \alpha_* (\pi_* v_{s,t}) = \alpha (v_E) + \frac{t}{b_*(E)}
    \label{EqSkewnessOnSegment}
  \end{equation}
\end{prop}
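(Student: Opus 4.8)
I want to recognise the line segment $L:=\{\,v_{s,t} : s,t\ge 0,\ s\,b_*(E)+t\,b_*(F)=1\,\}$ as a monomial segment of one of the relative valuative trees at $q$, transport it to $\cV_*$ through the inclusion-of-trees map $\pi_\bullet$, and then read the skewness off the parametrisation already built in Section \ref{SecGeometricInterpretationValuativeTree}. First, since $q=E\cap F$ lies on both $E$ and $F$, the divisorial valuations $v_E=\tfrac1{b_*(E)}\pi_*\ord_E$ and $v_F=\tfrac1{b_*(F)}\pi_*\ord_F$ have as centres in $Y$ the intersecting prime divisors $E$ and $F$; hence they are adjacent, and in particular comparable for the tree order of $\cV_*$ (by Propositions \ref{PropOrderRelationAfterOneBlowUp}, \ref{PropOrderRelationAfterOneBlowUpRelativeCase} and the order-compatibility of Proposition \ref{PropCompatibilitéOrdreContractionMap}). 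After exchanging the roles of $(E,z,s)$ and $(F,w,t)$ if necessary, I assume $v_E<_*v_F$.

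The map $L\to[0,\infty]$, $(s,t)\mapsto r:=t/s$, is a homeomorphism sending the endpoint $(1/b_*(E),0)$ to $r=0$ and $(0,1/b_*(F))$ to $r=\infty$; for $s>0$ the valuation $v_{s,t}$ equals the rescaling $s\cdot v_{1,r}$, where $v_{1,r}\in\cV_X(q;E)$ is the monomial valuation at $q$ normalised by $v(z)=1$, and $\alpha_E(v_{1,r})=r$ by Lemma \ref{LemmelevelfunctionValuationMonomiale}. As $r$ runs over $[0,\infty)$ these $v_{1,r}$ trace out the totally ordered half-segment $[\ord_E,v_w)$ of $\cV_X(q;E)$. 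Because $v_E<_*v_F$, Proposition \ref{PropCompatibilitéOrdreContractionMap} applies in the satellite case and gives that $\pi_\bullet:\cV_X(q;E)\hookrightarrow\cV_*$ is an order-preserving inclusion of trees; moreover the chosen normalisation $s\,b_*(E)+t\,b_*(F)=1$ is exactly $\pi_*v_{s,t}(\m_p)=1$, since $\pi^*\m_p\cdot\OO_Y$ is locally the principal ideal $(z^{b_*(E)}w^{b_*(F)})$ near $q$, so no further renormalisation is needed. Thus $\pi_*$ carries $L\setminus\{(0,1/b_*(F))\}$ onto an increasing family of $\cV_*$ whose minimum is $\pi_*v_{1/b_*(E),0}=\tfrac1{b_*(E)}\pi_*\ord_E=v_E$; computing $\pi_*v_{1,r}$ as $r\to\infty$ (equivalently, using that its skewness increases continuously to $\alpha_*(v_F)$ by Proposition \ref{PropInfinitelyNearSequence}) shows the family converges weakly to $v_F$, and the omitted endpoint itself is sent to $\tfrac1{b_*(F)}\pi_*\ord_F=v_F$. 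Hence $\pi_*(L)=[v_E,v_F]$. Finally $\pi_*|_L$ is injective: $\pi$ is an isomorphism over $X_0$, so $\pi_*$ is a bijection of $\cV$ (Remark \ref{RmqMemeValuationApresEclatement}), and $(s,t)\mapsto v_{s,t}$ is injective on $L$. A continuous bijection from the compact interval $L$ onto a subset of the Hausdorff space $\cV_*$ is a homeomorphism, which gives the first assertion.

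For the skewness formula $\alpha_*(\pi_*v_{s,t})=\alpha_*(v_E)+t/b_*(E)$, when $\cV_*=\cV_X(p;\m_p)$ this is Proposition 6.35 of \cite{favreValuativeTree2004} applied to the regular $2$-dimensional local ring $\OO_{X,p}$; when $\cV_*$ is one of the relative trees $\cV_X(p;E')$ one transfers it through the change-of-tree identity of Proposition \ref{PropSkewnessRelationAfterBlowUpContractionMap}. Alternatively I would check it directly: it holds at the two endpoints ($t=0$ gives $\alpha_*(v_E)$; $s=0$ gives $\alpha_*(v_E)+\tfrac1{b_*(E)b_*(F)}=\alpha_*(v_F)$ by Proposition \ref{PropSkewnessBlowUp}, since $v_E,v_F$ are adjacent), then extend it to every divisorial valuation on $[v_E,v_F]$ (these are obtained by successively blowing up satellite points over $q$) by induction using Proposition \ref{PropSkewnessBlowUp} together with the recursion defining $b_*$, and pass to the limit with Proposition \ref{PropInfinitelyNearSequence}.

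The only genuine difficulty is bookkeeping the three competing normalisations ($v(\m_p)=1$, $v(z)=1$, $v(w)=1$) and making sure that, once renormalised inside $\cV_*$, the pushed-forward monomial valuations really sweep out the \emph{closed} segment $[v_E,v_F]$ — in particular that the endpoints of $L$ land on the divisorial valuations $v_E,v_F$ and not on a curve valuation escaping $\cV_*$. All the needed machinery is already the relative-tree comparison results (Propositions \ref{PropCompatibilitéOrdreContractionMap} and \ref{PropSkewnessRelationAfterBlowUpContractionMap}), so this step is careful verification rather than a new idea.
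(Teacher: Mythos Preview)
Your homeomorphism argument is correct and more self-contained than the paper's, which simply defers to \cite{favreValuativeTree2004} Proposition~6.35 for that assertion. One bookkeeping point: your normalisation check (that $\pi^*\m_p\cdot\OO_Y$ is locally $(z^{b_*(E)}w^{b_*(F)})$, so that $sb_*(E)+tb_*(F)=1$ is exactly the condition $\pi_*v_{s,t}\in\cV_*$) is written only for $\cV_*=\cV_X(p;\m_p)$; when $\cV_*=\cV_X(p;E')$ you need the analogous statement for $\pi^*$ of a local equation of $E'$, which is Proposition~\ref{PropInterpretationRelativeGenericMultiplicity}. Also, $\cV_X(q;E)$ should read $\cV_Y(q;E)$.

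For the skewness formula, your option~(a) has a gap: Proposition~\ref{PropSkewnessRelationAfterBlowUpContractionMap} is stated only for a \emph{free} point $q\in E_q$, whereas here $q=E\cap F$ is a satellite point in $\Gamma_{*,\pi}$, so you cannot invoke it directly to transfer between the two tree structures. Your option~(b) is correct and is precisely the paper's route: the paper verifies~\eqref{EqSkewnessOnSegment} at the two endpoints, then proves it for every divisorial valuation in $[v_E,v_F]$ by induction on the number of blow-ups over $q$ --- for each new exceptional divisor $\tilde E$ over a satellite point $E_1\cap E_2$ it computes explicitly, in local coordinates, the pair $(s,t)$ with $(\tau\circ\omega)_*\ord_{\tilde E}=v_{s,t}$, and checks this against the value of $\alpha_*(v_{\tilde E})$ given by Proposition~\ref{PropSkewnessBlowUp} --- and finally passes to arbitrary valuations via infinitely near sequences (Proposition~\ref{PropInfinitelyNearSequence}).
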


\begin{proof}
  The first part of the proposition is exactly the content of \cite{favreValuativeTree2004} Proposition 6.35. We compute
  the skewness. It suffices to show \eqref{EqSkewnessOnSegment} for the divisorial valuations in $[v_E, v_F]$ and then
  use infinitely near sequences. We show \eqref{EqSkewnessOnSegment} by induction on the number of blowups. The result
  holds for $v_E = v_{1/b_* (E), 0}$ and $v_F = v_{0, 1/b_* (F)}$. Let $v_{s_1, t_1}, v_{s_2, t_2}$ be adjacent
  divisorial valuations such that \eqref{EqSkewnessOnSegment} holds. Let $E_1, E_2$ be the associated prime divisor and
  let $\tau : (Z, \Exc(\tau) \rightarrow (Y, E \cap F)$ be a completion exceptional above $E \cap F$ such that $E_1,
    E_2$ intersect in $Z$ and let $q = E_1 \cap E_2$. Let $(x,y)$ be local coordinates at $q$ associated to $(E_1, E_2)$
    and let $\tilde E$ be the exceptional divisor above $q$ and $\omega$ be the blow up of $q$. We want to compute $s,t \geq 0$ such that
    \begin{equation}
      (\tau \circ \omega)_* \ord_{\tilde E} = v_{s,t}.
      \label{<+label+>}
    \end{equation}
    To do so, we need to compute $\ord_{\tilde E} ((\tau \circ \omega)^* z)$ and $\left( (\tau \circ \omega)^*
    \ord_{\tilde E} w \right)$
    Let $b_1 = b_* (E_1)$ and $b_2 = b_* (E_2)$. We have by the first part of the proposition that
    \begin{equation}
      v_{E_i} = (\pi \circ \tau)_* \frac{1}{b_i} \ord_{E_i} = \pi_* (v_{s_i, t_i}).
      \label{EqComputation}
    \end{equation}
    Thus, $\tau_* \frac{1}{b_i} \ord_{E_i} = v_{s_i, t_i}$. In local coordinates $(u,v)$, $\omega$ is given by
    \begin{equation}
      \omega (u,v) = (u ,uv)
      \label{<+label+>}
    \end{equation}
    where $u = 0$ is a local equation of $\tilde E$ and $v = 0$ is a local equation of the strict transform of $E_2$. By
    \eqref{EqComputation}, we get up to multiplication by invertible germs of functions that
    \begin{equation}
      \omega^* (\tau^* z) = \omega^* \left( x^{s_1b_1} y^{s_2b_2} \right) = u^{s_1 b_1 + s_2 b_2 } v^{s_2 b_2}.
      \label{<+label+>}
    \end{equation}
    and
    \begin{equation}
      \omega^* (\tau^* w) = u^{t_1 b_1 + t_2 b_2} v^{t_2 b_2}
      \label{<+label+>}
    \end{equation}
    Thus, $s = s_1 b_1 + s_2 b_2$ and $t = t_1 b_1 + t_2 b_2$. This implies that
    \begin{equation}
      \pi_* v_{\frac{s_1 b_1 + s_2 b_2 }{ b_1 + b_2}, \frac{t_1 b_1 + t_2 b_2 }{b_1 + b_2}} = v_{\tilde E}.
      \label{<+label+>}
    \end{equation}
    We compute the skewness, by Proposition \ref{PropSkewnessBlowUp} we have that
    \begin{equation}
      \alpha_* (v_{\tilde E}) = \frac{b_1 \alpha_* (v_{E_1}) + b_2 \alpha_* (v_{E_2})}{b_1 + b_2}
      \label{<+label+>}
    \end{equation}
    and by induction, we get
    \begin{equation}
      \alpha_* (v_{\tilde E}) = \frac{b_1 (\alpha (v_E) + \frac{t_1}{b(E)}) + b_2 (\alpha (v_E) +
      \frac{t_2}{ b(E)})}{b_1 + b_2 } = \alpha (v_E) + \frac{t_1 b_1 + t_2 b_2}{ b_E (b_1 + b_2)}
      \label{<+label+>}
    \end{equation}
    and the result is shown by induction.
\end{proof}

\chapter{Different topologies over the space of valuations}\label{ChapterTopologiesValuations}
We define two topologies on the space of valuations centered at infinity. We saw in the previous chapter that the space
of valuations centered at infinity can be viewed as a space with an atlas of open subsets given by valuation trees. The
valuation tree comes with a weak and a strong topology and they glue together to define the weak and the strong topology
on the whole space of valuations centered at infinity.
\section{The weak topology}
Let $X_0$ be an affine surface and let $\cV_\infty$ be the space of valuations centered at infinity. We define
$\hat{\cV_\infty}$ to be the space of valuations centered at infinity modulo equivalence and $\eta : \cV_\infty
\rightarrow \hat{\cV_\infty}$ the quotient map. We define the weak
topology over $\cV_\infty$ as follows. A basis for the topology is given by
\begin{equation}
  \left\{ v \in \cV_\infty : t < v (P) < t' \right\}
  \label{<+label+>}
\end{equation}
for some $t, t' \in \R, P \in \k[X_0]$. A sequence $v_n$ of $\cV_\infty$ converges towards $v$ if and only if for every $P
\in \k[X_0]$, the sequence $v_n (P)$ converges towards $v(P)$. We define the weak topology over $\hat{\cV_\infty}$ to be the
thinnest topology such that $\eta : \cV_\infty \rightarrow \hat{\cV_\infty}$ is continuous with respect to the weak topology.

\begin{prop}\label{PropConvergenceDesCentres}
  Let $X$ be a completion of $X_0$. Let $v \in \cV_\infty$ and $(v_n)$ a sequence of elements of $\cV_\infty$.
  Suppose that $v_n \rightarrow v$ with respect to the weak topology. Then,
  \begin{itemize}
    \item If $c_X (v) = p$ is a closed point at infinity, then for all $n$ large enough $c_X (v_n) = p$.
    \item If $c_X (v) = E$ is a prime divisor at infinity, then for all $n$ large enough $c_X (v_n) \in E$.
  \end{itemize}
\end{prop}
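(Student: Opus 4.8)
The plan is to deduce both assertions from the single statement: \emph{for $n$ large enough, $c_X(v_n)$ lies in the Zariski closure $\overline{\{c_X(v)\}}$ of the point $c_X(v)$ in $X$}. Indeed, if $c_X(v)=p$ is a closed point then $\overline{\{p\}}=\{p\}$, so this gives $c_X(v_n)=p$; and if $c_X(v)$ is the generic point of a prime divisor $E$ at infinity then $\overline{\{c_X(v)\}}=E$, so this gives $c_X(v_n)\in E$. I would emphasize here that in the divisorial case one cannot expect a sharper conclusion: the centers $c_X(v_n)$ may genuinely roam over all of $E$ (for instance, divisorial valuations $\tfrac{1}{b_n}\ord_{F_n}$ centered at infinitely many distinct points of $E$ can converge weakly to $\lambda\ord_E$), so the proof must be arranged so as not to try to trap $c_X(v_n)$ in a fixed affine chart around $c_X(v)$.

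\textbf{The contradiction argument.} Set $Z:=\overline{\{c_X(v)\}}\subseteq\BD$ and suppose, for contradiction, that $c_X(v_n)\notin Z$ for infinitely many $n$; after passing to a subsequence assume $c_X(v_n)\notin Z$ for every $n$. Since $X$ is projective and $Z$ is closed, the open subset $X\setminus Z$ is covered by finitely many affine open subsets $V_1,\dots,V_r$ of $X$, each disjoint from $Z$. Each $c_X(v_n)\in X\setminus Z$ lies in some $V_{t(n)}$; by the pigeonhole principle I may pass to a further subsequence so that $c_X(v_n)\in V_1$ for all $n$, for one fixed index. Since $V_1$ is an affine open of $X$ containing the center $c_X(v_n)$, the valuation $v_n$ dominates $\OO_{X,c_X(v_n)}\supseteq \k[V_1]$; equivalently $v_n(a)\ge 0$ for every $a\in\k[V_1]$.

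\textbf{Transport to $\k[X_0]$ and passage to the limit.} Write the affine open $V_1\cap X_0$ of $X_0$ as a finite union of principal opens $D_{X_0}(g_k)$ and put $g:=\prod_k g_k\in\k[X_0]$; then $\k[V_1]\subseteq\k[V_1\cap X_0]=\bigcap_k\k[X_0][1/g_k]\subseteq\k[X_0][1/g]$. Fix generators $a_1,\dots,a_s$ of the $\k$-algebra $\k[V_1]$ and write $a_l=P_l/g^{m_l}$ with $P_l\in\k[X_0]$. The inequalities $v_n(a_l)\ge 0$ read $v_n(P_l)\ge m_l\,v_n(g)$. Since $v_n\to v$ weakly, $v_n(P_l)\to v(P_l)$ and $v_n(g)\to v(g)$ in $\R\cup\{\infty\}$, and passing to the limit yields $v(P_l)\ge m_l\,v(g)$, i.e.\ $v(a_l)\ge 0$, for every $l$; hence $\k[V_1]\subseteq\OO_v$. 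Therefore $v$ has a center on the affine scheme $V_1$, namely the point cut out by $\m_v\cap\k[V_1]$, which must equal $c_X(v)$ by uniqueness of the center (Lemma~\ref{LemmeCentreValuation}). This forces $c_X(v)\in V_1$, contradicting $V_1\cap Z=\varnothing$. This completes the argument (and hence both statements of the proposition).

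\textbf{The point that needs care.} The step ``$v_n(a_l)\to v(a_l)$'' uses that $v(P)\in\R$ for $P\in\k[X_0]$, which holds for divisorial, irrational, infinitely singular, and formal curve valuations. The one remaining case — valuations with non-trivial bad ideal, i.e.\ algebraic curve valuations (for which $c_X(v)$ is automatically a closed point, cf.\ Proposition~\ref{PropClassificationValuation}) — is where I expect the only real technical friction, since such $v$ need not extend to a valuation of $\k(X)$ and $v(g)$ may be $+\infty$. This is handled by running the entire argument with the associated Krull valuation of Remark~\ref{RmqValuationDeCourbeKrull} (which does extend to $\k(X)$), or equivalently by choosing the affine cover $V_1,\dots,V_r$ of $X\setminus Z$ so that the denominators involved are not contained in the bad ideal; neither modification changes the structure of the proof.
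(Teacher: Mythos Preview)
Your proof is correct and takes a genuinely different route from the paper's. The paper works \emph{at} the center: in the closed-point case it picks local coordinates $(x,y)$ at $p$, uses Lemma~\ref{LemmeCentreValuation} to write $x=P_1/Q_1$, $y=P_2/Q_2$ with $P_i,Q_i\in\k[X_0]$ and $v(Q_i)<\infty$, and then passes to the limit $v_n(P_i)-v_n(Q_i)\to v(x),v(y)>0$; the divisorial case is identical with a local equation $z$ of $E$ in place of $(x,y)$. Your approach works \emph{away} from the center, via an affine cover of $X\setminus Z$ and a contradiction, and has the pleasant feature of unifying the two bullet points into a single statement. The trade-off is precisely the friction you flag: when $v$ is an algebraic curve valuation your denominator $g$ may land in $\p_v$, and then the implication ``$v(P_l)\ge m_l\,v(g)\Rightarrow v(a_l)\ge 0$'' degenerates to $\infty\ge\infty$ and says nothing about $a_l$. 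The cleanest way to close this is in fact the paper's trick rather than the Krull detour: choose the $V_i$ as complements of very ample divisors passing through $Z$ but not containing $\overline C$, so that each $V_i$ meets $\overline C$; then $\k[V_1]\subset(\k[X_0])_{\p_v}$, each generator $a_l$ admits a representation $P_l/Q_l$ with $v(Q_l)<\infty$, and your limit step goes through verbatim. Your second suggested fix (``choose denominators outside the bad ideal'') amounts to exactly this, but it needs the geometric input $\overline C\cap V_1\neq\varnothing$, which is not automatic for an arbitrary cover.
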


\begin{proof}
  Suppose first that $c_X (v) = p$ is a closed point at infinity. Let $(x,y)$ be local coordinates at $p$. By
  definition of the center we have $v(x), v(y) >0$. We can
  find $P_1, P_2, Q_1, Q_2 \in \OO_X (X_0)$ such that $x = P_1 / Q_1, y = P_2 / Q_2$ and such that $v(Q_1), v (Q_2) \neq
  \infty$. Indeed by Lemma \ref{LemmeCentreValuation}, $\OO_{X,p}$ is a subring of $\OO_X (X_0)_{\p_v}$ where $\p_v
  = \{v = +\infty\}$. Now, we have that $v_n (P_i) \rightarrow v(P_i)$ and $v_n (Q_i) \rightarrow
  v(Q_i)$ as $n \rightarrow \infty$, therefore for all n large enough
  \begin{equation}
    v_n (x), v_n (y) > 0.
    \label{<+label+>}
  \end{equation}
  Thus, for all $n$ large enough $c_X (v_n) = p$.

  If $c_X (v) = E$, then $v = \lambda \ord_E $ for some $\lambda > 0$. Let $U$ be an open affine subset of $X$
  such that $U \cap E \neq \emptyset$. Let $z$ be a local equation of $E$ over $U$. Similarly, we can write $z = P / Q$
  with $v(Q) \neq \infty$. Since $v_n(P) \rightarrow v(Q)$ and $v_n (Q) \rightarrow v(Q)$, we get that $v_n (z)
  \rightarrow v(z) >0$. Therefore for $n$ large enough, $v_n (z) >0$ and therefore $c_X (v_n) \in E$.
\end{proof}

\begin{prop}\label{PropConvergenceSurFonctionRegulierePareilQueConvergenceLocale}
  Let $X$ be a completion and let $p \in X$ be a closed point at infinity. Let $v \in \cV_X (p)$ and $v_n \in
  \cV_X (p)$. Then, $v_n \rightarrow v$ weakly if and only if for every $\phi \in \OO_{X,p}, v_n (\phi)
  \rightarrow v(\phi)$.
\end{prop}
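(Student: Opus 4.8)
The plan is to deduce the equivalence from two standard facts, applied uniformly to every $w \in \cV_X(p)$: since $p$ is the center of $w$ on $X$, $w$ extends to the induced centered valuation on $\OO_{X,p}$, and the proof of Lemma~\ref{LemmeCentreValuation} moreover gives $\OO_{X,p} \subseteq \OO_X(X_0)_{\p_w}$, so that this extension is computed by $w(P/Q) = w(P) - w(Q)$ whenever $Q \in \k[X_0]$ and $Q \notin \p_w$. I would first record a preliminary observation: if $E_1,\dots,E_k$ are the prime divisors at infinity of $X$ through $p$ and $z_1,\dots,z_k \in \OO_{X,p}$ are corresponding local equations, then $0 < w(z_i) < +\infty$ for every $w \in \cV_X(p)$ --- positivity because $z_i \in \m_p$, and finiteness because $z_i \in \p_w$ would force $w$ to be the curve valuation $v_{E_i,p}$, which does not define a valuation on $\k[X_0]$.

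For the direction $v_n \to v \Rightarrow v_n(\phi) \to v(\phi)$ I would fix $\phi \in \OO_{X,p}$ and, using the inclusion above for $v$, write $\phi = P/Q$ with $P,Q \in \k[X_0]$ and $v(Q) < +\infty$. Weak convergence gives $v_n(Q) \to v(Q) < +\infty$, hence $v_n(Q) < +\infty$ for all large $n$; for such $n$ one has $v_n(\phi) = v_n(P) - v_n(Q)$, and letting $n \to \infty$ yields $v_n(\phi) \to v(P) - v(Q) = v(\phi)$ in $\R \cup \{+\infty\}$ (the case $v(P) = +\infty$ included). Conversely, given $P \in \k[X_0]$, regularity of $P$ on $X_0$ means its polar part near $p$ is supported on $E_1 \cup \dots \cup E_k$, so there are integers $a_i \geq 0$ with $\psi := P\, z_1^{a_1}\cdots z_k^{a_k} \in \OO_{X,p}$; for every $w \in \cV_X(p)$ this gives $w(P) = w(\psi) - \sum_i a_i\, w(z_i)$, legitimate since $w(z_i) < +\infty$. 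Feeding $v_n(\psi) \to v(\psi)$ and $v_n(z_i) \to v(z_i) < +\infty$ into this identity produces $v_n(P) \to v(P)$, i.e.\ $v_n \to v$ weakly.

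The argument is essentially bookkeeping, and the only points needing care are the possible value $+\infty$ on curve valuations --- dealt with by always keeping denominators ($Q$, resp.\ the $z_i$) outside $\p_w$ --- and the fact that the representation $\phi = P/Q$ depends on $v$, which is precisely why one passes through the convergence $v_n(Q) \to v(Q)$ to control $v_n$ for large $n$ (the finitely many remaining indices being irrelevant to the limit). The one nontrivial input is Lemma~\ref{LemmeCentreValuation}, invoked to realize germs at $p$ as ratios of global regular functions.
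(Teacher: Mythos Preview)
Your proof is correct and follows essentially the same approach as the paper's: for the forward direction you write a germ $\phi$ as $P/Q$ with $P,Q \in \k[X_0]$ and $v(Q) < +\infty$, and for the converse you write $P \in \k[X_0]$ as a germ divided by a monomial in local equations of the boundary divisors through $p$, using that these local equations have finite value under any valuation in $\cV_X(p)$. Your write-up is somewhat more explicit about the role of $+\infty$ and about why $v_n(Q) < +\infty$ for large $n$, but the underlying argument is identical.
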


\begin{proof}
  Indeed, every $\phi \in \OO_{X,p}$ can be written as $\phi = \frac{P}{Q}$ with $v(Q) \neq \infty$. This shows one
  implication. Conversely, every $P \in \k[X_0]$ is of the form $\frac{\phi}{\psi}$ where $\phi, \psi \in \OO_{X,p}$.
  Furthermore, if $p \in E$ is a free point then $\psi = u^a$ where $a \in \Z_{\geq 0}$ and $u$ is a
  local equation of $E$. If $p = E \cap F$ is a satellite point, then $\psi = u^a v^b$ where $uv$ is a local
  equation of $E \cup F$. Now since $v_n$ and $v$ are valuations over $\k[X_0]$, they cannot be the curve valuations
  associated to a prime divisor at infinity. Therefore, for all $n, v_n (\psi) \neq \infty$ and $v(\psi) \neq \infty$.
  This shows the other implication.
\end{proof}

\begin{prop}\label{PropConvergenceLocale}
  Let $X$ be a completion of $X_0$ and let $p \in X$ be a closed point. Let $E$ be a prime divisor at infinity in
  $X$ such that $p \in X$. Let $\eta_p : \cV_X (p) \rightarrow \cV_X (p; E)$ be the natural map defined by
  $\eta_p (v) = \frac{v}{v(z)}$ where $z \in \OO_{X,p}$ is a local equation of $E$. Let $(v_n)$ be a sequence of
  $\cV_X (p)$ and let $v \in \cV_{X}(p)$. If $v_n \rightarrow v$ for the weak topology of $\cV_\infty$, then
  $\eta_p (v_n) \rightarrow \eta_p (v)$ for the weak topology of $\cV_X (p;E)$.
\end{prop}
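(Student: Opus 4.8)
The plan is to transport the hypothesis down to the local ring $\OO_{X,p}$ and then divide by the normalising factor $v_n(z)$; since all the $v_n$ and $v$ already lie in $\cV_X(p)$, no reduction on the centres is needed, and the only thing to control is that $v_n(z)$ stays in a fixed compact subinterval of $(0,+\infty)$.

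First I would invoke Proposition \ref{PropConvergenceSurFonctionRegulierePareilQueConvergenceLocale}: from $v_n \to v$ for the weak topology of $\cV_\infty$ it yields $v_n(\phi) \to v(\phi)$ in $\R \cup \{+\infty\}$ for every $\phi \in \OO_{X,p}$, hence, through the isomorphism of Proposition \ref{PropValuativeTreeIsomorphism} together with the description of the weak topology of $\cV_\m$ in Proposition \ref{PropWeakTopologyIsWeakTopologyOfTree}, also for every $\phi \in \hat{\OO_{X,p}}$. Applying this to $\phi = z$ gives $v_n(z) \to v(z)$. Now $z \in \m_p$ and $v$ is centered at $p$, so $v(z) > 0$; moreover $v(z) < +\infty$, for otherwise $z$ would lie in the bad ideal $\p_v$ and $v$ would be a curve valuation whose curve is the germ at $p$ of the prime divisor $E \subset \BD$, and such a valuation does not restrict to a valuation of $\k[X_0]$ because $E$ lies at infinity, so some $P \in \k[X_0]$ has $\ord_E(P) < 0$. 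The same argument applies to each $v_n$, so $(v_n(z))_n$ is a sequence of positive reals converging to the positive real $v(z)$.

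It then suffices to observe that for every $\phi \in \hat{\OO_{X,p}}$
\[
  \eta_p(v_n)(\phi) = \frac{v_n(\phi)}{v_n(z)} \longrightarrow \frac{v(\phi)}{v(z)} = \eta_p(v)(\phi) \quad \text{in } \R \cup \{+\infty\},
\]
the numerators converging in $\R \cup \{+\infty\}$ and the denominators to a strictly positive real; by the definition of the weak topology on $\cV_X(p;E)$ (through the isomorphism with $\cV_z$, resp.\ $\cV_z \setminus \{v_w\}$, of Proposition \ref{PropRelativeValuativeTreeIsomorphism}) this is exactly the convergence $\eta_p(v_n) \to \eta_p(v)$ in $\cV_X(p;E)$, and since all the valuations involved are centered at $p$ they are distinct from $\ord_E$, so the root of the relative tree never intervenes. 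I expect the only delicate points to be the two facts that $0 < v(z) < +\infty$ — which is where one genuinely uses that $v \in \cV_\infty$, not merely that $v$ is a centered valuation on $\hat{\OO_{X,p}}$ — and the identification of the weak topology of $\cV_X(p;E)$ with evaluation against $\hat{\OO_{X,p}}$; both follow readily from the results already established.
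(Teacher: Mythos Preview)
Your proof is correct and follows essentially the same route as the paper: invoke Proposition~\ref{PropConvergenceSurFonctionRegulierePareilQueConvergenceLocale} to get $v_n(z)\to v(z)$, then divide and invoke it again to conclude. The paper's proof is two lines and leaves implicit the points you spell out --- that $0<v(z)<+\infty$ (so division is legitimate) and that convergence on $\OO_{X,p}$ suffices for the weak topology on $\cV_X(p;E)$ --- so your added detail is justified but does not constitute a different approach.
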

\begin{proof}
  If $v_n \rightarrow v$ for the weak topology, then, $v_n (z) \rightarrow v(z)$ by Proposition
  \ref{PropConvergenceSurFonctionRegulierePareilQueConvergenceLocale}. Therefore $\eta_p (v_n) \rightarrow \eta_p
  (v)$, again by Proposition \ref{PropConvergenceSurFonctionRegulierePareilQueConvergenceLocale}. This shows the first
  implication.
\end{proof}

\begin{thm}\label{ThmWeakTopologyAtlas}
  Let $X$ be a completion of $X_0$. The weak topology on $\hat{\cV_\infty}$ is the topology induced by the open subsets
  $\cV_X (E; E)$ for all prime divisor $E$ at infinity.
\end{thm}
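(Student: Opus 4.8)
The plan is to realize $\{\cV_X(E;E)\}$, with $E$ running over the prime divisors at infinity of $X$, as an open cover of $\hat{\cV_\infty}$ whose members are homeomorphic (via $\eta$, each carrying its own weak tree topology) to open subsets, with transition maps the homeomorphisms of Proposition \ref{PropRelativeValuativeTreeIsomorphism}; the assertion is then exactly that the weak topology is the one glued from this atlas. First I would check the covering. Any $v\in\cV_\infty$ has a pole on $X_0$, so $\OO_v$ dominates no local ring of a point of $X_0$ and the center $c_X(v)$ lies on $\partial_X X_0$, which is a curve by Proposition \ref{PropBoundaryIsCurve}. If $c_X(v)$ is the generic point of a prime divisor $E$ at infinity then $v$ is divisorial and $v\sim\ord_E\in\cV_X(E;E)$; if $c_X(v)$ is a closed point $p$, choose a prime divisor $E$ at infinity through $p$ and a local equation $z$ of $E$ at $p$ — then $0<v(z)<\infty$ (a curve valuation along a boundary component is not a valuation on $\k[X_0]$), so $v/v(z)$ is the unique representative of $[v]$ lying in $\cV_X(p;E)\subseteq\cV_X(E;E)$. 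Hence $\hat{\cV_\infty}=\bigcup_E\eta(\cV_X(E;E))$.

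Next, openness and the transitions. The saturated set $\Omega_E:=\{v\in\cV_\infty:c_X(v)\in E\}$ equals $\eta^{-1}(\eta(\cV_X(E;E)))$, and Proposition \ref{PropConvergenceDesCentres} shows it is open in $\cV_\infty$: if $v_n\to v$ weakly with $c_X(v)\in E$, then $c_X(v_n)\in E$ for $n$ large (a closed center is preserved, a divisorial one forces the center onto $E$), and an explicit basic open neighbourhood of $v$ is read off from the proof of that proposition. Since the weak topology on $\hat{\cV_\infty}$ is the quotient topology, $\eta(\cV_X(E;E))=\eta(\Omega_E)$ is open. For the transitions: if $p=E\cap F$ is a satellite point, the representatives of a class centered at $p$ chosen in $\cV_X(E;E)$ and in $\cV_X(F;F)$ are related by $N_{p,F}\circ N_{p,E}^{-1}$, which is a homeomorphism by Proposition \ref{PropRelativeValuativeTreeIsomorphism}.

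The heart of the matter is that $\eta|_{\cV_X(E;E)}$ is a homeomorphism onto $\eta(\cV_X(E;E))$. It is a bijection because the normalization picks out one representative per class. For continuity it suffices that every evaluation $v\mapsto v(P)$, $P\in\k[X_0]$, is continuous on $\cV_X(E;E)$ for the tree topology: away from $\ord_E$ this follows from the relative version of Proposition \ref{PropWeakTopologyIsWeakTopologyOfTree} together with Propositions \ref{PropConvergenceSurFonctionRegulierePareilQueConvergenceLocale} and \ref{PropConvergenceLocale} and the isomorphism $\cV_X(p;E)\simeq\cV_z$; at $\ord_E$ one writes $P=\phi/z^{a}$ locally at a point $p\in E$ with $\phi\in\OO_{X,p}$ and $v(z)=1$, so $v(P)=v(\phi)-a\to\ord_E(\phi)-a=\ord_E(P)$ as $v\to\ord_E$, the limit being the branch-independent value $\ord_E(P)$. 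Finally, to get that $\eta|_{\cV_X(E;E)}$ is open I would use the retraction $r:\Omega_E\to\cV_X(E;E)$, $v\mapsto v/v(z)$ (with $z$ a local equation of $E$ at $c_X(v)$, and $\ord_E\mapsto\ord_E$): it is continuous near the closed points of $E$ by Proposition \ref{PropConvergenceLocale}, and near $\ord_E$ by a direct computation — choosing $z=P_1/Q_1$ with $\ord_E(Q_1)<\infty$, the formula $r(w)=w/\bigl(w(P_1)-w(Q_1)\bigr)$ is valid and continuous on a weak neighbourhood of $\ord_E$. Then for a tree-open $W\subseteq\cV_X(E;E)$ the cone $\bigcup_{t>0}tW=r^{-1}(W)$ is weak-open in $\cV_\infty$ and saturated, so $\eta(W)$ is weak-open. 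Combining the three steps, a subset of $\hat{\cV_\infty}$ is weak-open iff its trace on each $\cV_X(E;E)$ is tree-open, which is the statement. The main obstacle is precisely the continuity of $r$ at $\ord_E$, that is, ruling out that approaching $\ord_E$ through valuations centered at varying closed points of $E$ creates tangent directions not present in the chart; openness of $\Omega_E$ confines a weak neighbourhood of $[\ord_E]$ to a single chart, and the computation above matches it with a tree neighbourhood.
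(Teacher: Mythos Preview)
Your argument is correct and follows essentially the same route as the paper: both show that the preimage $\Omega_E=\eta^{-1}(\eta(\cV_X(E;E)))$ is weak-open, construct the normalization map (your retraction $r$, the paper's section $\sigma_E$, related by $\sigma_E\circ\eta=r$), establish its continuity via Proposition~\ref{PropConvergenceLocale}, and check compatibility on overlaps through the transition maps $N_{p,F}\circ N_{p,E}^{-1}$ of Proposition~\ref{PropRelativeValuativeTreeIsomorphism}. Your treatment is in fact slightly more explicit than the paper's at two points: you spell out why the atlas data determines the topology (showing both that $\eta|_{\cV_X(E;E)}$ is continuous and that it is open), and you isolate and address the continuity of $r$ at $\ord_E$, which the paper asserts by a reference to Proposition~\ref{PropConvergenceLocale} that does not literally cover that case.
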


\begin{proof}
  Let $X$ be a completion at infinity and let $E$ be a prime divisor at infinity. Let $\cV_X (E)$ be the set of
  valuations $v$ over $\k[X_0]$ such that $c_X (v) \in E$ (this includes $c_X (v) = E$, i.e $v = \ord_E$). We have
  that
  \begin{equation}
    \cV_X (E)  = \left\{ \ord_E \right\} \cup \bigcup_{p \in E} \cV_X (p).
  \end{equation}
  Let $U_1, \cdots, U_r$ be a finite open affine cover of $E$ such that for every $i = 1,\cdots, r$ there exists $z_i
  \in \OO_X (U_i)$ a local equation of $E$. Then, every $z_i$ is of the form $z_i = P_i / Q_i$ with $P_i, Q_i \in \k[X_0]$.
  Then,
  \begin{equation}
  \cV_X (E)  = \bigcup_i \left\{ v(Q_i) < + \infty, v(P_i) - v(Q_i) > 0 \right\}
  \label{<+label+>}
\end{equation}
and, it follows that $\cV_X (E)$ is an open subset of $\cV_\infty$.
Set $\hat{\cV_\infty} (p) := \eta (\cV_X (p))$. Define a map $\sigma_p:
\hat{\cV_\infty} (p) \rightarrow \cV_X (p; E) \setminus \left\{ \ord_E \right\} \subset \cV_X (p)$ by
\begin{equation}
  \sigma_p ([v]) = \eta_p (v)
\label{<+label+>}
\end{equation}
where $\eta_p$ is the map from Proposition \ref{PropConvergenceLocale} and $[v]$ is the class of $v$ in
$\hat{\cV_\infty}$. By Proposition \ref{PropConvergenceLocale},
$\sigma_p$ is a continuous section of $\eta_{|\cV_X (p)} : \cV_X (p) \rightarrow \hat{\cV_\infty} (p)$. Still by
Proposition \ref{PropConvergenceLocale}, the map $\sigma_p: [\ord_E] \cup \hat{\cV_\infty} (p) \rightarrow \cV_X
(p; E)$ extended by $\sigma_p ([\ord_E]) = \ord_E$ is also a continuous section of $\eta : \left\{ \lambda \ord_E :
\lambda > 0 \right\} \cup \cV_X (p) \rightarrow \left\{ [\ord_E] \right\} \cup \hat{\cV_\infty} (p)$. These maps
$\sigma_p$ glue together to give a continuous section $\sigma_E : \hat{\cV_\infty} (E) \rightarrow \cV_X (E;E) \subset
\cV_X (E)$ of $\eta : \cV_X (E) \rightarrow \hat{\cV_\infty} (E)$.

To finish the proof we need to understand the behaviour of $\sigma_F, \sigma_E$ on
\begin{equation}
  \hat{\cV_\infty} (E) \cap
  \hat{\cV_\infty} (F) = \hat{\cV_\infty} (p)
\end{equation}
for $p = E
\cap F$ where $E,F$ are two prime divisors at infinity. By Proposition \ref{PropRelativeValuativeTreeIsomorphism}, we have that
the map $N_{p,F} \circ {N_{p,E}}^{-1} : \cV_X (p; E) \setminus \left\{ \ord_E \right\} \rightarrow \cV_X (p; F)
\setminus \left\{ \ord_F \right\}$ is a homeomorphism and we have
\begin{equation}
  (\sigma_F)_{|\hat{\cV_\infty} (p)} = (N_{p,F} \circ {N_{p,E}}^{-1}) \circ (\sigma_E)_{|\hat{\cV_\infty} (p)}
\label{<+label+>}
\end{equation}
\end{proof}

\section{The strong topology}
Let $R = \k [ [ x,y ] ]$ and let $\m = (x,y)$. Let $\cV_*$ be the valuative tree with either the normalization by
$\m$ or with respect to a curve $z$. We will write $\alpha_*$ for the skewness function over $\cV_*$. We
consider a stronger topology on $\cV_*$. Let $\cV_*^{\qm}$ be the
subset of quasimonomial valuations. We define the following distance
\begin{equation}
  d(v_1, v_2) = \alpha(v_1) - \alpha(v_1 \wedge v_2) + \alpha (v_2) - \alpha (v_1 \wedge v_2).
  \label{<+label+>}
\end{equation}
The topology induced by this distance is called the \emph{strong} topology.

\begin{lemme}\label{LemmeWeakStrongTopologiesSameOnSegments}
  Let $v_1 < v_2$ be two quasimonomial valuations in $\cV_*$. Then, the weak topology and the strong topology on $[v_1,
  v_2]$ are the same.
\end{lemme}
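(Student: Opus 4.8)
The plan is to show that both topologies on the segment $[v_1,v_2]$ agree with the pullback, under the skewness function $\alpha_*$, of the Euclidean topology on the compact real interval $[\alpha_*(v_1),\alpha_*(v_2)]$. First I would record the preliminary facts: since $v_1$ and $v_2$ are quasimonomial they are interior points of $\cV_*$, and every $v$ with $v_1\le v\le v_2$ is again an interior point (it has at least two tangent directions, one toward each endpoint), hence quasimonomial; so $d$ is defined on all of $[v_1,v_2]$ and the skewness values $\alpha_*(v_1)\le\alpha_*(v_2)$ are finite. Since $[v_1,v_2]$ is a full, totally ordered subset of $\cV_*$, Proposition \ref{PropValuationEnFonctionDeAlpha} together with its relative version Proposition \ref{PropValuationEnFonctionDeAlphaRelative} shows that $\alpha_*$ restricts to an order-preserving bijection $[v_1,v_2]\to[\alpha_*(v_1),\alpha_*(v_2)]$.

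For the strong topology, I would compute directly: if $v\le w$ in $[v_1,v_2]$ then $v\wedge w=v$ because the segment is totally ordered, so $d(v,w)=\alpha_*(w)-\alpha_*(v)=|\alpha_*(w)-\alpha_*(v)|$. Hence $\alpha_*$ is an isometry from $([v_1,v_2],d)$ onto $[\alpha_*(v_1),\alpha_*(v_2)]$ with its Euclidean metric, and the strong topology on the segment is exactly the pullback of the Euclidean topology.

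For the weak topology, I would invoke Proposition \ref{PropWeakTopologyIsWeakTopologyOfTree} (and its relative version) to replace the weak topology on $\cV_*$ by the topology coming from the $\R$-tree structure, which has as a subbasis the sets $U(\overrightarrow{v})$ attached to tangent vectors $\overrightarrow{v}$ at points $\tau$. The core step is to intersect such a set with $[v_1,v_2]$: using that $\cV_*$ is a tree, for any $\tau$ there is a well-defined point of $[v_1,v_2]$ nearest to $\tau$, and one checks case by case that $U(\overrightarrow{v})\cap[v_1,v_2]$ is empty, is all of $[v_1,v_2]$, or is a half-open subsegment $[v_1,\tau)$ (when $\tau\in(v_1,v_2]$ and $\overrightarrow{v}$ points toward $v_1$) or $(\tau,v_2]$ (when $\tau\in[v_1,v_2)$ and $\overrightarrow{v}$ points toward $v_2$). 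Therefore the subspace weak topology on $[v_1,v_2]$ is the order topology, which under the order-isomorphism $\alpha_*$ transports to the Euclidean topology on $[\alpha_*(v_1),\alpha_*(v_2)]$. Comparing with the previous paragraph concludes the proof.

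The main obstacle is the bookkeeping in the last step: carefully checking that the trace of each tree-subbasic open set $U(\overrightarrow{v})$ on the segment has one of the stated forms, which is a routine but attentive use of the tree structure and the definition of tangent vectors. The only other point worth a sentence is that the order topology on $[v_1,v_2]$ really does transport to the Euclidean topology, which is immediate because an order-isomorphism of totally ordered sets is a homeomorphism for their order topologies and the Euclidean topology on a real interval \emph{is} its order topology.
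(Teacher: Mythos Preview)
Your argument is correct, but it takes a noticeably different route from the paper's. The paper proceeds in one line: it picks an irreducible $\phi\in\m$ with $v_\phi>v_2$, so that for every $v\in[v_1,v_2]$ one has $v\wedge v_\phi=v$ and hence, by Proposition~\ref{PropValuationEnFonctionDeAlpha} (or its relative version), $v(\phi)=\alpha_*(v)\,m_*(\phi)$. Thus the single evaluation map $v\mapsto v(\phi)$ already realises $\alpha_*$ on the segment up to a positive scalar, so $\alpha_*$ is weakly continuous there; since the strong metric on the segment is $|\alpha_*(v)-\alpha_*(w)|$, and the strong topology refines the weak one in general, the two coincide.

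Your approach instead transports both topologies through $\alpha_*$: the strong side via the isometry computation (same as the paper), and the weak side via Proposition~\ref{PropWeakTopologyIsWeakTopologyOfTree}, computing the trace of each tree-subbasic open $U(\overrightarrow{v})$ on $[v_1,v_2]$ to obtain the order topology. This works and is more self-contained---you never need the blanket fact that strong refines weak---at the cost of the case analysis you flag as the main obstacle. The paper's trick of exhibiting a single $\phi$ with $v_\phi>v_2$ bypasses that case analysis entirely and is worth remembering: it is the quickest way to see that $\alpha_*$ itself is an evaluation map (up to scale) on the segment.
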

\begin{proof}
  Let $\phi \in \m$ be irreducible such that the curve valuation $v_\phi$ satisfies $v_\phi > v_2$. Then we have by
  Proposition \ref{PropValuationEnFonctionDeAlpha} and \ref{PropValuationEnFonctionDeAlphaRelative} that
  \begin{equation}
    v(\phi) = \alpha_* (v) m_* (\phi).
    \label{<+label+>}
  \end{equation}
  This shows that the topologies are the same on the segment $[v_1, v_2]$.
\end{proof}

\begin{prop}[\cite{favreValuativeTree2004} Proposition 5.12] We have the following
  \begin{itemize}
    \item The strong topology is stronger than the weak topology.
    \item The closure of $\cV_{*}^{\qm}$ with respect to the strong topology is the subspace of $\cV_*$ consisting of
      valuations of finite skewness.
  \end{itemize}
\end{prop}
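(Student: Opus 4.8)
The plan is to handle the two assertions separately, reducing both to bookkeeping with the skewness parametrisation $\alpha_*$ and with the (extended) distance $d$, which takes values in $[0,+\infty]$ and is finite exactly between valuations of finite skewness.

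For the first assertion, I would show that the identity map from $\cV_*$ with its strong topology to $\cV_*$ with its weak topology is continuous, i.e. that every evaluation $v \mapsto v(\phi)$, $\phi \in R$, is $d$-continuous. Factoring $\phi = u\,\phi_1 \cdots \phi_k$ into a unit times irreducible elements and using additivity, $v(\phi) = \sum_j v(\phi_j)$, and Proposition \ref{PropValuationEnFonctionDeAlpha} (or its relative version Proposition \ref{PropValuationEnFonctionDeAlphaRelative}) shows that each $v(\phi_j)$ equals $\alpha_*(v \wedge w_j)$ times a positive constant depending only on $\phi_j$, where $w_j \in \cV_*$ is the normalised curve valuation attached to $\phi_j$ (the case where $\phi_j$ is a local equation of the root curve of a relative tree being trivial, since then $v(\phi_j)$ is constant on $\cV_*$). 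It therefore suffices to check that, for fixed $w$, the map $v \mapsto \alpha_*(v \wedge w)$ is $1$-Lipschitz for $d$; then each $v \mapsto v(\phi_j)$, and hence $v \mapsto v(\phi)$, is $d$-Lipschitz, which is what I need. To see that the strong topology is strictly finer I would exhibit a sequence $(v_n)$ of divisorial valuations of constant skewness lying in pairwise distinct tangent directions at a fixed divisorial valuation $v_0$: for each $\phi$ all but at most one of the $v_n$ lie in a tangent direction not containing $v_\phi$, so $v_n(\phi) = v_0(\phi)$ eventually and $v_n \to v_0$ weakly, while $d(v_n, v_0) = \alpha_*(v_n) - \alpha_*(v_0)$ is a fixed positive constant; by Lemma \ref{LemmeWeakStrongTopologiesSameOnSegments} the two topologies nonetheless agree on every segment.

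For the second assertion I would prove the two inclusions. Quasimonomial valuations have finite skewness (rational for divisorial ones, irrational but finite for irrational ones), so $\cV_*^{qm} \subseteq \{\alpha_* < +\infty\}$; and this set is strongly closed, because if $v_n \to v$ strongly with each $\alpha_*(v_n) < +\infty$, then $v_n \wedge v \le v_n$ forces $\alpha_*(v_n \wedge v) \le \alpha_*(v_n) < +\infty$, so $\alpha_*(v) = +\infty$ would give $d(v_n,v) = +\infty$ for every $n$, which is absurd. Hence the strong closure of $\cV_*^{qm}$ is contained in $\{\alpha_* < +\infty\}$. For the reverse inclusion, take $v$ with $\alpha_*(v) < +\infty$. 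Curve valuations have infinite skewness by Proposition \ref{PropValuationEnFonctionDeAlpha}, so $v$ is either quasimonomial — and there is nothing to prove — or infinitely singular; in the latter case I would take the infinitely near sequence $(v_n)$ of $v$ supplied by Proposition \ref{PropInfinitelyNearSequence}. Each $v_n$ is divisorial, hence quasimonomial, one has $v_n < v$, and $\alpha_*(v_n)$ increases to $\alpha_*(v)$; since $v_n \le v$ we get $v_n \wedge v = v_n$, so $d(v_n, v) = \alpha_*(v) - \alpha_*(v_n) \to 0$, and $v$ lies in the strong closure of $\cV_*^{qm}$.

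The step I expect to be the main obstacle is the $1$-Lipschitz estimate for $v \mapsto \alpha_*(v \wedge w)$: one views $v \wedge w$ as the nearest point of the segment $[\,\mathrm{root}, w\,]$ to $v$ and uses that $\alpha_*$ restricts to an order isomorphism of that segment onto an interval of $\R$, and then one must check that for any $v_1, v_2$ the points $v_1 \wedge w$ and $v_2 \wedge w$ (which are comparable, both lying below $w$) satisfy $|\alpha_*(v_1 \wedge w) - \alpha_*(v_2 \wedge w)| \le d(v_1,v_2)$ — the key observation being that the smaller of these two points coincides with $v_1 \wedge v_2$ as soon as they differ. Everything else is routine manipulation of the skewness formulas of Propositions \ref{PropValuationEnFonctionDeAlpha} and \ref{PropValuationEnFonctionDeAlphaRelative} and of the infinitely near sequences of Proposition \ref{PropInfinitelyNearSequence}.
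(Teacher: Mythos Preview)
Your proof is correct. Note, however, that the paper does not actually prove this proposition: it is stated with a citation to \cite{favreValuativeTree2004}, Proposition 5.12, and no argument is given in the text. So there is no ``paper's own proof'' to compare against; you have supplied one where the paper simply quotes the literature.

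A couple of minor remarks on your write-up. For the $1$-Lipschitz step, your key observation is exactly right and can be phrased cleanly as the standard tree fact that among the three pairwise infima $v_1 \wedge v_2$, $v_1 \wedge w$, $v_2 \wedge w$, at least two coincide and equal the minimum; once $v_1 \wedge w < v_2 \wedge w$ this forces $v_1 \wedge v_2 = v_1 \wedge w$, and then
\[
\alpha_*(v_2 \wedge w) - \alpha_*(v_1 \wedge w) \le \alpha_*(v_2) - \alpha_*(v_1 \wedge v_2) \le d(v_1,v_2),
\]
as you say. For valuations of infinite skewness the extended distance makes them isolated, so continuity of the evaluation maps there is automatic and your Lipschitz argument on $\{\alpha_* < +\infty\}$ suffices. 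Your example showing strictness (divisorial valuations of fixed skewness in distinct tangent directions at a branch point) is correct, though the proposition as stated only asserts ``stronger'', not ``strictly stronger''.
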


\begin{prop}\label{PropHomeoForStrongTopology}
  Let $R = \k [ [ z , w ] ] $ and let $\cV_\m, \cV_z, \cV_w$ be the three valuation trees. Let $\cV_\m ', \cV_z ', \cV_w
  '$ be the three subtrees of valuations of finite skewness. Then, the maps
  \begin{equation}
    N_z : \cV_\m ' \rightarrow \cV_z ' \setminus \left\{ \ord_z \right\}, \quad N_w \circ {N_z}^{-1} : \cV_z '
    \rightarrow \cV_w '
    \label{<+label+>}
  \end{equation}
  are homeomorphisms with respect to the strong topology.
\end{prop}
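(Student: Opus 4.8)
The plan is to bootstrap from the fact that $N_z\colon\cV_\m\to\cV_z$ is already a homeomorphism for the \emph{weak} topologies (Proposition~\ref{PropLocalChartForValuations}), adding only two further ingredients: the comparison of skewness functions of Proposition~\ref{PropRelationSkewness}, and the fact that the strong topology refines the weak one. First I would pin down the maps on the finite--skewness subtrees. For $v\in\cV_\m$ with $v\neq v_z$ one has $1\le v(z)<\infty$ (the value $v(z)=\infty$ forces $z\in\p_v$, hence $v=v_z$ since $z$ is irreducible), and since $\min(v(z),v(w))=v(\m)=1$, Proposition~\ref{PropRelationSkewness} reads $\alpha(v)=v(z)^2\,\alpha_z\!\big(N_z(v)\big)$. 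So $\alpha(v)<\infty\iff\alpha_z(N_z(v))<\infty$; together with $N_z(v_z)=\ord_z$, with $\alpha_z(\ord_z)=0$, and with $v_z\notin\cV_\m'$ (curve valuations have infinite skewness), this shows $N_z$ restricts to a bijection $\cV_\m'\to\cV_z'\setminus\{\ord_z\}$, and the same computation with $w$ in place of $z$ gives a bijection $N_w\colon\cV_\m'\to\cV_w'\setminus\{\ord_w\}$. Thus $N_w\circ N_z^{-1}$ is a well-defined bijection $\cV_z'\setminus\{\ord_z\}\to\cV_w'\setminus\{\ord_w\}$, and it suffices to show that $N_z$ is a homeomorphism for the strong topologies; the statement for $N_w\circ N_z^{-1}$ then follows by composition.

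Next I would prove that $N_z$ is strongly continuous. Recall that on $\cV_\m'$ the strong topology is that of the metric $d_\m(v_1,v_2)=\alpha(v_1)+\alpha(v_2)-2\alpha(v_1\wedge v_2)$, and likewise $d_z$ on $\cV_z'$. Let $v_n\to v$ strongly in $\cV_\m'$. Since $0\le\alpha(v)-\alpha(v_n\wedge v)$ and $0\le\alpha(v_n)-\alpha(v_n\wedge v)$ and their sum $d_\m(v_n,v)$ tends to $0$, we get $\alpha(v_n)\to\alpha(v)$; as the strong topology is finer than the weak one, also $v_n\to v$ weakly, hence $v_n(z)\to v(z)$ and, $N_z$ being a weak homeomorphism, $N_z(v_n)\to N_z(v)$ weakly in $\cV_z$. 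I then estimate $d_z(N_z(v_n),N_z(v))$ in two parts. For the skewness part,
\[
\alpha_z\!\big(N_z(v_n)\big)=\frac{\alpha(v_n)}{v_n(z)^2}\ \longrightarrow\ \frac{\alpha(v)}{v(z)^2}=\alpha_z\!\big(N_z(v)\big),
\]
using $\alpha(v_n)\to\alpha(v)$, $v_n(z)\to v(z)$ and $v(z)\ge1$. For the meet part, I use that weak convergence in a tree forces the branch points to converge: since the weak topology on $\cV_z$ coincides with its tree topology (relative version of Proposition~\ref{PropWeakTopologyIsWeakTopologyOfTree}), the points $c_n:=N_z(v_n)\wedge_z N_z(v)$, which all lie on $[\ord_z,N_z(v)]$, converge weakly to $N_z(v)$ --- otherwise a subsequence of the $c_n$ would remain $\le c'$ for some $c'\in[\ord_z,N_z(v))$, and the tangent direction at $c'$ towards $N_z(v)$ would yield a weak neighbourhood of $N_z(v)$ missing that subsequence. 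On the segment $[\ord_z,N_z(v)]$ the parametrisation $\alpha_z$ (Proposition~\ref{PropValuationEnFonctionDeAlphaRelative}) restricts to a homeomorphism onto $[0,\alpha_z(N_z(v))]$ for the weak topology (cf. Lemma~\ref{LemmeWeakStrongTopologiesSameOnSegments}), and here $\alpha_z(N_z(v))<\infty$, so $\alpha_z(c_n)\to\alpha_z(N_z(v))$. Adding the two parts, $d_z(N_z(v_n),N_z(v))\to0$.

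Finally, the inverse $N_z^{-1}\colon\cV_z'\setminus\{\ord_z\}\to\cV_\m'$ is again the normalisation $u\mapsto u/u(\m)$ with $u(\m)=\min(u(z),u(w))$; Proposition~\ref{PropRelationSkewness} now reads $\alpha(N_z^{-1}(u))=\alpha_z(u)/\min(1,u(w))^2$, where $\min(1,u(w))\in(0,1]$ is weakly continuous and stays bounded away from $0$ near any fixed point of $\cV_z'\setminus\{\ord_z\}$, so the argument of the previous paragraph applies verbatim and shows $N_z^{-1}$ is strongly continuous. Hence $N_z$, and by composition $N_w\circ N_z^{-1}$, are homeomorphisms for the strong topology. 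The one genuinely delicate point is the meet part in the second step: because $N_z$ is \emph{not} order-preserving (the orders on $\cV_\m$ and $\cV_z$ are reversed along $[v_\m,v_z]$ and $[\ord_z,N_z(v_\m)]$, Proposition~\ref{PropCompatibilityOrdre}), one cannot transport the tree-meet through $N_z$; the way around it is to give up on the meet on the $\cV_\m$ side entirely, exploiting that $N_z$ transports weak convergence irrespective of order and that weak convergence in a tree already controls the tree-meet on the target side --- so the skewness relation is needed only to match the $\alpha$-values themselves.
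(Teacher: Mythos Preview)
Your proof is correct and follows exactly the approach the paper intends: in the paper the entire proof is the single line ``This follows from Proposition~\ref{PropRelationSkewness},'' and your argument is a careful unpacking of that citation, combining the skewness relation $\alpha(v)=v(z)^2\alpha_z(N_z(v))$ with the weak-homeomorphism property of $N_z$ and a tree-theoretic control of the meet $N_z(v_n)\wedge_z N_z(v)$. You also correctly observe that $N_w\circ N_z^{-1}$ is really a bijection $\cV_z'\setminus\{\ord_z\}\to\cV_w'\setminus\{\ord_w\}$ (since $\ord_z\mapsto v_z$, which has $\alpha_w=\infty$), a small imprecision in the statement that your restriction handles properly.
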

This follows from Proposition \ref{PropRelationSkewness}.

Let $\cV_\infty '$ be the subset of $\cV_\infty$ of valuations of finite skewness, this set is well defined thanks to
Proposition \ref{PropFiniteSkewnessEverywhere}. We define the \emph{strong topology} on $\cV_\infty '$ as follows. First
define the strong topology on $\hat{\cV_\infty} ' := \eta (\cV_\infty ')$ using the notations from the proof of Theorem
\ref{ThmWeakTopologyAtlas}.
Consider the map $\sigma_E : \hat{\cV_\infty} ' \cap \hat{\cV_\infty} (E) \rightarrow \cV_X (E; E) '$. We define the
strong topology on $\hat{\cV_\infty} ' \cap \hat{\cV_\infty}(E)$ as the coarsest topology such that $\sigma_E$ is
continuous for the strong topology on $\cV_X (E; E) '$. This defines a topology on $\hat{\cV_\infty} '$ thanks to
Proposition \ref{PropHomeoForStrongTopology}.

\begin{cor}\label{CorApproximatingSequenceStrongConvergence}
  Let $v$ be a valuation centered at infinity, let $X$ be a completion of $X_0$ and let $(v_n)$ be the infinitely near
  sequence of $v$ from Proposition \ref{PropInfinitelyNearSequence}. If $v \in \cV_\infty '$, then $\eta(v_n)$
  converges towards $\eta(v)$ with respect to the strong topology.
\end{cor}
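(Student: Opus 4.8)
The plan is to reduce the statement to a convergence of skewness inside a single valuative tree. Set $p := c_X(v)$. If $p$ is a prime divisor at infinity then $v$ is divisorial, its infinitely near sequence is finite and its last term is equivalent to $v$, so there is nothing to do; assume therefore that $p$ is a closed point. Since $v \in \cV_\infty'$ has finite skewness, Proposition \ref{PropValuationEnFonctionDeAlpha} rules out curve valuations, so $v$ is divisorial, irrational or infinitely singular. Recall from Proposition \ref{PropInfinitelyNearSequence} that $v$ and all the terms $v_n = \frac{1}{b_*(E_n)}\ord_{E_n}$ of its infinitely near sequence lie in one and the same tree $\cV_* := \cV_X(p;\m_p)$, that the normalization is the correct one (by Proposition \ref{PropInterpretationGenericMultiplicity}, which gives $\ord_{E_n}(\m_p) = b_*(E_n)$, so $v_n(\m_p)=1$), and that $\alpha_*(v_n) \to \alpha_*(v)$, a finite limit.

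First I would check that strong convergence of $\eta(v_n)$ to $\eta(v)$ in $\hat{\cV_\infty}'$ is equivalent to $d(v_n,v) \to 0$ inside $\cV_*$, where $d(w_1,w_2) = \alpha_*(w_1) + \alpha_*(w_2) - 2\alpha_*(w_1 \wedge w_2)$. Indeed, pick a prime divisor $E$ at infinity with $p \in E$; then $\eta(v)$ and all $\eta(v_n)$ lie in $\hat{\cV_\infty}(E)$, and by the very definition of the strong topology on $\hat{\cV_\infty}'$ the section $\sigma_E$ of the proof of Theorem \ref{ThmWeakTopologyAtlas} is a homeomorphism onto its image in $\cV_X(E;E)'$. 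The images $\sigma_E(\eta(v))$ and $\sigma_E(\eta(v_n))$ all lie in the subtree $\cV_X(p;E)'$, and the change of normalization $\cV_X(p;\m_p)' \to \cV_X(p;E)'$ is a homeomorphism for the strong topologies by Propositions \ref{PropHomeoForStrongTopology} and \ref{PropRelationSkewness}. Hence strong convergence in $\hat{\cV_\infty}'$ amounts exactly to $d(v_n,v) \to 0$ in $\cV_*$.

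It then remains to run a short case analysis on the type of $v$, using the explicit description of the infinitely near sequence in Proposition \ref{PropInfinitelyNearSequence}. If $v$ is infinitely singular, then $v_n < v$ for all large $n$, so $v_n \wedge v = v_n$ and $d(v_n,v) = \alpha_*(v) - \alpha_*(v_n) \to 0$. If $v$ is irrational, then for $n$ large each $v_n$ is comparable to $v$: either $v_n < v$, so $d(v_n,v) = \alpha_*(v) - \alpha_*(v_n)$, or $v_n > v$, so $v_n \wedge v = v$ and $d(v_n,v) = \alpha_*(v_n) - \alpha_*(v)$; in both situations $d(v_n,v) \to 0$. If $v$ is divisorial, the sequence is finite and ends at (a representative of) $v$, so $\eta(v_n)$ is eventually constant equal to $\eta(v)$. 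In every case $\eta(v_n) \to \eta(v)$ strongly.

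The only genuinely delicate step is the second paragraph: one must make sure that convergence measured by $\alpha_*$ in the chart $\cV_X(p;\m_p)$ really translates into convergence in $\hat{\cV_\infty}'$ through the chart $\cV_X(E;E)'$ that was used to define its strong topology, which is precisely where Propositions \ref{PropHomeoForStrongTopology} and \ref{PropRelationSkewness} enter. Everything else is a formal consequence of $\alpha_*(v_n)\to\alpha_*(v)$ together with the monotone position of the infinitely near sequence along the tree.
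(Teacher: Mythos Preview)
Your proof is correct and follows essentially the same route as the paper: reduce to a single valuative tree at $p=c_X(v)$, use that $\alpha_*(v_n)\to\alpha_*(v)$ from Proposition \ref{PropInfinitelyNearSequence}, and observe that comparability of $v_n$ with $v$ gives $d(v_n,v)=|\alpha_*(v_n)-\alpha_*(v)|$. The paper's proof is terser: it works directly in $\cV_X(p;E)$ and asserts $v_n\le v$ without a case split, which is literally correct only for the infinitely singular case; your explicit treatment of the irrational case (where the $v_n$ alternate above and below $v$ but remain comparable) and your justification of the passage from $\cV_X(p;\m_p)'$ to $\cV_X(p;E)'$ via Proposition \ref{PropHomeoForStrongTopology} make the argument cleaner.
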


\begin{proof}
  Let $p = c_X (v)$ and we can suppose that $v_n, v \in \cV_X
  (p; E)$ for some prime divisor $E$ at infinity with $p \in E$. Then, we have $v_n \leq v$ for all $n$ and
  $\alpha(v_n) \rightarrow \alpha (v)$. Therefore \begin{equation}
    d(v_n, v) = \alpha(v) - \alpha(v_n) \xrightarrow[n \rightarrow \infty]{} 0
    \label{<+label+>}
  \end{equation}
\end{proof}

\chapter{Valuations as Linear forms}\label{ChapterValuationsAsLinearForms}
As done in \cite{jonssonValuationsAsymptoticInvariants2012}, we can view valuations on $X_0$ as
\begin{itemize}
    \item linear forms with values in $\R$ over the space of integral Cartier Divisors over $X$ supported at infinity
      \item as real-valued functions over the set of coherent fractional ideal sheaves of $X$ co-supported at
        infinity.
  \end{itemize}
  We recall how to do so. For a divisor $D$, we denote by $H^0 (X, \OO_X(D))$ the set of global sections of the line
  bundle $\OO_X (D)$ and
  \begin{equation}
    \Gamma(X, \OO_X (D)) = \left\{ h \in \k(X)^\times : D + \div(h) \geq 0 \right\}.
  \end{equation}

  \section{Valuations as linear forms over $\DivInf (X)$}
  \begin{lemme}\label{LemmeGoodFormOfLocalEquation}
    Let $D \in \Div(X)$ such that the negative part (if any) of $D$ is supported in $\partial_{X} X_0$. For
    any point $p \in X$, there exists an open neighbourhood $U$ of $p$ such that a local equation of
    $D$ on $U$ is of the form $\phi = P \cdot \psi$ with $P \in \OO_X (X_0)$ and $\psi \in \OO_X(U)$.
  \end{lemme}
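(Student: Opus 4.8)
The plan is to reduce the statement to producing a single function $P\in\OO_X(X_0)$ whose divisor is dominated by $D$ near $p$. Fix a local equation $\phi\in\k(X)^\times$ of $D$ on some neighbourhood $U_0$ of $p$, so $\div_X(\phi)|_{U_0}=D|_{U_0}$. Suppose we have found $P\in\OO_X(X_0)$ with $\div_X(P)\le D$ in a neighbourhood of $p$. Then $\psi:=\phi/P\in\k(X)$ satisfies $\div_X(\psi)=D-\div_X(P)\ge 0$ near $p$; since $\OO_{X,p}$ is normal (it is regular when $p\in\BD$, and equals $\OO_{X_0,p}$ when $p\in X_0$), a rational function with locally effective divisor at $p$ lies in $\OO_{X,p}$, so $\psi$ is regular on some neighbourhood $U$ of $p$ with $U\subset U_0$. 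Then $\phi=P\psi$ is a local equation of $D$ on $U$ with $P\in\OO_X(X_0)$ and $\psi\in\OO_X(U)$, which is the assertion.

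To construct $P$ I would argue as follows. We may assume $X$ is normal, replacing it by its normalisation (an isomorphism near $\BD$ and near $p$). By Theorem~\ref{ThmGoodmanExistenceAmpleDivisorAtInfinity}, since $\BD$ is locally factorial ($X$ being smooth there), there is an ample divisor $H$ on $X$ with $\Supp(H)=\BD$; write $H=\sum_i h_iE_i$ with every $h_i\ge 1$. Choose $m$ so large that $\OO_X(mH)$ is globally generated and $m\ge-\ord_E(D)$ for each of the (at most two) boundary components $E$ through $p$. Global generation yields a global section of $\OO_X(mH)$ not vanishing at $p$; call it $P$. Identifying $P$ with an element of $\k(X)$ whose associated effective zero divisor is $\div_X(P)+mH$, we see on the one hand that $\div_X(P)\ge-mH$ has negative part supported on $\BD$, so $P$ is regular on $X\setminus\BD=X_0$ and $P\in\OO_X(X_0)$; on the other hand "$P$ does not vanish at $p$" means $p\notin\Supp(\div_X(P)+mH)$, hence $\ord_C(P)=-\ord_C(mH)$ for every prime divisor $C$ through $p$. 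In particular $\ord_C(P)=0$ for every curve through $p$ not contained in $\BD$, and $\ord_E(P)=-mh_E$ for every boundary component $E$ through $p$.

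It then remains to check $\div_X(P)\le D$ near $p$, divisor by divisor through $p$. If $C\ni p$ is not contained in $\BD$, then $\ord_C(P)=0\le\ord_C(D)$ because the negative part of $D$ lies in $\BD$. If $E\ni p$ is a boundary component, then $\ord_E(P)=-mh_E\le-m\le\ord_E(D)$ by the choice of $m$. Since $D-\div_X(P)$ has only finitely many components, after shrinking the neighbourhood so as to discard those not passing through $p$ we obtain $(D-\div_X(P))|_U\ge 0$ for a suitable open $U\ni p$, which is exactly what was used in the first paragraph.

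The delicate point is this construction of $P$: one cannot simply write $\phi=P/Q$ with $P,Q\in\k[X_0]$ via $\k(X)=\Frac\k[X_0]$, because the denominator $Q$ need not be a unit near $p$ — it may vanish along curves through $p$ or have poles along $\BD$ — and it is precisely the ample divisor supported at infinity, together with base-point-freeness of a large multiple, that lets one prescribe the order of $P$ along \emph{every} divisor through $p$ at once. (When $p\in X_0$ there is also a shorter argument: $D$ is effective near $p$, so $\phi\in\OO_{X_0,p}=\k[X_0]_{\m_p}$ can be written $P/Q$ with $Q\notin\m_p$, and one takes $U=X_0\setminus\{Q=0\}$ and $\psi=1/Q$.)
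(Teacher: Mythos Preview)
Your proof is correct and follows essentially the same route as the paper's: both invoke Goodman's theorem to obtain an ample effective divisor $H$ supported on $\partial_X X_0$, take a large multiple $mH$ whose linear system is base-point free, and pick a global section $P\in\Gamma(X,\OO_X(mH))\subset\OO_X(X_0)$ not vanishing at $p$, so that $\psi=\phi/P$ is regular near $p$. The only cosmetic difference is that the paper chooses $n$ so that $D+nH\ge 0$ globally and then verifies $\div(\psi)|_U\ge 0$ in one stroke, whereas you choose $m$ based on the boundary components through $p$ and check $\div_X(P)\le D$ component by component; the content is identical.
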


  \begin{proof}
    Let $\phi \in \k(U')^* = \k(X)^*$ be a local equation of $D$ where $U'$ is an open subset of $X$
    containing $p$.

    Let $H$ be an ample effective divisor such that
    $\Supp (H) = \partial_X X_0$. There exists an integer $n \geq 1$ such that $D +nH \geq 0$. Pick $P$
    general in $\Gamma (X, \OO_X (nH)) \subset \OO_X (X_0)$, then $\div P = Z_P - n H$ with $Z_P \geq 0$ and $p \not
    \in \supp Z_P$ because we chose $P$ general and $\left| nH \right|$ is basepoint free, in particular $P$ restricts to a
    regular function over $X_0$. Set $\psi := \phi / P$, one has

    \begin{equation}
    \div \left( {\psi}_{|U} \right) = D_{|U} + nH_{|U} - Z_{P|U}. \end{equation}

    Set $U = U' \setminus \supp Z_P$, then $\div (\psi)_{|U'} \geq 0$, i.e $\psi \in \OO_X (U)$ and we are done.
  \end{proof}

  \begin{cor}\label{CorValuationDefinieSurSectionLocale}
    If $D$ is a divisor such that the negative part (if any) of $D$ is at infinity and $v$ is a valuation on $\k[X_0]$,
    then for all small enough affine open subsets $U \subset X$ containing $c_{X}(v)$,
    \begin{equation}
      \Gamma(U, \OO_{X}(-D)) \subset \OO_X(X_0)_{\p_{v_X}}
    \end{equation}
    and $v_X$ can be extended to $\Gamma(U, \OO_X(-D))$.
  \end{cor}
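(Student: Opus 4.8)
The plan is to deduce the statement directly from Lemma~\ref{LemmeGoodFormOfLocalEquation} together with the description of the center of $v$ given in Lemma~\ref{LemmeCentreValuation}. Write $p := c_X(v)$. Since by hypothesis the negative part of $D$ (if any) is supported on $\BD$, Lemma~\ref{LemmeGoodFormOfLocalEquation} produces an open neighbourhood $U_0$ of $p$ on which a local equation $\phi$ of $D$ can be written as $\phi = P\cdot\psi_0$ with $P\in\OO_X(X_0)$ and $\psi_0\in\OO_X(U_0)$. Because the affine open subsets of $X$ form a basis and $\OO_X(-D)$ is a sheaf, I may shrink $U_0$ and assume it is affine; and every affine open $U$ with $p\in U\subseteq U_0$ still carries the same factorization of $\phi$, since $\OO_X(U_0)\subseteq\OO_X(U)$. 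So it suffices to prove the claim for such a $U$.

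Next I would unwind the definitions. As $\phi$ is a local equation of $D$ on $U$, one has $\div(\phi)|_U = D|_U$, so a rational function $h$ lies in $\Gamma(U,\OO_X(-D))$ exactly when $\div(h/\phi)|_U\geq 0$; and since $X$ (hence $U$) is normal, this is equivalent to $h/\phi\in\OO_X(U)$. Therefore $\Gamma(U,\OO_X(-D))\subseteq\phi\cdot\OO_X(U) = P\psi_0\cdot\OO_X(U)\subseteq P\cdot\OO_X(U)$. Now the proof of Lemma~\ref{LemmeCentreValuation} shows that $\OO_{v_X}$ dominates $\OO_{X,p}$ and, more precisely, that $\OO_{X,p}\subseteq\OO_X(X_0)_{\p_{v_X}}$. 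Since $p\in U$ we get $\OO_X(U)\subseteq\OO_{X,p}\subseteq\OO_X(X_0)_{\p_{v_X}}$, while $P\in\OO_X(X_0)\subseteq\OO_X(X_0)_{\p_{v_X}}$; as the latter is a ring this yields $\Gamma(U,\OO_X(-D))\subseteq\OO_X(X_0)_{\p_{v_X}}$, which is the first assertion.

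For the extension of $v_X$: as recalled in Chapter~\ref{ChapterValuations} (the remark following the definition of a valuation), $v_X$ extends canonically from $\OO_X(X_0)$ to the localization $\OO_X(X_0)_{\p_{v_X}}$ by the formula $v_X(a/b) := v_X(a) - v_X(b)$, which makes sense because $v_X(b)<\infty$ for every $b\notin\p_{v_X}$. Restricting this extension along the inclusion $\Gamma(U,\OO_X(-D))\subseteq\OO_X(X_0)_{\p_{v_X}}$ just established gives the desired extension of $v_X$ to $\Gamma(U,\OO_X(-D))$.

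I do not expect a genuine obstacle here. The only mildly delicate points are the reduction to an affine $U$ still carrying the factorization $\phi = P\psi_0$, which is immediate from the sheaf property, and the inclusion $\OO_{X,p}\subseteq\OO_X(X_0)_{\p_{v_X}}$, which is exactly what the proof of Lemma~\ref{LemmeCentreValuation} provides; all the substantive work is already contained in Lemma~\ref{LemmeGoodFormOfLocalEquation}.
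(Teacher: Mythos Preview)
Your argument is correct and follows essentially the same route as the paper: factor a local equation $\phi = P\cdot\psi$ via Lemma~\ref{LemmeGoodFormOfLocalEquation}, observe that $\Gamma(U,\OO_X(-D)) = \phi\cdot\OO_X(U)$, and conclude using $\OO_X(U)\subseteq\OO_X(X_0)_{\p_{v_X}}$ from the proof of Lemma~\ref{LemmeCentreValuation}. Your version is a bit more explicit (spelling out the normality step and the extension formula for $v_X$), but the structure is identical.
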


  \begin{proof}
    If $U$ is small enough, then $\Gamma(U, \OO_X(-D))$ is the $\OO_X (U)$-module
      generated by $\phi$ where $\phi$ is a local equation of $D$. Now, by Lemma
      \ref{LemmeGoodFormOfLocalEquation}, $\phi$ is of the form $\phi = P \cdot \psi$ where $P \in \OO_X
      (X_0)$ and $\psi \in \OO_X (U)$. By definition we have $\OO_X(X_0) \subset \OO_X(X_0)_{\p_{v_X}}$ and for all
      affine open neighbourhood $U$ of $c_X (v), \OO_X (U) \subset \OO_X (X_0)_{\p_{v_X}}$ by the proof of Lemma
      \ref{LemmeCentreValuation}.
  \end{proof}

  Let $D$ be divisor of $X$ supported at infinity and let $\phi \in \k(X)$ be a local equation of $D$ at
  $c_{X}(v)$. Then we set
  \begin{equation}
    L_{v,X}(D) := v_{X}(\phi).
  \end{equation}
  This is well defined because by Corollary \ref{CorValuationDefinieSurSectionLocale} because by definition there exists
  an open affine neighbourhood $U$ of $c_X (v)$ such that $\phi \in \Gamma (U, \OO_X (-D))$.
  This does not depend on the choice of the local equation because if $\psi$ is another local equation of $D$, then
  $\frac{\phi}{\psi}$ is a regular invertible function near $c_{X}(v)$ and $v_{X}(\phi / \psi) = 0$.

  \begin{lemme}\label{LemmeValuationJamaisInfinieSurLesDiviseurs}
    Let $v$ be a valuation over $\k[X_0]$ and let $X$ be a completion of $X_0$, then for all $D \in \DivInf
    (X)_\R,\phantom{.} L_{v,X} (D) < \infty$.
  \end{lemme}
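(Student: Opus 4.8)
The plan is to show that for any divisor $D$ supported at infinity, the value $L_{v,X}(D) = v_X(\phi)$ (with $\phi$ a local equation of $D$ at $c_X(v)$) is never $+\infty$. The key observation is that $L_{v,X}(D) = +\infty$ would force $\phi$ to lie in the bad ideal $\p_{v_X}$, which would in turn mean that the center of $v$ on $X$ lies on the support of $D$ in a way incompatible with $v$ being a valuation on $\k[X_0]$. I would split the argument according to whether $D$ is effective or not, and reduce to the effective case.

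First I would observe that $L_{v,X}$ is $\R$-linear in $D$ (since local equations multiply), so it suffices to treat $D \in \DivInf(X)$ with integer coefficients, and then by writing $D = D_+ - D_-$ as a difference of effective divisors at infinity it suffices to show $L_{v,X}(D) < \infty$ for $D$ effective and supported at infinity. So assume $D \geq 0$ with $\Supp D \subseteq \BD$. Let $p = c_X(v)$ and let $\phi$ be a local equation of $D$ near $p$; by Lemma \ref{LemmeGoodFormOfLocalEquation} we may write $\phi = P \cdot \psi$ with $P \in \OO_X(X_0)$ and $\psi \in \OO_X(U)$ a regular function on a neighbourhood $U$ of $p$. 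Then $L_{v,X}(D) = v_X(\phi) = v_X(P) + v_X(\psi)$. Now $v_X(P)$ is finite unless $P \in \p_{v_X}$, and $v_X(\psi) = v(\psi)$ is finite because $\psi$ is regular near the center $p = c_X(v)$, so $v_X$ extends to $\OO_{X,p}$ and takes finite nonnegative values there (this is the content of Corollary \ref{CorValuationDefinieSurSectionLocale} and the discussion of centers in \S\ref{SubSecValuationsOnLocalRing}).

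The heart of the matter, and the step I expect to be the main obstacle, is ruling out $P \in \p_{v_X}$. The point is that $\p_{v_X}$ is a prime ideal of $\OO_X(X_0) \cong \k[X_0]$, and by Proposition \ref{PropClassificationValuation} (and its corollary), the only valuations with $\p_v \neq \{0\}$ are curve valuations, whose Zariski closure of $\p_v$ is a curve $C$ in $X$ through $p$ that is \emph{not} a component of $\BD$. So if $v_X(P) = \infty$, then $P$ vanishes identically on $C$; but $D$ is supported only on $\BD$, so a local equation $\phi$ of $D$ cannot vanish on $C$ (it is invertible away from $\BD$ near $p$, in particular along the germ of $C$ at $p$ minus $\BD$). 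Concretely: $\phi \in \Gamma(U, \OO_X(-D))$ is invertible on $U \setminus \BD$, hence $v_X(\phi) < \infty$ since otherwise $\phi$ would vanish on the curve defining $\p_{v_X}$, which meets $U \setminus \BD$. Writing this out carefully using the completed local ring $\hat{\OO}_{X,p} \cong \k[[z,w]]$ and the description of curve valuations there, one gets that $v_X(\phi)$ equals a finite intersection number $\{\phi = 0\} \cdot_p C$ because $\phi$ is coprime to the local equation of the curve $C$ attached to $\p_{v_X}$. Thus $L_{v,X}(D) = v_X(P) + v(\psi) < \infty$, completing the proof.
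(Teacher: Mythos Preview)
Your final argument (the ``concretely'' paragraph) is correct and proves the lemma: a local equation $\phi$ of an effective $D \in \DivInf(X)$ is, in the completed local ring at $p = c_X(v)$, a monomial in the local equations of the components of $\BD$ through $p$ times a unit; since a curve valuation on $\k[X_0]$ is by definition attached to a formal curve $C$ not among these components, $\phi$ is coprime to the equation of $C$ and $v_X(\phi)$ is a finite intersection number. (If $\p_v = 0$ there is nothing to prove.)

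However, the route you take to get there contains a wrong step. You assert that $v_X(\psi)$ is finite because $\psi$ is regular near $p$ and ``$v_X$ extends to $\OO_{X,p}$ and takes finite nonnegative values there''. This is false precisely for curve valuations---the case you yourself single out as the obstacle two lines later. The decomposition $\phi = P\cdot\psi$ via Lemma~\ref{LemmeGoodFormOfLocalEquation} is in fact unnecessary: your final argument applies directly to $\phi$ and does not use the decomposition at all. (The reduction to effective $D$ is harmless but also not essential for that argument.)

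The paper's proof is different and shorter, and avoids the classification of valuations entirely. Since $D \in \DivInf(X)$, both $D$ and $-D$ have their (possibly empty) negative part supported at infinity, so Corollary~\ref{CorValuationDefinieSurSectionLocale} applies to each. This gives $\phi \in \OO_X(X_0)_{\p_{v_X}}$ and $1/\phi \in \OO_X(X_0)_{\p_{v_X}}$; hence $\phi$ is a unit in this localization and $v_X(\phi)$ is automatically finite. Your approach has the merit of making explicit why curve valuations cause no trouble (the curve is transverse to $\BD$); the paper's approach exploits instead the symmetry of $\DivInf(X)$ under $D\mapsto -D$ and is a two-line application of the preceding corollary.
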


  \begin{proof}
    It suffices to show Lemma \ref{LemmeValuationJamaisInfinieSurLesDiviseurs} for $D$ an integral divisor
    supported at infinity in $X$. We can apply corollary \ref{CorValuationDefinieSurSectionLocale} to $D$ and $-D$,
    therefore if $\phi$ is a local equation of $D$, we have that both $\iota_X^* (\phi)$ and $\iota_X^*(1/\phi)$
    belong to $A_{\p_v}$ and this means that $v_X(\phi) < \infty$.
  \end{proof}

  \begin{rmq}
    We can in fact define $L_{v,X}$ at any divisor $D$ on $X$ such that the negative part of $D$ is supported
    at infinity but it could happen that $L_{v,X}(D)$ is infinite. For example, let $X_0 = \A^2, X = \P^2$. Let
    $v$ be the curve valuation centered at $[1 :0 :0]$ associated to the curve $y = 0$, then
    \begin{equation}
      L_{v, \P^2} (\left\{ Y= 0  \right\} - \left\{ Z = 0 \right\}) = v (Y/ Z) = + \infty.
      \label{<+label+>}
    \end{equation}
  \end{rmq}

  \begin{ex}
    If $X$ is a completion of $X_0$, let $E$ be a prime divisor at infinity. Let $D \in \DivInf (X)$. Recall that we
    have defined in Section \ref{SubSecCompletions} that $\ord_E(D)$ is the weight of $D$ along $E$, then
    \begin{equation}
      L_{\ord_E} (D) = \ord_E (D).
      \label{EqValuationDisivorielle}
    \end{equation}
    Indeed, at the generic point of $E$, a local equation of $D$ is $z^{\ord_E (D)} \phi$ where $z$ is a local equation
    of $E$ and $\phi$ is regular not divisible by $z$.
  \end{ex}

  \begin{prop}\label{PropValuationForCartierDivisorOverOneCompletion}
    If $v$ is a valuation over $\k[X_0]$, and $X$ is a completion of $X_0$ then
    \begin{enumerate}
      \item $L_{v, X}(0_{\DivInf (X)}) = 0$.
      \item For any $D, D' \in \DivInf(X), L_{v,X}(D + D') = L_{v,X}(D) + L_{v,X}(D')$, and $L_{v, X}
        (mD) = m L_{v,X} (D)$ for all $m \in \Z$.
      \item If $D \geq 0$, then $L_{v,X}(D) \geq 0$ and $L_{v,X}(D) > 0 \Leftrightarrow c_{X} (v) \in \supp D$. In
        particular, if $v$ is not centered at infinity then $L_v = 0$.
      \item If $P \in \OO_{X}(X_0)$, then $v_{X}(P) = L_{v,X}(\div P)$.
      \item If $Y$ is another completion of $X_0$ above $X$, and $\pi: Y \rightarrow X$ is the
        morphism of completions over $X_0$, then $L_{v,X}(D) = L_{v,Y} (\pi^* D)$.
    \end{enumerate}
  \end{prop}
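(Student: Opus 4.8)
The plan is to prove the five items in turn, in each case choosing a local equation $\phi$ of the relevant divisor at the center $p := c_X(v)$ and unwinding the definition $L_{v,X}(D) = v_X(\phi)$, using repeatedly that $c_X(v)$ is \emph{dominated} by the valuation ring of $v_X$ (Lemma~\ref{LemmeCentreValuation}) and that local equations have the nice form provided by Lemma~\ref{LemmeGoodFormOfLocalEquation}. For (1), a local equation of $0_{\DivInf(X)}$ is the constant $1$ and $v_X(1) = 0$ since $v_X$ is a valuation. For (2), if $\phi,\phi'$ are local equations of $D,D'$ at $p$, then $\phi\phi'$ is a local equation of $D+D'$, so additivity is the multiplicativity of $v_X$; for $L_{v,X}(mD) = m\,L_{v,X}(D)$ with $m$ a negative integer, $\phi^{m}\in\k(X)^{\times}$ is a local equation of $mD$, and by Corollary~\ref{CorValuationDefinieSurSectionLocale} applied to $\pm D$ together with Lemma~\ref{LemmeValuationJamaisInfinieSurLesDiviseurs}, both $v_X(\phi)$ and $v_X(\phi^{-1})$ are finite, hence sum to $v_X(1)=0$.

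For (3), assume $D\geq 0$ and pick a local equation $\phi$ of $D$ at $p$; then $\phi\in\OO_{X,p}$, and since $\OO_{X,p}$ is dominated by $\OO_{v_X}$ we get $v_X(\phi)\geq 0$, with $v_X(\phi)>0$ exactly when $\phi\in\m_{v_X}$. Because $\m_p\subseteq\m_{v_X}$ by domination, and because $\phi$ is a unit of $\OO_{X,p}$ as soon as $\phi\notin\m_p$, we obtain $v_X(\phi)>0 \iff \phi\in\m_p \iff p\in\supp D$, the last equivalence using $D\geq 0$. For the final assertion: if $v$ is not centered at infinity then $c_X(v)\in X_0$, and writing an arbitrary $D\in\DivInf(X)$ as $D_+-D_-$ with $D_\pm\geq 0$ supported on $\BD$, neither support contains $c_X(v)$, so $L_{v,X}(D)=L_{v,X}(D_+)-L_{v,X}(D_-)=0$ by (2) and what was just shown.

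For (4), the principal divisor $\div P$ with $P\in\OO_X(X_0)$ admits $P$ itself as a global, hence local, equation, and its negative part lies on $\BD$ because $P$ is regular on $X_0$; so, reading $L_{v,X}$ in the extended sense of the remark following Lemma~\ref{LemmeValuationJamaisInfinieSurLesDiviseurs}, $L_{v,X}(\div P)=v_X(P)$, which is finite since $v_X$ is a valuation on $\OO_X(X_0)$. For (5), I would first record that $\pi$ sends $c_Y(v)$ to $c_X(v)$: the morphism $\pi$ induces $\OO_{X,\pi(c_Y(v))}\hookrightarrow\OO_{Y,c_Y(v)}$, which is dominated by $\OO_{v_X}=\OO_{v_Y}=\OO_v$, so $\pi(c_Y(v))$ is a center of $v$ on $X$ and equals $c_X(v)$ by uniqueness in Lemma~\ref{LemmeCentreValuation}. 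Then a local equation $\phi\in\k(X)=\k(Y)$ of $D$ at $c_X(v)$ is a local equation of $\pi^{*}D$ at $c_Y(v)$, and it remains to check $v_Y(\phi)=v_X(\phi)$: writing $\phi=P\psi$ with $P\in\OO_X(X_0)=\OO_Y(X_0)$ and $\psi\in\OO_X(U)\subseteq\OO_{X,c_X(v)}\subseteq\OO_{Y,c_Y(v)}$ via Lemma~\ref{LemmeGoodFormOfLocalEquation}, one has $v_X(P)=v_Y(P)$ by Remark~\ref{RmqMemeValuationApresEclatement} and $v_X(\psi)=v_Y(\psi)$ because the extensions of $v$ to the common local ring $\OO_{X,c_X(v)}$ coincide, whence $L_{v,X}(D)=v_X(\phi)=v_Y(\phi)=L_{v,Y}(\pi^{*}D)$.

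Items (1), (2), (4) are routine. The two places demanding care are the support characterization in (3), which rests entirely on the domination properties $\m_p\subseteq\m_{v_X}$ and $\OO_{X,p}\subseteq\OO_{v_X}$, and the functoriality in (5). I expect (5) to be the main obstacle: one must confirm the compatibility $\pi(c_Y(v))=c_X(v)$ and then verify that the two a priori distinct induced valuations $v_X$ and $v_Y$ genuinely agree on every local equation that can arise, which is exactly the role of Remark~\ref{RmqMemeValuationApresEclatement} combined with the normal form of local equations from Lemma~\ref{LemmeGoodFormOfLocalEquation}.
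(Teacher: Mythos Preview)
Your proposal is correct and follows essentially the same approach as the paper: pick a local equation at the center and unwind the definition using the domination properties of $c_X(v)$. The only minor difference is in (5), where the paper argues more directly---once $\phi$ is a local equation of $D$ at $c_X(v)$, then $\pi^*\phi$ is a local equation of $\pi^*D$ at $c_Y(v)$ and Remark~\ref{RmqMemeValuationApresEclatement} gives $v_Y(\pi^*\phi)=v_X(\phi)$ immediately---so your decomposition $\phi=P\psi$ via Lemma~\ref{LemmeGoodFormOfLocalEquation} is not needed there.
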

  Thus, we can extend $L_{v,X}$ to $\DivInf (X)_\R$ by linearity:
  \begin{equation}
    L_{v, X} : \DivInf (X)_\R \rightarrow \R.
    \label{<+label+>}
  \end{equation}

  \begin{proof}
    The first assertion is trivial as 1 is a local equation of the trivial divisor. The second assertion
    follows from the fact that if $\phi, \psi$ are local equations of $D$ and $D'$ respectively, then $\phi \psi$ is a
    local equation of $D + D'$ and $1/ \phi$ is a local equation of $-D$. For the third one, suppose $D$ is an integral
    divisor. If $D$ is effective
    and $f$ is a local equation at
    $c_{X}(v)$, then $f$ is regular at $p$ and by definition of the center $v(f) \geq 0$, now if $c_{X}(v)$ belongs to
    $\supp D$, then $f$ vanishes at $c_{X} (v)$; thus, $v(f) > 0$. If on the other hand $c_{X}(v) \not \in \supp D$, then
    $f$ is invertible at the center of $v_{X}$ and $v_{X}(f) = 0$. The fourth assertion follows from
    $f$ being a local equation of $\div(f)$ and the fact that $f$ has no pole over $X_0$. Finally, if $f
    \in \k(X)$ is a local equation of $D$ at $c_{X}(v)$, then $\pi^* f$ is a local equation of $\pi^* D$
    at $c_{Y}(v)$ and by Remark \ref{RmqMemeValuationApresEclatement}, one has $v_{X}(f) =
    v_{Y}(\pi^* f)$.
  \end{proof}

  \begin{prop}\label{PropPushForwardOnValuationsIsPullbackOnDivisors}
    Let $f: X_0 \rightarrow X_0$ be a dominant endomorphism of $X_0$. Let $Y, X$ be two completions of $X_0$ such
    that the lift $F: Y \rightarrow X$ of $f$ is regular. Then,
    \begin{equation} F (c_Y (v)) = c_X (f_* v) \text { and } \forall D \in \DivInf(X), L_{f_*v, X} (D) =
    L_{v, Y} (F^* D).
  \end{equation}
  \end{prop}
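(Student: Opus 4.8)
The plan is to reduce both assertions to the single identity $(f_*v)_X = F_*(v_Y)$, where $F_*(v_Y)$ denotes the pushforward of the valuation $v_Y$ on $\OO_X(X_0)$ along the \emph{morphism} $F$, and then to invoke the standard functoriality of centers (the valuative criterion of properness, already used in the proof of Lemma \ref{LemmeCentreValuation}) together with the functoriality of local equations of Cartier divisors.

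\textbf{Step 1 (the key identity).} Since $F$ is the lift of $f$, the square $F\circ\iota_Y=\iota_X\circ f$ commutes; in particular $F$ maps $X_0\subset Y$ into $X_0\subset X$, and the induced morphism $F^*:\OO_X(X_0)\to\OO_Y(X_0)$ corresponds, under the isomorphisms $\iota_X^*$ and $\iota_Y^*$ onto $\k[X_0]$, to $f^*:\k[X_0]\to\k[X_0]$. Hence for every $P\in\OO_X(X_0)$,
\[
(f_*v)_X(P)=(f_*v)(\iota_X^*P)=v\bigl(f^*\iota_X^*P\bigr)=v\bigl(\iota_Y^*F^*P\bigr)=v_Y(F^*P),
\]
so $(f_*v)_X=F_*(v_Y)$. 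Moreover $F^*$ sends the bad ideal $\p_{(f_*v)_X}$ into $\p_{v_Y}$, so $F^*$ extends to a local homomorphism $\OO_X(X_0)_{\p_{(f_*v)_X}}\to\OO_Y(X_0)_{\p_{v_Y}}$ and the identity $(f_*v)_X(\phi)=v_Y(F^*\phi)$ persists there.

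\textbf{Step 2 (centers).} Put $q=c_Y(v)$. The morphism $F$ induces a local homomorphism $\OO_{X,F(q)}\to\OO_{Y,q}$; composing with $\OO_{Y,q}\subset\OO_{v_Y}$ and using Step 1 one checks that $\OO_{(f_*v)_X}$ dominates $\OO_{X,F(q)}$: writing $g\in\OO_{X,F(q)}$ as $P/Q$ with $P,Q\in\OO_X(X_0)$ and $Q$ invertible at $F(q)$, one has $v_Y(F^*Q)=0$ and $v_Y(F^*P)\geq 0$, with $v_Y(F^*P)>0$ when $g\in\m_{F(q)}$. By uniqueness of the center (Lemma \ref{LemmeCentreValuation}) this gives $c_X(f_*v)=F(q)=F(c_Y(v))$. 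Equivalently, one composes the scheme morphism $\spec\OO_{v_Y}\to Y$ realizing $c_Y(v)$ with $F$ and invokes the valuative criterion of properness for $X$, exactly as in the proof of Lemma \ref{LemmeCentreValuation}.

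\textbf{Step 3 (linear forms) and main obstacle.} Because $F(X_0)\subseteq X_0$, taking complements yields $F^{-1}(\BD_X)\subseteq\BD_Y$, so $F^*D\in\DivInf(Y)_\R$ for every $D\in\DivInf(X)_\R$ and, by linearity, it suffices to treat an integral $D$. Choose $\phi\in\k(X)^\times$ a local equation of $D$ at $c_X(f_*v)=F(q)$; since $D$ and $-D$ are both supported at infinity, Corollary \ref{CorValuationDefinieSurSectionLocale} and Lemma \ref{LemmeValuationJamaisInfinieSurLesDiviseurs} show that $L_{f_*v,X}(D)=(f_*v)_X(\phi)$ is well defined and finite. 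As $F$ is dominant, $F^*\phi\neq 0$, and by functoriality of Cartier divisors $F^*\phi$ is a local equation of $F^*D$ at $q=c_Y(v)$, so $L_{v,Y}(F^*D)=v_Y(F^*\phi)$; Step 1 then gives $(f_*v)_X(\phi)=v_Y(F^*\phi)$, which is the claimed equality. The only delicate point throughout is the bookkeeping for valuations with nonzero bad ideal — the curve valuations — ensuring that $F_*(v_Y)$, the centers in Step 2, and the extensions of $v_Y$ and $(f_*v)_X$ to the localizations in which $\phi$ and $F^*\phi$ live are all consistently defined; once this is arranged, the proposition is just Step 1 combined with the two functoriality facts.
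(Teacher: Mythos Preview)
Your proof is correct and follows essentially the same approach as the paper's: both establish that $(f_*v)_X = F_*(v_Y)$ via the commuting square $F\circ\iota_Y=\iota_X\circ f$, then deduce the equality of centers from the domination criterion and the second formula from the fact that $F^*\phi$ is a local equation of $F^*D$ at $c_Y(v)$. Your version is more explicit—isolating the key identity as Step 1, checking that $F^*D$ lands in $\DivInf(Y)$, and flagging the bad-ideal bookkeeping for curve valuations—whereas the paper's proof is terser and leaves these points implicit, but there is no genuine difference in method.
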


  \begin{proof}
    Let $p = c_Y (v)$ and $q = F(p)$. Then, $F$ induces a local ring homomorphism
    \[ F^* : \OO_{X, q} \rightarrow \OO_{Y,p} \]
    Now, for any $\phi \in \OO_{X,q}$, there exists $P,Q \in \k[X_0]$ such that $\phi = \frac{P}{Q}$. Therefore,
    \[ F^* \phi = \frac{f^* P}{ f^* Q} \]
    and therefore $f_* v (\phi) = v (F^* \phi) \geq 0$ and $f_*v (\phi) > 0$ if $\phi \in \m_{X,q}$. Thus, $q = c_X (f_* v)$.

    Now, to show the second result we distinguish two cases whether or not $f_* v$ is centered at infinty. Let $D \in
    \DivInf (X)$ and let $\phi$ be a local
      equation of $D$ at the center of $v_{X}$, then
      $F^* \phi$ is a local equation of $F^* D$ at the center of $v_{Y}$. Since $\pi_* v_{Y} = v_{X}$, one has

      \begin{equation}
      v_{Y} (F^* \phi) = v_{X} ((F \circ {\pi}^{-1} )^* \phi) = v_{X} (f^*\phi) = (f_* v)_{X} (\phi) 
    \end{equation}
    and this shows the result. In particular, note that if $f_{*} v$ is not centered at infinity, then the local
    equation $\phi$ of $D$ at $c_X (v)$ is actually invertible and $v_Y (F^* g) = 0$ so that $L_{f_*v, X} = 0$.
  \end{proof}

  \begin{rmq}\label{rmq:f-not-proper}
    In particular, if $f$ is not proper and some valuation $v$ centered at infinity is such that $f_*v$ is centered over
    $X_0$, then $L_{f_* v, X} = 0$.
  \end{rmq}

  \section[Valuations as functions over the set of fractional ideals]{Valuations as real-valued functions over the set
  of fractional ideals co-supported at infinity in $X$}
  An \emph{ideal} of $X$ is a sheaf of ideals of $\OO_{X}$ and a \emph{fractional ideal} is a
  coherent sub-$\OO_{X}$-module of the constant
  sheaf $\k(X)$. Let $\aa$ be a fractional ideal of $X$, we say that $\aa$ is \emph{co-supported} at
  infinity if $\aa_{|X_0} = \OO_{X_0}$. For example, for any divisor $D \in \Div (X)$, $\OO_X (D)$ is a
  fractional ideal of $X$ and if $D \in \DivInf (X)$ then $\OO_X (D)$ is co-supported at infinity.

  \begin{prop}\label{PropGoodFormofLocalGenerators}
    Let $\aa$ be a fractional ideal of $X$ co-supported at infinity and let $p \in X$, for any finite
    system $(f_1, \cdots, f_r)$ of local generators of $\aa$ at $p$ there exists an open neighbourhood
    $U$ of $p$ such that $f_{i|U}$ is of the form
    \begin{equation}
      f_i = F_i g_i
    \end{equation}
    with $F_i \in \OO_X(X_0)$ and $g_i \in \OO_X(U)$.
  \end{prop}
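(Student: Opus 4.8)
The plan is to reduce to Lemma~\ref{LemmeGoodFormOfLocalEquation}, applied to one generator at a time, and then to intersect finitely many neighbourhoods of $p$. The one substantive step is the observation that, near $p$, each $f_i$ has no poles along curves meeting $X_0$: since $\aa$ is co-supported at infinity and $f_i$ is a section of $\aa$ over some open $U'\ni p$, its restriction to $U'\cap X_0$ lies in $\aa(U'\cap X_0)=\OO_X(U'\cap X_0)$, so $f_i$ is regular on $U'\cap X_0$. Hence, if $D_i$ denotes the divisor obtained from $\div(f_i)$ by deleting from its negative part the components lying in $X_0$, then $D_i\in\Div(X)$ has negative part supported on $\BD$, and on a small enough neighbourhood $W$ of $p$ (one $W$ serving for all $i$, as there are finitely many) one has $D_i|_W=\div(f_i)|_W$, so $f_i|_W$ is a local equation of $D_i$ on $W$.

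Next I would apply Lemma~\ref{LemmeGoodFormOfLocalEquation} to each $D_i$, obtaining an open $U_i\ni p$ and a local equation $\phi_i=P_i\psi_i$ of $D_i$ on $U_i$ with $P_i\in\OO_X(X_0)$ and $\psi_i\in\OO_X(U_i)$. On $U_i\cap W$ the two local equations $f_i$ and $\phi_i$ of $D_i$ differ by a unit $u_i\in\OO_X(U_i\cap W)^\times$, so $f_i=P_i\,(u_i\psi_i)$ with $P_i\in\OO_X(X_0)$ and $u_i\psi_i\in\OO_X(U_i\cap W)$. Setting $U:=W\cap\bigcap_i U_i$, $F_i:=P_i$ and $g_i:=u_i\psi_i$ then finishes the proof. (The generation property of the $f_i$ is never used; any finite family of sections of $\aa$ near $p$ works.)

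I do not expect a real obstacle here, since everything reduces to Lemma~\ref{LemmeGoodFormOfLocalEquation} once the bookkeeping is done. The point that genuinely uses the hypothesis — and that should not be skipped — is that $\aa$ being co-supported at infinity forces the generators to be regular on $X_0$ near $p$: without it a generator could have poles inside $X_0$ through $p$, and then no global function in $\OO_X(X_0)$ could clear them while keeping $g_i$ regular at $p$, so the statement would fail.
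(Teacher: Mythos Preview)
Your proof is correct and follows essentially the same route as the paper's: both reduce to Lemma~\ref{LemmeGoodFormOfLocalEquation} applied to each $f_i$ after observing that, because $\aa_{|X_0}=\OO_{X_0}$, the poles of $f_i$ near $p$ lie only on $\BD$; both then adjust by a local unit and intersect finitely many neighbourhoods. The paper's write-up is slightly terser (it decomposes $\div f_i=D^+-D_1^--D_2^-$ with $D_1^-$ supported at infinity and $D_2^-$ away from $p$, then applies the lemma to $D^+-D_1^-$), but the content is identical to yours.
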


  \begin{proof}

    Pick $U'$ an open neighbourhood containing $p$.
    Since $f_i$ is regular over $X_0$, we have $\div f_i = D^+ - D_1^- - D_2^-$ where $D^+, D_1^-$ and
    $D_2^-$ are effective divisors such that $\supp D_1^-
    \subset \partial_X X_0$ and $ {D_2^-}_{|U'} = 0$. By Lemma \ref{LemmeGoodFormOfLocalEquation} there
    exists an open neighbourhood $U_i \subset U'$ of $p$ such that $(D^+ - D_1^-)_{|U_i} = \div F_i g_i'$ with $F_i \in
    \OO_X(X_0)$ and $g_i' \in \OO_X (U_i)$. Therefore, there exists $g_i '' \in \OO_X (U_i)$ such that
    $f_i = F_i g_i ' g_i ''$. Set $U = \cap U_i$ and $g_i = g_i ' g_i ''$.

  \end{proof}

  \begin{cor}\label{CorValuationDefinieSurSectionLocaleIdeaux}
    Let $\aa$ be a fractional ideal co-supported at infinity and let $v$ be a valuation over $\k[X_0]$, then for
    all affine open neighbourhood of $c_{X} (v), \Gamma(U, \aa) \subset \OO_X(X_0)_{\p_{v_X}}$ and
    $v_X$ is defined over $\Gamma(U, \aa)$.
  \end{cor}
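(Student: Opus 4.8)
The plan is to run exactly the same argument as for Corollary~\ref{CorValuationDefinieSurSectionLocale}, with Proposition~\ref{PropGoodFormofLocalGenerators} playing the role that Lemma~\ref{LemmeGoodFormOfLocalEquation} played there. Write $p := c_X(v)$. Starting from an arbitrary affine open neighbourhood $U$ of $p$, I may assume $U$ is affine, so that by coherence of $\aa$ the module $\Gamma(U,\aa)$ is generated over $\OO_X(U)$ by a finite system $f_1,\ldots,f_r$ of local generators of $\aa$ at $p$. Because $\aa$ is co-supported at infinity, $\aa_{|X_0}=\OO_{X_0}$, so each $f_i$ is a rational function on $X$ that is regular on $X_0$.

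Next I would apply Proposition~\ref{PropGoodFormofLocalGenerators} to $p$ and to this system of generators: it yields a (possibly smaller) open neighbourhood $U$ of $p$ on which $f_i=F_i g_i$ with $F_i\in\OO_X(X_0)$ and $g_i\in\OO_X(U)$. Since $p$ still lies in this new $U$, I keep the name $U$ (replacing it, if one wishes, by an affine open inside it that still contains $p$). It then remains to check the two inclusions $\OO_X(X_0)\subset\OO_X(X_0)_{\p_{v_X}}$ and $\OO_X(U)\subset\OO_X(X_0)_{\p_{v_X}}$: the first is tautological, and the second holds because the proof of Lemma~\ref{LemmeCentreValuation} shows $\OO_{X,p}\subset\OO_X(X_0)_{\p_{v_X}}$ while $\OO_X(U)\subset\OO_{X,p}$ as $p\in U$. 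Hence each $f_i=F_i g_i$ lies in $\OO_X(X_0)_{\p_{v_X}}$, and therefore $\Gamma(U,\aa)=\sum_i \OO_X(U)f_i\subset\OO_X(X_0)_{\p_{v_X}}$. Finally $v_X$ extends to the localization $\OO_X(X_0)_{\p_{v_X}}$ via $v_X(a/b)=v_X(a)-v_X(b)$ (the remark following the definition of valuations), so $v_X$ is defined on $\Gamma(U,\aa)$, which is the assertion.

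There is essentially no obstacle here beyond bookkeeping: all the content is in Proposition~\ref{PropGoodFormofLocalGenerators}, and the only points to be slightly careful about are that $\Gamma(U,\aa)$ is a finitely generated $\OO_X(U)$-module (which holds for $U$ affine since $\aa$ is coherent) and that the successive shrinkings of $U$ keep $p=c_X(v)$ inside $U$, which is guaranteed because Proposition~\ref{PropGoodFormofLocalGenerators} returns a neighbourhood of $p$.
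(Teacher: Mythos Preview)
Your proposal is correct and matches the paper's intended argument: the paper omits the proof of this corollary, but its placement immediately after Proposition~\ref{PropGoodFormofLocalGenerators}, parallel to how Corollary~\ref{CorValuationDefinieSurSectionLocale} follows Lemma~\ref{LemmeGoodFormOfLocalEquation}, makes clear that exactly the argument you give is what is meant. The only small point worth making explicit is why shrinking $U$ is harmless for the ``for all affine open neighbourhoods'' claim: since $\aa$ is a subsheaf of the constant sheaf $\k(X)$, one has $\Gamma(U,\aa)\subset\Gamma(U',\aa)$ as subsets of $\k(X)$ whenever $U'\subset U$, so proving the inclusion for a smaller neighbourhood suffices.
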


  If $v$ is a valuation over $\k[X_0]$, then we define $L_{v,X}(\aa)$ as
  \begin{equation}
  L_{v,X}(\aa) := \min_{f} v_{X}(f). \end{equation}
  where
  the $f$ runs over the germs of sections of $\aa$ at $c_{X} (v)$. This makes sense by Corollary
  \ref{CorValuationDefinieSurSectionLocaleIdeaux}.

  \begin{prop}\label{PropValuationForCoherentSheafOfIdealsForOneCompletion}
    If $v$ is a valuation over $\k[X_0]$, then
    \begin{enumerate}
      \item $L_{v,X}(\OO_{X}) = 0$.
      \item If $\mathfrak a, \mathfrak b$ are two fractional ideals of $X$ co-supported at infinity, then

        \begin{equation}
        L_{v,X} (\aa \cdot \bb) = L_{v,X}(\aa) + L_{v,X}(\bb) \text{ and } L_{v,X}(\aa + \bb) = \min(L_{v,X}(\aa), L_{v,X}(\bb)) \end{equation}

      \item If $f_1, \cdots, f_r \in \k(X)$ is a set of local generators of $\aa$ at $c_X (v)$, then

        \begin{equation}
        L_{v,X}(\aa) = \min (v_X(f_1), \cdots, v_X(f_r)). \end{equation}

      \item If $D \in \Div(X)$ is a divisor, then $L_{v,X}(D) = L_{v,X}(\OO_{X}(-D))$.
      \item If $Y$ is another completion of $X_0$ above $X$, and $\pi: Y \rightarrow X$ is the
        morphism of completions over $X_0$, then $\tilde \aa := \pi^* \aa \cdot \OO_{Y}$ is
        a fractional ideal over $Y$ and $L_{v,X}(\aa) = L_{v,Y}(\tilde \aa)$.
    \end{enumerate}
  \end{prop}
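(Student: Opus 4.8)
The plan is to prove assertion (3) first — that $L_{v,X}(\aa)$ can be computed as $\min_i v_X(f_i)$ over any finite system $f_1,\dots,f_r$ of local generators of $\aa$ at $p := c_X(v)$ — and then to read off (1), (2) and (4) as formal consequences, with (5) needing in addition the functoriality of centers already established in Proposition \ref{PropPushForwardOnValuationsIsPullbackOnDivisors}.

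For (3): by Proposition \ref{PropGoodFormofLocalGenerators} we may shrink to an affine neighbourhood $U$ of $p$ on which the $f_i$ are genuine sections of $\aa$, and by Corollary \ref{CorValuationDefinieSurSectionLocaleIdeaux} the valuation $v_X$ is defined on all of $\Gamma(U,\aa)$; in particular each $v_X(f_i)$ is finite and $L_{v,X}(\aa)\le\min_i v_X(f_i)$. Conversely, any germ of a section $f$ of $\aa$ at $p$ is of the form $f=\sum_i h_i f_i$ with $h_i\in\OO_{X,p}$; since $p$ is the center of $v_X$ we have $\OO_{X,p}\subseteq\OO_{v_X}$, so $v_X(h_i)\ge 0$, whence $v_X(f)\ge\min_i\bigl(v_X(h_i)+v_X(f_i)\bigr)\ge\min_i v_X(f_i)$. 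Taking the infimum over $f$ gives the reverse inequality, proving (3).

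Granting (3): assertion (1) is clear, since $\OO_X$ is generated near $p$ by $1$ and $v_X(1)=0$. For (2), if $\aa$ is generated near $p$ by $(f_i)$ and $\bb$ by $(g_j)$, then $\aa\cdot\bb$ is generated near $p$ by $(f_i g_j)$ and $\aa+\bb$ by $(f_i)\cup(g_j)$; combining (3), the identity $v_X(f_i g_j)=v_X(f_i)+v_X(g_j)$ and the elementary equality $\min_{i,j}(a_i+b_j)=\min_i a_i+\min_j b_j$ gives the product formula, while the sum formula is immediate. For (4), a local equation $\phi$ of $D$ at $p$ is precisely a local generator of the invertible sheaf $\OO_X(-D)$ there, so by (3) (or directly from the definition of $L_{v,X}$ on fractional ideals) $L_{v,X}(\OO_X(-D))=v_X(\phi)$, which is the definition of $L_{v,X}(D)$; this is an identity in $\R\cup\{\infty\}$, and both sides are finite when the negative part of $D$ lies at infinity, by Lemma \ref{LemmeValuationJamaisInfinieSurLesDiviseurs}.

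Finally, for (5): $\tilde{\aa}:=\pi^*\aa\cdot\OO_Y$ is the inverse-image ideal sheaf, hence coherent, and it is co-supported at infinity because $\pi$ is an isomorphism over $X_0$, so $L_{v,Y}(\tilde{\aa})$ is defined. As in the proof of Proposition \ref{PropPushForwardOnValuationsIsPullbackOnDivisors}, $\pi$ carries $c_Y(v)$ to $c_X(v)=p$, and the pullbacks $\pi^*f_1,\dots,\pi^*f_r$ of a system of local generators of $\aa$ at $p$ form a system of local generators of $\tilde{\aa}$ at $c_Y(v)$; moreover $v_X(f_i)=v_Y(\pi^*f_i)$ by Remark \ref{RmqMemeValuationApresEclatement}. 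Applying (3) on both $X$ and $Y$ then yields $L_{v,Y}(\tilde{\aa})=\min_i v_Y(\pi^*f_i)=\min_i v_X(f_i)=L_{v,X}(\aa)$. The one point that really needs care is assertion (3): one must invoke Proposition \ref{PropGoodFormofLocalGenerators} and Corollary \ref{CorValuationDefinieSurSectionLocaleIdeaux} to know that a chosen finite set of local generators consists of sections on which $v_X$ is defined and finite, so that the two passages "germs of sections $\leftrightarrow$ finite generating set" are legitimate; everything else is formal bookkeeping.
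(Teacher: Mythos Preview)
Your proof is correct and follows essentially the same approach as the paper: both reduce everything to assertion (3), proving it via the observation that any germ of a section at $p=c_X(v)$ is an $\OO_{X,p}$-combination of the generators and $v_X$ is nonnegative on $\OO_{X,p}$, then deducing (1), (2), (4), (5) formally from the behaviour of local generators under the relevant operations. Your write-up is slightly more explicit than the paper's in spelling out the two inequalities in (3) and in citing Remark~\ref{RmqMemeValuationApresEclatement} for $v_X(f_i)=v_Y(\pi^*f_i)$, but the underlying argument is the same.
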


  \begin{proof}
    The first assertion is trivial since $1$ is a local generator of the trivial sheaf. For Assertion (2), notice that if
    $(f_1, \ldots, f_r)$ are local generators of $\aa$ at $c_X (v)$ and $(g_1, \ldots, g_s)$ local generators of $\bb$
    at $c_X (v)$ then $(f_i g_j)_{i,j}$ is a set of local generators of $\aa \cdot \bb$ at $c_X (v)$ and $(f_1,
    \ldots, f_r, g_1, \ldots, g_s)$ is a set of local generators of $\aa + \bb$ at $c_X (v)$, so Assertion (2) follows
    from Assertion (3). To show Assertion (3), let $f_1, \cdots, f_r$ be local generators of $\aa$ at $c_X (v)$. This
    implies that $\aa_{c_X (v)} = f_1 \OO_{c_X (v)} + f_2 \OO_{c_X (v)} + \cdots + f_r \OO_{c_X (v)}$. Since
    $v$ is nonnegative on $\OO_{c_X (v)}$ by
    definition of the center, the assertion follows.
    For assertion 5, if $f_1, \cdots, f_r$ are local generators of $\aa$, then $\pi^* f_1, \cdots, \pi^* f_r$ are local
    generators of $\tilde \aa$ at $c_{Y}(v)$ and the result follows since $\pi_* v_{Y} = v_{X}$. Assertion (4)
    follows from the fact $\OO_X (-D)$ is locally generated by an equation of $D$ and Assertion (5) follows from the
    fact that if $(f_1, \cdots, f_r)$ are local generators of $\aa$ at $c_X (v)$ then $\left( \pi^* f_1, \cdots, \pi^*
    f_r \right)$ are local generators of $\tilde \aa$ at $c_Y (v)$.
  \end{proof}

  \begin{prop}
    If $v$ is a valuation over $\k[X_0]$ and $\aa$ is a fractional ideal co-supported at infinity, then
    $L_{v,X} (\aa) < \infty$.
  \end{prop}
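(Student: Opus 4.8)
The plan is to reduce the assertion to the divisorial case already treated in Lemma~\ref{LemmeValuationJamaisInfinieSurLesDiviseurs}, using the functoriality of $L_{v,\cdot}$ under morphisms of completions (Proposition~\ref{PropValuationForCoherentSheafOfIdealsForOneCompletion}(5)) together with the identity relating a divisor and its associated fractional ideal (Proposition~\ref{PropValuationForCoherentSheafOfIdealsForOneCompletion}(4)). Concretely, I would first produce a completion $\pi\colon Y\to X$ above $X$ on which $\aa$ becomes locally principal, i.e. such that $\tilde\aa:=\pi^*\aa\cdot\OO_Y$ is an invertible $\OO_Y$-module; this is the blow-up of $\aa$ (after writing $\aa=\OO_X(D_0)\otimes\bb$ with $D_0\in\DivInf(X)$ effective and $\bb\subseteq\OO_X$ an honest coherent ideal sheaf, so that $\bb$ is still co-supported at infinity, and blowing up $\bb$; this is exactly the construction used in the proof of Lemma~\ref{LemmeMinOfCartierIsCartier}).

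Next I would check that $\pi$ is a morphism of completions and that $\tilde\aa$ is the fractional ideal of a divisor supported at infinity. Since $\aa$, hence $\bb$, is co-supported at infinity, the blow-up of $\bb$ is an isomorphism over $X_0$, so $Y$ with the embedding $\pi^{-1}\circ\iota_X$ is a completion of $X_0$ and $\pi$ a morphism of completions. Writing $\pi^*\bb\cdot\OO_Y=\OO_Y(-E_\bb)$ with $E_\bb$ effective, one has $E_\bb\in\DivInf(Y)$ because $\bb_{|X_0}=\OO_{X_0}$, and therefore
\[
\tilde\aa=\pi^*\OO_X(D_0)\otimes(\pi^*\bb\cdot\OO_Y)=\OO_Y(\pi^*D_0-E_\bb)=\OO_Y(-D_Y),\qquad D_Y:=E_\bb-\pi^*D_0\in\DivInf(Y).
\]

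Finally I would conclude: by Proposition~\ref{PropValuationForCoherentSheafOfIdealsForOneCompletion}(5), $L_{v,X}(\aa)=L_{v,Y}(\tilde\aa)=L_{v,Y}(\OO_Y(-D_Y))$, which by Proposition~\ref{PropValuationForCoherentSheafOfIdealsForOneCompletion}(4) equals $L_{v,Y}(D_Y)$, and this is finite by Lemma~\ref{LemmeValuationJamaisInfinieSurLesDiviseurs} applied to the completion $Y$ and the divisor $D_Y\in\DivInf(Y)$. The only point requiring any care is the verification that the principalization of $\aa$ can be taken to be a morphism of completions and that the resulting principal fractional ideal corresponds to a divisor \emph{supported at infinity} — both of which are immediate consequences of $\aa$ being co-supported at infinity — so there is no serious obstacle; the statement is essentially a formal corollary of the two cited parts of Proposition~\ref{PropValuationForCoherentSheafOfIdealsForOneCompletion} and of Lemma~\ref{LemmeValuationJamaisInfinieSurLesDiviseurs}.
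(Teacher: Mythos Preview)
Your proof is correct and takes a genuinely different route from the paper's. The paper argues directly and locally: it chooses local generators $f_1,\dots,f_r$ of $\aa$ at the center $p=c_X(v)$ and, appealing to the proof of Lemma~\ref{LemmeGoodFormOfLocalEquation}, writes both $f_i$ and $f_i^{-1}$ in the shape $(\text{regular on }X_0)\times(\text{regular near }p)$; since $v$ is defined on both factors, this forces each $v(f_i)$ to be finite. Your argument is instead a global reduction: you twist $\aa$ by an effective boundary divisor to obtain an honest ideal sheaf $\bb$ co-supported at infinity, principalize by blowing up, and then invoke Lemma~\ref{LemmeValuationJamaisInfinieSurLesDiviseurs} on the resulting boundary divisor via Proposition~\ref{PropValuationForCoherentSheafOfIdealsForOneCompletion}(4)--(5). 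This is a clean formal reduction that reuses machinery already in place; the paper's approach avoids introducing an auxiliary completion. One small point you should make explicit: the blow-up of a general ideal sheaf need not be smooth along the boundary, so to obtain a completion $Y$ in the sense of the paper you should follow the blow-up of $\bb$ by a resolution of singularities at infinity. This is harmless, since invertibility of $\pi^*\bb\cdot\OO_Y$ is preserved under further pull-back and all the identities you use still hold.
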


  \begin{proof}
    Take $f_1, \cdots, f_r$ local generators of $\aa$ at $p$ the center of $v$ on $X$. The proof of
    Lemma \ref{LemmeGoodFormOfLocalEquation} shows that there exists an affine open neighbourhood $U$
    of $p$ such that $f_{i | U} = h_i g_i$ with $h_i \in \k[X_0]$ and $g_i \in \OO_X (U)$ and such that ${f_i}^{-1}$ can be put into the same form. This shows that for all $i$, $v(f_i) < \infty$.
  \end{proof}

  \begin{rmq}\label{RmqValuationDefinieSurIdeaux}
    The same definition would allow one to define $L_{v,X}(\aa)$ for any fractional ideal such that $\aa$ is a sheaf of
    ideals of $X_0$ but we have to allow infinite values. In particular, $L_{v,X} (\aa)$ is defined for any sheaf of
    ideals over $X$.
  \end{rmq}

  \section{Valuations centered at infinity}

  Recall that a valuation $v$ over $\k[X_0]$ is \emph{centered at infinity}, if $v$ does not admit a center on $X_0$. We denote by
  $\Vinf$ the set of valuations over $\k[X_0]$ centered at infinity.

  \begin{lemme}\label{LemmeCaracValuationCentreeAlInfini}
    Let $v$ be valuation over $\k[X_0]$. The following assertions are equivalent.

    \begin{enumerate}
      \item $v$ is centered at infinity.
      \item There exists $P \in \k[X_0]$ such that $v(P) < 0$.
      \item For any completion $X$ of $X_0$ and any effective divisor $H$ in $X$ such that $\supp H =
        \BD$, one has $L_{v, X}(H) >0$.
      \item There exists a completion $X$ of $X_0$ and an effective divisor $H \in X$ with $\supp H =
        \BD$ such that $L_{v, X}(H) > 0$.
    \end{enumerate}
  \end{lemme}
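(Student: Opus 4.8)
The plan is to establish the cycle $(1)\Rightarrow(2)\Rightarrow(3)\Rightarrow(4)\Rightarrow(1)$, the recurring tool being Proposition~\ref{PropValuationForCartierDivisorOverOneCompletion}(3), which for effective $D\in\DivInf(X)$ identifies the condition $L_{v,X}(D)>0$ with $c_X(v)\in\supp D$. Two of the implications are formal. For $(3)\Rightarrow(4)$ it is enough to exhibit one admissible pair: take any completion $X$ of $X_0$ and let $H$ be the reduced divisor on the irreducible components of $\BD$; then $H\geq 0$ and $\supp H=\BD$, so $(3)$ yields $L_{v,X}(H)>0$. For $(4)\Rightarrow(1)$: if $L_{v,X}(H)>0$ for some $H\geq 0$ with $\supp H=\BD$, then Proposition~\ref{PropValuationForCartierDivisorOverOneCompletion}(3) forces $c_X(v)\in\supp H=\BD$, hence $c_X(v)\notin X_0$, which is the definition of $v$ being centered at infinity.

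For $(2)\Rightarrow(3)$, fix a completion $X$ and an effective $H$ with $\supp H=\BD$; it suffices to show $c_X(v)\in\BD$, for then $L_{v,X}(H)>0$ by Proposition~\ref{PropValuationForCartierDivisorOverOneCompletion}(3). The function $P$ is regular on $X_0$, hence extends to a rational function on $X$ that is regular on $X\setminus\BD=\iota_X(X_0)$. If $c_X(v)$ were not on $\BD$, then $P$ would be regular at $c_X(v)$, so that $v(P)\geq 0$ by the very definition of the center, contradicting $v(P)<0$; thus $c_X(v)\in\BD$.

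The implication $(1)\Rightarrow(2)$ is where the real work lies, and I expect it to be the main obstacle. Choose a completion $X$ with $c_X(v)\notin X_0$, so that $c_X(v)\in\BD$. Since $\BD$ carries a smooth open neighbourhood in $X$ and $X_0$ is a normal affine surface, $X$ is a normal proper surface with $\OO_{X,p}$ regular, hence factorial, for every $p\in\BD$; thus Goodman's Theorem~\ref{ThmGoodmanExistenceAmpleDivisorAtInfinity} furnishes an ample divisor $H$ on $X$ with $\supp H=\BD$ (equivalently, one may first replace $X$ by a good completion above it, which is smooth projective and on which the center stays at infinity because centers are compatible with morphisms of completions). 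For $n$ large, $|nH|$ is base-point free and $\Gamma(X,\OO_X(nH))\subset\OO_X(X_0)$; exactly as in the proof of Lemma~\ref{LemmeGoodFormOfLocalEquation}, pick $P$ general in $\Gamma(X,\OO_X(nH))$, so that $\div_X(P)=Z_P-nH$ with $Z_P\geq 0$ and $c_X(v)\notin\supp Z_P$. Comparing local equations of $\div_X(P)$, $Z_P$ and $nH$ at $c_X(v)$, and using $L_{v,X}(nH)=nL_{v,X}(H)$ from Proposition~\ref{PropValuationForCartierDivisorOverOneCompletion}(2), one obtains
\[
  v(\iota_X^{*}P)=v_X(P)=L_{v,X}(Z_P)-nL_{v,X}(H)=-nL_{v,X}(H)<0,
\]
because a local equation of $Z_P$ at $c_X(v)$ is a unit there (so $L_{v,X}(Z_P)=0$) while $L_{v,X}(H)>0$ by Proposition~\ref{PropValuationForCartierDivisorOverOneCompletion}(3). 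Hence $\iota_X^{*}P\in\k[X_0]$ has negative $v$-value, proving $(2)$. The only genuinely delicate point is to have at hand an effective divisor supported exactly on $\BD$ with enough global sections and to arrange the general member $Z_P$ of $|nH|$ to avoid $c_X(v)$ while $\BD$ still contains it; once Goodman's theorem is invoked, the rest is bookkeeping with the additivity and positivity of $L_{v,X}$.
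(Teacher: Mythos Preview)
Your proof is correct. The implications $(2)\Rightarrow(3)$, $(3)\Rightarrow(4)$, $(4)\Rightarrow(1)$ match the paper's arguments in content (the paper organises them as $2\Rightarrow 1\Rightarrow 3\Rightarrow 4$ together with $4\Rightarrow 1$, but each step is the same appeal to Proposition~\ref{PropValuationForCartierDivisorOverOneCompletion}(3)).

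The genuine difference is in $(1)\Rightarrow(2)$. The paper takes a more elementary route: it fixes a closed embedding $X_0\hookrightarrow\A^N$, passes to the Zariski closure $X$ in $\P^N$ (possibly singular, but still proper so the center exists), notes that the center lies on the hyperplane $\{x_0=0\}$, picks $i$ with $x_i\neq 0$ at that point, and sets $P=x_i/x_0$; then $1/P$ vanishes at the center, giving $v(P)<0$ directly. Your approach instead invokes Goodman's Theorem~\ref{ThmGoodmanExistenceAmpleDivisorAtInfinity} to produce an ample $H$ with $\supp H=\BD$ and then takes a general global section of $\OO_X(nH)$ avoiding the center. This is precisely the mechanism the paper deploys later, in the proof of Proposition~\ref{PropVThetaCentreeAlInfini}, so your argument is more in line with the surrounding machinery; the paper's coordinate argument, on the other hand, is shorter and sidesteps both Goodman and the need to arrange the general member $Z_P$ to miss $c_X(v)$.
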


  \begin{proof}
    We will show the following implications $2 \Rightarrow 1 \Rightarrow 3 \Rightarrow 4$. Then, we will show
    that $1 \Rightarrow 2$ and finally that $4 \Rightarrow 2$.

    \paragraph{$2 \Rightarrow 1 \Rightarrow 3 \Rightarrow 4$}  If there exists a regular function $P$ over $X_0$ such
    that $v(P) <0$ then the center of $v$ cannot be a
    point of $X_0$ because $P$ is regular at every point of $X_0$. This shows $2 \Rightarrow 1$, then if $v$ is
    centered at infinity, take a completion $X$ of $X_0$, let $E$ be a prime divisor at infinity in $X$
    such that $c_{X}(v) \in E$. Then, since $H$ is effective and $E \in \supp H$, $L_{v,X}(H) \geq v(E) > 0$ by
    Proposition \ref{PropValuationForCartierDivisorOverOneCompletion} (1). This
    shows $1 \Rightarrow 3$ and $3 \Rightarrow 4$ is clear.

    \paragraph{$1 \Rightarrow 2$} Conversely, suppose $v$ is centered at
    infinity and fix a closed embedding $X_0 \hookrightarrow \A^N$ for some integer $N$. Let $X$ be the Zariski
    closure of $X_0$ in $\P^N$ with homogeneous coordinates $x_0, \cdots, x_N$ such that $\left\{ {x_0 = 0} \right\}$ is
    the hyperplane at infinity. The surface $X$ might not be smooth so it is not necessarily a completion of $X_0$ but it
    still is proper and the center $p$ of $v$ on $X$ belongs to $\left\{ {x_0 = 0} \right\} \cap X$. Let $1 \leq i \leq
    N$ be an integer such that $p$ belongs to the open subset $\left\{ {x_i \neq 0} \right\}$. Then, the rational function
    $P := \frac{x_i}{x_0}$ is a regular function on $X_0$ and $1/P$ vanishes at $p$. Therefore, $v
    (P) < 0$.
    \paragraph{$4 \Rightarrow 1$} Suppose that $v$ is not centered at infinity, i.e the center of $v$
    belongs to $X_0$. Then, for any completion $X$ and for any divisor $D \in \DivInf (X)$,
    one has $L_{v,X}(D) = 0$ by Proposition \ref{PropValuationForCartierDivisorOverOneCompletion} (1) since $c_{X}(v) \not
    \in \supp D$.
  \end{proof}

  This lemma shows that being centered at infinity is a property that can be tested on only one completion
  $X_0$.

  \begin{cor}
    The space $\Vinf$ is an open subset of $\cV$.
  \end{cor}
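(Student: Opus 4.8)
The plan is to obtain this as an immediate consequence of Lemma \ref{LemmeCaracValuationCentreeAlInfini}. By the equivalence $(1)\Leftrightarrow(2)$ established there, a valuation $v \in \cV$ belongs to $\Vinf$ if and only if there exists some $P \in \k[X_0]$ with $v(P) < 0$. Hence I would write
\[
  \Vinf = \bigcup_{P \in \k[X_0]} \left\{ v \in \cV : v(P) < 0 \right\},
\]
and it suffices to check that each set on the right is open.

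For that, I would recall that the weak topology on $\cV$ is, by definition, the coarsest topology making every evaluation map $\mathrm{ev}_P : v \mapsto v(P)$ continuous, where $\R \cup \{\infty\}$ is equipped with its natural topology. Since $\{ t \in \R \cup \{\infty\} : t < 0 \}$ is open in this topology, the set $\{ v \in \cV : v(P) < 0 \} = \mathrm{ev}_P^{-1}\big((-\infty,0)\big)$ is open in $\cV$ for each fixed $P$. A union of open sets being open, this shows that $\Vinf$ is open in $\cV$, which is the claim.

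There is essentially no obstacle in this argument: all the content has been absorbed into Lemma \ref{LemmeCaracValuationCentreeAlInfini}, whose point is precisely to replace the condition ``$v$ is centered at infinity'' — which a priori refers to centers on arbitrary proper models of $X_0$ — by the manifestly open condition that $v$ take a negative value on a single regular function. The only minor verification is that strict inequalities cut out open subsets of the value set $\R \cup \{\infty\}$, which is immediate. Consequently I expect the proof to be one or two lines, simply invoking the lemma and the definition of the weak topology.
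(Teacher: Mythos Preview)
Your proposal is correct and follows exactly the same approach as the paper: both invoke Lemma \ref{LemmeCaracValuationCentreeAlInfini} to write $\Vinf = \bigcup_{P \in \k[X_0]} \{v : v(P) < 0\}$ and conclude that this is a union of open sets. The paper's version is just terser, omitting the explicit mention of the evaluation maps' continuity.
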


  \begin{proof}
    We have by Lemma \ref{LemmeCaracValuationCentreeAlInfini} that
    \begin{equation}
      \Vinf = \bigcup_{P \in \k[X_0]} \left\{ v(P) < 0 \right\}.
      \label{<+label+>}
    \end{equation}
    Therefore, it is a union of open subsets.

  \end{proof}

\subsection{Topologies over the set of valuations centered at infinity}
  Let $X$ be a completion of $X_0$.
  Call $\sigma$ the coarsest topology such that the evaluation maps $\phi_f: v \in \Vinf \mapsto v(f)$ are continuous
  for all $f \in \k[X_0]$ and $\tau$ the coarsest topology such that the evaluation maps $\psi_\aa: v \in \Vinf \mapsto L_v
  (\aa)$ are continuous for all fractional ideals $\aa$ of $X$ such that $\aa_{|X_0}$ is a sheaf of ideals over $X_0$.
  Recall that we allow in both cases infinite values.
  \begin{prop}{\cite{jonssonValuationsAsymptoticInvariants2012}} \label{PropTopologiesOverIdealsAndFunctionsAreTheSame}
    These two topologies on $\cV$ are the same.
  \end{prop}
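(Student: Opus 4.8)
The plan is to prove that the two topologies $\sigma$ (generated by evaluation maps $v \mapsto v(f)$ for $f \in \k[X_0]$) and $\tau$ (generated by $v \mapsto L_v(\aa)$ for fractional ideals $\aa$ co-supported at infinity) coincide on $\Vinf$ by showing each family of maps is continuous for the other topology. The key structural fact I will use is Proposition \ref{PropValuationForCoherentSheafOfIdealsForOneCompletion}(3): if $f_1,\dots,f_r$ are local generators of $\aa$ at $c_X(v)$, then $L_{v,X}(\aa) = \min_i v_X(f_i)$, together with Proposition \ref{PropGoodFormofLocalGenerators}, which says these generators can locally be written $f_i = F_i g_i$ with $F_i \in \OO_X(X_0)$ regular and $g_i$ regular invertible near $c_X(v)$.

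First I would show $\tau \subseteq \sigma$, i.e. each $\psi_\aa$ is $\sigma$-continuous. The subtlety is that the local generators of $\aa$ depend on the center $c_X(v)$, which varies with $v$. To handle this, fix a completion $X$; by quasi-compactness of $X$ choose a finite affine cover $(U_j)$, and on each $U_j$ finitely many generators $f_{j,1},\dots,f_{j,r_j} \in \k(X)$ of $\aa|_{U_j}$. Using Lemma \ref{LemmeGoodFormOfLocalEquation} / Proposition \ref{PropGoodFormofLocalGenerators}, after possibly shrinking I can write $f_{j,i} = F_{j,i} g_{j,i}$ with $F_{j,i} \in \OO_X(X_0)$ and $g_{j,i}$ regular invertible on the relevant open set. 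For $v$ centered in $U_j$, one has $v_X(g_{j,i}) = 0$, so $L_v(\aa) = \min_i v(F_{j,i})$ where the $F_{j,i}$ come from whichever chart contains $c_X(v)$. Since the condition "$c_X(v) \in U_j$" is $\sigma$-open (by Lemma \ref{LemmeCaracValuationCentreeAlInfini} and Proposition \ref{PropConvergenceDesCentres}-type arguments, being an $\sigma$-local property expressible by inequalities on finitely many regular functions), $L_v(\aa)$ is locally a finite minimum of the $\sigma$-continuous functions $v \mapsto v(F_{j,i})$, hence $\sigma$-continuous.

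For the reverse inclusion $\sigma \subseteq \tau$, I need each evaluation map $\phi_f : v \mapsto v(f)$, $f \in \k[X_0]$, to be $\tau$-continuous. The point is that $v(f) = L_{v,X}(\div f) = L_{v,X}(\OO_X(-\div f))$ by Proposition \ref{PropValuationForCartierDivisorOverOneCompletion}(4) and Proposition \ref{PropValuationForCoherentSheafOfIdealsForOneCompletion}(4), but $\OO_X(-\div f)$ is a fractional ideal whose restriction to $X_0$ is the principal ideal $(f)$, a genuine sheaf of ideals on $X_0$ — so it is an admissible test object for $\tau$. More carefully, for $f \in \k[X_0]$ one writes $f = h / h'$ with $h, h'$ regular on $X_0$ (or directly, $f$ itself regular), and $v(f)$ is read off from $L_v$ applied to the fractional ideals generated by $f$ and by $1$; this gives $\phi_f$ as a difference of $\tau$-continuous maps.

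The main obstacle I expect is the bookkeeping in the first inclusion: making the "locally a finite min of evaluation maps" argument uniform across the finitely many charts and verifying that the sets on which a given chart's formula is valid are $\sigma$-open and cover $\Vinf$. This is essentially a gluing argument and should follow from the cited local structure results (Lemma \ref{LemmeGoodFormOfLocalEquation}, Propositions \ref{PropGoodFormofLocalGenerators} and \ref{PropValuationForCoherentSheafOfIdealsForOneCompletion}) plus the fact that $\Vinf$ is open in $\cV$ and the center map is locally constant in the relevant sense; the reference \cite{jonssonValuationsAsymptoticInvariants2012} treats the analogous statement, so I would follow that line and only adapt it to the "centered at infinity" setting. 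No genuinely new idea is needed beyond carefully localizing.
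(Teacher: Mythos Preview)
Your direction $\sigma \subseteq \tau$ is fine and essentially matches the paper: for $f \in \k[X_0]$ one has $v(f) = L_v\bigl((f)\bigr)$ for the principal fractional ideal $(f)$, so each $\phi_f$ is already one of the test maps defining $\tau$.

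For $\tau \subseteq \sigma$ your local/chart approach has a genuine gap. Proposition~\ref{PropGoodFormofLocalGenerators} only produces $f_{j,i} = F_{j,i}\, g_{j,i}$ with $g_{j,i} \in \OO_X(U)$ \emph{regular}, not invertible; hence $v_X(g_{j,i}) \geq 0$ but there is no reason for $v_X(g_{j,i}) = 0$, and your claimed formula $L_v(\aa) = \min_i v(F_{j,i})$ does not follow. Repairing this would force you to write each $g_{j,i}$ as a ratio $P/Q$ with $P,Q \in \k[X_0]$, but such an expression depends on the center $c_X(v)$ and cannot be chosen uniformly over a chart, so the gluing you describe does not go through as stated. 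You also need, separately, that the sets $\{v : c_X(v) \in U_j\}$ are $\sigma$-open, which is true but is an extra ingredient.

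The paper avoids all of this with one global move: choose an ample $H$ supported at infinity and $n \gg 0$ so that both $\aa \otimes \OO_X(nH)$ and $\OO_X(nH)$ are generated by global sections $f_1,\dots,f_r$ and $g_1,\dots,g_s$. Because both sheaves restrict to subsheaves of $\OO_{X_0}$, every $f_i$ and $g_j$ lies in $\k[X_0]$; because global generators are in particular local generators at $c_X(v)$, Proposition~\ref{PropValuationForCoherentSheafOfIdealsForOneCompletion}(3) gives
\[
L_v(\aa) \;=\; L_v\bigl(\aa\otimes\OO_X(nH)\bigr) - L_v\bigl(\OO_X(nH)\bigr) \;=\; \min_i v(f_i) - \min_j v(g_j),
\]
a single $\sigma$-continuous formula valid for every $v$ simultaneously, with no localisation, no gluing, and no tracking of centers. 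This ample-twist trick is the missing idea in your proposal.
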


  \begin{proof}
    First if $f \in \k[X_0]$, then $v(f) = L_v ( (f))$ where $(f)$ is the fractional ideal generated by $f$. So
    $\sigma$ is finer than $\tau$. For the other way, Let $H$ be an ample divisor supported at infinity and let $\aa$ be
    a fractional ideal co-supported at infinity. There exists an integer $n >0$ such that
    $\aa \otimes \OO_X (nH)$ and $\OO_X(nH)$ are generated by global sections $(f_1,  \cdots, f_r)$ and
    $(g_1, \cdots, g_s)$ respectively. Notice that for all $i,j$, the rational functions $f_i, g_j$ belong
    to $\OO_X (X_0)$. Now, we have that $L_v (\aa)  = L_v (\aa \otimes \OO_X (nH) \otimes \OO_X (-nH))$, therefore
    \[ L_v (\aa) = \min_{i,j} \left( v \left( \frac{f_i}{g_j} \right) \right) = \min_{i,j} \left( v(f_i) -
      v (g_j) \right)
    \]
      Therefore, $\tau$ is finer than $\sigma$ and the result is shown.

  \end{proof}

  \subsection{Valuations centered at infinity as linear forms over
  $\Cinf$}\label{SecValuationAsLinearFormsOverDivisors}
  \begin{dfn} Let $v$ be a valuation over $\k[X_0]$. Let $D \in \Cinf$ and $X$ be a completion of $X_0$ such that $D$ is defined by $D_X$. We define
    \begin{equation}
    L_v(D) := L_{v,X} (D_{X}).
  \end{equation}
  This does not depend on the choice $X$ and defines a linear map on $\Cinf$ by Proposition
    \ref{PropValuationForCartierDivisorOverOneCompletion} and $L_v (D) < + \infty$ by Lemma
    \ref{LemmeValuationJamaisInfinieSurLesDiviseurs}.
    Notice that $L_v = 0$ if and only if $v$ is not centered at infinity.
  \end{dfn}

  \begin{prop}\label{prop:property_plus}
    If $v$ is a valuation on $\k[X_0]$ centered at infinity then $L_v$ is a linear form $L_v: \Cinf \rightarrow \R$
    and satisfies
    \begin{enumerate}
      \item If $D \geq 0$, then $L_v(D) \geq 0$.
      \item For $D, D' \in \Cinf, L_v(D \wedge D') = \min (L_v(D), L_v(D'))$.
    \end{enumerate}
    We will say that an element of $\Hom (\Cinf, \R)$ that satisfies these 2 properties satisfies property (+).
  \end{prop}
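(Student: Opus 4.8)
The plan is to observe that the linearity of $L_v$ on $\Cinf$, as well as its finiteness, is already part of the definition preceding the statement (it is Proposition \ref{PropValuationForCartierDivisorOverOneCompletion} together with Lemma \ref{LemmeValuationJamaisInfinieSurLesDiviseurs}), so that only the two positivity-type properties remain. Property (1) I would dispatch immediately: if $D \geq 0$ in $\Cinf$, then by definition $D$ is defined over some completion $X$ by an effective incarnation $D_X \in \DivInf(X)$, and $L_v(D) = L_{v,X}(D_X) \geq 0$ by Proposition \ref{PropValuationForCartierDivisorOverOneCompletion}(3). The content of the statement is property (2), and I would prove it in two steps.

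First I would reduce property (2) to the case where $D$ and $D'$ are effective. Fixing a completion $X$ over which both are defined and an effective divisor $H$ with $\Supp H = \BD$, for $n$ large the divisors $D + nH$ and $D' + nH$ are effective; since the meet commutes with translation, namely $D \wedge D' = \bigl((D+nH)\wedge(D'+nH)\bigr) - nH$ (this is exactly the identity used in the proof of Lemma \ref{LemmeMinOfCartierIsCartier}), and since $L_v$ is linear, both sides of (2) are shifted by the same quantity $L_v(-nH)$. Hence we may assume $D$ and $D'$ are effective, defined over $X$ by $D_X, D'_X \geq 0$.

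Then I would pass to the ideal-sheaf description. Put $\aa := \OO_X(-D_X) + \OO_X(-D'_X)$, a fractional ideal of $X$ co-supported at infinity. On one hand, Proposition \ref{PropValuationForCoherentSheafOfIdealsForOneCompletion}(4) and (2) give
\[
  L_{v,X}(\aa) = \min\bigl(L_{v,X}(\OO_X(-D_X)),\, L_{v,X}(\OO_X(-D'_X))\bigr) = \min\bigl(L_v(D), L_v(D')\bigr).
\]
On the other hand, let $\pi : Y \to X$ be the blow-up of $\aa$; since $\aa_{|X_0} = \OO_{X_0}$ this is a morphism of completions, $\tilde\aa := \pi^*\aa \cdot \OO_Y = \OO_Y(-D_0)$ for an effective $D_0 \in \DivInf(Y)$, and by Claim \ref{ClaimPullBackOfIdealSheafIsMinimumCartierClass} the Cartier class $D \wedge D'$ is defined over $Y$ by $D_0$. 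Therefore, using Proposition \ref{PropValuationForCoherentSheafOfIdealsForOneCompletion}(4) and then (5),
\[
  L_v(D \wedge D') = L_{v,Y}(D_0) = L_{v,Y}(\OO_Y(-D_0)) = L_{v,Y}(\tilde\aa) = L_{v,X}(\aa),
\]
and combining with the previous display yields $L_v(D \wedge D') = \min(L_v(D), L_v(D'))$, which is property (2).

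The main obstacle is the handling of $D \wedge D'$: one is essentially forced to go through the blow-up of the sum ideal sheaf, so the crux is transporting $L_v$ faithfully along $\pi$ (Proposition \ref{PropValuationForCoherentSheafOfIdealsForOneCompletion}(5)) and knowing, via Claim \ref{ClaimPullBackOfIdealSheafIsMinimumCartierClass}, that $D_0$ is the correct incarnation of the meet. The reduction to effective divisors is the only other point requiring a word, and it is routine.
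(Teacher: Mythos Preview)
Your proof is correct and follows essentially the same route as the paper: property (1) is immediate from Proposition \ref{PropValuationForCartierDivisorOverOneCompletion}(3), and property (2) is obtained by realising $D \wedge D'$ via the blow-up of $\aa = \OO_X(-D_X) + \OO_X(-D'_X)$ (Claim \ref{ClaimPullBackOfIdealSheafIsMinimumCartierClass}) and then applying Proposition \ref{PropValuationForCoherentSheafOfIdealsForOneCompletion}(2), (4), (5). Your explicit reduction to the effective case is a welcome bit of care, since the Claim is only stated for effective divisors; the paper skips this reduction in its proof here, though it is implicit from the proof of Lemma \ref{LemmeMinOfCartierIsCartier}.
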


  \begin{proof}
    Assertion 1 follows from Proposition \ref{PropValuationForCartierDivisorOverOneCompletion} (3).  We show the second
    assertion. Take $D, D' \in \Cinf$ and $X$ a completion of
    $X_0$ such that $D, D'$ are defined by their incarnation $D_{X}, D_{X}'$.
    By Claim \ref{ClaimPullBackOfIdealSheafIsMinimumCartierClass} (that we prove in the next section), we know that there exists a completion $Y$
    along with a morphism of completions $\pi: Y \rightarrow X$ such that $D \wedge D'$ is the Cartier class
    determined by some divisor $D_{Y}$ in $Y$ such that $\pi^* (\OO_{X}(-D_{X}) + \OO_{X}(-D_{X} '
    )) \cdot \OO_{Y} = \OO_{Y}(-D_{Y})$. Using Proposition
    \ref{PropValuationForCoherentSheafOfIdealsForOneCompletion}, it follows that

    \begin{align*}
      L_v(D \wedge D') &= L_{v,Y}(D_{Y}) \\
      &= L_{v,Y}(O_{Y}(-D_{Y})) \quad \ref{PropValuationForCoherentSheafOfIdealsForOneCompletion}
      (4) \\
      &= L_{v,X} (\OO_{X}(-D_{X}) + \OO_{X}(- D_{X}')) \quad \ref{ClaimPullBackOfIdealSheafIsMinimumCartierClass}\\
      &= \min(L_{v,X}(\OO_{X}(- D_{X})), L_{v,X}(\OO_{X}(-D_{X}'))) \quad
      \ref{PropValuationForCoherentSheafOfIdealsForOneCompletion} (2) \\
      &= \min (L_v(D), L_v(D')) \quad \ref{PropValuationForCoherentSheafOfIdealsForOneCompletion} (4)
    \end{align*}
  \end{proof}

  For any linear form $L \in \Hom (\Cinf, \R)$ and any dominant endomorphism $f : X_0 \rightarrow X_0$ we can define the
  \emph{pushforward} $f_*L \in \Hom (\Cinf, \R)$ by 
  \begin{equation}
    \forall D \in \Cinf, \quad f_* L (D) := L (f^*D).
    \label{<+label+>}
  \end{equation}

    \begin{prop}\label{PropPushForwardOnValuationsIsPushforwardOnLinearMap}
      Let $v$ be a valuation over $\k[X_0]$ and $f : X_0 \rightarrow X_0$ a dominant endomorphism, then for all $D \in \Cinf,$

      \begin{equation}
        L_{f_* v} (D) = L_v (f^* D) = (f_* L_v )(D)
      \end{equation}

    \end{prop}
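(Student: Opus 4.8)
The plan is to unwind the definitions and reduce the statement to Proposition~\ref{PropPushForwardOnValuationsIsPullbackOnDivisors}, which already records the compatibility of the pushforward of a valuation with the pullback of a divisor supported at infinity at the level of a single pair of completions. The last equality $L_v(f^*D) = (f_*L_v)(D)$ is a matter of notation: $f_*$ on linear forms is defined to be the transpose of $f^* : \Cinf \to \Cinf$, i.e.\ $f_*L_v := L_v \circ f^*$, compatibly with the perfect pairing between $\Winf$ and $\Cinf$ and with $f_*$ on $\Winf$; so the real content is the first equality $L_{f_*v}(D) = L_v(f^*D)$.

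First I would fix $D \in \Cinf$ and choose a completion $X$ of $X_0$ over which $D$ is defined, say $D$ is the class of $(X, D_X)$ with $D_X \in \DivInf(X)$. By elimination of indeterminacies in dimension two, there is a completion $Y$ of $X_0$ such that the lift $F : Y \to X$ of $f$ is a regular morphism of completions. By the very definition of $f^*$ on $\Cinf$ given in \S\ref{SecValuationAsLinearFormsOverDivisors}, one has $f^*D = (Y, F^*D_X)$, and this class does not depend on the choice of such $Y$.

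Next I would compute both sides through their defining local expressions. By definition of $L_v$ on $\Cinf$, $L_v(f^*D) = L_{v,Y}(F^*D_X)$, and by definition $L_{f_*v}(D) = L_{f_*v,X}(D_X)$. Now Proposition~\ref{PropPushForwardOnValuationsIsPullbackOnDivisors}, applied to the regular lift $F : Y \to X$, gives exactly $L_{f_*v,X}(D_X) = L_{v,Y}(F^*D_X)$. Combining these three identities yields $L_{f_*v}(D) = L_v(f^*D)$, and then $L_v(f^*D) = (f_*L_v)(D)$ holds by the definition of $f_*$ recalled above.

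There is essentially no obstacle here beyond bookkeeping; the only point deserving a word is that all the choices (of a completion $X$ on which $D$ is defined, and of $Y$ resolving $f$) are immaterial. This follows from the independence statements already established for $L_v$ on $\Cinf$ (via Proposition~\ref{PropValuationForCartierDivisorOverOneCompletion}(5)) and for $f^*$ on $\Cinf$, together with the identity $\pi_* v_Y = v_X$ for morphisms of completions from Remark~\ref{RmqMemeValuationApresEclatement}, which is precisely what makes Proposition~\ref{PropPushForwardOnValuationsIsPullbackOnDivisors} applicable. One could alternatively reduce to checking the identity on effective divisors or on the canonical basis of $\Cinf$, but that reduction is unnecessary given the cited proposition.
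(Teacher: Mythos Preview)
Your proof is correct and follows essentially the same approach as the paper: choose completions $X$ (where $D$ is defined) and $Y$ (where the lift $F:Y\to X$ is regular), then compare $L_{f_*v,X}(D_X)$ with $L_{v,Y}(F^*D_X)$. The only cosmetic difference is that the paper redoes the local-equation computation inline (taking a local equation $\phi$ of $D$ at $c_X(v)$ and checking $v_Y(F^*\phi)=(f_*v)_X(\phi)$), whereas you more efficiently cite Proposition~\ref{PropPushForwardOnValuationsIsPullbackOnDivisors}, which already records exactly this identity.
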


    \begin{proof}
      Let $X$ be a completion of $X_0$ where $D$ is defined, then $f$ induces a dominant rational map $f : X
      \rightarrow X$. Let $\pi: Y \rightarrow X$ be a projective birational morphism such that the lift $F: Y
      \rightarrow X$ is regular. Since $f$ is an endomorphism of $X_0$ we can suppose that $\pi$ is the identity over
      $X_0$, hence $Y$ is a completion of $X_0$ and $\pi$ is a morphism of completions. Now, if $\phi$ is a local
      equation of $D$ near the center of $v_{X}$, then
      $F^* \phi$ is a local equation of $F^* D$ near the center of $v_{Y}$. Since $\pi_* v_{Y} = v_{X}$, one has

      \begin{equation}
      v_{Y} (F^* g) = v_{X} ((F \circ {\pi}^{-1} )^* g) = v_{X} (f^*g) = (f_* v)_{X} (g) \end{equation}
    \end{proof}
   We equip $\Hom(\Cinf, \R)$ with the weak-$\star$ topology,
  that is the coarsest topology such that the map $L \in \Hom(\Cinf, \R) \mapsto L(D)$ is continuous
  for all $D \in \Cinf$. We extend $L_v$ to $\Cinf_\R$ by linearity.

  \begin{prop}\label{PropTopologiesOverDivisorsAndFunctionsAreTheSame}
    The map $v \in \Vinf \mapsto L_v \in \Hom(\Cinf, \R)$ is a continuous embedding.
  \end{prop}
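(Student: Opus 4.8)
The plan is to show that $v\mapsto L_v$ is injective, continuous, and a homeomorphism onto its image (with the weak-$\star$ topology on $\hom(\Cinf,\R)$). Continuity is the easy part: given $D\in\Cinf$, pick a completion $X$ with $D$ defined by $D_X\in\DivInf(X)$, write $D_X=D_X^+-D_X^-$ with $D_X^\pm\geq 0$ supported at infinity, and observe that $\OO_X(-D_X^\pm)$ are fractional ideals of $X$ co-supported at infinity, so by Proposition \ref{PropValuationForCoherentSheafOfIdealsForOneCompletion} and the definition of $L_v$ one has $L_v(D)=L_{v,X}\big(\OO_X(-D_X^+)\big)-L_{v,X}\big(\OO_X(-D_X^-)\big)$, a difference of two functions of $v$ that are continuous on $\Vinf$ for the weak topology by Proposition \ref{PropTopologiesOverIdealsAndFunctionsAreTheSame}. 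Since $D$ is arbitrary, $v\mapsto L_v$ is continuous; equivalently, the topology on $\Vinf$ pulled back along $v\mapsto L_v$ is coarser than the weak topology.

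The main tool for the remaining two points is the identity $v(P)=L_{v,X}(D_X)+L_v(\div_{\infty,X}(P))$, valid for every $P\in\k[X_0]$, every $v\in\Vinf$ and every completion $X$, where $D_X\geq 0$ is the horizontal part of $\div_X(P)$ (so $L_{v,X}(D_X)=v(\text{local equation of }D_X\text{ at }c_X(v))\geq 0$); this comes from Proposition \ref{PropValuationForCartierDivisorOverOneCompletion}. Passing to a higher completion only adds effective exceptional contributions to $\div_{\infty,\cdot}(P)$, so $X\mapsto L_v(\div_{\infty,X}(P))$ is nondecreasing with $v(P)=\sup_X L_v(\div_{\infty,X}(P))$; and $L_{v,X}(D_X)$ vanishes once $c_X(v)\notin\supp D_X$, which can be arranged on a high enough completion unless the sequence of centers of $v$ follows a branch of $\overline{\{P=0\}}$ forever — by Proposition \ref{PropSequenceOfInfinitelyNearPoints} this happens exactly when $v$ is the curve valuation along that branch, i.e. when $v(P)=+\infty$. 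So whenever $v(P)<\infty$ there is a completion $X$ (and all higher ones) with $v(P)=L_v(\div_{\infty,X}(P))$. Injectivity then follows: if $L_v=L_w$, applying Proposition \ref{PropValuationForCartierDivisorOverOneCompletion}(3) to the prime divisors at infinity over all completions (together with blow-ups of centers) forces $c_Y(v)=c_Y(w)$ for every completion $Y$, hence $v$ and $w$ have the same sequence of centers, the same type and the same bad ideal, so $v(P)<\infty\iff w(P)<\infty$; the identity above then gives $v(P)=L_v(\div_{\infty,X}(P))=L_w(\div_{\infty,X}(P))=w(P)$ for all such $P$, and both equal $+\infty$ otherwise, so $v=w$.

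It remains to show the weak topology on $\Vinf$ is coarser than the pulled-back topology, i.e. that each map $v\mapsto v(P)$ is continuous for the latter. The sets $\{v(P)>t\}=\bigcup_X\{\,v: L_v(\div_{\infty,X}(P))>t\,\}$ are pulled-back open by the monotone identity. The delicate sets are $\{v(P)<t\}$: fixing $v_0$ there and a completion $X$ large enough that $v_0(P)=L_{v_0}(\div_{\infty,X}(P))$, one must show the identity $v(P)=L_v(\div_{\infty,X}(P))$ survives on a pulled-back neighbourhood of $v_0$, i.e. that the horizontal curve $D_X$ does not contribute through the center of valuations $v$ close to $v_0$. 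This is the step I expect to be the main obstacle. The way I would handle it is through the local tree atlas of Theorem \ref{ThmWeakTopologyAtlas}: working inside a chart $\cV_X(E;E)$, use Proposition \ref{PropConvergenceDesCentres} to pin the center of nearby valuations and the fact that the skewness — hence $v(\text{local equation of }D_X)$ — vanishes at the root to control $L_{v,X}(D_X)$, so that $v\mapsto v(P)$ is indeed pulled-back continuous near $v_0$. Combining the two topological inclusions shows that $v\mapsto L_v$ is a homeomorphism onto its image, which completes the proof.
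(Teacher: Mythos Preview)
Your continuity argument is essentially the paper's: both reduce $v\mapsto L_v(D)$ to evaluation on a fractional ideal co-supported at infinity and invoke Proposition~\ref{PropTopologiesOverIdealsAndFunctionsAreTheSame}.

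For injectivity, you work much harder than necessary. The paper's argument is two lines: if $v\neq w$, either $w=tv$ with $t\neq 1$ (then $L_w=tL_v\neq L_v$ since $L_v\neq 0$ for $v$ centered at infinity), or $v$ and $w$ have distinct centers on some completion $X$; in the latter case, if say $c_X(v)$ is a closed point, the exceptional divisor $\tilde E$ above it satisfies $L_v(\tilde E)>0$ but $L_w(\tilde E)=0$ by Proposition~\ref{PropValuationForCartierDivisorOverOneCompletion}(3). Your route through the identity $v(P)=\sup_X L_v(\div_{\infty,X}(P))$ is correct but essentially reproves the later ``$v\circ L=\id_{\Vinf}$'' step of Chapter~\ref{ChapterBijection}, and is overkill here.

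The third part of your proposal --- that the map is a homeomorphism onto its image --- is not part of the paper's proof and not part of the statement as the paper understands it. In this paper ``continuous embedding'' means continuous injection only; compare Corollary~\ref{CorStrongConvergenceInL2}, which asserts a continuous embedding and then adds ``Furthermore, it is a homeomorphism onto its image'' as a \emph{separate} claim. So you are proving strictly more than required. Your sketch for this extra part is also incomplete as written: you invoke Proposition~\ref{PropConvergenceDesCentres} to pin down centers of ``nearby'' valuations, but that proposition is stated for convergence in the weak topology, which is precisely the topology you are trying to recover from the pulled-back one --- so the step is circular. It can be repaired (the center is read off directly from which prime divisors $E$ satisfy $L_v(E)>0$, a pulled-back-open condition), but since the statement does not ask for it, the cleanest fix is simply to drop this part.
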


  \begin{proof}
    For the injectivity, let $v , w \in \Vinf$ such that $v \neq w$. First, if $w = t v$ with $t >0$, then since
    $L_v \neq 0$, we have $L_v \neq L_w$. Otherwise, there exists a completion $X$ such that $c_X (v) \neq c_X
    (w)$. If the centers are both prime divisors at infinity then it is clear that $L_v \neq L_w$. If $c_X
    (v)$ is a point, let $\tilde E$ be the exceptional divisor above it. Then, by Proposition
    \ref{PropValuationForCartierDivisorOverOneCompletion}, $L_v (\tilde E) >0$, but $L_w (\tilde E) = 0$.

    By definition, to show continuity we have to show that for all $D \in \Cinf$, the map $v \in \Vinf \mapsto L_v (D)$ is
    continuous. Let $X$ be a completion where $D$ is defined, then by Proposition
      \ref{PropValuationForCartierDivisorOverOneCompletion} $L_v (D) = L_v (\OO_X (-D))$ and by Proposition
      \ref{PropTopologiesOverIdealsAndFunctionsAreTheSame} the map $v \in \Vinf \mapsto L_v (\OO_X (-D))$ is continuous.
  \end{proof}

  \begin{prop}\label{PropEvaluationSurDiviseurEnFonctionDeAlpha}
  Let $X$ be a completion of $X_0$ and $p \in X$ a closed point at infinity. Let $v \in \cV_X (p; \m_p)$. If $E$
  is a prime divisor of $X$ at infinity such that $p \in E$, then
  \begin{equation}
    1 \leq L_v (E) \leq \alpha (v)
    \label{<+label+>}
  \end{equation}
\end{prop}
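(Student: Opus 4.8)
The plan is to unwind the definitions; everything reduces to the identity $L_v(E) = v(z)$ for a suitable local equation $z$ of $E$. First I would fix local coordinates $(z,w)$ at $p$ adapted to the situation: if $p \in E$ is free, take $z$ a local equation of $E$ and $w$ arbitrary completing it to a regular system of parameters; if $p = E \cap F$ is a satellite point, take $z$ a local equation of $E$ and $w$ a local equation of $F$. In both cases $\m_p = (z,w)$, and by Theorem \ref{ThmCompletionLocalRing} we have $\hat{\OO_{X,p}} \simeq \k[[z,w]]$. Since $p = c_X(v)$ (which exists and is unique by Lemma \ref{LemmeCentreValuation}), the valuation $v_X$ extends to a centered valuation on $\hat{\OO_{X,p}}$, and under the above identification $v$ corresponds to an element of $\cV_\m$; this is exactly the identification of Proposition \ref{PropValuativeTreeIsomorphism}, so that $\alpha(v)$ is the skewness of Proposition \ref{PropValuationEnFonctionDeAlpha}, namely $\alpha(v) = \sup\{v(\phi)/m(\phi) : \phi \in \m_p\}$ with $m = v_{\m_p}$.

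Next I would record that, by the definition of $L_v$ from Section \ref{SecValuationAsLinearFormsOverDivisors} together with Proposition \ref{PropValuationForCartierDivisorOverOneCompletion}, and using that $z$ is a local equation of $E$ at $c_X(v) = p$, one has $L_v(E) = L_{v,X}(E) = v_X(z) = v(z)$; this value is finite by Lemma \ref{LemmeValuationJamaisInfinieSurLesDiviseurs} (equivalently, $v$ is not the curve valuation $v_z$, which does not define a valuation on $\k[X_0]$). The lower bound is then immediate: the normalization $v \in \cV_X(p;\m_p)$ means $v(\m_p) = \min(v(z),v(w)) = 1$, hence $v(z) \geq 1$, i.e. $L_v(E) \geq 1$. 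For the upper bound, since $z$ is part of a regular system of parameters of $\OO_{X,p}$ we have $z \in \m_p \setminus \m_p^2$, so $m(z) = 1$; plugging $\phi = z$ into the supremum defining the skewness gives $\alpha(v) \geq v(z)/m(z) = v(z) = L_v(E)$. Combining the two estimates yields $1 \leq L_v(E) \leq \alpha(v)$.

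There is no genuine obstacle in this argument; the only points requiring care are purely bookkeeping ones: first, that the abstract valuation $v$ on $\k[X_0]$ really does induce a centered valuation on $\k[[z,w]]$ to which the skewness formalism applies (which is the content of the center construction and of Proposition \ref{PropValuativeTreeIsomorphism}), and second, that the two occurrences of ``local equation of $E$'' — the one used to compute $L_v(E)$ and the coordinate $z$ — agree up to a unit near $p$, so that the value $v(z)$ is unambiguous. Both were already established earlier in the text, so the proof is a short verification.
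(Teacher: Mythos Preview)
Your proof is correct and follows essentially the same approach as the paper: reduce $L_v(E)$ to $v(z)$ for a local equation $z$ of $E$, get the lower bound from the normalization $v(\m_p)=1$, and get the upper bound from the skewness. The only cosmetic difference is that for the upper bound the paper invokes the formula $v(z)=\alpha(v\wedge v_z)\,m(z)\leq \alpha(v)$ from Proposition~\ref{PropValuationEnFonctionDeAlpha}, whereas you plug $\phi=z$ directly into the defining supremum $\alpha(v)=\sup_{\phi\in\m_p} v(\phi)/m(\phi)$; these are the same observation.
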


\begin{proof}
  Let $z \in \OO_{X,p}$ be a local equation of $E$, $z$ is irreducible and we have $L_v (E) = v(z)$. We have that $z
  \in \m_p$, therefore $v(z) \geq v(\m_p) = 1$. This shows the first inequality. For the second one, let $v_z$ be the
  curve valuation associated to $z$. It does not define a valuation over $\k [X_0]$ but it defines a valuation over
  $\OO_{X,p}$ by Proposition \ref{PropValuationEnFonctionDeAlpha}, we get
  \begin{equation}
    v(z) = \alpha (v_z \wedge v) \leq \alpha (v)
    \label{<+label+>}
  \end{equation}
\end{proof}

  \subsection{Special look at divisorial valuations centered at infinity}

  \begin{lemme}\label{LemmeValuationDivisorielle}
    Let $X$ be a completion of $X_0$ and let $E$ be a prime divisor at infinity. One has $L_{\ord_E}(E) = 1$ and for
    any prime divisor $F \neq E$ in $X$, $L_{\ord_E} (F) = 0$.

    Furthermore, if $\pi: Y \rightarrow X$ is some blow-up of $X$, and $\pi ' (E)$ the strict
    transform of $E$ by $\pi$, then

    \begin{equation}
    \pi_* \ord_{\pi ' (E)} = \ord_E. \end{equation}

  \end{lemme}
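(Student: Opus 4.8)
Let $X$ be a completion of $X_0$ and let $E$ be a prime divisor at infinity. One has $L_{\ord_E}(E) = 1$ and for any prime divisor $F \neq E$ in $X$, $L_{\ord_E}(F) = 0$. Furthermore, if $\pi: Y \rightarrow X$ is some blow-up of $X$, and $\pi'(E)$ the strict transform of $E$ by $\pi$, then $\pi_* \ord_{\pi'(E)} = \ord_E$.

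Let me think about how to prove this.

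First part: $L_{\ord_E}(E) = 1$. The valuation $\ord_E$ is a divisorial valuation on $\k[X_0]$ whose center on $X$ is the generic point $x_E$ of $E$ (as discussed in the example after Lemma \ref{LemmeCentreValuation}). At that generic point, the local ring $\OO_{X, x_E}$ is a DVR with uniformizer a local equation $z$ of $E$. A local equation of the divisor $E$ at $x_E$ is precisely $z$, so $L_{\ord_E}(E) = \ord_E(z) = 1$. This is actually the special case $D = E$ of the example with $\ord_E(D) = L_{\ord_E}(D)$ already stated just above (Equation \eqref{EqValuationDisivorielle}), since $\ord_E(E) = 1$. Similarly for $F \neq E$, the divisor $F$ does not pass through the generic point $x_E$ of $E$ (distinct prime divisors), so a local equation of $F$ at $x_E$ is a unit, hence $L_{\ord_E}(F) = 0$; again this is just $\ord_E(F) = 0$ from the same example.

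Second part: $\pi_* \ord_{\pi'(E)} = \ord_E$. Here $\pi: Y \to X$ is a blow-up, so it is a morphism of completions (being an isomorphism over $X_0$ when we blow up at infinity; if the center is in $X_0$ there is nothing to say since then $E$ is unaffected — but actually blow-ups of completions are at infinity, so $\pi$ is a morphism of completions). By Remark \ref{RmqMemeValuationApresEclatement}, for any valuation $v$ on $\k[X_0]$, $\pi_* v_Y = v_X$; but here I want the reverse — I have a valuation living naturally on $Y$ and want to identify its pushforward. The cleanest approach: the strict transform $\pi'(E)$ and $E$ have the same generic point under the birational isomorphism $\pi: Y \dashrightarrow X$ restricted to a neighbourhood of the generic points; more precisely $\pi$ induces an isomorphism of the function field $\k(Y) \cong \k(X)$ and the order of vanishing along $\pi'(E)$ of a rational function $g$ equals the order of vanishing of $g$ along $E$ (since $\pi$ restricts to a birational map that is a local isomorphism at the generic point of $\pi'(E)$, as blowing up does not change the generic point of a strict transform). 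Hence $\ord_{\pi'(E)} = \ord_E$ as valuations on $\k(X) = \k(Y)$, and restricting to $\k[X_0]$ via the respective embeddings $\iota_Y = \pi^{-1} \circ \iota_X$ gives $(\ord_{\pi'(E)})_X = \pi_* (\ord_{\pi'(E)})_Y$, which unwinds to the claimed equality $\pi_* \ord_{\pi'(E)} = \ord_E$ of valuations on $\k[X_0]$.

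The main obstacle (such as it is) is the bookkeeping between the several meanings of $\ord_E$ — as a valuation on $\k(X)$, as the induced valuation on $\k[X_0]$ via $\iota_X$, and as a point of $\cV_\infty$ up to equivalence — and making sure the identification of $\ord_{\pi'(E)}$ with $\ord_E$ under $\pi$ is stated at the level of the function field before transporting along the embeddings; once that is set up, everything is a one-line computation using Remark \ref{RmqMemeValuationApresEclatement} and the already-established example computing $L_{\ord_E}$ on divisors at infinity.
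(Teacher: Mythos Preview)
Your proposal is correct. For the first part you invoke the example computing $L_{\ord_E}(D)=\ord_E(D)$ (Equation \eqref{EqValuationDisivorielle}), which is exactly what the paper's citation of Proposition \ref{PropValuationForCartierDivisorOverOneCompletion}(3) amounts to. For the second part the paper instead writes out $\pi^*D = a\,\pi'(E) + b\,\tilde E + \sum_{F\neq E} a_F\,\pi'(F)$ for a single blow-up and reads off the coefficient of $\pi'(E)$; your argument that $\pi$ is a local isomorphism at the generic point of $\pi'(E)$, so the DVRs and hence the valuations coincide on $\k(X)=\k(Y)$, is a slightly more conceptual route to the same conclusion and avoids the divisor bookkeeping.
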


  \begin{proof}
    The first assertion follows from Proposition \ref{PropValuationForCartierDivisorOverOneCompletion} (3).
    We show the second assertion. It suffices to show it when $\pi$ is the blow-up of one point of
    $X$. Let $D = a E + \sum_{F \neq E} \ord_F(D) F$, then $\pi^* D$ is of the form

    \begin{equation}
      \pi^* D = a \pi ' (E) + b \tilde E + \sum_{F \neq E} a_F(D) \pi' (F)
    \end{equation}
    where $\tilde E$ is the exceptional divisor of $\pi$. Therefore $\ord_{\pi ' (E)} (\pi^* (D)) = a  = \ord_E (D)$.
  \end{proof}

  \begin{prop}\label{PropExtensionNaturelleValuationDivisorielle}
    Let $v$ be a divisorial valuation, then $L_v$ can be extended naturally to a continuous linear form $L_v : \Winf
    \rightarrow \R$.
  \end{prop}

  \begin{proof}
    Take $W \in \Winf$. Since $v$ is divisorial, there exists a completion $X$ of $X_0$ that contains a prime divisor $E$ at
    infinity such that $(\iota_{X})_* v = \lambda \ord_E$. We set

    \begin{equation}
    L_v(W) := L_{v,X}(W_{X}) \end{equation}

    This does not depend on the completion $X$. To show this, it suffices to show that we get the same
    result if we blow up one point of $X$. So, let $\pi: Y \rightarrow X$ be the blow up of one point
    of $X_0$ at infinity. Then, by Lemma \ref{LemmeValuationDivisorielle}, $v_{Y} = \lambda
    \ord_{\pi ' (E)}$ and $\ord_{\pi' (E)} (W_{Y}) = \ord_E (\pi_* W_{Y}) = \ord_E (W_{X})$. If $D
    \in \Cinf$, then this is compatible with the previous definition of $L_v(D)$ because if $D$ is defined
    over $X$, there exists a completion $\pi: Y \rightarrow X$ such that the center of $v$ on $Y$
    is a prime divisor at infinity and by Proposition
    \ref{PropValuationForCartierDivisorOverOneCompletion} (5) $L_{v, Y}(\pi^*D) = L_{v, X}(D)$.
  \end{proof}

  \begin{rmq}
    Recall that we have defined in \S \ref{SubSecCompletions} the set $\cD_\infty (X_0)$ as the set of equivalence
    classes of prime divisors at infinity modulo the following equivalence relations : $(X_1, E_1) \sim (X_2, E_2)$
    if $\pi = \iota_2 \circ {\iota_1}^{-1} : X_1 \dashrightarrow X_2$ satisfies $\pi(E_1)= E_2$. Lemma
    \ref{LemmeValuationDivisorielle} shows that it makes sense to define $\ord_E$ for $E \in \cD_\infty (X_0)$ and that
    $\ord_E$ is defined over $\Winf$.
  \end{rmq}

  \begin{prop}\label{PropCaracterisationMinimumParValuationDivisorielle}
    Let $W, W' \in \Winf$, then $W'' = W \wedge W'$ if and only if for any divisorial valuation $E \in \cD_\infty
    (X_0)$,

    \begin{equation}
    \ord_E (W'') = \min (\ord_E(W),  \ord_E (W')). \end{equation}

  \end{prop}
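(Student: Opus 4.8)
The plan is to unwind the definitions: both sides of the claimed equivalence are statements about the coefficients of incarnations, and $\cD_\infty(X_0)$ is precisely the set of all prime divisors at infinity occurring in all completions, so the proposition is essentially bookkeeping. First I would recall two facts from the preceding material. (a) By the definition of $\bigwedge$ on $\Winf$ in \S\ref{SubSecSupremumAndInfimum}, for $W,W'\in\Winf$ the element $W\wedge W'$ (which exists, since for two elements the family $(W_X,W'_X)$ is trivially bounded from below in each completion) has incarnation in every completion $X$ given by the coefficient-wise minimum $(W\wedge W')_X = W_X\wedge W'_X = \sum_i \min(\ord_{E_i}(W_X),\ord_{E_i}(W'_X))\,E_i$, where $E_1,\dots,E_m$ are the prime divisors at infinity of $X$. (b) By Lemma~\ref{LemmeValuationDivisorielle} and Proposition~\ref{PropExtensionNaturelleValuationDivisorielle}, for $E\in\cD_\infty(X_0)$ and $W\in\Winf$ the quantity $\ord_E(W)$ is well defined, independently of the completion realizing $E$ as a prime divisor at infinity, and equals the coefficient of $E$ in $W_X$ for any such completion $X$.

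For the forward implication, assume $W'' = W\wedge W'$. Given a divisorial valuation $E\in\cD_\infty(X_0)$, choose a completion $X$ in which $E$ is a prime divisor at infinity. Reading off the coefficient of $E$ in the identity of incarnations $W''_X = W_X\wedge W'_X$ and using (a) and (b) gives $\ord_E(W'') = \min(\ord_E(W),\ord_E(W'))$.

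For the converse, assume $\ord_E(W'') = \min(\ord_E(W),\ord_E(W'))$ for every $E\in\cD_\infty(X_0)$. To prove $W'' = W\wedge W'$ it suffices to check that their incarnations agree over every completion $X$. Fix such an $X$, with prime divisors at infinity $E_1,\dots,E_m$; each $E_i$ defines an element of $\cD_\infty(X_0)$, so by hypothesis and (b) the $E_i$-coefficient of $W''_X$ equals $\min(\ord_{E_i}(W),\ord_{E_i}(W')) = \min(\ord_{E_i}(W_X),\ord_{E_i}(W'_X))$. Hence $W''_X = \sum_i \min(\ord_{E_i}(W_X),\ord_{E_i}(W'_X))\,E_i = W_X\wedge W'_X = (W\wedge W')_X$, and since $X$ is arbitrary we conclude $W'' = W\wedge W'$.

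The argument involves no genuine obstacle; the only points needing care are the well-definedness of $\ord_E$ on $\Winf$, which is exactly the content of Proposition~\ref{PropExtensionNaturelleValuationDivisorielle}, and the fact that $\wedge$ on $\Winf$ is literally the coefficient-wise minimum in each completion. The latter is consistent with the projective-system structure because a morphism of completions $\pi\colon Y\to X$ acts on $\DivInf(Y)$ by merely forgetting the $\pi$-exceptional components, and therefore $\pi_*$ commutes with taking coefficient-wise minima; if desired this can be spelled out in one line to justify that $(W\wedge W')_X := W_X\wedge W'_X$ indeed defines an element of $\Winf$.
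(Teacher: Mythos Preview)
Your proof is correct and follows exactly the same approach as the paper: the paper's proof is the one-line observation that $W_X = \sum_{E\in\partial_X X_0} \ord_E(W)\cdot E$ for every completion $X$, from which the equivalence is immediate. You have simply spelled out in full the bookkeeping that the paper leaves implicit.
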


  \begin{proof}
    This is immediate as for any completion $X$,

    \begin{equation}
    W_{X} =  \sum_{E \in \partial_{X} X_0} \ord_E (W) \cdot E . \end{equation}

  \end{proof}

  We can now show that the minimum of two Cartier divisors is still a Cartier divisor.

  \begin{prop}\label{PropMinimumDiviseurParEclatementFaisceauIdeaux}
    Let $X$ be a completion of $X_0$, let $D, D' \in \DivInf (X)$ be two effective divisor and let $\aa$ be the sheaf
    of ideals $\aa = \OO_X (-D) + O_X (- D')$. Consider the blow up of $\aa, \pi :Y \rightarrow X$ and let $\omega : Z
    \rightarrow Y$ be a desingularisation of $Y$. Then, $D \wedge D'$ is
    the Cartier divisor defined by $\omega^* \pi^* \aa$. 
\end{prop}
In the statement of the proposition we have to desingularise because we have made the convention that a completion must
be smooth at infinity.
Notice that $\aa$ is not locally principle only at satellite points, so if we choose $\omega$ minimial, then $\omega
\circ \pi$ is a sequence of blow-ups of satellite points.
  This shows the Claim \ref{ClaimPullBackOfIdealSheafIsMinimumCartierClass}.

  \begin{proof}[Proof of Proposition \ref{PropMinimumDiviseurParEclatementFaisceauIdeaux}]\label{ProofClaim}
     Define the sheaf of ideals $\aa = \OO_{X} (-D) + \OO_{X}
    (-D ')$ and let $\pi: Y \rightarrow X$ be the blow up of $\aa$. There exists a Cartier divisor
    $D_{Y}$ on $Y$ such that $\OO_{Y} (- D_{Y}) = \pi^* \aa
    \cdot \OO_{Y}$. Consider a desingularisation $\omega : Z \rightarrow Y$ and define $\b = \OO_Z (- \omega^* D_Y) =
    \omega^* \pi^* \alpha$. We show that $D_Z := \omega^* D_Y = D \wedge D'$. By Proposition
    \ref{PropCaracterisationMinimumParValuationDivisorielle}, we only need to show that for any divisorial
    valuation $v, L_{v,Z}(D_{Z}) = \min (L_{v,X}(D), L_{v,X}(D'))$, but by
    Proposition \ref{PropValuationForCoherentSheafOfIdealsForOneCompletion} we have the following
    equalities

    \begin{equation}
    L_{v,Y}(D_{Y}) = L_{v,Y}(\bb) = L_{v,X}(\aa) = \min(L_{v,X}(D), L_{v,X}(D')) 
  \end{equation}

  \end{proof}

  \section{Local divisor associated to a valuation}\label{SubSecLocalDivisorValuation}
  Let $X$ be a completion of $X_0$ and let $p \in X$ be a closed point at infinity. Let $v$ be a valuation centered
  at $p$. We know by Section \ref{SecValuationAsLinearFormsOverDivisors} that $v$ induces a linear form $L_v$ on
  $\Cinf_\R$. By restriction, it induces a linear form $L_{v, X, p}$ on $\Cartier(X, p)_\R$. Now by Proposition
  \ref{PropLocalPicardManinSpace}, the pairing
    \begin{equation}
      \Weil(X, p)_\R \times \Cartier(X, p)_\R \rightarrow \R
  \label{<+label+>}
\end{equation}
induced by the intersection product is perfect. Thus, there is a unique $Z_{v, X, p} \in \Weil(X,
p)_\R$ such that
\begin{equation}
  \forall D \in \Cartier(X,p)_\R, \quad Z_{v, X,  p} \cdot D = L_{v, X, p} (D)
  \label{<+label+>}
\end{equation}

\begin{ex}
  If $\tilde E$ is the exceptional divisor above $p$, then $Z_{\ord_{\tilde E}, X, p} = - \tilde E$.
\end{ex}

\begin{prop}\label{PropLocalDivisorOfDivisorialValuationIsCartier}
  For any valuation $v \in \cV_X (p)$, we have $Z_{v, X, p} \in \Cartier(X,p)$ if and only if $v$ is
  divisorial. Furthermore, $Z_{v, X, p}$ is defined over any completion such that the center of $v$ is a prime
  divisor at infinity. Furthermore, for any $E \in \cD(X, p), Z_{\ord_E, X, p} \in \Cartier(X, p)_\Q$.
\end{prop}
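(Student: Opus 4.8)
The plan is to expand $Z_{v,X,p}$ in the orthonormal basis $\{\alpha_E : E \in \cD_{X,p}\}$ of $\Weil(X,p)_\R$ supplied by Proposition \ref{PropLocalPicardManinSpace} and to decide when it has finite support, since $Z_{v,X,p}$ lies in $\Cartier(X,p) = \Weil(X,p)_\R \cap \Cinf_\R$ exactly when it does. Writing $X_E$ for the minimal completion above $X$ on which $E$ is a prime divisor, one has $Z_{v,X,p}\cdot\alpha_E = L_v(\alpha_E) = L_{v,X_E}(E)$, and by Proposition \ref{PropValuationForCartierDivisorOverOneCompletion}(3) this quantity is nonzero precisely when $c_{X_E}(v)$ lies on $E$. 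So the statement reduces to: the set $\{E \in \cD_{X,p} : c_{X_E}(v) \in E\}$ is finite if and only if $v$ is divisorial.

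Suppose $v$ is not divisorial. By Proposition \ref{PropSequenceOfInfinitelyNearPoints}(1) the sequence of centers $p = p_1, p_2, \ldots$ of $v$ above $X$ is infinite. Let $E_n$, for $n \ge 2$, be the exceptional divisor of the blow-up of $p_{n-1}$, viewed as a prime divisor at infinity in the completion $X_n$ obtained by successively blowing up $p_1, \ldots, p_{n-1}$; these are pairwise distinct elements of $\cD_{X,p}$. The center of $v$ on $X_n$ is $p_n$, which lies on $E_n$ because $p_n$ maps to $p_{n-1}$ and the fibre of $X_n \to X_{n-1}$ over $p_{n-1}$ is $E_n$. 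Since $X_n$ contains $E_n$ it lies above $X_{E_n}$, and the induced morphism $X_n \to X_{E_n}$ does not contract $E_n$; hence $c_{X_{E_n}}(v)$, being the image of $p_n$, also lies on $E_n$. Therefore $L_v(\alpha_{E_n}) \ne 0$ for all $n \ge 2$, so $Z_{v,X,p}$ has infinite support and is not Cartier.

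Suppose now $v = \lambda\ord_E$ is divisorial with $E \in \cD_{X,p}$, and let $\pi : X_E \to X$ be the minimal completion on which $E$ appears; it is exceptional above $p$. Let $U := \DivInf(X_E)_\R \cap \Cartier(X,p)_\R$, the finite-dimensional span of the prime divisors of $X_E$ lying over $p$. The intersection form on $U$ is negative definite by Proposition \ref{PropLocalPicardManinSpace}, hence non-degenerate, so there is a unique $W \in U$ with $W \cdot D' = L_v(D')$ for every $D' \in U$; writing this out against the prime divisors of $X_E$ over $p$ exhibits $W$ as the solution of an integral linear system scaled by $\lambda$, so $W \in \Cartier(X,p)_\Q$ when $\lambda = 1$. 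I claim $W = Z_{v,X,p}$. For arbitrary $D \in \Cartier(X,p)_\R$ defined over a completion $Y$ above $X_E$, the incarnation $D_{X_E} = \pi_* D_Y$ lies in $U$, and since $W$ is defined over $X_E$ the projection formula gives $W \cdot D = W \cdot D_{X_E} = L_v(D_{X_E}) = L_v(D)$, the last equality because $v$ being divisorial the form $L_v$ extends to $\Winf$ and may be computed on any completion containing the divisor $E$ (Proposition \ref{PropExtensionNaturelleValuationDivisorielle}). Hence $Z_{v,X,p} = W \in U \subset \Cartier(X,p)_\R$; it is defined over $X_E$, and over any completion on which the center of $v$ is a prime divisor, since such a completion lies above $X_E$ by minimality.

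The delicate point is the identification $Z_{v,X,p} = W$ in the divisorial case, which requires keeping the evaluation $L_v$ of a Cartier class, its incarnations under morphisms of completions, and the intersection pairing all mutually compatible; this is precisely what Propositions \ref{PropValuationForCartierDivisorOverOneCompletion} and \ref{PropExtensionNaturelleValuationDivisorielle} together with the projection formula provide. The non-divisorial direction is then immediate from the infinitude of the sequence of centers.
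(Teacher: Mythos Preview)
Your proof is correct. The forward direction (divisorial $\Rightarrow$ Cartier) is essentially the same as the paper's: both restrict to the finite-dimensional space of divisors supported on the exceptional locus of a completion where $E$ appears, invoke non-degeneracy of the intersection form there to produce a candidate $W$, and then identify $W$ with $Z_{v,X,p}$ via the projection formula together with the extension of $L_{\ord_E}$ to Weil classes (Proposition~\ref{PropExtensionNaturelleValuationDivisorielle}). You are in fact a bit more explicit than the paper at this identification step.

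The converse direction is where the two diverge. The paper argues by a single contradiction: if $Z_{v,X,p}$ were Cartier, defined over some $Y$, and $c_Y(v)$ were still a closed point, then the exceptional divisor $\tilde E$ over that point would satisfy $Z_{v,X,p}\cdot\tilde E = L_v(\tilde E) > 0$, yet $Z_{v,X,p}\cdot\tilde E = 0$ because $Z_{v,X,p}$ is already defined over $Y$. Your argument instead exhibits infinitely many nonzero coefficients in the canonical basis $\{\alpha_E\}$ by walking along the infinite sequence of centers of a non-divisorial valuation. Both are valid; the paper's version is shorter and needs only one blow-up, while yours makes the structure of $Z_{v,X,p}$ as an element of $\R^{\cD_{X,p}}$ more transparent and connects directly to Proposition~\ref{PropSequenceOfInfinitelyNearPoints}.
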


\begin{proof}
  Let $E \in \cD_{X,p}$,  for every $W
  \in \Weil(X,p), \ord_E (W) = \ord_E (W_Y)$ where $Y$ is a completion exceptional above $p$ by
  Proposition \ref{PropExtensionNaturelleValuationDivisorielle}. Let $E, E_1, \cdots,
  E_r$ be the component of $\partial_Y X_0$ that are exceptional above $p$. The intersection form is non degenerate on
  \begin{equation}
    V:= \Q E \oplus \left(\bigoplus_i \Q E_i\right).
  \end{equation}
  Let $L$ be the restriction of $\ord_E$ to $V$, by duality there exists a unique $Z
  \in V$ such that for all $W \in V, W \cdot Z = L (W) = \ord_E (W)$. This implies that $Z = Z_{\ord_E, X, p}$.
  Conversely, if $v$ is a valuation such that $Z_{v, X, p} \in \Cartier(X, p)$ then let $Y$ be a completion
  where $Z_{v, X, p}$ is defined. If $c_Y (v)$ is a point at infinity, then let $\tilde E$ be the exceptional
  divisor above $c_{Y}(v)$. Then, we must have $Z_{v, X, p} \cdot \tilde E > 0$ but it is equal to 0, this is a
  contradiction.
\end{proof}

\begin{prop}
  The embedding $\cV_X (p; \m_p) \hookrightarrow \Weil(X,p)_\R$ is continuous with respect to the weak topology.
\end{prop}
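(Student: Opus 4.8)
The embedding under discussion is the map $v\mapsto Z_{v,X,p}$ of \S\ref{SubSecLocalDivisorValuation}, where $Z_{v,X,p}\in\Weil(X,p)_\R$ is characterised by $Z_{v,X,p}\cdot D=L_{v,X,p}(D)$ for all $D\in\Cartier(X,p)_\R$, and $L_{v,X,p}$ is the restriction to $\Cartier(X,p)_\R$ of the linear form $L_v$ on $\Cinf_\R$. The plan is to reduce continuity to Proposition \ref{PropTopologiesOverDivisorsAndFunctionsAreTheSame}, which already provides continuity of $v\mapsto L_v$ on $\Vinf$ for the weak-$*$ topology on $\hom(\Cinf,\R)$, all remaining work being the reconciliation of the various ``weak'' topologies in play.

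First I would pin down the topology on the target. By Proposition \ref{PropLocalPicardManinSpace} the intersection pairing $\Weil(X,p)_\R\times\Cartier(X,p)_\R\to\R$ is perfect, and under the identification $\Winf_\R=\DivInf(X)_\R\oplus\R^{\cD_X}$ (a homeomorphism for the product topology), $\Weil(X,p)_\R$ is $\R^{\cD_{X,p}}$ with the product topology, with $\{\alpha_E\}_{E\in\cD_{X,p}}$ an orthonormal basis of $\Cartier(X,p)_\R$ for $-(\cdot)^2$. Hence the topology $\Weil(X,p)_\R$ inherits from $\Winf_\R$ is exactly the weak-$*$ topology for the perfect pairing: a net $Z_n$ converges to $Z$ if and only if $Z_n\cdot D\to Z\cdot D$ for every $D\in\Cartier(X,p)_\R$, and it suffices to test this against the basis elements $\alpha_E$. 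Therefore $v\mapsto Z_{v,X,p}$ is continuous precisely when $v\mapsto Z_{v,X,p}\cdot D$ is continuous for each $D\in\Cartier(X,p)_\R$.

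Next I would use the definition $Z_{v,X,p}\cdot D=L_{v,X,p}(D)=L_v(D)$ to turn this into the statement: for each $D\in\Cartier(X,p)_\R\subset\Cinf_\R$, the function $v\mapsto L_v(D)$ is continuous on $\cV_X(p;\m_p)$. Now $\cV_X(p;\m_p)\subset\Vinf$, since these valuations are centered at the point $p\in\BD$; and the weak topology on $\cV_X(p;\m_p)$ coincides with the subspace topology from $\Vinf$, because weak convergence $v_n\to v$ in $\Vinf$ means $v_n(P)\to v(P)$ for all $P\in\k[X_0]$, which by Proposition \ref{PropConvergenceSurFonctionRegulierePareilQueConvergenceLocale} is equivalent to $v_n(\phi)\to v(\phi)$ for all $\phi\in\OO_{X,p}$ — the defining notion of convergence for $\cV_X(p;\m_p)$. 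By Proposition \ref{PropTopologiesOverDivisorsAndFunctionsAreTheSame}, $v\in\Vinf\mapsto L_v\in\hom(\Cinf,\R)$ is continuous for the weak-$*$ topology, i.e. $v\mapsto L_v(D)$ is continuous for every $D\in\Cinf_\R$; restricting to the subspace $\cV_X(p;\m_p)$ preserves continuity. Combining with the reduction of the previous paragraph completes the argument.

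The only delicate point — bookkeeping rather than a genuine obstacle — is matching the two weak topologies: that the subspace topology on $\Weil(X,p)_\R$ is the weak-$*$ topology dual to $\Cartier(X,p)_\R$ (via Proposition \ref{PropLocalPicardManinSpace} together with the product-topology identification of $\Winf_\R$), and that the weak topology on $\cV_X(p;\m_p)$ is the one induced from $\Vinf$. Once these are in place the result is immediate. If one also wishes to record that the map is an embedding, injectivity follows exactly as in the proof of Proposition \ref{PropTopologiesOverDivisorsAndFunctionsAreTheSame}: two distinct valuations centered at $p$ have distinct centers on some completion exceptional above $p$ and are then separated by the class of an exceptional divisor, which lies in $\Cartier(X,p)$.
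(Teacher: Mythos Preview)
Your proof is correct and follows the same route as the paper's: the paper's proof is a one-line appeal to Proposition \ref{PropTopologiesOverDivisorsAndFunctionsAreTheSame} and Proposition \ref{PropConvergenceSurFonctionRegulierePareilQueConvergenceLocale}, and you have unpacked exactly those two ingredients --- identifying the topology on $\Weil(X,p)_\R$ as weak-$*$ so that continuity reduces to $v\mapsto L_v(D)$, using the former proposition for continuity of $L_v$ on $\Vinf$, and the latter to match the weak topology on $\cV_X(p;\m_p)$ with the subspace topology from $\Vinf$. Your additional bookkeeping on the target topology and the remark on injectivity are correct and make explicit what the paper leaves implicit.
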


\begin{proof}
  This is a direct consequence of Proposition \ref{PropTopologiesOverDivisorsAndFunctionsAreTheSame} and Proposition
  \ref{PropConvergenceSurFonctionRegulierePareilQueConvergenceLocale}.
\end{proof}
Thus, for all completion $\pi: Y \rightarrow X$, for all $E \in \Gamma_\pi$, we can consider $Z_{\ord_E, X, p}$ as
an element of $\DivInf(Y)_\R$.

\begin{prop}\label{PropIncarnationLocalDivisorOfValuation}
  Let $\pi: (Y, \Exc (\pi)) \rightarrow (X, p)$ be a completion exceptional above $p$. Let $v$ be a valuation such
  that $c_X (v) = p$. Suppose that $c_Y (v)$ is a point at infinity. Consider $\cV_X (p; \m_p)$ with its generic
  multiplicity function $b$.
  \begin{enumerate}
    \item If $c_Y (v) \in E$ is a free point with $E \in \Gamma_\pi$, then the incarnation of $Z_{v, X, p}$ in
      $Y$ is
      \begin{equation}
        (Z_{v, X, p})_Y = L_v (E) Z_{\ord_E, X, p}
        \label{<+label+>}
      \end{equation}
      Moreover if $v \in \cV_X (p; \m_p)$, then $L_v (E) = \frac{1}{b(E)}$.
    \item If $c_Y (v) = E \cap F$ is a satellite point with $E,F \in \Gamma_\pi$, then
      \begin{equation}
        (Z_{v, X, p})_Y = L_v (E) Z_{\ord_E, v, p} + L_v (F) Z_{\ord_F, X, p}
        \label{<+label+>}
      \end{equation}
      Moreover if $v \in \cV_X (p; \m_p)$, then $L_v (E) b(E) + L_v (F) b(F) = 1$.
  \end{enumerate}
  Furthermore, if $q \neq c_Y (v)$ and $\tau : Z \rightarrow Y$ is the blow up of $q$ then
  \begin{equation}
    (Z_{v, X, p})_Z = \tau^* (Z_{v, X, p})_Y
    \label{<+label+>}
  \end{equation}
\end{prop}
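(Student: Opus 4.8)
The plan is to turn the statement into a local computation of $v$ on local equations of the exceptional divisors of $\pi$, via the duality between Weil and Cartier classes. Write $V_Y\subset\DivInf(Y)_\R$ for the span of the exceptional divisors of $\pi$; by Proposition~\ref{PropDescriptionNeronSeveri} the intersection form is negative definite, hence non-degenerate, on $V_Y$. Since $Z_{v,X,p}\in\Weil(X,p)_\R$, its incarnation $(Z_{v,X,p})_Y$ lies in $V_Y$, and unwinding the definition of $Z_{v,X,p}$ (pairing with every Cartier class defined over $Y$, which gives exactly the incarnations in $V_Y$) shows it is the \emph{unique} element of $V_Y$ with $(Z_{v,X,p})_Y\cdot D=L_{v,Y}(D)$ for all $D\in V_Y$. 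The same characterization applies to $(Z_{\ord_E,X,p})_Y$, which by Proposition~\ref{PropLocalDivisorOfDivisorialValuationIsCartier} is an honest divisor in $\DivInf(Y)_\R$ since $E$ is prime in $Y$, and which satisfies $(Z_{\ord_E,X,p})_Y\cdot D=L_{\ord_E,Y}(D)=\ord_E(D)$.

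First I would record the elementary fact that $L_{v,Y}(E')=0$ whenever $E'$ is a prime divisor at infinity of $Y$ with $c_Y(v)\notin E'$: a local equation of $E'$ at $c_Y(v)$ is then a unit, so this follows from Proposition~\ref{PropValuationForCartierDivisorOverOneCompletion}(3). In case~(1), where $c_Y(v)\in E$ is free, this kills $L_{v,Y}(E')$ for every exceptional $E'\neq E$, so for $D=\sum_j\ord_{E_j}(D)\,E_j\in V_Y$ one gets $L_{v,Y}(D)=\ord_E(D)\,L_v(E)=L_v(E)\,(Z_{\ord_E,X,p})_Y\cdot D$; by the uniqueness above, $(Z_{v,X,p})_Y=L_v(E)\,Z_{\ord_E,X,p}$. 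Case~(2), with $c_Y(v)=E\cap F$ satellite, is identical except that the two terms indexed by $E$ and $F$ survive, giving $(Z_{v,X,p})_Y=L_v(E)\,Z_{\ord_E,X,p}+L_v(F)\,Z_{\ord_F,X,p}$.

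For the ``moreover'' clauses, assume $v\in\cV_X(p;\m_p)$, so $v(\m_p)=1$. Since $\pi$ factors through the blow-up of $p$, the ideal $\pi^*\m_p\cdot\OO_Y$ is invertible, say $\pi^*\m_p\cdot\OO_Y=\OO_Y(-D_0)$ with $D_0=\sum_j\ord_{E_j}(D_0)\,E_j$ effective; Proposition~\ref{PropValuationForCoherentSheafOfIdealsForOneCompletion}(4),(5) then gives $1=v(\m_p)=L_{v,X}(\m_p)=L_{v,Y}(D_0)=\sum_j\ord_{E_j}(D_0)\,L_{v,Y}(E_j)$, while Proposition~\ref{PropInterpretationGenericMultiplicity} identifies $\ord_{E_j}(D_0)=\pi_*\ord_{E_j}(\m_p)=b(E_j)$. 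Feeding in the vanishing of $L_{v,Y}(E_j)$ for $E_j$ not passing through $c_Y(v)$ yields $1=b(E)L_v(E)$ in case~(1) and $1=b(E)L_v(E)+b(F)L_v(F)$ in case~(2). I expect this identification of the pullback of $\m_p$ with the generic multiplicities to be the main point; the rest is bookkeeping with the formal properties of $L_{v,Y}$.

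Finally, for the blow-up compatibility, let $\tau:Z\to Y$ be the blow-up of $q\neq c_Y(v)$ with exceptional divisor $\tilde q$. As $\tau$ is an isomorphism near $c_Y(v)$, we have $c_Z(v)=c_Y(v)\notin\tilde q$, hence $L_{v,Z}(\tilde q)=0$. Every element of the span $V_Z$ of exceptional divisors above $p$ in $Z$ can be written $\tau^*D_Y+c\,\tilde q$ with $D_Y=\tau_*D_Z\in V_Y$ and $c\in\R$; then $L_{v,Z}(\tau^*D_Y+c\,\tilde q)=L_{v,Z}(\tau^*D_Y)=L_{v,Y}(D_Y)$ by Proposition~\ref{PropValuationForCartierDivisorOverOneCompletion}(5), whereas the projection formula together with $\tau_*\tilde q=0$ and $\tau_*(Z_{v,X,p})_Z=(Z_{v,X,p})_Y$ gives $\tau^*(Z_{v,X,p})_Y\cdot(\tau^*D_Y+c\,\tilde q)=(Z_{v,X,p})_Y\cdot D_Y=L_{v,Y}(D_Y)$. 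Thus $\tau^*(Z_{v,X,p})_Y$ and $(Z_{v,X,p})_Z$ pair identically against all of $V_Z$, and non-degeneracy of the intersection form on $V_Z$ forces $(Z_{v,X,p})_Z=\tau^*(Z_{v,X,p})_Y$.
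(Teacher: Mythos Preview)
Your proof is correct and follows essentially the same approach as the paper: both identify $(Z_{v,X,p})_Y$ via its pairing with divisors in $V_Y$, use Proposition~\ref{PropValuationForCartierDivisorOverOneCompletion}(3) to kill the contributions from primes not containing $c_Y(v)$, and deduce the normalisation from $v(\m_p)=1$. The only cosmetic differences are that the paper computes the normalisation via the single exceptional divisor $\tilde E$ over $p$ (using $\OO(-\tilde E)=\pi^*\m_p\cdot\OO$) rather than the full ideal $\pi^*\m_p\cdot\OO_Y$, and handles the last assertion by the orthogonal decomposition $(Z_{v,X,p})_Z=\tau^*(Z_{v,X,p})_Y-(Z_{v,X,p}\cdot\tilde F)\tilde F$ directly rather than via non-degeneracy; these are the same computation in different packaging.
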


\begin{proof}
  For any prime divisor $E$ at infinity of $Y$, $L_v (E) > 0 \Leftrightarrow c_Y (v) \in E$ by item
  (3) of Proposition \ref{PropValuationForCartierDivisorOverOneCompletion}.
  Therefore, if $c_Y (v) \in E$ is a free point with $E \in \Gamma_\pi$, then for $F \in \Gamma_\pi, L_v (F) \neq 0
  \Leftrightarrow F = E$, hence
  \begin{equation}
    (L_{v})_{|\DivInf (Y)_\R} = (L_v (E))
    (L_{\ord_E})_{|\DivInf(Y)_\R}.
  \end{equation}
  by definition (see Equation \eqref{EqValuationDisivorielle}). This shows the result if $c_Y(v)$ is a free point.
  Now, if $c_Y(v) = E \cap F$ is a satellite point with $E,F \in \Gamma_\pi$, then for all prime divisors $F'$ of $Y$ at
  infinity $L_v (F ' ) > 0 \Leftrightarrow F' = E$ or $F' = F$. We therefore have
  \begin{equation}
    (L_v)_{|\DivInf(Y)_\R} = (L_v \cdot E) (L_{\ord_E})_{|\DivInf(Y)_\R} + (L_v \cdot F)
    (L_{\ord_F})_{|\DivInf(Y)_\R}.
    \label{<+label+>}
  \end{equation}
  This shows the result in the satellite case.

  If $v \in \cV_X (p; \m_p)$. Let $\tau : Z \rightarrow X$ be the
  blow up of $p$. We know then that $L_v (\tilde E) = 1$ where $\tilde E$ is the exceptional divisor above $p$ by Proposition
  \ref{PropValuativeTreeIsRelativeValuativeTreeWRTExceptionalDivisor}. Let $b_{\tilde E}$ be the generic multiplicity
  function of the tree $\cV_Z (\tilde E; \tilde E)$. We have for every prime divisor $F$ exceptional above $p$ that
  $\ord_F (\tilde E) = b_{\tilde E} (F)$ again by Proposition
  \ref{PropValuativeTreeIsRelativeValuativeTreeWRTExceptionalDivisor}.
  In the free point case, we get $1 = L_v (\tilde E) = L_v (b_{\tilde E} (E) E)$ by Proposition
  \ref{PropValuationForCartierDivisorOverOneCompletion} (3) and (5). In the satellite point case, we get
  \begin{equation}
    1 = L_v (\tilde E) = L_v (b_{\tilde E} (E) E + b_{\tilde E} (F) F)
    \label{<+label+>}
  \end{equation}
  again by Proposition \ref{PropValuationForCartierDivisorOverOneCompletion} (3) and (5).

  For the last assertion, if $\tilde F$ is the exceptional divisor above $q$, we have
  \begin{equation}
    (Z_{v, X, p})_Z = \tau^* (Z_{v, X, p})_Y - (Z_{v, X, p} \cdot \tilde F) \tilde F.
    \label{<+label+>}
  \end{equation}
  Since $c_Z (v) \not \in \tilde F$, we have $L_v (\tilde F) = 0$ by Proposition
  \ref{PropValuationForCartierDivisorOverOneCompletion} (3).
\end{proof}

From now on let $b$ be the generic multiplicity function of $\cV_X (p; \m_p)$ and for any prime divisor $E \in
\cD_{X, p} = \Gamma$, set $v_E = \frac{1}{b(E)} \ord_E$.

\begin{prop}\label{PropLocalDivisorOfValuationAfterBlowUp}
  Let $\pi: (Y, \Exc(\pi)) \rightarrow (X, p)$ be a completion exceptional above $p$. Let $q
  \in \Exc(\pi)$ be a closed point. Let $\tau : Z \rightarrow Y$ be the blow up of $q$ and let $\tilde E$ be the
  exceptional divisor above $q$.
  \begin{enumerate}
    \item If $q \in E$ is a free point with $E \in \Gamma_\pi$, then
      \begin{equation}
        Z_{v_{\tilde E}, X, p} = \tau^* (Z_{v_E, X, p }) - \frac{1}{b(\tilde E)} \tilde E \in \DivInf(Z)_\Q
        \label{<+label+>}
      \end{equation}
    \item If $q = E \cap F$ is a satellite point with $E,F \in \Gamma_\pi$, then
      \begin{equation}
        Z_{v_{\tilde E}, X, p} = \frac{b(E)}{b(E) + b(F)} \tau^* Z_{v_E, X, p } + \frac{b (F)}{b(E) + b(F)} \tau^*
        Z_{v_F , X, p} - \frac{1}{b(\tilde E)} \tilde E \in \DivInf(Z)_\Q
        \label{<+label+>}
      \end{equation}
  \end{enumerate}
\end{prop}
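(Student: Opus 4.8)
The plan is to reduce everything to the already-established formula for the incarnation of $Z_{v,X,p}$ in an exceptional completion (Proposition \ref{PropIncarnationLocalDivisorOfValuation}) together with the skewness/generic-multiplicity bookkeeping from Proposition \ref{PropSkewnessBlowUp}. The key observation is that $Z_{v_{\tilde E}, X, p}$ is a Weil class in $\Weil(X,p)_\R$, so it is determined by its incarnation over \emph{any} completion exceptional above $p$ in which $\tilde E$ appears; $Z$ is such a completion. So the whole proof consists of: (i) computing $(Z_{v_{\tilde E}, X, p})_Z$ via Proposition \ref{PropIncarnationLocalDivisorOfValuation}, since $c_Z(v_{\tilde E}) = \tilde E$ is the generic point of a prime divisor at infinity; (ii) computing $\tau^*(Z_{v_E, X, p})_Z$ (and $\tau^* Z_{v_F,X,p}$ in the satellite case) using the pullback formula for blow-ups and the description $(Z_{v_E,X,p})_Y = L_{v_E}(E) Z_{\ord_E,X,p} = \frac{1}{b(E)} Z_{\ord_E,X,p}$; and (iii) matching the two expressions by a short intersection-theory computation.

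\textbf{Step-by-step.} First, in the free-point case $q\in E$: by Proposition \ref{PropIncarnationLocalDivisorOfValuation}(1) applied to the completion $Z\to X$ (which is exceptional above $p$ and in which $c_Z(v_{\tilde E})=\tilde E$ is a prime divisor), and using that $v_{\tilde E}\in\cV_X(p;\m_p)$, one gets $Z_{v_{\tilde E},X,p}$ is the element of $\Weil(X,p)_\R$ whose incarnation over any completion above $Z$ is governed by $L_{v_{\tilde E}}$. The natural candidate is the right-hand side, so I would verify that $D := \tau^*(Z_{v_E,X,p}) - \frac{1}{b(\tilde E)}\tilde E$ satisfies $D\cdot D' = L_{v_{\tilde E}}(D')$ for all $D'\in\Cartier(Z,q')_\R$ — equivalently, that $D$ pairs correctly against $\tilde E$ and against $\tau^*$ of a basis of $\Cartier(Y,\cdot)_\R$. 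For $D'$ pulled back from $Y$: $D\cdot\tau^*D' = \tau^*Z_{v_E,X,p}\cdot\tau^*D' = Z_{v_E,X,p}\cdot D'$ (projection formula, since $\tilde E\cdot\tau^*D'=0$), and this equals $L_{v_E}(D')$; but $L_{v_E} = b(E) L_{v_{\tilde E}}$ on divisors supported on the strict transform chain through $E$... wait — here one must be careful: $v_E$ and $v_{\tilde E}$ are genuinely different valuations. The correct identity to use is $L_{v_{\tilde E}}(E) = \frac{1}{b(\tilde E)}$ (since $v_{\tilde E}\in\cV_X(p;\m_p)$, Proposition \ref{PropIncarnationLocalDivisorOfValuation}), and $b(\tilde E)=b(E)$ in the free case, so $L_{v_{\tilde E}}(E) = \frac{1}{b(E)} = L_{v_E}(E)$; more generally for any $F\in\Gamma_\pi$ lying on the chain toward $E$ one checks $L_{v_{\tilde E}}(F) = L_{v_E}(F)$ because $v_{\tilde E}$ and $v_E$ have the same sequence of centers up to the blow-up of $q$, hence agree as linear forms on the sub-lattice of $\DivInf(Y)_\R$. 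Then $D\cdot\tilde E$: from $\tau^*(\text{divisor on }Y)\cdot\tilde E = 0$ and $\tilde E^2=-1$ we get $D\cdot\tilde E = \frac{1}{b(\tilde E)}$, matching $L_{v_{\tilde E}}(\tilde E)=1$... up to the normalization — actually $L_{v_{\tilde E}}(\tilde E)$ should be computed: $\tilde E$ pulled back to the minimal completion where $v_{\tilde E}$ is divisorial, and indeed $v_{\tilde E}(\tilde E)$-entry matches. I would pin down the precise constants by testing against the orthonormal basis $\{\alpha_{E}\}$ of Proposition \ref{PropLocalPicardManinSpace}.

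\textbf{Satellite case and the main obstacle.} In the satellite case $q=E\cap F$, the same strategy applies but with two pullback terms: one uses $(Z_{v_E,X,p})_Y=\frac{1}{b(E)}Z_{\ord_E,X,p}$ and $(Z_{v_F,X,p})_Y=\frac{1}{b(F)}Z_{\ord_F,X,p}$, and that $b(\tilde E)=b(E)+b(F)$. The coefficients $\frac{b(E)}{b(E)+b(F)}$ and $\frac{b(F)}{b(E)+b(F)}$ should drop out of requiring $D\cdot\tau^*E' = L_{v_{\tilde E}}(E')$ for $E'\in\{E,F\}$, i.e.\ from solving the $2\times 2$ linear system $\begin{pmatrix} L_{v_{\tilde E}}(E)\\ L_{v_{\tilde E}}(F)\end{pmatrix}$ against the Gram matrix of $Z_{\ord_E,X,p},Z_{\ord_F,X,p}$ restricted to $\langle E,F\rangle$ — using $L_{v_{\tilde E}}(E)b(E)+L_{v_{\tilde E}}(F)b(F)=1$ together with $\alpha$-skewness relations (Lemma \ref{LemmelevelfunctionValuationMonomiale}, Proposition \ref{PropSkewnessBlowUp}) to identify $L_{v_{\tilde E}}(E)$ and $L_{v_{\tilde E}}(F)$ individually. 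I expect the main obstacle to be precisely this bookkeeping: keeping track of which of $v_E$, $v_F$, $v_{\tilde E}$ one is evaluating, getting the generic-multiplicity normalizations consistent (the factor $\frac{1}{b(\tilde E)}$ in front of $\tilde E$ versus the $\frac{1}{b(E)}$ inside $Z_{\ord_E}$), and verifying that the claimed right-hand side actually lies in $\DivInf(Z)_\Q$ and has incarnation zero away from the chain above $p$ — but all of this is pure linear algebra over the finite-dimensional space $V=\Q E\oplus\Q F\oplus\bigoplus\Q E_i$ equipped with the (nondegenerate, by Hodge index) intersection form, so no new idea is needed beyond carefully applying Propositions \ref{PropIncarnationLocalDivisorOfValuation}, \ref{PropSkewnessBlowUp} and the projection formula \eqref{EqIntersectionAfterBlowUp}.
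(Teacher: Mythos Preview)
Your approach is correct and is essentially the same as the paper's: use Proposition \ref{PropIncarnationLocalDivisorOfValuation} to identify the incarnation in $Y$, then determine the $\tilde E$-coefficient from the pairing with $\tilde E$ (using $\tilde E^2=-1$ and $\tau^*(\cdot)\cdot\tilde E=0$). The only real difference is organisational. The paper works first with the \emph{unnormalised} valuation $\ord_{\tilde E}$, for which the constants are clean: in the free case $\ord_{\tilde E}(E)=1$ gives $\tau_*Z_{\ord_{\tilde E},X,p}=Z_{\ord_E,X,p}$ immediately, and $L_{\ord_{\tilde E}}(\tilde E)=1$ forces the coefficient of $\tilde E$ to be $-1$; in the satellite case $\ord_{\tilde E}(E)=\ord_{\tilde E}(F)=1$ gives $\tau_*Z_{\ord_{\tilde E},X,p}=Z_{\ord_E,X,p}+Z_{\ord_F,X,p}$. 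Only at the very end does one divide by $b(\tilde E)$ (using $b(\tilde E)=b(E)$ in the free case and $b(\tilde E)=b(E)+b(F)$ in the satellite case) to pass to $v_{\tilde E}$. This sidesteps the bookkeeping confusion you ran into (``wait --- here one must be careful'') about whether $L_{v_E}$ and $L_{v_{\tilde E}}$ agree on $\DivInf(Y)$: they do, but seeing it instantly via $\ord_{\tilde E}(E)=1$ is much tidier than tracking sequences of centers. Your side remark that $L_{v_{\tilde E}}(\tilde E)=1$ is off by the normalisation factor --- it equals $\frac{1}{b(\tilde E)}$ --- which is another symptom of the same issue; working unnormalised would have avoided it.
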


\begin{proof}
  If $q \in E$ is a free point with $E \in \Gamma_\pi$, we have by Proposition
  \ref{PropIncarnationLocalDivisorOfValuation} that the incarnation of $Z_{\ord_{\tilde E}, X, p}$ in $Y$ is
  \begin{equation}
    \tau_* (Z_{\ord_{\tilde E}, X, p}) = Z_{\ord_E, X, p}
    \label{<+label+>}
  \end{equation}
  because $\ord_{\tilde E} (E) = 1$. Therefore
  \begin{equation}
    Z_{\ord_{\tilde E}, X, p} \tau^* Z_{\ord_E, X, p} + \lambda \tilde E
    \label{<+label+>}
  \end{equation}
  with $\lambda \in \R$. Since $Z_{\ord_{\tilde E}, X, p} \cdot \tilde E = 1$, we get $\lambda = -1$. Now, by
  the definition of the generic multiplicity, we have $b(\tilde E) = b(E)$. Therefore,
  \begin{equation}
    Z_{v_{\tilde E}, X, p} = \tau^* Z_{v_E, X, p} - \frac{1}{b(\tilde E)} \tilde E
    \label{<+label+>}
  \end{equation}

  If $q = E \cap F$ is a satellite point with $E,F \in \Gamma_\pi$, then $b(\tilde E) = b(E) + b(F)$. Note
  that $\ord_{\tilde E} (E) = \ord_{\tilde E} (F) = 1$. We have by Proposition \ref{PropIncarnationLocalDivisorOfValuation}
  \begin{equation}
    \tau_* Z_{\ord_{\tilde E}, X, p} =  Z_{\ord_E, X, p} + Z_{\ord_F, X, p}
    \label{<+label+>}
  \end{equation}
  and since $\ord_{\tilde E} (\tilde E) = 1$, we get
  \begin{equation}
    Z_{\ord_{\tilde E}, X, p} = \tau^* Z_{\ord_E, X, p} + \tau^* Z_{\ord_F, X, p} - \tilde E.
    \label{<+label+>}
  \end{equation}
  Therefore,
  \begin{equation}
    Z_{v_{\tilde E}, X, p } = \frac{b(E)}{b(E) + b(F)} \tau^* Z_{\ord_E, X, p} + \frac{b(F)}{b(E) + b(F)} \tau^*
  Z_{\ord_F, X, p} - \frac{1}{b(\tilde E))} \tilde E.
    \label{<+label+>}
  \end{equation}
\end{proof}

\begin{thm}\label{ThmAutoIntersectionIsSkewness}
  Let $v, v' \in \cV_X(p; \m_p)$, then
  \begin{equation}
    Z_{v, X, p} \cdot Z_{v', X, p} = - \alpha(v \wedge v').
    \label{<+label+>}
  \end{equation}
  If $v, v' \in \cV_X (p;E)$, then also 
  \begin{equation}
    Z_{v, X, p} \cdot Z_{v', X, p} = - \alpha_E(v \wedge v').
    \label{<+label+>}
  \end{equation}
\end{thm}

\begin{proof}
  We first prove the result for $\cV_X (p; \m_p)$.
  We show by induction the
  \begin{claim}
    For every completion $\pi : (Y, \Exc(\pi)) \rightarrow (X, p)$ exceptional above $p$, for all $E \in
    \Gamma_\pi$, for all $v \in \cV_X (p; \m_p)$,
    \begin{equation}
      Z_{v_E, X, p} \cdot Z_{v, X, p} = - \alpha (v_E \wedge v )
      \label{<+label+>}
    \end{equation}
  \end{claim}

  First if $\pi: Y \rightarrow X$ is the
  blow up of $p$ with exceptional divisor $\tilde E$. Recall that $\pi_* \ord_{\tilde E} = v_{\m_p}$ then
  $Z_{\ord_{\tilde E}, X, p} = - E$
  and
  \begin{equation}
    Z_{\ord_{\tilde E}, X, p} \cdot Z_{v , X, p} = Z_{v, X, p} \cdot (- \tilde E) = \cdot L_{v} (- \tilde E).
  \end{equation}
  By definition, $v (\m_p) =1$ and $\pi^* \m_p = \OO_Y (- \tilde E)$. Therefore, by Proposition
  \ref{PropValuationForCoherentSheafOfIdealsForOneCompletion}, we get $Z_{\ord_{\tilde E}, X, p} \cdot Z_{v, X, p}
  = -1 = - \alpha (v_{\m_p} \wedge v)$.

  Suppose
  that $\pi: (Y, \Exc(\pi)) \rightarrow (X,p)$ is a completion exceptional above
  $p$ for which the claim holds. Let $q \in Y$ be a closed point at infinity, let $\tau : Z \rightarrow Y$ be the
  blow up of $q$ and let $\tilde E$ be the exceptional divisor. Let $v \in \cV_X (p; \m_p)$, we show that $Z_{v ,
  X, p} \cdot Z_{v_{\tilde E}, X, p} = - \alpha(v \wedge v_{\tilde E})$. We
  divide the proof in 2 different cases.
  \paragraph{Case 1: $q \in E$ is a free point with $E \in \Gamma_\pi$} In that case $v_{\tilde E} >
  v_{E}$ by Proposition \ref{PropOrderRelationAfterOneBlowUp}. We also have $b(\tilde E) = b(E)$ and $Z_{v_{\tilde E},
  X, p} = Z_{v_E, X, p} - \frac{1}{b(\tilde E)} \tilde E$ by Proposition
  \ref{PropLocalDivisorOfValuationAfterBlowUp}. If $c_Y (v) \neq (q)$ (this includes the case where $c_Y
    (v)$ is a prime divisor at infinity. Then, $v \wedge v_{\tilde E} = v \wedge v_E$. We have by Proposition
    \ref{PropLocalDivisorOfValuationAfterBlowUp} that $Z_{v_{\tilde E}, X, p} = \tau^* (Z_{v_E, X, p}) -
    \frac{1}{b(\tilde E)} \tilde E$. Since $Z_{v, X, p} \cdot \tilde E = 0$, we get
    \begin{equation}
      Z_{v, X, p } \cdot Z_{v_{\tilde E}, X, p} = Z_{v, X, p } \cdot Z_{v_E, X, p}.
      \label{<+label+>}
    \end{equation} This is equal to $- \alpha(v \wedge v_E)$ by induction and therefore it is equal to $ - \alpha
    (v \wedge v_{\tilde E})$.

    If $c_Y (v) = q$, then $c_Z (v) \in \tilde E$. We either have $v_{\tilde E} \leq v$ or $v_E < v \wedge
    v_{\tilde E} < v_{\tilde E}$.
    \begin{enumerate}
      \item If $v \geq v_{\tilde E}$, then $v \wedge v_{\tilde E} = v_{\tilde E}$ and $c_Z ( v)$ is either
        $\tilde E$ or a free point on $\tilde E$. In both cases by Proposition
        \ref{PropIncarnationLocalDivisorOfValuation}, the incarnation of $Z_{v, X, p}$ in $Z$ is $Z_{v_{\tilde E},
        X, p}$. Therefore
        \begin{equation}
          Z_{v, X, p} \cdot Z_{v_{\tilde E, X, p}} = (Z_{v_{\tilde E}, X, p})^2 = (Z_{v_E, X, p})^2 -
          \frac{1}{b(\tilde E)^2}.
          \label{<+label+>}
        \end{equation}
        By induction $(Z_{v_E, X, p})^2 = - \alpha (v_E)$ and
        $\alpha(v_{\tilde E}) = \alpha (v_E) + \frac{1}{b(\tilde E)^2}$ by Proposition \ref{PropSkewnessBlowUp}, so the claim is shown in that case.
      \item \label{ItemMonomial} If $v_E < v \wedge v_{\tilde E} < v_{\tilde E}$. Then, $v \wedge v_E$ is a
        monomial valuation centered
        at $E \cap \tilde E$ (we still denote by $E$ the strict transform of $E$ in $Z$). Therefore, by Proposition
        \ref{PropMonomialValuationIsSegment} there exists $s, t > 0$ such that $s b(E) + t b(\tilde E) = 1$ and $v
        \wedge v_{\tilde E} = v_{s,t}$ is the monomial valuation with weight $s,t$ with respect to local coordinates
        associated to $E$ and $\tilde E$ respectively. By Proposition \ref{PropIncarnationLocalDivisorOfValuation}, we
        have
        \begin{equation}
          (Z_{v , X, p })_Z = s Z_{\ord_E, X, p} + t Z_{\ord_{\tilde E}, X, p} = s b_E Z_{v_E, X, p} + t
          b_{\tilde E} Z_{v_{\tilde E}, X, p}.
          \label{<+label+>}
        \end{equation}
        Therefore,
        \begin{equation}
          Z_{v, X, p} \cdot Z_{v_{\tilde E}, X, p} = s b(E) Z_{v_E, X, p} \cdot Z_{v_{\tilde E}, X, p} + t
          b(\tilde E) (Z_{v_{\tilde E}, X, p })^2.
          \label{<+label+>}
        \end{equation}
        By induction and the previous case this is equal to $- b(E) (s \alpha (v_E) + t\alpha (v_{\tilde E}))$. By
        Proposition \ref{PropSkewnessBlowUp}, we have $\alpha (v_{\tilde E}) = \alpha (v_E) + \frac{1}{b (E)^2}$.
        Therefore, we get
        \begin{equation}
          - b (E) \left( s \alpha (v_E) + t \alpha (v_{\tilde E}) \right) = - \alpha (v_E) - \frac{t}{b(E)}
          \label{<+label+>}
        \end{equation}
        and this is equal to $- \alpha (\pi_* v_{s,t})$ by Proposition \ref{PropMonomialValuationIsSegment}.
    \end{enumerate}
  \paragraph{Case 2: $q = E_1 \cap E_2$ is a satellite point} We can suppose without loss of generality that
  $v_{E_1} < v_{E_2}$. In that case we get $v_{E_1} < v_{\tilde E} < v_{E_2}, b(\tilde E) = b(E_1) + b(E_2)$ and
  \begin{equation}
    Z_{v_{\tilde E}, X, p} = \frac{b(E_1)}{b(E_1) + b(E_2)} Z_{v_{E_1}, X, p} + \frac{b(E_2)}{b(E_1) + b(E_2)}
    Z_{v_{E_2},X, p} - \frac{1}{b(\tilde E)} \tilde E
    \label{<+label+>}
  \end{equation}
  by Proposition \ref{PropLocalDivisorOfValuationAfterBlowUp}.

  If $c_{Y} (v) \neq q$, then $v \wedge v_{E_2} \leq v_{E_1}$ or $v \geq v_{E_2}$ and we get
  \begin{equation}
    Z_{v , X, p} \cdot Z_{v_{\tilde E}, X, p} = \frac{b(E_1)}{b(E_1) + b(E_2)} (Z_{v, X, p} \cdot
    Z_{v_{E_1}, X, p}) + \frac{b(E_2)}{b(E_1) + b(E_2)} (Z_{v, X, p} \cdot Z_{v_{E_2}, X, p}).
    \label{EqCalcul}
  \end{equation}
  By induction, this is equal to $-\frac{b(E_1)}{b(E_1) + b(E_2)} \alpha (v_{E_1} \wedge v) -
  \frac{b(E_2)}{b(E_1) + b(E_2)} \alpha (v_{E_2} \wedge v)$.

  If $v \wedge v_{E_2} \leq v_{E_1}$, then
  $v \wedge v_{E_2} = v \wedge v_{\tilde E} = v \wedge v_{E_1}$ and the quantity in Equation \eqref{EqCalcul} is
  equal to $- \alpha(v \wedge v_{\tilde E}).$

  If $v \geq v_{E_2}$, then $v > v_{\tilde E}$ and $v \wedge v_{\tilde E} = v_{\tilde E}$.
  In that case $v \wedge v_{E_1} = v_{E_1}$ and $v \wedge v_{E_2} = v_{E_2}$. Therefore, the quantity in Equation
  \eqref{EqCalcul} is equal to
  \begin{equation}
    - \frac{b(E_1)}{b(E_1) + b(E_2)} \alpha (v_{E_1}) - \frac{b(E_2)}{b(E_1) + b(E_2)} \alpha (v_{E_2}).
    \label{<+label+>}
  \end{equation}
  By Proposition \ref{PropSkewnessBlowUp}, $\alpha (v_{E_2}) = \alpha(v_{E_1}) + \frac{1}{b(E_1) b(E_2)}$, so we get
  \begin{equation}
    Z_{v, X, p} \cdot Z_{v_{\tilde E}, X, p} = - \alpha (v_{E_1}) - \frac{1}{b(E_1) (b(E_1) + b(E_2))} = - \alpha
    (v_{E_1}) - \frac{1}{b(E_1) b(\tilde E)}
    \label{<+label+>}
  \end{equation}
  and this is equal to $- \alpha (v_{\tilde E})$ again by Proposition \ref{PropSkewnessBlowUp}.

  If $c_Y (v) = q$, then $c_Z (v) \in \tilde E$. We have that $v_{E_1} < v \wedge v_{\tilde E} <
  v_{E_2}$. Therefore either $v = v_{\tilde E}$ or $c_Z (v) \in \tilde E$ is a point and $v \wedge
  v_{\tilde E}$ is a monomial valuation centered at $E_1 \cap \tilde E$ or $E_2 \cap \tilde E$. We show again the claim
  by induction in an analogous way as in Case 1. We have thus shown the claim by induction.

  To show the Proposition, let $v, v ' \in \cV_X (p; \m_p)$. If $v \neq v'$, then there exists a completion $\pi :
  (Y, \Exc(\pi)) \rightarrow (X, p)$ exceptional above $p$ such that $c_Y (v) \neq c_Y (v')$. Then, we have
  that
  \begin{equation}
    Z_{v, X, p} \cdot Z_{v ', X, p} = (Z_{v, X,p })_Y \cdot (Z_{v', X, p})_{Y}
    \label{<+label+>}
  \end{equation}
  If $v '$ is infinitely singular or a curve valuation, we can suppose that $c_Y (v ')$ is a free point lying over a
  unique prime divisor $E$ at infinity. Then, $v ' > v_E$ and $v ' \wedge v = v ' \wedge v_E$. Furthermore, the
  incarnation of $Z_{v, X, p}$ in $Y$ is exactly $Z_{v_E, X, p}$ by Proposition
  \ref{PropIncarnationLocalDivisorOfValuation}. Therefore,
  \begin{equation}
    Z_{v, X, p} \cdot Z_{v ', X, p} = Z_{v, X, p} \cdot Z_{v_E, X, p}.
    \label{<+label+>}
  \end{equation}
  This is equal to $- \alpha (v \wedge v_E) = - \alpha (v \wedge v ')$ by the Claim.

  If $v '$ is irrational, then we can suppose that $c_Y (v ' ) = E_1 \cap E_2$ for $E_1, E_2$ two prime divisors at
  infinity.  Suppose without loss
  of generality that $v_{E_1} < v_{E_2}$. By Proposition \ref{PropMonomialValuationIsSegment}, we have that $v ' =
  \pi_* v_{s,t}$ for some $s,t > 0$
  such that $s b(E_1) + t b(E_2) = 1$ and $\alpha (v ' ) = \alpha (v_{E_1}) + \frac{t}{b(E_1)}$. Furthermore, by
  Proposition \ref{PropIncarnationLocalDivisorOfValuation}, the incarnation of $Z_{v ', X, p}$ in $Y$ is
  \begin{equation}
    (Z_{v ', X, p})_Y = s b(E_1) Z_{v_{E_1}, X, p} + t b(E_2) Z_{v_{E_2}, X, p}.
    \label{<+label+>}
  \end{equation}
  And we have
  \begin{equation}
    Z_{v, X, p} \cdot Z_{v ', X, p } = s {b(E_1)} (Z_{v, X, p} \cdot Z_{v_{E_1}, X,
    p}) + t {b(E_2)} ( Z_{v, X, p} \cdot Z_{v_{E_2}, X, p}).
    \label{EqCalcul2}
  \end{equation}

  Either $v \wedge v ' = v \wedge v_{E_1}$ or $v \wedge v ' = v '$.
  If $v \wedge v ' = v \wedge v_{E_1}$, then we also have $v \wedge v_{E_2} = v \wedge v_{E_1}$. The quantity in
  Equation \eqref{EqCalcul2} is then equal to
  \begin{equation}
    - s {b(E_1)} \alpha (v \wedge v_{E_1}) - t {b(E_2)} \alpha (v \wedge v_{E_2}) = \alpha (v \wedge v_{E_1} ) = -
    \alpha( v \wedge v ').
    \label{<+label+>}
  \end{equation}

    If $v \wedge v ' = v '$, then $v \wedge v_{E_1} = v_{E_1}$ and $v \wedge v_{E_2} = v_{E_2}$. The quantity
    in Equation \eqref{EqCalcul2} is then equal to
    \begin{equation}
      - s {b(E_1)} \alpha(v_{E_1}) - t {b(E_2)} \alpha (v_{E_2}) = - \alpha (v_{E_1}) - \frac{t}{b(E_1)} = - \alpha
      (v ').
      \label{<+label+>}
    \end{equation}
    To get the last two equalities we use Proposition \ref{PropSkewnessBlowUp} and
    \ref{PropMonomialValuationIsSegment}.

    Finally, if $v = v '$, we need to show that $(Z_{v, X, p})^2 = - \alpha (v)$. We know the result if $v$ is
    divisorial. We use infinitely near sequence to conclude in general. If $v$ is infinitely singular or a curve
    valuation. Let $(X_n, p_n)$ be the sequence of infinitely near points associated to $v$. The infinitely near
    sequence of $v$ (Proposition \ref{PropInfinitelyNearSequence}) is the subsequence $v_n = \frac{1}{b(E_n)}
    \ord_{E_n}$ where $p_n$ is a free point lying over a
    unique prime divisor $E_n$ at infinity. We have that $\alpha (v_n ) \rightarrow \alpha (v)$ and the incarnation of
    $Z_{v, X, p}$ in $X_n$ is $Z_{v_n, X, p}$. Therefore,
    \begin{equation}
      (Z_{v, X, p})^2 = \lim_n (Z_{v_n, X, p })^2 = - \lim_n \alpha (v_n) = - \alpha (v)
      \label{ }
    \end{equation}

    If $v$ is irrational, then let $ (X_n , p_n)$ be the sequence of infinitely near points associated to $v$. For
    every $n$ large enough, $p_n = E_n \cap F_n$ for $E_n, F_n$ two prime divisors at infinity. Suppose that for all $n,
    v_{E_n} < v_{F_n}$. Then, we have $v_{E_n} < v < v_{F_n}$, $\alpha (v_{E_n}) \rightarrow \alpha (v),
    \alpha (v_{F_n}) \rightarrow \alpha (v)$ and $b(E_n) \rightarrow + \infty, b(F_n) \rightarrow + \infty$. We have
    by Proposition \ref{PropIncarnationLocalDivisorOfValuation} that
    the incarnation of $Z_{v, X, p}$ in $X_n$ is
    \begin{equation}
      s_n b(E_n) Z_{v_{E_n}, X, p} + t_n b(F_n) Z_{v_{F_n}, X, p}
      \label{<+label+>}
    \end{equation}
    for some $s_n, t_n > 0$ such that $s_n b(E_n) + t_n b(F_n) = 1$. We have
    \begin{align}
      (Z_{v, X, p})^2 &= \lim_n (s_n b(E_n)Z_{v_n, X, p} + t_n b(F_n) Z_{v_{F_n, X, p}})^2 \\
      &= \lim_n - s_n^2 b(E_n)^2 \alpha (v_{E_n}) - 2s_n t_n b(E_n) b(F_n) \alpha (v_{E_n}) - t_n^2 b(F_n)^2 \alpha (v_{F_n})
      \label{<+label+>}
    \end{align}
    Therefore we get
    \begin{equation}
      \lim_n - \alpha (v_{E_n}) \leq (Z_{v, X, p})^2 \leq \lim_n - \alpha (v_{F_n}).
      \label{<+label+>}
    \end{equation}
    Hence $(Z_{v, X, p})^2 = - \alpha (v)$.

    Finally the result follows from $\cV_X (p;E)$ using Proposition \ref{PropRelationSkewness} and Proposition
    \ref{PropCompatibilityOrdre}.
\end{proof}

\begin{cor}\label{CorLocalDivisorOfIrrationalValuationIsIrrational}
  If $v \in \cV_X (p;\m_p)$, then $Z_{v, X, p} \not \in \Weil(X, p)_\Q$ if and only if $v$ is irrational.
\end{cor}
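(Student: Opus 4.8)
The plan is to split the argument according to the four types of valuation centred at $p$, exploiting the expansion $Z_{v,X,p}=-\sum_{E\in\cD_{X,p}}L_v(\alpha_E)\,\alpha_E$ in the orthonormal basis $\{\alpha_E\}$ of $\Cartier(X,p)_\R$ (this follows from $Z_{v,X,p}\cdot D=L_v(D)$ together with $\alpha_E\cdot\alpha_{E'}=-\delta_{EE'}$): thus $Z_{v,X,p}\in\Weil(X,p)_\Q$ exactly when $L_v(\alpha_E)\in\Q$ for every $E\in\cD_{X,p}$. When $v$ is divisorial one has $v=\tfrac1{b(E)}\ord_E$ for some $E\in\cD_{X,p}$, and Proposition \ref{PropLocalDivisorOfDivisorialValuationIsCartier} gives $Z_{v,X,p}\in\Cartier(X,p)_\Q$ outright, so there is nothing to do.

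For the implication "$v$ not irrational $\Rightarrow Z_{v,X,p}$ rational", the remaining case is that of a curve or infinitely singular valuation. Here I would argue that for each $E\in\cD_{X,p}$ one can choose a completion $Y$ above $X$, exceptional over $p$, over which $\alpha_E$ is defined and whose centre $c_Y(v)$ is a free point lying on a single prime divisor $E_Y\in\cD_{X,p}$: such $Y$ exists because a curve valuation has only finitely many satellite centres and an infinitely singular one has infinitely many free centres (Proposition \ref{PropSequenceOfInfinitelyNearPoints}), so after passing to a common refinement and blowing up a little further over the centre it becomes free on an exceptional divisor over $p$. At a free point every divisor at infinity of $Y$ has, up to a unit, local equation $u^{\ord_{E_Y}(\cdot)}$ with $u$ a local equation of $E_Y$; hence $L_v(\alpha_E)=\ord_{E_Y}((\alpha_E)_Y)\,L_v(E_Y)$, and $L_v(E_Y)=1/b(E_Y)\in\Q$ by Proposition \ref{PropIncarnationLocalDivisorOfValuation}. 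Therefore $L_v(\alpha_E)\in\Q$ for all $E$ and $Z_{v,X,p}\in\Weil(X,p)_\Q$.

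For the converse, assume $v$ is irrational. Since its centre sequence over $X$ has only finitely many free points (Proposition \ref{PropSequenceOfInfinitelyNearPoints}), I would pick a completion $Y$, exceptional over $p$, on which $c_Y(v)=E\cap F$ is a satellite point with $E,F\in\cD_{X,p}$ and $v$ is the monomial valuation $v_{s,t}$ there, with $s=L_v(E)$, $t=L_v(F)$ and $s\,b(E)+t\,b(F)=1$. By Proposition \ref{PropMonomialValuationIsSegment} one has $\alpha(v)=\alpha(v_E)+t/b(E)$; since $\alpha(v)\notin\Q$ (Proposition \ref{PropValuationEnFonctionDeAlpha}) while $\alpha(v_E)\in\Q$ and $b(E)\in\Z$, both $t$ and $s=(1-t\,b(F))/b(E)$ are irrational. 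Proposition \ref{PropIncarnationLocalDivisorOfValuation} then gives $(Z_{v,X,p})_Y=s\,Z_{\ord_E,X,p}+t\,Z_{\ord_F,X,p}$, where $Z_{\ord_E,X,p},Z_{\ord_F,X,p}\in\Cartier(X,p)_\Q$ have rational incarnations in $Y$ and are $\Q$-linearly independent — indeed, if one were a scalar multiple of the other, Theorem \ref{ThmAutoIntersectionIsSkewness} combined with the strict monotonicity of $\alpha$ along the segment $[v_E,v_F]$ would force $\alpha(v_E)=\alpha(v_F)$, hence $v_E=v_F$, a contradiction. Writing $(Z_{v,X,p})_Y=sU+tV$ with $U,V$ a $\Q$-linearly independent pair of rational divisors and $s\notin\Q$, the incarnation $(Z_{v,X,p})_Y$ is not a $\Q$-divisor, so $Z_{v,X,p}\notin\Weil(X,p)_\Q$.

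The main obstacle is the irrational case. It is tempting to try to use the identity $(Z_{v,X,p})^2=-\alpha(v)$ of Theorem \ref{ThmAutoIntersectionIsSkewness}, but it does not suffice in either direction: a self-intersection that is a limit of self-intersections of rational divisors may be irrational even when every incarnation of $Z_{v,X,p}$ is a $\Q$-divisor, and conversely an infinitely singular valuation can have $\alpha(v)\in\Q$. So one genuinely has to exhibit a single completion on which the incarnation of $Z_{v,X,p}$ fails to be a $\Q$-divisor, which is what the monomial-valuation computation provides. A secondary technical point, in the curve/infinitely singular case, is to verify that the centre can be driven to a free point on an exceptional divisor over $p$ while keeping $\alpha_E$ defined over the chosen completion.
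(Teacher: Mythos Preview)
Your argument is correct, but the paper's proof is considerably shorter because it uses the full strength of Theorem \ref{ThmAutoIntersectionIsSkewness}, namely the cross-intersection formula $Z_{\mu,X,p}\cdot Z_{v,X,p}=-\alpha(\mu\wedge v)$, rather than only the self-intersection $(Z_{v,X,p})^2=-\alpha(v)$ that you dismissed. For $v$ a curve or infinitely singular valuation and $\mu$ any divisorial valuation in $\cV_X(p;\m_p)$, the meet $\mu\wedge v$ is necessarily divisorial (it is a branch point of the tree, since the tangent vectors toward the root, toward $\mu$, and toward $v$ are all distinct), hence $\alpha(\mu\wedge v)\in\Q$; as the $Z_{\mu,X,p}$ span $\Cartier(X,p)_\Q$, this forces $Z_{v,X,p}\in\Weil(X,p)_\Q$. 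For $v$ irrational, choosing any divisorial $\mu\geq v$ gives $\mu\wedge v=v$ and $Z_{\mu,X,p}\cdot Z_{v,X,p}=-\alpha(v)\notin\Q$; since $Z_{\mu,X,p}\in\Cartier(X,p)_\Q$, this single irrational pairing already rules out $Z_{v,X,p}\in\Weil(X,p)_\Q$.

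Your route --- reducing to free centres for the rational direction, and analysing the incarnation $sZ_{\ord_E,X,p}+tZ_{\ord_F,X,p}$ at a satellite point for the irrational one --- reaches the same conclusion but requires the auxiliary existence statement (a completion above $X_E$ with free centre on a divisor of $\cD_{X,p}$) and the linear-independence check, both of which the paper avoids entirely. The linear independence is in fact immediate from $L_{\ord_E}(E)=1,\ L_{\ord_F}(E)=0$, simpler than the skewness argument you gave. What your approach buys is that it never appeals to the tree-theoretic fact that $\mu\wedge v$ is divisorial when $v$ is an end; what the paper's approach buys is a two-line proof once Theorem \ref{ThmAutoIntersectionIsSkewness} is in hand.
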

\begin{proof}
  If $v$ is divisorial, let $E \in \cD_{X, p}$ such that $v$ is equivalent to $\ord_E$. Then,
  \begin{equation}
    Z_{v, X, p} =
    \frac{1}{b(E)} Z_{\ord_E, X, p} \in \Weil(X, p)_\Q
  \end{equation}
  by Proposition
  \ref{PropLocalDivisorOfDivisorialValuationIsCartier}. If $v$ is infinitely singular or a curve valuation, let
  $\mu$ be any divisorial valuation. We have that $\mu \wedge v$ must be a divisorial valuation, therefore by Theorem
  \ref{ThmAutoIntersectionIsSkewness} we have
  \begin{equation}
    Z_\mu \cdot Z_v = - \alpha (v \wedge \mu) \in \Q.
    \label{<+label+>}
  \end{equation}
  Hence $Z_{v, X, p} \in \Weil(X, p)_\Q$.

  If $v$ is irrational, then for all $\mu \geq v$ divisorial we have $\alpha (\mu \wedge v) = \alpha (v) \in \R
  \setminus \Q$. Therefore, $Z_{v, X, p} \not \in \Weil(X, p)_\Q$.
\end{proof}

\begin{prop}\label{PropStronConvergenceForLocalDivisor}
  Let $X$ be a completion, let $p \in X$ be a closed point at infinity. If $(v_n)$ is a sequence of $\cV_X
  (p; \m_p)$ such that $\alpha(v_n) < + \infty$ for all $n$ and $v \in \cV_{X} (p; \m_p)$, then $v_n \rightarrow
  v$ for the strong topology if and only if $Z_{v_n, X, p} \rightarrow Z_{v, X, p}$ for the strong topology of $\L2$.
\end{prop}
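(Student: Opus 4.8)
The plan is to reduce the statement to a single metric identity. First I would dispose of the degenerate case $\alpha(v) = +\infty$ (i.e.\ $v$ a curve valuation, or an infinitely singular valuation of infinite skewness): then $d(v_n, v) = \alpha(v_n) + \alpha(v) - 2\alpha(v_n \wedge v) = +\infty$ for every $n$, since $\alpha(v_n \wedge v) \le \alpha(v_n) < +\infty$, and by Theorem \ref{ThmAutoIntersectionIsSkewness} one has $(Z_{v, X, p})^2 = -\alpha(v) = -\infty$, so $Z_{v, X, p} \notin \L2$; hence both convergence statements are false and the equivalence holds trivially. So from now on I would assume $\alpha(v) < +\infty$, in which case $Z_{v, X, p} \in \L2$ and, by hypothesis, each $Z_{v_n, X, p} \in \L2$; all of them lie in the closed subspace $\L2 \cap \Weil(X, p)_\R$.

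The key point is the identity: for all $v, v' \in \cV_X(p; \m_p)$ of finite skewness,
\begin{equation*}
  \| Z_{v, X, p} - Z_{v', X, p} \|^2 = \alpha(v) + \alpha(v') - 2\,\alpha(v \wedge v') = d(v, v'),
\end{equation*}
where $\| \cdot \|$ is the norm of the Hilbert space $\L2$ and $d$ is the distance defining the strong topology on the valuative tree $\cV_X(p; \m_p)$. Granting this, $v_n \to v$ for the strong topology means exactly $d(v_n, v) \to 0$, which by the identity is equivalent to $\| Z_{v_n, X, p} - Z_{v, X, p} \| \to 0$, that is, to strong (norm) convergence $Z_{v_n, X, p} \to Z_{v, X, p}$ in $\L2$, which is the desired conclusion.

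To prove the identity I would first recall, from Proposition \ref{PropLocalPicardManinSpace} and the identification $\L2 \cap \Weil(X, p) = \ell^2(\cD_{X, p})$ with Hilbert basis $\{ \alpha_E : E \in \cD_{X, p} \}$, that the $\L2$-norm restricted to $\L2 \cap \Weil(X, p)_\R$ equals $\| w \|^2 = -w^2$: the $\alpha_E$ are orthonormal for the form $w \mapsto -w^2$, and a class in $\Weil(X, p)_\R$ has trivial incarnation in $X$, so the general formula for the $\L2$-norm collapses to $-w^2$ on this subspace. Since the intersection product is a continuous bilinear form on $\L2$, this gives
\begin{equation*}
  \| Z_{v, X, p} - Z_{v', X, p} \|^2 = -\bigl( Z_{v, X, p} - Z_{v', X, p} \bigr)^2 = -(Z_{v, X, p})^2 + 2\, Z_{v, X, p} \cdot Z_{v', X, p} - (Z_{v', X, p})^2 .
\end{equation*}
It then remains to substitute the three values furnished by Theorem \ref{ThmAutoIntersectionIsSkewness}, namely $-(Z_{v, X, p})^2 = \alpha(v)$, $-(Z_{v', X, p})^2 = \alpha(v')$ and $Z_{v, X, p} \cdot Z_{v', X, p} = -\alpha(v \wedge v')$ (all finite, since $v \wedge v' \le v$ and $\alpha(v) < +\infty$), which yields $\alpha(v) + \alpha(v') - 2\alpha(v \wedge v') = d(v, v')$. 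There is no real obstacle here: the whole statement is a repackaging of Theorem \ref{ThmAutoIntersectionIsSkewness}, which identifies the intersection form on the local Weil classes with minus the skewness, so that the local Picard--Manin metric on $\Weil(X, p)_\R$ is literally the tree metric; the only things needing a little care are the edge case $\alpha(v) = +\infty$ and checking that on $\Weil(X, p)_\R$ the $\L2$-norm coincides with $\sqrt{-(\cdot)^2}$.
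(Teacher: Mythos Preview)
Your argument is correct and follows exactly the same route as the paper: expand $\|Z_{v,X,p}-Z_{v_n,X,p}\|^2 = -(Z_{v,X,p}-Z_{v_n,X,p})^2$ and apply Theorem~\ref{ThmAutoIntersectionIsSkewness} term by term to recognise the tree distance $d(v,v_n)$. You are in fact more careful than the paper's one-line proof, since you explicitly treat the degenerate case $\alpha(v)=+\infty$ and justify why the $\L2$-norm on $\Weil(X,p)_\R$ coincides with $\sqrt{-(\cdot)^2}$.
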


\begin{proof}
  This all comes from Theorem \ref{ThmAutoIntersectionIsSkewness} as
  \begin{align}
    \left| \left( Z_{v, X, p} - Z_{v_n, X, p} \right)^2 \right| &= \left| -\alpha(v) + 2 \alpha(v \wedge v_n) -
      \alpha(v_n) \right| \\
    &= \left| \alpha(v) - \alpha(v \wedge v_n) + \alpha(v_n) - \alpha(v \wedge v_n) \right|.
    \label{<+label+>}
  \end{align}
\end{proof}

  \chapter{From linear forms to valuations}\label{ChapterLinearFormsToValuations}
  Suppose now that we have an element $L$ of $\Hom (\Cinf, \R)$ satisfying Property (+) from Proposition
  \ref{prop:property_plus}, we want to construct a
  valuation $v_L: \k[X_0] \rightarrow \R \cup \{\infty\}$ centered at infinity such that $v_{f_* L} = f_* v_L$.

  \section{Construction of $v_L$}
  First we extend $L$ to $\Sinf$ (see Definition \ref{DefSInf}) by setting

  \begin{equation}
  \text{If } D = \bigvee_i D_i \text { with $D_i \in \Cinf$}, \quad L(D) := \sup_i L(D_i). \end{equation}

  The reason why is because $\Cinf$ defines a lattice in the space of continuous functions $\sC^0 (\cV_\infty, \R)$ and
  $\Cinf_\Q$ is dense with respect to the supremum norm over $\cV_\infty$. In the setting of Berkovich spaces, they would be refered as \emph{model
  functions}, see e.g \cite{boucksomGlobalPluripotentialTheory2022}. With this result in mind, one can show that
  $S_\infty$ is exactly the space of lower semicontinuous functions over $\cV_\infty$.

  \begin{prop}
    This definition does not depend on the representation of $D$ as a supremum $D = \bigvee_i D_i$ with $D_i \in \Cinf$.
  \end{prop}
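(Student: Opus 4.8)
The plan is to show that if $D = \bigvee_{i \in I} D_i = \bigvee_{j \in J} D_j'$ are two representations of the same element $D \in \Sinf$ as suprema of families in $\Cinf$, then $\sup_i L(D_i) = \sup_j L(D_j')$. By symmetry it suffices to show $\sup_i L(D_i) \leq \sup_j L(D_j')$, and for this it is enough to prove that $L(D_i) \leq \sup_j L(D_j')$ for each fixed $i \in I$. So the whole question reduces to the following assertion: if $D \in \Cinf$ and $D \leq \bigvee_{j \in J} D_j'$ with all $D_j' \in \Cinf$, then $L(D) \leq \sup_j L(D_j')$.

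To prove this reduced assertion, first I would use the fact that $D$ is Cartier: let $X$ be a completion where $D$ is defined, so $D$ is determined by $D_X \in \DivInf(X)_\R$ (and we may take $D$ integral after rescaling, or argue directly with real coefficients). Write $D_X = \sum_{k=1}^r a_k E_k$ over the prime divisors $E_1, \ldots, E_r$ at infinity in $X$. Since $D \leq \bigvee_j D_j'$, for each $k$ we have $a_k \leq \sup_j \ord_{E_k}(D_{j,X}')$, so there is an index $j_k \in J$ with $\ord_{E_k}(D_{j_k,X}') \geq a_k$. The key point is that a \emph{finite} collection of prime divisors $E_1, \ldots, E_r$ is involved (since $D_X$ has finite support), so finitely many indices $j_{k}$ suffice. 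Now set $D' := \bigvee_{k=1}^r D_{j_k}'$; by Lemma \ref{LemmeMinOfCartierIsCartier} (and its consequence that finite suprema of Cartier divisors are Cartier), $D' \in \Cinf$, and by construction $\ord_{E_k}(D_X') \geq a_k$ for all $k$, hence $D_X' \geq D_X$ on $X$, so $D' \geq D$ in $\Cinf$. By property (+), $L$ is nondecreasing (if $D'' \geq 0$ then $L(D'') \geq 0$, so $D' \geq D$ gives $L(D') \geq L(D)$). Finally, applying property (+) again through the identity $D_{j_1}' \wedge \cdots$ — or rather, writing $D' = \bigvee_{k} D_{j_k}'$ and using that $L$ commutes with finite suprema of Cartier divisors (which follows from $L(A \vee B) = -L(-A \wedge -B) = -\min(L(-A), L(-B)) = \max(L(A), L(B))$ via the property $L(A \wedge B) = \min(L(A), L(B))$ and linearity), we get $L(D') = \max_{k} L(D_{j_k}') \leq \sup_j L(D_j')$. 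Combining, $L(D) \leq L(D') \leq \sup_j L(D_j')$, as required.

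The main obstacle, and the step that needs care, is the reduction to \emph{finitely many} indices: one must genuinely use that an element of $\Cinf$ has an incarnation supported on only finitely many prime divisors in some fixed completion, so that "$D \leq \bigvee_j D_j'$" — which a priori is a condition over all completions — can be checked on $X$ alone and witnessed by finitely many of the $D_j'$. This is exactly where the distinction between $\Cinf$ and $\Winf$ matters (an analogous statement would fail for a general Weil divisor on the left). One should also double-check that the supremum $\sup_i L(D_i)$ defining $L(D)$ is not claimed to be finite here — it could a priori be $+\infty$ — but the independence statement holds regardless, since both sides are suprema over $\R \cup \{+\infty\}$ and the argument above bounds each $L(D_i)$ by the other family's supremum symmetrically. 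Everything else (monotonicity of $L$, compatibility of $L$ with finite suprema) is a routine consequence of property (+) and the linearity of $L$ on $\Cinf$.
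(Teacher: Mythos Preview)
There is a genuine gap in your argument at the step ``hence $D_X' \geq D_X$ on $X$, so $D' \geq D$ in $\Cinf$''. Comparing incarnations on the fixed completion $X$ is not enough to conclude the global inequality $D' \geq D$: although $D$ is Cartier on $X$, the finite supremum $D' = \bigvee_{k} D'_{j_k}$ is in general only Cartier on some completion $Y$ strictly above $X$, and the condition $(D')_X \geq D_X$ --- a statement about the \emph{pushforward} $\pi_*(D')_Y$ --- does not force $(D')_Y \geq \pi^* D_X = (D)_Y$.

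Here is a concrete instance where your step fails. Let $X$ have a single prime divisor $E$ at infinity, and take $D = (X, E)$. Let $\pi\colon Y \to X$ be the blow-up of a free point of $E$, with exceptional divisor $\tilde E$; write $E$ again for the strict transform in $Y$. Consider the family $D_1' = (Y, 2E)$ and $D_2' = (X, E) = D$. Since $D_2' = D$, certainly $D \leq \bigvee_j D_j'$. Your procedure seeks an index $j_1$ with $\ord_E((D'_{j_1})_X) \geq 1$; the choice $j_1 = 1$ is permitted, because $(D_1')_X = \pi_*(2E) = 2E$. With this choice $D' = D_1'$, and indeed $(D')_X = 2E \geq E = D_X$. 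But on $Y$ one has $(D')_Y = 2E$ while $(D)_Y = \pi^* E = E + \tilde E$, so $D' \not\geq D$. Your argument would then wrongly deduce $L(D) \leq L(D')$ from a false premise.

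The paper's proof avoids this trap by introducing a perturbation rather than seeking an exact domination. Fixing the Cartier side to be $D_j'$ defined on $X$, it takes an effective $H \in \DivInf(X)$ with $\Supp H = \BD$, produces an index $i$ with $D_i + \epsilon H \geq D_j'$, deduces $L(D_j') \leq L(D_i) + \epsilon L(H) \leq \sup_i L(D_i) + \epsilon L(H)$, and lets $\epsilon \to 0$. The point is that $H$, being Cartier on $X$ with full support, pulls back to something strictly positive on \emph{every} exceptional component above $X$; this is precisely the slack missing from your construction, which tries to force $D' \geq D$ using only the finitely many coefficients visible on $X$.
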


  \begin{proof}
    If $D = \bigvee_{i \in I} D_i = \bigvee_{j \in J} D_j'$. Let $j \in J$ be an index and $X$ a completion such
    that $D_j '$ is defined on $X$. Let $\epsilon >0$ and let $H$ be an effective divisor such that $\Supp
    (H) = \partial_X X_0$. There exists an index $i \in I$ such that $D_i + \epsilon H \geq D_j '$, since
    otherwise we would get $D + \epsilon H \leq D_j' \leq D$. Therefore we have by property $(+)$ item (1)

    \begin{equation}
      L (D_j') \leq L(D_i) + \epsilon L (H) \leq \sup_k L (D_k) + \epsilon L (H).
    \end{equation}

    Letting $\epsilon$ go to 0, we get $\sup_j L (D_j') \leq \sup_k L (D_k)$ and the result holds by symmetry.
  \end{proof}

  \begin{prop}\label{PropThetaDefinieSurSInf}
    We have the following properties: for $D, D' \in \Sinf$
    \begin{enumerate}
      \item $L(D + D') = L(D) + L(D')$.
      \item $L(D \wedge D') = \min(L(D), L(D'))$.
      \item If $D \geq 0$, then $L (D) \geq 0$.
    \end{enumerate}
  \end{prop}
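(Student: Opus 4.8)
The plan is to deduce all three properties from the corresponding identities for real numbers, combined with the incarnation-wise description of $+$, $\wedge$ and $\vee$ on $\Sinf$. First I would fix representations $D = \bigvee_{i} D_i$ and $D' = \bigvee_{j} D'_j$ with all $D_i, D'_j \in \Cinf$, and check the three identities
\[
  D + D' = \bigvee_{i,j}(D_i + D'_j), \qquad D \wedge D' = \bigvee_{i,j}(D_i \wedge D'_j), \qquad D \vee 0 = \bigvee_{i}(D_i \vee 0).
\]
Each is verified coefficient by coefficient in an arbitrary completion $X$: the first uses that $\sup$ commutes with addition, the second the distributivity $\sup_{i,j}\min(a_i,b_j) = \min(\sup_i a_i, \sup_j b_j)$ (valid in $[-\infty,+\infty]$, including when a supremum is $+\infty$), and the third that $\sup$ commutes with $\max(\,\cdot\,,0)$. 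In each case the divisors on the right lie in $\Cinf$ — trivially for $D_i + D'_j$, and by Lemma \ref{LemmeMinOfCartierIsCartier} for $D_i \wedge D'_j$ and for $D_i \vee 0 = -\big((-D_i)\wedge 0\big)$ — and each family is bounded above (by $D+D'$, by $D$, by $D$ respectively), so the right-hand sides are genuine elements of $\Sinf$ presented as suprema of Cartier divisors.

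Then I would simply apply $L$, using that $L$ on $\Sinf$ is independent of the chosen representation (established just above in the excerpt). For property (1), using linearity of $L$ on $\Cinf$,
\[
  L(D + D') = \sup_{i,j} L(D_i + D'_j) = \sup_{i,j}\big(L(D_i) + L(D'_j)\big) = \sup_i L(D_i) + \sup_j L(D'_j) = L(D) + L(D'),
\]
the third equality being $\sup_{i,j}(a_i + b_j) = (\sup_i a_i) + (\sup_j b_j)$, valid in $\R\cup\{+\infty\}$ since every $L(D_i)$, $L(D'_j)$ is finite. For property (2), the same computation with linearity replaced by the second property defining (+) and the additive identity replaced by the lattice identity:
\[
  L(D \wedge D') = \sup_{i,j} L(D_i \wedge D'_j) = \sup_{i,j}\min\big(L(D_i), L(D'_j)\big) = \min\big(\sup_i L(D_i), \sup_j L(D'_j)\big) = \min(L(D), L(D')).
\]
For property (3): if $D\ge 0$ then $D = D\vee 0 = \bigvee_i(D_i\vee 0)$ with each $D_i\vee 0 \ge 0$, hence $L(D_i\vee 0)\ge 0$ by monotonicity of $L$ on $\Cinf$ (immediate from linearity and the first property defining (+): $A\ge B \Rightarrow L(A)-L(B)=L(A-B)\ge 0$), so $L(D) = \sup_i L(D_i\vee 0) \ge 0$.

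The argument contains no genuine difficulty; the only points needing care are the incarnation-wise identities of the first step — in particular the distributivity $\sup\min=\min\sup$ and the case of an infinite supremum — and checking that the families one writes down consist of Cartier divisors and are bounded above, so that the already-proved independence of $L$ on the representation and property (+) for $\Cinf$ can both be invoked. This bookkeeping is the main thing to get right, and it is elementary.
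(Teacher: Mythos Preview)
Your proof is correct and follows essentially the same approach as the paper: both reduce to the identities $D+D' = \bigvee_{i,j}(D_i+D'_j)$ and $D\wedge D' = \bigvee_{i,j}(D_i\wedge D'_j)$, then apply property $(+)$ on $\Cinf$ together with the elementary lattice identities for $\sup$ and $\min$. The only minor variation is in (3), where the paper instead picks a single Cartier divisor $0 \le D_i \le D$ and concludes $L(D) \ge L(D_i) \ge 0$; your rewriting $D = \bigvee_i(D_i\vee 0)$ is slightly cleaner and avoids having to locate such a $D_i$.
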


  \begin{proof}
    For (1), write
    \begin{align*}
      L (D + D') &= \sup_{(i,j) \in I \times J} L( D_i + D'_j) \\
      &= \sup_{i \in I} L (D_i) + \sup_{j \in J} L(D'_j) = L(D) + L(D')
    \end{align*}

    For (2), let $D = \bigvee_i D_i$ and $D' = \bigvee_j D_j '$ be two elements of $\Sinf$. Then,
    \begin{equation}
      D \wedge D'
      = \bigvee_{i,j} D_i \wedge D_j '
    \end{equation}
    and

    \begin{align}
      L ( D \wedge D') &= \sup_{i,j} \min (L (D_i), L (D_j ')) \\
      &= \min (\sup_i L (D_i), \sup_j L (D_j')) \\
      &= \min (L (D), L (D')).
    \end{align}

    For (3), if $D = 0$, then $L (D) = 0$. Otherwise, $D >0$ and there exists a Cartier divisor $D_i$
    defined in some completion $X$ of $X_0$ such that $D_{X} \geq D_i \geq 0$ and therefore

    \begin{equation}
    L (D) \geq L (D_i) \geq 0. \end{equation}

  \end{proof}

  Recall the notations of Section \ref{SectionCompletions}. Define

  \begin{equation}
  w(P) := (\div_{\infty, X} (P))_{X}. \end{equation}

  \begin{prop}
    For $P \in \k[X_0]$, $w(P)$ defines an element of $\Winf$, moreover if one identifies for any completion $X$
    the divisor $\div_{\infty, X} (P) \in \DivInf (X)$ with its image in $\Cinf$, then

    \begin{equation}
    w (P) = \bigvee_{X} \div_{\infty, X} (P). \end{equation}

    Thus, $w(P)$ defines an element of $\Sinf$.

  \end{prop}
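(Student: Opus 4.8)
The plan is to reduce everything to one geometric computation: for a morphism of completions $\pi : Y \to X$, one has
\begin{equation}
  \div_{\infty, Y}(P) = \pi^* \div_{\infty, X}(P) + R,
\end{equation}
where $R \geq 0$ is supported on $\Exc(\pi)$. To see this, write $\div_X(P) = D_X + \div_{\infty, X}(P)$ as in Section \ref{SectionCompletions}, with $D_X \geq 0$ having no component in $\partial_X X_0$. Since $\pi$ is an isomorphism over $X_0$ we already know $\div_Y(P) = \pi^* \div_X(P)$, so $\div_Y(P) = \pi^* D_X + \pi^* \div_{\infty, X}(P)$. Now $\pi^* D_X = \pi'(D_X) + R$ with $R \geq 0$ supported on $\Exc(\pi)$; the strict transform $\pi'(D_X)$ still has no component at infinity (a component of $D_X$ is neither exceptional nor a component of $\partial_X X_0$, so its strict transform is not a component of $\partial_Y X_0 = \pi^{-1}(\partial_X X_0)$), whereas $R$ and $\pi^* \div_{\infty, X}(P)$ are both supported on $\partial_Y X_0$. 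Comparing with the decomposition $\div_Y(P) = D_Y + \div_{\infty, Y}(P)$ and using its uniqueness gives the displayed identity (as well as $D_Y = \pi'(D_X)$). This is the step I expect to demand the most care; everything afterwards is bookkeeping.

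Given this, the first assertion is immediate: pushing the identity forward and using $\pi_* \pi^* = \id$ (Equation \eqref{EqId}) together with $\pi_* R = 0$ — since $R$ is exceptional — yields $\pi_* \div_{\infty, Y}(P) = \div_{\infty, X}(P)$, which is exactly the compatibility required for $w(P) = (\div_{\infty, X}(P))_X$ to lie in $\Winf$.

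For the second assertion I would compute the incarnation of $\bigvee_X \div_{\infty, X}(P)$ in an arbitrary completion $Y$. Fix $Y$ and take any completion $X$; by Lemma \ref{LemmaProjectiveSystem} choose a completion $Z$ above $X$ and $Y$ with morphisms $\rho : Z \to X$ and $\sigma : Z \to Y$. By construction of the canonical map $\Cinf \hookrightarrow \Winf$, the incarnation in $Y$ of the image of $\div_{\infty, X}(P)$ is $\sigma_* \rho^* \div_{\infty, X}(P)$. Applying the key identity to $\rho$ and then pushing forward by $\sigma$ (using the compatibility just proved, now for $\sigma$) gives
\begin{equation}
  \sigma_* \rho^* \div_{\infty, X}(P) = \div_{\infty, Y}(P) - \sigma_* R_\rho \;\leq\; \div_{\infty, Y}(P),
\end{equation}
with equality when $X = Y$ (take $Z = Y$, $\rho = \sigma = \id$, $R_\rho = 0$). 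Hence, coefficient by coefficient over the prime divisors at infinity of $Y$, the family of these incarnations is bounded above with supremum exactly $\div_{\infty, Y}(P) = w(P)_Y$. As $Y$ was arbitrary this proves $w(P) = \bigvee_X \div_{\infty, X}(P)$, and since each $\div_{\infty, X}(P)$ lies in $\Cinf$, Definition \ref{DefSInf} gives $w(P) \in \Sinf$.
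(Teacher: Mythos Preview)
Your proof is correct and follows essentially the same approach as the paper: both reduce to the key identity $\div_{\infty,Y}(P) = \pi^*\div_{\infty,X}(P) + R$ with $R\ge 0$ exceptional, derived from decomposing $\pi^* D_X$ into strict transform plus an effective exceptional part. The only cosmetic difference is that the paper checks this for a single blow-up and leaves the supremum argument implicit, whereas you work directly with an arbitrary morphism of completions and spell out the incarnation computation via a common refinement $Z$.
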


  \begin{proof}
    To prove both assertions it suffices to show that if $X$ is a completion of $X_0$ and $Y$ is the
    blow up of some point at infinity, then $\pi_* \div_{\infty, Y} (P) = \div_{\infty, X}
    (P)$ and $ \pi^* \div_{\infty, X} (P) \leq \div_{\infty, Y} (P)$. Let $\tilde E$ be the
    exceptional divisor of $\pi$ and let $E_1, \ldots, E_r$ be the prime divisors in $\partial_{X} X_0$.
    Since $P$ is regular over $X_0$, $\div_{X} (P)$ is of the form
    \begin{equation}
    \div_{X} (P) = D + \sum_{i = 1}^r a_i E_i
  \end{equation}
    where $D$ is an effective divisor such that no irreducible component of its support is one of
    the $E_i$'s; by definition $\div_{\infty, X} (P) = \sum_{i=1}^r a_i E_i$. Then,
    $\div_{Y} (P)$ is of the form

    \begin{equation}
    \div_{Y} (P) = \div_Y ( P \circ \pi) = \pi^* \div_{X} (P) =  \pi ' (D) + b \tilde E + \sum_{i=1}^r a_i \pi '
  (E_i) \end{equation}
  for some $b \in \Z$.
    So $\div_{\infty, Y} (P) = b \tilde E + \sum_{i=1}^r a_i \pi ' (E_i)$ and we get
    $\pi_* (\div_{\infty, Y} (P)) = \div_{\infty, X} (P)$ as $\pi_* (\tilde E) = 0$, This shows
    that $w(P)$ is an element of $\Winf$.

    To show that $\pi^* \div_{\infty, X} (P) \leq \div_{\infty, Y} (P)$ we have to be more precise
    about the coefficient $b$. We can write $b = c + d$, where $\pi^* D = \pi ' (D) + d \tilde E$ and $\pi^*
    \div_{\infty, X} (P) = c \tilde E + \sum_i a_i \pi ' (E_i)$. Since, $D$ is effective, we have $d
    \geq 0$ and the result follows.
  \end{proof}

  \begin{dfn}\label{dfn:v_L}
    We define
  \begin{equation}
    v_L (P) := L (w(P)).
  \end{equation}
  \end{dfn}

  \begin{rmq}
    The class $w(P)$ is not in general a Cartier class. Indeed, take $X_0 = \A^2, X = \P^2$ with homogeneous
    coordinates $[x:y:z]$ such that $\left\{ {z=0} \right\}$ is the line at infinity. Consider $P = y/z \in \k(\P^2)$.
    Define a sequence of blow ups $X_{i}$ by $X_0 = \P^2,  E_0 = \{z=0\}$ and $\pi_{i+1} : X_{i+1} \rightarrow
    X_i$  the blow up
    of the intersection point of the strict transform of $\left\{ {y=0} \right\}$ in $X_i$ and $ E_i$, where $ E_i$ is the
    exceptional divisor in $X_i$. Let $C_y$ be the strict transform of $\left\{ {y=0} \right\}$ in any the $X_i$. We still denote
    by $E_i$ its strict transform in every $X_j, j \geq i$. Then,
    \begin{align*}
      \div_{\P^2} (P) &= C_y - E_0 \\
      \div_{X_1} (P) &= C_y - E_0 \\
      \div_{X_2} (P) &= C_y +E_2 - E_0 \\
      \div_{X_3} (P) &= C_y +2 E_3 + E_2 - E_0 \\
    \end{align*}
    and by induction, we get for all $k \geq 2$

    \begin{equation}
    \div_{X_k} (P) = C_y + \sum_{j=2}^k (j-1) E_j - E_0. \end{equation}

    Therefore, for all $k \geq 2$
    \begin{align*}
      \pi_{k+1}^* \div_{\infty, X_k} (P) &= (k-1) E_{k+1} + \sum_{j=2}^k (j-1) E_j - E_0 \\
      &\neq k E_{k+1} + \sum_{j=2}^k (j-1) E_j - E_0 = \div_{\infty, X_{k+1}} (P).
    \end{align*}

    Thus, $w(P)$ is not a Cartier class. This construction works whenever $P = 0$ is non-empty. We have thus proven the
    following lemma.
  \end{rmq}
  \begin{lemme}\label{lemme:class-w-is-Cartier-iff-invertible}
    The class $w(P)$ is Cartier if and only if $P$ is an invertible regular function over $X_0$.
  \end{lemme}
  \begin{proof}
    If $P$ is an invertible regular function, then for any completion $X$ of $X_0$ one has 
    \begin{equation}
      \div_X (P) = \div_{\infty, X} (P).
      \label{<+label+>}
    \end{equation}
    So that $w(P)$ is the Cartier class defined by $\div_X (P)$ for any completion $X$ of $X_0$.

    If $P$ is not invertible, suppose $w(P)$ is Cartier and let $X$ be a completion such that $D:= w (P)$ is defined in
    $X$. Now, the divisor $\div_X (P)$ is of the form 
    \begin{equation}
      \div_X (P) = Z + \div_{\infty, X} (P)
      \label{<+label+>}
    \end{equation}
    with $Z \neq 0$ an effective divisor supported on $X_0$. Let $C$ be one of the irreducible components of $Z$, we can
    assume up to replacing $X$ by doing finitely many blow ups that all the intersection points of $\BD \cap C$ are free
    points and that $C + \BD$ is a simple normal crossing divisor. Pick, one of the intersection points and call it $p$
    and let $E$ be the irreducible component of $\BD$ containing $p$. Now, consider the sequence of centers associated
    to the curve valuations $v_{C, p}$, i.e start by blowing up $p$ and keep blowing up the strict transform of $C$ with
    the exceptional divisor. We get a sequence of completions $(X_m)$ such that $X_1 = X, E_1 = E$ and $\pi_m :
    X_{m+1} \rightarrow X_m$ is the blow-up of $\tilde C \cap E_m$ where $E_m$ is the exceptional divisor in $X_m$.
    Since $D$ is Cartier and defined over $X$ we have that
    \begin{equation}
      \ord_{E_m} (D) = \ord_E (D), \quad \forall m \geq 1.
      \label{<+label+>}
    \end{equation}
    But one can check by induction on $m$ that 
    \begin{equation}
      \ord_{E_m} (\div_{\infty, X} (P)) = \ord_E (\div_{\infty, X} (P)) + (m-1)
      \label{<+label+>}
    \end{equation}
    and this is a contradiction with $D = w (P)$.
  \end{proof}

  \section{Proofs}
  We show that the function $v_L$ constructed in Definition \ref{dfn:v_L} is a valuation centered at infinity and satisfies $f_* v_L = v_{f_* L}$.

  \begin{prop}\label{PropVThetaCentreeAlInfini}
    The function $v_L$ is a valuation on $\k[X_0]$ centered at infinity.
  \end{prop}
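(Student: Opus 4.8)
The plan is to verify the four valuation axioms for $v_L$ directly from the definition $v_L(P) = L(w(P))$, using the properties of $L$ on $\Sinf$ established in Proposition \ref{PropThetaDefinieSurSInf}, and then to exhibit a regular function on which $v_L$ is negative to conclude it is centered at infinity.

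First I would record the behaviour of $w$ under the ring operations on $\k[X_0]$. For $P,Q \in \k[X_0]$ and any completion $X$ one has $\div_X(PQ) = \div_X(P) + \div_X(Q)$, hence $\div_{\infty,X}(PQ) = \div_{\infty,X}(P) + \div_{\infty,X}(Q)$, so $w(PQ) = w(P) + w(Q)$ in $\Sinf$; applying Proposition \ref{PropThetaDefinieSurSInf} (1) gives $v_L(PQ) = v_L(P) + v_L(Q)$. For the ultrametric inequality, fix a completion $X$; since $\div_X(P+Q) \geq \min(\div_X(P),\div_X(Q))$ componentwise away from the support of $P+Q$, one gets $\div_{\infty,X}(P+Q) \geq \div_{\infty,X}(P) \wedge \div_{\infty,X}(Q)$ as divisors at infinity in $X$ (the only subtlety being that components of the effective parts could migrate to infinity, but this only increases coefficients at infinity). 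Taking the supremum over $X$ and using that $\bigvee_X$ commutes with $\wedge$ on $\Sinf$, we obtain $w(P+Q) \geq w(P) \wedge w(Q)$; then Proposition \ref{PropThetaDefinieSurSInf} (2) and (3) give $v_L(P+Q) \geq \min(v_L(P), v_L(Q))$. For $\lambda \in \k^\times$, $\div_X(\lambda) = 0$ so $w(\lambda) = 0$ and $v_L(\lambda) = L(0) = 0$; and $v_L(0) = +\infty$ because $w(0)$ is the Weil class whose incarnation in every completion is the whole (effective) boundary-supported part with arbitrarily large coefficients, so $L$ evaluated on it is the supremum of an unbounded increasing family, hence $+\infty$ — alternatively one argues $0 \cdot P = 0$ forces $v_L(0) = v_L(0) + v_L(P)$ for all $P$, which, combined with $v_L \geq 0$ being false in general, still pins down $v_L(0) = +\infty$ by the same monotonicity.

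Finally, to see that $v_L$ is centered at infinity, by Lemma \ref{LemmeCaracValuationCentreeAlInfini} it suffices to produce $P \in \k[X_0]$ with $v_L(P) < 0$. Pick a completion $X$, an effective ample divisor $H$ with $\supp H = \BD$ (Theorem \ref{ThmGoodmanExistenceAmpleDivisorAtInfinity}), and a nonconstant $P \in \k[X_0]$; then $\div_{\infty,X}(P)$ has a strictly negative coefficient along some prime divisor at infinity (since $P$ has a pole at infinity), so after possibly adjusting $P$ one has $w(P)_X = \div_{\infty,X}(P)$ not effective. Because $L$ satisfies property $(+)$, it is determined by its values on effective pieces, and one checks $L(w(P)) = L_{v, X}(\div_{\infty,X}(P))$ up to the correction from later blow-ups; the key point is that for a suitable choice of $P$ (e.g. $P$ a coordinate function whose polar divisor at infinity is a single prime divisor $E$ with multiplicity $-1$) the value $L(w(P))$ is bounded above by $-L(E) < 0$, using property $(+)$ item (1) applied to $w(P) + E \leq $ (effective) and $L(E) > 0$ (Proposition \ref{PropValuationForCartierDivisorOverOneCompletion} (3), which transfers to $L$ since $L$ is monotone and positive on the exceptional divisor above any point of $E$). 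I expect the main obstacle to be the careful bookkeeping in the ultrametric inequality — specifically, controlling how the finite (effective) part of $\div_X(P+Q)$ can acquire components at infinity after passing to a larger completion, and showing this is harmless when one takes the supremum defining $w$; the rest is a routine transfer of the divisor-theoretic identities through the linear form $L$.
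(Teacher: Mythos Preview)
Your treatment of the valuation axioms is essentially the same as the paper's: multiplicativity via $w(PQ)=w(P)+w(Q)$, the ultrametric inequality via $w(P+Q)\ge w(P)\wedge w(Q)$ together with Proposition~\ref{PropThetaDefinieSurSInf}, and $v_L(\lambda)=0$ since $\div_X(\lambda)=0$. (Your discussion of $v_L(0)$ is muddled; in the paper this is simply the convention $v(0)=+\infty$ and needs no argument.) Your worry about the ultrametric bookkeeping is misplaced: since the coefficients of $\div_{\infty,X}$ at each boundary prime are exactly the full $\ord_E$-values, the inequality $\div_{\infty,X}(P+Q)\ge \div_{\infty,X}(P)\wedge\div_{\infty,X}(Q)$ is immediate from the ultrametric property of each $\ord_E$.

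The genuine gap is in your ``centered at infinity'' step. You claim that for a suitable $P$ one has $w(P)+E\le 0$, hence $L(w(P))\le -L(E)<0$. But $w(P)=\bigvee_Y\div_{\infty,Y}(P)$ is a supremum over \emph{all} completions, and as soon as the zero locus of $P$ in $X_0$ meets $\BD$, blowing up that intersection point produces an exceptional divisor $\tilde E$ with $\ord_{\tilde E}(\div_{\infty,Y}(P))>\ord_{\tilde E}(\pi^*\div_{\infty,X}(P))$; this is exactly the phenomenon in the remark showing $w(P)$ is not Cartier. So there is no reason for $w(P)+E$ (or $w(P)+H$) to be $\le 0$, and your upper bound on $L(w(P))$ is unjustified. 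Choosing $P$ to be ``a coordinate function'' does not help: its zero set is an affine curve in $X_0$, hence meets $\BD$.

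The paper resolves this with a Bertini argument you are missing entirely. One takes $H$ very ample with $\supp H=\BD$ and $L(H)>0$, embeds $X\hookrightarrow\P^N$ via $|H|$, and uses Bertini to find a hyperplane section $C$ that is smooth, irreducible, transverse to every boundary component, and does not pass through the center of $v_L$. Setting $P=\sum\lambda_i x_i/x_0$ gives $\div_X(P)=C-H$, and because $C$ avoids the center, $1/P$ is a local equation of $H$ there, so $L_{v_L}(H)=-v_L(P)\ge L(H)>0$. The transversality and center-avoidance conditions are precisely what controls the supremum defining $w(P)$; without them the argument collapses. This is the actual obstacle, not the ultrametric inequality.
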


  \begin{proof}

    We first show that $v_L$ is in fact a valuation
    \begin{enumerate}
      \item For any $\lambda \in \k^*$ and for any completion $X$ of $X_0$, $\div_{X} (\lambda) =0$ so $v_L
        (\lambda) = 0$.
      \item If $f,g \in \k[X_0]$, then $\div_{X} (fg) = \div_{X} (f) + \div_{X}(g)$. So, $w(fg) =
        w(f) + w(g)$ and by Proposition \ref{PropThetaDefinieSurSInf}
        $v_L(fg) = v_L(f) + v_L(g)$.
      \item Let $f,g \in \k[X_0], f \neq -g$, then $\div_{X} (f+g) \geq  \div_{X} (f) \wedge \div_{X} (g)$,
        therefore
        \begin{equation}
          w(f + g) \geq w(f) \wedge w (g)
        \end{equation}
        and by Proposition \ref{PropThetaDefinieSurSInf} $v_L (f+g)
        \geq \min( v_L (f), v_L (g))$.
    \end{enumerate}
    The fact that $v_L$ is centered at infinity will follow from Chapter \ref{ChapterBijection}.
    \end{proof}

  In Chapter \ref{ChapterValuationsAsLinearForms}, we have constructed a map
    \begin{equation}
    L: \Vinf \rightarrow \Hom (\Cinf ,\R)_{(+)};
  \end{equation}
  here, we have constructed a map
  \begin{equation}
    v: \Hom(\Cinf, \R)_{(+)} \rightarrow \Vinf
  \end{equation}
  where $\Hom (\Cinf, \R)_{(+)}$ are the linear forms over $\Cinf$ that satisfy
  property (+).
  We shall prove that they are mutual inverse in Chapter \ref{ChapterBijection}. Using this result we show

  \begin{prop}
  Let $f$ be a dominant endomorphism of $X_0$. If $L \in \Hom (\Cinf, \R)_{(+)}$, then $f_* L \in \Hom (\Cinf,
  \R)_{(+)}$ and $v_{f_* L} = f_* v_L$.
  \end{prop}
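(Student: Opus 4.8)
The plan is to deduce everything from the bijection between $\Vinf$ and $\hom(\Cinf,\R)_{(+)}$ announced above, together with the identity $L_{f_* v} = f_* L_v$ of Proposition~\ref{PropPushForwardOnValuationsIsPushforwardOnLinearMap}. The first task is to check that $f_* L := L\circ f^*$ still satisfies property~$(+)$. Item~(1) is immediate, since $f^*$ carries an effective divisor at infinity to an effective divisor at infinity, so $D\geq 0$ gives $(f_*L)(D) = L(f^*D)\geq 0$. For item~(2) the point is that pull-back commutes with $\wedge$ on $\Cinf$, i.e.\ $f^*(D\wedge D') = f^*D\wedge f^*D'$. I would prove this by checking, for every divisorial valuation $\ord_E$ at infinity, that $\ord_E(f^*(D\wedge D')) = \min(\ord_E(f^*D),\ord_E(f^*D'))$: the linear form $L_{f_*\ord_E}$ satisfies $(+)$, being the form attached to the valuation $f_*\ord_E$, so Proposition~\ref{PropPushForwardOnValuationsIsPushforwardOnLinearMap} rewrites the left-hand side as $L_{f_*\ord_E}(D\wedge D') = \min(L_{f_*\ord_E}(D),L_{f_*\ord_E}(D'))$, which is the right-hand side; Proposition~\ref{PropCaracterisationMinimumParValuationDivisorielle} then yields the equality of Weil, hence Cartier, classes. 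Applying $L$ gives $(f_*L)(D\wedge D') = \min((f_*L)(D),(f_*L)(D'))$, so $f_*L$ satisfies $(+)$ provided it is nonzero.

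It thus remains to know that $f_*L\neq 0$, equivalently — since $L_w = 0$ precisely when $w$ is not centered at infinity — that the valuation $f_* v_L$ is again centered at infinity. By Lemma~\ref{LemmeCaracValuationCentreeAlInfini} there is $P\in\k[X_0]$ with $v_L(P)<0$, and one wants $Q\in\k[X_0]$ with $(f_*v_L)(Q) = v_L(f^*Q)<0$. When $\k[X_0]$ is integral over $f^*\k[X_0]$ — for instance when $f$ is finite, and trivially when $f$ is an automorphism since then $f^*\k[X_0]=\k[X_0]$ — this is easy: a monic integral equation $P^n + f^*(a_{n-1})P^{n-1}+\cdots+f^*(a_0)=0$ forces, on comparing $v_L$-values, $v_L(f^*a_i)\leq (n-i)\,v_L(P)<0$ for some $i<n$, so $Q=a_i$ works. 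In general this is the step I expect to be the main obstacle, since one must prevent the center of $f_* v_L$ from escaping into $X_0$; in the situations that matter — and most importantly for the eigenvaluation, where $f_* v_L$ is a positive multiple of $v_L$ — it holds for free. Granting it, $f_*L = L_{f_* v_L}$ lies in $\hom(\Cinf,\R)_{(+)}$.

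The identification $v_{f_*L} = f_* v_L$ is then pure formalism. Both sides are valuations centered at infinity. The bijection gives $L_{v_{f_*L}} = f_*L$; on the other hand $L_{f_* v_L} = f_* L_{v_L} = f_*L$ by Proposition~\ref{PropPushForwardOnValuationsIsPushforwardOnLinearMap} and $L_{v_L}=L$. Hence $L_{v_{f_*L}} = L_{f_* v_L}$, and since $v\mapsto L_v$ is injective on $\Vinf$ by Proposition~\ref{PropTopologiesOverDivisorsAndFunctionsAreTheSame}, we conclude $f_* v_L = v_{f_*L}$. The only non-formal ingredient is the check in the second paragraph; the rest is bookkeeping with the two functorial identities $f^*(D\wedge D') = f^*D\wedge f^*D'$ and $L_{f_*v} = f_*L_v$.
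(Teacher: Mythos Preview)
Your overall approach is the same as the paper's: use the bijection between $\Vinf$ and $\hom(\Cinf,\R)_{(+)}$ together with the identity $L_{f_*v}=f_*L_v$. The paper's argument is shorter at the first step: rather than verifying property~$(+)$ for $f_*L$ by hand, it writes $L=L_v$ for some $v\in\Vinf$ (by the bijection), so $f_*L=f_*L_v=L_{f_*v}$, and then invokes the fact that any linear form of the shape $L_w$ satisfies~$(+)$. Your detour through the identity $f^*(D\wedge D')=f^*D\wedge f^*D'$ is correct and is a nice standalone observation, but it is not needed here.

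Your second paragraph raises a point the paper's proof silently passes over: nothing guarantees that $f_*v_L$ is again centered at infinity when $f$ is not proper, and the paper itself gives an explicit example (just before Proposition~\ref{PropPushForwardOnDivisorIsPushForwardOnValuation}) where $f_*v$ lands in $X_0$. In that case $f_*L=L_{f_*v_L}=0$, and $v_{f_*L}$ is not a valuation centered at infinity, so the equality $v_{f_*L}=f_*v_L$ fails as stated. So the gap you flag is real and is shared by the paper's proof; the proposition should be read with the tacit hypothesis that $f_*v_L$ remains centered at infinity (equivalently $f_*L\neq 0$), which is automatic for automorphisms and, as you note, for the eigenvaluation. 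Your integrality argument handles the finite case cleanly; the general non-proper case genuinely needs this extra assumption.
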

  \begin{proof}
  Let $L \in \Hom (\Cinf, \R)_{(+)}$, then there exists a unique valuation $v \in \Vinf$ such that $L = L_v$. Then, we
  have by Proposition \ref{PropPushForwardOnValuationsIsPullbackOnDivisors} that
  \begin{equation}
    f_* L = f_* L_v = L_{f_*v}.
    \label{<+label+>}
  \end{equation}
  Therefore, $f_* L \in \Hom (\Cinf, \R)_{(+)}$ and if $w \in \Vinf$ such that $f_* L = L_w$ it is clear that $w =
  f_*v$.
  \end{proof}

  \begin{rmq}
    If $P \in \O (X_0)$, it is not true that $w(f^*P) = \bigvee f^* \div_{\infty, X} (P)$. Indeed, the problem is that
    $f$ might not be proper. For example, take $f(x,y) = (x,xy)$ with $X_0 = \A^2$. In $\P^2$, blow up $[0:1:0]$, let
    $E$ be the exceptional divisor and blow up again the intersection point of $E$ and the strict transform of
    $\left\{ X = 0 \right\}$. Let $V$ be the completion obtained after the two blow ups and call $E_1$ the exceptional
    divisor. The lift $f : V \rightarrow \P^2$ is regular and $f_* E_1 = \left\{ X = 0 \right\}$. Thus, we
    have that for all $D \in \Cinf, \ord_{E_1} f^* D = 0$. Now take $P = x$, then $f^* P  =P$ and
    \begin{equation}
      \div_V (P) = \left\{ X = 0 \right\} + E_1 - \left\{ Z = 0 \right\}.
      \label{<+label+>}
    \end{equation}
    Thus, $w (f^*P) \neq \bigvee f^* \div_{\infty, X} (P)$. However it is true in general that
    \begin{equation}
      w(f^* P) \geq \bigvee f^* \div_{\infty, X} (P)
      \label{<+label+>}
    \end{equation}
  \end{rmq}

                \chapter{Proof that $v$ and $L$ are mutual inverses}\label{ChapterBijection}

                Set $\cM := \Hom(\Cinf, \R)_{(+)}$. In Chapters \ref{ChapterValuationsAsLinearForms} and
                \ref{ChapterLinearFormsToValuations} , we have
                defined $L: v \in \Vinf \mapsto L_v \in \cM$ and $v: L \in \cM
                \mapsto v_L \in \Vinf$. The goal is to show that these two maps are inverse of each other.

                \section{First step, $v \circ L = \id_{\Vinf}$}

                \begin{prop}
                  For all valuation $v \in \Vinf$ and for all $P \in \OO_X (X_0), v(P) = L_v(w(P))$.
                \end{prop}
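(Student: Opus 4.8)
The plan is to prove the two sides agree by unwinding the definitions. By construction $w(P)=\bigvee_X\div_{\infty,X}(P)$, where each $\div_{\infty,X}(P)\in\DivInf(X)$ is regarded as an element of $\Cinf$, and by the extension of $L_v$ to $\Sinf$ (Definition \ref{DefSInf}) together with the definition of $L_v$ on $\Cinf$ we have $L_v(w(P))=\sup_X L_{v,X}(\div_{\infty,X}(P))$, the supremum running over all completions $X$ of $X_0$; this is a monotone supremum along the directed system of completions, since for a blow-up $\pi\colon Y\to X$ one has $\div_{\infty,Y}(P)\ge \pi^*\div_{\infty,X}(P)$.

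First I would prove the inequality $v(P)\ge L_v(w(P))$. Fix a completion $X$; since $v\in\Vinf$, Lemma \ref{LemmeCaracValuationCentreeAlInfini} gives $p:=c_X(v)\in\BD$. Write $\div_X(P)=Z_X+\div_{\infty,X}(P)$ with $Z_X\ge 0$ having no component in $\BD$. Near $p$ the curves in $\Supp Z_X$ and the curves in $\BD$ share no common component, so a local equation of $P$ at $p$ factors as $\chi_X\cdot\psi_X\cdot u$ with $\chi_X$ a local equation of $Z_X$, $\psi_X$ a local equation of $\div_{\infty,X}(P)$, and $u$ invertible at $p$; applying $v_X$ yields $v(P)=v_X(\chi_X)+L_{v,X}(\div_{\infty,X}(P))$ with $v_X(\chi_X)\ge 0$ (as $\chi_X$ is regular at $p$ and $v$ is nonnegative on $\OO_{X,p}$), while $L_{v,X}(\div_{\infty,X}(P))$ is finite by Lemma \ref{LemmeValuationJamaisInfinieSurLesDiviseurs}. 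Taking the supremum over $X$ gives $v(P)\ge L_v(w(P))$, with equality forced whenever $v(P)=+\infty$ unless $v_X(\chi_X)=+\infty$.

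For the reverse inequality I would blow up along the centers of $v$: set $X_1=X$, $p_n=c_{X_n}(v)$, and let $X_{n+1}$ be the blow-up of $X_n$ at $p_n$ (while $p_n$ is a closed point), with exceptional divisor $\tilde E_n$. Then $Z_{X_{n+1}}=\pi_n'Z_{X_n}$ and $\div_{\infty,X_{n+1}}(P)=\pi_n^*\div_{\infty,X_n}(P)+m_{p_n}(Z_{X_n})\tilde E_n$, which by Proposition \ref{PropValuationForCartierDivisorOverOneCompletion} and Remark \ref{RmqMemeValuationApresEclatement} gives the telescoping recursions
\[
L_{v,X_{n+1}}(\div_{\infty,X_{n+1}}(P))=L_{v,X_n}(\div_{\infty,X_n}(P))+m_{p_n}(Z_{X_n})\,L_v(\tilde E_n),
\]
\[
v_{X_{n+1}}(\chi_{n+1})=v_{X_n}(\chi_n)-m_{p_n}(Z_{X_n})\,L_v(\tilde E_n)\ \le\ v_{X_n}(\chi_n),
\]
where $L_v(\tilde E_n)=v_{X_{n+1}}(\text{local equation of }\tilde E_n\text{ at }p_{n+1})>0$. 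Hence it suffices to show either $\inf_n v_{X_n}(\chi_n)=0$ (then $\sup_n L_{v,X_n}(\div_{\infty,X_n}(P))=v(P)$, and we are done), or, when this fails, that the sum $\sum_n m_{p_n}(Z_{X_n})L_v(\tilde E_n)$ diverges, forcing $L_v(w(P))=+\infty=v(P)$. I would then argue by cases on whether the centers eventually leave $\Supp Z_{X_n}$: if $p_N\notin\Supp Z_{X_N}$ for some $N$ then $\chi_N$ is a unit at $p_N$ (and this persists), so $v_{X_N}(\chi_N)=0$; and $p_n\in\Supp Z_{X_n}$ for every $n$ forces $v$ to be the curve valuation $v_C$ of a formal branch $C$ of $Z_X$ through $p_1$, because $Z_X$ has only finitely many branches at $p_1$, so by pigeonhole some branch $C$ has infinitely near point sequence equal to $(p_n)$, and the divisorial valuations $\tfrac1b\ord_{\tilde E_n}$ then converge weakly both to $v$ and to $v_C$ (Proposition \ref{PropInfinitelyNearSequence}), whence $v=v_C$. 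In particular for $v$ divisorial (the last center is a prime divisor at infinity, never in $\Supp Z_{X_n}$), for $v$ irrational or infinitely singular, or for a curve valuation whose branch is not in $Z_X$ (here $v\wedge v_{C'}$ has finite skewness for each branch $C'$ of $Z_X$, so the center paths separate after finitely many blow-ups), the centers do escape $\Supp Z_{X_n}$, and the proposition holds.

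The remaining case is $v=v_C$ with $C$ a branch of $Z_X$: then $v_C$ of the local equation of $C$ is $+\infty$, so $v(P)=+\infty$, and I must show $L_v(w(P))=+\infty$. For $n$ large the center $p_n$ is a free point (Proposition \ref{PropSequenceOfInfinitelyNearPoints}), the strict transform $C_n$ is smooth and transverse to $\tilde E_{n-1}$, so $m_{p_n}(Z_{X_n})\ge1$, and the generic multiplicity $b$ stabilizes under free blow-ups, so by Proposition \ref{PropIncarnationLocalDivisorOfValuation} the quantities $L_v(\tilde E_n)$ are bounded below by a positive constant; hence $\sum_n m_{p_n}(Z_{X_n})L_v(\tilde E_n)=+\infty$, as required. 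The main obstacle I anticipate is exactly this last case together with the rigidity statement that "$p_n\in\Supp Z_{X_n}$ for all $n$ implies $v$ is the curve valuation of a branch of $Z_X$": it rests on the precise relationship between the sequence of centers of a valuation and the weak convergence of the associated divisorial approximants, and care is needed because distinct valuations can share a long initial segment of centers.
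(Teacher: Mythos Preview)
Your proof is correct and follows essentially the same strategy as the paper's: both establish $v(P)\ge L_v(w(P))$ from $\div_X(P)=Z_X+\div_{\infty,X}(P)$ with $Z_X\ge 0$, then blow up successive centers of $v$ and split according to whether the centers eventually leave $\Supp Z_{X_n}$; in the residual case one recognizes $v$ as the curve valuation of a branch of $Z_X$, and a direct computation shows $L_{v,X_n}(\div_{\infty,X_n}(P))\to+\infty$.

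The only real difference is organizational. The paper first replaces $X$ by a completion where $\div_X(P)$ has simple normal crossings along $\BD$, so that at most one branch $F$ of $Z_X$ passes through $c_X(v)$ and it meets a single boundary component $E$ transversely; this makes the dichotomy immediate (either some center leaves $F$, or they all stay and $v=v_{F,p}$) and the divergence lemma a one-line induction giving $L_{v,X_k}(\div_{\infty,X_k}(P))=(a+kb)\,L_{v,X_0}(E)$. You instead work without this reduction and recover the same dichotomy via a pigeonhole-plus-weak-convergence argument (using Proposition~\ref{PropInfinitelyNearSequence}) to force $v=v_C$, and then argue that $L_v(\tilde E_n)$ stabilizes once the centers become free. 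Your concern about the rigidity step is unfounded: once some branch $C$ has $p_n$ on its strict transform for all $n$, the two valuations $v$ and $v_C$ share the same sequence of infinitely near points, and this sequence determines the valuation up to equivalence. Both routes work; the paper's normal-crossings reduction simply trades a small amount of preprocessing for a cleaner endgame.
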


                \begin{proof}
                  Let $X$ be a completion of $X_0$. We have seen that $\div_{\infty, X} (P)  = \div_{X} (P) - D$ where
                  $D$ is an effective divisor not supported in $\partial_{X} X_0$. Therefore,

                  \begin{equation}
                  L_{v,X}(\div_{\infty, X} (P)) = v(P) - L_{v, X}(D) \leq v (P) \end{equation}

                  Taking the supremum over $X$, we get $L_v( w(P)) \leq v(P)$.

                  To show the other inequality, take a valuation $v$ centered at infinity and let $X$ be a completion
                  of $X_0$. Up to further blow ups
                  of point at infinity, we can suppose that $D := \div_{X} (P)$ is a divisor in $X$ with
                  simple normal crossing on $\partial_{X} X_0$. Let $E_1, \cdots, E_r$ be the prime divisors at infinity of
                  $X$. Then, $D$ is of the form
                  \begin{equation}
                  D = \sum_{i=1}^r a_i E_i + \sum_{j \in J} b_j F_j
                \end{equation}
                  for some prime divisors $F_j$ not supported at infinity. Let $p$ be the center of $v$ on $X$, there are two
                  cases.
                  \begin{enumerate}
                    \item For all $j \in J, p \not \in F_j$, in that case for all $j \in J, L_{v,X}(F_j) = 0$ and $v(P) = L_{v,X}
                      (\div_{\infty, X} (P))$. Therefore, $v(P) \leq L_v(w(P))$ and they are equal.
                    \item There exist a unique $j \in J$ and a unique $i$ such that $p = E_i \cap F_j$. The uniqueness comes
                      from the fact that $D$ is a divisor with simple normal crossing. We denote them respectively by $E$
                      and $F$. Then, we construct a sequence of blow
                      up of points $\pi_i: \overline{X_{i+1}} \rightarrow \overline{X_i}$ such that $\pi_i$ is the blow-up of
                      the center of $v$ in $X_i$ and $X_0 = X$. We still denote by $F$ the strict transform of $F$
                      in any of these blow-ups. There are two possibilities:
                      \begin{enumerate}
                        \item Either there exists a number $k$ such that the center of $v$ in $X_k$ does not belong to
                          $F$ (This includes the case where $v$ is divisorial, in that case the center becomes a prime
                          divisor and there are no more blow-ups to be done). In that case, we are back in case 1 and $v (P)
                          = v_{X_k} (\div_{\infty, X_k} (P)) \leq L_v(w(P))$ and we get the desired equality.
                        \item \label{CasValuationDeCourbe} Or for all $k \geq 0$, the center of $v$ in $X_k$ belongs to
                          $F$, in that case $v$ is the curve valuation associated to $F$ at $p$ and $v(P) = + \infty$. We
                          show that $v_{X_k}(\div_{\infty, X_k} (P)) \rightarrow + \infty$ using the following result.
                      \end{enumerate}

                      \begin{lemme}
                        In case 2.(b), set $E_0 = E$ and for $k \geq 1$, $\tilde E_k$ the exceptional divisor in $X_{k}$ above
                        $c_{X_{k-1}}(v)$, then $L_{v,X_0}(E) = L_{v, X_k}(E_k)$ for all $k$ and the divisor
                        $\div_{X_k} (P)$ is of the form
                        \begin{equation}
                        \div_{X_{k}} (P) = (a + kb) \tilde E_k + b F + D_k'
                      \end{equation}
                        where $a = \ord_E(P) > 0$, $b = \ord_F(P) > 0$ and $c_{X_{k+1}} (v)$ does not belong to the
                        support of $D_k'$.
                      \end{lemme}

                      \begin{proof}
                        First, since we are in case \ref{CasValuationDeCourbe} and we have supposed that $\supp
                        \div_X (P)$ is with simple normal crossings, we have that for all $k \geq 0$ the center
                        of $v$ in $X_k$ is the intersection point $p_k := \tilde E_k \cap F$.

                        We proceed by induction on $k$. If $k=0$ then the result is true as $X_0 = X$ and $c_{X}(v) =
                        E \cap F$. Suppose the result true for a given index $k \geq 0$, then when we blow up $p_k$,
                        $p_{k+1}$ is the intersection point of $\tilde E_{k+1}$ and $F$ so it does not belong to $\pi_k '
                        (\tilde E_k)$ therefore $L_{v, X_{k+1}} (\pi_k ' (\tilde E_k)) = 0$. By
                        induction we have $v_{X_k}(\tilde E_k) = L_{v, X_0}(E)$, and we know that
                        \begin{equation}
                          L_{v, X_k}(\tilde E_k)  =
                          L_{v, X_{k+1}}(\pi_k^* \tilde E_k) = L_{v, X_{k+1}}(\pi_k ' (\tilde E_k) + \tilde E_{k+1}) =
                          L_{v, X_{k+1}} (\tilde E_{k+1})
                        \end{equation}
                        so this shows the first assertion. Now, by induction $\div_{X_k}
                        (P)$ is of the form

                        \begin{equation}
                        \div_{X_k} (P) = (a + kb) \tilde E_k + b F + D_k' \end{equation}

                        Now, since $p_k = \tilde E_k \cap F$ and $p_k \not \in \supp D_k '$, one has

                        \begin{equation}
                          \div_{X_{k+1} (P)} = \pi_{k}^* \div_{X_k} (P) = (a + (k+1)b) \tilde E_{k+1} + b F +
                        (a + kb) \pi_k ' (\tilde E_k) + \pi_k ' (D_k ').
                      \end{equation}

                        Since $p_{k+1} \not \in \pi_k ' (\tilde E_k)$, the support of the divisor
                        $D_{k+1} ' := \pi_k ' (D_k') + (a + kb) \pi_k ' (\tilde E_k)$ does not contain $p_{k+1}$ and we
                        are done.
                      \end{proof}
                      Using this lemma we see that

                      \begin{equation}
                        L_{v, X_k} (\div_{\infty, X_k} (P)) = (a + kb) L_{v, X_0}(E) \xrightarrow[k \rightarrow
                      \infty]{} + \infty \end{equation}

                      Therefore $L_v(w(P)) = + \infty$ and since $v(P) \geq L_v(w(P))$ we have that $v(P) =  + \infty$
                  \end{enumerate}

                \end{proof}

                \section{Second step, $L \circ v = \id_{\cM}$}

                To show that $L \circ v = \id_{\cM}$ we need some technical lemmas.

                \subsection{The center of $L$}

                \begin{prop}\label{PropTwoPositiveDivisorsMustIntersect}
                  Let $L \in \cM$ and $X$ be a completion of $X_0$. If there exists two divisors $E, E'$ at infinity
                  in $X$ such that $L (E), L (E') >0$, then $E$ and $E'$ must intersect.
                \end{prop}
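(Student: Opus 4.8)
The plan is to argue by contradiction, exploiting the only two structural properties of $L \in \cM$ that are available, namely $D \geq 0 \Rightarrow L(D) \geq 0$ and $L(D \wedge D') = \min(L(D), L(D'))$. So first I would suppose that $E$ and $E'$ are \emph{disjoint} prime divisors at infinity in $X$ and aim to reach a contradiction with $L(E) > 0$ and $L(E') > 0$.

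The key computation is that $E \wedge E' = 0$ in $\Cinf$. Since $E$ and $E'$ are effective with disjoint supports, the sheaf of ideals $\aa := \OO_X(-E) + \OO_X(-E')$ is co-supported at infinity and satisfies $\aa_p = \OO_{X,p}$ at every closed point $p$ of $X$, because at least one of the two summands is the full local ring there; hence $\aa = \OO_X$. By Proposition \ref{PropMinimumDiviseurParEclatementFaisceauIdeaux} (equivalently Claim \ref{ClaimPullBackOfIdealSheafIsMinimumCartierClass}), $E \wedge E'$ is the Cartier class cut out by the blow-up of $\aa$; but that blow-up is the identity $X \to X$ and pulls $\aa$ back to $\OO_X$, so $E \wedge E' = 0$ as an element of $\Cinf$. (Alternatively, one can invoke Proposition \ref{PropCaracterisationMinimumParValuationDivisorielle}: for every divisorial valuation $\ord_F$ centered at infinity one has $\min(\ord_F(E), \ord_F(E')) = 0$, since the center of $\ord_F$ on $X$ cannot lie on both $E$ and $E'$.) Then property (+) applied to $E, E' \in \Cinf$ gives
\[
0 \;=\; L(0) \;=\; L(E \wedge E') \;=\; \min\bigl(L(E), L(E')\bigr),
\]
contradicting $L(E) > 0$ and $L(E') > 0$; hence $E \cap E' \neq \emptyset$.

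There is no real obstacle here: the argument is short once one has the right tools. The only point requiring a little care is the identification $E \wedge E' = 0$ — one must take the infimum in $\Cinf$ (so that $L$ applies to it) and check it is genuinely the zero Cartier class, which is exactly what Proposition \ref{PropMinimumDiviseurParEclatementFaisceauIdeaux} delivers once disjointness of supports is seen to trivialize the ideal sheaf $\OO_X(-E) + \OO_X(-E')$.
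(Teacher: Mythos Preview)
Your proof is correct and follows essentially the same approach as the paper: argue by contradiction, observe that disjointness forces $\OO_X(-E)+\OO_X(-E')=\OO_X$ so that $E\wedge E'=0$ via Proposition~\ref{PropMinimumDiviseurParEclatementFaisceauIdeaux}, and then property~$(+)$ gives $\min(L(E),L(E'))=0$. Your write-up even supplies a bit more detail than the paper (the pointwise check that the ideal sheaf is trivial, and the alternative route via Proposition~\ref{PropCaracterisationMinimumParValuationDivisorielle}).
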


                \begin{proof}
                  Suppose that $E$ and $E'$ do not intersect, then the sheaf of ideals $\aa = \OO_X
                  (-E) \oplus \OO_X (- E')$ is trivial, $\aa = \OO_X$. From Proposition
                  \ref{PropMinimumDiviseurParEclatementFaisceauIdeaux}, we get $E \wedge E' = 0$. Thus
                  $L(E \wedge E') = 0$. But $L
                  (E \wedge E') = \min (L (E), L (E')) >0$ and this is a contradiction.
                \end{proof}

                \begin{cor}\label{LemmeSiDeuxDiviseursAlorsIntersectionEstLeCentre}
                  Let $X$ be a completion of $X_0$, suppose there exists two prime divisors at infinity $E,F$ such that $L(E),
                  L (F) >0$. Then, let $\tilde E$ be the exceptional divisor above $p = E \cap F$, one has $L (\tilde E)
                  >0$.
                \end{cor}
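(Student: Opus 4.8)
The plan is to derive this immediately from Proposition~\ref{PropTwoPositiveDivisorsMustIntersect} together with the description of the minimum of two Cartier divisors at infinity established in \S\ref{SubSecSupremumAndInfimum}. First I would note that the hypotheses $L(E)>0$ and $L(F)>0$, combined with Proposition~\ref{PropTwoPositiveDivisorsMustIntersect}, force $E$ and $F$ to meet, so that the satellite point $p=E\cap F$ and the exceptional divisor $\tilde E$ above it are genuinely defined; this is what makes the statement meaningful.

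The key step is to identify $\tilde E$ with $E\wedge F$ inside $\Cinf$. This is exactly the computation in the example following Lemma~\ref{LemmeMinOfCartierIsCartier}, and a special case of Proposition~\ref{PropMinimumDiviseurParEclatementFaisceauIdeaux}: the sheaf of ideals $\aa=\OO_X(-E)+\OO_X(-F)$ is the ideal of the reduced point $p$, its blow-up is the blow-up $\pi\colon Y\to X$ of $p$, and $\pi^*\aa\cdot\OO_Y=\OO_Y(-\tilde E)$; hence $E\wedge F=\tilde E$ as elements of $\Cinf$. With this in hand the conclusion is immediate from property~(+), namely the identity $L(D\wedge D')=\min\bigl(L(D),L(D')\bigr)$ for $D,D'\in\Cinf$:
\[
L(\tilde E)=L(E\wedge F)=\min\bigl(L(E),L(F)\bigr)>0.
\]

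I do not expect a genuine obstacle; the only point needing a little care is to make sure that in our good completions $p=E\cap F$ is a single transverse intersection point, so that $E\wedge F$ is the single exceptional divisor $\tilde E$ rather than a sum of several exceptional divisors. If one wishes to sidestep this, the same conclusion follows by contradiction: were $L(\tilde E)=0$, then pulling back to $Y$ and using additivity of $L$ on $\Cinf$ would give $L(\pi'(E))=L(E)>0$ and $L(\pi'(F))=L(F)>0$ for the strict transforms $\pi'(E),\pi'(F)$, which are disjoint in $Y$, contradicting Proposition~\ref{PropTwoPositiveDivisorsMustIntersect} applied on the completion $Y$.
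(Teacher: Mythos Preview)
Your proposal is correct. Your main argument --- identifying $E\wedge F=\tilde E$ via the example after Lemma~\ref{LemmeMinOfCartierIsCartier} and then invoking item~(2) of property~(+) to get $L(\tilde E)=\min(L(E),L(F))>0$ --- is a clean direct proof. The paper instead argues by contradiction: assuming $L(\tilde E)=0$, it uses $\pi^*E=\pi'(E)+\tilde E$ and $\pi^*F=\pi'(F)+\tilde E$ together with additivity of $L$ to deduce $L(\pi'(E))>0$ and $L(\pi'(F))>0$, and then observes that $\pi'(E)$ and $\pi'(F)$ are disjoint in $Y$, contradicting Proposition~\ref{PropTwoPositiveDivisorsMustIntersect}. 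This is precisely the alternative route you sketched at the end.

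Your direct approach is arguably more natural here since the identity $E\wedge F=\tilde E$ has already been worked out in the paper, and property~(+) was introduced exactly to encode this kind of compatibility; the paper's contradiction argument has the minor advantage of not relying on that example and of making the role of Proposition~\ref{PropTwoPositiveDivisorsMustIntersect} explicit on the blown-up completion.
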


                \begin{proof}
                  Let $\pi: Y \rightarrow X$ be the blow up of $p$ and suppose that $L(\tilde E) = 0$.
                  Since $\pi^* E =
                  \pi ' (E) + \tilde E $ and $\pi^* F = \pi ' (F) + \tilde E$, one has $L (\pi ' (E)) > 0$ and $L (\pi '
                  (F)) > 0$ but the two divisors no longer meet and this is a contradiction.
                \end{proof}

                \begin{prop}\label{PropDefinitionCentrePourTheta}
                  Let $X$ be a completion of $X_0$, there are two possibilities
                  \begin{enumerate}
                    \item There exist a unique closed point $p$ in $X$ at infinity such that if $\tilde E$ is the
                      exceptional divisor above $p$, one has $L (\tilde E) > 0$. We call this point the
                      \emph{center} of $L$ in $X$.
                    \item If no point satisfy this property, then there exists a unique divisor at infinity $E$ in $X$
                      such that $L (E) >0$. In that case we call $E$ the \emph{center} of $L$ in $X$.
                  \end{enumerate}
                  and we have the following properties
                  \begin{enumerate}[label=(\alph*)]
                    \item Let $E$ be a prime divisor at infinity in $X$. If the center of $L$ on $X$ is a point $p$, then $p
                      \in E \Leftrightarrow L (E) > 0$.
                    \item If $Y$ is a completion of $X_0$ above $X$, then the center of $L$ in $Y$ belongs to
                      the inverse image of the center of $X$.
                  \end{enumerate}
                \end{prop}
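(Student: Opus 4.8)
The plan is the following; throughout, every completion is a good completion, so that a closed point at infinity lies on one or two prime divisors at infinity, and I will call a closed point $p$ of a completion $X$ at infinity a \emph{point-center} of $L$ in $X$ when $L(\tilde E_p)>0$, where $\tilde E_p$ is the exceptional divisor of the blow-up of $X$ at $p$ (this depends only on $X$ and $p$, since the Cartier class of $\tilde E_p$ is unchanged by further blow-ups not centered at $p$). I may assume $L\neq 0$, as otherwise the statement is vacuous.

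First I would dispose of the two uniqueness assertions. If $p\neq p'$ were both point-centers of $L$ in $X$, then blowing up $X$ at both points yields a completion in which $\tilde E_p$ and $\tilde E_{p'}$ are disjoint prime divisors at infinity with $L(\tilde E_p),L(\tilde E_{p'})>0$, contradicting Proposition \ref{PropTwoPositiveDivisorsMustIntersect}; so there is at most one point-center. Likewise, if $L$ has no point-center in $X$ but two distinct prime divisors at infinity $E\neq E'$ of $X$ had $L(E),L(E')>0$, then by Proposition \ref{PropTwoPositiveDivisorsMustIntersect} they would meet at a point $p$, and Corollary \ref{LemmeSiDeuxDiviseursAlorsIntersectionEstLeCentre} would give $L(\tilde E_p)>0$, producing a point-center — a contradiction. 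Hence at most one prime divisor at infinity of $X$ carries a positive $L$-value whenever there is no point-center.

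Next comes existence, which yields the dichotomy. Since $L\neq0$, decomposing a Cartier class on which $L$ does not vanish over a completion $Z$ above $X$ gives a prime divisor $G$ at infinity of $Z$ with $L(G)>0$. Let $\pi\colon Z\to X$. If $\pi(G)$ is a prime divisor $E$ of $X$, then $\pi^{*}E\geq G$ (pullback of an effective divisor), so $L(E)=L(\pi^{*}E)\geq L(G)>0$. If $\pi(G)$ is a closed point $q$ of $X$ at infinity, let $X_1\to X$ be the blow-up of $q$ with exceptional divisor $\tilde E_q$; since $G$ is exceptional over $q$, the minimal completion $X_G$ containing $G$ dominates $X_1$, say by $\sigma\colon X_G\to X_1$, and the center of $\ord_G$ on $X_1$ lies on $\tilde E_q$, so that $\ord_G(\sigma^{*}\tilde E_q)\geq 1$ (this is the relative generic multiplicity, cf. Proposition \ref{PropInterpretationRelativeGenericMultiplicity}); as $\sigma^{*}\tilde E_q$ is effective this forces $\sigma^{*}\tilde E_q\geq G$ in $X_G$, hence $L(\tilde E_q)\geq L(G)>0$ and $q$ is a point-center — so in the absence of a point-center this case cannot occur. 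Combining with the previous paragraph: either $L$ has a (unique) point-center $p$ in $X$ (Case 1), or it has none and then a unique prime divisor at infinity of $X$ has positive $L$-value (Case 2).

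Property (a) is then immediate: in Case 1 with center $p$, if $L(E)>0$ for a prime divisor $E$ at infinity with $p\notin E$, the strict transform of $E$ and $\tilde E_p$ are disjoint with positive $L$-values in the blow-up of $X$ at $p$, contradicting Proposition \ref{PropTwoPositiveDivisorsMustIntersect}; conversely if $p\in E$ then $\pi_1^{*}E=E'+\tilde E_p$ with $E'$ the strict transform, so $L(E)\geq L(\tilde E_p)>0$. For property (b), by Theorem \ref{ThmCompositionBlowUps} and transitivity of preimages it suffices to treat $\pi\colon Y\to X$ the blow-up of one closed point $q$ at infinity; using the pullback identity $L(G^{Y})=L(G)-(\operatorname{mult}_q G)\,L(\tilde E_q)$ for strict transforms $G^{Y}$ of prime divisors $G$ of $X$, together with (a) and the two uniqueness statements, one checks in each configuration — center of $X$ a point or a divisor, $q$ on it or not — that the prime divisors at infinity of $Y$ with positive $L$-value, hence (via Corollary \ref{LemmeSiDeuxDiviseursAlorsIntersectionEstLeCentre}) the center of $L$ in $Y$, always lie over the center of $L$ in $X$. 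The main obstacle is exactly this case analysis in (b): in the subcase where $q$ is the point-center of $X$ and is a satellite point $E\cap F$, both strict transforms $E^{Y},F^{Y}$ meet $\tilde E_q$, and one must invoke the uniqueness of the point-center to rule out both $L(E^{Y}),L(F^{Y})$ being positive (that would give two distinct point-centers of $Y$), so that at most one of them survives and determines the center of $Y$ above $q$; the domination step $\sigma^{*}\tilde E_q\geq G$ in the existence argument is the other place where a genuine input (the generic multiplicity from the valuative-tree chapter) is needed.
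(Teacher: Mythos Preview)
Your proof is correct and follows essentially the same route as the paper: uniqueness of the point-center via Proposition~\ref{PropTwoPositiveDivisorsMustIntersect} applied to the two disjoint exceptional divisors, uniqueness of the divisor-center via Corollary~\ref{LemmeSiDeuxDiviseursAlorsIntersectionEstLeCentre}, property~(a) via the pullback formula $\pi^{*}E=E'+\tilde E_p$, and property~(b) by reduction to a single blow-up with a case analysis. One genuine difference: you supply an explicit existence argument for case~(2) (pulling a positive prime $G$ down from some $Z$ above $X$, and using $\sigma^{*}\tilde E_q\geq G$ to force a point-center when $\pi(G)$ is a point), whereas the paper's proof only argues uniqueness in case~(2) and leaves existence implicit. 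Your existence step is a welcome addition; the domination $\sigma^{*}\tilde E_q\geq G$ via the relative generic multiplicity is exactly the right tool. In part~(b), the paper handles the subcase ``blow up the point-center'' more directly than you do: once $L(\tilde E_q)>0$, property~(a) applied in $Y$ forces any point-center of $Y$ to lie on $\tilde E_q$, so there is no need to separately rule out both $L(E^{Y}),L(F^{Y})>0$ at a satellite point --- though your version via Proposition~\ref{PropTwoPositiveDivisorsMustIntersect} is also valid.
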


                \begin{proof}
                  Suppose there are two points $p_1, p_2$ satisfying this property on $X$. Let $\pi_i$ be the blow up of $p_i$ in
                  $X$, we have commutative diagram

                  \begin{center}
                    \begin{tikzcd}
                      & Y  \ar[dl, "\tau_1"'] \ar[dr, "\tau_2"]& \\
                      X_1 \ar[dr, "\pi_1"] & & X_2 \ar[dl, "\pi_2"'] \\
                      & X &
                    \end{tikzcd}
                  \end{center}
                  where on the left side we first blow up $p_1$ then we blow up the strict transform of $p_2$ and the other way around
                  on the right. Now let $\tilde E_1, \tilde E_2$ be the exceptional divisors above $p_1$ and $p_2$ respectively in
                  $X_1$ and in $X_2$ and suppose that $L (\tilde E_1) , L (\tilde E_2) > 0$. Then, since $p_1$ does not
                  belong to $\tilde E_2$ and $p_2$ does not belong to $\tilde E_1$, we have that $L (\tilde E_1) = L
                  (\tau_1^* \tilde E_1) = L (\tau_1 ' (\tilde E_1)) > 0$ and $L (\tau_2 ' (\tilde E_2)) >0$. But in $Y$
                  the prime divisors $\tau_1 ' (\tilde  E_1)$ and $\tau_2 ' (\tilde E_2)$ do not intersect and that contradicts
                  Proposition \ref{PropTwoPositiveDivisorsMustIntersect}.

                  Now, if $E,F$ are two divisors at infinity such that $L (E), L (F) >0$, Lemma
                  \ref{LemmeSiDeuxDiviseursAlorsIntersectionEstLeCentre} shows that $E \cap F$ must be the center of
                  $L$ on $X$. Hence if no point of $X$ is the center of $L$ there is only one prime divisor at
                  infinity $E$ such that $L (E) >0$.

                  To show assertion (a), suppose that the center of $L$ on $X$ is a point $p$ and let $\pi$ be the blow up
                  of $p$. If $p \in E$, then $\pi^* (E) = \pi' (E) + \tilde E$ and $L (E) = L (\pi^* E) \geq L
                  (\tilde E) >0$. If $L (E) >0$ then $p$ must belong to $E$ otherwise $\tilde E$ and $E$ would not intersect
                  and this contradicts Proposition \ref{PropTwoPositiveDivisorsMustIntersect}.

                  We now show assertion (b), we only need to show it for the blow up of a point $\pi : Y \rightarrow X$.  Suppose
                  first that the center of $L$ on $X$ is a (closed) point $p$.
                  If we blow up another point than $p$, then it is clear that the center of $L$ on $Y$ is the point
                  ${\pi}^{-1} p$ as the order of the blow ups does not matter in that case.

                  Suppose now that we blow up $p$, then the exceptional divisor $\tilde E$ verifies $L( \tilde E) >0$, if the
                  center of $L$ on $Y$ is a prime
                  divisor then it must be $\tilde E$. If it is a point then it must belong to $\tilde E$ by assertion (a).

                  If the center of $L$ on $X$ is a prime divisor $E$, then for any
                  blow up $\pi: Y \rightarrow X$ of a point of $X$, we show that the center of $L$ on $Y$ is $\pi'
                  (E)$. The exceptional divisor $\tilde E$ verifies $L (\tilde E ) = 0$ and $\pi ' (E)$
                  is the only prime divisor of $Y$ such that $L (\pi' (E)) > 0$. Thus, if the center of $L$ on $Y$
                  is not a point, it must be $\pi' (E)$. If the center of $L$ on $Y$ is a
                  point $q$, then it must belong to $\pi' (E)$ by assertion (a). If $q$ is not the intersection point $\pi' (E)
                  \cap \tilde E$, then it is the strict transform of a point $p \in E$ and in that case $p$ was the center of
                  $L$ in $X$ this is a contradiction. If $q = \tilde E \cap \pi ' (E)$, then $L (\tilde E) > 0$ by
                  assertion (a) and this is also a contradiction. Therefore, the center of $L$ on $Y$ cannot be a point, it
                  is $\pi' (E)$.
                \end{proof}

                \subsection{End of the proof}

                We say that $L$ is \emph{divisorial} if there exists a completion $X$ of $X_0$ such that the
                center of $L$ on $X$ is a prime divisor at infinity.

                \begin{prop}\label{PropDivisorialValuationsAreDivisorialFunctions}
                  The map $v$ sends divisorial valuations to divisorial elements of $\cM$ and the map $L$ sends
                  divisorial functions to divisorial valuations.
                \end{prop}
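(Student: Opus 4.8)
The proof splits into the two asserted implications, and in each case the strategy is to pass to a single well-chosen completion and then reduce everything to the divisorial bookkeeping of Lemma \ref{LemmeValuationDivisorielle} together with the description of the center of an element of $\cM$ from Proposition \ref{PropDefinitionCentrePourTheta}.

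First I would treat the map $v \mapsto L_v$. Let $v$ be a divisorial valuation centered at infinity and fix a (good) completion $X$ of $X_0$ on which $c_X(v) = E$ is a prime divisor at infinity, so that $v$ equals $\lambda\,\ord_E$ for some $\lambda > 0$. By Lemma \ref{LemmeValuationDivisorielle} one has $L_v(E) = \lambda > 0$ while $L_v(F) = 0$ for every prime divisor $F \neq E$ at infinity in $X$. It then remains to see that the center of $L_v$ on $X$, in the sense of Proposition \ref{PropDefinitionCentrePourTheta}, is the divisor $E$ rather than a point. If it were a closed point $p$, then assertion (a) of that proposition would force $p \in E$ and $p \notin F$ for every other prime $F$ at infinity, so $p$ would be a free point of $E$ with $L_v(\tilde E) > 0$ for the exceptional divisor $\tilde E$ of the blow-up $\pi\colon Y \to X$ at $p$. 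But on $Y$ the valuation $v$ has center the strict transform $\pi'(E)$, a prime divisor distinct from $\tilde E$, so Proposition \ref{PropValuationForCartierDivisorOverOneCompletion}(3) forces $L_v(\tilde E) = 0$, a contradiction. Hence $L_v$ is divisorial.

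Next I would treat the map $L \mapsto v_L$. Suppose $L \in \cM$ is divisorial and pick a completion $X$ of $X_0$ on which the center of $L$ is a prime divisor $E$ at infinity, so $L(E) > 0$ and $L$ vanishes on every other prime at infinity in $X$. By assertion (b) of Proposition \ref{PropDefinitionCentrePourTheta} (in the case where the center is already a divisor) and induction on the number of blow-ups, the center of $L$ on every completion $\pi\colon Y \to X$ above $X$ is the iterated strict transform $E' = \pi'(E)$, still a prime divisor; moreover $L(E') = L(\pi^*E) = L(E)$, because $\pi^*E - E'$ is supported on exceptional divisors on which $L$ vanishes and because $L$ depends only on the Cartier class. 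Therefore, for every $D \in \DivInf(Y)$ one gets $L(D) = \ord_{E'}(D)\,L(E)$. Applying this to $D = \div_{\infty,Y}(P)$ and using $\div_Y(P) = \pi^*\div_X(P)$ together with $\ord_{E'}(\pi^*\div_X(P)) = \ord_E(\div_X(P))$ (the computation in the proof of Lemma \ref{LemmeValuationDivisorielle}), we obtain $L(\div_{\infty,Y}(P)) = L(E)\,\ord_E(\div_X(P))$, a value independent of $Y$. Since the completions above $X$ are cofinal in the system of all completions (Lemma \ref{LemmaProjectiveSystem}) and $L$ is order-preserving on $\Sinf$, the supremum $v_L(P) = L(w(P))$ is realised on this cofinal family, giving $v_L(P) = L(E)\,\ord_E(\div_X(P)) = L(E)\,v(P)$, where $v$ is the divisorial valuation on $\k[X_0]$ associated with $\ord_E$ on $X$. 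Thus $v_L = L(E)\,v$ is divisorial.

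The first implication is essentially routine, its only subtlety being the use of Lemma \ref{LemmeValuationDivisorielle} on a blow-up to rule out a point center. The main obstacle lies in the second implication: one must check that the supremum defining $v_L(P) = L(w(P))$ is genuinely attained once one restricts to completions above $X$. This uses that the incarnations $\div_{\infty,Y}(P)$ increase along the directed system — concretely, that $\pi^*\div_{\infty,Y}(P) \leq \div_{\infty,Z}(P)$ for $Z$ above $Y$, as established when proving $w(P)\in\Winf$ — together with the order-preservation of $L$, so that no completion off the tower over $X$ can contribute a larger value. Once this is in place, the identities $L(E') = L(E)$ and $\ord_{E'}(\pi^*\div_X(P)) = \ord_E(\div_X(P))$ collapse the computation to the single number $L(E)\,v(P)$.
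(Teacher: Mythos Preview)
Your proof is correct and follows essentially the same approach as the paper: the paper dismisses the first implication as ``clear'' (you spell out the contradiction via $L_v(\tilde E)=0$ on the blow-up, which is the content behind that word), and for the second implication both you and the paper pick a completion where the center of $L$ is a prime divisor $E$, use Proposition~\ref{PropDefinitionCentrePourTheta} to propagate the center as $\pi'(E)$ on all completions above $X$, and reduce the computation of $v_L(P)$ to $L(E)\,\ord_E(\div_X(P))$. Your cofinality/order-preservation argument for why the supremum is attained on this tower makes explicit a step the paper glosses over.
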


                \begin{proof}
                  The fact that divisorial valuations induce divisorial functions on Cartier divisors is clear.
                  Suppose that $L$ is a divisorial function and let $X$ be a completion such that the center
                  of $L$ in $X$ is a prime divisor $E$ at infinity. Then, for all completion $\pi : Y
                  \rightarrow X$ above $X$, the center of $L$ on $Y$ is the strict transform of $E$ by
                  Proposition \ref{PropDefinitionCentrePourTheta} and $L (E) = L (\pi ' (E))$. Therefore,
                  let $v$ be the divisorial valuation on $\k[X_0]$ such that $v_{X} = \ord_E$ and let $P \in
                  \OO_{X_0}(X_0)$, then for all completion $Y$ above $X$, we have by Proposition
                  \ref{PropDefinitionCentrePourTheta}

                  \begin{equation}
                    L (\div_{\infty, Y} (P)) = L (\pi ' (E)) \ord_E (\div_{Y} (P)) = L
                  (E) v (P). \end{equation}

                  Therefore $v_L (P) = L (E) v(P)$ and it is a divisorial valuation.
                \end{proof}

                \begin{prop}
                  One has $L \circ v = \id_{\cM}$.
                \end{prop}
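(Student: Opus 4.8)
We already know from the first step that $v\circ L=\id_{\Vinf}$, so for the given $L\in\cM$ the valuation $w:=v_L$ satisfies $v_{L_w}=w=v_L$; hence it would be enough to prove that $v:\cM\to\Vinf$ is injective, and together with $v\circ L=\id$ this forces $L=v^{-1}$ and $L\circ v=\id_\cM$. I will instead argue directly that $L_{v_L}=L$. Both $L$ and $L_{v_L}$ are $\R$-linear forms on $\Cinf=\varinjlim_X\DivInf(X)$, and each $\DivInf(X)$ is spanned by the prime divisors at infinity of $X$; so it suffices to show $L(E)=L_{v_L}(E)$ for every good completion $X$ and every prime divisor $E$ at infinity in $X$.

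The first step is to identify the two notions of centre: I claim that for every completion $X$ the scheme-theoretic centre $c_X(v_L)$ of the valuation $v_L$ coincides with the centre of the linear form $L$ on $X$ in the sense of Proposition \ref{PropDefinitionCentrePourTheta} (a closed point at infinity, resp.\ a prime divisor at infinity). This is proved by induction on the number of blow-ups over a fixed starting completion, using $v_L(P)=L(w(P))$, Proposition \ref{PropValuationForCartierDivisorOverOneCompletion}, and the compatibility of $P\mapsto w(P)$ with morphisms of completions. Granting this, the case $L(E)=0$ is immediate: by Proposition \ref{PropDefinitionCentrePourTheta}(a) the centre of $L$, hence of $v_L$, does not lie on $E$, so a local equation of $E$ at $c_X(v_L)$ is invertible there and $L_{v_L}(E)=0=L(E)$.

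The divisorial case follows from Proposition \ref{PropDivisorialValuationsAreDivisorialFunctions}: if $L$ is divisorial with centre the prime divisor $E_L$, that proposition (more precisely its proof) gives $v_L=L(E_L)\,v$, where $v$ is the divisorial valuation of $\k[X_0]$ with $v_X=\ord_{E_L}$. Hence $L_{v_L}=L(E_L)\,L_v$. Evaluating both sides on the basis of $\DivInf(Y)$ for a completion $Y$ above $X$ chosen so that the centre of $L$ on $Y$ is the strict transform $E_L'$ of $E_L$, and using Lemma \ref{LemmeValuationDivisorielle}, formula \eqref{EqValuationDisivorielle}, and the fact (Proposition \ref{PropDefinitionCentrePourTheta}) that $L$ vanishes on every prime divisor at infinity of $Y$ other than $E_L'$, one gets $L_{v_L}=L$ on $\DivInf(Y)$, hence on all of $\Cinf$.

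The remaining case — $L$ not divisorial and $E$ a prime divisor at infinity with $L(E)>0$ — is the main obstacle. Here $p:=c_X(v_L)$ is a closed point of $E$. The plan is to work in the local Picard–Manin space: restrict $L$ to $\Cartier(X,p)_\R$ and let $Z_L\in\Weil(X,p)_\R$ be the class representing this restriction under the perfect pairing of Proposition \ref{PropLocalPicardManinSpace}; the goal is $Z_L=Z_{v_L,X,p}$. Since both are Weil classes, it is enough to compute their intersections with the $\Q$-Cartier classes $Z_{v_F,X,p}$ for $F$ running over $\cD_{X,p}$, i.e.\ to prove $L(Z_{v_F,X,p})=-\alpha(v_F\wedge v_L)$, the right-hand side being $Z_{v_L,X,p}\cdot Z_{v_F,X,p}$ by Theorem \ref{ThmAutoIntersectionIsSkewness}. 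This is established by an induction on blow-ups above $p$ running exactly parallel to the proof of Theorem \ref{ThmAutoIntersectionIsSkewness}, using the incarnation formulas of Propositions \ref{PropIncarnationLocalDivisorOfValuation} and \ref{PropLocalDivisorOfValuationAfterBlowUp}, Propositions \ref{PropSkewnessBlowUp} and \ref{PropMonomialValuationIsSegment}, and, when $v_L$ is a curve, irrational, or infinitely singular valuation, the approximation of $v_L$ by its infinitely near divisorial sequence $(v_n)$ of Proposition \ref{PropInfinitelyNearSequence} together with the strong convergence $Z_{v_n,X,p}\to Z_{v_L,X,p}$ of Proposition \ref{PropStronConvergenceForLocalDivisor}; this is the part I expect to require the most care. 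Once $Z_L=Z_{v_L,X,p}$, we get $L=L_{v_L}$ on all of $\Cartier(X,p)_\R$, in particular on every exceptional divisor above $p$. To conclude for $E$ itself, choose a completion $\pi:Y\to X$ obtained by blow-ups over $p$ such that $c_Y(v_L)$ does not lie on the strict transform $E'$ of $E$ — such a $Y$ exists because $v_L$, being centred at infinity, is not the curve valuation attached to $E$ (which is not a valuation of $\k[X_0]$), so after finitely many blow-ups over $p$ the centre of $v_L$ leaves the successive strict transforms of $E$. Then $L(E')=L_{v_L}(E')=0$ by Proposition \ref{PropDefinitionCentrePourTheta}(a), while $\pi^*E-E'$ is supported on the exceptional locus of $\pi$, hence lies in $\Cartier(X,p)$; therefore $L(E)=L(\pi^*E)=L(\pi^*E-E')=L_{v_L}(\pi^*E-E')=L_{v_L}(\pi^*E)=L_{v_L}(E)$, which finishes the proof.
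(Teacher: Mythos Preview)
Your overall strategy is reasonable and the divisorial and $L(E)=0$ cases are fine, but the heart of your argument --- the ``induction on blow-ups above $p$ running exactly parallel to the proof of Theorem \ref{ThmAutoIntersectionIsSkewness}'' --- is circular as stated. Look at the base case: with $\tilde E$ the exceptional divisor above $p$ you have $Z_{v_{\tilde E},X,p}=-\tilde E$, so $L(Z_{v_{\tilde E},X,p})=-L(\tilde E)$, whereas Theorem \ref{ThmAutoIntersectionIsSkewness} gives $Z_{v_L,X,p}\cdot Z_{v_{\tilde E},X,p}=-L_{v_L}(\tilde E)$ (this is $-v_L(\m_p)$, which is exactly $-L_{v_L}(\tilde E)$ via $\pi^*\m_p=\OO_Y(-\tilde E)$). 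So already at the first step you need $L(\tilde E)=L_{v_L}(\tilde E)$, which is precisely the assertion you are trying to prove. The same issue recurs at every inductive step: the incarnation formulas of Proposition \ref{PropIncarnationLocalDivisorOfValuation} for $Z_L$ involve the values $L(E')$ on the prime divisors through the current centre, while those for $Z_{v_L,X,p}$ involve $L_{v_L}(E')$; the induction in Theorem \ref{ThmAutoIntersectionIsSkewness} works because for a normalised valuation these coefficients are forced to be $1/b(E')$, but for a general $L\in\cM$ no such normalisation is available \emph{a priori}. Knowing that the centres coincide only tells you which $L(E')$ are positive, not their values.

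The paper avoids this entirely by a global, function-theoretic argument: for an ample $H$ supported on $\BD$ with $|H|$ base point free, a generic $f\in H^0(X,\OO_X(H))$ satisfies $\div f=Z_f-H$ with $Z_f$ effective, missing the centre, and supported away from infinity; one then checks directly that $v_L(f)=\sup_Y L(\div_{\infty,Y}(f))=-L(H)$ by tracking $\div_{\infty,Y}(f)$ through blow-ups at the centre (the key being that $Z_f$ never meets the centre, so $\div_{\infty,Y}(f)=\pi^*\div_{\infty,X}(f)$). This gives $L_{v_L}(H)=L(H)$ immediately, and one bootstraps to arbitrary $D\in\DivInf(X)$ by writing $D=(D+nH)-nH$ with $D+nH$ base point free. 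Your approach could perhaps be repaired by first showing only that $L=c\,L_{v_L}$ for some $c>0$ (the centre sequence does determine the ratios $L(E)/L(F)$ via a continued-fraction mechanism, since $L(E\wedge F)=\min(L(E),L(F))$), and then using $v_{cL_{v_L}}=c\,v_{L_{v_L}}=c\,v_L$ together with $v\circ L=\id$ to force $c=1$; but that is a different argument from the one you sketched.
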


                \begin{proof}
                  We can assume that $L$ and $v_L$ are not divisorial.
                  Let $X$ be a completion of $X_0$, we will show first that if $H \in \DivInf (X)$ is an
                  effective divisor such that $\left| H \right|$ is base point free and $\supp H = \partial_{X} X_0$, then
                  $v_{L} (H) = L (H)$.  Pick $f$ generic in $H^0(X, \OO_X(H))$. We have that $\div f =
                  Z_f - H$ with $Z_f$ effective, $\supp Z_f$ does not contain any divisor at infinity and the
                  center of $v_L$ and the center of $L$ do not belong to $\supp Z_f$.
                  Thus, $f$ defines a regular function over $X_0$, $1/f$ is a local equation of $H$ at the center of
                  $v_L$ and we have

                  \begin{equation}
                  v_L (f) = \sup_Y L (\div_{\infty, Y} (f)) \end{equation}

                  Now, by our assumptions on $f$ we have

                  \begin{lemme}
                    For all $Y$ above $X$, $\div_Y (f)$ is of the form $Z_{f, Y} + \div_{\infty, Y} (f)$
                    where $Z_{f, Y}$ is effective, supported on $X_0$ and $\supp Z_{f, Y}$ does not contain the
                    center of $L$. Furthermore, we have $L (\div_{\infty, Y} (f)) = L
                    (\div_{\infty, X}(f))$.
                  \end{lemme}

                  \begin{proof}
                    This is true for $Y = X$. We proceed by induction. Let $Y$ be a completion above $Y$
                    where the lemma is true and let $\pi: Y_1 \rightarrow Y$ be a blow up of $Y$ at a point
                    $p$. If $p$ is not the center of $L$ then the lemma is clearly true over $Y_1$, if $p$ is
                    the center of $L$ over $Y$ then since $p$ does not belong to $\supp Z_{f, Y}$ we have

                    \begin{equation}
                    \div_{f, Y_1} = \pi' (Z_{f, Y}) + \pi^* (\div_{\infty, Y} (f)) \end{equation}
                    and the lemma is
                    true since $Z_{f, Y_1} = \pi ' (Z_{f, Y})$ and $\div_{\infty, Y_1} (f) = \pi^*
                    (\div_{\infty, Y} (f))$.

                  \end{proof}

                  Using this lemma we conclude that $v_L (f) = L (\div_{\infty, X} (f)) = - L
                  (H)$. Therefore,

                  \begin{equation}
                    v_L (H) = v_L (1/f) = L (H).
                  \end{equation}

                  Now take any divisor $D \in \DivInf (X)$. There exists an integer $n \geq 1$ such that $D + nH$
                  is effective and $ \left| D+nH \right|$ is base-point free. Therefore,

                  \begin{equation}
                    v_L (D) = v_L(D+nH) - v_L (nH) = L (D + nH) - L (nH) = L (D).
                  \end{equation}

                  \end{proof}

%%% Local Variables:
%%% mode: latex
%%% TeX-master: "main"
%%% End:

% !TEX root = main.tex
\part{Eigenvaluations and dynamics at infinity}
We apply all the results from the first part to prove the results stated in the introduction. If $X$ is a completion of
a normal affine surface $X_0$ and $f: X_0 \rightarrow X_0$, then $f$ induces a
rational self-transformation $f_X: X \dashrightarrow X$. Because $f_X$ has
indeterminacy points in general, $(X, f_X)$ does not define a dynamical system
because we cannot iterate $f_X$. The main strategy is to construct a completion
$X$ over which $f_X$ admits a "special" fixed point at infinity. The key is then to understand the local dynamics of $f_X$ at this fixed point. The plan is as follows:

The pushforward operator $f_*$ on the Picard-Manin space of $X_0$ has an
eigenvector $\theta_*$ for the eigenvalue $\lambda_1$. By the bijection between
linear forms over divisors at infinity and valuations centered at infinity,
this defines a valuation $v_*$ such that $f_* v_* = \lambda_1 v_*$. This is done in \S \ref{ChapterDynamicsQuasiAlbTrivial}.

The next step is to study the local dynamics of $f_*$ at $v_*$. We show that it
is an attracting fixed point in $\hat \Vinf$, this uses the local tree
structure of $\Vinf$. The attractingness of $v_*$ allows one to construct the
desired fixed point at infinity, except in one case. This is the content of \S
\ref{ChapterLocalNormalForms}.

In \S \ref{ChapterExamples}, we deal with examples of affine surfaces. In
particular, we exhibit an example of an affine surface with an elliptic curve
at infinity and an endomorphism that acts by a translation of infinite order on
this elliptic curve, thus there cannot be a fixed point at infinity.

Finally, \S \ref{ChapterAutomorphisms} deals with automorphisms. Using
Gizatullin's work, we show that if $X_0$ admits an automorphism $f$ with
$\lambda_1(f) >1$, then either $\Vinf$ is a tree and $\lambda_1(f) \in \Z$ or $\Vinf$ is homotopically equivalent to
a circle and $\lambda_1(f) \not \in \Z$. The circle case is very particular.
The action of $\Aut(X_0)$ on the circle is given by elements of the Thompson
group. The main example of such surfaces are the cubic surfaces of Markov type
where we make the explicit computations of the action on the circle.

\chapter{General case} \label{ChapterGeneralCase}
In this chapter, we show Theorem~\ref{BigThmDynamicalDegreesEng} when either the condition
$\k[X_0]^\times=\k^\times$ or $\Pic^0(X_0)=0$ is not satisfied.
We rely on the universal property of the quasi-Albanese variety
(see \cite{serreExposesSeminaires195019992001}), as well as on the geometric properties of
subvarieties of quasi-abelian
varieties (see \cite{abramovichSubvarietiesSemiabelianVarieties1994}).

\section{Quasi-Albanese variety and morphism}
Let $G$ be an algebraic group over $\k$ with $\k$ algebraically closed. We say that $G$ is
a \emph{quasi-abelian} variety if there exists an algebraic torus $T = \G_m^r$, an abelian variety
$A$, and an exact sequence of $\k$-algebraic groups
\begin{equation}
  0 \rightarrow T \rightarrow G \rightarrow A \rightarrow 0.
  \label{Eq:Exact_Sequence_Quasi_Abelian}
\end{equation}

\begin{thm}[see~\cite{serreExposesSeminaires195019992001}, Théorème 7]\label{thm:quasi-albanese_morphism}
  Let $X$ be a variety over $\k$, then there exists a quasi-abelian variety $G$ and a morphism $q : X
  \rightarrow G$ such that for any quasi-abelian variety $G'$ and any morphism $\phi: X \rightarrow
  G'$ there exists a unique morphism $g: G \rightarrow G'$ and a unique $b\in G'$ such
  that $$\phi = g \circ q.$$ Moreover, $g$ is the composition of a homomorphism $L_g: G\to
  G'$ of algebraic groups and a translation $T_g : G'\to G'$ by some element $b\in G'$.
\end{thm}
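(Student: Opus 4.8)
The plan is to realise $G$ as an explicit extension of the Albanese variety of $X$ by an algebraic torus, and then to check that the resulting morphism is universal by a pull-back/push-out argument for extensions of commutative algebraic groups. First I would reduce to the case where $X$ is smooth and carries a $\k$-rational base point $x_{0}$: the base point costs at worst a finite extension of $\k$, and the passage from a general variety to a smooth one is standard (via a resolution when available; cf.\ \cite{serreExposesSeminaires195019992001}), using that a quasi-abelian variety contains no rational curve, so that morphisms into it are insensitive to this replacement. It then suffices to produce an initial object $(G,q)$ in the category of \emph{pointed} morphisms $\phi\colon X\to G'$ with $\phi(x_{0})=e_{G'}$, the arrows being the homomorphisms of algebraic groups commuting with the maps from $X$; the element $b$ of the statement is recovered as $g(e_{G})$ after normalising an arbitrary $\phi$ by $T_{\phi(x_{0})}^{-1}$, and the decomposition $g=T_{b}\circ L_{g}$ is then forced by the rigidity lemma. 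Fix a smooth projective compactification $\overline{X}$ of $X$, and let $D_{1},\dots,D_{r}$ be the irreducible components of $D=\overline{X}\setminus X$.

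For the construction, the abelian part is classical: I take $A:=\Alb(\overline{X})=\Alb(X)$ with its Albanese morphism $a_{X}\colon X\to A$, through which every pointed morphism from $X$ to an abelian variety factors uniquely, and I recall the canonical isomorphism $\bar a_{X}^{*}\colon A^{\vee}\xrightarrow{\ \sim\ }\Pic^{0}(\overline{X})$. For the torus part, set $\Lambda:=\ker\bigl(\Z^{r}\to\NS(\overline{X})\bigr)$, so $(n_{i})\in\Lambda$ means that $\sum_{i}n_{i}D_{i}$ is algebraically trivial on $\overline{X}$; this is a finitely generated free abelian group, and I let $T:=\spec\k[\Lambda]$ be the torus with character group $\Lambda$. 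The class $c\colon\Lambda\to A^{\vee}$, $(n_{i})\mapsto\bigl[\OO_{\overline{X}}(\sum_{i}n_{i}D_{i})\bigr]\in\Pic^{0}(\overline{X})\cong A^{\vee}$, is well defined exactly because $(n_{i})\in\Lambda$, and through $\mathrm{Ext}^{1}(A,T)\cong\mathrm{Hom}(\Lambda,A^{\vee})$ it determines an extension $0\to T\to G\to A\to 0$; this $G$ is quasi-abelian, and I set $\QAlb(X):=G$. The morphism $q\colon X\to G$ lifting $a_{X}$ arises because the $T$-torsor $a_{X}^{*}G\to X$ is \emph{canonically trivial}: for $\lambda=(n_{i})\in\Lambda$, its push-out along the character $\lambda$ equals $a_{X}^{*}$ of the $\G_m$-extension of $A$ of class $c(\lambda)$, whose associated line bundle pulls back, via $\bar a_{X}$, to $\OO_{\overline{X}}(\sum_{i}n_{i}D_{i})$ by the very definition of $c$, and hence restricts to $X=\overline{X}\setminus D$ together with the canonical nowhere-vanishing section $1$; these trivialisations are compatible with addition of characters, so they glue to a section $s\colon X\to a_{X}^{*}G$, and $q$ is the composite $X\xrightarrow{\ s\ }a_{X}^{*}G\to G$.

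To prove universality I start from a pointed $\phi\colon X\to G'$ with $0\to T'\to G'\to A'\to 0$. Composition with $G'\to A'$ yields, by the universal property of $A$, a unique homomorphism $L_{A}\colon A\to A'$; then I would (i) pull $G'$ back along $L_{A}$ to an extension $G''$ of $A$ by $T'$, through which $\phi$ lifts to $\phi''\colon X\to G''$ over $a_{X}$; (ii) extract from $\phi''$ a homomorphism $\beta\colon\widehat{T'}\to\Lambda$, by noting that for each character $\chi$ of $T'$ the line bundle $a_{X}^{*}$ of the $\G_m$-push-out of $G''$ along $\chi$ is trivial on $X$ and algebraically trivial on $\overline{X}$, hence of the form $\OO_{\overline{X}}(\sum_{i}\beta(\chi)_{i}D_{i})$ with $\beta(\chi)\in\Lambda$; (iii) verify that $\beta$ intertwines the class $c''$ of $G''$ with $c$, so that the push-out of $\QAlb(X)$ along the torus homomorphism $T\to T'$ dual to $\beta$ is exactly $G''$, which gives $g\colon\QAlb(X)\to G''\to G'$ with $\phi=g\circ q$; and (iv) deduce uniqueness, since two homomorphisms $\QAlb(X)\to G'$ agreeing after composition with $q$ differ by a homomorphism $A\to T'$, hence coincide. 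The main obstacle is step (iii) together with the final compatibility check that $q$ and $\phi''$ produce the same data --- concretely, that the canonical trivialisations entering the definition of $q$ agree with those extracted from $\phi''$ --- which is the only place where the precise choices made in constructing $q$, and the freeness of $\Lambda$, genuinely matter; once this bookkeeping is organised, the remaining inputs are either classical (existence of the Albanese variety, the Barsotti--Weil isomorphism $\mathrm{Ext}^{1}(A,\G_m)\cong A^{\vee}$, the duality between $\Alb$ and $\Pic^{0}$) or formal diagram chasing, and the reduction from a general variety to a smooth one is the last routine ingredient.
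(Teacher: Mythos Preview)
The paper does not prove this theorem: it is stated with a reference to Serre's seminar notes and used as a black box, so there is no ``paper's own proof'' to compare against. Your sketch is therefore not competing with anything in the text; what you have written is a reasonable outline of the classical construction (Albanese of a compactification, torus built from boundary divisors, Barsotti--Weil to glue them via $\mathrm{Ext}^{1}(A,T)\cong\mathrm{Hom}(\widehat{T},A^{\vee})$), which is essentially the route taken in Serre's original exposition and in later treatments such as Fujino's notes on quasi-Albanese maps.

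Two points are worth flagging if you intend to turn this into a full proof. First, the paper works over an algebraically closed field of arbitrary characteristic, so the reduction to smooth $X$ via resolution is not available in general; Serre's argument avoids this by working with generalized Jacobians and does not require desingularisation, and you should either follow that or restrict your claim. Second, your lattice $\Lambda=\ker\bigl(\Z^{r}\to\NS(\overline{X})\bigr)$ is the right object for the extension-class construction, but you should check carefully that the resulting torus has the expected rank $\mathrm{rk}\,\bigl(\OO(X)^{\times}/\k^{\times}\bigr)$; the discrepancy between $\ker(\Z^{r}\to\NS)$ and $\ker(\Z^{r}\to\Pic)$ is absorbed precisely by the extension data, and this is part of the ``bookkeeping'' you allude to in step (iii). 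These are genuine details rather than fatal gaps, but they are where the work lies.
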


Such a $G$ is unique up to (a unique) isomorphism. It is called the \emph{quasi-Albanese
variety} of $X$ and it will be denoted by $\QAlb(X)$; the universal morphism $q: X\to
\QAlb(X)$ is ``the'' \emph{quasi-Albanese morphism} (it is unique up to post-composition
with an isomorphism of $G$). Of course if $X$ is projective, then $\QAlb(X)$ is the classical Albanese
variety of $X$. If a quasiprojective variety $U$ has a trivial quasi-Albanese variety, then this has some geometric
consequences. We define $\Pic^0(U)$ as $\Pic^0 (X)$ for any projective variety containing $U$ as a dense open subset. The
variety $\Pic^0 (X)$ is a abelian variety which is the dual for the Albanese variety of $X$, it is a birational
invariant so it does not depend on the choice of $X$. It is also the kernel of the map $\Pic(X) \rightarrow N^1
(X)$ where $N^1(X)$ is the group of Cartier divisors modulo numerical equivalence.

\begin{prop}\label{prop:carac-trivial-quasi-albanese}
  Let $X_0$ be an affine variety. Then  $\k [X_0]^\times = \k^\times$
  and $\Pic^0 (X_0) = 0$ if and only if $\QAlb(X_0) = 0$.
\end{prop}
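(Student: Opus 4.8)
The plan is to split the equivalence into a statement about the torus part and a statement about the abelian part of $\QAlb(X_0)$, and to reduce each to a standard fact.

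First I would record a formal reduction. Since $\QAlb(X_0)$ is itself quasi-abelian and the quasi-Albanese morphism $q$ may be chosen so that $q(X_0)$ generates it, Theorem~\ref{thm:quasi-albanese_morphism} gives that $\QAlb(X_0)=0$ if and only if every morphism from $X_0$ to a quasi-abelian variety is constant. If $G'$ is quasi-abelian, write $0\to T'\to G'\to A'\to 0$ with $T'\simeq\G_m^r$ and $A'$ abelian; a morphism $\psi:X_0\to G'$ then induces $X_0\to A'$, and if the latter is constant $\psi$ factors through one fibre of $G'\to A'$, which is non-empty and a coset of $T'$, hence isomorphic to $\G_m^r$. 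So $\psi$ is constant as soon as every morphism $X_0\to\G_m$ and every morphism from $X_0$ to an abelian variety is constant, and since $\G_m$ and abelian varieties are quasi-abelian we obtain
\[
  \QAlb(X_0)=0 \iff \k[X_0]^\times=\k^\times \ \text{ and every morphism from } X_0 \text{ to an abelian variety is constant,}
\]
the first condition appearing because a morphism $X_0\to\G_m$ is exactly a unit of $\k[X_0]$, constant precisely when it lies in $\k^\times$. It remains to identify the second condition with $\Pic^0(X_0)=0$.

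This identification is Albanese--Picard duality. Given a non-constant morphism $\phi:X_0\to A$ with $A$ abelian, I would replace $A$ by the abelian subvariety generated by the image of $(x,y)\mapsto\phi(x)-\phi(y)$, so that $\phi(X_0)$ generates $A\neq0$; then pullback of line bundles yields an injection $\phi^\ast:\hat A=\Pic^0(A)\hookrightarrow\Pic^0(X_0)$, whence $\Pic^0(X_0)\neq0$. Conversely, if $\Pic^0(X_0)\neq0$ it is a non-trivial abelian variety and $X_0$ admits the non-constant morphism to its dual given by the Poincaré bundle. To give $\Pic^0(X_0)$ a clean meaning and to place both sides in the same register I would reduce to $X_0$ smooth and then pass to a smooth projective completion $\bar X$ --- neither step affects $\QAlb$ (the fibres collapsed are trees of rational curves, which go to points in any semi-abelian target) nor $\Pic^0$ --- so that $\Pic^0(X_0)=\Pic^0(\bar X)=\widehat{\Alb(\bar X)}$, where $\Alb(\bar X)$ is precisely the abelian quotient of $\QAlb(X_0)$, and $X_0\hookrightarrow\bar X\to\Alb(\bar X)$ is non-constant exactly when $\Alb(\bar X)\neq0$.

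The hard part is this second step: one must fix the meaning of $\Pic^0$ for a possibly singular, non-proper variety, verify its invariance under resolution and under projective completion, and invoke Albanese--Picard duality over an arbitrary field in the needed (non-proper) generality --- all classical, but calling for some care. The torus half is by contrast essentially tautological, being built into Serre's construction of $\QAlb(X_0)$, whose torus quotient has character group $\k[X_0]^\times/\k^\times$.
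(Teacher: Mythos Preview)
Your argument is correct and follows the same underlying strategy as the paper: split $\QAlb(X_0)$ into its torus and abelian parts, match the torus part with $\k[X_0]^\times/\k^\times$ and the abelian part with $\Pic^0$ via Albanese--Picard duality. The paper's execution is more direct: rather than characterising $\QAlb(X_0)=0$ by constancy of all maps to quasi-abelian targets, it works with the single map $q:X_0\to\QAlb(X_0)$ and the exact sequence $0\to T\to\QAlb(X_0)\to A\to 0$, extends $\pi\circ q$ to a completion $X$, and observes that $\Pic^0(X)=0$ forces this extension to be constant (so $q$ lands in a torus coset), after which $\k[X_0]^\times=\k^\times$ finishes it. Your version is more explicit about what $\Pic^0(X_0)$ means and why one may pass to a smooth completion; the paper sidesteps this entirely by adopting, elsewhere in the text, the convention that ``$\Pic^0(X_0)=0$'' is shorthand for ``$\Pic^0(X)=0$ for some (equivalently any) completion $X$'', which makes the Albanese step immediate. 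So you are doing a bit more work to justify a definition that the paper simply fixes by fiat, but the mathematical content is the same.
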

\begin{proof} Let $G = \QAlb(X_0)$ and $q: X_0\to G$ be a quasi-Albanese morphism. Let
  \begin{equation}
    0 \rightarrow T \rightarrow G \xrightarrow{\pi} A \rightarrow 0.
  \end{equation}
  be an exact sequence, as in Equation~\eqref{Eq:Exact_Sequence_Quasi_Abelian}. Let $X$ be a completion of $X_0$ such
  that $\pi \circ q$ extends to a regular map $\pi \circ q: X \rightarrow A$.

  Assume $\k [X_0]^\times = \k^\times$
  and $\Pic^0 (X_0) = 0$.
  Then, $\pi \circ q (X_0)$ is a point in $A$ as the Albanese variety of $X$ is trivial, and composing $q$ with a
  translation of $G$, we can
  assume that this point is the
  neutral element of $A$. Then, $q(X_0) \subset T$, so $q$ is a regular map from
  $X_0$ to an algebraic torus, and $\k [X_0]^\times = \k^\times$ implies that $q(X_0)$
  is a point. This shows that $\QAlb(X_0)$ is a point.

  Now, suppose that $\k[X_0]^\times \neq \k^\times$, then any non-constant invertible function
  $X_0\to \k^\times$ provides a dominant morphism to a $1$-dimensional torus, so $\dim(\QAlb(X_0))\geq 1$ by the universal
  property. And if $\Pic^0(X_0)\neq 0$, the Albanese morphism of any completion of $X_0$ also shows that
  $\dim(\QAlb(X_0))\geq 1$. This concludes the proof.
\end{proof}

In the following, we show that if $\car \k = 0$ and $X_0$ is an irreducible normal affine surface with non-trivial quasi-Albanese
variety and $f$ is a dominant endomorphism of $X_0$, then $\lambda_1 (f)$ is a quadratic integer. In positive
characteristic, we show that the results hold for separable endomorphism of $X_0$. See
Theorem~\ref{thm:dynamical_degree_quasi_albanese} below.

\section{Logarithmic Kodaira dimension}
Let $V$ be a smooth algebraic surface, let $\overline V$ be a good completion of $V$ and $\overline D = \overline
V \setminus V$, it is a simple normal crossing divisor. We write $\overline K$ for a canonical divisor of $\overline V$.
Following
\cite{iitakaLogarithmicKodairaDimension1977} we
define the following invariant
\begin{equation}
  \overline \kappa (V) = \kappa (\overline V, \overline K + \overline D).
  \label{<+label+>}
\end{equation}
Where for a line bundle $L$ over $\overline V$,
\begin{equation}
  \kappa (\overline V, L) = \limsup_{k \rightarrow + \infty} \frac{n!}{k^n} \dim H^0 (\overline V, L).
  \label{<+label+>}
\end{equation}
It is called the \emph{logarithmic Kodaira
dimension} of $V$ (see \cite{iitakaLogarithmicKodairaDimension1977}). We have the following
characterization of the algebraic torus of dimension 2. If $V$ is projective, then the log Kodaira
dimension is nothing but the classical Kodaira dimension of $V$. We also write $q(V)$ for the dimension of
$\QAlb(V)$.

\begin{thm}[Theorem 2 of \cite{iitakaNumericalCriterionQuasiabelian1979}]\label{ThmCaracAlgebraicTorusChar0}
  Suppose $\car \k = 0$. Let $V$ be a normal affine surface, then $V \simeq \G_m^2$ if and only if $\overline \kappa
  (V) = 0$ and $\overline q (V) = 2$.
\end{thm}

\begin{thm}[Theorem 3.1 of \cite{kojimaOpenSurfacesLogarithmic2001}]\label{ThmCaracAlgebraicTorusAnyChar}
  Let $X_0 = \spec A$ be a normal affine surface over an algebraically closed field $\k$ with $\overline \kappa
  (X_0) = 0$ then $\rank \k [X_0]^\times / \k^\times \leq 2$ with equality if and only if $X_0 \simeq \G_m^2$.
\end{thm}

\begin{thm}\label{ThmAffineRuled}
  Let $X_0$ be a normal affine surface, then $\overline \kappa(X_0) = - \infty$ if and only if a desingularisation of
  $X_0$ admits an open subset isomorphic to $\A^1_\k \times C$ where $C$ is a curve.
\end{thm}
\begin{proof}
  This follows from Theorem 1 of \cite{miyanishiAffineruledIrrationalSurfaces1982} if $X_0$ is irrational and from
  Theorem 1 of \cite{russellAffineruledRationalSurfaces1981} if $X_0$ is rational as for any completion $X$ of $X_0$ the
  boundary $\BD$ is connected by \cite{goodmanAffineOpenSubsets1969}, so the boundary of any completion of $Y_0$ must
  also be connected where $Y_0$ is a desingularisation of $X_0$.
\end{proof}

\begin{prop}[Theorem 1.10 of \cite{kojimaOpenSurfacesLogarithmic2001}]\label{PropVanishingLogKodairaDimensionImpliesRational}
  Let $X_0$ be a normal affine surface with $\overline \kappa (X_0) = 0$, then $X_0$ is rational.
\end{prop}

\begin{lemme}[\cite{iitakaLogarithmicKodairaDimension1977} Proposition 1]
  \label{LemmeIsomorphismeDifferentielleLogarithmique}
  If $V$ is a quasiprojective variety and $f : V \rightarrow V$ is a separable endomorphism. Suppose that there
  exists smooth completions $V_1, V_2$ of $V$ such that $f$ lifts to a regular map $f : V_1 \rightarrow V_2$ then $f$
  induces a linear injective homomorphism
  \begin{equation}
    f^* : H^0 (\overline V_2, m (\overline K + \overline D_2)) \rightarrow H^0 (\overline V_1, m (\overline K + \overline D_1))
    \label{<+label+>}
  \end{equation}
  for all $m \geq 1$.
\end{lemme}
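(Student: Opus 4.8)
The plan is to show that pullback along $f$ is a well-defined injective linear endomorphism of the vector space $M_m := H^0(\overline V, m(\overline K + \overline D))$; since $M_m$ is finite-dimensional (it is the space of global sections of a coherent sheaf on the projective variety $\overline V$), an injective endomorphism of it is automatically bijective, which is the assertion. Recall, as noted above following Iitaka, that $M_m$ is canonically identified with the space of meromorphic $m$-pluricanonical forms on $V$ that have logarithmic poles of order at most $m$ along the boundary of some --- equivalently, any --- good completion of $V$; in particular it does not depend on the choice of $\overline V$. Since $f$ is a morphism it induces an embedding $f^* : \k(V) \hookrightarrow \k(V)$ of function fields, hence a pullback on meromorphic pluricanonical forms, so it remains only to check (i) that $f^*\omega$ lies in $M_m$ for every $\omega \in M_m$, and (ii) that $f^*$ is injective on $M_m$.

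For (i), fix a good completion $\overline V$, so $\overline V$ is a smooth projective surface and $\overline D = \overline V \setminus V$ is an SNC divisor, and let $\overline f : \overline V \dashrightarrow \overline V$ be the rational extension of $f$. By elimination of indeterminacies in dimension two, valid in any characteristic, there is a composition of blow-ups $\pi : W \to \overline V$ such that $g := \overline f \circ \pi : W \to \overline V$ is a morphism; since $\overline f$ is already regular on $V$ the centres of these blow-ups lie over $\overline D$, so $\pi$ is an isomorphism over $V$, and after finitely many further blow-ups over $\overline D$ we may assume that $W$ is again a good completion of $V$, with boundary $D_W := \pi^{-1}(\overline D)$ an SNC divisor. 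The decisive point is that, because $f$ is an \emph{endo}morphism of $V$, we have $g(W \setminus D_W) = f(V) \subseteq V = \overline V \setminus \overline D$, and hence $g^{-1}(\overline D) \subseteq D_W$. Consequently, viewing $\omega \in M_m$ as a rational $m$-form on $\overline V$ with poles only along $\overline D$, the form $g^*\omega$ is a rational $m$-form on $W$ whose poles lie only along $g^{-1}(\overline D) \subseteq D_W$, and the logarithmic ramification formula bounds these pole orders by $m$: in local coordinates $(y_1, \dots, y_n)$ on $\overline V$ with $y_1 \cdots y_k = 0$ a local equation of $\overline D$, each $g^*y_i$ with $i \le k$ is regular on $W$ and vanishes only along $D_W$, so $g^*(dy_i/y_i) = d(g^*y_i)/g^*y_i$ is logarithmic along $D_W$, while $g^*(dy_i)$ is regular for $i > k$; taking the $m$-th power of the wedge of these one-forms gives the bound. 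Thus $g^*\omega \in H^0(W, m(K_W + D_W))$, which equals $M_m$ by completion-independence; restricting this class to $\pi^{-1}(V)$ and transporting it along the isomorphism $\pi^{-1}(V) \xrightarrow{\sim} V$ recovers $f^*\omega$, so $f^*\omega \in M_m$.

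For (ii), $f$ is a dominant self-morphism, hence generically finite; assuming $f$ separable --- which is automatic when $\car \k = 0$ --- it is generically étale, so its Jacobian does not vanish identically and $f^*$ annihilates no nonzero meromorphic $m$-pluricanonical form; therefore $f^*|_{M_m}$ is injective. Combining (i), (ii), and $\dim_\k M_m < \infty$ we conclude that $f^* : M_m \to M_m$ is an isomorphism for every $m \ge 1$. The substantive step is (i); within it the genuinely delicate part is not the resolution of indeterminacies but the control of pole orders along $D_W$, that is the logarithmic ramification formula, which is available precisely because the endomorphism hypothesis on $f$ forces $g^{-1}(\overline D) \subseteq D_W$.
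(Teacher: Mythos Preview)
The paper does not prove this lemma; it is quoted verbatim from Iitaka's work (Propositions 1 and 2 of \cite{iitakaLogarithmicKodairaDimension1977}) and used as a black box. Your argument is the standard one and matches Iitaka's: pull back log-pluricanonical forms through a resolution of the graph, use that $f$ is an endomorphism to force $g^{-1}(\overline D)\subseteq D_W$, apply the logarithmic ramification formula to bound pole orders, and conclude by finite-dimensionality.

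Two remarks. First, you silently restrict to surfaces (``smooth projective surface'', ``elimination of indeterminacies in dimension two''); the lemma as stated is for an affine variety of arbitrary dimension, and Iitaka's proof uses Hironaka in characteristic zero. For the purposes of this memoir only the surface case matters, so your restriction is harmless here, but it should be flagged. Second, you correctly note that injectivity of $f^*$ on $m$-forms requires $f$ separable; Iitaka works over $\mathbf{C}$ so the issue does not arise in the cited source, and the lemma as imported into this paper tacitly inherits a separability hypothesis in positive characteristic. This is consistent with how the paper later handles characteristic-$p$ phenomena (cf.\ the tamely-ramified assumption in \S\ref{SubSecTamelyRamified}).
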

\begin{proof}
  First notice that since $f$ is separable and dominant, the linear operator $H^0 (V, mK_V) \rightarrow H^0 (V,mK_V)$ is
  injective and therefore an isomorphism.
  Let $q \in \overline D_2$ and $p \in \overline D_1$ such that $f(p) = q$. Let $w_1, \dots, w_r$ be regular parameters at
  $q$ such that $w_1 \cdots w_r = 0$ is a local equation of $D_2$ at $q$ and let $z_1, \dots, z_s$ be local parameters
  at $p$ such that $z_1 \cdots z_s = 0$ is a local equation of $D_1$ at $p$. Then, for every $i = 1, \dots, r$, we must
  have
  \begin{equation}
    f^* w_i = \epsilon_i \prod_{j=1}^s z_j^{n_{ij}}.
    \label{eq:<+label+>}
  \end{equation}
  Now, since $f$ is separable, $f^*w_i$ cannot be zero. Thus, none of the integers $n_{ij}$ can be divisible by $\car
  K$.
  Therefore we get
  \begin{equation}
    f^* \frac{d w_i}{w_i} = \sum_j n_{ij} \frac{d z_j}{z_j} + \frac{d \epsilon}{\epsilon}
    \label{eq:<+label+>}
  \end{equation}
  with none of the $n_{ij}$ divisible by $p$. Thus, we see that if $\omega$ is a volume form over $V$ with logarithmic
  poles along $\overline D_2$, then $f^* \omega$ is a volume form over $V$ with logarithmic poles along $\overline D_1$.
\end{proof}
\begin{cor}\label{CorIsomorphismeDifferentielleLogarithmique}
  If $X_0$ is a normal affine surface and $f$ is a dominant separable endomorphism of $X_0$, then for any
  completion $X$ of $X_0$, $f$ induces a linear isomorphism
  \begin{equation}
    f^* : H^0 (X, m(K_X + D)) \rightarrow H^0 (X, m(K_X +D)).
    \label{eq:<+label+>}
  \end{equation}
\end{cor}
\begin{proof}
  Let $\pi: Y \rightarrow X$ be a completion of $X_0$ above $X$ such that $f : X \dashrightarrow X$ lifts to a regular
  map $f : Y \rightarrow X$. Then, by Lemma \ref{LemmeIsomorphismeDifferentielleLogarithmique} we have an injective
  linear homomorphism
  \begin{equation}
    f^* : H^0 (X, m( K_X + D_X)) \rightarrow H^0 (Y, m(K_Y + D_Y)).
    \label{eq:<+label+>}
  \end{equation}
  But since $\pi : Y \rightarrow X$ is a birational morphism which is an isomorphism above $X_0$, it induces a linear
  isomorphism $H^0 (X, m( K_X + D_X)) \xrightarrow{\sim} H^0 (Y, m(K_Y + D_Y))$ and we get the result.
\end{proof}

We define the \emph{log Kodaira Iitaka} fibration
\begin{equation}
  \Phi_m : V \dashrightarrow \P \left( H^0, \overline V, m(\overline K + \overline D) \right).
\end{equation}
By Corollary \ref{CorIsomorphismeDifferentielleLogarithmique}, if $V$ is a quasiprojective surface, every dominant
separable endomorphism of $V$ must preserve the log Kodaira Iitaka fibration for $m \gg 1$. We write $\End_{sep}(V)$ for
the set of dominant separable endomorphism of $V$.
We say that $V$ is of \emph{log general type} if $\overline \kappa (V) = \dim V$.

\begin{cor}[\cite{iitakaLogarithmicKodairaDimension1977} Proposition 2 and Corollary
  p.5]\label{CorLogGeneralType}
  If $X_0$ is a quasiprojective surface of log general type over any algebraically closed field, then $\Aut X_0$ is a finite
  group and $\End_{\text{sep}}(X_0) = \Aut (X_0)$.
\end{cor}

\section{Dynamical degree in presence of an invariant fibration}

\begin{prop}[Stein Factorization]\label{PropSteinFactorization}
  Let $X$, $S$ be projective varieties and let $f: X \dashrightarrow X$ be a rational transformation. Suppose that
  there exist morphisms $\phi : X \rightarrow S$ and $g : S \rightarrow S$ such that the following diagram commutes,
  \begin{center}
    \begin{tikzcd}
      X \ar[r, dashed, "f"] \ar[d, "\phi"] & X \ar[d, "\phi"] \\
      S \ar[r, "g"] & S
    \end{tikzcd}
  \end{center}
  Then there exists a variety $\tilde S$ and morphisms $\psi: X \rightarrow \tilde S$, $\pi: \tilde S
  \rightarrow S$ such that
  \begin{itemize}
    \item $\phi = \pi \circ \psi$,
    \item $\pi$ is finite and $\psi$ has connected fibers
    \item there exists a    rational transformation $\tilde g : \tilde S \dasharrow \tilde S$ such that the diagram
      \begin{center}
        \begin{tikzcd}
          X \ar[r, dashed, "f"] \ar[d, "\psi"] & X \ar[d, "\psi"] \\
          \tilde S \ar[r, dashed, "\tilde g"] \ar[d, "\pi"] & \tilde S \ar[d, "\pi"] \\
          S \ar[r, "g"] & S
        \end{tikzcd}
      \end{center}
      commutes.
  \end{itemize}
\end{prop}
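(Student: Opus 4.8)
The plan is to obtain $\tilde S$, $\psi$ and $\pi$ from the classical Stein factorization, and then to produce $\tilde g$ by a purely function‑theoretic argument exploiting the relation $\phi\circ f = g\circ\phi$.

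First, since $X$ is projective the morphism $\phi : X\to S$ is proper; replacing $S$ by $\overline{\phi(X)}$ we may assume $\phi$ is surjective. I would then invoke the Stein factorization (\cite{hartshorneAlgebraicGeometry1977}, Corollary III.11.5): $\phi = \pi\circ\psi$ with $\psi : X\to\tilde S := \spec_S(\phi_*\OO_X)$ satisfying $\psi_*\OO_X = \OO_{\tilde S}$ (so $\psi$ has connected fibers) and $\pi : \tilde S\to S$ finite. Because $X$ is an integral scheme, $\tilde S$ is a variety and $\psi$ is dominant, so $\psi^*$ embeds $\k(\tilde S)$ into $\k(X)$ as a finite extension of $\pi^*\k(S)$; moreover $\psi^*\k(\tilde S)$ is exactly the algebraic closure of $\pi^*\k(S)$ in $\k(X)$. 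This last point is the only non‑formal input: in the situation of this paper $X$ is smooth, hence normal, and one sees it directly, since an element $a\in\k(X)$ integral over $\psi^*\k(\tilde S)$ is regular on $\psi^{-1}(V')$ for a suitable affine open $V'\subset\tilde S$, hence lies in $\OO_X(\psi^{-1}(V')) = (\psi_*\OO_X)(V') = \OO_{\tilde S}(V')$, so $a\in\psi^*\k(\tilde S)$. This already provides the first two bullet points of the statement, with $\phi = \pi\circ\psi$.

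Next I would build $\tilde g$. As $\phi\circ f = g\circ\phi$ is dominant (being $\phi$ surjective and $f$ dominant), both $f$ and $g$ are dominant, so $f^* : \k(X)\hookrightarrow\k(X)$ and $g^* : \k(S)\hookrightarrow\k(S)$ are defined and the relation reads $f^*\phi^* = \phi^* g^*$. Then $f^*\bigl(\psi^*\k(\tilde S)\bigr)$ contains $f^*\phi^*\k(S) = \phi^* g^*\k(S)\subset\phi^*\k(S)$ and is finite over it (applying the field embedding $f^*$ to the finite extension $\psi^*\k(\tilde S)/\phi^*\k(S)$), hence algebraic over $\phi^*\k(S)$, hence contained in the algebraic closure of $\phi^*\k(S)$ in $\k(X)$, which is $\psi^*\k(\tilde S)$. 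Therefore $f^*\circ\psi^*$ maps $\k(\tilde S)$ into $\psi^*\k(\tilde S)$, and composing with $(\psi^*)^{-1}$ produces a $\k$‑algebra embedding $\tilde g^* : \k(\tilde S)\to\k(\tilde S)$ with $\psi^*\circ\tilde g^* = f^*\circ\psi^*$; this corresponds to a dominant rational transformation $\tilde g : \tilde S\dashrightarrow\tilde S$.

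Finally I would check commutativity. The identity $\psi^*\circ\tilde g^* = f^*\circ\psi^*$ means the dominant rational maps $\tilde g\circ\psi$ and $\psi\circ f$ from $X$ to $\tilde S$ induce the same map on function fields, hence coincide, which is the upper square. For the lower square, applying $\psi^*$ and using the two relations gives $\psi^*(\tilde g^*\pi^*) = f^*\psi^*\pi^* = f^*\phi^* = \phi^* g^* = \psi^*(\pi^* g^*)$ on $\k(S)$, and injectivity of $\psi^*$ yields $\tilde g^*\pi^* = \pi^* g^*$, i.e.\ $\pi\circ\tilde g = g\circ\pi$. The main obstacle is exactly the identification of $\k(\tilde S)$ with the algebraic closure of $\k(S)$ in $\k(X)$; once that is available, the construction of $\tilde g$ and the commutativity are a formal chase in function fields.
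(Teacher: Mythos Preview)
Your proof is correct. Both you and the paper use the same Stein factorization $\tilde S=\spec_S(\phi_*\OO_X)$, but the construction of $\tilde g$ differs. The paper works directly with rings of sections: for affine opens $U\subset g^{-1}(V)$ of $S$ with $\phi^{-1}(U),\phi^{-1}(V)$ disjoint from the indeterminacy locus of $f$, one has $\pi^{-1}(U)\simeq\spec\OO_X(\phi^{-1}(U))$, and the pullback $f^*:\OO_X(\phi^{-1}(V))\to\OO_X(\phi^{-1}(U))$ (well defined since $\phi^{-1}(U)\subset f^{-1}(\phi^{-1}(V))$) yields $\tilde g_{|\pi^{-1}(U)}:\pi^{-1}(U)\to\pi^{-1}(V)$. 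You instead characterize $\psi^*\k(\tilde S)$ as the algebraic closure of $\phi^*\k(S)$ in $\k(X)$ and show it is $f^*$-stable, producing $\tilde g$ purely at the level of function fields. Your route is conceptually clean once the algebraic-closure identification is in hand, but it costs you the normality hypothesis on $X$ (harmless in this paper, as you note). The paper's sheaf-theoretic argument avoids that hypothesis and delivers $\tilde g$ as an honest morphism on a cover by affine opens, at the price of a slightly less transparent bookkeeping with indeterminacy loci.
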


\begin{proof}
  The existence of $\tilde S$ along with $\pi$ and $\psi$ is due to Stein Factorization theorem: It is
  known that one can take $\tilde S = \spec_S \phi_* \OO_X$ where $\spec_S$ is the
  relative $\spec$; that is for every affine open subset $U$ of
  $S$, one has
  \begin{equation}
  {\pi}^{-1} (U) \simeq \spec \OO_X ({\phi}^{-1} (U)). \end{equation}
  Now to construct $\tilde g$, take  affine open subsets $U$ and $V$ of $S$ such that $U \subset {g}^{-1} (V)$.
  Suppose also that ${\phi}^{-1} (U)$ and ${\phi}^{-1} (V)$ do not contain any  indeterminacy of $f$. To
  construct
  \begin{equation}
    \tilde g_{| {\pi}^{-1} (U)} : {\pi}^{-1} (U) \rightarrow {\pi}^{-1} (V)
  \end{equation}
  we use
  the map $f^*: \OO_X ({\phi}^{-1} (V)) \rightarrow \OO_X ({\phi}^{-1} (U))$ induced by $f$; this is well defined since ${\phi}^{-1} (U) \subset {f}^{-1} ({\phi}^{-1} (V))$. It is clear
  that $\psi \circ f = \tilde g \circ \psi$.
\end{proof}

\begin{prop}\label{PropDynamicalDegreeFibrationOverCurve}
  Let $S$ be a quasiprojective surface and $f$ be a dominant endomorphism of $S$. Suppose there exists
  a quasiprojective curve $C$ with a dominant morphism $\pi: S \rightarrow C$ and an endomorphism $g:
  C \rightarrow C$ such that $\pi \circ f = g \circ \pi$. Then, the first dynamical degree of $f$ is
  an integer.
\end{prop}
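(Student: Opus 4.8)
The plan is to reduce to the case of a fibration over a curve by taking a completion and compactifying everything, then to relate the dynamical degree of $f$ on $S$ to the degree of $g$ on $C$ together with the degree of $f$ restricted to a generic fiber.

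First I would choose projective completions: let $\overline S$ be a smooth projective surface containing $S$ as a dense open subset, $\overline C$ a smooth projective curve containing $C$, and arrange (after blowing up $\overline S$ at infinity) that $\pi$ extends to a morphism $\overline\pi: \overline S \to \overline C$ and $g$ extends to $\overline g: \overline C \to \overline C$ with $\overline\pi\circ\overline f = \overline g\circ\overline\pi$, where $\overline f: \overline S\dashrightarrow\overline S$ is the induced rational transformation. Since dynamical degrees are birational invariants, $\lambda_1(f) = \lambda_1(\overline f)$, so it suffices to compute $\lambda_1(\overline f)$. Next I would apply the Stein factorization (Proposition~\ref{PropSteinFactorization}) to replace $\overline\pi$ by a fibration with connected fibers $\overline\psi: \overline S \to \widetilde C$ over a curve $\widetilde C$ finite over $\overline C$, semiconjugating $\overline f$ to some $\widetilde g: \widetilde C \dashrightarrow \widetilde C$; a dominant rational self-map of a curve is automatically a morphism, so $\widetilde g$ is a morphism and $\deg\widetilde g = \deg\overline g =: e$.

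The core computation: fix an ample divisor $H$ on $\overline S$ and a fiber class $F = \overline\psi^* (\mathrm{pt})$. Write, up to numerical equivalence, $(\overline f^n)^* H \equiv a_n F + (\text{something positive on fibers})$; more precisely I would show that $\frac{1}{n}\log\left((\overline f^n)^* H \cdot H\right)$ is controlled by the growth of $(\overline f^n)^* H \cdot F$ and of $(\overline f^n)^* F \cdot H$. Because $\overline\psi\circ\overline f^n = \widetilde g^n\circ\overline\psi$, one has $(\overline f^n)^* F = (\overline f^n)^* \overline\psi^*(\mathrm{pt}) = \overline\psi^*(\widetilde g^n)^*(\mathrm{pt}) = e^n F$ (as classes, the pullback of a point by a degree-$e$ map is $e^n$ points). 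On the other hand $(\overline f^n)^* H \cdot F$ measures the degree of $\overline f^n$ restricted to a generic fiber, which stays bounded (a generic fiber maps to a generic fiber and the restricted map has degree bounded by the topological degree divided by $e$, hence bounded uniformly in $n$); more carefully, $\overline f$ restricted to the generic fiber of $\overline\psi$ is a fixed rational self-map of a fixed curve over the function field, so its degree is a constant $d_0$ and $(\overline f^n)^* H\cdot F \leq C d_0^{\,n}$ with $d_0$ bounded. Combining, $(\overline f^n)^* H\cdot H \leq C' \max(e, d_0)^n$ and $\lambda_1(f) \leq \max(e, d_0)$, an integer, while the lower bound $\lambda_1(f) \geq e$ comes from $(\overline f^n)^* H\cdot H \geq (\overline f^n)^* H\cdot F \cdot (\text{positive}) $ together with intersecting against a multisection. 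Since $\lambda_1$ lies between consecutive behaviours governed by integers, one concludes $\lambda_1(f)\in\Z$; in fact the cleanest route is the known product formula $\lambda_1(\overline f) = \max\bigl(\lambda_1(\widetilde g),\ \lambda_1(\overline f_{|\text{fiber}})\bigr)$ for maps permuting the fibers of a fibration over a curve, where both terms on the right are dynamical degrees of self-maps of curves, hence integers (degrees of endomorphisms of curves).

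The main obstacle I anticipate is making the fiberwise degree argument rigorous: one must ensure that $\overline f$ genuinely permutes the fibers of the Stein factorization (which the semiconjugacy gives) and that the restriction to a generic fiber is well-defined and of bounded degree, handling the indeterminacy locus of $\overline f$ and the finitely many singular or multiple fibers. This is where one invokes that a generic fiber avoids the indeterminacy points and maps dominantly to another generic fiber, so the restriction is a genuine dominant rational map of smooth curves whose degree equals $e(f)/e$ (topological degrees multiply along the fibration), in particular a fixed integer independent of $n$; then the asymptotics of $(\overline f^n)^*H\cdot H$ are pinned down by the two integer degrees $e$ and $e(f)/e$, giving $\lambda_1(f)=\max$ of these, an integer.
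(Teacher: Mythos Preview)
Your proposal is correct and follows essentially the same route as the paper: reduce via Stein factorization to a fibration with connected fibers, then invoke the product formula $\lambda_1(f)=\max\bigl(\lambda_1(g),\lambda_1(f_{|\pi})\bigr)$ (the paper cites \cite{dinhComparisonDynamicalDegrees2011, truongRelativeDynamicalDegrees2015}) and note that both terms are integers. The paper makes the constancy of the fiberwise degree $d=\lambda_2(f)/\deg g$ precise via a flatness argument on $\pi$ together with an intersection computation on $S\times S$, which is exactly the point you flag as the main obstacle.
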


\begin{proof}
  Let $X$ be a completion of $S$; $f$ extends to a rational transformation of $X$. We can also
  suppose that $C$ is a projective curve, and then we apply Theorem \ref{PropSteinFactorization} to
  suppose also that $\pi$ has connected fibers.

  Let $P$ be a general point of $C$ and $H$ an ample divisor of
  $X$. We have by \cite{dinhComparisonDynamicalDegrees2011, truongRelativeDynamicalDegrees2015} that
  \begin{equation}
  \lambda_1 (f) = \max \left( \lambda_1 (g), \lambda_1 (f_{|\pi})) \right)
\end{equation}
where $\lambda_1(g)$ is the integer given by the topological degree of $g$ and
\begin{equation}
  \lambda_1 (f_{|\pi}) := \lim_n \left( H \cdot (f^n)_*{\pi}^{-1} (P) \right)^{1/n}.
  \label{ }
\end{equation}
Since $C$ is a curve and $\pi$ is dominant we have that
$\pi$ is flat (\cite{hartshorneAlgebraicGeometry1977} Proposition III.9.7) so for any point $P \in C$,
\begin{itemize}
  \item ${\pi}^{-1} (P)$ is an
    irreducible curve $C_P$ and the topological degree of $f: C_P \rightarrow C_{g(P)}$ is an
    integer $d$ that does not depend on $P$
  \item  $d \cdot d_{\text{top}} (g) = \lambda_2 (f)$.
\end{itemize}
Indeed, consider the following $0$-cycle in $S \times S$:
\begin{equation}
  \alpha (P) = (\pi_1^* C_P) \cdot (\pi_2^* H) \cdot \Gamma_f
  \label{<+label+>}
\end{equation}
where $\pi_1, \pi_2 : S \times S \rightarrow S$ are the two projections and $\Gamma_f$ is the graph of
$f$. The degree of $\alpha (P)$ is
\begin{equation}
  \deg \alpha(P) = (H \cdot C_{g(P)}) \cdot \deg (f : C_P \rightarrow C_{g(p)}).
\end{equation}
Now, since $C$ is a curve the morphism $\pi \circ \pi_1 : S \times S \rightarrow C$ is flat, therefore
$\deg (\alpha (P))$ does not depend on $P$ (\cite{fultonIntersectionTheory1998} \S 20.3) and since
$\pi$ is flat, the intersection number $(H \cdot C_P)$ does not depend on $P$ either. Therefore, $\deg
(f : C_P \rightarrow C_{g(P)})$ is an integer $d$ independent of $P$. Hence, we infer
\begin{equation}
  \lambda_1 (f_{|\pi}) = \lim_n \left( H \cdot (f^n)_* {\pi}^{-1} P \right) = d \cdot \lim_n \left( H \cdot {\pi}^{-1} P
  \right)^{1/n} = d
\end{equation}
and we get that $\lambda_1(f)$ is the integer $\max(d, \lambda_1(g))$.
                \end{proof}

                \section{Dynamical degree when the quasi-Albanese variety is non-trivial}
                \label{SubsecDynDegreeNonTrivialQAlb}

                The goal of this section is to show the following proposition.

                \begin{thm}\label{thm:dynamical_degree_quasi_albanese}
                  Let $X_0$ be an irreducible normal affine surface and $f$ a separable dominant endomorphism of $X_0$. Suppose
                  that $\QAlb (X_0)$ is non-trivial, then $\lambda_1 (f)$ is an algebraic integer of degree $\leq 2$.
                  Furthermore, if $\lambda_1 (f)$ is not an integer or if $f$ is a loxodromic automorphism, then $X_0
                  \simeq \G_m^2$.
                \end{thm}

                \begin{proof}
                  Set $Q_0=\QAlb(X_0)$ and let $q : X_0\to Q_0$ be a quasi-Albanese morphism.
                  Let $V = \overline{q (X_0)}$ be the closure of the image of $X_0$ in $Q_0$. By the universal
                  property, there exists an endomorphism $g$ of $Q_0$ such that $q \circ f = g \circ q$ and $g$ must
                  preserve $V$. We cannot have $\dim V = 0$, otherwise $Q_0 = 0$, therefore $\dim V =1,2$.

                  If $\dim V = 1$, then $f$ must preserve the curve fibration $q :X_0 \rightarrow V$. By Proposition
                  \ref{PropDynamicalDegreeFibrationOverCurve}, $\lambda_1(f)$ is an integer. If $f$ is an automorphism
                  then it cannot be loxodromic because loxodromic automorphisms cannot preserve any fibration.

                  If $\dim V =2$, we look at $\overline \kappa (X_0)$. If $\overline \kappa (X_0) = 2$, then $X_0$ is of
                  log general type and by  Corollary \ref{CorLogGeneralType}, $f$ is of finite order and $\lambda_1
                  (f) = 1$. If $\overline \kappa (X_0) =1$, then $f$ preserve the curve fibration given by the log Kodaira
                  Iitaka fibration and by Proposition \ref{PropDynamicalDegreeFibrationOverCurve}, $\lambda_1 (f)$ is an
                  integer. If $\overline \kappa (X_0) = - \infty$, then by Theorem \ref{ThmAffineRuled}, a
                  desingularisation of $X_0$ admits an open subset of the form $\A^1 \times C$ where $C$ is a curve. Since
                  $\QAlb (\A^1) = 0$, we cannot have $\dim V = 2$ and this is a contradiction. Finally, if $\overline
                  \kappa (X_0) = 0$, then by Proposition \ref{PropVanishingLogKodairaDimensionImpliesRational} $X_0$ must
                  be rational and $\QAlb (X_0)$ is an algebraic torus $\G_m^s$. Let $r$ be the rank of $\k [X_0]^\times /
                  \k^\times$. If $r = 0$, then $X_0$ does not admit any nontrivial morphism into an algebraic torus and
                  therefore $\QAlb (X_0) =0$ this is a contradiction. If $r = 1$, then $X_0$ admits a curve fibration
                  $X_0 \rightarrow \G_m$ that is $f$-invariant and $\lambda_1 (f)$ is an integer by Proposition
                  \ref{PropDynamicalDegreeFibrationOverCurve}. Finally, we must have $r =2$ and by Theorem
                  \ref{ThmCaracAlgebraicTorusAnyChar}, we have $X_0 = \G_m^2$ and $\lambda_1 (f)$ is either an integer or
                  a quadratic integer. This is the only case where $\lambda_1 (f)$ is not an integer or where $f$ can be a
                  loxodromic automorphism.
                \end{proof}

                \begin{proof}[Proof of Theorem \ref{BigThmClassificationQAlbNonTrivial}]
                  Suppose $\car \k = 0$, following the proof of Theorem \ref{thm:dynamical_degree_quasi_albanese},
                  we have that if $X_0$ is not of log general type or if there is no curve fibration $X_0 \rightarrow C$
                  invariant by every endomorphism of $X_0$, then we must have $\overline \kappa (X_0) = 0$ and
                  $q(X_0)=  \dim \QAlb(X_0) = 2$. By Theorem \ref{ThmCaracAlgebraicTorusChar0} we must have that
                  $X_0 \simeq \G_m^2$.

                  In positive characteristic the theorem also holds if we restrict the statement to separable
                  endomorphisms.
                \end{proof}

                \chapter{Dynamics when $\k[X_0]^\times = \k^\times$ and $\Pic^0 (X_0) = 0$}\label{ChapterDynamicsQuasiAlbTrivial}
                In this chapter, we will prove Theorem \ref{BigThmExistenceValuationPropreEng} and derive Theorems
                \ref{BigThmDynamicalDegreesEng} and \ref{BigThmDynamicalSpectrum}. The two hypothesis allows one to
                describe the Picard-Manin space of $X_0$ more precisely. In particular, we show that $\Vinf$ embeds into
                $\Winf_\R$ and $\Vinf '$ embeds into $\L2$.

                \section{The structure of the Picard-Manin space of $X_0$}
                From \S \ref{SecPicardManin} we have linear maps
                \begin{equation}
                  \tau: \Cinf_\R \rightarrow \cNS_\R, \quad
                  \tau: \Winf_\R \rightarrow \wNS_\R.
                \end{equation}
                For this section we suppose that $X_0$ is a normal affine surface over an algebraically closed
                field $\k$ such that
                \begin{enumerate}
                  \item $\k[X_0]^\times = \k^\times$;
                  \item For all completion $X$ of $X_0, \Pic^0 (X) = 0$.
                \end{enumerate}
                It suffices to test the second condition on one completion of $X_0$ as the Albanese variety of a
                projective variety is a birational invariant. We will make an abuse of notations and write $\Pic^0 (X_0)
                = 0$ for the second hypothesis.

                If these two conditions are satisfied, the finite dimensional subspace $\DivInf (X)$ embeds into
                $\NS(X)$. Indeed, consider the composition
                \begin{equation}
                  \DivInf(X) \rightarrow \Pic (X) \rightarrow \NS(X),
                  \label{<+label+>}
                \end{equation}
                the first map is injective since $\k[X_0]^\times = \k^\times$ and the second is an isomorphism because
                $\Pic^0 (X) = 0$. Therefore the maps $\tau$ are injective and we have the
                orthogonal decomposition
                \begin{equation}
                  \wNS_\R = \Winf_\R \obot \phantom{.}V
                \end{equation}
                where $V$ is a finite-dimensional vector space(this decomposition also holds over $\Q$); in fact let $X$ be a
                completion of $X_0$, then $V$ is the orthogonal of $\DivInf(X)$ in $\NS (X)$.

                \subsection{The intersection form at infinity}

                \begin{prop}\label{PropIntersectionFormNonDegenerateAtInfinity}
                  Let $X$ be a completion of $X_0$, then

                  \begin{itemize}
                    \item $\DivInf (X)_\A$ embeds into $\NS(X)_\A$ and the intersection form is non degenerate on
                      $\DivInf(X)_\A$.
                    \item The perfect pairing $\cNS_\R \times \wNS_\R \rightarrow \R$ induces a pairing
                      \begin{equation}
                        \Cinf_\R \times \Winf_\R \rightarrow \R
                      \end{equation}
                      that is also perfect.
                    \item $\Winf_\R$ is isomorphic, as a topological vector space, to $\Cinf_\R^*$ endowed with the
                      weak-$*$ topology.
                  \end{itemize}
                \end{prop}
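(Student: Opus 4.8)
The plan is to reduce all three assertions to the orthogonal decompositions $\cNS_\R=V\operp\tau(\Cinf_\R)$ and $\wNS_\R=V\operp\tau(\Winf_\R)$ of Proposition \ref{PropDecompositionPicardManin}, combined with the non-degeneracy statement of Proposition \ref{PropIntersectionFormNonDegenerateAtInfinityGeneralForm} and the perfectness of the global intersection pairing from Proposition \ref{PropPerfectPairing}.

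\textbf{First point.} Injectivity of $\tau$ on $\DivInf(X)_\Z$ is the remark preceding the statement: $\DivInf(X)\to\Pic(X)\to\NS(X)$ is injective because $\k[X_0]^\times=\k^\times$ and an isomorphism because $\Pic^0(X)=0$. As $\DivInf(X)$ is free and tensoring with $\Q$ or $\R$ is exact, $\tau$ remains injective on $\DivInf(X)_\Q$ and $\DivInf(X)_\R$. Non-degeneracy of the intersection form on $\tau(\DivInf(X)_\R)$ is exactly Proposition \ref{PropIntersectionFormNonDegenerateAtInfinityGeneralForm}; pulling it back along the isomorphism $\DivInf(X)_\R\xrightarrow{\sim}\tau(\DivInf(X)_\R)$ gives non-degeneracy on $\DivInf(X)_\R$. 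Since the Gram matrix in the basis of prime divisors at infinity has integral entries and, by what precedes, nonzero determinant in $\R$, the form is non-degenerate over $\Q$ and the associated map $\DivInf(X)_\Z\to\DivInf(X)_\Z^{*}$ is injective.

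\textbf{Second point.} By \S\ref{SubSecCanonicalBasisOfDivisors} one has $\Cinf_\R=\DivInf(X)_\R\oplus\R^{(\cD_X)}$ and $\Winf_\R=\DivInf(X)_\R\oplus\R^{\cD_X}$, and $\tau$ restricted to these is the direct sum of the injection on $\DivInf(X)_\R$ and the identity on the $\cD_X$-factor; in particular $\tau$ is injective on $\Cinf_\R$ and on $\Winf_\R$, with images precisely the summands occurring in the decompositions of $\cNS_\R$ and $\wNS_\R$. Now $V$ is orthogonal to $\tau(\DivInf(X)_\R)$ by definition and orthogonal to $\R^{\cD_X}$ because $V\subset\NS(X)_\R$, which is orthogonal to $\R^{\cD_X}$ in $\wNS_\R$ by Proposition \ref{PropDescriptionNeronSeveri}; hence $V\perp\tau(\Cinf_\R)$ and $V\perp\tau(\Winf_\R)$. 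The restriction of a perfect pairing to an orthogonal pair of direct summands of each of its two arguments is again perfect, so the pairing $\cNS_\R\times\wNS_\R\to\R$ restricts to a perfect pairing on $\tau(\Cinf_\R)\times\tau(\Winf_\R)$; transporting it back through $\tau$ yields the asserted perfect pairing $\Cinf_\R\times\Winf_\R\to\R$. Concretely: any linear form on $\Cinf_\R$ extends by $0$ on $V$ to $\cNS_\R$, is represented by a unique element of $\wNS_\R$ via Proposition \ref{PropPerfectPairing}, and this element lies in $\Winf_\R$ because non-degeneracy of the intersection form on the finite-dimensional space $V$ forces its $V$-component to vanish.

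\textbf{Third point.} By Proposition \ref{PropPerfectPairing}, $\wNS_\R\simeq\cNS_\R^{*}$ as topological vector spaces for the weak-$*$ topology; under this homeomorphism the decomposition $\wNS_\R=V\operp\Winf_\R$ corresponds to $\cNS_\R^{*}=(V\operp\Cinf_\R)^{*}=V^{*}\times\Cinf_\R^{*}$, the $V$-factors being identified through the non-degenerate intersection form on $V$. As $V$ is finite dimensional, the weak-$*$ topology on $\cNS_\R^{*}$ is the product of the usual topology on $V^{*}\simeq V$ and the weak-$*$ topology on $\Cinf_\R^{*}$, while the projective-limit topology on $\wNS_\R=V\operp\Winf_\R$ is the product of the topology on $V$ and the subspace topology on $\Winf_\R$; comparing the two product decompositions gives the homeomorphism $\Winf_\R\simeq\Cinf_\R^{*}$ for the weak-$*$ topology. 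I expect the only point needing care to be this last bookkeeping, namely verifying that the homeomorphism of Proposition \ref{PropPerfectPairing} is compatible with splitting off the finite-dimensional factor $V$ and that the topology it induces on $\Winf_\R$ is exactly the weak-$*$ one (rather than merely coarser); the rest follows at once from the decompositions and from Propositions \ref{PropIntersectionFormNonDegenerateAtInfinityGeneralForm}, \ref{PropPerfectPairing} and \ref{PropDescriptionNeronSeveri}.
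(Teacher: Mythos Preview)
Your proposal is correct and follows essentially the same approach as the paper: reduce everything to the injectivity of $\tau$ (from the hypotheses $\k[X_0]^\times=\k^\times$ and $\Pic^0(X_0)=0$), the non-degeneracy statement of Proposition~\ref{PropIntersectionFormNonDegenerateAtInfinityGeneralForm}, and the perfect pairing of Proposition~\ref{PropPerfectPairing}. The paper's proof is a one-liner citing exactly these ingredients, while you have spelled out in detail how the orthogonal decomposition $\wNS_\R=V\operp\Winf_\R$ interacts with them; your added bookkeeping for the topological statement in the third point is a welcome clarification of what the paper leaves implicit.
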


                \begin{proof}
                  Everything follows from Propositions \ref{PropIntersectionFormNonDegenerateAtInfinityGeneralForm} and
                  \ref{PropPerfectPairing} and that $\tau: \DivInf (X) \hookrightarrow \NS (X)$ is injective.
                \end{proof}

                \begin{cor}\label{CorLPlusClosedSubset}
                  The subspace $\LPLus$ is a closed subspace of $\Winf_\R$ with the weak-$\star$ topology.
                \end{cor}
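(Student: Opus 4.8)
The plan is to realize $\LPLus$ as an intersection of closed sets, exploiting that each of the two conditions defining property (+) is a closed condition for the weak-$\star$ topology. First I would invoke Proposition \ref{PropIntersectionFormNonDegenerateAtInfinity} to identify $\Winf_\R$ with $\Cinf_\R^*$ equipped with the weak-$\star$ topology, so that $\LPLus$ is a subset of $\Winf_\R$ and, for every $D \in \Cinf$, the evaluation $e_D \colon L \mapsto L(D)$ is continuous on $\Winf_\R$.

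Next I would recall that $L \in \Winf_\R$ lies in $\LPLus$ precisely when (1) $L(D) \geq 0$ for every effective $D \in \Cinf$, and (2) $L(D \wedge D') = \min(L(D), L(D'))$ for all $D, D' \in \Cinf$; here I use Lemma \ref{LemmeMinOfCartierIsCartier} to guarantee $D \wedge D' \in \Cinf$, so that both sides of (2) are honest continuous functions of $L$. Condition (1) cuts out $\bigcap_{D \geq 0} e_D^{-1}([0, +\infty))$, and each $e_D^{-1}([0,+\infty))$ is closed, being the preimage of a closed subset of $\R$ under a continuous map. Condition (2) cuts out $\bigcap_{D, D'} g_{D,D'}^{-1}(\{0\})$, where $g_{D,D'} \colon L \mapsto L(D \wedge D') - \min(L(D), L(D'))$ is continuous, being built from the continuous maps $e_{D \wedge D'}, e_D, e_{D'}$ and the continuous function $\min \colon \R^2 \to \R$; hence each $g_{D,D'}^{-1}(\{0\})$ is closed. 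Intersecting all of these closed sets gives that $\LPLus$ is closed in $\Winf_\R$.

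Since the argument is essentially formal, the only points requiring care are bookkeeping ones: confirming that the topology on $\Winf_\R$ furnished by Proposition \ref{PropIntersectionFormNonDegenerateAtInfinity} is exactly the one making all evaluations $e_D$ continuous, and confirming via Lemma \ref{LemmeMinOfCartierIsCartier} that $D \wedge D'$ is a Cartier class, so that condition (2) is a genuine constraint on elements of $\Winf_\R$. (One may even omit condition (1), as it follows from (2) applied with $D' = 0$ together with $L(0) = 0$, but keeping both in the list costs nothing.) I do not anticipate any real obstacle here; the content is entirely the continuity of evaluation together with the standard fact that a subset cut out by continuous equalities and inequalities is closed.
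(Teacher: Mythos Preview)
Your proposal is correct and matches the paper's proof essentially verbatim: the paper writes $\LPLus = C_1 \cap C_2$ with $C_1 = \bigcap_{D \geq 0}\{L(D) \geq 0\}$ and $C_2 = \bigcap_{D,D'}\{L(D \wedge D') = \min(L(D), L(D'))\}$, and observes that these are closed conditions. Your extra care in citing Proposition~\ref{PropIntersectionFormNonDegenerateAtInfinity} for the continuity of evaluations and Lemma~\ref{LemmeMinOfCartierIsCartier} for $D \wedge D' \in \Cinf$ is justified but already implicit in the paper's terse argument.
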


                \begin{proof}
                  All the conditions that  elements of $\LPLus$ have to satisfy are closed conditions. Indeed, we have
                  \begin{equation}
                    \LPLus = C_1 \cap C_2
                    \label{<+label+>}
                  \end{equation}
                  where
                  \begin{align}
                    C_1 &= \bigcap_{D \geq 0} \left\{ L(D) \geq 0 \right\}\\
                    C_2 &= \bigcap_{D, D' \in \Cinf} \left\{ L (D \wedge D') = \min (L(D), L(D')) \right\}.
                    \label{<+label+>}
                  \end{align}
                \end{proof}

                \subsection{A continuous embedding of $\Vinf$ into $\Winf_\R$}

                From Proposition \ref{PropIntersectionFormNonDegenerateAtInfinity}, we get the immediate corollary.

                \begin{cor}
                  For any valuation $v$ centered at infinity, there exists a unique $Z_v \in \Winf_\R$ such that for
                  all $D \in \Cinf_\R, L_v (D) = Z_v \cdot D$.

                \end{cor}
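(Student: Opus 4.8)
The statement is a direct formal consequence of Proposition~\ref{PropIntersectionFormNonDegenerateAtInfinity}, so the plan is short. First I would recall from \S\ref{SecValuationAsLinearFormsOverDivisors} that a valuation $v$ centered at infinity defines an $\R$-linear form $L_v : \Cinf_\R \to \R$: one takes the map $L_v$ on $\Cinf$ constructed via Proposition~\ref{PropValuationForCartierDivisorOverOneCompletion} (which is independent of the chosen completion, and finite-valued by Lemma~\ref{LemmeValuationJamaisInfinieSurLesDiviseurs}) and extends it by linearity. Then, since by Proposition~\ref{PropIntersectionFormNonDegenerateAtInfinity} the intersection pairing $\Cinf_\R \times \Winf_\R \to \R$ is perfect and identifies $\Winf_\R$ with $\Cinf_\R^*$ endowed with the weak-$*$ topology, there is a unique $Z_v \in \Winf_\R$ with $Z_v \cdot D = L_v(D)$ for all $D \in \Cinf_\R$; this is the asserted element.

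If one prefers not to invoke the global duality statement as a black box, I would exhibit $Z_v$ incarnation by incarnation. Fix a completion $X$. By the first bullet of Proposition~\ref{PropIntersectionFormNonDegenerateAtInfinity} the intersection form is non-degenerate on the finite-dimensional space $\DivInf(X)_\R$, so there is a unique $Z_{v,X} \in \DivInf(X)_\R$ representing the restricted linear form $(L_v)_{|\DivInf(X)_\R}$. To see that $(Z_{v,X})_X$ is a coherent family, hence defines an element of $\Winf_\R = \varprojlim \DivInf(X)_\R$, I would take a morphism of completions $\pi : Y \to X$ and compute, for $D \in \DivInf(X)_\R$, that $\pi_* Z_{v,Y} \cdot D = Z_{v,Y} \cdot \pi^* D = (L_v)_{|\DivInf(Y)_\R}(\pi^* D) = (L_v)_{|\DivInf(X)_\R}(D)$, using the projection formula and Proposition~\ref{PropValuationForCartierDivisorOverOneCompletion}(5); non-degeneracy on $\DivInf(X)_\R$ then forces $\pi_* Z_{v,Y} = Z_{v,X}$. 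Unwinding the definition of the pairing $\Winf_\R \times \Cinf_\R \to \R$, the resulting $Z_v := (Z_{v,X})_X$ satisfies $Z_v \cdot D = L_{v,X}(D_X) = L_v(D)$ for every $D \in \Cinf_\R$.

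For uniqueness I would observe that if $Z, Z' \in \Winf_\R$ both represent $L_v$, then $(Z - Z') \cdot D = 0$ for all $D \in \Cinf_\R$, and perfectness of the pairing $\Cinf_\R \times \Winf_\R \to \R$ (again Proposition~\ref{PropIntersectionFormNonDegenerateAtInfinity}) gives $Z = Z'$. There is no genuine obstacle in this corollary: the real content sits entirely in Proposition~\ref{PropIntersectionFormNonDegenerateAtInfinity} (which itself rests on the hypotheses $\k[X_0]^\times = \k^\times$ and $\Pic^0(X_0)=0$ making $\DivInf(X) \hookrightarrow \NS(X)$ isometric), and the only point requiring a line of care is that $L_v$ is genuinely a well-defined $\R$-linear form on $\Cinf_\R$, which is Proposition~\ref{PropValuationForCartierDivisorOverOneCompletion}.
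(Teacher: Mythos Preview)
Your proposal is correct and matches the paper's approach exactly: the paper states this corollary as an immediate consequence of Proposition~\ref{PropIntersectionFormNonDegenerateAtInfinity} without further argument, and your first paragraph gives precisely the intended one-line deduction from the perfect pairing. The incarnation-by-incarnation construction you add is a helpful unpacking of that duality but is not needed for (nor present in) the paper's treatment.
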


                \begin{cor}\label{CorEbeddingIntoWinf}
                  A valuation $v$ is divisorial if and only if $Z_v$ belongs to $\Cinf_\R$. In particular, for any prime
                  divisor $E$ at infinity, $Z_{\ord_E} \in \Cinf_\Q$. The embedding
                  \begin{equation}
                    v \in \cV_\infty \mapsto Z_v \in \Winf_\R
                  \end{equation}
                  is a continuous map for the weak topology.
                \end{cor}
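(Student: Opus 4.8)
The plan is to deduce both statements from the work already done locally in Chapter \ref{ChapterValuationsAsLinearForms} together with the global decomposition $\wNS_\R = \Winf_\R \obot V$ furnished by Proposition \ref{PropIntersectionFormNonDegenerateAtInfinity}. First I would address the characterization of divisorial valuations. If $v$ is divisorial, pick a completion $X$ in which the center of $v$ is a prime divisor $E$ at infinity. Then $L_v = \lambda L_{\ord_E}$ for some $\lambda > 0$, and by the example following Proposition \ref{PropValuationForCartierDivisorOverOneCompletion}, $L_{\ord_E}(D) = \ord_E(D_X)$ for every $D \in \DivInf(X)$. Since the intersection form is non-degenerate on the finite-dimensional space $\DivInf(X)$ (first bullet of Proposition \ref{PropIntersectionFormNonDegenerateAtInfinity}), the linear form $D \mapsto \ord_E(D_X)$ on $\DivInf(X)_\R$ is represented by a unique element $Z_E \in \DivInf(X)_\R$, and in fact $Z_E \in \DivInf(X)_\Q$ because $\ord_E$ takes integer values on the integral lattice; one checks this $Z_E$ is the incarnation of a well-defined Cartier class $Z_{\ord_E} \in \Cinf_\Q$ (compatibility under blow-ups being exactly Lemma \ref{LemmeValuationDivisorielle}). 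Conversely, if $Z_v \in \Cinf_\R$, let $Y$ be a completion over which $Z_v$ is defined. If the center $c_Y(v)$ were a closed point at infinity, blowing it up produces an exceptional divisor $\tilde E$ with $Z_v \cdot \tilde E = L_v(\tilde E) > 0$ by Proposition \ref{PropValuationForCartierDivisorOverOneCompletion}(3); but $Z_v$ being Cartier and defined on $Y$ forces $Z_v \cdot \tilde E = 0$, a contradiction. Hence the center of $v$ is a prime divisor at infinity, i.e.\ $v$ is divisorial. This is essentially the local statement Proposition \ref{PropLocalDivisorOfDivisorialValuationIsCartier}, here globalized using that $\tau : \DivInf(X) \hookrightarrow \NS(X)$ is injective.

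For the continuity of $v \mapsto Z_v$, recall that $\Winf_\R$ carries the weak-$\star$ topology of $\Cinf_\R^*$ (third bullet of Proposition \ref{PropIntersectionFormNonDegenerateAtInfinity}). So it suffices to show that for every fixed $D \in \Cinf_\R$ the map $v \in \cV_\infty \mapsto Z_v \cdot D = L_v(D)$ is continuous for the weak topology on $\cV_\infty$. But this is precisely Proposition \ref{PropTopologiesOverDivisorsAndFunctionsAreTheSame}: the map $v \mapsto L_v$ into $\hom(\Cinf, \R)$ with the weak-$\star$ topology is a continuous embedding, and composing with the (continuous) identification $\hom(\Cinf,\R) \simeq \Winf_\R \obot V \twoheadrightarrow \Winf_\R$ — legitimate because $L_v$ vanishes on no part of $V$ is irrelevant: $L_v$ lives in $\Cinf_\R^* = \hom(\Cinf_\R,\R)$ and the dual pairing identifies it with $Z_v \in \Winf_\R$ directly — gives the claim. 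Concretely: fix $D$, choose a completion $X$ where $D$ is defined, then $L_v(D) = L_v(\OO_X(-D))$ by Proposition \ref{PropValuationForCartierDivisorOverOneCompletion}(4), and $v \mapsto L_v(\OO_X(-D))$ is continuous by Proposition \ref{PropTopologiesOverIdealsAndFunctionsAreTheSame}.

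The only mild subtlety — and the step I would be most careful about — is making sure the identification of $L_v$ (a priori an element of $\hom(\Cinf_\R, \R)$, which by Proposition \ref{PropPerfectPairing} together with the splitting $\wNS_\R = \Winf_\R \obot V$ corresponds to an element of $\Winf_\R$ rather than all of $\wNS_\R$) is indeed valued in $\Winf_\R$: this is exactly where the hypotheses $\k[X_0]^\times = \k^\times$ and $\Pic^0(X_0) = 0$ enter, via the perfect pairing $\Cinf_\R \times \Winf_\R \to \R$ of the second bullet of Proposition \ref{PropIntersectionFormNonDegenerateAtInfinity}. Once that pairing is perfect, the existence and uniqueness of $Z_v$ is the preceding corollary, and everything reduces to the continuity and injectivity statements already proved in the first part of the memoir. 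I expect no genuine obstacle here; the proof is a bookkeeping assembly of Propositions \ref{PropValuationForCartierDivisorOverOneCompletion}, \ref{PropLocalDivisorOfDivisorialValuationIsCartier}, \ref{PropTopologiesOverDivisorsAndFunctionsAreTheSame}, and \ref{PropIntersectionFormNonDegenerateAtInfinity}.
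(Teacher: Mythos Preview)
Your proof is correct and follows essentially the same route as the paper: non-degeneracy of the intersection form on $\DivInf(X)_\Q$ for the forward direction, the exceptional-divisor contradiction for the converse, and reduction of continuity to $v \mapsto L_v(D)$ via Proposition~\ref{PropTopologiesOverDivisorsAndFunctionsAreTheSame}. The only cosmetic difference is that the paper cites Proposition~\ref{PropExtensionNaturelleValuationDivisorielle} directly rather than unfolding it via Lemma~\ref{LemmeValuationDivisorielle}, and it does not re-derive the continuity step through Proposition~\ref{PropTopologiesOverIdealsAndFunctionsAreTheSame} as you do.
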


                \begin{proof}
                  If $v$ is divisorial, then there exists a completion $X$ such that the center of $v$ is a prime
                  divisor $E$ at infinity. For every $W \in \Winf, L_{\ord_E} (W) = L_{\ord_E, X} (W_X)$, by
                  Proposition \ref{PropExtensionNaturelleValuationDivisorielle}. By non-degeneracy of the
                  intersection pairing on $\DivInf (X)_\Q$, there exists $Z \in \DivInf (X)_\Q$ such that
                  for all $D \in \DivInf(X)_\Q, L_{\ord_E, X}(D) = Z \cdot D$. It follows that $Z_{\ord_E}$ is the Cartier class
                  defined by $Z$, hence it is an element of $\Cinf_\Q$.

                  Conversely, if $Z_v \in \Cinf_\R$, let $X$ be a completion where $Z_v$ is
                  defined. The center of $v$ over $X$ cannot be a closed point $p$; otherwise let $\tilde E$ be the
                  exceptional divisor above $p$, we would have $L_v (\tilde E) >0$, but $Z_v \cdot \tilde E = 0$.

                  Now to show the continuity of the map of the Corollary, it suffices by Proposition
                  \ref{PropIntersectionFormNonDegenerateAtInfinity} to show that for any $D \in
                  \Cinf_\R$, the map $v \in \Vinf \mapsto Z_v \cdot D$ is continuous. It actually suffices to show
                  this for $D \in \Cinf$ and this follows immediately from
                  $Z_v \cdot D = L_v (D)$ and Proposition \ref{PropTopologiesOverDivisorsAndFunctionsAreTheSame}.
                \end{proof}

                \begin{prop}\label{PropIncarnationDivisorOfValuation}
                  Let $v$ be a valuation centered at infinity and $X$ a completion of $X_0$ such that $c_X
                  (v)\in E$ is a free point. Then, the incarnation
                  of $Z_v$ in $X$ is

                  \begin{equation}
                  Z_{v, X} = (Z_v \cdot E) Z_{\ord_E}. \end{equation}

                  If $c_X (v) = E \cap F$ is a satellite point, then
                  \begin{equation}
                    Z_{v, X} = (Z_v \cdot E) Z_{\ord_E} + (Z_v \cdot F) Z_{\ord_F}.
                    \label{<+label+>}
                  \end{equation}

                  Furthermore, if $\pi: Y \rightarrow X$ is the blow up of a point at infinity $p \neq c_X
                  (v)$, then
                  \begin{equation}
                    Z_{v, Y} = \pi^* Z_{v, X}.
                    \label{EqMemeIncarnation}
                  \end{equation}
                \end{prop}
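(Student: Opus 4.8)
The plan is to identify the incarnation $Z_{v,X}\in\DivInf(X)_\R$ by computing its intersection numbers against the prime divisors at infinity of $X$, using that the intersection form is non-degenerate on $\DivInf(X)_\R$ (Proposition \ref{PropIntersectionFormNonDegenerateAtInfinity}). By the defining property of $Z_v$ and of the incarnation, for every prime divisor $E'$ at infinity one has $Z_{v,X}\cdot E' = Z_v\cdot E' = L_v(E') = L_{v,X}(E')$, and item (3) of Proposition \ref{PropValuationForCartierDivisorOverOneCompletion} says this number is strictly positive precisely when $c_X(v)\in E'$ and is zero otherwise. Hence $L_{v,X}$, regarded as a linear form on $\DivInf(X)_\R$, is concentrated on the prime divisors at infinity through $c_X(v)$: one such divisor if $c_X(v)$ is a free point, two if it is a satellite point (recall that we only use good completions).

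First I would treat the free case $c_X(v)\in E$. Then $L_{v,X}(E')=0$ for every prime divisor $E'\neq E$ at infinity, and since by Lemma \ref{LemmeValuationDivisorielle} the forms $(L_{\ord_{E'}})_{|\DivInf(X)_\R}$ are the dual basis to the basis of prime divisors at infinity, we get $(L_v)_{|\DivInf(X)_\R} = L_v(E)\,(L_{\ord_E})_{|\DivInf(X)_\R}$. Now $Z_{\ord_E}$ is defined over $X$ and its incarnation there represents $L_{\ord_E}$ on $\DivInf(X)_\R$ — this is exactly the argument proving Corollary \ref{CorEbeddingIntoWinf} — so dualising through the non-degenerate pairing yields $Z_{v,X} = L_v(E)\,Z_{\ord_E}$, and $L_v(E)=Z_v\cdot E$. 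The satellite case $c_X(v)=E\cap F$ is identical: $L_{v,X}$ vanishes on every prime divisor at infinity other than $E$ and $F$, so $(L_v)_{|\DivInf(X)_\R} = L_v(E)(L_{\ord_E})_{|\DivInf(X)_\R} + L_v(F)(L_{\ord_F})_{|\DivInf(X)_\R}$, and dualising gives the claimed formula.

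For the last assertion, let $\pi\colon Y\to X$ be the blow-up of $p\neq c_X(v)$, with exceptional divisor $\tilde E$, so that $\DivInf(Y)_\R = \pi^*\DivInf(X)_\R \obot \R\tilde E$. Since $Z_v\in\Winf_\R$ is a compatible family, $\pi_*Z_{v,Y} = Z_{v,X}$, hence by the projection formula $Z_{v,Y}$ and $\pi^*Z_{v,X}$ have the same intersection with every $\pi^*D$ for $D\in\DivInf(X)_\R$. On the remaining factor, $\pi$ is an isomorphism near $c_X(v)$, so $c_Y(v)\notin\tilde E$ and therefore $Z_{v,Y}\cdot\tilde E = L_v(\tilde E) = 0 = \pi^*Z_{v,X}\cdot\tilde E$; non-degeneracy of the intersection form on $\DivInf(Y)_\R$ then forces $Z_{v,Y}=\pi^*Z_{v,X}$. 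The only delicate point — and the closest thing here to an obstacle — is justifying that $Z_{\ord_E}$ genuinely makes sense as an element of $\DivInf(X)_\R$ and represents the correct linear form there, so that the stated equalities are literal equalities of incarnations in $X$; this follows verbatim from Corollary \ref{CorEbeddingIntoWinf}, and the remainder is the routine duality computation already carried out in the local setting in Proposition \ref{PropIncarnationLocalDivisorOfValuation}.
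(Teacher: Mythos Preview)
Your proposal is correct and follows essentially the same approach as the paper: both arguments reduce to the observation that $(L_{\ord_{E'}})_{|\DivInf(X)_\R}$ is the dual basis to the prime divisors at infinity (Lemma~\ref{LemmeValuationDivisorielle}), so that $L_{v,X}$ is supported on the divisors through $c_X(v)$ by Proposition~\ref{PropValuationForCartierDivisorOverOneCompletion}(3), and then dualise through the non-degenerate pairing. The paper's treatment of the last assertion simply writes the orthogonal decomposition $Z_{v,Y} = \pi^* Z_{v,X} - (Z_v\cdot\tilde E)\tilde E$ directly and kills the second term the same way you do.
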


                \begin{proof}
                  If $c_X (v) \in E$ is a free point. For any $D \in \DivInf(X)_\R$, one has $D = \sum_F L_{\ord_F} (D) F$,
                  therefore by Proposition \ref{PropValuationForCartierDivisorOverOneCompletion} (2) and (3) $L_v (D) =
                  L_{\ord_E} (D) L_v (E) $ . Since $(Z_v \cdot E) = L_v (E)$, we get the result. The proof is similar
                  for the case $c_X (v) = E \cap F$.

                  For the last assertion, if $\tilde E$ is the exceptional divisor of $\pi : Y \rightarrow X$,
                  then by definition
                  \begin{equation}
                    Z_{v, Y} = \pi^* Z_{v, X} - (Z_v \cdot \tilde E) \tilde E
                    \label{<+label+>}
                  \end{equation}
                  However, since $c_X (v) \neq p$, we have that $c_Y (v) \not \in \tilde E$ and therefore $Z_v
                  \cdot \tilde E = 0$ by Proposition \ref{PropValuationForCartierDivisorOverOneCompletion}.
                \end{proof}

                Recall that in \S\ref{SubSecLocalDivisorValuation}, we have defined for a point $p$ at infinity in a completion
                $X$ the local divisor $Z_{v, X, p}$ for every valuation $v$ centered at $p$. The divisor is defined by duality
                via the following property
                \begin{equation}
                  \forall D \in \Cartier(X, p)_\R, \quad L_v (D) = Z_{v, p, X} \cdot D.
                  \label{<+label+>}
                \end{equation}

                \begin{cor}\label{CorDivisorOfValuationWithLocalDivisorOfValuation}
                  Let $X$ be a completion of $X_0$ and let $v$ be a valuation centered at infinity.
                  \begin{itemize}
                    \item If $p := c_X (v) \in E$ is a free point, then
                      \begin{equation}
                        Z_v = (Z_v \cdot E) Z_{\ord_E} + Z_{v, X, p}
                        \label{<+label+>}
                      \end{equation}
                    \item If  $p := c_X(v) = E \cap F$ is a satellite point, then
                      \begin{equation}
                        Z_v = (Z_v \cdot E) Z_{\ord_E} + (Z_{v} \cdot Z_{\ord_F}) Z_{\ord_F} + Z_{v, X, p}
                        \label{<+label+>}
                      \end{equation}
                  \end{itemize}

                  In particular, $Z_v \in \L2$ if and only if $v$ is quasimonomial or there exists a completion $X$
                  and a closed point $p \in X$ at infinity such that $c_X (v) = p$ and $\alpha (\tilde v ) < +
                  \infty$ where $\tilde v$ is the valuation equivalent to $v$ such that $\tilde v \in \cV_X (p;
                  \m_p)$.
                \end{cor}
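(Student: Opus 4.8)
The plan is to separate the global Weil class $Z_v$ into its incarnation $Z_{v,X}\in\DivInf(X)_\R$ and a ``tail'' supported at $p$, and to identify that tail with the local divisor $Z_{v,X,p}$; the $\L2$-membership will then be read off from Theorem \ref{ThmAutoIntersectionIsSkewness}.

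First I would invoke the canonical decomposition of $\Winf_\R$ along $\DivInf(X)_\R$ and the basis $\{\alpha_F\}_{F\in\cD_X}$ (cf. \S\ref{SubSecCanonicalBasisOfDivisors} and Proposition \ref{PropDescriptionNeronSeveri}), together with the partition $\cD_X=\bigsqcup_{q\in\BD}\cD_{X,q}$, writing $Z_v=Z_{v,X}+\sum_{F\in\cD_X}c_F\,\alpha_F$ with $c_F\in\R$. The key observation is that $c_F=0$ whenever $F\in\cD_{X,q}$ with $q\neq p$: indeed $\alpha_F=(X_F,F)$, the center $c_{X_F}(v)$ lies above $c_X(v)=p$ whereas $F$ is contracted to $q$, so $c_{X_F}(v)\notin F$ and Proposition \ref{PropValuationForCartierDivisorOverOneCompletion}(3) gives $Z_v\cdot\alpha_F=L_v(\alpha_F)=L_{v,X_F}(F)=0$; pairing the displayed decomposition with $\alpha_F$ and using $\alpha_F\perp\NS(X)$, $\alpha_F^2=-1$, $\alpha_{F'}\cdot\alpha_F=0$ for $F'\neq F$, forces $c_F=0$. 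Hence $T:=Z_v-Z_{v,X}=\sum_{F\in\cD_{X,p}}c_F\,\alpha_F\in\Weil(X,p)_\R$. For every $D\in\Cartier(X,p)_\R$ one has $Z_{v,X}\cdot D=0$ (since $Z_{v,X}\in\NS(X)$ while $D$ lies in the span of the $\alpha_F$, $F\in\cD_{X,p}$), so $T\cdot D=Z_v\cdot D=L_v(D)=L_{v,X,p}(D)$; by the perfectness of the local pairing (Proposition \ref{PropLocalPicardManinSpace}) this gives $T=Z_{v,X,p}$, i.e. $Z_v=Z_{v,X}+Z_{v,X,p}$. Plugging in the incarnation formulas of Proposition \ref{PropIncarnationDivisorOfValuation} for $Z_{v,X}$ in the free and satellite cases yields the two displayed identities.

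For the membership statement I would use $\L2=\NS(X)\obot\ell^2(\cD_X)$ and $\L2\cap\Weil(X,p)_\R=\ell^2(\cD_{X,p})$. Since $Z_{v,X}$ lies in the finite-dimensional space $\DivInf(X)_\R\subset\L2$, the identity $Z_v=Z_{v,X}+Z_{v,X,p}$ shows $Z_v\in\L2$ if and only if $Z_{v,X,p}\in\L2$, i.e. if and only if $(Z_{v,X,p})^2>-\infty$. If $v$ is divisorial then $Z_v\in\Cinf_\R\subset\L2$ (Corollary \ref{CorEbeddingIntoWinf}) and $v$ is quasimonomial, so the statement holds. If $v$ is not divisorial its center on every completion is a closed point $p$; setting $\tilde v=v/v(\m_p)\in\cV_X(p;\m_p)$ one has $Z_{v,X,p}=v(\m_p)\,Z_{\tilde v,X,p}$, so by Theorem \ref{ThmAutoIntersectionIsSkewness} applied with $v=v'=\tilde v$, $(Z_{v,X,p})^2=-v(\m_p)^2\,\alpha(\tilde v)$. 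Hence $Z_v\in\L2$ if and only if $\alpha(\tilde v)<+\infty$; by Proposition \ref{PropValuationEnFonctionDeAlpha} this holds for quasimonomial valuations and for infinitely singular valuations of finite skewness and fails for curve valuations, and Proposition \ref{PropFiniteSkewnessEverywhere} guarantees that finiteness of the skewness is independent of the chosen completion. This is precisely the stated dichotomy, in both directions.

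The main obstacle is the bookkeeping in the second paragraph: keeping straight the global Weil class $Z_v$, its incarnation $Z_{v,X}$ regarded as a Cartier class in $\Winf_\R$, and the purely local object $Z_{v,X,p}\in\Weil(X,p)_\R$, and in particular verifying the vanishing $L_v(\alpha_F)=0$ for exceptional divisors lying over points other than $p$. Once $Z_v=Z_{v,X}+Z_{v,X,p}$ is established, the rest is a routine combination of Theorem \ref{ThmAutoIntersectionIsSkewness}, the $\ell^2$-description of the Picard--Manin space, and the classification of valuations by skewness.
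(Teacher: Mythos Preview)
Your proof is correct and follows essentially the same route as the paper: decompose $Z_v$ as its incarnation $Z_{v,X}$ plus an exceptional tail, show the tail is supported above $p$ alone because $L_v$ vanishes on divisors exceptional above $q\neq p$ (Proposition~\ref{PropValuationForCartierDivisorOverOneCompletion}(3)), identify it with $Z_{v,X,p}$ by the local duality, and then read off the $\L2$ criterion from $(Z_v)^2=(Z_{v,X})^2+(Z_{v,X,p})^2$ together with Theorem~\ref{ThmAutoIntersectionIsSkewness}. Your write-up is in fact more explicit than the paper's, which compresses the vanishing argument into one line and does not single out the divisorial case separately; your careful handling of the $\alpha_F$-basis and of the normalization $\tilde v=v/v(\m_p)$ is a welcome addition.
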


                \begin{proof}
                  We have that
                  \begin{equation}
                    Z_v = Z_{v, X} + Z'
                    \label{ }
                  \end{equation}
                  where $Z' \in \Winf$ is exceptional above $X$. Now, for every divisor $D$ exceptional above $X$, we have
                  \begin{equation}
                    L_v (D) = Z_v \cdot D = Z' \cdot D.
                    \label{<+label+>}
                  \end{equation}
                  If $D$ is exceptional above a point $q \neq p$, then $L_v (D) = 0$ by Proposition
                  \ref{PropValuationForCartierDivisorOverOneCompletion} as $q \neq c_X (v)$. Therefore, we get that $Z' = Z_{v,
                  X, p}$.

                  Now, we have $Z_v \in \L2 \Leftrightarrow (Z_v)^2 < - \infty$. Replace $v$ by the equivalent valuation such
                  that $v \in \cV_X (p; \m_p)$, then by Theorem \ref{ThmAutoIntersectionIsSkewness} $(Z_{v, X, p})^2 = -
                  \alpha (v)$ and therefore
                  \begin{equation}
                    (Z_v)^2 = (Z_{v, X})^2 - \alpha (v).
                    \label{<+label+>}
                  \end{equation}
                  This shows the result.
                \end{proof}

                \begin{cor}\label{CorDivisorOfValuationIsRational}
                  Let $v \in \Vinf$, then up to normalisation $Z_v \in \Winf_\Q$ if and only $v$ is not irrational.
                \end{cor}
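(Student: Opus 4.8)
The plan is to treat the four types of valuation separately, pivoting on the decomposition of $Z_v$ into an incarnation part and a local part furnished by Corollary~\ref{CorDivisorOfValuationWithLocalDivisorOfValuation} and Proposition~\ref{PropIncarnationDivisorOfValuation}, together with the local statement already proven in Corollary~\ref{CorLocalDivisorOfIrrationalValuationIsIrrational}. If $v$ is divisorial, then after normalisation $v=\ord_E$ for some prime divisor $E$ at infinity in some completion, and $Z_v=Z_{\ord_E}\in\Cinf_\Q\subset\Winf_\Q$ by Corollary~\ref{CorEbeddingIntoWinf}; since divisorial valuations are not irrational this is the desired conclusion. So I would assume $v$ is not divisorial, fix a completion $X$ in which $p:=c_X(v)$ is a closed point at infinity, and replace $v$ by the equivalent valuation in $\cV_X(p;\m_p)$. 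Corollary~\ref{CorDivisorOfValuationWithLocalDivisorOfValuation} then gives $Z_v=L_v(E)Z_{\ord_E}+Z_{v,X,p}$ when $p\in E$ is free, and $Z_v=L_v(E)Z_{\ord_E}+L_v(F)Z_{\ord_F}+Z_{v,X,p}$ when $p=E\cap F$ is satellite, where $Z_{\ord_E},Z_{\ord_F}\in\Cinf_\Q$ by Corollary~\ref{CorEbeddingIntoWinf}, the two summands lie respectively in the orthogonal factors $\DivInf(X)_\R$ and $\Weil(X,p)_\R=\R^{\cD_{X,p}}$ of $\Winf_\R$, and $L_v(E)=v(z)$, $L_v(F)=v(w)$ for local equations $z,w$ of $E,F$. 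Hence, for $t>0$, one has $tZ_v\in\Winf_\Q$ if and only if $tL_v(E),tL_v(F)\in\Q$ and $tZ_{v,X,p}\in\Weil(X,p)_\Q$.

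For the ``not irrational $\Rightarrow$ rational up to normalisation'' direction (remaining cases: $v$ a curve valuation or infinitely singular), the key point I would establish is that the incarnation coefficients are rational: $L_v(E)=v(z)=\alpha(v\wedge v_z)\,m(z)=\alpha(v\wedge v_z)$, since $m(z)=1$ for a local coordinate $z$. The curve valuation $v_z$ of the boundary component $E$ does not define a valuation on $\k[X_0]$, so $v\neq v_z$; moreover $v$ is an end of the valuative tree (Proposition~\ref{PropValuationEnFonctionDeAlpha}) and cannot lie on the segment joining the root to $v_z$, so $v$ and $v_z$ are incomparable and $v\wedge v_z$ is a branching point of $\cV_\m$, hence a divisorial valuation with $\alpha(v\wedge v_z)\in\Q$. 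The same argument handles $w$ in the satellite case. Since $v$ is not irrational, $Z_{v,X,p}\in\Weil(X,p)_\Q$ by Corollary~\ref{CorLocalDivisorOfIrrationalValuationIsIrrational}; combined with $L_v(E),L_v(F)\in\Q$ this yields $Z_v\in\Winf_\Q$ already for $t=1$.

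For the converse I would show that if $v$ is irrational then $tZ_{v,X,p}\notin\Weil(X,p)_\Q$ for every $t>0$ (a mild strengthening of the $t=1$ case of Corollary~\ref{CorLocalDivisorOfIrrationalValuationIsIrrational}); by the equivalence of the previous paragraph this forces $tZ_v\notin\Winf_\Q$ for all $t$. Take $\mu_1:=v_{\m_p}$, the divisorial root of $\cV_X(p;\m_p)$ with $\alpha(\mu_1)=1$, and a divisorial $\mu_2\in\cV_X(p;\m_p)$ with $\mu_2>v$ (such a $\mu_2$ exists because an irrational valuation is a regular point of the tree, so the segment above it is nondegenerate and contains divisorial valuations). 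Then $v\wedge\mu_1=\mu_1$ and $v\wedge\mu_2=v$, so Theorem~\ref{ThmAutoIntersectionIsSkewness} gives $Z_{v,X,p}\cdot Z_{\mu_1,X,p}=-1$ and $Z_{v,X,p}\cdot Z_{\mu_2,X,p}=-\alpha(v)\in\R\setminus\Q$, with $Z_{\mu_1,X,p},Z_{\mu_2,X,p}\in\Cartier(X,p)_\Q$ by Proposition~\ref{PropLocalDivisorOfDivisorialValuationIsCartier}. If $tZ_{v,X,p}$ lay in $\Weil(X,p)_\Q$, pairing with $Z_{\mu_1,X,p}$ would force $t\in\Q$ and then pairing with $Z_{\mu_2,X,p}$ would force $\alpha(v)\in\Q$, a contradiction.

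The main obstacle is the rationality of the incarnation coefficients $v(z)$ in the curve/infinitely singular case: although an infinitely singular valuation can have irrational skewness, its values on honest coordinates must be rational, and this has to be read off from the tree geometry (incomparability of $v$ with the curve valuation of a boundary divisor, and the fact that branching points of $\cV_\m$ are divisorial). Once that is in hand, everything else is bookkeeping with Corollaries~\ref{CorDivisorOfValuationWithLocalDivisorOfValuation}, \ref{CorEbeddingIntoWinf} and~\ref{CorLocalDivisorOfIrrationalValuationIsIrrational} and the orthogonal decomposition of $\Winf_\R$.
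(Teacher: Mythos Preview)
Your argument is correct and follows the same architecture as the paper: split by type, use Corollary~\ref{CorEbeddingIntoWinf} for the divisorial case, and for curve/infinitely singular valuations show $v(z)=\alpha(v\wedge v_z)\in\Q$ because $v\wedge v_z$ is divisorial, then invoke the local Corollary~\ref{CorLocalDivisorOfIrrationalValuationIsIrrational}. The paper streamlines the curve/infinitely singular case by first passing (via Proposition~\ref{PropSequenceOfInfinitelyNearPoints}) to a completion where the center is a \emph{free} point, so only one coefficient $L_v(E)$ needs to be checked; your treatment of the satellite case is fine but unnecessary.

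The only genuine difference is in the irrational direction. You work with the local divisor $Z_{v,X,p}$ and pair it against two divisorial test classes via Theorem~\ref{ThmAutoIntersectionIsSkewness} to rule out any rational multiple. The paper instead looks at the incarnation $Z_{v,X}=sZ_{\ord_E}+tZ_{\ord_F}$ in a completion where $c_X(v)=E\cap F$: since $s/t\notin\Q$ and $Z_{\ord_E},Z_{\ord_F}$ are $\Q$-linearly independent in $\DivInf(X)_\Q$, no multiple of $Z_{v,X}$ (hence of $Z_v$) lies in $\Winf_\Q$. This is shorter and avoids reproving a scaled version of Corollary~\ref{CorLocalDivisorOfIrrationalValuationIsIrrational}, but your pairing argument is perfectly valid and has the virtue of being self-contained once Theorem~\ref{ThmAutoIntersectionIsSkewness} is available.
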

                \begin{proof}
                  First, if $v$ is divisorial, the result follows from Corollary \ref{CorEbeddingIntoWinf}. Then, if $v$ is
                  infinitely singular or a curve valuation. Then, there exists a completion $X$ such that $c_X (v)$ is a free
                  point $p \in E$. Then, replace $v$ by its equivalent valuation such that $v \in \cV_X (p; \m_p)$. Let
                  $(z,w)$ be local coordinates at $p$ such that $z = 0$ is a local equation of $E$. Then, $Z_v (E) = v (z) =
                  \alpha (v \wedge v_z) \in \Q$ because $v \wedge v_z$ has to be a divisorial valuation. Therefore, by Corollary
                  \ref{CorLocalDivisorOfIrrationalValuationIsIrrational} and Proposition \ref{PropIncarnationDivisorOfValuation}, we
                  get that $Z_{v} \in \Winf_\Q$.

                  Finally, if $v$ is irrational then let $X$ be a completion such that $c_X (v) = E \cap F$ is a satellite
                  point. Then, $Z_{v, X} = s Z_{\ord_E} + t Z_{\ord_F}$ with $s/t \not \in \Q$ by Proposition
                  \ref{PropIncarnationDivisorOfValuation}. It is clear that no multiple of $Z_{v, X}$ can be in
                  $\DivInf(X)_\Q$.
                \end{proof}

                \begin{cor}\label{CorStrongConvergenceInL2}
                  Let $\cV_\infty '$ be the subspace of $\cV_\infty$ consisting of $v \in \Vinf$ such that $Z_v \in \L2$, then
                  \begin{equation}
                    \cV_\infty ' \hookrightarrow \L2
                    \label{<+label+>}
                  \end{equation}
                  is a continuous embedding for the strong topology. Furthermore, it is a homeomorphism onto its image.
                \end{cor}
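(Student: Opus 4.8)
The plan is to piece together the local statements already established into a global one, using the atlas structure on $\cV_\infty$ given by the open sets $\cV_X(E;E)$ (Theorem \ref{ThmWeakTopologyAtlas} and its strong-topology refinement in Chapter \ref{ChapterTopologiesValuations}). First I would recall that by Corollary \ref{CorDivisorOfValuationWithLocalDivisorOfValuation}, the condition $Z_v \in \L2$ is exactly the condition that, fixing a completion $X$ and the center $p = c_X(v)$, the normalized valuation $\tilde v \in \cV_X(p;\m_p)$ has finite skewness $\alpha(\tilde v) < +\infty$; this is the same subset $\cV_\infty'$ that appears in Chapter \ref{ChapterTopologiesValuations}, so the strong topology on $\cV_\infty'$ is already defined there. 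For injectivity and continuity of $v \mapsto Z_v$ for the strong topology, the point is to reduce to one chart: given $v \in \cV_\infty'$ with center $p$ in some good completion $X$ lying on a prime divisor $E$ at infinity, Corollary \ref{CorDivisorOfValuationWithLocalDivisorOfValuation} writes $Z_v = Z_{v,X} + Z_{v,X,p}$ where $Z_{v,X}$ is a fixed Cartier class depending only on the (finitely many) values $Z_v\cdot E$, $Z_v \cdot F$, and $Z_{v,X,p} \in \Weil(X,p)_\R$ depends only on $\tilde v \in \cV_X(p;\m_p)$.

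Next I would invoke Proposition \ref{PropStronConvergenceForLocalDivisor}, which says precisely that on a fixed chart $\cV_X(p;\m_p)$, a sequence $v_n \to v$ in the strong (tree) topology if and only if $Z_{v_n,X,p}\to Z_{v,X,p}$ strongly in $\L2$. Combined with the fact that the finitely many intersection numbers $Z_{v_n}\cdot E \to Z_v\cdot E$ (continuity for the weak topology, Corollary \ref{CorEbeddingIntoWinf}, and the strong topology is finer than the weak one), the decomposition $Z_v = Z_{v,X} + Z_{v,X,p}$ shows that $v_n \to v$ strongly in $\cV_X(E;E)'$ if and only if $Z_{v_n}\to Z_v$ strongly in $\L2$. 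Since the strong topology on $\cV_\infty'$ was defined chart-by-chart (using the homeomorphisms of Proposition \ref{PropHomeoForStrongTopology} to check compatibility on overlaps), and every $v \in \cV_\infty'$ lies in some such chart, this gives both that $v\mapsto Z_v$ is a continuous injection and that it is open onto its image (a basic strong-open neighborhood of $v$ in a chart maps to a basic strong-open neighborhood of $Z_v$ in $\L2\cap \cV_X(E;E)$, and conversely). Hence it is a homeomorphism onto its image.

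One subtlety to handle carefully: the map $v \mapsto Z_v$ is defined on the space of valuations $\cV_\infty$, not on $\hat\cV_\infty$, while the strong topology in Chapter \ref{ChapterTopologiesValuations} is defined first on $\hat\cV_\infty'$ and pulled back. So I would be explicit that the embedding factors through a section $\sigma_E$ of $\eta$ on each chart: on $\cV_X(E;E)'$ we have the normalization that sends $v$ to the unique equivalent valuation with $v(z)=1$ ($z$ a local equation of $E$), and $Z_v$ for this normalized representative is exactly what is controlled by $\alpha$ and the intersection numbers. The main obstacle, I expect, is not any deep new idea but the bookkeeping of making the chart-compatibility airtight — verifying that the identification of strong-open sets is independent of the auxiliary completion $X$ and divisor $E$ chosen, which ultimately rests on Proposition \ref{PropHomeoForStrongTopology} and Proposition \ref{PropStronConvergenceForLocalDivisor} together with the relation $\alpha_{\m_q}\circ\pi_* = 1 + \alpha_E$ from Proposition \ref{PropValuativeTreeIsRelativeValuativeTreeWRTExceptionalDivisor}. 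Once these compatibilities are in place, the statement follows formally.
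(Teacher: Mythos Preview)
Your approach is essentially the paper's: work chart by chart, decompose $Z_v$ as a Cartier incarnation plus a local piece $Z_{v,X,p}$, control the Cartier piece by weak convergence (which follows from strong), and control the local piece by Proposition \ref{PropStronConvergenceForLocalDivisor}. That is exactly what the paper does, and your bookkeeping remarks about chart compatibility are well placed.

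There is one genuine gap, however. You invoke Proposition \ref{PropStronConvergenceForLocalDivisor} ``on a fixed chart $\cV_X(p;\m_p)$'', but that proposition requires a \emph{fixed} closed point $p$. When $v$ is divisorial --- say $v$ is equivalent to $\ord_E$ for a prime divisor $E$ in $X$ --- the root of the chart $\cV_X(E;E)$ is $\ord_E$ itself, and a sequence $v_n\to\ord_E$ strongly will typically have centers $c_X(v_n)=p_n$ that wander over $E$. The local pieces $Z_{v_n,X,p_n}$ then live in \emph{different} orthogonal subspaces $\Weil(X,p_n)_\R$ of $\L2$, so Proposition \ref{PropStronConvergenceForLocalDivisor} does not apply directly. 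The paper treats this case separately: normalizing so that $v_n\in\cV_X(p_n;\m_{p_n})$, one has $(Z_{v_n,X,p_n})^2=-\alpha_{\m_{p_n}}(v_n)$ by Theorem \ref{ThmAutoIntersectionIsSkewness}, and then Proposition \ref{PropRelationSkewness} gives
\[
\frac{(Z_{v_n,X,p_n})^2}{(Z_{v_n}\cdot E)^2}
= -\,\frac{\alpha_{\m_{p_n}}(v_n)}{v_n(E)^2}
= -\,\alpha_E\!\left(\frac{v_n}{v_n(E)}\right)\longrightarrow 0,
\]
which is exactly the statement that $Z_{v_n}/(Z_{v_n}\cdot E)\to Z_{\ord_E}$ in $\L2$. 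The converse direction (showing the inverse is continuous at $\ord_E$) uses the same identity. So your outline is correct once you add this boundary case; the relation you cite from Proposition \ref{PropValuativeTreeIsRelativeValuativeTreeWRTExceptionalDivisor} is the right kind of ingredient, but what is actually needed here is the quadratic relation of Proposition \ref{PropRelationSkewness}.
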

                \begin{proof}
                  Let $X$ be a completion of $X_0$. Let $v_n$ be a sequence of $\cV_\infty '$ converging towards $v \in
                  \cV_\infty '$ for the strong topology. We treat two cases, whether $v$ is associated to a prime divisor of $X$
                  or $v$ is centered at a closed point $p \in X$ at infinity.

                  If $v$ is centered at a closed point $p$ at infinity, then since $v_n$ converges strongly towards $v$ then it
                  converges also weakly, therefore for $n$ big enough, $v_n$ is centered at $p$ by Proposition
                  \ref{PropConvergenceDesCentres}. We can replace each $v_n$ and
                  $v$ by their representative such that $v_n, v \in \cV_X (p; \m_p)$. Then
                  \begin{itemize}
                    \item If $p \in E$ is a free point,
                      \begin{equation}
                        Z_{v_n} = (Z_{v_n} \cdot E) Z_{\ord_E} + Z_{v_n, X, p}
                        \label{<+label+>}
                      \end{equation}
                    \item If $p = E \cap F$ is a satellite point, then
                      \begin{equation}
                        Z_{v_n} = (Z_{v_n} \cdot E) Z_{\ord_E} + (Z_{v_n} \cdot F) Z_{\ord_F} + Z_{v_n, X, p}
                        \label{<+label+>}
                      \end{equation}
                  \end{itemize}
                  and we have similar formulas for $Z_v$. Now the incarnation of $Z_{v_n}$ in $X$ converges
                  towards the incarnation of $Z_{v}$ in $X$ in both the free and the satellite case by weak convergence. Let
                  $\left||{\cdot} \right||$ be any norm over $\NS(X)_\R$, then
                  \begin{equation}
                    \left||{Z_v - Z_{v_n}} \right||^2_{\L2} \asymp \left||{Z_{v, X} - Z_{v_n, X}} \right||^2 - \left( Z_{v,X, p} - Z_{v_n, X, p}
                    \right)^2
                    \label{<+label+>}
                  \end{equation}
                  where $f \asymp g$ means that there exists constants $A, B > 0$ such that $A g \leq f \leq B g$. By Proposition
                  \ref{PropStronConvergenceForLocalDivisor}, we have that $\left||{Z_v - Z_{v_n}} \right||^2_{\L2} \rightarrow 0$.

                  If $v \simeq \ord_E$ for some prime divisor $E$ at infinity in $X$, then for all $n$ large enough, $c_X
                  (v_n) \in E$. We can suppose that $v = \ord_E$ and for all n $v_n (E) > 0$, i.e $v, v_n \in \cV_X
                  (E)$ and $Z_{v_n} \cdot E \rightarrow 1$ as $n \rightarrow \infty$. We show that
                  \begin{equation}
                    \frac{Z_{v_n}}{Z_{v_n} \cdot E} \xrightarrow[n \rightarrow +\infty]{} Z_{\ord_E}
                    \label{<+label+>}
                  \end{equation}
                  in $\L2$. We can replace $v_n$ by its equivalent valuation
                  such that $v_n \in \cV_X (p_n, \m_{p_n})$ where $p_n = c_{X} (v_n)$. Then, we have
                  that $Z_{v_n ,X} / Z_{v_n} \cdot E$ converges towards $Z_{\ord_E}$ in $\NS(X)_\R$ by weak convergence. It
                  suffices to show
                  \begin{equation}
                    \frac{(Z_{v_n, X, p})^2}{ (Z_{v_n} \cdot E)^2} \rightarrow 0
                  \end{equation}
                  but this is equal to
                  \begin{equation}
                    - \frac{\alpha_{\m_{p_n}} (v_n)}{ v_n (E)^2} = - \frac{\alpha_E (v_n )}{v(E)^2} \xrightarrow[n \rightarrow + \infty]{} 0
                  \end{equation}
                  by Theorem \ref{ThmAutoIntersectionIsSkewness} and Proposition \ref{PropRelationSkewness} so we are done.

                  Finally, to show the homeomorphism, we have to show that if $Z_{v_n} \rightarrow Z_{v}$ in $\L2$, then $v_n$
                  converges strongly towards $v$. Let $X$ be a completion of $X_0$. Suppose first that $c_X (v)$ is a point at
                  infinity. Let $\tilde E$ be the exceptional
                  divisor above $c_X (v)$, we have $Z_v \cdot \tilde E > 0$, therefore for all n large enough $Z_{v_n} \cdot
                  \tilde E > 0$ and $c_{X} (v_n ) = c_{X} (v) =: p$. Now, we can suppose that $v_n, v \in \cV_X (p; \m_p)$,
                  it suffices to show that $v_n \rightarrow v$ for the strong topology of $\cV_X (p; \m_p)$ and this is a direct
                  consequence of Proposition \ref{PropStronConvergenceForLocalDivisor}.

                  If $c_X (v) =E$ a prime divisor at infinity, then for all $n$ large enough, $Z_{v_n} \cdot E > 0$. Suppose
                  that $v = \ord_E$ and $v_n \in \cV_X (E)$. We have that $Z_{v_n, X} / Z_v \cdot E \rightarrow Z_{\ord_E}$
                  in $\NS (X)_\R$. We need to show that $\alpha_E (\frac{v_n}{v_n (E)}) \rightarrow
                  0$. We can suppose that $v_n \in \cV_X (p_n, \m_{p_n})$ where $p_n = c_X (v_n)$, then by Proposition
                  \ref{PropRelationSkewness},
                  \begin{equation}
                    \alpha_E \left( \frac{v_n}{v_n (E)} \right) = \frac{\alpha_{\m_{p_n}} (v_n)}{v_n (E)^2}.
                    \label{<+label+>}
                  \end{equation}
                  Thus, by Proposition \ref{PropRelationSkewness} and Theorem \ref{ThmAutoIntersectionIsSkewness}
                  \begin{equation}
                    \alpha_E \left( \frac{v_n}{v_n (E)} \right) = \left| \frac{Z_{v_n, X, p_n}^2}{(Z_{v_n} \cdot E)^2} \right|
                    \xrightarrow[n \rightarrow + \infty]{} 0.
                    \label{<+label+>}
                  \end{equation}

                \end{proof}

                \begin{cor}\label{CorValuationDeCourbeAutoIntersectionInfinie}
                  If $v$ is a curve valuation, then $Z_v$ is a Weil class satisfying $Z_v^2 = - \infty$.
                \end{cor}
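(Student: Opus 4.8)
The plan is to reduce the statement to the local self‑intersection computation already contained in Corollary~\ref{CorDivisorOfValuationWithLocalDivisorOfValuation}, exploiting that a curve valuation has infinite skewness. Since the preceding corollary already provides a well‑defined class $Z_v\in\Winf_\R$ for every valuation centered at infinity, the only content to establish is that $Z_v^2=-\infty$ (which in particular forces $Z_v\notin\L2$, as every class of $\L2$ has finite self‑intersection by the description of $\L2$ recalled after Proposition~\ref{PropPushForwardAndPullBackOnL2}).

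First I would fix an arbitrary completion $X$ of $X_0$ and identify the center of $v$. As $v$ is a curve valuation it is not divisorial, so by the classification of valuations centered at infinity (Lemma~\ref{LemmeCentreValuation} and its consequences), the center $p:=c_X(v)$ is a closed point of $X$ at infinity, never a prime divisor; this is true for every completion. I would then replace $v$ by the equivalent normalized valuation $\tilde v\in\cV_X(p;\m_p)$: this only rescales $Z_v$ by a positive constant, hence does not change whether $Z_v^2=-\infty$. Under the identification of $\cV_X(p;\m_p)$ with a subset of the valuative tree $\cV_\m$ (Proposition~\ref{PropValuativeTreeIsomorphism}), $\tilde v$ is again a curve valuation, so Proposition~\ref{PropValuationEnFonctionDeAlpha} gives $\alpha(\tilde v)=+\infty$.

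Next I would apply Corollary~\ref{CorDivisorOfValuationWithLocalDivisorOfValuation}, whose proof yields the identity $(Z_{\tilde v})^2=(Z_{\tilde v,X})^2-\alpha(\tilde v)$: here $Z_{\tilde v,X}$, the incarnation of $Z_{\tilde v}$ in $X$, lies in the finite‑dimensional space $\DivInf(X)_\R\subset\NS(X)_\R$, so $(Z_{\tilde v,X})^2$ is a finite real number, while $(Z_{\tilde v,X,p})^2=-\alpha(\tilde v)$ by Theorem~\ref{ThmAutoIntersectionIsSkewness}, the cross term vanishing by orthogonality of the $\NS(X)$‑part and the part exceptional above $p$ inside $\wNS_\R$. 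Since $\alpha(\tilde v)=+\infty$, we conclude $(Z_{\tilde v})^2=-\infty$, hence $(Z_v)^2=-\infty$; in particular $Z_v\notin\L2$, i.e.\ $Z_v$ is a genuine Weil class, as claimed.

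There is essentially no obstacle: the argument is a direct combination of the already‑proved results. The only point that needs a word of care is checking that, for a curve valuation, the relevant local object is indeed that of a closed point of the valuative tree (so that $\alpha(\tilde v)$ is the tree skewness and $Z_{\tilde v,X,p}$ the corresponding local divisor), which is exactly what the classification of valuations centered at infinity guarantees.
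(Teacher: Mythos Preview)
Your proposal is correct and follows essentially the same approach as the paper: normalize $v$ to lie in $\cV_X(p;\m_p)$, decompose $Z_v = Z_{v,X} + Z_{v,X,p}$ via Corollary~\ref{CorDivisorOfValuationWithLocalDivisorOfValuation}, and then use Theorem~\ref{ThmAutoIntersectionIsSkewness} together with $\alpha(v)=+\infty$ for curve valuations to conclude $(Z_v)^2 = (Z_{v,X})^2 - \alpha(v) = -\infty$.
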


                \begin{proof}
                  Let $X$ be a completion of $X_0$, let $p = c_X (v)$ and replace $v$ by the valuation equivalent to $v$ such
                  that $v \in \cV_X (p; \m_p)$. We have by Corollary \ref{CorDivisorOfValuationWithLocalDivisorOfValuation}
                  that \begin{equation}
                    Z_v = Z_{v, X} + Z_{v, X, p}.
                    \label{<+label+>}
                  \end{equation}
                  Therefore, by Theorem \ref{ThmAutoIntersectionIsSkewness}
                  \begin{equation}
                    (Z_v)^2 = Z_{v, X}^2 + (Z_{v, X, p})^2 = Z_{v, X}^2 - \alpha (v) = - \infty
                    \label{<+label+>}
                  \end{equation}
                  because $\alpha (v) = - \infty$ for any curve valuation $v$ (see \cite{favreValuativeTree2004} Lemma 3.32).
                \end{proof}

                \subsection{Positivity of $Z_v$}\label{subsec:positivity-divisor-valuations}
                We give here a criterion to determine whether $Z_v$ is nef.
                \begin{thm}\label{thm:nef-valuation}
                  Let $v \in \Vinf$, then the following are equivalent.
                  \begin{enumerate}
                    \item \label{item:nef} $Z_v$ is nef.
                    \item \label{item:nonnegative-self-intersection} $Z_v^2 \geq 0$.
                    \item \label{item:divisor-effective} $Z_v \geq 0$ as an element of $\Winf_\R$.
                  \end{enumerate}
                  Furthermore, if $v$ is \emph{not} divisorial and $Z_v^2 \geq 0$, then for every completion $X$ of
                  $X_0$, $Z_{v,x}^2 > 0$ and $\Supp Z_{v,x} = \BD$.                 
                \end{thm}
                \begin{proof}
                  It is clear that \ref{item:divisor-effective} $\Rightarrow$ \ref{item:nef} $\Rightarrow$
                  \ref{item:nonnegative-self-intersection}. So we show \ref{item:nonnegative-self-intersection}
                  $\Rightarrow$ \ref{item:divisor-effective}. Assume first that $v = \ord_E$ is divisorial and let $X$
                  be a completion such that $E \subset \BD$. Assume first that $Z_{\ord_E}^2 > 0$. Then, for every
                  $F \neq E$ at infinity we have $Z_{\ord_E}\cdot F = 0$, by the Hodge index theorem this implies that
                  the intersection form in negative definite on $\Vect_\R (F; F \neq E)$. We can assume that
                  $Z_{\ord_E}^2 = 1$. Write
                  \begin{equation}
                    Z_{\ord_E} = E + \sum_{F \neq E} a_F F.
                    \label{<+label+>}
                  \end{equation}
                  Start with the following lemma.
                  \begin{lemme}\label{lemme:existence-effectif-exceptionnel}
                    There exists a divisor $D = \sum_{F \neq E} b_F F$
                  with $b_F > 0$ and such that for every $F \neq E, D \cdot F < 0$.
                  \end{lemme}
                  \begin{proof}
                    Consider the $\R$-vector space $V:= \Vect_R (F; F \neq E)$. The intersection form induces a negative
                    definite form on $V$ such that for $F \neq G, F \cdot G \geq 0$. Let $(\hat F, F \neq E)$ be the
                    dual basis, by Lemma 1.8 of \cite{gignacLocalDynamicsNoninvertible2021}, we have for all $F,G \neq
                    E, \hat F \cdot \hat G \leq 0$ and it is $<0$ if and only if $F$ and $G$ belong to the same
                    connected component of $\BD \setminus E$. In particular, the divisors $\hat F$ are anti-effective
                    and supported over $\BD \setminus E$. We define
                    \begin{equation}
                      D = - \sum_F \hat F.
                      \label{<+label+>}
                    \end{equation}
                    The divisor $D$ is effective and satisfies $\Supp D = \BD \setminus E$ and for every $F \neq E, D
                    \cdot F = -1$.
                  \end{proof}
                  Now, suppose that there exists $F \neq E$ such that $a_F \leq 0$ and such that for every $G \neq
                  E$, $\frac{a_F}{b_F} \leq \frac{a_G}{b_G}$. Then we have
                  \begin{equation}
                    Z_{\ord_E} \cdot F = 0 \geq \sum_{G \neq E} a_G G \cdot F = \sum_{G \neq E} \frac{a_G}{b_G} \left(
                    b_G G \cdot F \right)  \geq \frac{a_F}{b_F} D \cdot F \geq 0.
                    \label{<+label+>}
                  \end{equation}
                  This implies that $a_F = 0$ and also that for every $G \neq F$ such that $G \cap F \neq \emptyset$ we
                  have $a_G = 0$. But this is absurd because repeating this process we get that $a_G = 0$ for every
                  divisor $G$ in the same connected component of $F$ in $\BD \setminus E$. Then we can replace $F$ by
                  one of the divisor in the same connected component of $\BD \setminus E$ such that $F \cap E \neq
                  \emptyset$ and we would have that $Z_{\ord_E} \cdot F > 0$. So we see that $\supp Z_{\ord_E} = \BD$.

                  Now suppose that $Z_{\ord_E}^2 = 0$. Since $Z_{\ord_E} \cdot E = 1$ there must exists $F \neq E$ such
                  that $F \cap E \neq \emptyset $ and such that $Z_{\ord_E} \cdot Z_{\ord_F} > 0$. Consider 
                  for $s> 0$ the divisor
                  \begin{equation}
                    Z_s = Z_{\ord_E} + s Z_{\ord_F}.
                    \label{<+label+>}
                  \end{equation}
                  We have
                  \begin{equation}
                    Z_s^2 = 2s Z_{\ord_E}\cdot Z_{\ord_F} + s^2 Z_{\ord_F}^2.
                    \label{<+label+>}
                  \end{equation}
                  So that for $0 < s \ll 1$ we have
                  \begin{equation}
                    Z_s^2 >0, Z_{\ord_E}\cdot Z_s > 0, \text{ and } Z_{\ord_F} \cdot Z_s > 0.
                    \label{<+label+>}
                  \end{equation}
                  Then for every divisor $G \neq E,F$ we have that $G \cdot Z_s =
                  0$. By the Hodge index theorem, this implies that the intersection form is negative definite over
                  $\Vect_\R (G; G \neq E,F)$ and by a similar proof as in the previous case we have that $Z_s \geq 0$
                  and actually that $\Supp Z_s = \BD$.
                  Since $Z_s \xrightarrow[s \rightarrow 0]{} Z_{\ord_E}$ we have the result. What we have shown is the
                  following corollary.
                  \begin{cor}\label{cor:support-everywhere}
                    Let $X$ be a completion of $X_0$ and $E,F$ are prime divisors at infinity.
                    \begin{enumerate}
                      \item If $Z_{\ord_E}^2 > 0$, then $Z_{\ord_E} \geq 0$ and $\Supp Z_{\ord_E} = \BD$.
                      \item Let $s > 0$ and define $Z_s = Z_{\ord_E} + s Z_{\ord_F}$ such that $Z_s^2, Z_{\ord_E} \cdot
                        Z_{s}, Z_{\ord_F} \cdot Z_s > 0$, then $Z_s \geq 0$ and $\supp Z_s = \BD$.
                    \end{enumerate}
                  \end{cor}

                  Now we treat the general case. If $v$ is infinitely singular or a curve valuation then we can fix a
                  completion $X$ where $c_X (v) = p \in E$ is a free point. Normalise $v$ such that $Z_v \cdot E = 1$
                  and approximate $v$ by its infinitely near sequence $v_n$. We have that $v_n < v$ and therefore
                  $\alpha_E (v_n) < \alpha_E (v)$. And we have
                  \begin{equation}
                    Z_{v_n}^2 = Z_{\ord_E}^2 - \alpha_E (v_n) > Z_{\ord_E}^2 - \alpha_E (v) = Z_v^2.
                    \label{<+label+>}
                  \end{equation}
                  Therefore if $Z_v^2 \geq 0$, we have $Z_{v_n}^2 > 0$ so that $Z_{v_n} \geq 0$ and since $Z_{v_n}
                  \rightarrow Z_v$ for the weak topology we have that $Z_v \geq 0$. Notice that $Z_{v_n} = Z_{v,X_n}$ so
                  that $\Supp Z_{v,X_n} = \partial_{X_n} X_0$ by Corollary \ref{cor:support-everywhere}.

                  Finally if $v$ is irrational, then let $X$ be a completion such that $c_X (v) = p \in E \cap F$ is a
                  satellite point. Consider the normalisation such that $L_v (E) = 1$. Then $v$ is the monomial
                  valuation $v_{1,s_0}$ centered at $p$ with $s_0 > 0$ irrational. By Corollary
                  \ref{CorDivisorOfValuationWithLocalDivisorOfValuation} we have
                  \begin{equation}
                    Z_{v_{1,s}}^2 = Z_{\ord_E}^2 + 2s Z_{\ord_E} \cdot Z_{\ord_F} + s^2 Z_{\ord_F}^2 - s.
                    \label{<+label+>}
                  \end{equation}
                  This is a polynomial function $\phi(s)$ of degree $\leq 2$ and we have $\phi (s_0) \geq 0$. If $\phi
                (s_0) > 0$, then for $s$ rational sufficiently close to $s_0$ we have $\phi(s) = Z_{v,s}^2 > 0$ so
                  that $Z_{v_{1,s}} \geq 0$ and we get $Z_{v_{1,s_0}} \geq 0$ by letting $s \rightarrow s_0$. Suppose
                  now that $\phi (s_0) = 0$. If $\phi$ is constant equal to $0$ then we get the result by approximation. If
                  $\phi$ is of degree 1, then $\phi (s) > 0$ for every $s > s_0$ sufficiently close to $s_0$ or for
                  every $s < s_0$ depending on the monotonicity of $\phi$. Finally if $\phi$ is of degree 2 $s_0$ is of
                  the root of a degree 2 polynomial so that $\phi (s) > 0$ for $s > s_0$ sufficiently close or
                  $s < s_0$ depending on the sign of the discriminant of $\phi$ and we conclude by approximation. Unless
                  $s_0$ is a double zero of $\phi$. But then we would get $\phi ' (s_0) =0 $ which implies
                  \begin{equation}
                    2 s_0 Z_{\ord_F}^2 + 2 Z_{\ord_E}\cdot Z_{\ord_F} - 1 = 0.
                    \label{<+label+>}
                  \end{equation}
                  By Corollary \ref{CorEbeddingIntoWinf} this would imply that $s_0 \in \Q$ which is absurd.

                  We now show the last statement. Let
                  $X \leftarrow X_1 \leftarrow X_2 \leftarrow \cdots \leftarrow X_n \leftarrow \cdots$ be the sequence
                  consisting of blowing up the center of $v$. For every $n \geq 1$, we have that $c_{X_n} (v) = E_n \cap
                  F_n$ and $Z_{v, x_n}$ is of the form
                  \begin{equation}
                    Z_{v,x_n} = \alpha_n Z_{\ord_{E_n}} + \beta_n Z_{\ord_{F_n}}
                    \label{eq:<+label+>}
                  \end{equation}
                  with $\alpha_n, \beta_n > 0$.
                  If $\phi(s) > 0$, then we can assume that for every $n \geq 1, Z_{\ord_{E_n}^2},
                  Z_{\ord_{F_n}}^2 > 0$ and then $\Supp Z_{v,x_n} = \partial_{X_n} X_0$ since $Z_{v, X_n} \geq
                  Z_{\ord_{E_n}}$.

                  If $\phi(s) = 0$. Suppose $\phi$ is constant, then we can assume that for every $n\geq 1$,
                  $Z_{\ord_{E_n}}^2 = Z_{\ord_{F_n}}^2 = 0$. Since $Z_v$ is nef and $Z_v^2 = 0$, we have that
                  $Z_{v,X_n}^2 > 0$. This implies that $Z_{\ord_{E_n}} \cdot Z_{\ord_{F_n}} > 0$ and we therefore get
                    \begin{equation}
                      Z_{v,X_n}^2, Z_{\ord_{E_n}} \cdot Z_{v,X_n}, Z_{\ord_{F_n}} \cdot Z_{v,X_n} > 0.
                      \label{eq:<+label+>}
                    \end{equation}
                    By Corollary \ref{cor:support-everywhere}, we have that $Z_{v, X_n} \geq 0$ and $\Supp Z_{v,X_n} =
                    \partial_{X_n} X_0$.

                    Otherwise $\phi(s) = 0$ with $\phi$ of degree 1 or 2 and we can assume after a finite number of
                    blowups that there exists a completion $X$ such that $c_X(v) = E \cap F$ with $Z_{\ord_E}^2 >0,
                    Z_{\ord_F}^2 > 0$ and $v = v_{1,s}$. In particular, we have that $\phi(s) = 0$ and that
                    $\phi'(s) < 0.$ Since $Z_v^2 = 0$ and $Z_v$ is nef, we have that for any
                    completion $Y$ of $X_0$, $Z_{v,Y}^2 > 0$. To apply Corollary \ref{cor:support-everywhere} we need to
                    show the following. For any $t$ sufficiently close to $s$, we have
                    \begin{equation}
                      \psi(t) := Z_{v_{1,t}} \cdot Z_{v_{1,s}} >0.
                      \label{eq:<+label+>}
                    \end{equation}
                    Suppose first $t <s$ and write $t = s - \alpha$ for some $\alpha > 0$. Then
                    \begin{equation}
                      \psi(t) = Z_{\ord_E}^2 + (s+t) Z_{\ord_E} \cdot Z_{\ord_F} + st Z_{\ord_F}^2 - t.
                      \label{eq:<+label+>}
                    \end{equation}
                    Replacing $t = s-\alpha$ and using $\phi (s) = \psi(s) = 0$ we get
                    \begin{equation}
                      \psi (t) = - \alpha \left( s Z_{\ord_F}^2 + Z_{\ord_E} \cdot Z_{\ord_F} - 1 \right)
                      \label{eq:<+label+>}
                    \end{equation}
                    Since $\phi'(s) =  2s Z_{\ord_F}^2 + 2Z_{\ord_E} \cdot Z_{\ord_F} - 1 < 0$ we get that
                    \begin{equation}
                      \psi(t) = -\alpha (\frac{1}{2} \phi'(s) - \frac{1}{2}) > 0.
                      \label{eq:<+label+>}
                    \end{equation}

                    If now $t > s$, we get
                    \begin{equation}
                      \psi (t) =  Z_{\ord_E}^2 + (s+t) Z_{\ord_E} \cdot Z_{\ord_F} + st Z_{\ord_F}^2 - s.
                      \label{eq:<+label+>}
                    \end{equation}
                    Notice that $\psi(t) \geq 0$ because $Z_v = Z_{v_{1,s}} \geq 0$. And $\psi(t)$ is either an affine
                    map or a constant one. If it is affine then we must have $\psi(t) > 0$ for every $t > s$ because
                    $\psi(s) = 0$ and $\psi(t) \geq 0$. Otherwise $\psi$ is constant but this would imply
                    \begin{equation}
                      Z_{\ord_E} \cdot Z_{\ord_F} + s Z_{\ord_F}^2 = 0.
                      \label{eq:<+label+>}
                    \end{equation}
                    Thus, $s$ would be rational by Corollary \ref{CorEbeddingIntoWinf} and this is a contradiction.
                \end{proof}

                \section{Endomorphisms}
                \begin{prop}\label{PropPushForwardOfCenter}
                  Let $f$ be an endomorphism of $X_0$ and let $X, Y$ be completions of $X_0$ such that the lift
                  $F: X \rightarrow Y$ of $f$ is regular. Let $p \in X$ be a closed point and $q := F(p) \in
                  Y$. Then,
                  \begin{itemize}
                    \item $f_* \cV_X (p) \subset \cV_Y (q)$.
                    \item $f_*$ preserves the set of divisorial, irrational and infinitely singular valuations.
                    \item If $v_C$ is a curve valuation centered at infinity and such that $f_* v_C$ is still centered at
                      infinity, then $f_* v_C$ is also a curve valuation.

                  \end{itemize}

                \end{prop}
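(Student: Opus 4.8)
The plan is to deduce the three assertions from facts already available: the compatibility of centers with a regular lift (Proposition~\ref{PropPushForwardOnValuationsIsPullbackOnDivisors}), the invariance of the type of a valuation under pushforward (Proposition~\ref{PropPreservationOfType}), and the fact that being centered at infinity can be detected on a single completion (Lemma~\ref{LemmeCaracValuationCentreeAlInfini}).

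For the first item, I would apply Proposition~\ref{PropPushForwardOnValuationsIsPullbackOnDivisors} with $X$ and $Y$ swapped, so that the regular lift there is the given map $F\colon X\to Y$; it yields $c_Y(f_* v)=F(c_X(v))$ for every valuation $v$ on $\k[X_0]$. In particular, if $v\in\cV_X(p)$ then $c_Y(f_* v)=F(p)=q$, that is $f_* v\in\cV_Y(q)$. The second item is then immediate: $f_*\colon\cV\to\cV$ sends divisorial, irrational and infinitely singular valuations to valuations of the same type by Proposition~\ref{PropPreservationOfType}, and by the first item this map restricts to $\cV_X(p)\to\cV_Y(q)$.

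For the third item, I would argue by contraposition, invoking the full dichotomy of Proposition~\ref{PropPreservationOfType}: $f_* v_C$ is a curve valuation if $f$ does not contract $C$, and a divisorial valuation if it does. So it is enough to show that $f$ cannot contract $C$ while $f_* v_C$ remains centered at infinity. If $f$ contracts $C$, then $C$ is a branch at $p$ of an irreducible curve $\bar C\subset X$ that $F$ collapses to a point. Because $v_C$ is a valuation on $\k[X_0]$, the curve $\bar C$ is not a component of $\BD$ --- a boundary component $E$ carries some $P\in\k[X_0]$ with $\ord_E(P)<0$, so the associated curve valuation would take the value $-\infty$ on $P$ --- hence $\bar C\cap X_0$ is a nonempty dense open subset of $\bar C$. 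Its image under $F$ is $f(\bar C\cap X_0)$, a single point of $X_0$, and by density $F(\bar C)$ equals that point; since $p\in\bar C$, we get $q=F(p)\in X_0$. Then the center of $f_* v_C$ on $Y$ lies in $X_0$, so $f_* v_C$ is not centered at infinity by Lemma~\ref{LemmeCaracValuationCentreeAlInfini}, contradicting the hypothesis. Hence $f$ does not contract $C$ and $f_* v_C$ is a curve valuation.

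There is no serious obstacle here, since the argument is essentially an assembly of the cited statements. The only step calling for a little care is the third one: one must keep the open immersions $\iota_X,\iota_Y$ straight and remember that the regular lift $F$ is generically finite rather than birational, which is exactly why the contraction of the branch $C$ by $f$ has to be understood through an ambient algebraic curve $\bar C$ that supports it.
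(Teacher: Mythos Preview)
Your proof is correct and follows essentially the same approach as the paper's. For the first item you cite Proposition~\ref{PropPushForwardOnValuationsIsPullbackOnDivisors} instead of repeating its local-ring argument, which is legitimate since that result appears earlier; for the third item your contrapositive unpacks exactly the step the paper leaves implicit, namely that a non-boundary curve contracted by $F$ must collapse to a point of $X_0$ (so $f_* v_C$ would fail to be centered at infinity). The one point worth stating explicitly is why a \emph{formal} germ $C$ contracted by the algebraic map $F$ is necessarily a branch of an algebraic curve $\bar C$: this holds because $C$ is then contained in the algebraic fiber $F^{-1}(q)$, hence is a formal branch of one of its one-dimensional components.
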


                \begin{proof}
                  The map $F$ induces a local ring homomorphism $F^*: \hat{\OO_Y (q)} \rightarrow \hat{\OO_X (p)}$.
                  Let $v$ be a valuation centered at $p$. For $\phi \in \OO_Y (q), f_* v (\phi) = v
                  (F^* \phi) \geq 0$ and for $\psi \in \m_{Y,q}, f_* v (\psi) = v (F^* \psi) >0$. Therefore
                  $f_* v$ is centered at $q$. The fact that $f_*$ preserves the type of valuations is shown in
                  Proposition \ref{PropPreservationOfType}. It only remains to show the statement for curve valuations.
                  Let $p = c_X (v_C)$ and $q = c_Y (f_* v_C)$, here we assume that $q \not \in X_0$. We have that
                  $F(p) = q$. By Proposition \ref{PropPreservationOfType} $f_* v_C$ is not a curve valuation only if
                  it is contracted by $F$. But the only germ of holomorphic curve at $p$ that can be contracted by $F$
                  is the germ of a prime divisor $E$ at infinity on which $p$ lies, and the curve valuation associated
                  to $E$ does not define a valuation on $\k[X_0]$. So, $f_* v_C$ is a curve valuation.
                \end{proof}

                \begin{ex}
                  It might happen that $f_* v$ is not centered at infinity even though $v$ is; if this is the case then $f$ is not proper.
                  For example, let $X_0 = \A^2$ with affine coordinates $(x,y)$ and consider the completion $\P^2$ with homogeneous
                  coordinates $[X:Y:Z]$. We have the relation $x = X/ Z, y = Y/ Z$. Consider the chart $X \neq 0$ with affine
                  coordinates $y' = Y /X$ and $z ' = Z / X$.
                  Define $v_t$ to be the monomial valuation centered at $[1:0:0]$ such that $v_t (y') = 1$ and
                  $v_t(z') = t$ with $t >0$. Let $P = \sum_{i,j} a_{ij} x^i y^j \in \k [ x,y]$, we have that $v_t (P ) = \min \left\{ j +
                  (j-i)t | a_{ij} \neq 0 \right\}$.
                  Now take the map $f: (x,y)
                  \in \A^2 \mapsto (xy, y)$, $f$ contracts the curve $\left\{ y=0 \right\}$ to the point $(0,0)$ in $\A^2$, hence
                  it is not proper. For
                  any polynomial $P = \sum_{i,j} a_{ij} x^i y^j, f^* P = \sum_{i,j} a_{ij} x^i y^{i+j}$. We get

                  \begin{equation}
                    v_{1,t} (f^* P)  = \min_{i,j} \left\{ i + j (t+1) | a_{ij} \neq 0  \right\}.
                  \end{equation}
                  The center of $f_* v_t$ is $[0:0:1]$ and $f_* v_t$ is the monomial valuation centered at $[0:0:1]$
                  such that $v_t(x) = 1, v_t (y) = t+1$.
                \end{ex}

                \begin{lemme}[Proposition 3.2 of \cite{favreEigenvaluations2007}] \label{LemmeConditionRegularité}
                  Let $f: X_0 \rightarrow X_0$ be a dominant endomorphism and let $X, Y$ be completions of $X_0$.
                  Let $F: X \dashrightarrow Y$ be the lift of $f$, let $p$ be a closed
                  point of $X$ at infinity and $\cV_{X} (p)$ be the set of valuations on $\k[X_0]$ centered at
                  $p$. Then, $F$ is defined at $p$ if and only if $f_* \cV_{X} (p)$ does not contain any divisorial
                  valuation associated to a prime divisor (not necessarily at infinity) of $Y$. If $F$ is defined at
                  $p$, then $F(p)$ is the unique point $q$ such that $f_* \cV_{X} (p) \subset \cV_{Y} (q)$.
                \end{lemme}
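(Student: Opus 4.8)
The plan is to analyze the rational map $F : X \dashrightarrow Y$ lifting $f$ by resolving its indeterminacy at $p$ and reading off what the pushforward $f_*$ does to the local valuative tree $\cV_X(p)$. First I would recall that by the elimination of indeterminacies (the theorem at the end of the ``Surfaces'' section), there is a sequence of blow-ups $\pi : Z \to X$, isomorphic over $X_0$, such that $G := F \circ \pi : Z \to Y$ is regular; since we only blow up points at infinity, $Z$ is again a completion of $X_0$. If $F$ is already defined at $p$, then one may take $\pi$ to be an isomorphism near $p$ and the claim that $f_*\cV_X(p) \subset \cV_Y(F(p))$ is exactly the first bullet of Proposition \ref{PropPushForwardOfCenter}; moreover any $v \in \cV_X(p)$ has $c_X(v) = p$, so $c_Z(v) = p$ and $G$ maps $p$ to $F(p)$, giving $f_* v = G_* v$ centered at $F(p)$, never a divisorial valuation attached to a prime divisor of $Y$ (a divisorial valuation $\ord_D$ for $D$ a prime divisor of $Y$ has center the \emph{generic point} of $D$, not a closed point). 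This handles one implication and the identification of $F(p)$ as the unique $q$ with $f_*\cV_X(p) \subset \cV_Y(q)$: uniqueness holds because distinct closed points of $Y$ have disjoint sets of centered valuations.

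For the converse, suppose $F$ is \emph{not} defined at $p$. Let $\pi : Z \to X$ be a minimal resolution of indeterminacy near $p$, so $\pi$ is a nontrivial composition of blow-ups of points over $p$, with exceptional locus $\Exc(\pi) \subset Z$ lying over $p$, and $G = F\circ\pi : Z \to Y$ regular. By Proposition \ref{PropTechniqueContractionDeCourbes}, the first curve contracted by $G$ is the strict transform of a curve in $X$; by minimality this forces $G$ not to contract all of $\Exc(\pi)$ to a point — more precisely, some prime divisor $E \subset \Exc(\pi)$ is \emph{not} contracted by $G$, i.e.\ $G(E)$ is a curve in $Y$. Let $D = G(E)$, a prime divisor of $Y$ (at infinity, since $E$ lies over $p \in \BD$ and $G$ respects $X_0$). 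Then $G_* \ord_E = k\,\ord_D$ for the positive integer $k = \ord_E(G^* D)$, by the description of the pushforward of a divisorial valuation recalled after Proposition \ref{PropPreservationOfType}. Now $\ord_E$, viewed as a valuation on $\k[X_0]$ via $\pi$, has center $p$ on $X$ (its center on $Z$ is the generic point of $E \subset \Exc(\pi)$, which maps to $p$); hence $\pi_*\ord_E \in \cV_X(p)$ and $f_*(\pi_*\ord_E) = G_*\ord_E = k\,\ord_D$ is a divisorial valuation associated to a prime divisor of $Y$. This produces the forbidden valuation and completes the contrapositive.

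The one point requiring care — and the main obstacle — is ensuring that in the non-defined case we can always extract a prime divisor $E \subset \Exc(\pi)$ over $p$ that $G$ does \emph{not} contract. This is where Proposition \ref{PropTechniqueContractionDeCourbes} is essential: if $G$ contracted every component of $\Exc(\pi)$, then combined with the fact that the first $G$-contracted curve must be a strict transform of a curve in $X$, one gets a contradiction with the minimality of the resolution $\pi$ (one could have contracted a $(-1)$-curve earlier by Castelnuovo). I would spell out this combinatorial argument on the dual graph of $\Exc(\pi)$: the last exceptional divisor blown up is a $(-1)$-curve, and if it is contracted by $G$ then it was unnecessary, contradicting minimality, so it maps onto a curve in $Y$. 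The rest is bookkeeping with centers (a divisorial valuation on $\k[X_0]$ has center a closed point on a completion $X$ precisely when its center on $X$ is not a prime divisor of $X$, and the ``point'' versus ``generic point'' distinction) and with the already-established functoriality statements, none of which is more than routine.
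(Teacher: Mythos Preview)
Your proof is correct and follows essentially the same route as the paper: resolve the indeterminacy of $F$ by blow-ups over $p$, and in the indeterminate case exhibit an exceptional component $E$ over $p$ whose image $G(E)$ is a curve $D$ in $Y$, so that $f_*(\pi_*\ord_E)$ is proportional to $\ord_D$. The paper simply asserts that $G(\pi^{-1}(p))$ contains a prime divisor of $Y$ (this being the standard characterisation of an indeterminacy point), whereas you unpack it via the last $(-1)$-curve in a minimal resolution, which is fine.

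Two small corrections. First, your appeal to Proposition~\ref{PropTechniqueContractionDeCourbes} is misplaced: that proposition is stated only for \emph{birational} maps, and $F$ need not be birational when $f$ has topological degree $>1$. Fortunately you do not actually need it---your ``last exceptional divisor is a $(-1)$-curve and cannot be contracted by $G$ without contradicting minimality of $\pi$'' argument is self-contained and correct (if $G$ contracted that $(-1)$-curve, $G$ would factor through its blow-down, contradicting minimality). Just drop the reference. Second, the parenthetical claim that $D = G(E)$ lies at infinity is not true in general when $f$ is not proper (cf.\ the examples in Chapter~\ref{ChapterDynamicsQuasiAlbTrivial}); but this is harmless, since the lemma explicitly allows $D$ to be any prime divisor of $Y$.
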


                \begin{proof}
                  If $\hat f$ is defined at $p$, then let $q = \hat f (p)$, we have that $f_* \cV_X (p) \subset
                  \cV_Y(q)$ by Proposition \ref{PropPushForwardOfCenter}.

                  Conversely, If $p$ is an indeterminacy point of $\hat f$. Let $\pi: Z \rightarrow X$ be a completion
                  above $X$ such that the lift $F: Z \rightarrow Y$ is regular. Then, $F ({\pi}^{-1} (p))$ contains a
                  prime divisor $E'$ of $Y$. Let $E$ be a prime divisor at infinity in $Z$ above $p$ such that
                  $F(E) = E'$,
                  then $F_* \ord_E = f_* (\pi_* \ord_E)  = \lambda \ord_{E'}$ for some constant $\lambda >0$ and $
                  \ord_{E'} \in f_*
                  \cV_{X} (p)$.
                \end{proof}

                \begin{prop}\label{PropPushForwardOnDivisorIsPushForwardOnValuation}
                  Let $v$ be a valuation over $\k[X_0]$ and let $f: X_0 \rightarrow X_0$ be a dominant endomorphism, then
                  \begin{itemize}
                    \item $f_* Z_v = Z_{f_* v} \mod \Cinf^\perp$.
                    \item If $f$ is proper then $f_*$ preserves $\Winf$ and $f_* Z_v = Z_{f_* v}$.
                  \end{itemize}
                \end{prop}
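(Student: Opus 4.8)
The plan is to derive both statements from a single adjunction identity between $f_*$ on Weil classes and $f^*$ on Cartier classes, and then to feed in the compatibility of the linear forms $L_v$ with $f_*,f^*$ already established in Propositions~\ref{PropPushForwardOnValuationsIsPushforwardOnLinearMap} and~\ref{PropPushForwardOnValuationsIsPullbackOnDivisors}. First I would dispose of the trivial case: if $v$ is not centered at infinity then $L_v=0$, hence $Z_v=0$, and $L_{f_*v}(D)=L_v(f^*D)=0$ for all $D\in\Cinf_\R$, so $Z_{f_*v}=0$ as well and both assertions are vacuous. Thus I may assume $v\in\Vinf$.

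The key step is the identity
\begin{equation}
  (f_* Z_v)\cdot D = Z_v\cdot (f^* D)\qquad\text{for all } D\in\Cinf_\R .
  \label{EqAdjunctionPushPull}
\end{equation}
To prove \eqref{EqAdjunctionPushPull} I would fix a completion $X$ over which $D$ is defined, say $D=(X,D_X)$, and choose a completion $Y$ of $X_0$ such that the lift $F:Y\to X$ of $f$ is regular. By the very definitions of $f^*$ on $\Cinf_\R$ and of $f_*$ on Weil classes one has $f^* D=(Y,F^* D_X)$ and $(f_* Z_v)_X=F_*\bigl((Z_v)_Y\bigr)$; evaluating the intersection pairing in the completion where each class is defined and applying the projection formula for the birational morphism $F$ gives
\begin{equation}
  (f_* Z_v)\cdot D = F_*\bigl((Z_v)_Y\bigr)\cdot D_X = (Z_v)_Y\cdot F^* D_X = Z_v\cdot (f^* D).
  \label{EqAdjunctionComputation}
\end{equation}
Combining this with the defining relation $L_v(D')=Z_v\cdot D'$ on $\Cinf_\R$ and with Proposition~\ref{PropPushForwardOnValuationsIsPushforwardOnLinearMap}, I obtain, for every $D\in\Cinf_\R$,
\begin{equation}
  (f_* Z_v)\cdot D = Z_v\cdot (f^* D) = L_v(f^* D) = L_{f_*v}(D) = Z_{f_*v}\cdot D .
  \label{EqEqualityOnCartier}
\end{equation}
Hence $f_* Z_v - Z_{f_*v}$ pairs to $0$ with all of $\Cinf_\R$, i.e. it lies in $\Cinf^\perp$; this is the first bullet.

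For the second bullet, assume $f$ proper. I would first show that the lift $F:Y\to X$ then satisfies $F(\partial_Y X_0)\subseteq\partial_X X_0$: setting $U:=F^{-1}(X_0)$, the open immersion $X_0\hookrightarrow U$ has dense image and fits into the factorisation $f=(F|_U)\circ(X_0\hookrightarrow U)$, so by the cancellation property (using that $f$ is proper and $F|_U$ is separated) the immersion $X_0\hookrightarrow U$ is proper, hence a clopen immersion; being dense it is an isomorphism, so $U=X_0$ and $F(\partial_Y X_0)\subseteq\partial_X X_0$. Consequently $F_*$ sends $\DivInf(Y)_\R$ into $\DivInf(X)_\R$ (a prime divisor at infinity contracted by $F$ contributes $0$), so $f_*$ preserves $\Winf_\R$; moreover $c_X(f_*v)=F(c_Y(v))\in\partial_X X_0$ by Proposition~\ref{PropPushForwardOnValuationsIsPullbackOnDivisors}, so $f_*v$ is again centered at infinity and $Z_{f_*v}\in\Winf_\R$ is defined. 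Then $f_* Z_v$ and $Z_{f_*v}$ both lie in $\Winf_\R$ and, by \eqref{EqEqualityOnCartier}, induce the same linear form on $\Cinf_\R$; since the pairing $\Cinf_\R\times\Winf_\R\to\R$ is perfect (Proposition~\ref{PropIntersectionFormNonDegenerateAtInfinity}), they coincide.

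The main obstacle is really only bookkeeping between two compatible-but-distinct ambient spaces: without properness $f_* Z_v$ a priori lives only in the larger space $\wNS_\R$ — its incarnations $F_*\bigl((Z_v)_Y\bigr)$ may pick up components meeting $X_0$ — so the first bullet can only assert equality modulo the finite-dimensional complement $\Cinf^\perp$, and the content of the properness hypothesis is exactly that $F^{-1}(X_0)=X_0$, which confines $f_* Z_v$ to $\Winf_\R$ and lets perfectness of the restricted pairing upgrade the congruence to an equality. Everything else is the projection formula together with the already-proven dictionary between valuations centered at infinity and positive linear forms on $\Cinf$.
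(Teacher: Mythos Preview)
Your argument is essentially the paper's own proof, just with the adjunction step $(f_* Z_v)\cdot D = Z_v\cdot f^* D$ and the properness implication $F(\partial_Y X_0)\subset\partial_X X_0$ spelled out rather than asserted; the paper simply writes the chain $f_* Z_v\cdot D = Z_v\cdot f^* D = L_v(f^*D) = L_{f_*v}(D) = Z_{f_*v}\cdot D$ and then invokes that properness makes $\Winf$ $f_*$-stable. One terminological slip: in \eqref{EqAdjunctionComputation} you call $F:Y\to X$ a \emph{birational} morphism, but it is only generically finite of degree $\lambda_2(f)$ in general; the projection formula you use is still valid for such a proper surjective morphism of smooth surfaces, so the computation stands, but the word ``birational'' should be dropped.
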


                \begin{proof}
                  Indeed, let $D \in \Cinf$, then
                  \begin{equation}
                  f_* Z_v \cdot D = Z_v \cdot f^* D = L_v (f^*D) = L_{f_* v} (D) = Z_{f_* v} \cdot D. \end{equation}

                  Therefore, we get that $Z_{f_* v} - f_* Z_v $ belongs to $\Cinf^\perp$. If $f$ is proper, then
                  $\Winf$ is $f_*$-stable and $f_* Z_v \in \Winf$, thus $Z_{f_* v} = f_* Z_v$.
                \end{proof}

                \begin{ex}\label{ExEllipticCurveAtInfinity}
                  Let $E$ be a general $(2,2)$-curve in $\P^1 \times \P^1$ defined by the equation
                  \begin{equation}
                    y^2 - P(x) y + Q(x) = 0
                    \label{<+label+>}
                  \end{equation}
                  where $P(x), Q(x)$ are rational functions of degree 2.
                  If $P$ and $Q$ are general, then $E$ is smooth and irreducible and it is an elliptic curve.
                  Let $X = \P^1
                  \times \P^1$ and $X_0 = X \setminus E$. We have $\Pic^0 (X_0) = 0$ because it is a rational surface and
                  $\k[X_0]^\times = \k^\times$ because $X \setminus X_0$ consists of a single irreducible curve. We have $Z_{\ord_E} =
                  \frac{1}{8} E$. Consider the projection $\pr_1 : X \rightarrow \P^1$ to the first coordinates.
                  Each fiber of $\pr_1$ is isomorphic to $\P^1$ and generically it has two intersection points with
                  $E$. Let $x_0, x_1, x_2, x_3$
                  be the four roots of the discriminant $\Delta = P(x)^2 - 4 Q(x)$. Then, $\pr_1^{-1} (x_i)$ has only one
                  intersection point with $E$. Consider the following selfmap of $X_0$
                  \begin{equation}
                    f(x,y) = \left(x, \frac{y^2 - Q(x)}{2y - P(x)}\right).
                    \label{}
                  \end{equation}
                  It preserves the fibers of $\pr_x$ and it
                  acts as $z \mapsto z^2$ in each fiber where the points $0$ and $\infty$ of $\P^1$ are the
                  intersection points of
                  the fiber with $E$. See Figure \ref{fig:elliptic_fibration}. There are exactly 4 indeterminacy points on $X$,
                  they are the points $(x_i, y_i)$ where $x_i$ is one of the roots of $\Delta$ and $y_i \in \P^1$ is such that
                  $(x_i, y_i) \in E$.

                  \begin{figure}[h]
                    \centering
                    \includegraphics[scale=0.8]{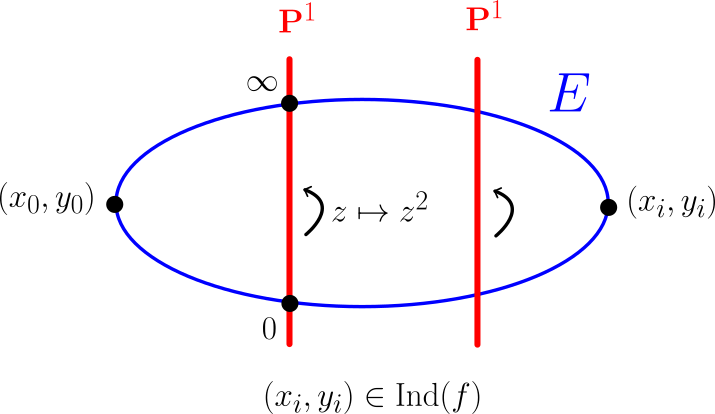}
                    \caption{The endomorphism $f$ on $X_0$}
                    \label{fig:elliptic_fibration}
                  \end{figure}

                  Let $C_0 = \{x_0\} \times \P^1$. Then, $\Cinf^\perp = \R \cdot (4C_0 - E)$ because $C_0 \cdot E = 2, E^2 = 8$
                  and $\dim \NS (X)_\R = 2$.

                  The endomorphism $f$ is not proper, indeed we have in $\NS (X), f_* E = E + 4 C_0$. Since $f^*E$
                  is of the form $f^* E =2 E + \ldots$, we have $f_*
                  \ord_E = 2 \ord_E$. And we get
                  \begin{align}
                    f_* Z_{\ord_E} = \frac{1}{8} E + \frac{1}{2} C_0 \\
                    &= \frac{1}{8} E + \frac{1}{8}(4 C_0 - E) + \frac{1}{8} E \\
                    &= 2 Z_{\ord_E} + \frac{1}{8} (4 C_0 - E)
                    \label{<+label+>}
                  \end{align}
                \end{ex}

                \section{Existence of Eigenvaluations}
                Recall from Theorem \ref{ThmEigenclasses} that there exists unique nef classes $\theta^*, \theta_* \in \L2$ up to
                normalization such that $f^* \theta^* = \lambda_1 \theta^*$ and $f_* \theta_* = \lambda_1 \theta^*$.

                \begin{prop}\label{PropThetaEffectif}
                  If $\k[X_0]^\times = \k^\times$ and $\Pic^0 (X_0) = 0$, then $\theta^* \in \Winf_\R \cap \L2$ and is
                  effective.
                \end{prop}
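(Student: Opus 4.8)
The plan is to realise $\theta^{*}$ as a limit of rescaled pullbacks of an ample effective divisor at infinity, and then to extract both conclusions from the fact that the relevant cone is closed.

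Concretely, I would first fix a completion $X$ of $X_{0}$ and, using Theorem~\ref{ThmGoodmanExistenceAmpleDivisorAtInfinity}, choose an ample \emph{effective} divisor $H\in\DivInf(X)$ with $\Supp H=\BD$; I view $H$ inside $\Cinf_{\R}\subseteq\Winf_{\R}\subseteq\L2$. Since $f^{*}$ preserves $\Cinf$ and $(f^{n})^{*}=(f^{*})^{n}$, each $(f^{n})^{*}H$ lies in $\Cinf_{\R}$, and it is effective: on any completion $Y_{n}$ over which the lift $F_{n}\colon Y_{n}\to X$ of $f^{n}$ is regular, the incarnation of $(f^{n})^{*}H$ is $F_{n}^{*}H$, a pullback of an effective divisor. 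Applying the asymptotic pullback formula~\eqref{EqAsymptoticPullBack} of Theorem~\ref{ThmEigenclasses} with $\alpha=H$ gives
\begin{equation}
  \frac{1}{\lambda_{1}^{n}}(f^{n})^{*}H\;\xrightarrow[n\to\infty]{}\;(H\cdot\theta_{*})\,\theta^{*}
\end{equation}
in $\L2$, hence also in $\wNS_{\R}$ since the inclusion $\L2\hookrightarrow\wNS_{\R}$ is continuous; and $H\cdot\theta_{*}>0$ because $H$ is ample and $\theta_{*}$ is a nonzero nef class.

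Next I would verify that the subset $\mathcal{E}\subseteq\wNS_{\R}$ of classes that lie in $\Winf_{\R}$ and are effective (effective incarnation in every completion) is closed for the projective-limit topology. This is exactly where the hypotheses $\k[X_{0}]^{\times}=\k^{\times}$ and $\Pic^{0}(X_{0})=0$ are used: by Proposition~\ref{PropIntersectionFormNonDegenerateAtInfinity} they give the orthogonal decomposition $\wNS_{\R}=\Winf_{\R}\obot V$ with $V$ finite dimensional, so $\Winf_{\R}$ is the kernel of the continuous projection onto $V$ and is closed; and within $\Winf_{\R}$ the effective classes form the intersection, over all completions $Z$ and all prime divisors $E$ at infinity in $Z$, of the closed half-spaces $\{W:\ \ord_{E}(W_{Z})\ge 0\}$, the maps $W\mapsto\ord_{E}(W_{Z})$ being continuous. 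As each term $\lambda_{1}^{-n}(f^{n})^{*}H$ belongs to $\mathcal{E}$, so does the limit $(H\cdot\theta_{*})\theta^{*}$; dividing by the positive scalar $H\cdot\theta_{*}$ yields $\theta^{*}\in\Winf_{\R}$ and $\theta^{*}\ge 0$, while $\theta^{*}\in\L2$ is part of the statement of Theorem~\ref{ThmEigenclasses}.

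The one delicate point is that the whole argument leans on the asymptotic formula~\eqref{EqAsymptoticPullBack}, i.e.\ on the spectral-gap hypothesis $\lambda_{1}^{2}>\lambda_{2}$; this is not an extra assumption here, being condition~(3) which is in force throughout the chapter and which is what makes $\theta^{*}$ exist and be unique up to a positive scalar. As an alternative, purely algebraic, route to the inclusion $\theta^{*}\in\Winf_{\R}$ one can check that $f^{*}$ maps $\Winf_{\R}$ into itself (a prime divisor of a completion whose image lies in $\BD$ must already be at infinity, since $f(X_{0})\subseteq X_{0}$), deduce that $f^{*}$ preserves the finite-dimensional orthogonal complement $V$, and then use $f^{*}\theta^{*}=\lambda_{1}\theta^{*}$, the negative-definiteness of the intersection form on $V$, and the uniqueness of the $\lambda_{1}$-eigenvector to force the $V$-component of $\theta^{*}$ to vanish; effectivity would still be obtained from the limiting argument above. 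I expect the limit-based route to be the cleaner one to write out in full.
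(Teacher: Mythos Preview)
Your argument is correct, but it follows a different route from the paper's. The paper does not invoke the asymptotic formula~\eqref{EqAsymptoticPullBack}; instead it goes back into the \emph{construction} sketched after Theorem~\ref{ThmEigenclasses} and reruns the Perron--Frobenius argument on each completion $X$ with the nef cone $C_X\subset\NS(X)_\R$ replaced by the smaller cone $C_X'\subset\DivInf(X)_\R$ of nef effective divisors supported at infinity. This cone is $f_X^*$-invariant because $f$ is an endomorphism of $X_0$, and it has non-empty interior thanks to Goodman's ample $H\in\DivInf(X)$ (Theorem~\ref{ThmGoodmanExistenceAmpleDivisorAtInfinity}); the resulting eigenvectors $\theta_X\in C_X'$ then accumulate, exactly as before, onto a class that is automatically in $\Winf_\R$ and effective.

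Your approach treats Theorem~\ref{ThmEigenclasses} as a black box and extracts the conclusion from the limit $\lambda_1^{-n}(f^n)^*H\to(H\cdot\theta_*)\theta^*$ together with the closedness of the effective cone in $\Winf_\R$; this is cleaner and avoids reopening the construction. The paper's route, by contrast, shows a little more---$\theta^*$ is built as a limit of classes that are nef and effective at infinity at each finite level---and does not need to appeal to the continuity of $\L2\hookrightarrow\wNS_\R$. Your alternative sketch (using $f^*$-invariance of $\Winf_\R$, negative-definiteness on $V$, and uniqueness of the $\lambda_1$-eigenline) is in spirit closer to what the paper does.
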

                \begin{proof}
                  This is a restatement of Corollary \ref{cor:eigenclass-at-infinity} using that $\tau: \Winf_\R
                  \rightarrow \wNS_\R$ is now an embedding.
                \end{proof}

                \begin{thm}\label{ThmExistenceEigenvaluationSurfaces}
                  Let $X_0$ be an irreducible normal affine surface such that $\k[X_0]^\times = \k^\times$ and $\Pic^0
                  (X_0) = 0$. Let $f$ be a
                  dominant endomorphism such that $\lambda_1(f)^2 > \lambda_2 (f)$, then there exists a unique
                  valuation $v_*$ centered at infinity up to equivalence satisfying
                  \begin{align}
                    \forall P \in \k[X_0], v_* (P) &\leq 0 \label{EqPositivité}\\
                    f_* v_* &= \lambda_1 (f) v_* \label{EqInvariance} \\
                    Z_{v_*}^2 &> - \infty \label{EqAlphaFini}
                  \end{align}
                  Furthermore there exists $ w \in \Cinf^\perp$ such that $\theta_* = w + Z_{v_*}$ and
                  $Z_{v_*}$ is nef. In particular, $v_*$ is not a curve valuation.
                \end{thm}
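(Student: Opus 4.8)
The idea is that the nef eigenclass $\theta_*$ of $f_*$ from Theorem~\ref{ThmEigenclasses} \emph{is} the sought eigenvaluation, up to an element of $\Cinf^\perp$. Let $\theta_*,\theta^*\in\L2$ be the eigenclasses, normalised so that $f_*\theta_*=\lambda_1\theta_*$ and $f^*\theta^*=\lambda_1\theta^*$; by Proposition~\ref{PropThetaEffectif} the class $\theta^*$ lies in $\Winf$ and is effective and nonzero, and $\theta_*\cdot\theta^*>0$ by \eqref{EqIntersectionDeTheta}. Consider the linear form $L:=L_{\theta_*}\colon \Cinf\to\R$, $D\mapsto \theta_*\cdot D$. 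Then $L\neq 0$: otherwise $\theta_*\in\Cinf^\perp$, which in the orthogonal splitting $\wNS_\R=\Cinf^\perp\obot\Winf_\R$ is orthogonal to $\Winf_\R\ni\theta^*$, contradicting $\theta_*\cdot\theta^*>0$. The plan is to show $L\in\cM=\hom(\Cinf,\R)_{(+)}$ and then set $v_*:=v_L$ as constructed in Chapter~\ref{ChapterLinearFormsToValuations}.

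To check property $(+)$ for $L$: nonnegativity on effective Cartier classes is immediate, since each incarnation $(\theta_*)_X$ is nef and an effective Cartier class has an effective incarnation. The compatibility $L(D\wedge D')=\min(L(D),L(D'))$ is the heart of the matter, and I would prove it in three steps. First, $(f^n)^*(D\wedge D')=(f^n)^*D\wedge(f^n)^*D'$ in $\Cinf$; this is a sheaf computation, since $\wedge$ corresponds to the sum of the fractional ideal sheaves $\OO_X(-D)+\OO_X(-D')$ co-supported at infinity, pullback of ideal sheaves commutes with such sums, and the blow-up defining $\wedge$ is compatible with the pullback. Second, choose a prime divisor $E_0$ at infinity with $\ord_{E_0}(\theta^*)>0$, possible because $\theta^*$ is effective and nonzero, and set $\alpha:=Z_{\ord_{E_0}}\in\Cinf$ and $c_0:=\alpha\cdot\theta^*=L_{\ord_{E_0}}(\theta^*)>0$. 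Third, for any $G\in\Cinf$ the asymptotic formula \eqref{EqAsymptoticPushForward} together with the adjunction $f_*x\cdot y=x\cdot f^*y$ gives $\lambda_1^{-n}\,\alpha\cdot(f^n)^*G=\lambda_1^{-n}\,(f^n)_*\alpha\cdot G\to c_0\,(\theta_*\cdot G)$; since $\alpha\cdot(f^n)^*G=L_{\ord_{E_0}}\big((f^n)^*G\big)$ and $L_{\ord_{E_0}}\in\cM$ already respects $\wedge$, applying this to $G=D$, $G=D'$ and $G=D\wedge D'$ and using the first step yields $\theta_*\cdot(D\wedge D')=\lim_n \lambda_1^{-n}c_0^{-1}\min\big(\alpha\cdot(f^n)^*D,\ \alpha\cdot(f^n)^*D'\big)=\min(\theta_*\cdot D,\theta_*\cdot D')$. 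I expect the first step, for a possibly non-proper $f$, to be the main technical obstacle.

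Once $L\in\cM$, set $v_*:=v_L$; it is a valuation centered at infinity because $L\neq 0$, and $L_{v_*}=L$ by Chapter~\ref{ChapterBijection}, so $Z_{v_*}\cdot D=\theta_*\cdot D$ for all $D\in\Cinf$, i.e.\ $w:=\theta_*-Z_{v_*}\in\Cinf^\perp$, which is the asserted decomposition. Since $\Cinf^\perp$ is the finite-dimensional space $V$ in $\wNS_\R=V\obot\Winf_\R$ and $V\subset\NS(X)\subset\L2$, we get $Z_{v_*}=\theta_*-w\in\L2$, hence \eqref{EqAlphaFini} holds and $v_*$ is not a curve valuation by Corollary~\ref{CorValuationDeCourbeAutoIntersectionInfinie}. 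For \eqref{EqInvariance}: $(f_*L)(D)=L(f^*D)=\theta_*\cdot f^*D=f_*\theta_*\cdot D=\lambda_1\,\theta_*\cdot D$, so $f_*L=\lambda_1 L$, and by the functoriality of Chapter~\ref{ChapterLinearFormsToValuations} ($v_{f_*L}=f_*v_L$ and $v_{\lambda_1 L}=\lambda_1 v_L$) we conclude $f_*v_*=\lambda_1 v_*$. For \eqref{EqPositivité}: extending $L$ to $\Sinf$, one has $v_*(P)=L(w(P))=\sup_X\,(\theta_*)_X\cdot\div_{\infty,X}(P)$, and writing $\div_{\infty,X}(P)=\div_X(P)-D^+$ with $D^+\ge 0$ effective, nefness of $(\theta_*)_X$ and numerical triviality of $\div_X(P)$ give $(\theta_*)_X\cdot\div_{\infty,X}(P)=-(\theta_*)_X\cdot D^+\le 0$, hence $v_*(P)\le 0$.

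For uniqueness, let $v'$ be centered at infinity and satisfy \eqref{EqPositivité}--\eqref{EqAlphaFini}. Then $Z_{v'}\in\L2$, and $w':=f_*Z_{v'}-\lambda_1 Z_{v'}$ lies in $\Cinf^\perp=V$, because $f_*Z_{v'}\equiv Z_{f_*v'}=\lambda_1 Z_{v'}\ (\mathrm{mod}\ V)$; moreover $w'\cdot\theta^*=Z_{v'}\cdot f^*\theta^*-\lambda_1 Z_{v'}\cdot\theta^*=0$. Since $f^*(\Cinf)\subset\Cinf$ dualises to $f_*(V)\subset V$, iterating gives $\lambda_1^{-n}(f^n)_*Z_{v'}=Z_{v'}+\lambda_1^{-1}\sum_{i<n}\lambda_1^{-i}(f^i)_*w'$, and the series converges in $\L2$ to an element of $V$ by \eqref{EqAsymptoticPushForward} applied to $w'$ (using $w'\cdot\theta^*=0$). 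Comparing with $\lambda_1^{-n}(f^n)_*Z_{v'}\to(Z_{v'}\cdot\theta^*)\theta_*$ and using that $V$ is closed, $Z_{v'}\equiv(Z_{v'}\cdot\theta^*)\theta_*\equiv(Z_{v'}\cdot\theta^*)Z_{v_*}\ (\mathrm{mod}\ V)$; as $Z_{v'},Z_{v_*}\in\Winf_\R$ and $V\cap\Winf_\R=0$, this forces $Z_{v'}=c\,Z_{v_*}$ with $c=Z_{v'}\cdot\theta^*$. Finally $c\neq 0$ (otherwise $Z_{v'}=0$ and $v'$ is not centered at infinity) and $c>0$ (otherwise $L_{v'}=c\,L_{v_*}$ would be $\le 0$ on effective classes, contradicting $L_{v'}\in\cM$ and $L_{v'}\neq 0$), so $v'=c\,v_*$ is equivalent to $v_*$.
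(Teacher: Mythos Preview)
Your approach is correct and is essentially the paper's argument in different clothing: both recognise the $\Winf_\R$-component of $\theta_*$ as a weak limit of the classes $Z_{v_n}$ with $v_n=\lambda_1^{-n}(f^n)_*\ord_{E_0}$, and conclude that it represents a valuation. The paper packages this as ``$\LPLus$ is closed in $\Winf_\R$'' (Corollary~\ref{CorLPlusClosedSubset}) and checks $Z_{v_n}\to Z$; you verify property~$(+)$ for $L=\theta_*\cdot(-)$ by the same limit. Your anticipated technical obstacle evaporates: you do not need $(f^n)^*$ to commute with $\wedge$, only the scalar identity $\alpha\cdot(f^n)^*(D\wedge D')=\min\bigl(\alpha\cdot(f^n)^*D,\ \alpha\cdot(f^n)^*D'\bigr)$, and by Proposition~\ref{PropPushForwardOnValuationsIsPushforwardOnLinearMap} the left side is $L_{(f^n)_*\ord_{E_0}}(D\wedge D')$, which equals the right side because $(f^n)_*\ord_{E_0}$ is a valuation (property~$(+)$ if it is centered at infinity; both sides vanish otherwise). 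This is precisely what the closedness of $\LPLus$ encodes, so no sheaf-theoretic argument about pullbacks of ideal sums is required. (That said, the commutation $(f^n)^*(D\wedge D')=(f^n)^*D\wedge(f^n)^*D'$ does hold, by the same valuation-wise reasoning together with Proposition~\ref{PropCaracterisationMinimumParValuationDivisorielle}.)

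Two further remarks. Your uniqueness argument is correct but heavier than necessary: instead of iterating $f_*$ on $Z_{v'}$ and summing correction terms in $V$, pair $Z_{v'}$ directly against $\lambda_1^{-n}(f^n)^*D$; since $L_{v'}\circ f^*=\lambda_1 L_{v'}$ this is constant in $n$, and~\eqref{EqAsymptoticPullBack} gives $Z_{v'}\cdot D=(Z_{v'}\cdot\theta^*)(\theta_*\cdot D)$ for every $D\in\Cinf$, whence $Z_{v'}=(Z_{v'}\cdot\theta^*)Z_{v_*}$ by the perfect pairing on $\Winf_\R$. On the other hand, your verification of~\eqref{EqPositivité} via nefness of $(\theta_*)_X$ and numerical triviality of $\div_X(P)$ is clean and is a point the paper leaves implicit.
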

                We call $v_*$ the \emph{eigenvaluation} of $f$.
                \begin{proof}
                  By Theorem \ref{ThmEigenclasses}, there exists nef classes $\theta_*, \theta^* \in \L2$ that satisfy
                  \begin{enumerate}
                    \item $f^* \theta^* = \lambda_1 \theta^*$
                    \item $f_* \theta_* = \lambda_1 \theta_*$
                    \item \label{ItemAsymptotic} $\forall \alpha \in \L2, \frac{1}{\lambda_1^n} (f^n)^* \alpha \rightarrow
                      (\theta_* \cdot \alpha) \theta^*$
                  \end{enumerate}
                  Let $X$ be a completion of $X_0$. Write the decomposition $\theta_* = w + Z$ with $w \in
                  \Cinf^\perp$ and $Z \in \Winf_\R \cap \L2$. Let $E$ be a prime divisor at infinity in $X$
                  such that $Z_{\ord_E} \cdot \theta^* >0$, it exists because $\theta^*$ is effective and nef. Then,
                  Item (\ref{ItemAsymptotic}) and the continuity of the intersection product in $\L2$ imply that for
                  all $D \in \Cinf$,
                  \begin{equation}
                    Z_{\ord_E} \cdot \left( \frac{1}{\lambda_1^n} (f^n)^* D \right) \rightarrow (Z_{\ord_E} \cdot
                    \theta^*) (\theta_* \cdot D) = (Z_{\ord_E} \cdot \theta^*) (Z \cdot D)
                    \label{EqConvergenceFaible}
                  \end{equation}
                  Now, set $v_n := \frac{1}{\lambda_1^n} (f^n)_* \ord_E$. Equation \eqref{EqConvergenceFaible}
                  shows that $Z_{v_n}$ converges towards $Z$ in $\Winf$. But, for all $n$, $Z_{v_n}$ belongs to
                  $\LPLus$ which is a closed set of $\Winf$ by Corollary \ref{CorLPlusClosedSubset}. Therefore, $Z
                  \in \LPLus$ and it defines a valuation $v_*$ by Proposition \ref{PropVThetaCentreeAlInfini}. From
                  the relation $f_* \theta_* = \lambda_1 \theta_*$ we get that $f_* v_* = \lambda_1 v_*$.
                  Using the decomposition $\theta_* = w + Z_{v_*}$ we have
                  \begin{equation}
                    0 \leq \theta_*^2 = w^2 + Z_{v_*}^2.
                  \end{equation}
                  Since $w^2 \leq 0$, we get that $Z_{v_*}^2 \geq 0$ so that $Z_{v_*}$ is nef by Theorem
                  \ref{thm:nef-valuation}. By Corollary \ref{CorValuationDeCourbeAutoIntersectionInfinie}, $v_*$ is not
                  a curve valuation.

                  Now to show the uniqueness of $v_*$, if $v$ is another valuation satisfying Equations
                  \eqref{EqPositivité}, \eqref{EqInvariance}, \eqref{EqAlphaFini}, then for all $D \in \Cinf$, Item
                  (\ref{ItemAsymptotic}) implies
                  \begin{equation}
                    Z_v \cdot D = \frac{1}{\lambda_1^n} Z_v \cdot (f^n)^* D \xrightarrow[n \rightarrow
                    \infty]{}
                    (Z_v \cdot \theta^*) (\theta_* \cdot D)
                    \label{<+label+>}
                  \end{equation}
                  Since $v \neq 0$, we get $Z_v \cdot \theta^* > 0$. And then $v = v_*$ up to a scalar factor.
                \end{proof}

                \begin{rmq}
                  It can happen that $f$ admits a curve valuation $\mu$ such that $f_* \mu = \lambda_1 \mu$. For
                  example take the dominant endomorphism of $\C^2$
                  \begin{equation}
                    f(x,y) = \left( x^2, y^3 \right).
                    \label{<+label+>}
                  \end{equation}
                  Then, $\lambda_1 (f) =3, \lambda_2 (f) = 6$. The curves $x = \pm 1$ are invariant by $f$, so they
                  defines curve valuations
                  at infinity centered at $[0:1:0]$ in $\P^2$. The extension of $f$ to $\P^2$ is the rational map
                  \begin{equation}
                    f [X : Y : Z] = \left[ X^2 Z : Y^3 : Z^3 \right]
                    \label{<+label+>}
                  \end{equation}
                  We see that $p = \left[ 0:1:0 \right]$ is a fixed point of $f$. Take the local coordinates $u = X/
                  Y, v = Z / Y$, then we have
                  \begin{equation}
                    f(u,v) = \left( u^2v, v^3 \right)
                    \label{<+label+>}
                  \end{equation}
                  The curve $x = \pm 1$ becomes $u = \pm v$ in these coordinates. We can see that they are both
                  invariant by $f$ and their curve valuations satisfy $f_* \mu = 3 \mu$. Now, if $v_{1,1}$ is the
                  multiplicity valuation at $p$, then we get also that $f_* v_{1,1} = v_{3,3} = 3 v_{1,1}$. Thus, this
                  is the eigenvaluation of $f$ and it is divisorial.
                \end{rmq}

                \chapter{Local normal forms}\label{ChapterLocalNormalForms}
                Let $X_0$ be an affine surface and $f$ a dominant endomorphism over a field $\k$. We show that the
                existence of this eigenvaluation allows one to find a fixed point at infinity and a local
                normal form at this fixed point, in most cases this fixed point will also be attracting.

                \begin{thm}\label{ThmLocalFormOfMapAndDynamicalDegreAreQuadraticInteger}  Let $X_0$ be an
                  irreducible normal affine surface over an algebraically closed field $\k$. Let $f$ be a
                  dominant endomorphism of $X_0$ such that $\lambda_1^2 > \lambda_2$. Suppose that
                  $\QAlb(X_0) = 0$ and let $Y$ be any completion of $X_0$
                  \begin{enumerate}
                    \item If $v_*$ is infinitely singular or irrational, there exists a completion $X$ above $Y$ such that
                      the lift $ f: X \dashrightarrow X$ is defined at $c_X (v_*)$ and $f (c_X (v_*)) = c_X
                      (v_*)$.  If $\k$ is complete, then $f$ defines a contracting germ of holomorphic function at $c_X (v_*)$ with
                      no $f$-invariant germ of curves at $c_X (v_*)$. If furthermore $\car \k = 0$, the germ can also be made
                      rigid. We have the
                      following local normal form:
                      \begin{enumerate}
                        \item If $v_*$ is infinitely singular, $c_X (v_*) \in E$ is a free point and $f$ has the
                          local normal form \eqref{EqPseudoFormeInfSing} or
                          \eqref{EqLocalNormalFormInfinitelySingular} if $\car \k = 0$ and $\k$ complete with $\{x = 0\}$ a local equation
                          of $E$ and $\lambda_1 = a \in \Z_{\geq 2}$.
                        \item If $v_*$ is irrational, $c_X (v_*) = E \cap F$ is a satellite point. The local
                          normal form is pseudomonomial \eqref{EqPseudoFormeMonomiale} with $(x, y)$ associated to
                          $(E,F)$ and $a_{11} a_{22} - a_{12} a_{21} \neq 0$. The dynamical degree $\lambda_1$ is the
                          spectral radius of the matrix $\left( a_{ij} \right)$. It is a Perron number of degree 2; in
                          particular $\lambda_1 \not \in \Z$. If $f$ is tamely ramified, the local normal form can be
                          made monomial \eqref{EqFormeNormaleMonomiale} with the same matrix $A$.
                      \end{enumerate}
                    \item \label{item:divisorial} If $v_*$ is divisorial, then there exists a completion $X$ above $Y$ such that
                      $c_X (v_*)$ is a prime divisor $E$ at infinity. In that case, $E$ is $f$-invariant and $\lambda_1 \in \Z_{\geq 2}$ is
                      such that $f_X^* E = \lambda_1 E + D$ where $D \in \DivInf(X)$ and $E \not \in \supp D$. And we
                      have three cases.
                      \begin{enumerate}
                        \item $E$ is of general type and $f_{|E}$ is not separable, of infinite order and no iterates
                          of $f_{|E}$ admits a fixed point. In particular,
                          $f$ is not tamely ramified.
                        \item \label{item:normal-form-divisorial} Up to replacing $f$ by some iterate, there exists a fixed point $p \in E$ of
                          $f_{|E}$, $p = E \cap E_0$ is a satellite point, $f : X \dashrightarrow X$ is defined at
                          $p$, $f(p) = p$ and $f$ is a germ of regular function at $p$ (not necessarily contracting)
                          with $E$ the only $f$-invariant germ of curves at $p$. The local normal form of $f$ at $p$ is
                          \eqref{EqLocalNormalFormDivisorial} with $(x,y)$ associated to $(E, E_0)$ and $\lambda_1 =
                          a$. If $\car \k = 0$ and $\k$ complete, the germ can be made rigid and if $f$ is tamely ramified, then $p$
                          can be chosen as a noncritical fixed point of $f_{|E}$ (and in particular $c =1$ in the
                          normal form).
                        \item \label{EllipticCase} The curve $E$ is an elliptic curve and $f_{|E}$ is a
                          translation by a non-torsion element.
                      \end{enumerate}
                  \end{enumerate}

                  In particular, the dynamical degree of $f$ is a Perron number of degree $\leq 2$, and if it is not
                  an integer then the eigenvaluation $v_*$ of $f$ is irrational and the normal form is monomial.
                \end{thm}
                This finishes the proof of Theorem \ref{BigThmDynamicalDegreesEng}.
                We will call \ref{EllipticCase} the \emph{elliptic} case, we will show examples of this case in \S
                \ref{sec:example-elliptic-case}.  The rest of this section is devoted
                to the proof of Theorem \ref{ThmLocalFormOfMapAndDynamicalDegreAreQuadraticInteger}, we will prove the
                Theorem page \pageref{ParLocalNormalForm}.

                To illustrate \ref{item:normal-form-divisorial}, take
                $X_0 = \A^2$ and $f(x,y) = (xP(x,y), xQ(x,y))$ with $P,Q$ homogeneous of same degree $d$ without common
                factor. Then, the eigenvaluation of $f$ is $- \deg = \ord_{L_\infty}$ where $L_\infty$ is the line at
                infinity in $\P^2$. The fixed points on the line at infinity may all be contained in the indeterminacy locus of
                $f$. We will do the following procedure, we pick one such fixed point $p$ and blow it up. If $f$ is
                not defined at the strict transform of $p$ then we blow it up again. After a finite number of blow ups
                we will show that the lift $f$ is eventually defined at the strict transform of $p$.

                To prove the theorem we need to understand the dynamics of $f_*$ on the space of valuations.
                \begin{prop}\label{PropCaracterisationConvergenceVersEigenvaluation}
                  Let $v \in \Vinf$ such that $Z_v \in \L2$. If $Z_v \cdot \theta^* >0$, then
                  $\frac{1}{\lambda_1^n}f^n_* v$ strongly converges towards $(Z_v \cdot \theta^*)v_*$.
                \end{prop}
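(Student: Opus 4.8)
The statement to prove is Proposition \ref{PropCaracterisationConvergenceVersEigenvaluation}: if $v \in \Vinf$ has $Z_v \in \L2$ and $Z_v \cdot \theta^* > 0$, then $\frac{1}{\lambda_1^n} f^n_* v$ converges strongly to $(Z_v \cdot \theta^*) v_*$. The natural approach is to translate the statement into the Picard–Manin / Hilbert space $\L2$, apply the spectral asymptotics of Theorem \ref{ThmEigenclasses}, and then transport the convergence back to the space of valuations using the homeomorphism $\cV_\infty ' \hookrightarrow \L2$ of Corollary \ref{CorStrongConvergenceInL2}. Concretely, first I would observe that since $v \in \Vinf$ and $Z_v \in \L2$, we have $v \in \cV_\infty '$, and by Proposition \ref{PropPushForwardOnDivisorIsPushForwardOnValuation} the class $f_* Z_v$ equals $Z_{f_* v}$ modulo $\Cinf^\perp$. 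Since the eigenvaluation satisfies $\theta_* = w + Z_{v_*}$ with $w \in \Cinf^\perp$ (Theorem \ref{ThmExistenceEigenvaluationSurfaces}), the $\Cinf^\perp$-component is irrelevant for the valuation it defines, so it suffices to work with the $\Winf$-projection and track the $\L2$-class up to $\Cinf^\perp$.

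**Key steps.** The core computation is Equation \eqref{EqAsymptoticPushForward} of Theorem \ref{ThmEigenclasses}: for any $\alpha \in \L2$,
\begin{equation}
  \frac{1}{\lambda_1^n} (f^n)_* \alpha = (\alpha \cdot \theta^*) \theta_* + O_\alpha\left( \left(\tfrac{\lambda_2}{\lambda_1^2}\right)^{n/2} \right).
\end{equation}
Applying this with $\alpha = Z_v$ gives that $\frac{1}{\lambda_1^n}(f^n)_* Z_v$ converges strongly in $\L2$ to $(Z_v \cdot \theta^*)\theta_*$. Now $(f^n)_* Z_v = Z_{f^n_* v}$ modulo $\Cinf^\perp$, and $\theta_* = Z_{v_*} + w$ with $w \in \Cinf^\perp$; projecting orthogonally onto the $\Winf_\R \cap \L2$-part (which is closed and $f_*$-compatible, by Proposition \ref{PropIntersectionFormNonDegenerateAtInfinity} and the decomposition $\wNS_\R = \Winf_\R \obot V$) we get that $\frac{1}{\lambda_1^n} Z_{f^n_* v}$ converges strongly in $\L2$ to $(Z_v \cdot \theta^*) Z_{v_*} = Z_{(Z_v \cdot \theta^*) v_*}$. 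Since $Z_v \cdot \theta^* > 0$, the limit is nonzero, hence is the class of a genuine valuation centered at infinity in $\cV_\infty '$. Finally, by Corollary \ref{CorStrongConvergenceInL2}, the map $\cV_\infty ' \to \L2$ is a homeomorphism onto its image for the strong topology, so strong convergence of the classes $Z_{f^n_* v / \lambda_1^n}$ to $Z_{(Z_v\cdot\theta^*)v_*}$ in $\L2$ is equivalent to strong convergence of $\frac{1}{\lambda_1^n} f^n_* v$ to $(Z_v \cdot \theta^*) v_*$ in $\Vinf '$.

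**Main obstacle.** The delicate point is the bookkeeping of the $\Cinf^\perp$-component: I must check that passing from the asymptotics of $(f^n)_* Z_v$ in all of $\L2$ to the asymptotics of the $\Winf$-projection $Z_{f^n_* v}$ is legitimate, i.e. that orthogonal projection onto $\Winf_\R \cap \L2$ is continuous for the strong ($\L2$-norm) topology and commutes appropriately with the limit — this is immediate since orthogonal projection in a Hilbert space is $1$-Lipschitz, but one must first verify that $\Winf_\R \cap \L2$ is a closed subspace of $\L2$ (it is, being the intersection of a closed subspace of $\wNS_\R$ with $\L2$, using that $\L2 \hookrightarrow \wNS_\R$ is continuous and the decomposition is orthogonal for the intersection form, which restricts to the $\L2$-inner product). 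A secondary point is that the $O_\alpha$ error term in \eqref{EqAsymptoticPushForward} is for the $\L2$-norm, which is exactly what "strong convergence" refers to here via Corollary \ref{CorStrongConvergenceInL2}, so no extra argument about topologies on $\cV_\infty '$ is needed beyond invoking that corollary. I expect the whole argument to be short once these identifications are set up carefully.
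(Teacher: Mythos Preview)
Your proposal is correct and follows essentially the same approach as the paper: the paper's proof is the single line ``This is a direct consequence of Equation \eqref{EqAsymptoticPushForward} and Corollary \ref{CorStrongConvergenceInL2}.'' Your additional care with the $\Cinf^\perp$-component (noting that $(f^n)_* Z_v = Z_{f^n_* v}$ only modulo $\Cinf^\perp$ by Proposition \ref{PropPushForwardOnDivisorIsPushForwardOnValuation}, and that $\Cinf^\perp = V$ so orthogonal projection onto $\Winf_\R \cap \L2$ handles this) makes explicit a step the paper leaves implicit, but the strategy is identical.
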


                \begin{proof}
                  This is a direct consequence of Equation \eqref{EqAsymptoticPushForward} and Corollary
                  \ref{CorStrongConvergenceInL2}.
                \end{proof}
                We will use this to show that $f$ admits a fixed point at infinity on some
                completion and that $f$ contracts a divisor at infinity there.

                For the rest of Chapter \ref{ChapterLocalNormalForms}, we suppose that we are in the conditions of Theorem
                \ref{ThmExistenceEigenvaluationSurfaces}.

                \section{Attractingness of $v_*$, the infinitely singular case}
                We show the following
                \begin{prop}\label{PropInfinitelySingularPreparation}
                  If the eigenvaluation $v_*$ is infinitely singular, then there exists a completion $X$ of $X_0$ such that
                  \begin{enumerate}
                    \item $p := c_X (v_*) \in E$ is a free point at infinity.
                    \item $f_* \cV_X (p) \subset \cV_X (p)$;
                    \item $f$ contracts $E$ to $p$.
                    \item Let $f_\bullet : \cV_X (p; \m_p) \rightarrow \cV_X (p; \m_p)$, then for all $v \in \cV_X
                      (p; \m_p), f_\bullet^n v \rightarrow v_*$.
                  \end{enumerate}
                  Furthermore, the set of completions $Y$ above $X$ that satisfy these 4 properties is cofinal in the set of
                  all completions above $X$.

                \end{prop}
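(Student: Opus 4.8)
The plan is to leverage the strong convergence statement from Proposition~\ref{PropCaracterisationConvergenceVersEigenvaluation} together with the geometric interpretation of the valuative tree, specifically Proposition~\ref{PropSequenceOfInfinitelyNearPoints}, which tells us that an infinitely singular valuation $v_*$ has infinitely many free points in its sequence of centers. First I would fix any completion $X_1$ of $X_0$ and consider the sequence of centers $(p_n)$ associated to $v_*$ with respect to $X_1$; since $v_*$ is infinitely singular, infinitely many of the $p_n$ are free points. Passing to a completion $X_n$ far enough along this sequence, I can arrange that $c_{X}(v_*) =: p$ is a free point lying on a unique prime divisor $E$ at infinity. This handles item (1), and by construction it is clear that any completion further along the sequence still has this property, which is the first half of the cofinality claim.

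\textbf{Obtaining the fixed point and the contraction.} The heart of the argument is items (2)--(4). Since $f_* v_* = \lambda_1 v_*$, the normalised valuation $v_*$ (say in $\cV_X(p;\m_p)$, using Proposition~\ref{PropValuativeTreeIsomorphism}) is a fixed point of the induced self-map $f_\bullet$ on the relative tree. To get item (2), i.e.\ that $f$ maps valuations centered at $p$ to valuations centered at $p$, I would use Lemma~\ref{LemmeConditionRegularité}: $F$ is defined at $p$ and $F(p) = p$ precisely when $f_* \cV_X(p) \subset \cV_X(q)$ for the unique such $q$, and $q = p$ is forced because $c_X(f_* v_*) = c_X(\lambda_1 v_*) = c_X(v_*) = p$ by Proposition~\ref{PropPushForwardOfCenter}. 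However, a priori $f$ might still fail to be regular at $p$ because $f_* \cV_X(p)$ could contain a divisorial valuation attached to a prime divisor of $X$; here is where I must blow up further. The strategy is to use the strong attraction: by Proposition~\ref{PropCaracterisationConvergenceVersEigenvaluation}, for any $v \in \cV_X(p;\m_p)$ with $Z_v \cdot \theta^* > 0$, the sequence $\frac{1}{\lambda_1^n} f_*^n v$ converges strongly to a positive multiple of $v_*$; combined with Corollary~\ref{CorStrongConvergenceInL2} and Corollary~\ref{CorApproximatingSequenceStrongConvergence}, this forces the centers to eventually stabilize near $p$, and after blowing up the (finitely many) obstructing points at infinity on $X$, I obtain a completion where $f$ is regular at $p$ and maps a neighbourhood of $p$ at infinity into itself. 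This gives (2).

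\textbf{Contraction of $E$ and convergence of iterates.} For item (3), the contraction of $E$ to $p$: since $v_*$ is infinitely singular, $\alpha(v_*) < \infty$ but $v_* \neq \ord_E$ and $v_*$ is an end of the tree, so the tangent direction at $\ord_E$ determined by $p$ is the one containing $v_*$. Because $f_\bullet$ fixes $v_*$ and is continuous for the weak topology (using Lemma~\ref{LemmeActionSurMonomiale}-type computations and the tree structure), the image $f_\bullet(\ord_E)$ must lie in the segment $[\ord_E, v_*]$ strictly above $\ord_E$ — equivalently $f$ contracts $E$ to a point, and that point must be $p$ since $f$ is regular at $p$ with $f(p)=p$; if $f$ did not contract $E$, then $f_* \ord_E$ would be a divisorial valuation attached to a prime divisor, contradicting that $f$ is regular at $p$ (Lemma~\ref{LemmeConditionRegularité}) unless that divisor is at infinity through $p$, and one rules this out by the attracting dynamics. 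Finally item (4) follows from Proposition~\ref{PropCaracterisationConvergenceVersEigenvaluation} applied inside the relative tree $\cV_X(p;\m_p)$: every such valuation $v$ has $Z_v \cdot \theta^* > 0$ (because $\theta^*$ is nef, effective, and $L_v(E) > 0$), so $f_\bullet^n v = \frac{1}{\lambda_1^n} f_*^n v / (\cdots) \to v_*$ strongly, hence weakly. The main obstacle I anticipate is the bookkeeping needed to guarantee regularity of $f$ at $p$ after finitely many blow-ups — one must show that the strong attraction of $v_*$ prevents the appearance of contracted curves accumulating at $p$, which requires carefully combining the spectral gap estimate \eqref{EqAsymptoticPushForward} with the local tree geometry; the cofinality statement then comes almost for free once one observes that all these properties are preserved under further blow-ups of points not equal to $p$ along the sequence of centers of $v_*$.
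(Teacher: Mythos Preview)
Your strategy has a genuine gap in the treatment of item~(4), and this gap propagates backwards into items~(2) and~(3). You claim that every $v \in \cV_X(p;\m_p)$ satisfies $Z_v \cdot \theta^* > 0$, justifying this by ``$\theta^*$ is nef, effective, and $L_v(E) > 0$''. But $Z_v \cdot \theta^*$ is not $L_v(E)$; the class $\theta^*$ is generally not Cartier, so this intersection number is an infinite sum over exceptional divisors and there is no reason it should be positive for \emph{every} $v$ centered at $p$. Worse, even when $Z_v \cdot \theta^* > 0$, Proposition~\ref{PropCaracterisationConvergenceVersEigenvaluation} only applies if $Z_v \in \L2$, i.e.\ if $v$ has finite skewness; curve valuations in $\cV_X(p;\m_p)$ have $\alpha(v) = +\infty$, so the spectral argument gives no control over their orbits. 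Consequently your proposed derivation of item~(4) does not cover all valuations, and your argument for regularity at $p$ (``blow up the finitely many obstructing points'') is not grounded: nothing rules out infinitely many divisorial valuations in $f_*\cV_X(p)$ landing on prime divisors of $X$.

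The paper's approach avoids this by working purely inside the local tree. It first shows (Lemma~\ref{LemmeFonctionOrderPreservingSurUnBonOuvert} and Proposition~\ref{PropApplicationAffineSurUnPetitSegment}) that on a small segment $[v,v_*]$ the induced map $F_\bullet$ is order-preserving and \emph{affine} in the skewness parameter. It then uses Proposition~\ref{PropCaracterisationConvergenceVersEigenvaluation} only for a \emph{single} well-chosen divisorial valuation $\omega$ with $Z_\omega \cdot \theta^* > 0$, whose orbit therefore converges to $v_*$; combined with the affine structure (Lemma~\ref{LemmeTechniqueFonctionAffine}) this forces $F_\bullet \mu > \mu$ for all $\mu$ on the segment close to $v_*$. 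Order-preservation then propagates this to a whole tangent direction $U(\overrightarrow v)$, giving $F_\bullet(U(\overrightarrow v)) \Subset U(\overrightarrow v)$, which simultaneously yields regularity at $p$ (via Lemma~\ref{LemmeConditionRegularité}), contraction of $E$, and weak convergence of \emph{all} orbits in $U(\overrightarrow v)$ to $v_*$ (Proposition~\ref{PropVecteurTangentInvariantEtToutLeMondeConvergeInfSingularCase}). The key insight you are missing is that a single convergent orbit plus the affine/order structure of the tree suffices to control everything; you cannot appeal to the spectral estimate valuation-by-valuation.
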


                Let $X$ be a completion of $X_0$ such that $c_X (v_*)$ is a
                free point $p_X \in E_X$. Such a completion $X$ exists and there are infinitely
                many of them above $X$ by Proposition \ref{PropSequenceOfInfinitelyNearPoints}. Let $Y$ be a
                completion above $X$ such that $c_Y(v_*)$ on $Y$ is a free point $p_Y \in E_Y$ and such that the diagram
                \begin{center}
                  \begin{tikzcd}
                    & Y \ar[dl, "\pi"'] \ar[dr, "F"] & \\
                    X \ar[rr, dashed, "f"] & & X
                  \end{tikzcd}
                \end{center}
                commutes, where $F$ is regular and $F(p_Y) = p_X$. Let $x,y$ be coordinates at $p_X$ such that $x = 0$ is a local
                equation of $E_X$ and $ z, w$ be coordinates at $p_Y$ such that $z = 0$ is a local equation for $E_Y$. We
                use the notations of Chapter \ref{ChapterValuationTree}. We have that $f_* \cV_Y (p_Y)
                \subset \cV_X (p_X)$ by Lemma \ref{LemmeConditionRegularité}. We define $F_\bullet:
                \cV_Y(p_Y; E_Y) \mapsto \cV_X(p_X, \m_{p_X})$ as follows:
                \begin{equation}
                  \forall v \in \cV_Y (p_Y; E_Y), \quad F_\bullet (v) := \frac{F_* v}{\min\left( v (F^* x), v
                  (F^* y) \right)}.
                  \label{<+label+>}
                \end{equation}
                Similarly, we define
                \begin{equation}
                  \forall v \in \cV_Y (p_Y; E_Y), \quad \pi_\bullet (v) := \frac{\pi_* v}{\min\left( v (\pi^* x), v
                  (\pi^* y) \right)}.
                  \label{<+label+>}
                \end{equation}
                By Proposition \ref{PropCompatibilitéOrdreContractionMap} item (1), the map $\pi_\bullet : \cV_Y (p_Y; E_Y)
                \rightarrow \cV_X (p_X; \m_{p_X})$ is an inclusion of trees and allows one to view $\cV_Y (p_Y;
                E_Y)$ as a subtree of $\cV_X (p_X; \m_{p_X})$.

                See Figure \ref{fig:embedding_trees}. The tree $\cV_X (p_X, \m_{p_X})$ is in black with its root
                $v_{\m_{p_X}}$ in blue, the tree $\cV_Y (p_Y; E_Y)$ is in orange with its root $\ord_{E_Y}$ in red. One
                can see how $\pi_\bullet $ maps homeomorphically $\cV_Y (p_Y; E_Y)$ to a subtree of $\cV_X (p_X,
                \m_{p_X})$.
                \begin{figure}[htb]
                  \centering
                  \includegraphics[scale=0.8]{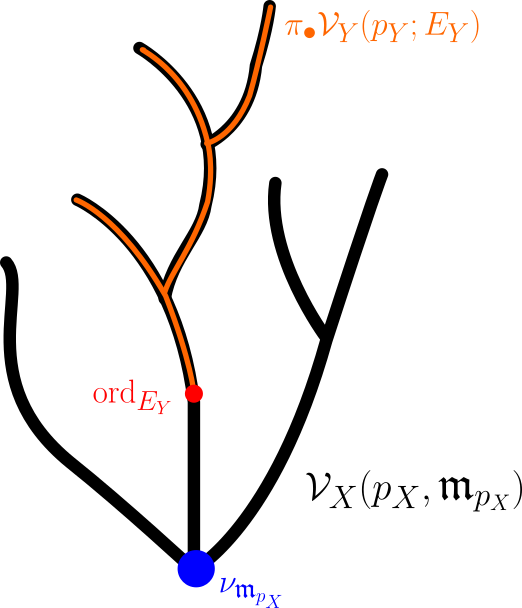}
                  \caption{The embedding $\pi_\bullet$}
                  \label{fig:embedding_trees}
                \end{figure}
                \begin{rmq}\label{RmqOrdreCompatible}
                  Since the orders $\leq_{\m_{p_X}}$ and $\leq_{E_Y}$ are compatible on $\cV_Y (p_Y; E_Y)$
                  and $\pi_\bullet \cV_Y (p_Y; E_Y)$ we will not write $\pi_\bullet$ or $\leq_{E_Y}$ when no
                  confusion is possible to avoid heavy notations.
                \end{rmq}
                By Proposition \ref{PropSkewnessRelationAfterBlowUpContractionMap}, we have the following relation

                \begin{equation}
                  \alpha_{\m_{p_X}} ( \pi_\bullet \mu) = \alpha_{\m_{p_X}} (\pi_\bullet \ord_{ E_Y}) + b(E_Y)^{-2}
                \alpha_{E_Y} (\mu) \end{equation}
                where $b$ is the generic multiplicity function of the tree $\cV_X (p; \m_p)$ and
                $\alpha_{\m_{p_X}}, \alpha_{E_Y}$ are the skewness
                functions defined in Chapter \ref{ChapterValuationTree}. Indeed, with the notation of Proposition
                \ref{PropSkewnessRelationAfterBlowUpContractionMap}, $v_{E_Y} = \pi_\bullet \ord_{E_Y}$.

                \begin{lemme}\label{LemmeFonctionOrderPreservingSurUnBonOuvert}
                  There exists $v \in \cV_Y (p_Y; E_Y)$ such that $v < v_*$ and for all $\mu \geq v$,
                  \begin{equation}
                    \min \left( \mu (F^*x), \mu (F^* y)\right) = b(E_Y) \lambda_1.
                    \label{<+label+>}
                  \end{equation}
                  I.e set $U = \left\{ \mu \geq v \right\}$, we have $F_\bullet = \frac{F_*}{b(E_Y)\lambda_1}$ over $U$.
                  In particular, $F_\bullet$ is order
                  preserving over $U$ and  $F_\bullet ([v, v_*]) \subset \left[ v_{\m_{p_X}}, v_* \right]$.
                \end{lemme}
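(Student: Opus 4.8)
The plan is to understand the quantity $\min(\mu(F^*x), \mu(F^*y))$ as $\mu$ ranges over the segment $[\ord_{E_Y}, v_*]$ in $\cV_Y(p_Y; E_Y)$, and show it stabilizes near $v_*$. First I would unwind what $F^*x$ and $F^*y$ are: since $F: Y \to X$ is a regular lift of $f$ with $F(p_Y) = p_X$, the map $F$ induces a local ring homomorphism $F^*: \hat\OO_{X, p_X} \to \hat\OO_{Y, p_Y}$, so $F^*x$ and $F^*y$ are germs in $\k[[z,w]]$. The function $\mu \mapsto \min(\mu(F^*x), \mu(F^*y)) = F_*\mu(\m_{p_X})$ is (by the generic multiplicity interpretation, Proposition~\ref{PropInterpretationGenericMultiplicity} transported through $F_*$) essentially the value $b_{\m_{p_X}}(F_\bullet\mu)$ when $F_\bullet\mu$ is divisorial, and more generally it is a piecewise-``affine in skewness'' function along the segment: on each maximal subsegment where $F_\bullet$ does not pass through a branch point of $\cV_X(p_X;\m_{p_X})$, the quantity $\min(\mu(F^*x),\mu(F^*y))$ is constant (it only jumps at satellite points in the image).

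Next I would use the attractingness of $v_*$: by Proposition~\ref{PropCaracterisationConvergenceVersEigenvaluation}, $\tfrac{1}{\lambda_1^n} f^n_* v \to (Z_v\cdot\theta^*) v_*$ strongly for suitable $v$, and in particular $f_*$ fixes $v_*$ with eigenvalue $\lambda_1$. Evaluating the eigenvalue relation $f_* v_* = \lambda_1 v_*$ on $\m_{p_X}$ (after identifying $v_*$ with the valuation in $\cV_Y(p_Y; E_Y)$, so $v_*(z) = 1$, i.e. $v_* = \tfrac{1}{b(E_Y)}\cdot(\text{its }\m_{p_Y}\text{-normalisation})$) pins down the exact constant: $\min(v_*(F^*x), v_*(F^*y)) = F_*v_*(\m_{p_X})$ equals $b(E_Y)\lambda_1$ because $f_* v_* = \lambda_1 v_*$ and $v_*$ normalised in $\cV_X(p_X;\m_{p_X})$ takes value $1$ on $\m_{p_X}$, while the normalisation factor relating $\cV_Y(p_Y;E_Y)$ and $\cV_Y(p_Y;\m_{p_Y})$ contributes the $b(E_Y)$. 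Then I would invoke the fact that $\mu\mapsto \min(\mu(F^*x),\mu(F^*y))$ is nondecreasing along $[\ord_{E_Y}, v_*]$ (monotonicity of $\mu \mapsto F_*\mu(\m_{p_X})$, since $F_*$ is order-preserving on valuations dominated by the relevant local structure and $\m_{p_X}$-evaluation is monotone) and that it takes only finitely many values on any segment avoiding the finitely many satellite points of the image before $c_X(v_*)$. Hence it is \emph{eventually} equal to its value at $v_*$: there is $v<v_*$ on the segment such that for all $\mu\in[v, v_*]$, and then for all $\mu\geq v$ in $\cV_Y(p_Y;E_Y)$ by the tree structure (any $\mu\geq v$ has $\mu\wedge v_* \geq v$, and the quantity depends only on $\mu\wedge v_*$ once we are past the last branch point — this needs the observation that $F^*x, F^*y$ have no common factor that is the local equation of a curve through which higher branches of $\mu$ could decrease the min), the equality $\min(\mu(F^*x),\mu(F^*y)) = b(E_Y)\lambda_1$ holds.

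From this, $F_\bullet = \tfrac{F_*}{b(E_Y)\lambda_1}$ on $U = \{\mu\geq v\}$ is immediate from the definition of $F_\bullet$. Since $\tfrac{1}{b(E_Y)\lambda_1}$ is a positive constant, $F_\bullet$ restricted to $U$ is just a rescaling of $F_*$, and $F_*$ is order-preserving here (it is the pushforward by a regular morphism, monotone on the relevant subtree by Proposition~\ref{PropPushForwardOfCenter} together with the compatibility of orders under contraction maps, Proposition~\ref{PropCompatibilitéOrdreContractionMap}, applied to $F$ after resolving it as a sequence of blow-ups if needed). Finally, $F_\bullet(v_*) = \tfrac{1}{b(E_Y)\lambda_1}F_*v_* = \tfrac{\lambda_1}{b(E_Y)\lambda_1}\cdot(b(E_Y) v_*) = v_*$ — more carefully, $F_*v_* = f_*v_* = \lambda_1 v_*$ with the appropriate normalisations — so $F_\bullet$ fixes $v_*$, and since it is order-preserving and $v \le v_*$, it maps the segment $[v, v_*]$ into the segment $[F_\bullet v, v_*] \subset [v_{\m_{p_X}}, v_*]$ (using $F_\bullet v \ge v_{\m_{p_X}}$, the root of $\cV_X(p_X;\m_{p_X})$).

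The main obstacle I expect is the finiteness/stabilisation argument: proving that $\min(\mu(F^*x),\mu(F^*y))$ really is eventually constant along $[\ord_{E_Y}, v_*]$ and that passing from ``on the segment $[v,v_*]$'' to ``on all of $U = \{\mu \ge v\}$'' is legitimate. This requires care because a priori a valuation $\mu$ branching off above $v$ could see $F^*x$ or $F^*y$ differently; the resolution is that $v_*$ is infinitely singular, hence an end of the tree with $b(E_Y)\to\infty$ along its infinitely near sequence, so the image $F_\bullet v_*$ is reached only in the limit and the local equations $F^*x, F^*y$ of $E_X$ (pulled back) cannot both vanish along any curve through points above $c_Y(v_*)$ other than $E_Y$ itself — which is exactly where the hypothesis that $f$ \emph{contracts} $E$ (to be arranged, and part of Proposition~\ref{PropInfinitelySingularPreparation}) and has no invariant curve at the center will be used. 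I would handle this by an explicit local computation in coordinates $(z,w)$, writing $F^*x = z^{a}\phi$, $F^*y = z^{c}w^{e}\psi + \cdots$ and tracking valuations, reducing the claim to the statement that past a certain point on $[\ord_{E_Y}, v_*]$ the $z$-adic order dominates.
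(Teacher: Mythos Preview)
Your overall strategy---show that the normalising factor $\min(\mu(F^*x),\mu(F^*y))$ stabilises near $v_*$ and then read off the constant from the eigenvalue relation---is on track, and your computation of the constant $b(E_Y)\lambda_1$ is essentially correct. But two of your intermediate claims are wrong, and the argument is more circuitous than the paper's.

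The function $\mu\mapsto\mu(F^*x)$ does not ``jump at satellite points in the image'' and does not take ``only finitely many values'': it is continuous. Writing $F^*x=\prod_i\psi_i$ with $\psi_i\in\hat\OO_{Y,p_Y}$ irreducible and using Proposition~\ref{PropValuationEnFonctionDeAlphaRelative}, one has
\[
\mu(F^*x)=\sum_i\alpha_{E_Y}(\mu\wedge v_{\psi_i})\,m_{E_Y}(\psi_i).
\]
This is the single observation you are missing, and it replaces most of your argument. It shows directly that $\mu\mapsto\mu(F^*x)$ is \emph{locally constant outside the finite subtree} $S_x=\bigcup_i[\ord_{E_Y},v_{\psi_i}]$; along the segment $[\ord_{E_Y},v_*]$ it is concave piecewise-linear in $\alpha_{E_Y}(\mu)$, not a step function. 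Let $S$ be the union of the analogous finite subtrees for $F^*x$, $F^*y$ and $z$. Since $v_*$ is infinitely singular it is an end of the tree which is not a curve valuation, so $v_*\notin S$, and the three evaluation maps are constant on the connected component $V$ of $\cV_Y(p_Y;E_Y)\setminus S$ containing $v_*$. Any divisorial $v\in V\cap[\ord_{E_Y},v_*]$ then works, and $U=\{\mu\geq v\}\subset V$ is immediate from the tree structure (if $\mu\geq v$ lay in $S$ then $v$ would too). Your ``main obstacle''---passing from the segment to all of $U$---thus disappears without any coordinate computation.

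Your appeal to Proposition~\ref{PropCaracterisationConvergenceVersEigenvaluation} is harmless but unnecessary: the relation $f_*v_*=\lambda_1 v_*$ is the \emph{defining} property of the eigenvaluation (Theorem~\ref{ThmExistenceEigenvaluationSurfaces}), not a consequence of attractingness, and together with $\pi^*\m_{p_X}\cdot\OO_{Y,p_Y}=(z^{b(E_Y)})$ it gives the constant directly. The monotonicity argument you sketch is salvageable once the ``finitely many values'' claim is replaced by ``finitely many linear pieces'', but it is superseded by the locally-constant statement above.
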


                \begin{proof}
                  Using Proposition \ref{PropValuationEnFonctionDeAlpha}, we see that the map $ v \mapsto \min (v (f^* x, f^*
                  y))$ is locally constant outside a finite subtree of $\cV_Y (p_Y; E_{p_Y})$. Indeed, one has $f^*x = \prod_i
                  \psi_i$ with $\psi_i$ irreducible and therefore
                  \begin{align}
                    v (f^*x) &= \sum_i v (\psi_i) \\
                    &=  \sum_i \alpha_{E_Y} (v \wedge v_{\psi_i}) m_{E_Y} (\psi_i) \quad \text{by Proposition
                    \ref{PropValuationEnFonctionDeAlpha}}.
                    \label{<+label+>}
                  \end{align}
                  Let $S_x$ be the finite subtree consisting of the segments $[\ord_{E_Y}, v_{\psi_i}]$,
                then the map $\mu \mapsto \mu (f^* x))$ is locally constant outside of $S_x$. Let $S$ be the maximal finite subtree
                of $\cV_Y (p_Y; E_{p_Y})$ such that the evaluation maps on $f^* x, f^* y$ and $z$ are locally constant outside
                of $S$. Since $v_*$ is an infinitely singular valuation it does not belong to $S$ and these three evaluation maps
                are constant on the open connected component $V$ of $\cV_Y (p_Y; E_{p_Y}) \setminus S$ containing $v_*$.
                Since $f_* v_* = \lambda_1 v_*$, this means that $F_* v_* = \lambda_1 \pi_* v_*$. Since the ideal generated by
                $\pi^* x, \pi^*y$ is the ideal generated by $z^{b(E_Y)}$,  we have $f_{\bullet | V} = \frac{f_*}{b(E_Y) \lambda_1}$
                and the map $F_\bullet$ is
                order preserving on $V$. Following Remark \ref{RmqOrdreCompatible}, the two orders $\leq_{\m_{p_X}}$ and
                $\leq_{E_Y}$ agree on $V$. Let $v \in [\ord_{E_Y} , v_*] \cap V$ be a divisorial valuation,
                $F_\bullet$ sends the segment $[v, v_*] \subset \cV_Y (p_Y; E_{Y})$ inside the segment $[v_{\m_{p_X}},
                v_*] \subset \cV_X (p_X; \m_{p_X} )$. Notice that $U := \left\{ \mu \geq v \right\} \subset V$ so the
                valuation $v$ satisfies Lemma \ref{LemmeFonctionOrderPreservingSurUnBonOuvert}.
              \end{proof}

              \begin{prop}[\cite{favreEigenvaluations2007}, Theorem 3.1]\label{PropApplicationAffineSurUnPetitSegment}
                Let $v$ be as in Lemma \ref{LemmeFonctionOrderPreservingSurUnBonOuvert}. For $t \in [\alpha_{E_Y} (v),
                \alpha_{E_Y} (v_*)]$, let $v_t$ be the unique valuation in $[v, v_*]$ such that $\alpha_{E_Y} (v_t) = t$.
                Then, there exists a divisorial valuation $v ' \in [v, v_*]$ such that the map

                \begin{equation}
                t \in [\alpha_{E_Y} (v '), \alpha_{E_Y} (v_*)] \mapsto \alpha_{\m_{p_X}} (F_\bullet v_t) \end{equation}
                is an affine function of $t$
                with nonnegative coefficients.
              \end{prop}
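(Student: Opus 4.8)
The plan is to recognize this as Theorem 3.1 of \cite{favreEigenvaluations2007} transported to the present, purely local situation, and to reprove it in that local model. Everything takes place at the two free points $p_X=c_X(v_*)\in E_X$ and $p_Y=c_Y(v_*)\in E_Y$ of the smooth surfaces $X$ and $Y$, so via the isomorphisms $\cV_Y(p_Y;E_Y)\simeq\cV_z$ and $\cV_X(p_X;\m_{p_X})\simeq\cV_\m$ of Propositions \ref{PropValuativeTreeIsomorphism} and \ref{PropRelativeValuativeTreeIsomorphism}, together with the relation between $\alpha_{E_Y}$ and $\alpha_{\m_{p_X}}$ of Proposition \ref{PropSkewnessRelationAfterBlowUpContractionMap}, the picture becomes exactly the one of \cite{favreEigenvaluations2007} \S 3. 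First I would record, using Lemma \ref{LemmeFonctionOrderPreservingSurUnBonOuvert}, that on $U=\{\mu\geq v\}$ one has $F_\bullet=(b(E_Y)\lambda_1)^{-1}F_*$, a continuous and order-preserving map with $F_\bullet([v,v_*])\subset[v_{\m_{p_X}},v_*]$ and $F_\bullet v_*=v_*$ (the last from $f_*v_*=\lambda_1 v_*$); hence, by Proposition \ref{PropInfinitelyNearSequence}, it is enough to compute $\alpha_{\m_{p_X}}(F_\bullet\mu)$ for divisorial $\mu\in[v,v_*]$.

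Then I would run the computation directly. Choose coordinates $(z,w)$ at $p_Y$ with $\{z=0\}$ a local equation of $E_Y$ and $(x,y)$ at $p_X$ with $\{x=0\}$ a local equation of $E_X$, and factor $F^*x=z^{a_0}\prod_i\psi_i^{a_i}$ and $F^*y=z^{b_0}\prod_i\phi_i^{b_i}$ into irreducibles in $\hat{\OO_{Y,p_Y}}$. For $\mu\in[v,v_*]$ and an irreducible germ $\eta$ at $p_Y$, Proposition \ref{PropValuationEnFonctionDeAlphaRelative} gives $\mu(\eta)=\alpha_{E_Y}(\mu\wedge v_\eta)\,m_{E_Y}(\eta)$, and since $v_*$ is an end of the tree and $\mu$ runs over a segment, $t=\alpha_{E_Y}(\mu)\mapsto\alpha_{E_Y}(\mu\wedge v_\eta)=\min(t,t_\eta)$ with $t_\eta=\alpha_{E_Y}(v_\eta\wedge v_*)<\alpha_{E_Y}(v_*)$. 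Thus $\mu\mapsto\mu(F^*x)$ and $\mu\mapsto\mu(F^*y)$ are concave piecewise-affine in $t$ with finitely many breakpoints, and become constant beyond the largest of the $t_{\psi_i},t_{\phi_i}$; let $v_1\in[v,v_*]$ be the associated divisorial valuation (or $v$, if that is larger). For $\mu\geq v_1$ the normalizing factor $\min(\mu(F^*x),\mu(F^*y))$ is the constant $b(E_Y)\lambda_1$, so $F_\bullet\mu\in\cV_X(p_X;\m_{p_X})$ and $F_\bullet\mu(h)=(b(E_Y)\lambda_1)^{-1}\mu(F^*h)$ for every germ $h$ at $p_X$. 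Writing $F^*h=z^{c_0(h)}\prod_j\xi_j^{c_j(h)}$ and applying Proposition \ref{PropValuationEnFonctionDeAlpha} as above, each $t\mapsto F_\bullet\mu(h)/m(h)$ is again concave piecewise-affine and eventually constant, and its values and slopes are $\geq 0$ because the exponents $a_i,b_i,c_j$, the multiplicities $m_{E_Y},m$, and $b(E_Y),\lambda_1$ and all skewnesses are $\geq 0$. Since $\alpha_{\m_{p_X}}(F_\bullet\mu)=\sup_h F_\bullet\mu(h)/m(h)$, the function $t\mapsto\alpha_{\m_{p_X}}(F_\bullet v_t)$ is a supremum of such functions; it is bounded above by $\alpha_{\m_{p_X}}(v_*)$, which is finite by hypothesis \eqref{EqAlphaFini}, and non-decreasing near $v_*$. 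It remains to show that near $t=\alpha_{E_Y}(v_*)$ the successive affine pieces of this supremum amalgamate into a single affine piece; taking $v'$ to be its left endpoint (a divisorial valuation, enlarging $v_1$ if needed) then finishes the proof.

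The hard part is precisely that last step: $\alpha_{\m_{p_X}}(v_*)$, the skewness of the infinitely singular valuation $v_*$, is attained only in the limit, by germs $h$ at $p_X$ of unbounded ``order'' (truncations of the Puiseux expansion of $v_*$), so a priori $t\mapsto\alpha_{\m_{p_X}}(F_\bullet v_t)$ is only an increasing limit of piecewise-affine functions on any half-open interval abutting $\alpha_{E_Y}(v_*)$. Showing that these approximations stabilize on an honest final segment is exactly the core of \cite{favreEigenvaluations2007} Theorem 3.1: one rigidifies $F$ in the $v_*$-direction (possible after finitely many further blow-ups of $Y$), so that $F$ becomes monomial in the coordinate transverse to the branch carrying $v_*$; the factorizations $F^*h$ of the relevant successive truncations $h$ then differ only by that fixed transverse monomial, which forces the affine pieces to line up. This step, being combinatorial in the valuative tree, is insensitive to the characteristic of $\k$, and nowhere does it use that $X_0$ is the affine plane rather than an arbitrary normal affine surface, so the argument of \cite{favreEigenvaluations2007} applies verbatim once the local identification above is in place.
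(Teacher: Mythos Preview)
Your setup is fine, and the piecewise--affine computation for a fixed germ $h$ is the right ingredient, but you have manufactured a difficulty that is not there and then invoked heavy machinery to dissolve it. The ``hard part'' you isolate --- that $\alpha_{\m_{p_X}}(F_\bullet v_t)=\sup_h F_\bullet v_t(h)/m(h)$ is a supremum over infinitely many $h$ whose breakpoints might accumulate --- disappears once you notice that on any compact subsegment a \emph{single} germ computes the skewness exactly. This is what the paper does: fix $v_1<v_2$ in $[v,v_*]$ and choose one irreducible $\psi\in\hat{\OO_{X,p_X}}$ with $v_\psi>F_\bullet v_2$; then for every $\mu\in[v_1,v_2]$ one has $F_\bullet\mu\wedge v_\psi=F_\bullet\mu$, so Proposition~\ref{PropValuationEnFonctionDeAlpha} gives
\[
\alpha_{\m_{p_X}}(F_\bullet\mu)=\frac{F_\bullet\mu(\psi)}{m_{p_X}(\psi)}=\frac{\mu(f^*\psi)}{b(E_Y)\lambda_1\,m_{p_X}(\psi)}
\]
on the nose, not merely as a supremum. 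Factoring $f^*\psi=\psi_1\cdots\psi_r$ and applying your own formula, the breakpoints of $t\mapsto\mu(f^*\psi)$ on $[v_1,v_2]$ are among the points $v_*\wedge v_{\psi_i}$. Each $v_{\psi_i}$ is a preimage of $v_\psi$ under $f_\bullet$, and by Proposition~\ref{PropFiniteNumberOfPreimages} there are at most $\lambda_2$ of them. So the function has at most $\lambda_2$ breakpoints on $[v_1,v_2]$; since this bound is uniform in $v_2$ and the function itself is intrinsic, it has at most $\lambda_2$ breakpoints on all of $[v,v_*)$, hence is affine past the last one.

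Your rigidification route may well be made to work, but it is neither needed nor what the paper does; the whole point is the finiteness of preimages, which you never use.
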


              \begin{proof}
                Let $v_1, v_2 \in \cV_Y (p_Y; E_Y)$ be such that $v < v_1 < v_2 < v_*$. Since $F_\bullet$ is order
                preserving on $U = \left\{ \mu \geq v \right\}$ one has that $F_\bullet$ maps $[ v_1, v_2]$ homeomorphically to
                $[F_\bullet v_1, F_\bullet v_2]$.
                Let $\psi \in \hat{\OO_{X,p_X}}$ be irreducible such that $v_\psi > F_\bullet v_2$, then by Proposition
                \ref{PropValuationEnFonctionDeAlpha},  for all $\mu \in [v_1,
                v_2]$ one has
                \begin{equation}
                  \alpha_{\m_{p_X}} (F_\bullet \mu) = \frac{F_\bullet \mu (\psi)}{ m_{p_X}(\psi)} =
                  \frac{ \mu (f^* \psi)}{m_{p_X}(\psi) b(E_Y)\lambda_1}
                \end{equation}
                Now let $\psi_1, \cdots, \psi_r \in \hat{\OO_{Y, p_Y}}$ be
                irreducible (not necessarily distinct) such that $f^* \psi = \psi_1 \cdots \psi_r$. One has,

                \begin{equation}
                \mu (f^* \psi) = \sum_i \mu (\psi_i) = \sum_i \alpha_{E_Y} (\mu \wedge v_{\psi_i}) m_{E_Y} (\psi_i). \end{equation}

                Take one of the $\psi_i$ and call it $\psi_0$, we shall study the map $\mu \in [v_1, v_2] \mapsto \alpha_{E_Y}
                (\mu \wedge v_{\psi_0})$. Let $\mu_0 = v_2 \wedge v_{\psi_0}$, this map is equal to $\alpha_{E_Y}$ on $[v_1,
                \mu_0]$ and constant
                equal to $\alpha_{E_Y}(\mu_0)$ on $[\mu_0, v_2]$. Therefore, the map $\mu  \in[ v_1, v_2] \mapsto \mu
                (f^* \psi)$
                is a piecewise affine function with nonnegative coefficients of $\alpha_{E_Y} (\mu)$. The points
                on $[v_1, v_2]$ where this map is not smooth are exactly the valuations $v_* \wedge v_{\psi_i}$ and there are
                at most $\lambda_2$ of them by Proposition \ref{PropFiniteNumberOfPreimages}. Therefore the map $\mu \mapsto v
                (f^* \psi)$ is an affine function of $\alpha_{E_Y}$ with nonnegative coefficients on the segment $[\mu', v_*]$
                for any $\mu ' < v_*$ close enough to $v_*$.
              \end{proof}

              As a corollary of the proof, we get  the following proposition.
              \begin{prop}\label{PropLocalisationDeLaFibreValuationDeCourbe}
                Let $v \in \cV_Y (p_Y; E_Y)$ be as in Proposition \ref{PropApplicationAffineSurUnPetitSegment}, let $v_0\in
                [v, v_*]$ and let  $\psi \in \hat \OO_{X, p}$ be irreducible such that $v_\psi > f_\bullet v_0$. Then, for
                all $\phi \in \hat{\OO_{Y, p_Y}}$ such that $f_\bullet v_\phi = v_\psi$, one has two possibilities:
                \begin{enumerate}
                  \item Either $v_\phi > v_0$.
                  \item or $v_0\wedge v_\phi = v_* \wedge v_\phi \leq v$.
                \end{enumerate}
              \end{prop}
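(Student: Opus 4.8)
The statement will follow by combining the affine structure exhibited in the proof of Proposition~\ref{PropApplicationAffineSurUnPetitSegment} with elementary combinatorics of the tree $\cV_Y(p_Y;E_Y)$. Throughout, let $F\colon Y\to X$ be the regular lift of $f$ with $F(p_Y)=p_X$, write $f^*\psi:=F^*\psi\in\hat\OO_{Y,p_Y}$, and recall that ``$v$ as in Proposition~\ref{PropApplicationAffineSurUnPetitSegment}'' means $v$ is a divisorial valuation chosen so that $\mu\mapsto\alpha_{\m_{p_X}}(f_\bullet\mu)$ is an affine function of $t=\alpha_{E_Y}(\mu)$ on the whole of $[v,v_*]$.

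First I would locate $v_\phi$ among the branches produced by $f^*\psi$. Since $f_\bullet v_\phi=v_\psi$ is a curve valuation, $F$ cannot contract the germ $C_\phi=\{\phi=0\}$ — otherwise $F_*v_\phi$, hence $f_\bullet v_\phi$, would be divisorial by Proposition~\ref{PropPreservationOfType} applied to the local lift $F$ (and by Proposition~\ref{PropPushForwardOfCenter}, the only contractible germs here are the boundary germs, so in particular $C_\phi$ is not one and $v_\phi$ genuinely defines a valuation on $\k[X_0]$). Therefore $F(C_\phi)=C_\psi$, so $C_\phi\subseteq F^{-1}(C_\psi)=\{f^*\psi=0\}$, and $\phi$ divides $f^*\psi$ up to a unit. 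Writing $f^*\psi=\psi_1\cdots\psi_r$ with each $\psi_j$ irreducible, we get $v_\phi=v_{\psi_i}$ for some $i$.

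Next comes a purely combinatorial reduction. Since $v_0\le v_*$, for every $\tau\in\cV_Y(p_Y;E_Y)$ one has $v_0\wedge\tau=\min(v_0,\,v_*\wedge\tau)$. Taking $\tau=v_\phi$: either $v_*\wedge v_\phi\le v_0$, and then $v_0\wedge v_\phi=v_*\wedge v_\phi$; or $v_*\wedge v_\phi>v_0$, and then $v_0<v_*\wedge v_\phi\le v_\phi$, so $v_\phi>v_0$ and we are in case~(1). Hence it remains only to prove that, in the first alternative, one in fact has $v_*\wedge v_\phi\le v$.

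For this I would use that $g\colon\mu\mapsto\mu(f^*\psi)$ is affine in $t$ on $[v,v_0]$. Indeed, for $\mu\in[v,v_0]$ the order-preservation of $f_\bullet$ on $\{\mu\ge v\}$ (Lemma~\ref{LemmeFonctionOrderPreservingSurUnBonOuvert}) gives $f_\bullet\mu\le f_\bullet v_0<v_\psi$, whence $\alpha_{\m_{p_X}}(f_\bullet\mu)=\mu(f^*\psi)\big/\big(m_{p_X}(\psi)\,b(E_Y)\lambda_1\big)$ by Proposition~\ref{PropValuationEnFonctionDeAlpha}; the left-hand side is affine in $t$ on $[v,v_*]\supseteq[v,v_0]$ by Proposition~\ref{PropApplicationAffineSurUnPetitSegment}. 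Expanding $g(\mu)=\sum_j\alpha_{E_Y}(\mu\wedge v_{\psi_j})\,m_{E_Y}(\psi_j)$ via Proposition~\ref{PropValuationEnFonctionDeAlphaRelative}, each summand is a nonnegative multiple of $\min\!\big(t,\alpha_{E_Y}(v_*\wedge v_{\psi_j})\big)$, hence concave in $t$; a sum of concave functions that is affine must have every summand affine, and $\min(t,c)$ is affine on $[\alpha_{E_Y}(v),\alpha_{E_Y}(v_0)]$ exactly when its breakpoint $c=\alpha_{E_Y}(v_*\wedge v_{\psi_j})$ lies outside the open interval, i.e.\ when $v_*\wedge v_{\psi_j}\le v$ or $v_*\wedge v_{\psi_j}\ge v_0$. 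Specializing to $j=i$: if $v_*\wedge v_\phi\ge v_0$ then $v_\phi\ge v_*\wedge v_\phi\ge v_0$, and since $v_\phi$ is a curve valuation while $v_0\in[v,v_*]$ is not, $v_\phi>v_0$ (case~(1)); if instead $v_*\wedge v_\phi\le v$, then together with $v_*\wedge v_\phi\le v_0$ this gives $v_0\wedge v_\phi=v_*\wedge v_\phi\le v$ (case~(2)). The main obstacle is the initial reduction $\phi\mid f^*\psi$ through non-contraction, and the concavity argument pinning the breakpoint at $v_*\wedge v_{\psi_j}$; the remaining steps are bookkeeping with the tree order.
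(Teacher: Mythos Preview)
Your argument is correct and follows essentially the same route as the paper: identify $v_\phi$ with one of the irreducible factors $\psi_i$ of $f^*\psi$, then use that the affine map $t\mapsto\alpha_{\m_{p_X}}(f_\bullet v_t)$ on $[v,v_*]$ forces the breakpoint $v_*\wedge v_\phi$ to lie outside the open segment $(v,v_0)$. The paper compresses this into two sentences (arguing by contradiction that a breakpoint inside would violate smoothness), whereas you spell out the concavity-of-summands step that justifies why breakpoints cannot cancel---a point the paper takes for granted from the proof of Proposition~\ref{PropApplicationAffineSurUnPetitSegment}.
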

              I.e the configuration of Figure \ref{fig:not_possible_configuration} cannot occur.
              \begin{figure}[h]
                \centering
                \includegraphics[scale=0.8]{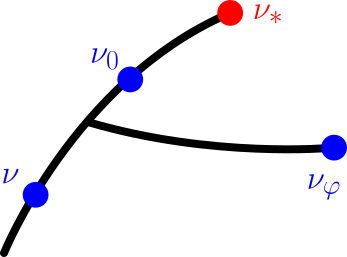}
                \caption{Configuration which is not possible}
                \label{fig:not_possible_configuration}
              \end{figure}

              \begin{proof}
                The map $ \mu \in [v, v_0] \mapsto \alpha_{m_{p_X}} (F_\bullet \mu)$ is a smooth affine function of $\alpha_{E_Y}
                (\mu)$. If (1) and (2) were not satisfied, then we would get $v_\phi \wedge v_* \in [v , v_*]$ and this
                would contradict the smoothness of the map $\mu \in [v, v_*] \mapsto \alpha_{m_{p_X}} (F_\bullet \mu)$
              \end{proof}

              \begin{lemme}\label{LemmeExistenceValuationContractante}
                Let $v$ be as in Proposition \ref{PropApplicationAffineSurUnPetitSegment}. If $\mu \in [v, v_*]$ is
                sufficiently close to $v_*$, then $F_\bullet \mu > \mu $ and $ F_\bullet (\left\{ \mu ' \geq \mu \right\}) \Subset U
                (\overrightarrow v)$ where $\overrightarrow v$ is the tangent vector at $\mu$ defined by $v_*$ and $U
                (\overrightarrow v)$ is its associated open subset.
              \end{lemme}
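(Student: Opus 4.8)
The plan is to compare the contraction map $F_\bullet$ with the inclusion $\pi_\bullet$ along the segment $[v',v_*]\subset\cV_Y(p_Y;E_Y)$, where $v'$ is the divisorial valuation produced by Proposition~\ref{PropApplicationAffineSurUnPetitSegment}; we may take $v=v'$. For $t\in[\alpha_{E_Y}(v'),\alpha_{E_Y}(v_*)]$ let $v_t$ be the point of $[v',v_*]$ with $\alpha_{E_Y}(v_t)=t$, and set $t_*:=\alpha_{E_Y}(v_*)$. By Proposition~\ref{PropApplicationAffineSurUnPetitSegment} the function $\phi(t):=\alpha_{\m_{p_X}}(F_\bullet v_t)$ is affine with slope $a\geq 0$ on $[\alpha_{E_Y}(v'),t_*]$, and $a>0$ (otherwise $F_\bullet v_t=v_*$ on a whole interval of $t$, contradicting Proposition~\ref{PropFiniteNumberOfPreimages}). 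On the other hand, by Proposition~\ref{PropSkewnessRelationAfterBlowUpContractionMap} one has $\alpha_{\m_{p_X}}(\pi_\bullet v_t)=\alpha_{\m_{p_X}}(\pi_\bullet\ord_{E_Y})+b(E_Y)^{-2}t$, affine with slope $\beta:=b(E_Y)^{-2}$. Since $f_*v_*=\lambda_1v_*$ gives $F_\bullet v_*=v_*=\pi_\bullet v_*$, the two affine functions agree at $t=t_*$, so $\phi(t)-\alpha_{\m_{p_X}}(\pi_\bullet v_t)=(a-\beta)(t-t_*)$. As both $F_\bullet v_t$ and $\pi_\bullet v_t$ lie on $[v_{\m_{p_X}},v_*]$, which is parametrized by $\alpha_{\m_{p_X}}$, the identification of Remark~\ref{RmqOrdreCompatible} shows that for $t<t_*$ one has $F_\bullet v_t>v_t$, $=v_t$, or $<v_t$ according to whether $a<\beta$, $a=\beta$, or $a>\beta$, and exactly one of these holds on a neighbourhood of $v_*$ in $[v',v_*]$.

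The next step, and the heart of the argument, is to prove $a<\beta$. I would rule out $a\geq\beta$ as follows. Fix a divisorial valuation $\mu=v_t\in(v',v_*)$ close enough to $v_*$ that $Z_\mu\cdot\theta^*>0$ (possible since $Z_\mu\to Z_{v_*}$ strongly as $\mu\to v_*$ by Corollary~\ref{CorStrongConvergenceInL2}, and $Z_{v_*}\cdot\theta^*>0$ by Theorem~\ref{ThmExistenceEigenvaluationSurfaces}). By Proposition~\ref{PropCaracterisationConvergenceVersEigenvaluation}, $f_\bullet^n\mu\to v_*$, hence $\alpha_{\m_{p_X}}(f_\bullet^n\mu)\to\alpha_{\m_{p_X}}(v_*)=:\alpha_*$. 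But, using that $F_\bullet v_t=f_\bullet(\pi_\bullet v_t)$ (a direct check from the definitions) and that $F_\bullet$ maps $[v',v_*]$ into $[v_{\m_{p_X}},v_*]$ (Lemma~\ref{LemmeFonctionOrderPreservingSurUnBonOuvert}), the affine relation gives $\alpha_*-\alpha_{\m_{p_X}}(f_\bullet^n\mu)=(a/\beta)^n\big(\alpha_*-\alpha_{\m_{p_X}}(\mu)\big)$ as long as $f_\bullet^n\mu$ remains on the segment $[v',v_*]$. If $a=\beta$ this skewness is constant, so $f_\bullet^n\mu=\mu\neq v_*$ for all $n$, contradicting the convergence; if $a>\beta$ the skewness is strictly decreasing, so the orbit leaves $[v',v_*]$ through $v'$ after finitely many steps, yet the convergence forces the orbit back into any neighbourhood of $v_*$, and iterating this we see that $\alpha_{\m_{p_X}}(f_\bullet^n\mu)$ drops below $\alpha_{\m_{p_X}}(v')$ infinitely often, again contradicting $\alpha_{\m_{p_X}}(f_\bullet^n\mu)\to\alpha_*$. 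In the boundary case $a=\beta$ one may alternatively invoke the simplicity of the eigenvalue $\lambda_1$ of $f_*$ on $\L2$ from Theorem~\ref{ThmEigenclasses}: $F_\bullet$ would fix a continuum of divisorial valuations $v_s$, so by Proposition~\ref{PropPushForwardOnDivisorIsPushForwardOnValuation} the classes $Z_{v_s}\in\L2$ would give infinitely many independent eigenvectors of $f_*$, which is impossible. Hence $a<\beta$.

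Finally, $a<\beta$ yields $F_\bullet v_t>v_t$ for every $t<t_*$ sufficiently close to $t_*$, and moreover $F_\bullet v_t<v_*$ for such $t$ since $F_\bullet$ is order preserving on $U=\{\mu'\geq v\}$ (Lemma~\ref{LemmeFonctionOrderPreservingSurUnBonOuvert}) and $F_\bullet v_*=v_*$. Fixing $\mu:=v_t$ this close to $v_*$, we get $\mu<F_\bullet\mu<v_*$; let $\overrightarrow v$ be the tangent vector at $\mu$ represented by $v_*$. For any $\mu'\geq\mu$ (which lies in $U$), order preservation gives $F_\bullet\mu'\geq F_\bullet\mu$, hence $F_\bullet\mu'\wedge v_*\geq F_\bullet\mu>\mu$, so $F_\bullet\mu'$ represents $\overrightarrow v$; thus $F_\bullet(\{\mu'\geq\mu\})\subset\{\nu:\nu\geq F_\bullet\mu\}$. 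The latter set is a closed subset of $\cV_X(p_X;\m_{p_X})$, it is compact (because the unique removed curve valuation $v_z$ does not dominate $F_\bullet\mu$ once $\mu$ is close enough to $v_*$), and it is contained in $U(\overrightarrow v)$ since $F_\bullet\mu\in(\mu,v_*]$. Therefore $F_\bullet(\{\mu'\geq\mu\})\Subset U(\overrightarrow v)$, which completes the proof.

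The main obstacle is the strict inequality $a<\beta$: once the affine structure of Proposition~\ref{PropApplicationAffineSurUnPetitSegment} is in hand, everything reduces to showing that the contraction map genuinely pushes valuations towards $v_*$ near $v_*$, and this requires feeding the global convergence statement (Proposition~\ref{PropCaracterisationConvergenceVersEigenvaluation}, ultimately the spectral gap $\lambda_1^2>\lambda_2$) back into the local picture; the remaining steps are routine manipulations in the valuative tree.
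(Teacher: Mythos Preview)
Your overall strategy---comparing the two affine maps $t\mapsto\alpha_{\m_{p_X}}(F_\bullet v_t)$ and $t\mapsto\alpha_{\m_{p_X}}(\pi_\bullet v_t)$ via their slopes $a$ and $\beta$, and feeding the global convergence of Proposition~\ref{PropCaracterisationConvergenceVersEigenvaluation} back in---matches the paper's, and your treatment of the case $a=\beta$ and of the final implication ``$a<\beta\Rightarrow F_\bullet(\{\mu'\geq\mu\})\Subset U(\overrightarrow v)$'' is correct. There is, however, a genuine gap in your argument for the case $a>\beta$.

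You write that if $a>\beta$ then the orbit $(f_\bullet^n\mu)_n$ leaves $[v',v_*]$ through $v'$, is forced back near $v_*$ by convergence, leaves again, and therefore has skewness below $\alpha_{\m_{p_X}}(\pi_\bullet v')$ infinitely often. The problem is the ``forced back'' step: strong convergence only gives $\alpha_{\m_{p_X}}(f_\bullet^n\mu)\to\alpha_*$ and $\alpha_{\m_{p_X}}(f_\bullet^n\mu\wedge v_*)\to\alpha_*$; it does \emph{not} say that $f_\bullet^n\mu$ eventually lies on the segment $[v',v_*]$ again. Once the orbit has left $\pi_\bullet(\cV_Y(p_Y;E_Y))$ you lose the map $F_\bullet$ altogether (the lift $F:Y\to X$ is only known to be regular at $p_Y$, not at the point where the orbit now sits), and even after the orbit returns to $\cV_X(p_X;\m_{p_X})$ it may approach $v_*$ along a branch off the segment. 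On such a branch the affine formula is unavailable and nothing prevents the skewness from simply increasing monotonically to $\alpha_*$. So your oscillation argument does not go through.

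The paper closes exactly this gap by proving the auxiliary Claim that, under the hypothesis $F_\bullet\nu\leq\nu$ on $[v,v_*[$, one has $F_\bullet\mu'\wedge v_*\leq\mu'\wedge v_*$ for \emph{every} $\mu'\geq v$, not only for $\mu'$ on the segment. This is where Proposition~\ref{PropLocalisationDeLaFibreValuationDeCourbe} (the localisation of preimages of curve valuations, itself a consequence of the smoothness of the affine map) is used, and it is the ingredient your argument is missing: it upgrades control along the segment to control of the projection $\mu'\mapsto\mu'\wedge v_*$ on the whole open set $\{\mu'\geq v\}$. With the Claim in hand one can iterate on a valuation $\omega>v$ all of whose forward images are centered at $p_Y$, and obtain $F_\bullet^N\omega\wedge v_*\leq\omega\wedge v_*$ for every $N$, contradicting convergence. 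To repair your proof in the case $a>\beta$ you would have to prove a statement of this kind; the slope comparison alone is not enough.
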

              We sum up Lemma \ref{LemmeExistenceValuationContractante} in Figure \ref{fig:dynamics_infinite_sing_case}

              \begin{figure}[h]
                \centering
                \includegraphics[scale=0.4]{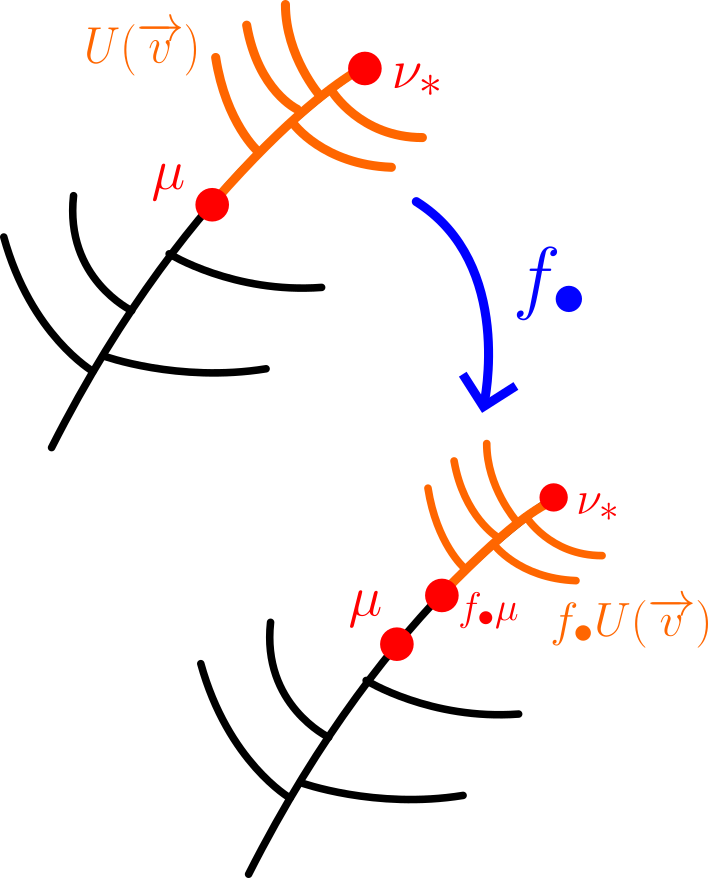}
                \caption{An $f_\bullet$-invariant open subset of $\Vinf$, infinitely singular case}
                \label{fig:dynamics_infinite_sing_case}
              \end{figure}

              \begin{proof}
                Let $U = \left\{ \mu \geq v \right\}$. Recall that $F_\bullet$ is order preserving over $U$. We first notice that
                if every $\mu \in [v, v_*]$ close enough to $v_*$ satisfies $F_\bullet \mu > \mu$, it is clear that $F_\bullet
                \left\{ \mu ' \geq \mu \right\} \Subset U(\overrightarrow v)$. Indeed, let $\mu ' \geq \mu$ and
                set $\mu_0 := \mu ' \wedge v_* \geq \mu$. Then, $F_\bullet \mu ' \geq F_\bullet \mu_0 > \mu_ 0$. In particular,
                $F_\bullet \mu ' \wedge v_* > \mu ' \wedge v_* \geq \mu$.

                Secondly, by Proposition \ref{PropApplicationAffineSurUnPetitSegment}, the map $t \in [\alpha_{E_Y} (v), \alpha_{E_Y}
                (v_*)] \mapsto \alpha_{m_{p_X}} (v_t)$ is affine and we know that it is non decreasing.

                \begin{lemme}\label{LemmeTechniqueFonctionAffine}
                  Let $a : \R \rightarrow \R$ be a non-decreasing non constant affine function that admits a fixed point $t_0$. If
                  there exists $s < t_0$, $a (s) > s$ then the slope of $a$ is $< 1$ and for all $t < t_0$, $a(t) > t$.
                \end{lemme}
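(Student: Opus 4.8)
The plan is to reduce everything to the single algebraic identity that expresses $a(t)-t$ in terms of the fixed point $t_0$ and the slope. Write the affine function as $a(t) = mt + c$; the hypothesis that $a$ is non-decreasing gives $m \geq 0$, and being non-constant gives $m \neq 0$, so $m > 0$. The existence of a fixed point $t_0$ means $m t_0 + c = t_0$, hence $c = (1-m)t_0$. Substituting this back yields the key formula
\begin{equation}
  a(t) - t = (m-1)t + c = (m-1)t + (1-m)t_0 = (1-m)(t_0 - t),
\end{equation}
valid for all $t \in \R$.

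From here both conclusions are immediate. First, applying the identity at $t = s$ and using the hypothesis $a(s) > s$ gives $(1-m)(t_0 - s) > 0$; since $s < t_0$ we have $t_0 - s > 0$, so $1 - m > 0$, i.e.\ the slope $m$ is $< 1$. Second, for any $t < t_0$ we have $t_0 - t > 0$ and $1 - m > 0$, so the identity gives $a(t) - t = (1-m)(t_0 - t) > 0$, that is $a(t) > t$. This is exactly the statement.

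There is essentially no obstacle here: the lemma is a one-line computation once the affine function is normalized through its fixed point, and I would present it as such (perhaps without even naming $m$ and $c$ explicitly, just noting that $t \mapsto a(t)-t$ is itself affine, vanishes at $t_0$, and has slope $m-1$). The only thing worth a sentence of care is making sure the degenerate reading of the hypotheses is excluded — "non-decreasing" together with "non-constant" for an affine map forces the slope to lie in $(0,\infty)$, and the subsequent argument then pins it down to $(0,1)$ — but this requires no real work.
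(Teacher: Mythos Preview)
Your proof is correct and is essentially the same as the paper's: the paper simply performs the linear change of variable $t \mapsto t - t_0$ to reduce to $t_0 = 0$, so that $a(t) = \alpha t$ with $\alpha > 0$, and then reads off $0 < \alpha < 1$ from $a(s) > s$ at some $s < 0$. Your version carries out the identical computation without first translating, via the identity $a(t) - t = (1-m)(t_0 - t)$; there is no substantive difference.
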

                \begin{proof}[Proof of Lemma \ref{LemmeTechniqueFonctionAffine}]
                  We can suppose that $t_0 = 0$ by a linear change of coordinate. Then, $a(t)$ is of the form
                  \begin{equation}
                    a(t) = \alpha t
                    \label{<+label+>}
                  \end{equation}
                  with $\alpha > 0$. Now, if $s < 0$ satisfies $a(s) > s$, this means that $0 < \alpha < 1$ and therefore for all
                  $t < 0$, $a(t) > t$.
                \end{proof}

                We show that there exists $\mu \in [v, v_*]$ such that $F_\bullet \mu > \mu$. If not, then for all
                $\mu \in [v, v_* [, F_\bullet \mu \leq \mu$. Under such an assumption, we show the following
                    \medskip
                    \paragraph{\textbf{Claim}} For all $\mu ' \geq v$ we have $F_\bullet \mu ' \wedge v_* \leq  \mu ' \wedge  v_*$.

                    Suppose that the claim is false and let $\mu '$ be a valuation that contradicts this
                    statement. It is clear that $\mu '$ does not belong to $[v, v_*]$.  Pick $v_0 \in [v , v_*]$ such that
                    $ v \leq  \mu ' \wedge  v_* <  v_0 < F_\bullet \mu ' \wedge
                    v_*$. Let $\phi \in \hat \OO_{Y, p_Y}$ be such that $v_\phi > \mu '$ and let $\psi \in \hat \OO_{X, p}$ be
                    such that $f_{\bullet} v_\phi = v_\psi$. Since $f$ is order preserving we get that  $v_\psi > F_\bullet
                    \mu ' \geq F_\bullet \mu' \wedge v_* > v_0$, therefore $v_\psi > F_\bullet v_0$. But then $\phi$
                    contradicts Proposition \ref{PropLocalisationDeLaFibreValuationDeCourbe} since $v_\phi \wedge v_0 = \mu '
                    \wedge v_0 \in [v, v_0]$. So the claim is shown.

                    Now, pick $\w$ divisorial such that $Z_\w \cdot \theta^* >0$ by Proposition
                    \ref{PropCaracterisationConvergenceVersEigenvaluation} the sequence $\frac{1}{\lambda_1^n}f^n_* \w$ converges
                    towards $ (Z_\w \cdot \theta^*) v_*$. Hence, there exists an integer $N_0 >0$ such that for all $N \geq N_0$,
                    $f_*^N v \in \cV_Y
                    (p_Y) $, replace $\w$ by $f_*^{N_0} \w$ and normalize it such that $\w \in \cV_Y (p_Y, E_Y)$.  We can
                    suppose up to choosing a larger $N_0$ that $\w > v$. In that case $F_\bullet^N \w$ converges towards $v_*$
                    but by the claim, $ \forall N \geq 0, F_\bullet^N \w \wedge v_* \leq \w \wedge v_*$ which is a contradiction.

                    Therefore, there exists a valuation $\mu \in [v, v_*[$ such that $F_\bullet \mu >
                      \mu$.  \end{proof}

                      \begin{prop}\label{PropVecteurTangentInvariantEtToutLeMondeConvergeInfSingularCase}
                        With the notations from Lemma \ref{LemmeExistenceValuationContractante}, we have $F_\bullet
                        (U(\overrightarrow v)) \Subset U(\overrightarrow v)$ and for all $\mu ' \in U(\overrightarrow v)$,
                        \begin{equation}
                          F_\bullet^n \mu' \xrightarrow[n \rightarrow +\infty]{} v_*
                          \label{<+label+>}
                        \end{equation}
                        for the weak topology.
                      \end{prop}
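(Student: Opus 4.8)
The plan is to package the contraction already established in Lemma~\ref{LemmeExistenceValuationContractante} together with a one–dimensional analysis of $F_\bullet$ along the segment $[\mu,v_*]$, and to propagate the convergence off the segment by a monotonicity (squeeze) argument in the tree.

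First I would dispose of the invariance $F_\bullet(U(\overrightarrow v))\Subset U(\overrightarrow v)$. Recall (this is the description used throughout this section, e.g. in the proof of Lemma~\ref{LemmeExistenceValuationContractante}) that $U(\overrightarrow v)=\{\nu:\nu\wedge v_*>\mu\}$, so $U(\overrightarrow v)\subset\{\nu\ge\mu\}$. Then Lemma~\ref{LemmeExistenceValuationContractante} gives
\begin{equation}
  F_\bullet(U(\overrightarrow v))\subset F_\bullet(\{\nu\ge\mu\})\Subset U(\overrightarrow v),
\end{equation}
and a closed subset of a relatively compact set is relatively compact. Moreover $U(\overrightarrow v)$ is contained in the subtree $\pi_\bullet(\cV_Y(p_Y;E_Y))$ of $\cV_X(p_X;\m_{p_X})$: every $\nu\in U(\overrightarrow v)$ satisfies $\nu>\mu\ge v$ and lies in the tangent direction of $v_*$ at $\mu$, hence in the tangent direction of $v_*$ at $v_{E_Y}$, which by Proposition~\ref{PropCompatibilitéOrdreContractionMap} is exactly the image of $\pi_\bullet$. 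Consequently $F_\bullet$ may legitimately be iterated on $U(\overrightarrow v)$ after identifying $\cV_Y(p_Y;E_Y)$ with $\pi_\bullet(\cV_Y(p_Y;E_Y))$, and by Lemma~\ref{LemmeFonctionOrderPreservingSurUnBonOuvert} it is order preserving on the set $U=\{\nu\ge v\}\supset U(\overrightarrow v)$.

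For the convergence I would first treat points of $[\mu,v_*]$. Using the affine relation $\alpha_{\m_{p_X}}\circ\pi_\bullet=\alpha_{\m_{p_X}}(\pi_\bullet\ord_{E_Y})+b(E_Y)^{-2}\alpha_{E_Y}$ to pass from the $E_Y$–parametrisation to the $\m_{p_X}$–parametrisation, Proposition~\ref{PropApplicationAffineSurUnPetitSegment} shows (after shrinking so that $\mu$ lies above the divisorial valuation $v'$ produced there) that
\begin{equation}
  A:\; s\longmapsto \alpha_{\m_{p_X}}\bigl(F_\bullet \nu_s\bigr),
  \qquad \nu_s\in[\mu,v_*],\ \alpha_{\m_{p_X}}(\nu_s)=s,
\end{equation}
is an affine function with nonnegative slope on $[\alpha_{\m_{p_X}}(\mu),\alpha_{\m_{p_X}}(v_*)]$; here one uses that $F_\bullet$ is order preserving on $U$, so $F_\bullet\nu_s$ runs monotonically along $[F_\bullet\mu,v_*]$, and that $F_\bullet v_*=v_*$, so $A$ fixes $\alpha_{\m_{p_X}}(v_*)$. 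Lemma~\ref{LemmeExistenceValuationContractante} provides $\mu$ with $F_\bullet\mu>\mu$, i.e. $A(\alpha_{\m_{p_X}}(\mu))>\alpha_{\m_{p_X}}(\mu)$, so Lemma~\ref{LemmeTechniqueFonctionAffine} forces the slope of $A$ to be some $\rho\in[0,1)$ and $A(s)>s$ for all $s<\alpha_{\m_{p_X}}(v_*)$. Hence $F_\bullet$ moves $[\mu,v_*)$ strictly towards $v_*$ and $\alpha_{\m_{p_X}}(F_\bullet^n\mu)=A^n(\alpha_{\m_{p_X}}(\mu))=\alpha_{\m_{p_X}}(v_*)-\rho^n\bigl(\alpha_{\m_{p_X}}(v_*)-\alpha_{\m_{p_X}}(\mu)\bigr)\to\alpha_{\m_{p_X}}(v_*)$, so $F_\bullet^n\mu\to v_*$ weakly since the weak and strong topologies coincide on $[\mu,v_*]$ by Lemma~\ref{LemmeWeakStrongTopologiesSameOnSegments}.

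Finally, for arbitrary $\mu'\in U(\overrightarrow v)$ set $\nu_0:=\mu'\wedge v_*\in(\mu,v_*]$. Using that $F_\bullet$ is order preserving on $U$ and fixes $v_*$, I would check by induction that $F_\bullet^n\nu_0\le F_\bullet^n\mu'\wedge v_*\le v_*$ for all $n$ (at each step $F_\bullet^n\nu_0\le F_\bullet^n\mu'$ and $F_\bullet^n\nu_0\le v_*$ give $F_\bullet^{n+1}\nu_0\le F_\bullet^{n+1}\mu'$ and $\le v_*$). Since $\nu_0\ge\mu$ this yields
\begin{equation}
  \alpha_{\m_{p_X}}(F_\bullet^n\mu)\le\alpha_{\m_{p_X}}(F_\bullet^n\nu_0)\le\alpha_{\m_{p_X}}(F_\bullet^n\mu'\wedge v_*)\le\alpha_{\m_{p_X}}(v_*),
\end{equation}
so $F_\bullet^n\mu'\wedge v_*\to v_*$; as the basic weak neighbourhoods of the end $v_*$ are the sets $\{\nu:\nu\wedge v_*>\tau\}$ with $\tau<v_*$, this is precisely $F_\bullet^n\mu'\to v_*$ weakly. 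The main obstacle is the input from Proposition~\ref{PropApplicationAffineSurUnPetitSegment} and Lemma~\ref{LemmeTechniqueFonctionAffine}: mere monotonicity of $\alpha_{\m_{p_X}}(F_\bullet^n\mu)$ does not give convergence to $\alpha_{\m_{p_X}}(v_*)$, and one genuinely needs the affine (slope $<1$) structure, which also forces one to be careful that the change of parametrisation is compatible with $F_\bullet$ being $\frac{1}{b(E_Y)\lambda_1}F_*$ on $U$ and with $F_\bullet v_*=v_*$, so that every iterate stays in the region where $A$ is defined.
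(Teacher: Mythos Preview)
Your proof is correct and follows essentially the same strategy as the paper: establish invariance from Lemma~\ref{LemmeExistenceValuationContractante}, use the affine structure from Proposition~\ref{PropApplicationAffineSurUnPetitSegment} together with Lemma~\ref{LemmeTechniqueFonctionAffine} to get convergence along $[\mu,v_*]$, and then squeeze via $F_\bullet^n(\mu'\wedge v_*)\le F_\bullet^n\mu'\wedge v_*\le v_*$.

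The only genuine difference is in the very last step. The paper concludes weak convergence by computing $F_\bullet^n\mu'(\phi)$ directly via Proposition~\ref{PropValuationEnFonctionDeAlpha}: once $F_\bullet^n\mu'\wedge v_*\ge v_*\wedge v_\phi$, one has $F_\bullet^n\mu'\wedge v_\phi=v_*\wedge v_\phi$ and hence $F_\bullet^n\mu'(\phi)=v_*(\phi)$. You instead invoke the tree--topological fact that the sets $\{\nu:\nu\wedge v_*>\tau\}$, $\tau<v_*$, form a basis of weak neighbourhoods of the end $v_*$ (this is legitimate via Proposition~\ref{PropWeakTopologyIsWeakTopologyOfTree}). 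Both arguments are short; yours is slightly more structural, the paper's slightly more explicit, but they are equivalent in content. Your added care about why iteration of $F_\bullet$ is well defined on $U(\overrightarrow v)$ and why the change of parametrisation $\alpha_{E_Y}\leftrightarrow\alpha_{\m_{p_X}}$ preserves affinity is in fact more detailed than what the paper spells out.
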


                      \begin{proof}
                        For every $\mu '$ in $U(\overrightarrow v)$, write $\tilde{\mu '} = \mu ' \wedge v_*$. By
                        the proof of Lemma \ref{LemmeExistenceValuationContractante}, $F_\bullet^n (\mu ') \rightarrow v_*$ for
                        the strong topology. Therefore, $F_\bullet^n \mu ' \wedge v_* \geq F_\bullet^n (\tilde{\mu '})
                        \rightarrow v_*$ and $F_\bullet^n \mu'$ converges weakly towards $v_*$ because for all $\phi \in
                        \OO_{Y, p}$ irreducible, we have
                        \begin{equation}
                          F_\bullet^n (\mu ') (\phi) = \alpha_{E_Y} (F_\bullet^n \mu ' \wedge v_\phi) m_{E_Y} (\phi).
                          \label{<+label+>}
                        \end{equation}
                        For $n$ large enough we have $F_\bullet^n \mu ' \wedge v_* \geq v_* \wedge v_{\phi}$, hence
                        $F_\bullet^n \mu ' \wedge v_\phi = v_* \wedge v_\phi$ and
                        \begin{equation}
                          F_\bullet^n (\mu ') (\phi) =  \alpha_{E_Y} (v_* \wedge v_\phi) m_{E_Y} (\phi) = v_* (\phi)
                          \label{<+label+>}
                        \end{equation}
                      \end{proof}

                      \begin{proof}[Proof of Proposition \ref{PropInfinitelySingularPreparation}]
                        Let $v$ be as in Proposition \ref{PropApplicationAffineSurUnPetitSegment}. Let $v_n$ be the
                        infinitely near sequence of $v_*$ (see Proposition \ref{PropInfinitelyNearSequence}). We have
                        for $n$ large enough $v_n \in [v, v_*]$ and $v_n$ satisfies Lemma
                        \ref{LemmeExistenceValuationContractante}. Set $\mu = v_n$ for some $n$ large enough and let
                        $Z$ be a completion such that $c_Z (\mu) = E$ and $c_Z (v_*) =: p \in E$ is a free
                        point. The open subset $U(\overrightarrow v)$ associated to the tangent vector at $\mu$
                        defined by $v_*$ is exactly the image of $\cV_Z (p)$ in $\cV_Y (p_Y; E_Y)$.  By
                        Proposition \ref{PropVecteurTangentInvariantEtToutLeMondeConvergeInfSingularCase}, $F_\bullet
                        U (\overrightarrow v) \Subset U (\overrightarrow v)$, this means that $f_* \cV_Y (p) \subset
                        \cV_Y (p)$. By Lemma \ref{LemmeConditionRegularité}, $f$ is defined at $p, f(p) = p$ and
                        since $F_\bullet \mu > \mu$, we get $f$ contracts $E$ to $p$. We have that for every $\mu \in
                        \cV_Z (p; \m_p), f_\bullet^n \mu \rightarrow v_*$ also by Proposition
                        \ref{PropVecteurTangentInvariantEtToutLeMondeConvergeInfSingularCase}.

                        The statement about cofinalness follows from the fact that the sequence of infinitely near
                        points associated to $v_*$ contains infinitely many free points, so for every completion
                        $X$ of $X_0$, there exists a completion above it where the center of $v_*$ is a free point
                        at infinity.
                      \end{proof}

                      \section{Attractingness of $v_*$, the irrational case}
                      Suppose now that $v_*$ is an irrational valuation. There exists a completion $X$ such that the center of
                      $v_*$ on $X$ and on any completion above $X$ is the intersection of two divisors at
                      infinity $E, F$. We still write $f : X \dashrightarrow
                      X$ for the lift of $f$.

                      Let $Y$ be a completion above $X$ such that $f : Y \dashrightarrow X$ is defined at $c_Y
                      (v_*)$. Then, we must have $f (c_Y (v_*)) = c_X (v_*)$. Write $c_Y (v_*) = E_Y \cap F_Y$ and
                      $c_X (v_*) = E_X \cap F_X$.

                      \begin{prop}\label{PropMonomializationOnlyAtTheCenter}
                        Let $(x,y)$ (resp. (z,w)) be local coordinates at $c_Y (v_*)$ (resp. $c_X (v_*)$) associated to
                        $(E_Y, F_Y)$ (resp. $(E_X, F_X)$), then in these coordinates $f$ is of the form
                        \begin{equation}
                          f(x,y) = \left( x^a y^b \phi, x^c y^d \psi \right)
                          \label{EqFormeMonomiale}
                        \end{equation}
                        with $ad - bd \neq 0$.
                      \end{prop}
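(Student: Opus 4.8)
The statement asserts that, in suitable local coordinates adapted to the pairs of divisors at infinity passing through $c_Y(v_*)$ and $c_X(v_*)$, the lift $f$ has the pseudomonomial shape \eqref{EqFormeMonomiale} with nonzero ``determinant'' $ad-bc$. The key point is that both coordinate axes at $c_Y(v_*)$ must be contracted by $f$ into the two axes at $c_X(v_*)$, which forces each $F^*z$, $F^*w$ to be (up to a local unit) a monomial in $x,y$. First I would set up notation: write $E_Y=\{x=0\}$, $F_Y=\{y=0\}$ locally at $q:=c_Y(v_*)$, and $E_X=\{z=0\}$, $F_X=\{w=0\}$ locally at $p:=c_X(v_*)$, with $F:Y\to X$ regular at $q$ and $F(q)=p$ (this holds by the choice of $Y$). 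Since $F^*z$ is a germ of regular function at $q$ vanishing exactly on $F^{-1}(E_X)$ near $q$, and the only candidate irreducible components through $q$ that can be sent into $E_X$ are $E_Y$ and $F_Y$ (any other curve through $q$ is not contracted, being a strict transform of a curve on $X$; this is where Proposition \ref{PropTechniqueContractionDeCourbes} and the fact that only boundary divisors get contracted near an indeterminacy point come in), we get $F^*z = x^a y^b \phi$ for some $a,b\ge 0$ and $\phi$ a local unit. Likewise $F^*w = x^c y^d \psi$. This already yields the form \eqref{EqFormeMonomiale}.

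The remaining point is $ad-bc\neq 0$. Here I would invoke the eigenvaluation. By Theorem \ref{ThmExistenceEigenvaluationSurfaces} the valuation $v_*$ is irrational, and by Proposition \ref{PropSequenceOfInfinitelyNearPoints} (item 2) its sequence of centers is eventually a sequence of satellite points, so we may indeed assume $c_Z(v_*)=E\cap F$ for every completion $Z$ above $X$; this legitimizes the satellite picture at both $q$ and $p$. Now $v_*$ restricted to $\cV_Y(q)$ is (a normalization of) a monomial valuation $v_{s,t}$ with $s/t\in\R\setminus\Q$, since $v_*$ is irrational and irrational valuations centered at a satellite point are monomial. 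Computing $f_*v_{s,t}$ with the formula of Lemma \ref{LemmeActionSurMonomiale}, we get $f_*(v_{s,t}) = v_{as+bt,\,cs+td}$, which must be proportional to the monomial valuation $v_{s',t'}$ representing $v_*$ at $p$; in particular the image is again a monomial (hence quasimonomial) valuation on $\cV_X(p)$. If $ad-bc=0$, the two linear forms $as+bt$ and $cs+td$ are proportional, so $f_*v_{s,t}$ would be a rescaling of the \emph{divisorial} valuation $\ord_{E_X}$ or $\ord_{F_X}$ (the degenerate monomial valuations), contradicting the fact that $f_*$ preserves the type of a valuation (Proposition \ref{PropPreservationOfType}, or Proposition \ref{PropPushForwardOfCenter}): an irrational valuation must push forward to an irrational one. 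Hence $ad-bc\neq 0$.

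The main obstacle, and the step requiring the most care, is the first one: justifying that \emph{no} irreducible germ through $q$ other than $E_Y$ and $F_Y$ can be carried into the two boundary axes at $p$, so that $F^*z$ and $F^*w$ really are monomials in $x,y$ up to units. For this I would argue that $F$ is a local isomorphism onto its image away from the exceptional locus of $F$ (as a composition of blow-ups over $X$), so a curve germ $C$ at $q$ with $F(C)\subset E_X\cup F_X$ and $C$ not a boundary divisor would force $C\subset \Exc(F)$, which is impossible since $\Exc(F)$ consists of boundary divisors; alternatively, one invokes that near an indeterminacy point the only contracted curves are at infinity, exactly as recorded in the discussion following Proposition \ref{PropPushForwardOfCenter}. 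Once that is in place, the monomial shape is immediate and the nondegeneracy $ad-bc\neq 0$ follows cleanly from the irrationality of $v_*$ together with the type-preservation of $f_*$.
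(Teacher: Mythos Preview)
Your proposal is correct and follows essentially the same strategy as the paper. One small slip: you claim that $F:Y\to X$ is ``a composition of blow-ups over $X$'' and hence a local isomorphism off its exceptional locus, but $F$ is the lift of the dominant endomorphism $f$, not of a birational map, so this description is wrong. Your alternative justification, however, is exactly what the paper uses and is enough: any irreducible germ of curve through $q$ other than $E_Y$ or $F_Y$ meets $X_0$, and since $f$ is an endomorphism of $X_0$ it cannot carry such a germ into the boundary $\{zw=0\}$; hence $F^*z$ and $F^*w$ vanish only along $E_Y\cup F_Y$, giving the pseudomonomial form. Your argument for $ad-bc\neq 0$ via type preservation (if $ad-bc=0$ then $f_*v_{s,t}$ lands on a fixed divisorial valuation, contradicting that $f_*$ sends the irrational $v_*$ to an irrational valuation) is equivalent to the paper's formulation, which observes that $ad-bc=0$ would force the ratio $(f_*v)(z)/(f_*v)(w)$ to be a fixed rational number for every $v$, incompatible with $f_*v_*=\lambda_1 v_*$ having irrational weight ratio.
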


                      \begin{proof}
                        It is clear that $f$ is of this quasimonomial form in these coordinates because $f$ is an
                        endomorphism of $X_0$ so no germ of curve in $X_0$ can be sent to $zw = 0$. The only
                        difficulty is to show that $ad -bd \neq 0$. If that was not the case, then there exists $t \in
                        \Q_{>0}$ such that $(a,b) = t (c,d)$. And therefore for all $v$ centered at $c_Y (v_*)$ we
                        should have
                        \begin{equation}
                          \frac{f_* v (z)}{f_* v (w)} = \frac{a v(x) + b v(y)}{c v(x) + d v(y)}= t \in \Q_{>0}
                          \label{<+label+>}
                        \end{equation}
                        But since $f_* v_* = \lambda_1 v_*$ and $v_*$ is irrational we have that $f_* v_* =
                        v_{s,t}$ with $s/t \not \in \Q$ and this is a contradiction.
                      \end{proof}

                      Using this we show

                      \begin{prop}\label{PropGoodCompactificationIrrationalCase}
                        There exists a completion $Y'$ such that
                        \begin{enumerate}
                          \item The lift $\hat f : Y' \rightarrow Y'$ is defined at $p = c_{Y'}(v_*)$;
                          \item $\hat f (p) = p$;
                          \item If $E,F$ are the two divisors at infinity such that $p = E \cap F$, then $\hat f$ contracts at
                            least one of the two divisors and $\hat f^2$ contracts both of them.
                          \item Define $f_\bullet : \cV_{Y'} (p; \m_p) \rightarrow \cV_{Y'} (p; \m_p)$. For all $\mu \in
                            \cV_{Y'}
                            (p; \m_p), f_\bullet^n \mu \rightarrow v_*$ for the weak topology of $\cV_{Y'} (p; \m_p)$.
                        \end{enumerate}
                        Furthermore, If $Z$ is a completion above $Y'$, then (1)-(4) remain true.
                      \end{prop}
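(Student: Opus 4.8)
The plan is to mimic the infinitely singular case, but now working in the tree $\cV_{Y'}(p;\m_p)$ directly, using that $v_*$ is an \emph{interior} (irrational) point with exactly two tangent directions. First I would choose a completion $Y$ above $X$ such that the lift $F\colon Y\to X$ is regular at $c_Y(v_*)=E_Y\cap F_Y$; by Proposition~\ref{PropMonomializationOnlyAtTheCenter} the map $f$ is then pseudomonomial $f(x,y)=(x^ay^b\phi,x^cy^d\psi)$ at $p_Y$ with $ad-bc\neq0$, and by Lemma~\ref{LemmeActionSurMonomiale} the induced map on the segment $[\ord_{E_Y},\ord_{F_Y}]$ of monomial valuations is $f_*v_{s,t}=v_{as+bt,cs+td}$. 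Normalizing, the induced action on the \emph{skewness coordinate} $\alpha$ along this segment is the projective action of the matrix $A=\begin{pmatrix}a&b\\c&d\end{pmatrix}$ on $\R\cup\{\infty\}$, and since $v_*$ is the irrational eigenvaluation, $\alpha_*(v_*)$ is the (irrational) fixed point corresponding to the dominant eigenvector of $A$. The key point is that near this fixed point the projective action of $A$ is a contraction in the $\alpha$-coordinate (its derivative at the attracting fixed point is $\lambda_2/\lambda_1^2<1$ up to the normalization, which is $<1$ by the hypothesis $\lambda_1^2>\lambda_2$). So on a small segment $[\mu,v_*]$ around $v_*$ inside this monomial segment, $f_\bullet$ is an order-preserving affine-type contraction with $f_\bullet\mu>\mu$.

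Next I would upgrade this from the one-dimensional monomial segment to a full open neighbourhood of $v_*$ in $\cV_{Y'}(p;\m_p)$. Exactly as in the infinitely singular argument (Propositions~\ref{PropApplicationAffineSurUnPetitSegment}, \ref{PropLocalisationDeLaFibreValuationDeCourbe}, Lemma~\ref{LemmeExistenceValuationContractante}), I use that the evaluation maps $\mu\mapsto\mu(F^*x)$, $\mu\mapsto\mu(F^*y)$ are piecewise affine in the skewness with finitely many breakpoints (bounded by $\lambda_2$ via Proposition~\ref{PropFiniteNumberOfPreimages}), so $f_\bullet$ is a piecewise-monomial order-preserving map on a neighbourhood $U$ of $v_*$ minus a finite subtree avoiding $v_*$. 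Picking a divisorial $\mu\in[\ord_{E_Y},\ord_{F_Y}]$ close enough to $v_*$ with $f_\bullet\mu>\mu$ and such that the two tangent directions at $\mu$ that do not point toward $v_*$ are sent strictly inside the direction pointing to $v_*$, one gets an open set (the union of the $U(\overrightarrow{v})$ for the finitely many tangent vectors at $\mu$ not equal to the one toward $v_*$) that is mapped by $f_\bullet$ strictly into itself. Then I would pass to the completion $Y'$ obtained by blowing up so that $c_{Y'}(\mu)=E$ and $c_{Y'}(v_*)=p=E\cap F$ is a satellite point, just as at the end of the proof of Proposition~\ref{PropInfinitelySingularPreparation}; this $Y'$ satisfies (1) and (2) by Lemma~\ref{LemmeConditionRegularité}, since $f_*\cV_{Y'}(p)\subset\cV_{Y'}(p)$.

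For (3): because $f_\bullet\mu>\mu$ strictly and $\mu$ is the normalized divisorial valuation attached to one of the two prime divisors through $p$, the fact that the center moves strictly up the tree forces $f$ to contract at least one of $E,F$ to $p$; by iterating and using that the pseudomonomial matrix $A$ has no invariant coordinate axis (since $v_*$ is irrational, $ad-bc\neq0$ and neither $(1,0)$ nor $(0,1)$ is an eigenvector of the relevant projective action — otherwise $\alpha_*(v_*)$ would be rational), $f^2$ contracts both. Concretely, $f^* E$ and $f^* F$ are combinations $a E + c F+\cdots$, $b E + d F+\cdots$ with all of $a,b,c,d\geq1$ (this is where pseudomonomiality with strictly positive exponents enters, coming from contractingness of the germ), so $f$ contracts $E$ when $b\geq1$ and $F$ when $c\geq1$, and $f^2$ contracts both. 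For (4), strong convergence $\frac1{\lambda_1^n}f^n_*\w\to(Z_\w\cdot\theta^*)v_*$ from Proposition~\ref{PropCaracterisationConvergenceVersEigenvaluation} (applied to a divisorial $\w$ with $Z_\w\cdot\theta^*>0$) together with $f_\bullet$-invariance of the open set $U(\overrightarrow{v})$ gives, by the same argument as Proposition~\ref{PropVecteurTangentInvariantEtToutLeMondeConvergeInfSingularCase}, that $f_\bullet^n\mu\to v_*$ weakly for every $\mu\in\cV_{Y'}(p;\m_p)$: any $\mu$ eventually lands in $U(\overrightarrow v)$ under iteration, and there the sequence $f_\bullet^n\mu\wedge v_*$ increases to $v_*$, which forces weak convergence since $v_*$ is irrational (hence a regular, non-end point with only two tangent directions, so the "wedge with $v_*$" increasing to $v_*$ pins down all test values $\mu(\phi)=\alpha_{E_Y}(\mu\wedge v_\phi)m_{E_Y}(\phi)$). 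The stability under further blow-ups $Z\to Y'$ of (1)--(4) is automatic from the definition of the center and functoriality of $f_\bullet$.

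The main obstacle I expect is step two: making rigorous that $f_\bullet$ genuinely contracts a \emph{two-dimensional} neighbourhood of the irrational point $v_*$ and not merely the monomial segment — i.e.\ that the finitely many breakpoints of the piecewise-monomial map $f_\bullet$ can be pushed away from $v_*$ and that in the complementary open region $f_\bullet$ is a strict contraction onto a relatively compact subset. This is where one must combine the affine-function lemma (Lemma~\ref{LemmeTechniqueFonctionAffine}), the bound on preimage breakpoints (Proposition~\ref{PropFiniteNumberOfPreimages}), the ordering compatibility (Proposition~\ref{PropCompatibilitéOrdreContractionMap}), and the skewness formulas after blow-up (Proposition~\ref{PropSkewnessRelationAfterBlowUpContractionMap}); the bookkeeping is essentially identical to the infinitely singular case treated above, with the simplification that $v_*$ being a regular point of the tree means we do not have to chase an infinite sequence of free centers but can stop after finitely many blow-ups. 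The contraction estimate ultimately reduces to the spectral gap $\lambda_1^2>\lambda_2$, exactly as in Theorem~\ref{ThmEigenclasses}.
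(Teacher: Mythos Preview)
Your overall strategy matches the paper's, but there is a structural gap in how you build $Y'$. You choose a single divisorial $\mu$ on one side of $v_*$ and then ``blow up so that $c_{Y'}(\mu)=E$ and $c_{Y'}(v_*)=p=E\cap F$''. But this does not specify $F$: whatever second divisor happens to pass through $c_{Y'}(v_*)$ is not under your control, so you have no reason to know that $f_\bullet[\ord_E,\ord_F]\subset[\ord_E,\ord_F]$, hence no reason for $f_*\cV_{Y'}(p)\subset\cV_{Y'}(p)$, which is exactly what you need for Lemma~\ref{LemmeConditionRegularité} to give (1)--(2). The paper fixes this by choosing \emph{two} divisorial valuations $v_1<v_*<v_2$ on the monomial segment, both close to $v_*$, with $f_\bullet[v_1,v_2]\subset(\pi_\bullet v_1,\pi_\bullet v_2)$, and then takes $Y'$ so that the centers of $v_1,v_2$ are the two divisors $E_1,E_2$ meeting at $p$. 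This two-sided choice is what makes the open set $U_Y=\{v_1<\mu\wedge v_{F_Y}<v_2\}$ genuinely $f_\bullet$-invariant.

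Two related points. First, you do not address the order-reversing case $ad-bc<0$: then $f_\bullet$ swaps the two sides of $v_*$, so it is possible that $f_\bullet(v_2)=\pi_\bullet v_1$ exactly and only $f_\bullet^2$ strictly contracts $[v_1,v_2]$; this is precisely why the statement distinguishes ``$\hat f$ contracts at least one'' from ``$\hat f^2$ contracts both'', and your argument for (3) via ``$a,b,c,d\geq 1$ at $E_Y,F_Y$'' does not see this (those are the exponents at the \emph{old} divisors, not at the new $E,F$ in $Y'$). Second, the contraction rate: the multiplier of the relevant M\"obius map is not $\lambda_2/\lambda_1^2$ computed from $A$ alone---one must compose with $M_\pi^{-1}$, and in any case $\det A$ need not equal $\lambda_2$. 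The paper obtains the bound $\sqrt{\lambda_2/\lambda_1^2}$ indirectly from the spectral estimate~\eqref{EqAsymptoticPushForward}, not from the local matrix.
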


                      \begin{proof}
                        Let $Y$ be as in Proposition \ref{PropMonomializationOnlyAtTheCenter}. We still write
                        $f : Y \dashrightarrow X$ for the lift of $f$ and
                        $\pi: Y \rightarrow X$ for the composition of blow ups. Set $p_Y = c_Y (v_*), p_X = c_X
                        (v_*)$. We use the local coordinates from Proposition \ref{PropMonomializationOnlyAtTheCenter}.
                        Consider the tree $\cV_{X} (p_{M}; E_X)$ with its order $<_X$, its skewness function
                        $\alpha_X$ and the generic multiplicity function $b_X$. This tree is rooted in $\ord_{E_X}$ and $F_X$
                        defines the end $v_w$. We have the map $f_\bullet : \cV_{Y}
                        (p_Y; E_Y) \rightarrow \cV_{X} (p_X; E_X)$. The root of $\cV_{Y} (p_Y; E_Y)$ is
                        $\ord_{E_Y}$ and
                        $F_Y$ defines the end $v_y$ in $\cV_{Y} (p_Y; E_Y)$. We
                        have that
                        $\ord_{E_Y} <_Y v_* <_Y v_y$. Using Equation \eqref{EqFormeMonomiale}, we can write
                        \begin{equation}
                          \forall v \in \cV_{Y} (p_Y; E_Y), \quad f_\bullet (v) = \frac{f_* v}{a + b v (y)}.
                          \label{EqFormeDeFbullet}
                        \end{equation}
                        In particular, by Lemma \ref{LemmeActionSurMonomiale} we have $f_\bullet ([\ord_{E_Y}, v_y])
                        \subset [\ord_{E_X}, v_w]$ and
                        \begin{equation}
                          f_\bullet (v_{1,t}) = v_{1, \frac{c+ td}{a + bt}}.
                          \label{<+label+>}
                        \end{equation}
                        If we chose the normalization given by $F_X$, then the root of $\cV_{X} (p_X; F_X)$ is
                        $\ord_{F_X}$ and $E_X$ defines the end $v_z$ and we have $\ord_{F_X} < v_* < v_z$.
                        The map $f_\bullet$ would be of the form
                        \begin{equation}
                          \forall v \in \cV_{X} (p_Y; E_Y) = \frac{f_* v}{c+ dv(y)}
                          \label{<+label+>}
                        \end{equation}
                        and we still have $f_\bullet ([\ord_{F_Y, v_x}]) \subset [\ord_{E_X}, v_w]$.

                        Recall the
                        notations of \S \ref{SecMonomialValuationsAtSatellitepoint}, we have a homeomorphism
                        \begin{equation}
                          N := N_{p_X, F_X} \circ N_{p_X, E_X}^{-1} : \cV_{X} (p_X; E_X) \rightarrow \cV_{X}
                          (p_X; F_X)
                          \label{<+label+>}
                        \end{equation}
                        that is defined as follows
                        \begin{align*}
                          N(v) &= \frac{v}{v(y)} \text{ if } v \neq \ord_{E_X}, v_y \\
                          N(\ord_{E_X}) &= v_x \\
                          N(v_y) &= \ord_{F_X}.
                          \label{<+label+>}
                        \end{align*}
                        In particular, by Lemma \ref{LemmelevelfunctionValuationMonomiale} we have for all $t > 0$
                        \begin{equation}
                          N (v_{1,t}) = v_{1/t, 1}.
                          \label{<+label+>}
                        \end{equation}
                        If $t > 0$ is rational, let $\overrightarrow v$ be a tangent vector at $v_{1,t}$ that is not
                        represented by $\ord_{E_X}$ or $v_y$. The order on $\overline{U(\overrightarrow v)} =
                        U(\overrightarrow v) \cup \left\{ v_{1,t} \right\}$ is compatible with the order on $N \left(
                        \overline{U(\overrightarrow v)} \right)$ by Proposition \ref{PropCompatibilityOrdre}.
                        Comparing with Lemma \ref{LemmeFonctionOrderPreservingSurUnBonOuvert}, we have

                        \begin{lemme}\label{LemmeFonctionOrderPreservingSurUnBonOuvertIrrationalCase}
                          For any rational number $t > 0$, $f_\bullet : \cV_{Y} (p_Y; E_Y) \rightarrow
                          \cV_{X}(p_X; E_X)$ is order preserving
                          on every tangent vector of $v_{1,t}$ which is not represented by $\ord_{E_Y}$ or $v_{y}$
                          (in particular $v \wedge v_y = v_{1,t}$ for such a valuation). Furthermore, for every $v$ in such a
                          connected component of $v_{1,t}$, $f_\bullet v \geq f_\bullet v_{1,t}$ and
                          \begin{equation}
                            (f_\bullet v) \wedge v_{w} = f_\bullet (v \wedge v_y).
                            \label{<+label+>}
                          \end{equation}

                          The analogous statement with $\cV_{X} (p_X; F_X)$ also holds.
                        \end{lemme}
                        \begin{proof}
                          We have that $f_\bullet (v) = \frac{f_*}{a + b v(y)}$ and the evaluation map $v \in
                          \cV_{Y} (p_Y; E_Y) \mapsto v(y)$ is locally constant outside the segment
                          $S = [\ord_{E_Y}, v_y)$ and the connected components of $\cV_{Y} (p_Y; E_Y)
                          \setminus S$ are exactly the tangent vectors of $v_{1,t}$ for $t > 0$ rational that are not
                          represented by $\ord_{E_Y}$ or $v_y$. On each connected component we have
                          \begin{equation}
                            f_\bullet (v) = \frac{f_* v}{a + b t}
                            \label{<+label+>}
                          \end{equation}
                          and therefore $f_\bullet$ is order preserving on each connected component.

                          Now take $v$ in such a connected component of $v_{1,t}$, for every $v_{1,t} < \mu \leq v$,
                          we have $f_\bullet v \geq f_\bullet \mu$, letting $\mu \rightarrow v_{1,t}$ yields
                          $f_\bullet v \geq f_\bullet v_{1,t}$.

                          Now, we can do the same proof with $\cV_{X} (p_X; F_X)$. Denote the two maps
                          \begin{align}
                            f_\bullet^E : \cV_{Y} (p_Y; E_Y) &\rightarrow \cV_{X} (p_X; E_X) \\
                            f_\bullet^F : \cV_{Y} (p_Y; E_Y) &\rightarrow \cV_{X} (p_X; F_X).
                            \label{<+label+>}
                          \end{align}
                          Using the homeomorphism $N$, we have the relation
                          \begin{equation}
                            f_\bullet^F = N \circ f_\bullet^E.
                            \label{<+label+>}
                          \end{equation}
                          And we get that if $v$ represents a tangent vector at $v_{1,t}$ that is not represented by
                          $\ord_{E_Y}$ or $v_y$ (in particular $v_{1,t} = v \wedge v_y$), then
                          \begin{equation}
                            f_\bullet^E (v) \geq f_\bullet^E (v_{1,t}) \text{ and } N (f_\bullet^E (v)) \geq N
                            (f_\bullet^E (v_{1,t})).
                            \label{<+label+>}
                          \end{equation}
                          This implies by Proposition \ref{PropCompatibilityOrdre} that
                          \begin{equation}
                            f_\bullet^E (v) \wedge v_w = f_\bullet^E (v_{1,t}).
                            \label{<+label+>}
                          \end{equation}
                        \end{proof}

                        Suppose without loss of generality that $v_{E_Y}
                        <_X v_{F_Y}$.
                        We have by Proposition \ref{PropCompatibilitéOrdreContractionMap} item (2) that the map
                        $\pi_\bullet : \cV_{Y} (p_Y; E_Y) \rightarrow \cV_{X} (p_X; E_X)$ is an inclusion of
                        trees. Hence, the orders $<_X, <_Y$ are compatible and $\cV_{Y} (p_Y; E_Y)$ is naturally
                        a subtree of $\cV_{X}(p_X; E_X)$ via the map $\pi_\bullet$.
                        Now, both maps $f_\bullet$ and
                        $\pi_\bullet$ send the segment $[\ord{E_Y}, v_{F_Y})$ into the segment $[\ord_{E_X},
                        v_{F_X})$ via a Möbius transformation by Lemma \ref{LemmeActionSurMonomiale}. Indeed, if $v_{1,t} \in \cV_{Y} (p_Y; E_Y)$ is a
                        monomial valuation at $p_Y$, then $f_* v_{1,t} = v_{a+bt, c+td}$ and one has by Lemma
                        \ref{LemmelevelfunctionValuationMonomiale} and
                        Equation \eqref{EqFormeDeFbullet}
                        \begin{equation}
                          \alpha_{M} (f_\bullet v_{1,t}) = \alpha_X \left(v_{1, \frac{c + td}{a + t b}}\right) =
                          \frac{c + \alpha_Y (v_{1,t}) d}{a + \alpha_Y (v_{1,t})b} = M_f (\alpha_Y (v_{1,t}))
                        \end{equation}
                        Where $M_f$ is the Möbius transformation associated to the matrix $\begin{pmatrix} d & c \\ b & a
                        \end{pmatrix}$. We can do the same process with the map $\pi_\bullet$ to get a Möbius transformation
                        represented by a matrix $M_\pi$. Set $M$ to be the Möbius transformation $M_f \circ
                        {M}^{-1}_\pi$. If $t_E := \alpha_X (\pi_\bullet \ord_{E_Y})$ and $t_F := \alpha_X (\pi_\bullet
                        \ord_{F_Y})$, then $M$ sends the segment $[t_E, t_F]$ into $[0; + \infty]$.

                        \begin{lemme}\label{LemmaMatrixLoxodromic}
                          The Möbius map $M$ is loxodromic with an attracting fixed point $t_* = \alpha_X (\pi_\bullet v_*)$
                          and the multiplier of $M$ at $t_*$ is $\leq \sqrt{\frac{\lambda_2}{\lambda_1^2}} <1$.

                          In particular, for every $v_1, v_2 \in \cV_{Y} (p_Y; E_Y)$ close enough to $v_*$ such that
                          $v_1 < v_* < v_2$, $f_\bullet ([v_1, v_2]) \Subset [\pi_\bullet v_1 , \pi_\bullet v_2]$.
                        \end{lemme}

                        \begin{proof}[Proof of Lemma \ref{LemmaMatrixLoxodromic}]
                          Let $\ord_E$ be a divisorial valuation
                          such that $Z_{\ord_E} \cdot \theta^* > 0$, then $f_\bullet^n \ord_E \rightarrow v_*$ by
                          Proposition \ref{PropCaracterisationConvergenceVersEigenvaluation}. So there exists $n_0
                          \geq 0$ such that for all $n \geq n_0, f_\bullet \ord_E$ belongs to $\cV_{Y} (p_Y; E_Y)$.
                          Write $v = f_\bullet^{n_0} \ord_E$ and define $\mu := v \wedge v_y$ and $t = \alpha_X
                          (\pi_\bullet (\mu))$.
                          By Lemma \ref{LemmeFonctionOrderPreservingSurUnBonOuvertIrrationalCase}, we must have for
                          all $k \geq 0$ $f^k_\bullet \mu \in \cV_{Y} (p_Y; E_Y)$ and $f_\bullet^k \mu \rightarrow
                          v_*$ weakly. By Lemma \ref{LemmeWeakStrongTopologiesSameOnSegments}, $f_\bullet^k \mu
                          \rightarrow v_*$ for the strong topology. In terms of skewness, this translates as follows:
                          for all $k \geq 0, M^k (t) \in [t_E, t_F]$ and
                          \begin{equation}
                            M^k (t) \xrightarrow[k \rightarrow \infty]{} t_*.
                            \label{<+label+>}
                          \end{equation}
                          To show that $M$ is a contracting fixed point, we show that there exists $0 < \rho < 1$ such that
                          \begin{equation}
                            \left| M^k (t) - t_* \right| = O (\rho^k)
                            \label{<+label+>}
                          \end{equation}

                          First, since $\theta_* \cdot \theta^* = 1$, up to replacing $\mu$ by some higher iterate
                          $f_\bullet^k \mu$ we can suppose by Corollary \ref{CorStrongConvergenceInL2} that $Z_\mu \cdot
                          \theta^* > 0$ because $f_\bullet^k \mu$ converges towards $v_*$ for the strong topology.

                          By Lemma \ref{LemmelevelfunctionValuationMonomiale} and Proposition
                          \ref{PropPushForwardOnDivisorIsPushForwardOnValuation}, we have
                          \begin{equation}
                            M^k (t) = \alpha_X (f_\bullet^k \mu )= \frac{Z_{f^k_*\mu} \cdot F_X}{Z_{f^k_*\mu} \cdot E_X}
                            = \frac{(f_k)_* Z_\mu \cdot F_X}{(f_k)_* Z_\mu \cdot E_X}.
                            \label{<+label+>}
                          \end{equation}
                          Applying \eqref{EqAsymptoticPushForward} with $\theta_* = Z_{v_*} + w$ with $w \in
                          \Cinf^\perp$. We get
                          \begin{equation}
                            M^k (t) = \frac{(Z_\mu \cdot \theta^* ) (Z_{v_*} \cdot F_X) + O\left(
                            \frac{\lambda_2}{\lambda_1^2} \right)^{k/2}}{(Z_\mu \cdot \theta^* ) (Z_{v_*} \cdot E_X)+ O\left(
                            \frac{\lambda_2}{\lambda_1^2} \right)^{k/2}}.
                            \label{<+label+>}
                          \end{equation}
                          Thus we get
                          \begin{equation}
                            M^k (t) = \frac{Z_{v_*} \cdot F_X}{ Z_{v_*} \cdot E_X} + O (\rho^k) = t_* + O(\rho^k)
                            \label{<+label+>}
                          \end{equation}
                          with $\rho = \sqrt{\frac{\lambda_2}{\lambda_1^2}}$. This finishes the proof.
                        \end{proof}
                        \textbf{End of Proof of Proposition \ref{PropGoodCompactificationIrrationalCase}.--}
                        By Lemma \ref{LemmaMatrixLoxodromic}, pick $v_1, v_2 \in \cV_{Y} (p_Y; E_Y)$ divisorial
                        sufficiently
                        close to $v_*$ such that
                        \begin{equation}
                          \ord_{E_Y} <_Y v_1 <_Y v_* <_Y v_2 <_Y v_y
                        \end{equation}
                        and
                        \begin{equation}
                          f_\bullet ([v_1,
                          v_2]) \subset [\pi_\bullet v_1, \pi_\bullet v_2 ].
                        \end{equation}
                        If $f_\bullet$ is order preserving, then we must have $f_\bullet [v_1,  v_2]
                      \subset ] \pi_\bullet v_1, \pi_\bullet v_2 [ $. If $f_\bullet$ is not order-preserving, it is possible to
                      have $f_\bullet (v_2) = \pi_\bullet v_1$ and $f_\bullet (v_1) \in ] \pi_\bullet v_1, \pi_\bullet v_2
                      [$. In that case, $f_\bullet^2$ is order-preserving and we have $f_\bullet^2 [v_1, v_2] \subset ]
                      \pi_\bullet v_1, \pi_\bullet v_2[$.

                        Let $U_Y = \left\{ v : v_1 < v \wedge v_{F_Y} < v_2 \right\} \subset \cV_{Y} (p_Y;
                        E_Y)$. By Lemma
                        \ref{LemmeFonctionOrderPreservingSurUnBonOuvertIrrationalCase} and the fact that $f_\bullet
                        ([v_1, v_2]) \subset [\pi_\bullet v_1, \pi_\bullet v_2 ]$ we have that $f_\bullet
                        (\pi_\bullet U_Y)$ so we can iterate $f_\bullet$ on $U_Y$. In particular, $f_\bullet^2 (U_Y)
                      \Subset \pi_\bullet (U_Y)$ because $f_\bullet^2 [v_1, v_2] \Subset ]\pi_\bullet v_1,
                      \pi_\bullet v_2[$.

                        \begin{prop}\label{PropVecteurTangentInvariantEtToutLeMondeConvergeInfIrrationalCase}
                          For every $\mu \in U_Y, f_\bullet^n \mu \rightarrow v_*$ for the weak topology.
                        \end{prop}

                        \begin{proof}
                          Let $\mu \in U_Y$ and let $\tilde \mu := \mu \wedge v_2$. We have $f_\bullet^n \tilde \mu \rightarrow
                          v_*$ for the strong topology by Lemma \ref{LemmaMatrixLoxodromic}. By Lemma
                          \ref{LemmeFonctionOrderPreservingSurUnBonOuvertIrrationalCase},
                          we have $f_\bullet^n \mu \wedge v_2 = f_\bullet^n \tilde
                          \mu \wedge v_2$. Therefore for $\phi \in \OO_{Y, p_Y}$ irreducible and for $n$ large enough,
                          $f_\bullet^n \mu \wedge v_\phi = f_\bullet^n \tilde \mu \wedge v_\phi$. Thus,
                          \begin{align}
                            f_\bullet^n \mu (\phi) &= \alpha_Y (f_\bullet^n \mu \wedge v_\phi) m_Y (\phi) \\
                            &= \alpha_Y (f_\bullet^n \tilde \mu \wedge v_\phi) m_Y (\phi) \\
                            &= f_\bullet^n \tilde \mu (\phi) \xrightarrow[n \rightarrow +\infty]{} v_* (\phi).
                            \label{<+label+>}
                          \end{align}
                        \end{proof}

                        Now pick a completion $Y'$ above $Y$ such that for $i=1,2$, the
                        center of $v_i$ is a prime divisor $E_i$ at infinity such that $E_1$ and $E_2$ intersect at a unique
                        point $p$. We have $c_{Y'} (v_*) = p$. The open set $U_Y \subset \cV_{Y} (p_Y; E_Y)$ is the image of
                        $\cV_{Y'} (p; E_1)$ via the inclusion $\cV_X(p; E_1) \hookrightarrow \cV_{Y} (p_Y; E_Y)$. Since
                        $f_\bullet U_Y \subset \pi_\bullet(U_Y)$, this shows that $f_* \cV_X (p)
                        \subset \cV_X (p)$. Therefore by
                        Lemma \ref{LemmeConditionRegularité} the lift $ f : Y' \dashrightarrow Y'$ is defined at $p$,
                        $ f (p) = p$ and since $f_\bullet$ (or $f_\bullet^2$) contracts the segment $[v_1, v_2]$ we have that
                        $f$ contracts $E_1$ and $E_2$ to $p$. We have for every $\mu \in \cV_{Y'}(p; \m_p), f_\bullet^n \mu
                        \rightarrow v_*$ by Proposition
                        \ref{PropVecteurTangentInvariantEtToutLeMondeConvergeInfIrrationalCase}.

                        If $Z$ is a completion above $X'$, then $c_Y (v_*) = F_1 \cap F_2$ where $F_i$ is a prime
                        divisor at infinity because $v_*$ is irrational. The segment $[v_{F_1}, v_{F_2}]$ is a subsegment
                        of $[v_{E_1}, v_{E_2}]$ and the same proof applies. This shows that $Z$ satisfies also
                        Proposition \ref{PropGoodCompactificationIrrationalCase}.
                      \end{proof}

                      \section{Attractingness of $v_*$, the divisorial case}
                      Suppose that $v_*$ is
                      divisorial and let $X$ be a completion such that the center of $v_*$ on $X$
                      is a prime divisor $E$ at infinity. Since $f_* \ord_E = \lambda_1 \ord_E$ we have that $f$ induces a
                      dominant rational selfmap of $E$.

                      \begin{lemme}
                        We have three cases: 
                        \begin{enumerate}
                          \item $E$ is of general type, $f_{|E}$ is not separable and no iterates of $f_{|E}$ admits a
                            fixed point. In particular, $f$ is not tamely ramified.
                          \item There exists an integer $N >0$ such that $f_{|E}^N$ admits a fixed point on
                            $E$, if $f$ is tamely ramified, then the fixed point of $f_{|E}$ can be chosen to be noncritical.
                          \item $E$ is an elliptic curve and $f_{|E}$ is a translation by a non-torsion element of
                            $E$. 
                        \end{enumerate}
                        \end{lemme}

                      \begin{proof}
                        The rational transformation $f$ induces a rational selfmap on $E$. If $E$ is rational, then $E \simeq
                        \P^1$ and $f_{|E}$ is given by a rational fraction $\frac{P(x)}{Q(x)}$ and therefore admits fixed
                        points. It admits a noncritical fixed
                        point if and only if $f_{|E}$ is not a rational fraction of the form $\frac{P_1(x^p)}{Q_1(x^p)}$ where
                        $p = \car \k$ that is if and only if $f_{|E}$ is separable, in particular it holds if $f$ is
                        tamely ramified by Lemma \ref{lemme:tamely-ramified-separable-over-curves}. If $E$ is of general
                        type and we are not in case 1, then either some iterate of
                        $f_{|E}$ admits a fixed point or $f_{|E}$ is separable but then it is of finite order and some
                        iterate of $f$ induces the
                        identity on $E$, this is the case in particular if $f$ is tamely ramified by Lemma
                        \ref{lemme:tamely-ramified-separable-over-curves}. Finally, if $E$ is an
                        elliptic curve, then $f_{|E} = t_{-b} \circ g$ where $g : E
                        \rightarrow E$ is a \emph{homomorphism} of elliptic curves and $t_{-b}$ is the translation by $-b$. We
                        have $f_{|E} (p) = p \Leftrightarrow g(p) - p = b$. Thus $f_{|E}$ admits a fixed point if and
                        only if $g - \id$ is not the trivial homomorphism, i.e $g \neq \id$. Now, by
                        \cite{silvermanArithmeticEllipticCurves2009} III.5, there exists an invariant 1-form $\omega$
                        with no poles or
                        zeros on $E$ such that $g^*
                        \omega = a(g) \omega$ where $a(g) \in \k$. If $a(g) \neq 0$ then every fixed point of $f$ is
                        non-critical,
                        if $a(g) = 0$, then every fixed point of $f_{|E}$ is critical and $f_{|E}$ is inseparable.
                      \end{proof}

                      \begin{rmq}\label{rmq:}
                        We see here that following Remark \ref{rmq:tamely-ramified-at-infinity}, we only use here the
                        fact that $f$ is \emph{tamely ramified at infinity}.
                      \end{rmq}

                      We suppose now that some iterate of $f_{|E}$ admits a fixed point.

                      \begin{prop}\label{PropGoodCompactificationDivisorialCase}
                        There exists a completion $X$ such that
                        \begin{enumerate}
                          \item $c_X (v_*)$ is a prime divisor $E$ at infinity.
                          \item $E$ intersects another prime divisor $E_0$ at infinity and we set $p = E \cap E_0$.
                          \item Up to replacing $f$ by an iterate, $f : X \dashrightarrow X$ is defined at $p$, $f (p) = p$.
                          \item If $f$ is tamely ramified, $p$ is a noncritical fixed point of $f_{|E}$.
                          \item $f$ leaves $E$ invariant and contracts $E_0$ to $p$.
                          \item Define $f_\bullet : \cV_X (p; E) \rightarrow \cV_X (p; E)$, then for all $\mu \in \cV_X
                            (p; E), f_\bullet^n \mu \rightarrow \ord_E$ for the weak topology.
                        \end{enumerate}
                        If $\pi: (Y, \Exc(\pi)) \rightarrow (X, p)$ is a completion exceptional above $p$, then all the
                        item above remain true in $Y$.
                      \end{prop}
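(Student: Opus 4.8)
The plan is to run the same scheme as in the infinitely singular and irrational cases (Propositions~\ref{PropInfinitelySingularPreparation} and~\ref{PropGoodCompactificationIrrationalCase}), but with the \emph{relative} valuative tree $\cV_X(E;E)$ rooted at $\ord_E=v_*$ in place of the local trees used there; the new feature is that the eigenvaluation now sits at the \textbf{root} of the relevant tree and the action of $f_\bullet$ on the tangent directions at that root is governed by the induced dominant self-map $f_{|E}$ of $E$. First I would fix a completion $X$ with $c_X(v_*)=E$ a prime divisor at infinity (such $X$ exists because $v_*$ is divisorial). Since we are assuming $f_{|E}$ is not a translation by a non-torsion element of an elliptic curve, the preceding Lemma gives an integer $N>0$ and a fixed point $q\in E$ of $f^N_{|E}$, which may be taken noncritical when $f$ is tamely ramified; replacing $f$ by $f^N$ I assume $N=1$. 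After finitely many blow-ups of points of $E$ (which do not change $c(v_*)=E$), and after one blow-up of $q$ if $q$ is a free point, I may assume that $q$ is a satellite point $E\cap E_0$ and that $f$ is regular at $q$; in local coordinates $(x,y)$ at $q$ with $E=\{x=0\}$ the map then has the form $f^*x=x^{\lambda_1}\cdot(\text{unit along }E)$ (because $f_X^*E=\lambda_1 E+D$ with $E\notin\supp D$, as recorded in Theorem~\ref{ThmLocalFormOfMapAndDynamicalDegreAreQuadraticInteger}(2)) and $f^*y=g(x,y)$ with $g(0,\cdot)$ equal to $f_{|E}$ in the coordinate $y$ and $g(0,0)=0$.

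Next I would pass to the induced tree dynamics. By Lemma~\ref{LemmeConditionRegularité}, regularity of $f$ at $q$ gives $f_*\cV_X(q)\subset\cV_X(q)$, and normalizing valuations by $E$ (Proposition~\ref{PropRelativeValuativeTreeIsomorphism}) yields a self-map $f_\bullet:\cV_X(q;E)\to\cV_X(q;E)$ of the rooted $\R$-tree $\cV_X(q;E)$ with $f_\bullet(\ord_E)=\ord_E$, since $f_*\ord_E=\lambda_1\ord_E$. The structural remark is that $\cV_X(q;E)$ has a single tangent direction at its root, the one carved out by $q\in E$, and that this direction is $f_\bullet$-invariant precisely because $f_{|E}(q)=q$.

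The heart of the matter is the attractingness of $\ord_E$ inside $\cV_X(q;E)$. The starting point is $Z_{\ord_E}\cdot\theta^*=\theta_*\cdot\theta^*>0$, using the decomposition $\theta_*=w+Z_{v_*}$ with $w\in\Cinf^\perp\subset\Winf^\perp$ from Theorem~\ref{ThmExistenceEigenvaluationSurfaces} and $(\theta^*)\cdot\theta_*>0$; by continuity of the intersection form on $\L2$ and Corollary~\ref{CorStrongConvergenceInL2} there is then a divisorial $\mu_0\in\cV_X(q;E)$ with $\mu_0>\ord_E$ and $Z_{\mu_0}\cdot\theta^*>0$, whence Proposition~\ref{PropCaracterisationConvergenceVersEigenvaluation} gives $\tfrac{1}{\lambda_1^n}f^n_*\mu_0\to(Z_{\mu_0}\cdot\theta^*)\,\ord_E$ strongly in $\L2$; translating through Corollary~\ref{CorStrongConvergenceInL2} and Theorem~\ref{ThmAutoIntersectionIsSkewness} (which identifies $-(Z_{v,X,p})^2$ with skewness) this reads $\alpha_E(f^n_\bullet\mu_0)\to 0$, i.e.\ $f^n_\bullet\mu_0\to\ord_E$. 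From here I would argue, exactly as in the proofs of Lemmas~\ref{LemmeFonctionOrderPreservingSurUnBonOuvert} and~\ref{LemmaMatrixLoxodromic}, that $f_\bullet$ is order-preserving and strictly contracting toward $\ord_E$ on a suitable open neighbourhood $U$ of $\ord_E$ inside $\cV_X(q;E)$, with the spectral-gap asymptotics~\eqref{EqAsymptoticPushForward} supplying the contraction rates, so that $f_\bullet^n\mu\to\ord_E$ weakly for every $\mu\in U$.

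Finally I would build the completion. Taking $p:=q=E\cap E_0$ gives items (1) and (2), and when $f_{|E}$ is noncritical at $q$ the local computation above shows directly that the lift of $f$ is regular at $p$, fixes $p$, leaves $E$ invariant, contracts $E_0$ to $p$, has the divisorial normal form~\eqref{EqLocalNormalFormDivisorial} with $a=\lambda_1\geq 2$, and has $p$ noncritical for $f_{|E}$ (its multiplier along $E$ being that of $f_{|E}$ at $q$) — this is items (3)–(5). In general one instead blows up $p$ and its successive centers finitely many more times to reach a completion $X'$ with a satellite point, still denoted $p=E\cap E_0$, such that $\cV_{X'}(p;E)\subset U$ and $f_\bullet\big(\cV_{X'}(p;E)\big)\subset\cV_{X'}(p;E)$; then $f_*\cV_{X'}(p)\subset\cV_{X'}(p)$, so by Lemma~\ref{LemmeConditionRegularité} $f$ is regular at $p$ with $f(p)=p$, the strict contraction toward $\ord_E$ forces $f$ to contract $E_0$ to $p$, and item (6) for $X'$ is Step~3 restricted to $\cV_{X'}(p;E)\subset U$. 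Persistence under an exceptional blow-up $\pi:(Y,\Exc(\pi))\to(X',p)$ is then immediate: $c_Y(v_*)$ is again the strict transform of $E$, it meets the last exceptional divisor at a satellite point $p'$, and $\cV_Y(p';E)$ is a subtree of $\cV_{X'}(p;E)$ still containing $\ord_E$, so the same reasoning applies verbatim. The step I expect to be the main obstacle is this last one in the inseparable / non-tamely-ramified case: showing that the process of blowing up the successive centers terminates at a completion where $f$ becomes regular at the satellite point, and that the tangent direction of $q$ stays $f_\bullet$-invariant throughout (before regularity of $f$ at $p$ is available), which requires combining Lemma~\ref{LemmeConditionRegularité} with the attractingness of Step~3 with the same care as in the earlier two cases.
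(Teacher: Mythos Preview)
Your overall strategy---attractingness of $\ord_E$ in the relative tree, then construction of the good completion via Lemma~\ref{LemmeConditionRegularité}---is the same as the paper's, and your observation that $Z_{\ord_E}\cdot\theta^*=\theta_*\cdot\theta^*>0$ (using $\theta_*=w+Z_{\ord_E}$ with $w\in V\perp\Winf$) is correct and useful. But there is a structural gap that you yourself flag at the end: you \emph{assume} that the self-map $f:X\dashrightarrow X$ is regular at the satellite point $q=E\cap E_0$ in order to get a well-defined self-map $f_\bullet:\cV_X(q;E)\to\cV_X(q;E)$, and this regularity is precisely item~(3) of the proposition. The spectral convergence $\tfrac{1}{\lambda_1^n}f_*^n\mu_0\to c\,\ord_E$ in $\L2$ is a global statement and does not by itself force the successive centers to remain at the single point $q\in E$; without regularity they may wander along $E$, so you cannot iterate inside $\cV_X(q;E)$, and your Step~3 does not get off the ground.

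The paper breaks this circularity by working, as in the irrational case, with \emph{two} completions: it builds the tower $X_n$ by repeatedly blowing up $E\cap(\text{exc.\ divisor})$, takes $Y=X_{n_0}$ minimal with the lift $f:Y\to X$ regular at $p_{n_0}$, and compares $f_\bullet:\cV_Y(p_{n_0};E)\to\cV_X(p;E)$ with $\pi_\bullet$. In local coordinates one reads off directly that $c=0,\ a=\lambda_1,\ d=\deg(f_{|E})$, so the induced M\"obius map $M=M_\pi^{-1}M_f$ on skewness satisfies $M(0)=0$ and $M'(0)=d/\lambda_1$. The key inequality $\lambda_1 d\le\lambda_2$, obtained from $f_*f^*E=\lambda_2 E$ (write $f^*E=\lambda_1E+R$ and $f_*E=dE+R'$), gives $M'(0)\le\lambda_2/\lambda_1^2<1$, hence $\ord_E$ is attracting---with no appeal to Proposition~\ref{PropCaracterisationConvergenceVersEigenvaluation} at all. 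Only \emph{after} this does the paper pick a divisorial $v_0$ in the basin, pass to the completion $Z$ where $c_Z(v_0)=E_0$ meets $E$, and invoke Lemma~\ref{LemmeConditionRegularité} to get the self-map regular at $p=E\cap E_0$. Your spectral argument could replace the explicit derivative computation once you are in the two-completion setting (this is exactly how Lemma~\ref{LemmaMatrixLoxodromic} is proved in the irrational case, yielding the weaker rate $\sqrt{\lambda_2/\lambda_1^2}$), but as written your order of operations is inverted.
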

                      \begin{proof}

                        Let $X$ be a completion of $X_0$ such that $E \subset X$ and let $p \in E$ be a fixed point of
                        $f_{|E}$ which is non critical if $f$ is tamely ramified. Up to blowing up $p$ once we can
                        suppose that $p$ is a satellite point $p = E \cap E_0$. Define the sequence of completions
                        $(X_n)$ as follows: $X_1 = X, p_1 = p$ and $\pi_n : X_{n+1}
                        \rightarrow X_n$ is the blow up of $X_n$ at $p_n$ and $p_{n+1}$ is the intersection point of
                        the strict transform of $E$ with the exceptional divisor of $\pi_{n+1}$. We still denote by $E$ its
                        strict transform in every $X_n$. For every $n$, we have
                        $f_{|E} (p_n) = p_n$ and if $f : X_n \dashrightarrow X$ is defined at $p_n$, we have $f(p_n) = p$.
                        Let $n_0$ be the minimal $n$ such that $f : X_n \dashrightarrow X$ is defined at $p_n$ and set
                        $Y = X_{n_0}$ and let $\pi : Y \rightarrow X$ be the induced morphism of completions. We still
                        write $p$ for $p_{n_0}$ we have that $p \in Y$ is a satellite point and we write $p = E \cap
                        F_0$.
                        We therefore have a map $f_\bullet : \cV_Y (p, E) \rightarrow \cV_X (p, E)$.
                        Again, the tree $\cV_Y (p, E)$ is a subtree of $\cV_X (p,E)$ via the map $\pi_\bullet$ and they are both
                        rooted at the divisorial valuation $\ord_E$. We write $\alpha_X, \alpha_Y$ for the skewness
                        function of $\cV_X (p; E), \cV_Y (p;E)$ respectively.

                        Let $(x,y), (z,w)$ be local coordinates at $p$ in $Y$ and $X$ respectively such that
                        $(x,y)$ is associated to $(E, F_0)$ and $(z,w)$ to $(E,E_0)$. We can write
                        \begin{equation}
                          f(x,y) = \left( x^a y^b \phi, x^c y^d \psi \right) \text{ and } \pi(x,y) = (xy^{n_0} \phi ',
                          y \psi ')
                          \label{EqFormeMonomialeDivisorialCase}
                        \end{equation}
                        with $\phi, \phi ', \psi, \psi '$ invertible.
                        Since we know that $E$ is not contracted by $f$ we actually have $c = 0$, since $f_* \ord_E =
                        \lambda_1 \ord_E$ we have $a = \lambda_1$ and $d = \deg(f_{|E}: E \rightarrow E)$. In
                        particular, since in $\L2, f_* f^* E = \lambda_2 E$ we must have $\lambda_2 \geq \lambda_1 d$.
                        We can therefore write
                        \begin{equation}
                          f(x,y) = (x^{\lambda_1} y^b \phi, y^d \psi)
                          \label{<+label+>}
                        \end{equation}
                        and thus
                        \begin{equation}
                          \forall v \in \cV_{Y} (p_Y; E), \quad f_\bullet (v) = \frac{f_* v}{ \lambda_1 + b v (y)}.
                          \label{EqFormeDeFbulletDivisorialCase}
                        \end{equation}
                        We have by Lemma \ref{LemmeActionSurMonomiale}
                        \begin{equation}
                          f_\bullet [\ord_E, v_y] \subset [\ord_E, v_w]
                        \end{equation}
                        and the map is given by the following formula
                        \begin{equation}
                          f_\bullet v_{1,s } = v_{1, \frac{sd}{\lambda_1 + sb}}.
                          \label{EqMobiusMapSurValuationMonomiale}
                        \end{equation}
                        As in the irrational case, we can consider the matrix $M_f= \begin{pmatrix}
                          d & 0 \\
                          b & \lambda_1
                        \end{pmatrix}$ and $M_\pi = \begin{pmatrix}
                          1 & 0 \\
                          n_0 & 0
                        \end{pmatrix}$ and study the type of
                        the Möbius transformation induced by $M := {M}^{-1}_\pi \circ M_f$.
                        \begin{lemme}
                          We have
                          \begin{equation}
                            M = \begin{pmatrix}
                              d & 0 \\
                              b - n_0 & \lambda_1
                            \end{pmatrix}
                            \label{<+label+>}
                          \end{equation}
                          and thus $M'(0) = \frac{d}{\lambda_1} \leq \frac{\lambda_2}{\lambda_1^2} < 1$. The matrix $M$
                          is loxodromic and $0$ is an attracting fixed point.
                        \end{lemme}
                        Thus, $\ord_E$ is an attracting fixed point and for every divisorial valuation $v_0 \in
                        [\ord_E, v_y]$ close enough to $\ord_E$, $f_\bullet [\ord_E, v_0] \Subset \pi_\bullet [\ord_E,
                        v_0]$. Fix such a valuation $v_0$.
                        Let $U_Y = \left\{ \mu : \ord_E \leq \mu \wedge v_y < v_0 \right\} \subset
                        \cV_{Y} (p_Y;
                        E)$. Let $\psi \in \hat{\OO_{X, p_X}}$ be irreducible
                        such that
                        $v_{\psi} > f_\bullet ([\ord_E, v_0])$.
                        Let $\psi_1 ,\cdots, \psi_r, \in \hat \OO_{Y,p_Y}$ be irreducible such that
                        $f^* \psi = \psi_1 \cdots \psi_r$. Up to shrinking the segment $[\ord_E, v_0]$ we can
                        suppose that none of the $v_{\psi_i}$ belong to $U_Y$ (See Figure \ref{fig:dynamics_divisorial_case}).
                        If this is the case, then for all $\mu \in U_Y$, set $\tilde \mu = \mu \wedge v_0$, for all
                        $i$
                        \begin{equation}
                          \mu \wedge v_{\psi_i} = \tilde \mu \wedge v_{\psi_i}, \quad \mu \wedge v_{y} = \tilde \mu \wedge
                          v_{y}.
                          \label{EqLocalisationMinimumDeValuationDivisorialCase}
                        \end{equation}
                        Now, for all $\mu \in U_Y$, by Equation \eqref{EqFormeDeFbulletDivisorialCase} and Proposition
                        \ref{PropValuationEnFonctionDeAlpha}
                        \begin{equation}
                          (f_\bullet \mu) (\psi) = \frac{\mu (f^* \psi)}{ \lambda_1 + b \mu (y)} = \frac{\sum_k
                          \alpha_Y (\mu \wedge v_{\psi_k}) m (\psi_k)}{\lambda_1 + b \mu (y)}.
                          \label{<+label+>}
                        \end{equation}
                        By Equation \eqref{EqLocalisationMinimumDeValuationDivisorialCase}, we get
                        \begin{equation}
                          (f_\bullet \mu) (\psi) = (f_\bullet \tilde \mu) (\psi).
                          \label{<+label+>}
                        \end{equation}
                        This means that
                        \begin{equation}
                          \forall \mu \in U_Y, \quad \alpha_X ((f_\bullet \mu) \wedge v_{\psi}) = \alpha_X ( (f_\bullet \tilde
                          \mu) \wedge v_{\psi}).
                          \label{EqLocalisationMinimumDeValuationDivisorialCase2}
                        \end{equation}
                        Since $f_\bullet [\ord_E, v_0] \Subset \pi_\bullet [\ord_E, v_0]$, we have $f_\bullet (U_Y)
                        \Subset \pi_\bullet (U_Y)$. So we can iterate $f_\bullet$ on $U_Y$.
                        \begin{figure}[h]
                          \centering
                          \includegraphics[scale=0.6]{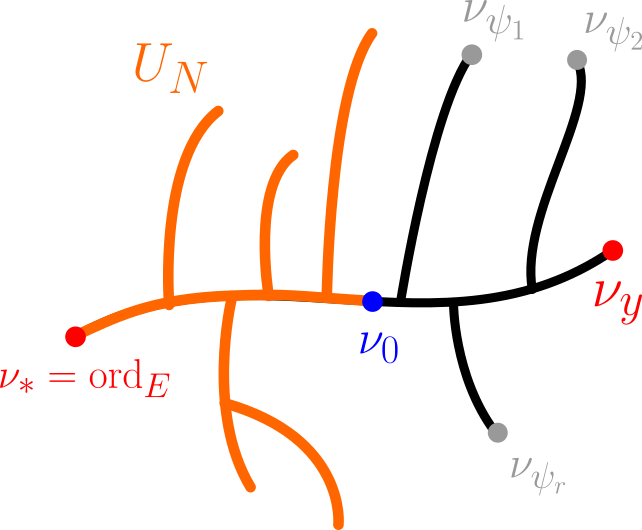}
                          \caption{An $f_\bullet$-invariant open subset of $\Vinf$, divisorial case}
                          \label{fig:dynamics_divisorial_case}
                        \end{figure}
                        \begin{prop}\label{PropVecteurTangentInvariantEtToutLeMondeConvergeInfDivisorialCase}
                          For all $\mu \in U_Y$, $f_\bullet^n \mu \rightarrow \ord_E$ for the weak topology.
                        \end{prop}
                        \begin{proof}
                          The proof is similar to the proof of Proposition
                          \ref{PropVecteurTangentInvariantEtToutLeMondeConvergeInfIrrationalCase}. Let $\mu \in U_Y$ and set
                          $\tilde \mu = \mu \wedge v_0$. Since $\ord_E$ is an attracting fixed point for $f_\bullet$ and
                          $f_\bullet [\ord_E, v_0] \Subset [\ord_E, v_0]$, we have $f_\bullet^n \tilde \mu \rightarrow
                          \ord_E$ for the strong topology. Then, by Equation
                          \eqref{EqLocalisationMinimumDeValuationDivisorialCase2}, $f_\bullet^n \mu \wedge v_0 = f_\bullet^n \tilde
                          \mu$. Let $\phi \in \OO_{Y, p_Y} $ be irreducible such that $\phi$ is not a local equation of $E$, then
                          for $n$ large enough
                          \begin{align}
                            f_\bullet^n \mu (\phi) &= \alpha_E (f_\bullet^n \mu \wedge v_\phi) m_{E} (\phi)  \\
                            &= \alpha_E (f_\bullet^n \tilde \mu \wedge v_{\phi}) m_{E}(\phi) \\
                            &= \alpha_E (f_\bullet^n \tilde \mu) m_{E}(\phi) \xrightarrow[n \rightarrow +
                            \infty]{} 0
                            \label{<+label+>}
                          \end{align}
                        \end{proof}

                        Let $E_0$ be the divisor associated to the divisorial valuation $v_0$ and let $Z$ be a completion
                        such that $c_Z (v_0)$ is the divisor $E_0$ and
                        such that $E_0 \cap E$ is a point $p$. Then, the open subset $U_N$ corresponds to
                        $\cV_Z (p)$ and we have $f_* \cV_Z (p) \subset \cV_Z (p)$. By Lemma
                        \ref{LemmeConditionRegularité}, we have that the lift $\hat f : Z \rightarrow Z$
                        is regular at $p$, $\hat f (p) = p$ and since we know that $f_\bullet v_0 < v_0$ and
                        $f_* \ord_E = \lambda_1 (f) \ord_E$ we have that $\hat f$ contracts $E_0$ at $p$, $E$ is $f$-invariant
                        and for all $\mu \in \cV_Z (p; E), f_\bullet^n \mu \rightarrow v_*$ by Proposition
                        \ref{PropVecteurTangentInvariantEtToutLeMondeConvergeInfDivisorialCase}.

                        If $\pi : (Z ', \Exc (\pi)) \rightarrow (Z, p)$ is a completion exceptional above $p$, then
                        $\Exc(\pi)$ is a tree of rational curves, let $E_0'$ be the irreducible component of $\Exc(\pi)$ that
                        intersect the strict transform of $E$. Then $E_0 ' $ corresponds to a divisorial valuation $v_0 '$
                        such that $\ord_E = v_* < v_0 ' < v_0$ and all the proofs above apply so Proposition
                        \ref{PropGoodCompactificationDivisorialCase} holds also for $Z '$.
                      \end{proof}

                      We finish this section with the following result. When the eigenvaluation is divisorial we have
                      a stronger constraint on the dynamical degrees.

                      \begin{lemme}\label{LemmaDivisorialConditionOnDegrees}
                        When $v_*$ is divisorial, $\lambda_1 \leq \lambda_2$, with equality if and only if $f_{|E} : E
                        \rightarrow E$ has degree 1.
                      \end{lemme}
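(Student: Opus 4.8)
The statement has two parts: the inequality $\lambda_1\le\lambda_2$ together with the implication $\lambda_1=\lambda_2\Rightarrow d=1$ (where $d:=\deg(f_{|E})$), which is essentially already obtained inside the proof of Proposition~\ref{PropGoodCompactificationDivisorialCase}, and the converse implication $d=1\Rightarrow\lambda_1=\lambda_2$, which is subtler.

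\emph{First part.} Pass to a completion $X$ such that $E:=c_X(v_*)$ is a prime divisor at infinity and $f$ is a morphism in a neighbourhood of $E$; this changes nothing, as $v_*$ stays divisorial with center $E$. Since $f_*\ord_E=\lambda_1\ord_E$ we have $\ord_E(f^*E)=\lambda_1$, so $f^*E=\lambda_1E+R$ with $R\in\Cinf$ effective and $\ord_E(R)=0$. In $\L2$ one has $f_*f^*=\lambda_2\cdot\id$ by Proposition~\ref{PropPushForwardAndPullBackOnL2}, and $f_*E=d\,E$ because $E$ is $f$-invariant and $f$ is a morphism near $E$. Hence
\begin{equation}
  \lambda_2E=f_*f^*E=\lambda_1 d\,E+f_*R ,
\end{equation}
i.e. $f_*R=(\lambda_2-\lambda_1 d)E$. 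As $R$ is effective, $f_*R$ is effective, and pairing with an ample class $H$ (with $E\cdot H>0$) gives $\lambda_2\ge\lambda_1 d\ge\lambda_1$; in particular $\lambda_1=\lambda_2$ forces $d=1$.

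\emph{Converse.} Assume $d=1$. It suffices to show $f_*R=0$, equivalently that no irreducible curve $C\ne E$ satisfies $\overline{f(C)}=E$. Such a $C$ must lie in $\BD$: otherwise $C\cap X_0\ne\emptyset$ and $f(C\cap X_0)\subseteq\overline{f(C)}\cap X_0=E\cap X_0=\emptyset$, absurd. Moreover, when $d=1$ we have $f_*E=E$, so $(f^n)_*E=E$ for all $n$; plugging this into \eqref{EqAsymptoticPushForward} gives
\begin{equation}
  0=\lim_n\frac{1}{\lambda_1^n}(f^n)_*E=(E\cdot\theta^*)\,\theta_* ,
\end{equation}
hence $E\cdot\theta^*=0$; then pairing $f_*R=(\lambda_2-\lambda_1)E$ with the nef class $\theta^*$ yields $0=\theta^*\cdot f_*R=\lambda_1(\theta^*\cdot R)$, so every component of $R$ is orthogonal to $\theta^*$. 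It remains to exclude a component $C$ of $R$ with $\overline{f(C)}=E$. I would do this by tracking, inside the finite set of components of $\BD$, the chain of boundary curves each mapping onto the next and eventually mapping onto $E$: no such chain can be periodic, because a periodic boundary curve whose forward orbit reaches the fixed curve $E$ (with $f_{|E}$ of degree $1$) must coincide with $E$; so the chain starts at a ``source'' boundary curve, and one then derives a contradiction with $\lambda_2>\lambda_1$ using the orthogonality $C\cdot\theta^*=0$ just obtained, the eigenrelation $f_*\theta^*=(\lambda_2/\lambda_1)\theta^*$ (which follows from $f_*f^*\theta^*=\lambda_2\theta^*$ and $f^*\theta^*=\lambda_1\theta^*$), and the negative-definiteness of the intersection form on $(\theta^*)^\perp/\langle\theta^*\rangle$.

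\emph{Main obstacle.} The first part is routine given what is already established; the difficulty is concentrated in the last step of the converse, namely proving that when $f_{|E}$ has degree one no boundary curve other than $E$ can dominate $E$ under $f$.
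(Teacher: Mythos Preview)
Your first part is exactly the paper's argument: from $f^*E=\lambda_1E+R$ and $f_*E=dE+R'$ together with $f_*f^*=\lambda_2\,\id$ one extracts $\lambda_1 d\le\lambda_2$, hence $\lambda_1\le\lambda_2$, and equality forces $d=1$. (Your extra care in arranging $R'=0$ by making $f$ regular along $E$ is a mild refinement; the paper simply writes $f_*E=dE+R'$ and uses effectivity of both $R'$ and $f_*R$.)

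The point you should be aware of is that the paper's own proof \emph{stops there}. It establishes only $\lambda_1 d\le\lambda_2$ and the implication $\lambda_1=\lambda_2\Rightarrow d=1$; the converse $d=1\Rightarrow\lambda_1=\lambda_2$ is not argued at all. So the difficulty you isolate in your ``Main obstacle'' is real, and it is not resolved in the paper either. This is harmless for the rest of the memoir: the only place the lemma is invoked is Proposition~\ref{PropTypesEigenValuationsAutomorphisms}, where one needs just the inequality $\lambda_1\le\lambda_2$ to rule out a divisorial eigenvaluation for a loxodromic automorphism.

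Regarding your attempted converse: the reduction to showing that no boundary component $C\neq E$ dominates $E$ under $f$ is correct, and your observation that $E\cdot\theta^*=0$ (hence $R\cdot\theta^*=0$) is a nice consequence of the spectral asymptotics. But the final combinatorial/chain argument is, as you say, only a sketch; in particular the step ``derive a contradiction with $\lambda_2>\lambda_1$ using \ldots\ negative-definiteness on $(\theta^*)^\perp/\langle\theta^*\rangle$'' is not carried out, and it is not clear it goes through without further input. Since the paper does not claim a proof of this direction, you are not missing anything the paper provides.
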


                      \begin{proof}
                        Let $X$ be a completion such that the center of $v_*$ is a prime divisor $E$ at
                        infinity. Since $f_* v_* = \lambda_1 v_*$, we have that $f^* E = \lambda_1 E + R$
                        where $R$ denotes an effective divisor supported at infinity. Now, we also have $f_* E = dE +
                        R'$. From the equality $f_* \circ f^* = \lambda_2 \id$, we get that $\lambda_1 d
                        \leq \lambda_2$. In particular, $\lambda_1 \leq \lambda_2$.
                      \end{proof}

                      \section{Local normal form of $f$} \label{ParLocalNormalForm}

                      We are now ready to proof Theorem \ref{ThmLocalFormOfMapAndDynamicalDegreAreQuadraticInteger}.

                      \begin{proof}[Proof of Theorem \ref{ThmLocalFormOfMapAndDynamicalDegreAreQuadraticInteger}]

                        Suppose $v_*$ is infinitely singular. From Proposition \ref{PropInfinitelySingularPreparation}, there
                        exists a completion $X$ such that $c_X (v_*) =: p \in E$ is a free point,
                        $f : X \dashrightarrow X$ is defined at $p$, $f_* (\cV_X (p)) \Subset \cV_X (p)$ and $f$
                        contracts $E$ to $p$. We need
                        to show that the germ of holomorphic functions induced by $f$ at $p$ is contracting.

                        First we show that if $\car \k = 0$ and $\k$ is complete, $f$ can be made rigid. It is
                        clear that $E \subset \Crit (f)$ (Recall the notations from \S \ref{SubSecContractingRigidGerm}). If
                        $\Crit (f)$ admits another irreducible component, it induces a curve valuation in $\cV_X (p)$, we can
                        blow up $p$ to get another completion above $X$ satisfying Proposition
                        \ref{PropInfinitelySingularPreparation} such that $\cV_X (p)$ does not contain any curve valuation
                        associated an irreducible component of $\Crit(f)$. Thus, $f$ is rigid at $p$ it remains to show that
                        it is contracting.

                        We do not suppose $\car \k = 0$ anymore but we still assume that $\k$ is complete. We show the
                        germ is contracting. Let $(x,y)$ be local coordinates at $p$ such that
                        $x = 0$ is a local equation of $E$. We must
                        have that $f^*x = x^a \phi$ with $a \geq 1$ and $\phi \in \OO_{X,p}^\times$ because no other
                        germs of curve is contracted to $p$ or sent to $E$ since $f$ is an endomorphism of $X_0$.
                        Since $v_* (E) > 0$ and $f_* v_* = \lambda_1 v_*$ we get that
                        \begin{equation}
                          \lambda_1 v_* (x) = f_* v_* (x) = v_* (x^a \phi) = a v_* (x).
                          \label{<+label+>}
                        \end{equation}
                        Thus, $\lambda_1 = a$ is an integer. Now, since $E$ is contracted by $f$, we get that $f^* y = x \psi$
                        with $\psi \in \OO_{X, p}$ but we must have $\psi \in \m_p$ because otherwise $y=0$ is a germ of
                        curve contained in $X_0$ that is sent to $E$ and this contradicts the fact that $f$ is an
                        endomorphism of $X_0$. Thus, factoring the maximal power of $x$ in $\psi$ we get that
                        \begin{equation}
                          f(x,y) = (x^{\lambda_1} \phi, x^c (x \psi_1(x,y) + y \psi_2(x,y))
                          \label{<+label+>}
                        \end{equation}
                        with $\phi \in \OO_{X, p}^\times$ and $\psi_2(0,y) \neq 0$. This is the local normal form
                        \eqref{EqPseudoFormeInfSing} with $a = \lambda_1$. Consider the norm $\parallel{(x,y)} \parallel = \max
                        (\left| x \right|, \left| y \right|)$ associated to the coordinates $x,y$ and let $U^*$ be the ball of center $p$ and
                        radius $\epsilon >0$. If $\epsilon >0$ is small enough, then $U^*$ is $f$-invariant and $f(U^*)
                        \Subset U^*$, so $f$ is contracting at $p$. Finally, there are no $f$-invariant germ of curves at $p$.
                        Indeed, if
                        $\phi \in \hat{\OO_{X, p}}$ is $f$-invariant, then $f_\bullet v_{\phi} = v_{\phi}$. But we have by
                        Proposition \ref{PropInfinitelySingularPreparation} that $f_\bullet^n v_{\phi} \rightarrow v_*$ and
                        this is a contradiction. If $\car \k = 0$,
                        looking at the classification of the rigid contracting germs in
                        dimension 2, we see that $f$ is in Class 4 of Table 1 in
                        \cite{favreClassification2dimensionalContracting2000} hence of type
                        \eqref{EqLocalNormalFormInfinitelySingular} thus there exists local analytic coordinates
                        $(z,w)$ at $p$
                        \begin{equation}
                        \hat f (z,w) = (z^a, \lambda z^c w + P (z)) \end{equation}
                        where $a \geq 2, c \geq 1, \lambda \in \C^\times$ and $P$ is a polynomial such that $P(0) = 0$. Since $E$
                        is the only germ of curve contracted by $f$ (all the other germs of analytic curves are contained in
                        $X_0$ they cannot be contracted to $p$ by $f$ since $f$ is an endomorphism of $X_0$), we have that $z
                        = 0$ is a local equation of $E$. Furthermore, since $f$ does not have any invariant germ of analytic
                        curve, we get that $P \not \equiv 0$.

                        Suppose now that $v_*$ is irrational, by Proposition \ref{PropGoodCompactificationIrrationalCase},
                        there exists a completion $X$ of $X_0$ such that the lift $ f: X \dashrightarrow X$ is
                        defined at $p = c_X (v_*)$, $X$ contains two divisors at infinity $E,F$ such that $p = E \cap F$
                        and $\hat f$ contracts both $E$ and $F$ at $p$. It remains to show that $f$ is contracting
                        at $p$.

                        First, we show that if $\car \k =0$ and $\k$ is complete, $f$ can be made rigid at $p$. As in the infinitely
                        singular case, we have $\Crit (f) \supset \left\{ xy = 0 \right\}$. If $\Crit f$ has other
                        irreducible components, they define curve valuations and we can blow up more to ensure that
                        $\cV_X (p)$ does not contain them. Therefore $\Crit(f) \cap X_0 = \emptyset$ and $f$
                        is rigid at $p$.

                        We do not suppose $\car \k = 0$ anymore. Since both $E,F$ are contracted to $p$, $f$ is
                        contracting. Finally, there are no
                        $f$-invariant germs of curves at $p$ since for all $\mu \in \cV_X (p; \m_p), f_\bullet^n \mu
                        \rightarrow v_*$ by Proposition \ref{PropGoodCompactificationIrrationalCase}. Let $(z,w)$ be local
                        coordinates at $p$ associated to $(E,F)$. We have that $f$ is of the pseudomonomial form
                        \begin{equation}
                          f(z,w) = \left( z^{a} w^{b} \phi, z^{c} w^{d} \psi \right).
                        \end{equation}
                        with $\phi, \psi \in \OO_{X, p}^\times$ and $a,b,c,d \geq 0$. Notice that
                        $f_* \ord_E = v_{a,b}$ and $f_* \ord_F = v_{c,d}$. Consider the segment of monomial valuations $I$
                        centered at $p$ inside $\cV_X (p ; \m_p)$ we have that $f_\bullet : I \rightarrow I$ is injective,
                        therefore $(a,b)$ is not
                        proportional to $(c,d)$ and $ad - bc \neq 0$.
                        Furthermore the open subset $U^*$ corresponding to
                        the ball of radius $\epsilon >0$ is $f$-invariant for $\epsilon >0$ small enough and $f(U^*)
                        \Subset U^*$.
                        In that case, we show that $\lambda_1 (f)$ is the spectral radius of the invertible matrix $A =
                        \begin{pmatrix}
                          a & b \\
                          c & d
                        \end{pmatrix}$, hence a Perron number of degree 2. Indeed, by Lemma
                        \ref{LemmeActionSurMonomiale}, $v_* = v_{s,t}$ where $(s,t)$ is an eigenvector of $A$ for the
                        eigenvalue $\lambda_1$. Since $v_*$ is irrational, we have $s/t \not \in \Q$ and therefore
                        $\lambda_1 \not \in \Q$. Now, when we iterate $f$, we get that $f^n$ has a pseudomonomial form
                        at $p$ given by the matrix $A^n$, hence we get
                        \begin{equation}
                          \lambda_1^n \begin{pmatrix}
                            v_* (z) \\
                            v_* (w)
                          \end{pmatrix} = A^n
                          \begin{pmatrix}
                            s \\ t
                        \end{pmatrix} \end{equation}
                        If $f$ is tamely ramified, we show that the local normal form can be made monomial. It
                        suffices to show that $ad -bc$ is not divisible by $ \car
                        \k$. Otherwise, there would be positive integers $s,t$ such that $f_* v_{s,t} = (\car \k) v_{s',
                        t'}$ and this
                        contradicts the fact that $f$ is tamely ramified because the value group of $f_* v_{s,t}$ in the value
                        group of $v_{s', t'}$ has index divisible by $\car \k$. Thus, the normal form of $f$ at $p$
                        is analytically conjugated to the monomial form
                        \begin{equation}
                          f(z,w) = \left( z^{a} w^{b}, z^{c} w^{d} \right).
                        \end{equation}

                        Now finally, suppose that $v_*$ is divisorial. Take a completion $X$ as in Proposition
                        \ref{PropGoodCompactificationDivisorialCase}. Let $p = E \cap E_0$ with $v_* = \ord_E$. The lift $f :
                        X \dashrightarrow X$ is defined at $p$.

                        If $\car \k = 0$, up to further blow-ups we can
                        suppose that $\Crit (f) \cap X_0 = \emptyset$ as in the other cases. Therefore, $\Crit (f)
                        \subset E \cup E_0$ which is totally invariant as $f_* \cV_X (p) \Subset \cV_X (p)$ so $f$ is
                        rigid at $p$.

                        We do not suppose $\car \k = 0$ anymore. There are no $f$-invariant germs of curves apart from
                        $E$ at $p$ since for all $\mu \in \cV_X (p; E), f_\bullet^n \mu \rightarrow \ord_E$ by Proposition
                        \ref{PropGoodCompactificationDivisorialCase}. Let $(x,y)$ be local coordinates at $p$ associated to
                        $(E, E_0)$.  Since $f_* \ord_E = \lambda_1 \ord_E$
                        with $\lambda_1 \geq 2$ we have $f^* x = x^{\lambda_1} \phi$ with $\phi \in \OO_{X, p}$ not
                        divisible by $x$. Since no
                        germ of curves is sent to $E$ apart from $E_0$, we have that $f^*x = x^{\lambda_1} y^b \phi $
                        with $\phi \in \OO_{X,p}^\times$ and $b \geq 1$ since $E_0$ is contracted to $p$. Then, $E_0$ is
                        contracted to $p$ so $f^* y = y^c \psi$ with
                        $\psi \in \OO_{X,p}^\times$ and $c = 1$ if $p$ is a noncritical fixed point of $f_{|E}$.
                        Hence, in these coordinates the local normal form of $f$ is
                        \eqref{EqLocalNormalFormDivisorial}:
                        \begin{equation}
                          \hat f (x,y) = \left( x^a y^b \phi, y^c \psi \right)
                        \end{equation}
                        with $a = \lambda_1 \geq 2, b,c \geq 1, \phi, \psi \in \OO_{X,p}^\times$.
                      \end{proof}

                      \section{The dynamical spectrum of the complex algebraic torus}\label{SecDynamicalSpectrum}
                      For an algebraic variety $V$, we have defined in the introduction the dynamical spectrum of $V$ by
                      \begin{equation}
                        \Lambda (V) := \left\{ \lambda_1 (f) : f \in \End(V) \right\}.
                        \label{<+label+>}
                      \end{equation}
                      Recall the definition of Perron numbers, given in the introduction.
                      \begin{prop}\label{PropDynamicalSpectrumTorus}
                        For any field $\k$, $\Lambda (\G_m^2)$ is the set of Perron numbers of degree $\leq 2$.
                      \end{prop}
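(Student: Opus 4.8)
The plan is to establish the two inclusions separately: the inclusion $\supseteq$ by exhibiting explicit monomial endomorphisms, and the inclusion $\subseteq$ by feeding the structural result of Proposition~\ref{pro:dynamical_degree_quasi_albanese} into an elementary analysis of which quadratic algebraic integers occur as spectral radii of $2\times 2$ integer matrices.

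For $\supseteq$, I would start from a (weak) Perron number $\alpha$ of degree $\leq 2$. If $\alpha\in\Z_{\geq 1}$, the monomial endomorphism $f(x,y)=(x^{\alpha},y)$ has matrix $\begin{pmatrix}\alpha&0\\0&1\end{pmatrix}$, which is invertible, so $f$ is dominant and $\lambda_1(f)=\alpha$ by the computation of dynamical degrees of monomial maps recalled in the introduction. If $\alpha$ has degree $2$, its minimal polynomial is $t^2-st+p$ with $s,p\in\Z$; its Galois conjugate $\alpha'$ is nonzero (otherwise $\alpha\in\Q$), hence $p=\alpha\alpha'\neq 0$. The companion matrix $M=\begin{pmatrix}0&-p\\1&s\end{pmatrix}\in\mathrm{M}_2(\Z)$ has $\det M=p\neq 0$ and eigenvalues $\alpha,\alpha'$, so the associated monomial endomorphism $f$ of $\G_m^2$ is dominant and, since $\alpha\geq 1>0$ while $|\alpha'|\leq\alpha$ by the weak Perron hypothesis, $\lambda_1(f)=\rho(M)=\alpha$. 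Thus $\alpha\in\Lambda(\G_m^2)$.

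For $\subseteq$, let $f$ be a dominant endomorphism of $\G_m^2$. Since $\k[\G_m^2]^{\times}\neq\k^{\times}$, Theorem~\ref{BigThmDynamicalDegreesEng} is proved here through Proposition~\ref{pro:dynamical_degree_quasi_albanese}: because $\QAlb(\G_m^2)=\G_m^2$ is non-trivial, $\lambda_1(f)$ is a real algebraic integer of degree $\leq 2$, and more precisely, by the final case of that proof (where $\overline{\kappa}=0$ and $X_0\simeq\G_m^2$), one has $\lambda_1(f)=\rho(M)$ for some $M\in\mathrm{M}_2(\Z)$ with $\det M\neq 0$; concretely $f$ is the composition of a translation with an algebraic group homomorphism of $\G_m^2$, and $M$ is the matrix of that homomorphism on the character lattice. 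It then remains to check that $\rho(M)$ is always a Perron number of degree $\leq 2$. Writing $\chi(t)=t^2-(\Tr M)\,t+\det M\in\Z[t]$ with roots $\mu_1,\mu_2$, I would distinguish two cases. If $\mu_1,\mu_2\in\R$, say $|\mu_1|\geq|\mu_2|$, then $\rho(M)=|\mu_1|$ is a root of $\chi(t)$ or of $\chi(-t)$ according to the sign of $\mu_1$; if it is rational it is an integer, and otherwise the relevant polynomial is its minimal quadratic polynomial, whose other root has modulus $|\mu_2|\leq\rho(M)$. If $\mu_1,\mu_2$ are complex conjugate, then $\rho(M)^2=|\mu_1|^2=\mu_1\mu_2=\det M\in\Z_{>0}$, so $\rho(M)=\sqrt{\det M}$ is an integer or the square root of a non-square integer, which the footnote definition explicitly includes among (weak) Perron numbers. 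In all cases $\lambda_1(f)=\rho(M)$ is a Perron number of degree $\leq 2$, completing the inclusion.

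The main obstacle is not conceptual but bookkeeping in the second inclusion: one must correctly track the sign of the dominant eigenvalue so that $\lambda_1(f)\geq 1$ is genuinely realised as a root of an explicit integral quadratic (possibly after $t\mapsto -t$), and one must make sure the complex-eigenvalue case is the one that forces the square-root clause in the definition. The identification $\lambda_1(f)=\rho(M)$ itself I would simply quote from the proof of Proposition~\ref{pro:dynamical_degree_quasi_albanese}, and for the monomial maps used in the first inclusion from the formula for dynamical degrees of monomial transformations recalled in the introduction.
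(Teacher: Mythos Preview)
Your proposal is correct and follows essentially the same approach as the paper: both reduce the inclusion $\subseteq$ to showing that the spectral radius of a $2\times 2$ integer matrix is a Perron number of degree $\leq 2$, and both establish this by the same case split on whether the eigenvalues are real or complex conjugate. The only noticeable difference is that you treat the inclusion $\supseteq$ explicitly via companion matrices (allowing negative entries, which is fine on $\G_m^2$), whereas the paper's proof as written only spells out $\subseteq$ and effectively defers $\supseteq$ to the subsequent lemma producing nonnegative integer matrices; your route is slightly more direct for this proposition, while the paper's nonnegative construction is what is actually needed later for $\A^2$.
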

                      \begin{proof}
                        Any matrix $A = \begin{pmatrix}
                          a & b \\ c & d
                        \end{pmatrix} \in M_2 (\Z)$ yields a monomial automorphism $f_A$ of $\G_m^2$ given by
                        \begin{equation}
                          f_A (x,y) = \left( x^a y^b, x^c y^d \right).
                          \label{<+label+>}
                        \end{equation}
                        Any endomorphism $f$ of $\G_m^2$ is given by the composition of a monomial transformation $f_A$ and a
                        translation. Let $A$ be the matrix associated to the monomial transformation of $f$. Then, $\lambda_1
                        (f)$ is equal to the spectral radius $\rho$ of $A$ (see e.g \cite{favreDegreeGrowthMonomial2012,
                        linPullingBackCohomology2012, hasselblattDegreegrowthMonomialMaps2007,
                      jonssonStabilizationMonomialMaps2011, favreApplicationsMonomialesDeux2003,
                    bedfordLinearRecurrencesDegree2008}). We show that $\rho$ is a Perron number of degree $\leq 1$.
                        Let $P = T^2 - (\Tr A) T + \det A$ be the characteristic polynomial of $A$. Set $\Delta = (\Tr A)^2 - 4
                        \det A$ the discriminant.

                        If $\Delta <0$, then $\det A > 0$ and the two roots of $P$ are complex conjugate and their modulus
                        is $\sqrt{\det A}$, so $\rho = \sqrt{\det A}$ which is a Perron number of degree 2 if
                        $\det A$ is not a square in $\Z$, otherwise it is a positive integer.

                        If $\Delta = 0$, then $(\Tr A)^2 = 4 \det A$. Therefore $\Tr A$ is even and  $P = (T - \frac{\Tr
                        A}{2})^2$, so $\rho = \left| \frac{\Tr A}{2} \right|$ which is a positive integer.

                        If $\Delta > 0$, set $a := \Tr A$. If $a \geq 0$, then $\rho = \frac{a + \sqrt \Delta}{2}$ which is the
                        largest root of $P$ and so $\rho$ is a Perron number of degree 2. If $a < 0$, then
                        $\rho = \frac{-a + \sqrt \Delta}{2}$ which is a Perron number of degree 2 as
                        it is the largest root of $T^2 + a T + \det A$.
                      \end{proof}
                      By Theorem \ref{BigThmDynamicalDegreesEng}, any normal affine surface satisfies $\Lambda (X_0) \subset
                      \Lambda(\G_m^2)$. Thus, $\Lambda (\G_m^2)$ is maximal and one might
                      think that this is a characterisation of the algebraic torus but this is not the case. We now prove
                      Theorem \ref{BigThmDynamicalSpectrum} which states
                      \begin{equation}
                        \Lambda (\A^2) = \Lambda (\G_m^2).
                      \end{equation}

                      \begin{proof}[Proof of Theorem \ref{BigThmDynamicalSpectrum}]
                        By Theorem \ref{ThmLocalFormOfMapAndDynamicalDegreAreQuadraticInteger} and Proposition
                        \ref{PropDynamicalSpectrumTorus} we have $\Lambda (\A^2) \subset \Lambda (\G_m^2)$. We
                        show the equality using the following lemma.
                        \begin{lemme}
                          Every Perron number of degree $\leq 2$ is the spectral radius of a $2
                          \times 2$ matrix with nonnegative integer entries.
                        \end{lemme}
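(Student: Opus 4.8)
The plan is to exhibit, for each weak Perron number $\alpha$ of degree $\leq 2$, an explicit $2\times 2$ matrix with nonnegative integer entries and spectral radius $\alpha$. If $\alpha = n$ is a positive integer, the matrix $\begin{pmatrix} n & 0 \\ 0 & 1 \end{pmatrix}$ already works (and is invertible, which matters when it is later fed into the monomial construction). So I would assume $\alpha$ is an irrational quadratic integer, with minimal polynomial $T^2 - sT + q$ over $\Z$. Since $\alpha$ is real and $\geq 1$, the discriminant $\Delta := s^2 - 4q$ is a positive non-square integer, $\alpha = \tfrac12\left(s+\sqrt{\Delta}\right)$ is the larger root, and its Galois conjugate is $\alpha' = \tfrac12\left(s-\sqrt{\Delta}\right) = s - \alpha$.

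The first point to record is that the Perron inequality $|\alpha'|\leq\alpha$ forces $s\geq 0$: this is automatic when $\alpha'\geq 0$ (which already gives $s = \alpha+\alpha'\geq 0$), and when $\alpha'<0$ it reads $-\alpha'\leq\alpha$, i.e. again $s = \alpha+\alpha'\geq 0$. The second point is that it suffices to find nonnegative integers $a,b,c,d$ with $a+d = s$ and $ad-bc = q$: the resulting matrix $A$ then has characteristic polynomial $T^2 - sT + q$, which is the minimal polynomial of $\alpha$ of degree equal to the size of $A$, so the eigenvalues of $A$ are exactly $\alpha$ and $\alpha'$; as $A$ is nonnegative, its spectral radius equals $\max(\alpha,|\alpha'|)=\alpha$. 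Note also that $q = \alpha\alpha'\neq 0$ since $\alpha\notin\Q$, so this $A$ is invertible.

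To produce such entries I would take $a$ and $d$ to be the two integers closest to $s/2$ with $a+d = s$ (so $a=d=s/2$ if $s$ is even, and $a=(s+1)/2$, $d=(s-1)/2$ if $s$ is odd; both are $\geq 0$ since $s\geq 0$), and set $c = 1$, $b = ad - q$. If $s$ is even then $ad = s^2/4 \geq q$ because $\Delta>0$; if $s$ is odd then $ad = (s^2-1)/4\geq q$ because the positive integer $\Delta$ is $\geq 1$. In both cases $b = ad-q\geq 0$, so $A = \begin{pmatrix} a & ad-q \\ 1 & d \end{pmatrix}$ has the required form; when $s = 0$ this is $\begin{pmatrix} 0 & -q \\ 1 & 0\end{pmatrix}$ with $-q = \Delta/4 > 0$, which in particular realizes $\sqrt{m}$ for every non-square positive integer $m$. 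This proves the lemma. The theorem then follows: for nonnegative integers $a,b,c,d$ with $ad-bc\neq 0$ the monomial map $(x,y)\mapsto(x^ay^b,x^cy^d)$ is a dominant endomorphism of $\A^2$ whose first dynamical degree is the spectral radius of $\begin{pmatrix} a & b \\ c & d\end{pmatrix}$, so every Perron number of degree $\leq 2$ lies in $\Lambda(\A^2)$; together with the inclusion $\Lambda(\A^2)\subseteq\Lambda(\G_m^2)$ from Theorem \ref{ThmLocalFormOfMapAndDynamicalDegreAreQuadraticInteger} and Proposition \ref{PropDynamicalSpectrumTorus}, this gives $\Lambda(\A^2)=\Lambda(\G_m^2)$. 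The only place where any care is needed is the nonnegativity of $b = ad-q$, and as indicated this reduces to the elementary fact that a positive integer is $\geq 1$.
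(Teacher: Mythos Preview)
Your proof is correct and follows essentially the same approach as the paper: both construct an explicit matrix with trace $s$ and determinant $q$ by taking the diagonal entries to be the two integers nearest $s/2$, then checking that $ad-q\geq 0$ via the discriminant inequality. The paper splits into a few more subcases (treating $\sqrt{m}$ and $q<0$ separately with different matrices), whereas you handle all irrational quadratic cases uniformly after first deriving $s\geq 0$ from the Perron condition; your matrices are transposes of the paper's in the overlapping cases.
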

                        Using the lemma, we have that every $\lambda \in \Lambda(\G_m^2)$ is the dynamical degree of a monomial
                        transformation of $\A^2$, thus $\Lambda (\A^2) = \Lambda (\G_m^2)$.
                      \end{proof}

                      \begin{proof}[Proof of the lemma]
                        Let $\lambda$ be a Perron number of degree $\leq 2$.

                        If $\lambda$ is an integer then it is the spectral radius of $\begin{pmatrix} \lambda & 0 \\ 0 & 1
                        \end{pmatrix}$.

                        If $\lambda = \sqrt m$ with $m$ a positive integer which is not
                        a square, then $\lambda$ is the spectral radius of $\begin{pmatrix} 0 & 1 \\ m & 0 \end{pmatrix}$.

                        Finally, suppose $\lambda$ is the largest root of $T^2 - aT + b$ with $a > 0, b \neq 0$. If $b < 0$, then
                        $\lambda$ is the spectral radius of $\begin{pmatrix}
                          a & 1 \\
                        -b & 0 \end{pmatrix}$. If $b > 0$, then the discriminant must satisfy
                        \begin{equation}
                          \Delta = a^2 - 4 b > 0 \Rightarrow \left( \frac{a}{2} \right)^2 > b.
                        \end{equation}
                        If $a = 2k$ is even, then $\lambda$ is the spectral radius of
                        \begin{equation}
                          \begin{pmatrix}
                            k & 1 \\
                            k^2- b & k
                          \end{pmatrix}.
                          \label{<+label+>}
                        \end{equation}
                        If $a = 2k +1 $ is odd, then $(k+ 1/2)^2 > b \Rightarrow k(k+1) \geq b$ and $\lambda$ is the spectral
                        radius of
                        \begin{equation}
                          \begin{pmatrix}
                            k & 1 \\
                            k(k+1) - b & k+1
                          \end{pmatrix}.
                          \label{<+label+>}
                        \end{equation}
                      \end{proof}

                      \chapter{The automorphism case}\label{ChapterAutomorphisms}
                      Here we suppose that $X_0$ is an irreducible normal affine surface that admits a loxodromic
                      automorphism. By Theorem \ref{thm:dynamical_degree_quasi_albanese}, we have that either $X_0 =
                      \G_m^2$ or $\QAlb(X_0) = 0$. In this situation, we can actually deduce a lot more from the result of Chapter
                      \ref{ChapterDynamicsQuasiAlbTrivial}.
                      We change the notation for this section, we will denote $\theta^*$ and $\theta_*$ by $\theta^+$
                      and $\theta^-$ respectively. So that $(f^{\pm 1})^* \theta^\pm = \lambda_1 \theta^\pm$. By
                      Proposition \ref{PropThetaEffectif} and Theorem \ref{ThmExistenceEigenvaluationSurfaces}, we get
                      that
                      \begin{itemize}
                        \item $\theta^+, \theta^- \in \Winf \cap \L2$ and they are both effective.
                        \item $\theta^+ = Z_{v_-}$ and $\theta^- = Z_{v_+}$ where $v_+$ is the eigenvaluation of $f$
                          and $v_-$ the eigenvaluation of ${f}^{-1}$.
                      \end{itemize}

                      \begin{prop}\label{PropTypesEigenValuationsAutomorphisms}
                        Let $X_0$ be a rational affine surface such that $\k[X_0]^\times = \k^\times$ and let $f$ be a
                        loxodromic automorphism of $X_0$, then
                        \begin{enumerate}
                          \item The eigenvaluations $v_+$, $v_-$ of $f$ and ${f}^{-1}$ respectively are of the same type.
                          \item If $\lambda_1 \in \Z_{\geq 0}$, then $v_+$ and $v_-$ are infinitely singular.
                          \item If $\lambda_1 \in \R \setminus \Z_{\geq 0}$ then $v_+$ and $v_-$ are irrational.
                        \end{enumerate}
                      \end{prop}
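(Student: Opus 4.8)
The plan is to reduce everything to the classification of the eigenvaluation in Theorem \ref{ThmLocalFormOfMapAndDynamicalDegreAreQuadraticInteger}, after first observing that, for a loxodromic automorphism, two of the four possible types of eigenvaluation are automatically excluded.

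First I would check that the hypotheses of Theorems \ref{ThmExistenceEigenvaluationSurfaces} and \ref{ThmLocalFormOfMapAndDynamicalDegreAreQuadraticInteger} are satisfied. Since $f$ is an automorphism its topological degree is $1$, so $\lambda_2(f)=1$, and loxodromicity gives $\lambda_1(f)>1=\lambda_2(f)$; moreover $\k[X_0]^\times=\k^\times$ is assumed and $\Pic^0(X_0)=0$ because $X_0$ is rational. As noted at the start of Chapter \ref{ChapterLocalNormalForms}, we may assume $\k\subset\C_v$ for a complete algebraically closed field $\C_v$, and no hypothesis on $\car\C_v$ or on tame ramification is needed for the parts of Theorem \ref{ThmLocalFormOfMapAndDynamicalDegreAreQuadraticInteger} we use. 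The same discussion applies verbatim to $f^{-1}$, which is again a loxodromic automorphism of $X_0$, and I would record that $\lambda_1(f^{-1})=\lambda_1(f)$: this is classical for birational self-maps of surfaces (\cite{dillerDynamicsBimeromorphicMaps2001}), and here it also follows from the fact that $f^*$ acts on $\mathbb{L}^2(X_0)$ as a loxodromic isometry of an infinite-dimensional hyperbolic space, so $f^*$ and $(f^{-1})^*=f_*$ have the same translation length.

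Next I would rule out two types for $v_+$, and by the identical argument applied to $f^{-1}$ also for $v_-$. By Theorem \ref{ThmExistenceEigenvaluationSurfaces} the eigenvaluation $v_+$ is not a curve valuation. And $v_+$ is not divisorial: if it were, Lemma \ref{LemmaDivisorialConditionOnDegrees} would force $\lambda_1(f)\le\lambda_2(f)=1$, contradicting $\lambda_1(f)>1$. In particular the elliptic case of Theorem \ref{ThmLocalFormOfMapAndDynamicalDegreAreQuadraticInteger} (item \ref{EllipticCase}) cannot occur. Hence $v_+$ is either irrational or infinitely singular, and likewise $v_-$.

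Finally I would feed this into the normal-form classification. By Theorem \ref{ThmLocalFormOfMapAndDynamicalDegreAreQuadraticInteger}, if $v_+$ is infinitely singular then $\lambda_1(f)=a\in\Z_{\ge 2}$, whereas if $v_+$ is irrational then $\lambda_1(f)$ is a Perron number of degree exactly $2$, so $\lambda_1(f)\notin\Z$. These two alternatives for $\lambda_1(f)$ are mutually exclusive, so the type of $v_+$ is forced by whether $\lambda_1(f)\in\Z$: if $\lambda_1(f)\in\Z_{\ge 0}$ then $v_+$ is infinitely singular, and if $\lambda_1(f)\notin\Z_{\ge 0}$ then $v_+$ is irrational. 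Running the same argument for $f^{-1}$ and using $\lambda_1(f^{-1})=\lambda_1(f)$ yields the identical conclusion for $v_-$; this proves (2) and (3), and (1) is then immediate, since in either alternative $v_+$ and $v_-$ are of the same type. There is no genuine obstacle here: the only two points needing care are the invariance $\lambda_1(f^{-1})=\lambda_1(f)$ and the verification that the divisorial (hence elliptic) and curve cases are truly excluded, both of which are immediate consequences of results already established.
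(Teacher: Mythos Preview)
Your proof is correct and follows essentially the same approach as the paper: rule out the divisorial case via Lemma~\ref{LemmaDivisorialConditionOnDegrees} (since $\lambda_1>1=\lambda_2$), then read off the dichotomy between infinitely singular and irrational eigenvaluations from Theorem~\ref{ThmLocalFormOfMapAndDynamicalDegreAreQuadraticInteger}, and conclude using $\lambda_1(f)=\lambda_1(f^{-1})$. Your write-up is simply more explicit about verifying the hypotheses and about excluding the curve-valuation case (which the paper leaves implicit via Theorem~\ref{ThmExistenceEigenvaluationSurfaces}).
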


                      \begin{proof}
                        If the eigenvaluation was divisorial, then we would get by Lemma
                        \ref{LemmaDivisorialConditionOnDegrees} that $\lambda_1 \leq \lambda_2$ and this is absurd because
                        $\lambda_1 > 1$, $f$ being loxodromic. The dichotomy of the type of eigenvaluation follows from Theorem
                        \ref{ThmLocalFormOfMapAndDynamicalDegreAreQuadraticInteger} and the fact that $\lambda_1 (f) =
                        \lambda_1({f}^{-1})$.
                      \end{proof}

                      \begin{cor}\label{CorSelfIntersectionNefClasses}
                        In that case, the nef eigenclasses $\theta^-$ and $\theta^+$ verify \[ (\theta^-)^2 = (\theta^+)^2 = 0 \]
                        and in any completion $X$ of $X_0$ one has $(\theta^\pm_X)^2 >0, \theta^\pm_X \geq 0$ and
                        $\supp \theta^\pm_X = \BD$.
                      \end{cor}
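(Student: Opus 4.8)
The plan is to handle the two claims in turn. For $(\theta^+)^2 = (\theta^-)^2 = 0$, I would simply appeal to Theorem~\ref{ThmEigenclasses}. Since $f$ is loxodromic, $\lambda_1(f) > 1 = \lambda_2(f)$, hence $\lambda_1(f)^2 > \lambda_2(f)$; and the same inequality holds for $f^{-1}$ because $\lambda_1(f^{-1}) = \lambda_1(f)$ while $\lambda_2(f^{-1}) = 1$. Applying Theorem~\ref{ThmEigenclasses} to $f$ gives $(\theta^+)^2 = 0$ (this is the class written $\theta^*$ there), and applying it to $f^{-1}$ gives $(\theta^-)^2 = 0$.

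For the positivity of the incarnations, I would first observe that $\theta^\pm$ is nef, so each incarnation $\theta^\pm_X$ is nef, hence a limit of ample classes by the cone theorem, and therefore $(\theta^\pm_X)^2 \geq 0$ for every completion $X$. Suppose, for contradiction, that $(\theta^+_X)^2 = 0$ for some $X$. I would then use the Hilbert space decomposition $\L2 = \NS(X) \obot \ell^2(\cD_X)$ together with the identity $(\theta^+)^2 = (\theta^+_X)^2 - \sum_{E \in \cD_X}(\theta^+ \cdot \alpha_E)^2$: since both $(\theta^+)^2$ and $(\theta^+_X)^2$ vanish, we get $\theta^+ \cdot \alpha_E = 0$ for all $E \in \cD_X$, i.e.\ $\theta^+ = \theta^+_X$. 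Because $\theta^+ \in \Winf$, its incarnation $\theta^+_X$ lies in $\DivInf(X)_\R$, so $\theta^+$ is the Cartier class determined by $\theta^+_X$; in particular $\theta^+ \in \Cinf_\R$.

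Now I would invoke the identification $\theta^+ = Z_{v_-}$ recorded at the beginning of this chapter, where $v_-$ is the eigenvaluation of $f^{-1}$. By Corollary~\ref{CorEbeddingIntoWinf}, $Z_{v_-} \in \Cinf_\R$ precisely when $v_-$ is divisorial; but Proposition~\ref{PropTypesEigenValuationsAutomorphisms} tells us that, $f$ being loxodromic, $v_-$ is infinitely singular or irrational, never divisorial. This contradiction shows $(\theta^+_X)^2 > 0$, and the symmetric argument with $f^{-1}$ in place of $f$ gives $(\theta^-_X)^2 > 0$. The only step requiring genuine care is the implication ``$(\theta^+)^2 = (\theta^+_X)^2 = 0 \Rightarrow \theta^+$ Cartier over $X$'': one must be careful with the sign convention in the $\L2$ self-intersection formula and must note that $\theta^+_X$ actually lies in $\DivInf(X)_\R$ rather than merely in $\NS(X)_\R$, which is exactly what upgrades $\theta^+$ from a Weil class to a Cartier class at infinity. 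I do not expect any other obstacle.
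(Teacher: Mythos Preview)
Your proposal is correct and follows essentially the same approach as the paper: invoke Theorem~\ref{ThmEigenclasses} for the vanishing of self-intersections, then argue by contradiction that $(\theta^\pm_X)^2 = 0$ would force $\theta^\pm$ to be Cartier, contradicting that the eigenvaluations are not divisorial (Proposition~\ref{PropTypesEigenValuationsAutomorphisms} and Corollary~\ref{CorEbeddingIntoWinf}). The paper's proof is terser---it simply asserts ``if $(\theta_X^\pm)^2 = 0$ then since $\theta^\pm$ is nef, we would get $\theta_X^\pm = \theta^\pm$''---whereas you spell out the underlying $\L2$ decomposition explicitly, which is a welcome clarification but not a different idea.
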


                      \begin{proof}
                        The equalities $(\theta^-)^2 = (\theta^+)^2 = 0$ come from Theorem \ref{ThmEigenclasses}
                        and more precisely from \eqref{EqIntersectionDeTheta}.
                        Since the eigenvaluations are not divisorial, $\theta^-$ and $\theta^+$ are not Cartier divisors
                        by Corollary \ref{CorEbeddingIntoWinf}
                        therefore for any completion $X$ of $X_0$, $(\theta^\pm_X)^2 > 0$. Indeed, if
                        $(\theta_X^\pm)^2 = 0$ then since $\theta^\pm$ is nef, we would get $\theta_X^\pm =
                        \theta^\pm$. Finally, the last statements follow from Theorem \ref{thm:nef-valuation}.
                      \end{proof}

                      Using the classification of dynamics of birational maps of surfaces from
                      \cite{dillerDynamicsBimeromorphicMaps2001} we get the following result.
                      \begin{cor}\label{cor:}
                        Let $X_0$ be an irreducible normal affine surface with a loxodromic automorphism $f$, then $X_0$
                      is rational and $X_0 \simeq \G_m^2 \Leftrightarrow K\left[X_0\right]^\times \neq K^\times$.
                      \end{cor}
                      \begin{proof}
                        If $\QAlb(X_0) \neq 0$ then we have by Theorem \ref{thm:dynamical_degree_quasi_albanese} that
                        $X_0 \simeq \G_m^2$ so $X_0$ is rational and admits a non-invertible regular function. Suppose
                        that $\QAlb(X_0) = 0$ then by Corollary \ref{CorSelfIntersectionNefClasses} we have that on any
                        completion $X$, $(\theta^+_X)^2 > 0$ thus we fall in Class 5 of Table 1 of
                        \cite{dillerDynamicsBimeromorphicMaps2001} and $X_0$ must be rational. Furthermore since
                        $\QAlb(X_0) = 0$ we have that $K[X_0]^\times = K^\times$ by Proposition
                        \ref{prop:carac-trivial-quasi-albanese}.
                      \end{proof}

                      \begin{rmq}\label{rmq:not-regularisable}
                        The fact that $(\theta_X^{\pm})^2 > 0$ implies that $f$ is not regularisable, i.e $f$ is never
                        an automorphism of a completion of $X$. This follows also from
                        \cite{dillerDynamicsBimeromorphicMaps2001}. We can also deduce it from Theorem
                        \ref{ThmLocalFormOfMapAndDynamicalDegreAreQuadraticInteger} because we see that $f$ must
                        contract a divisor at infinity in any completion $X$.
                      \end{rmq}
                      \begin{cor}\label{cor:very-attracting-valutions}
                        For any valuation $v\in \Vinf '$ of finite skewness (e.g for quasimonomial valuations) such that
                        $v \neq v_\mp$, there is a constant $c_\pm > 0$ such that for the strong topology 
                        \begin{equation}
                          \frac{1}{\lambda_1^n} (f^{\pm n}_* v) \xrightarrow[n \rightarrow + \infty]{} c \cdot v_\pm.
                          \label{<+label+>}
                        \end{equation}
                      \end{cor}
                      \begin{proof}
                        From Corollary \ref{CorSelfIntersectionNefClasses}, we have for any completion $X$ of $X_0$ that
                        $\Supp \theta^{\pm}_X = \BD$. Let $v \not v_-$, then there exists a completion $X$ such that
                        $c_X (v) \neq c_X (v_-)$ and we have that 
                        \begin{equation}
                          Z_v \cdot \theta^+ = Z_v \cdot Z_{v_-} = Z_{v,X} \cdot \theta^+_X > 0
                          \label{<+label+>}
                        \end{equation}
                        where the last inequality comes from Corollary
                        \ref{CorDivisorOfValuationWithLocalDivisorOfValuation}. Then putting $c := Z_v \cdot \theta^+
                        >0$ we get the result by Proposition \ref{PropCaracterisationConvergenceVersEigenvaluation}.  
                        The other case is symmetrical.
                      \end{proof}
                      We can thus fully understand the dynamics at infinity of a loxodromic automorphism.
                      \begin{prop}\label{PropBoundaryContracted}
                        For any completion $X$ of $X_0$, there exists an integer $N_0 > 0$ such that for every
                        $N \geq 0, f^{\pm N}$ contracts $\BD$ to $c_X (v_\pm)$.
                      \end{prop}
                      \begin{proof}
                        Let $E$ be a prime divisor at infinity in $X$, by Corollary \ref{cor:very-attracting-valutions}, we have that
                        \begin{equation}
                          \frac{1}{\lambda_1^n} f^{\pm n} \ord_E \rightarrow c_\pm v_\pm
                          \label{eq:<+label+>}
                        \end{equation}
                        for some $c_\pm > 0$.
                        Therefore for $n$ large enough, $c_X ( (f^{\pm n})_* \ord_E) = c_X (v_\pm)$.
                      \end{proof}

                      \section{Gizatullin's work on the boundary and applications}\label{SubSecGizatullinBoundary}
                      In \cite{gizatullinInvariantsIncompleteAlgebraic1971}, Gizatullin considers \emph{minimal
                      completions} of normal affine surfaces. If $X_0$ is a normal affine surface and $X$ is a
                      completion $X$ of $X_0$, then $X$ is \emph{minimal} if it is minimal with respect to the following
                      properties:
                      \begin{itemize}
                        \item There is a smooth open neighbourhood of $\partial_X X_0$. 
                        \item The boundary $\BD$ does not have three prime divisors that intersect at the same point.
                        \item If $\BD$ has a singular irreducible component then $\BD$ consists only of one
                          irreducible curve with at most one nodal singularity.
                      \end{itemize}
                      For such a completion $\iota: X_0 \hookrightarrow
                      X$, Gizatullin defines the curve $E(\iota)$ as the union of the irreducible components $E$ of
                      $\BD$ that are contracted by an automorphism of $X_0$ (the automorphism depends on $E$). His work
                      proves that the dual graph of $E(\iota)$ must be special, we have first to defines zigzags and
                      cycle of rational curves.
                      We call a \emph{zigzag} a chain of rational curves. That is a sequence $(E_1, \cdots,
                      E_r)$ of smooth rational curves such that $E_i \cdot E_{i+1} = 1, i = 1, \cdots, r-1$ and
                      for all $i,j$ such that $\left| i-j \right| \geq 2, E_i \cdot E_j = 0$. In particular the dual graph with
                      respect to the $E_i$'s is of the form
                      \begin{center}
                        \begin{tikzpicture}
                          \draw (0,0) node{$\bullet$};
                          \draw (0,0) node[below]{$E_1$};
                          \draw (1,0) node{$\bullet$};
                          \draw (1,0) node[below]{$E_2$};
                          \draw (2.5,0) node{$\bullet$};
                          \draw (2.5,0) node[below]{$E_{r-1}$};
                          \draw (3.5,0) node{$\bullet$};
                          \draw (3.5,0) node[below]{$E_r$};
                          \draw (1.75,0) node{$\cdots$};

                          \draw (0,0) -- (1.25,0);
                          \draw (2.25, 0) -- (3.5, 0);
                        \end{tikzpicture}
                      \end{center}
                      We will write $E_1 \vartriangleright E_2 \vartriangleright \cdots \vartriangleright E_r$ for the
                      zigzag defined by $(E_1, \cdots, E_r)$.
                      A \emph{cycle} of rational curves is either an irreducible rational curve with one nodal
                      singularity or
                      a sequence $(E_1, \cdots, E_r)$ of smooth rational curves with $r \geq 2$ such that $E_i \cdot E_{i+1} =1$ and $E_1 \cdot
                      E_r = 1$. The dual graph with respect to the $E_i$'s is of the form
                      \begin{center}
                        \begin{tikzpicture}
                          \draw (0:1) node{$\bullet$};
                          \draw (0:1) node[right]{$E_1$};
                          \draw (45:1) node{$\bullet$};
                          \draw (45:1) node[right, above]{$E_2$};
                          \draw (0:1) -- (45:1);
                          \draw (45:1) -- (90:1);
                          \draw (90:1) node{$\bullet$};
                          \draw (135:1) node{$\iddots$};
                          \draw (225:1) node{$\ddots$};
                          \draw (270:1) node{$\bullet$};
                          \draw (270:1) node[below]{$E_{r-1}$};
                          \draw (270:1) -- (315:1);
                          \draw (315:1) -- (0:1);
                          \draw (315:1) node{$\bullet$};
                          \draw (315:1) node[right]{$E_r$};
                        \end{tikzpicture}
                      \end{center}

                      \begin{prop}[\cite{gizatullinInvariantsIncompleteAlgebraic1971}]\label{prop:gizatullin}
                        Let $X_0$ be a normal affine surface and let $X$ be a minimal completion, then there are 3
                        possibilities for the dual graph of $E(\iota)$: 
                        \begin{enumerate}
                          \item $E(\iota) = \emptyset$. 
                          \item $E(\iota)$ is a disjoint union of smooth rational curves of self-intersection 0. 
                          \item $E(\iota)$ is a zigzag. 
                          \item $E(\iota)$ is a cycle of rational curves.
                        \end{enumerate}
                      \end{prop}

                      \begin{prop}\label{prop:}
                        If $X_0$ is a normal affine surface with a loxodromic automorphism and $X$ is a minimal
                        completion, then $E(\iota) = \BD$ is either a zigzag or a cycle of rational curves.
                      \end{prop}
                      \begin{proof}
                        The fact that $E (\iota) = \BD$ follows from Proposition \ref{PropBoundaryContracted}. In
                        particular we get that $E(\iota) \neq \emptyset$. If $\BD$ is a disjoint union of curves of
                        vanishing self-intersection, then any divisor supported on $\BD$ is numerically trivial but this
                        contradicts the fact that $K[X_0]^\times = 0$ and $\Pic^0 (X_0) = 0$. So the two remaining cases
                        are the zigzag and the cycle of rational curves.
                      \end{proof}

                      \begin{thm}\label{ThmAutomorphismCaseDynamicalCompactifications} 
                        \label{THMAUTOMORPHISMCASEDYNAMICALCOMPACTIFICATIONS}
                        Let $X_0= \spec \k[X_0]$ be an irreducible normal affine surface such that $\k[X_0]^\times =
                        \k^\times$ and $\Pic^0 (X_0) = 0$.
                        Suppose that $X_0$ admits an automorphism $f$ with $\lambda_1 (f) >1$.
                        If $X$ is a minimal completion of $X_0$, one has $E(\iota) = \BD = \supp \theta_{X}^{\pm}$.
                        Furthermore we have two mutually excluding cases
                        \begin{enumerate}
                          \item $\lambda_1(f)$ is an integer and in that case $E(\iota)$ is a zigzag.
                          \item $\lambda_1(f)$ is irrational and $E(\iota)$ is a cycle of rational curves.
                        \end{enumerate}
                        Furthermore, there exists a completion $Y$ with two distinct points $p_+,
                        p_- \in \partial_Y X_0$ and an integer $N >0$ such that
                        \begin{itemize}
                          \item $f^{\pm 1} (p_\pm) = p_\pm$.
                          \item $f^{\pm N}$ contracts $\partial_Y X_0$ to $p_{\pm}$.
                          \item $f^{\pm 1}$ has a normal form at $p_\pm$ given by Theorem
                            \ref{ThmLocalFormOfMapAndDynamicalDegreAreQuadraticInteger}, it is pseudomonomial or monomial in
                            the cycle case and of type \eqref{EqLocalNormalFormInfinitelySingular} or
                            \eqref{EqPseudoFormeInfSing} in the zigzag case.
                          \item In the zigzag case, $\partial_Y X_0$ is a tree of rational curves.
                          \item In the cycle case, $\partial_Y X_0$ is a cycle of rational curves.
                        \end{itemize}
                        Furthermore, 
                        \begin{itemize}
                          \item In the zigzag case, the set of completions above $Y$ that satisfy these
                            properties is cofinal in the set of all completions above $Y$.
                          \item In the zigzag case, we have that for any completion $Y$ of $X_0$, $\partial_Y X_0$ is a
                            tree of rational curves. In the cycle case, the set of cyclic completions of $X_0$ satisfying these properties is cofinal in
                            the set of cyclic completions of $X_0$.
                        \end{itemize}
                      \end{thm}
                      This shows Theorem \ref{BigThmAutomorphismIntroEng}. We will prove Theorem
                      \ref{ThmAutomorphismCaseDynamicalCompactifications} in \S \ref{SubSecProofCycleCase} and
                      \S \ref{SubSecProofZigzagCase}. For now we can say that the fact that $E(\iota) = \BD$ for any
                      minimal completion $X$ follows from Proposition \ref{PropBoundaryContracted}.

                      \section{Proof of Theorem \ref{ThmAutomorphismCaseDynamicalCompactifications}, the cycle
                      case}\label{SubSecProofCycleCase}

                      \begin{prop}[\cite{el-hutiCubicSurfacesMarkov1974,
                        cantatCommensuratingActionsBirational2019}]\label{PropCycleCaseIndeterminacyPoints}
                        Let $X$ be projective surface and $U$ an open subset of $X$ such that $X \setminus U$ is a cycle
                        of rational curves. Let $g$ be an automorphism of $U$, then the indeterminacy points of $g$ can only be
                        intersection points of two components of the cycle or at the nodal singularity of $X \setminus
                        U$ if it is a nodal curve.
                      \end{prop}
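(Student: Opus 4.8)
The plan is to argue by contradiction, in the spirit of the case analysis of \cite{el-hutiCubicSurfacesMarkov1974}. Suppose $p$ is an indeterminacy point of $g$ lying at a \emph{free} point of the cycle $D:=X\setminus U$, i.e.\ a smooth point of $D$ sitting on a unique component $E$ of $D$; I will derive a contradiction from the cyclic structure of $D$.

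First I would record the elementary behaviour of $g$ and $g^{-1}$ near $D$. Since $g|_U$ is an isomorphism, $g$ has no indeterminacy point and contracts no curve inside $U$, so $\operatorname{Ind}(g)\subset D$; and a generic point of each component of $D$ is sent into $D$, since otherwise $g^{-1}$ would fail to map a neighbourhood of its image back into $U$. Hence every component of $D$ is either contracted by $g$ to a point of $D$ or mapped birationally onto a component of $D$, and $g$ is injective on the set of non-contracted components; the same holds for $g^{-1}$. A short count then shows that $g$ and $g^{-1}$ contract the same number of components of $D$ and that every component of $D$ not in the image of $g$ is contracted by $g^{-1}$; this is the bridge between indeterminacy points of $g$ and components of $D$ contracted by $g^{-1}$.

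Next I would resolve $g$ minimally, $\pi\colon W\to X$ with $h=g\circ\pi$ regular (Theorem~\ref{ThmCompositionBlowUps}), and examine the exceptional fibre of $\pi$ over $p$. By Proposition~\ref{PropTechniqueContractionDeCourbes} the first curve contracted by $h$ is the strict transform of a curve of $X$, and since $g$ contracts nothing in $U$ this must be a component of $D$; pushing down and using that the exceptional loci are connected, one extracts a component $C$ of the cycle such that $g^{-1}$ contracts $C$ onto $p$ (more precisely, $g^{-1}$ collapses a connected chain of components of $D$ onto $p$).

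The geometric heart is then the following. The component $C$ meets exactly two other components $C^{-},C^{+}$ of the cycle, at nodes $a^{-},a^{+}$. For each sign, if $C^{\pm}$ is not itself contracted by $g^{-1}$, then working on a common resolution of $g^{-1}$ near $C\cup C^{\pm}$ and using that the exceptional fibre over $a^{\pm}$ together with the strict transforms of $C$ and $C^{\pm}$ maps to a connected set, the curve $g^{-1}(C^{\pm})$ must pass through $p$; since $E$ is the only component of $D$ through $p$, this forces $g^{-1}(C^{\pm})=E$. Propagating this around the cycle yields, after finitely many steps, one of two contradictions: two distinct components of $D$ are both mapped by $g^{-1}$ onto $E$, against the injectivity recorded above; or every component of $D$ is contracted by $g^{-1}$ onto $p$, which is impossible since then $g$ would map no component of $D$ onto a component while still being an isomorphism of $U$. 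The main obstacle is the bookkeeping when the nodes $a^{\pm}$ are themselves indeterminacy points of $g^{-1}$, and the degenerate length-two cycle (where $C^{+}=C^{-}$), for which I would fall back on a direct local analysis: contracting a rational component of a two-cycle on a surface with affine complement is obstructed by Castelnuovo's criterion together with the constraints on the intersection matrix of $D$ arising from \cite{goodmanAffineOpenSubsets1969}.
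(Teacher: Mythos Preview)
The paper does not supply its own proof of this proposition; it is quoted from \cite{el-hutiCubicSurfacesMarkov1974, cantatCommensuratingActionsBirational2019} and used as a black box. So there is no in-paper argument to compare against, and I assess your proposal on its own.

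Your setup is sound and you correctly extract a component $C$ of the cycle $D$ with $g^{-1}(C)=p$. The gap is in the propagation step. You assert that if the neighbour $C^{\pm}$ is not contracted by $g^{-1}$, then connectedness on a resolution forces $g^{-1}(C^{\pm})$ to pass through $p$, hence to equal $E$. But when the node $a^{\pm}=C\cap C^{\pm}$ is an indeterminacy point of $g^{-1}$, the exceptional fibre over $a^{\pm}$ in the minimal resolution consists of $h$-exceptional curves; by minimality these are \emph{not} $\pi$-exceptional, so under $\pi$ they map to honest components of $D$, not to the point $p$. Your connectedness argument then only tells you that $g^{-1}(C^{\pm})$ is linked to $p$ through a chain of components of $D$, which is vacuous in a cycle. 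You call this ``bookkeeping'', but it is a structural obstruction: the propagation scheme cannot be completed along these lines. (Separately, the sub-case where exactly one component of $D$ is not contracted by $g^{-1}$ escapes both of your listed contradictions.)

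The argument that closes, and which underlies the cited sources, is a direct dual-graph count that sidesteps the case analysis entirely. Take the minimal resolution $\pi\colon W\to X$, $h=g\circ\pi\colon W\to X$; since blow-ups at free or satellite boundary points both preserve $b_1$ of the dual graph, $\Gamma_W$ has $b_1=1$, i.e.\ it is a unique cycle $\mathcal{C}$ with trees attached. Now any leaf $\ell$ of $\Gamma_W$ leads to a contradiction: by minimality $\ell$ is not simultaneously $\pi$- and $h$-exceptional, say not $h$-exceptional; then $h(\ell)$ is a component of $D$ meeting two neighbours at two distinct nodes, and the $h$-preimages of these nodes on $\ell$ each force an adjacency in $\Gamma_W$, so $\ell$ has degree $\ge 2$, absurd. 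Thus $\Gamma_W$ has no leaves and is itself a cycle. Consequently every blow-up in $\pi$ is at a satellite point, and in particular every indeterminacy point of $g$ is a node of $D$. This handles the length-two cycle uniformly, without any separate appeal to Castelnuovo or Goodman.
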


                      \begin{cor}\label{CorCycleCaseValuationIrrational}
                        If $X$ is a cyclic completion of $X_0$ and $f \in \Aut(X_0)$ is a loxodromic automorphism with
                        one of its eigenvaluations $v$, then $c_X (v)$ is a satellite point and if $(X_n, p_n)$ is the
                        sequence of centers of $v$, then every $X_n$ is a cyclic completion and $p_n$ is a satellite
                        point. The eigenvaluation of a loxodromic automorphism must be irrational and
                        therefore $\lambda_1$ is an algebraic integer of degree 2, in particular it is irrational.
                      \end{cor}

                      \begin{proof}
                        Proposition \ref{PropCycleCaseIndeterminacyPoints} shows that for any completion $X$ of $X_0$,
                        $p_+ = c_X (v_+)$ is a satellite point at infinity. Indeed, by Proposition
                        \ref{PropBoundaryContracted} we have that for $n$ large enough $f^n$ contracts $\BD$ to
                        $p_+$. So that $p_+$ must be an indeterminacy point of $f^n$ and therefore it is a satellite
                        point by Proposition \ref{PropCycleCaseIndeterminacyPoints}. Since blowing up a cyclic
                        completion at a satellite point also yields a cyclic completion, Proposition
                        \ref{PropSequenceOfInfinitelyNearPoints} shows that the eigenvaluations $v_\pm$ are irrational.
                      \end{proof}

                      \begin{proof}[Proof of Theorem \ref{ThmAutomorphismCaseDynamicalCompactifications}]
                        Corollary \ref{CorCycleCaseValuationIrrational} shows the first part of the theorem.
                        We get the normal form at $p_{\pm}$ by blowing up the center of $v_\pm$ enough times. Since
                        these are always intersection points of two prime divisors at infinity we can suppose that
                        $\partial_Y X_0$ is still a cycle.

                        If we blow up any point on $Y$ which is not $p_+$ or $p_-$, then then we get a completion
                        $Y'$ that satisfies the same property as $Y$. If we blow up say $p_+$, then the local normal
                        form at $p_+$ is $f(x,y) = (x^a y^b, x^c y^d)$ up to invertible functions. With $ad - bc = 1$.
                        If $\tilde E$ is the exceptional divisor, then the center of $v_+$ on $Y'$ is $\tilde E \cap F$.
                        Using local coordinates $(u,v)$ at $\tilde E \cap F$ associated to $\tilde E$ and $F$, the blow
                        up is given by $\pi (u,v) = (u,uv) = (z,w)$ where $z = 0$ is a local equation of $E$ and $w = 0$
                        is a local equation of $F$. Conjugating $f$ with $\pi$ we have that $f$ is defined at
                        $c_{Y'} (v_+)$ is a pseudomonomial local normal form. The cofinalness statement follows from the
                        fact that the blow up of a cyclic completion at a satellite point is also cycle.
                      \end{proof}

                      \section{Proof of Theorem \ref{ThmAutomorphismCaseDynamicalCompactifications}, the zigzag
                      case}\label{SubSecProofZigzagCase}
                      Let $X$ be a minimal completion of $X_0$ and suppose that $E(\iota)$ is a zigzag. Then,we have
                      proven that $\BD = E(\iota)$ so that $\BD$ is a zigzag.
                      Following \cite{danilovAutomorphismsAffineSurfaces1975, blancAutomorphisms1fiberedAffine2011}, a zigzag
                      $Z$ is \emph{standard} if
                      it is of the form
                      \begin{equation}
                        Z = F \vartriangleright E \vartriangleright Z'
                      \end{equation}
                      where $F^2 =0, E^2 \leq -1$ and
                      $Z'$ is a \emph{negative} zigzag meaning that every component of $Z'$ has self-intersection $\leq
                      -2$. This is also called a Hirzebruch-Jung string, see \cite{barthCompactComplexSurfaces2004}. If
                      $E^2 =: -m$,we say that the zigzag is $m$-standard. Any zigzag can be put to a standard
                      form via blow-up of points and contractions
                      of (-1)-curves (see \cite{danilovAutomorphismsAffineSurfaces}, \S 1.7). So that we can
                      assume that we have a completion $X$ of $X_0$ such that $\BD$ is a standard zigzag.

                      \subsection{Some technical lemmas about zigzags}\label{SubSecTechnicalLemmasZigZag}

                      \begin{lemme}\label{lemme:center-on-B}
                        Let $X_0$ be a normal affine surface and $X$ a completion such that $\BD$ is an almost standard
                        zigzag.
                        If $B$ is the unique component of nonnegative self-intersection of $\BD$ and $v$ is a valuation
                      such that $Z_v^2 \geq 0$ and $v$ is not divisorial, then $c_X (v) \in B$. 
                    \end{lemme}
                      \begin{proof}
                        By Theorem \ref{thm:nef-valuation}, we have that $Z_{v,X}^2 > 0$. Now if $c_X (v) \not \in B$,
                        then we have that $Z_{v,X} \cdot B = 0$. By the Hodge index theorem this implies either that
                        $Z_{v,X}^2 < 0$ or that $Z_{v,X} = B$ and $Z_{v,X}^2 = B^2 = 0$ both cases yield a
                        contradiction.
                      \end{proof}

                      Following \cite{blancAutomorphisms1fiberedAffine2011}, an \emph{almost standard} zigzag is a zigzag
                      $Z = B_1 \vartriangleright B_2 \vartriangleright \cdots \vartriangleright B_r$ such that
                      \begin{enumerate}
                        \item There exists a unique irreducible component $B_k$ such that $(B_k)^2 \geq 0$.
                        \item There exists at most one component $B_l$ such that $(B_l)^2  =-1$ and in that case we must
                          have $l = k \pm 1$.
                      \end{enumerate}
                      We need to state some technical results for the proof of Theorem
                      \ref{ThmAutomorphismCaseDynamicalCompactifications}. All the results in this Section rely
                      heavily on Proposition \ref{PropTechniqueContractionDeCourbes} and the Castelnuovo criterion.

                      \begin{lemme}[Proposition 3.1.3 of \cite{blancAutomorphisms1fiberedAffine2011}]
                        \label{LemmePointDindeterminationAlmostStandardZigzag}
                        Let $X_0$ be a quasiprojective surface and $X$ a completion of $X_0$ such that $X \setminus X_0$
                        is an almost standard zigzag that has no component of self intersection $-1$. Let $B_k$ be the
                        unique irreducible component of nonnegative self-intersection of $X \setminus X_0$. Let $g$ be an
                        automorphism of $X_0$, then
                        \begin{enumerate}
                          \item $g$ has at most one indeterminacy point $q$ on $X$.
                          \item $q$ has to be on $B_k$ (if it exists).
                          \item If $B_k$ is not on the boundary of the zigzag then $q$ must be the intersection point of
                            $B_k$ with $B_{k + 1}$ or $B_{k-1}$.
                        \end{enumerate}
                      \end{lemme}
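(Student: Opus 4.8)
The plan is to resolve the indeterminacies of $g$ and read off every assertion from Proposition~\ref{PropTechniqueContractionDeCourbes} together with the Castelnuovo criterion and the numerical behaviour of self-intersections under blow-ups. First I would extend $g$ to a birational self-map $g\colon X\dashrightarrow X$; since $g$ is an automorphism of $U$ and $X$ is a completion of $U$, every indeterminacy point of $g$ lies on the boundary zigzag $X\setminus U$, and likewise every curve blown up or contracted in any resolution of $g$ lies over that zigzag. If $g$ is already regular on $X$ there is nothing to prove, so I assume it is not; let $\pi\colon Y\to X$ be the minimal resolution of $g$ and $h=g\circ\pi\colon Y\to X$ the regular lift. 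Both $\pi$ and $h$ are compositions of $n:=\rho(Y)-\rho(X)\geq1$ point blow-ups, respectively blow-downs, and restrict to isomorphisms over $U$; in particular each of the $n$ curves contracted by $h$ is a boundary curve of the relevant intermediate surface.

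The key step is to identify the first curve $C$ contracted by $h$. By Proposition~\ref{PropTechniqueContractionDeCourbes}, $C$ is the strict transform $\pi'(B)$ of a curve $B\subset X$; since $g$ contracts no curve meeting $U$, $B$ must be an irreducible component $B_j$ of the zigzag. As $C=\pi'(B_j)$ is a $(-1)$-curve, its self-intersection equals $B_j^2-m_j=-1$, where $m_j\geq0$ is the number of infinitely near centers of $\pi$ lying on the successive strict transforms of $B_j$. If $j\neq k$ then, the zigzag being almost standard with no $(-1)$-component, $B_j^2\leq-2$, forcing $m_j\leq-1$, which is absurd. Hence $j=k$, the component $B_k$ occurs, and $m_k=B_k^2+1\geq1$: in particular $\pi$ has a center on $B_k$, so $g$ does have an indeterminacy point lying on $B_k$. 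This already yields part of assertion~2.

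Upgrading this to the full statement is the real work, and I would carry it out following the zigzag calculus of \cite{gizatullinAutomorphismsAffineSurfaces1975} and \cite{blancAutomorphismsFiberedAffine2011}. The two points to establish are: (i) every center of $\pi$ is infinitely near $B_k$, so $g$ has no indeterminacy point off $B_k$; and (ii) the unique first-level center on $B_k$ is, when $B_k$ is interior, the satellite point $B_k\cap B_{k\pm1}$. For (i) I would argue by contradiction: a center on some $B_j$ with $j\neq k$ makes $\pi'(B_j)^2\leq-3$, a curve whose only neighbours in the boundary of $Y$ are the strict transforms of $B_{j\pm1}$ (still of self-intersection $\leq-3$ after the blow-ups) and the exceptional curves of $\pi$ over that center; tracking self-intersections through the $n$ blow-downs making up $h$ shows $\pi'(B_j)$ can never become a $(-1)$-curve in time to be contracted, and comparing $n$ with the number $r+n$ of boundary components of $Y$ enters to rule such a center out. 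For (ii), once all centers lie over $B_k$: since $B_k^2\geq0$ the zigzag completion $X$ carries a $\P^1$-fibration attached to $B_k$ (after the usual reduction to $B_k^2=0$), preserved up to an automorphism of the base by $g$; resolving $g$ is then an elementary transformation of this ruled surface, which by minimality of $\pi$ is a single blow-up followed by a single blow-down at a point of $B_k$, located at $B_k\cap B_{k\pm1}$ when $B_k$ is interior so that the contracted boundary is again the given zigzag. Assertion~1 (uniqueness) and assertion~3 (location) then follow at once.

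The hard part will be exactly the bookkeeping in steps (i)–(ii): keeping track of the self-intersections of the boundary curves through the blow-up $\pi$ and then through the blow-down $h$, and verifying that the target boundary is again the prescribed almost standard zigzag — this is what rigidly pins down both the number and the position of the indeterminacy points. Everything else reduces to Proposition~\ref{PropTechniqueContractionDeCourbes} and Castelnuovo's criterion.
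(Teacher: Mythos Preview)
Your identification of the first contracted curve as the strict transform of $B_k$ via Proposition~\ref{PropTechniqueContractionDeCourbes} is correct and matches the paper. However, your plan for (ii) and for uniqueness contains a genuine gap: the claim that ``resolving $g$ is then an elementary transformation of this ruled surface, which by minimality of $\pi$ is a single blow-up followed by a single blow-down'' is false in general. There is no reason for $g$ to preserve the $\P^1$-fibration attached to $B_k$, and even when all centers of $\pi$ lie over a single point of $B_k$ the minimal resolution typically involves many blow-ups of infinitely near points (already for H\'enon maps on $\A^2$ with $X=\P^2$). So the fibration argument does not pin down the number or location of indeterminacy points.

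The paper's arguments are both different and much shorter. For the location (assertion~3): if $B_k$ is interior and the indeterminacy point $p$ were a free point of $B_k$, then after blowing up $p$ and contracting the strict transform of $B_k$ (which is necessarily the first curve contracted by $h$), the images of $B_{k-1}$, $B_{k+1}$, and the exceptional divisor over $p$ all pass through the same point --- a triple point in the boundary. But $h$ is a composition of point blow-ups onto $X$, and the target boundary is a simple-normal-crossing zigzag, so no triple point can ever appear; contradiction. For uniqueness (assertion~1) and for ruling out centers off $B_k$ (your step (i)): over each indeterminacy point the minimal resolution carries a $\pi$-exceptional $(-1)$-curve, and by minimality of $\pi$ this curve is \emph{not} contracted by $h$. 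Its image in $X$ is therefore a boundary component of self-intersection $\geq -1$, hence $B_k$ (the zigzag having no $(-1)$-component). Two distinct indeterminacy points would give two such curves, both mapping onto $B_k$, which is impossible since $h$ is birational. This replaces all of your bookkeeping in (i) --- where, incidentally, the claim that the neighbours $B_{j\pm1}$ have self-intersection $\leq -3$ is not correct; they only satisfy $\leq -2$ unless they have themselves been blown up.
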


                      \begin{proof}
                        Suppose $g$ has at least one indeterminacy point. Let $\pi : Y \rightarrow
                        X$ be the minimal resolution of indeterminacies of $g$ and let $\tilde g$ be the lift of $g$.

                        We first show that $g$ cannot have two indeterminacy points. Suppose $g$ has two indeterminacy
                        point $q_1, q_2$. Let $E_1, E_2$ be two $(-1)$-curves in $Y$ with $E_i$ above $q_i$. Then by
                        minimality of $Y$, $E_1$ and $E_2$ cannot be contracted. So their images in $X$ via $\tilde g$
                        is a curve of self-intersection $\geq -1$ but there is only one such curve in $\BD$ and we
                        cannot have $\tilde g(E_1) = \tilde g(E_2)$ so this is a contradiction.

                        Let $q$ be the unique indeterminacy point of $g$. Since every other curve in $X$ has
                        self-intersection $\leq -2$, the first $(-1)$-curve in $Y$ contracted by $\tilde g$ must be the
                        strict transform of $B_k$ so that $q \in B_k$.

                        Suppose that $B_k$ is not on the boundary of the zigzag and that the
                        indeterminacy point $q$ of $g$ is not an intersection point. Then, the map $\pi$
                        factorizes through the blow-up of $q$ and after contracting the strict transform of
                        $B_k$, we get at infinity three prime divisors that intersect at the same point. But
                        this is a contradiction because $\tilde g^{-1}$ consists only of blow ups of point at
                        infinity and $X \setminus X_0$ does not have three divisors that intersects at the
                        same point and that remains true after any blow-ups.
                      \end{proof}

                      \begin{cor}\label{CorollaryCenterOnAlmostStandardZigzag}
                        Let $X$ be a completion of $X_0$ such that $X \setminus X_0$ is an almost standard
                        zigzag $Z$ and let $f$ be an automorphism of $X_0$. Suppose that $f$ has an
                        indeterminacy point that is a free point on $B_k$, then one of the
                        two sides of $Z$ can be contracted so that $B_k$ becomes a boundary component of the
                        zigzag.
                      \end{cor}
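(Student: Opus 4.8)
The plan is to run a finite descent: repeatedly contract the $(-1)$-curve of the zigzag adjacent to $B_k$, using Lemma~\ref{LemmePointDindeterminationAlmostStandardZigzag} to detect when the process must stop. If $B_k$ is already a boundary component of $Z$ there is nothing to prove, so assume it is not; write $Z = B_1 \vartriangleright \cdots \vartriangleright B_r$, so that both $B_{k-1}$ and $B_{k+1}$ occur in the chain, and let $q \in B_k$ be the given free point at which $f$ is not defined.

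I would first dispose of the case in which $Z$ has no component of self-intersection $-1$. Then $Z$ satisfies the hypotheses of Lemma~\ref{LemmePointDindeterminationAlmostStandardZigzag}, whose conclusion forces every indeterminacy point of $f$ to lie on $B_k$ and, because $B_k$ is not a boundary component, to be the intersection point of $B_k$ with $B_{k-1}$ or $B_{k+1}$; this contradicts the hypothesis that $q$ is a free point of $B_k$. Hence: whenever the boundary zigzag is almost standard, $B_k$ is not a boundary component of it, and $f$ has a free indeterminacy point on $B_k$, the zigzag must contain a $(-1)$-curve.

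Now suppose $Z$ contains a $(-1)$-curve. Since $Z$ is almost standard this curve is $B_{k-1}$ or $B_{k+1}$; say it is $B_{k+1}$. By the Castelnuovo criterion I may contract it, obtaining a birational morphism $\tau\colon X \to X'$ onto a smooth completion $X'$ of $U$ whose boundary $Z' = \tau(Z)$ is again a chain of rational curves. The self-intersection bookkeeping should show that $Z'$ is again almost standard: contracting $B_{k+1}$ raises $B_k^2$ and $B_{k+2}^2$ (when $B_{k+2}$ exists) by one and leaves $B_{k-1}$ and all other components untouched, so $B_k$ remains the unique component of nonnegative self-intersection, and the only component of $Z'$ that can have self-intersection $-1$ is $B_{k+2}$, which is adjacent to $B_k$ and lies on the same side of $B_k$ as $B_{k+1}$ did. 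Moreover $\tau$ is an isomorphism near $q$, so the rational map $f\colon X' \dashrightarrow X'$ is still undefined at $q$; thus $f$ again has a free indeterminacy point on $B_k \subset X'$.

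Iterating this construction on the chosen side, each step removes one component from the chain, so the process terminates, and at termination the boundary zigzag has no $(-1)$-curve. If $B_k$ were still not a boundary component there, the second paragraph would yield a contradiction; hence the process ends with $B_k$ a boundary component, that is, the entire side of $Z$ containing $B_{k+1}$ has been contracted, as desired. The delicate point — and the only place where a genuine argument is needed beyond Lemma~\ref{LemmePointDindeterminationAlmostStandardZigzag} — is verifying that the ``almost standard'' form, and in particular the fact that the $(-1)$-curve stays on one fixed side of $B_k$ and remains adjacent to it, is preserved by each contraction; everything else is a bounded induction.
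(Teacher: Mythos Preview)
Your proof is correct and follows essentially the same approach as the paper: use Lemma~\ref{LemmePointDindeterminationAlmostStandardZigzag} to locate a $(-1)$-curve adjacent to $B_k$, contract it, check the result is again almost standard with the free indeterminacy point persisting on $B_k$, and iterate. Your write-up is in fact a bit more explicit than the paper's about why the $(-1)$-curve stays on one fixed side of $B_k$ and why termination forces $B_k$ to be a boundary component, but the underlying idea is identical.
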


                      \begin{proof}
                        Suppose that $B_k$ is not a boundary component of the zigzag and that $f$ has an
                        indeterminacy point that is a free point on $B_k$. Then, by Lemma
                        \ref{LemmePointDindeterminationAlmostStandardZigzag}, $B_{k-1}$ or $B_{k+1}$ has to
                        be a $(-1)$-curve, suppose it is $B_{k+1}$. We contract it and we obtain an almost standard zigzag
                        and $f$
                        still has an indeterminacy point that is a free point on $B_k$. If $B_k$ is on the
                        boundary we are done, otherwise we apply the same procedure. This algorithm must terminate and
                        we end up with a completion where the boundary is zigzag and $B_k$ is on the boundary. 
                      \end{proof}

                      \begin{lemme}\label{LemmePointDIndeterminationPasSurDeuxMoinsX_0n}
                        Let $X_0$ be a quasiprojective variety and $X$ a completion of $X_0$ such that $X \setminus X_0$ is a
                        zigzag of type $(-m_1, \cdots, -m_k , -1, -1, -m_{k+1}, \cdots,  - m_r)$ such that for all $i, m_i
                        \geq 2$. Let $f$ be an automorphism of $X_0$. Then the intersection point of the two $(-1)$-curves
                        cannot be an indeterminacy point of $f$.

                        If the zigzag is of type $(-1, -2, \ldots, \underbrace{-2}_{F}, \underbrace{-1}_E, -m_{k+1},
                        \cdots,  - m_r)$ with $m_i \geq 2$,
                        then $F \cap E$ cannot be an indeterminacy point of $f$.
                      \end{lemme}

                      \begin{proof}
                        Let $\pi: Z \rightarrow X$ be the minimal resolution of indeterminacy of $f: X \dashrightarrow
                        X$ and let $\tilde f: Z \rightarrow X$ be the lift of $f$. The first curve
                        contracted by $\tilde f$ must
                        be the strict transform of one of the prime divisors at infinity of $X$. But if the intersection
                        of the $(-1)$-curves is an indeterminacy point of $f$, then all the strict transforms of the prime
                        divisors at infinity of $X$ have self-intersections $\leq -2$ and this is a contradiction.

                        If $X \setminus X_0$ is a zigzag $Z$ of type $(-1, -2, \cdots, -2, -1 , -m_{k+1}, \cdots, -
                        m_r)$, suppose that $F \cap E$ is an indeterminacy point of $f$, then the first
                        curve contracted by $\tilde f$ must be the strict transform of the $(-1)$-curve on
                        the left of the zigzag. So we can start by contracting it and we get a zigzag $Z'$
                        of type $(-1, -2 , \cdots, \underbrace{-2}_{F}, \underbrace{-1}_{E}, -m_{k+1}, \cdots, - m_r)$
                        and of size $\# Z -1$. We
                        can repeat this process until we get a zigzag of the form $(\underbrace{-1}_F,
                        \underbrace{-1}_E, -m_{k+1}, \cdots, - m_r)$ and we have that $F \cap E$ cannot be
                        an indeterminacy point of $f$ by the previous case, this is a contradiction.
                      \end{proof}

                      \subsection{Elementary links between almost standard zigzags}
                      All the results of \S \ref{SubSecTechnicalLemmasZigZag} will
                      be applied to the following situation. If $X$ is a completion of $X_0$ such that $\BD$ is an
                      almost standard zigzag and $f$ is a
                      loxodromic automorphism of $X_0$, then first by Lemma \ref{lemme:center-on-B}, $c_X (v_+)$ belongs
                      to the unique component of $\BD$ of nonnegative self-intersection and furthermore some positive
                      iterate of $f$ contracts a component of $\BD$ to $c_{X} (v_+)$. Thus, $c_X (v_+)$ is
                      an indeterminacy point of some positive iterate of ${f}^{-1}$ on $X$.

                      \begin{prop}\label{PropEclatementDuCentreSurZigzag}
                        Let $X$ be a completion of $X_0$ such that $\BD$ is an almost standard zigzag, then
                        one can find a completion $Y$ of $X_0$ with a birational map $\phi: X \dashrightarrow Y$ that
                        is an isomorphism above $X_0$ such that
                        \begin{enumerate}
                          \item $Y \setminus X_0$ is also an almost standard zigzag.
                          \item Let $\tilde X$ be the blow up of $X$ at $c_X (v_+)$, then the lift $\phi :
                            \tilde X \dashrightarrow Y$ is defined at $c_{\tilde X} (v_+)$ and is a local
                            isomorphism there.
                        \end{enumerate}
                      \end{prop}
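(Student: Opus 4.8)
The plan is to obtain $Y$ by performing on $X$ a finite chain of elementary surgeries at infinity---blow-ups of points of $X\setminus U$ and contractions of $(-1)$-curves contained in $X\setminus U$, all of them away from $c_X(v_+)$---the last surgery being the blow-up of $c_X(v_+)$ itself, and to check at every stage that one stays within the class of almost standard zigzags. First I would locate the center of $v_+$ on $X$. Recall that $f$ defines an automorphism of $U$ by Lemma~\ref{LemmeAutoOnQuasiProjectiveSurface}, and that since $f$ is loxodromic one has $\lambda_1(f)>1=\lambda_2(f)$, so by Lemma~\ref{LemmaDivisorialConditionOnDegrees} the eigenvaluation $v_+$ is not divisorial and $c_X(v_+)$ is a closed point of $Z:=X\setminus U$; moreover, as recalled before the statement, some positive iterate of $f$ contracts a component of $Z$ onto $c_X(v_+)$, so $c_X(v_+)$ is an indeterminacy point on $X$ of a positive iterate of the automorphism $f^{-1}$ of $U$. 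Writing $Z=B_1\vartriangleright\cdots\vartriangleright B_r$ with $B_k$ the unique component of nonnegative self-intersection, Lemma~\ref{LemmePointDindeterminationAlmostStandardZigzag} then forces $c_X(v_+)\in B_k$, and, if $B_k$ is an interior component, that $c_X(v_+)=B_k\cap B_{k-1}$ or $c_X(v_+)=B_k\cap B_{k+1}$.

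Next I would normalise the situation so that blowing up $c_X(v_+)$ keeps the boundary a chain and keeps it almost standard. If $c_X(v_+)$ is a free point of $B_k$ and $B_k$ is interior, Corollary~\ref{CorollaryCenterOnAlmostStandardZigzag} lets me contract one side of $Z$ until $B_k$ becomes a boundary component; the contracted curves are disjoint from $c_X(v_+)$, so this replaces $X$ by an almost standard completion $X'$ through a birational map which is an isomorphism near $c_X(v_+)$. If $c_X(v_+)$ is the satellite point $B_k\cap B_{k\pm1}$, I may first need to contract the adjacent $(-1)$-curve lying on the \emph{opposite} side of $B_k$, when there is one (again away from the center), so that the unique $(-1)$-curve of $Z$, if any, sits on the side of $B_k$ containing $c_X(v_+)$; this is again a local isomorphism near the center. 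In all the remaining configurations, blowing up $c_X(v_+)$ inserts a new $(-1)$-curve $E'$ adjacent to the strict transform of $B_k$ into the boundary chain, every other component being unchanged except that $B_k$ drops by one; provided $B_k^2\geq 1$, the resulting chain is again almost standard, with $E'$ the adjacent $(-1)$-curve and the strict transform of $B_k$ the unique nonnegative component, and I take $Y$ to be this completion. Since each preliminary move is an isomorphism in a neighbourhood of $c_X(v_+)$, the composite birational map $\phi\colon X\to Y$ is an isomorphism over $U$, and its lift to $\tilde X$ is defined and a local isomorphism at $c_{\tilde X}(v_+)$, the corresponding point being the center $c_Y(v_+)$ on $E'\subset Y$.

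The delicate point, and the step I expect to be the main obstacle, is the degenerate case $B_k^2=0$: then a free blow-up on the boundary $0$-curve, or a satellite blow-up next to it, produces two adjacent $(-1)$-curves and no nonnegative component, so the output fails to be almost standard. Here I would first modify $X$ by an elementary transformation replacing the offending $0$-curve by one of positive self-intersection---contracting an adjacent $(-1)$-curve of $Z$ if one is present, and otherwise performing an inner blow-up followed by an outer contraction in the $\P^1$-fibration defined by the $0$-curve, in the spirit of \cite{gizatullinAutomorphismsAffineSurfaces1975,blancAutomorphismsFiberedAffine2011}---all of this carried out away from $c_X(v_+)$, after which one is reduced to the case $B_k^2\geq 1$ treated above. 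The bulk of the remaining work is the bookkeeping of self-intersections ensuring that the almost standard property survives every surgery, together with using Lemmas~\ref{LemmePointDindeterminationAlmostStandardZigzag} and~\ref{LemmePointDIndeterminationPasSurDeuxMoinsUn} to guarantee that none of the contractions involved passes through $c_X(v_+)$ (respectively $c_{\tilde X}(v_+)$), which is what makes $\phi$ an isomorphism over $U$ and its lift a local isomorphism at $c_{\tilde X}(v_+)$.
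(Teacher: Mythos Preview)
Your overall plan---case analysis by the position of $B_k$ and of $c_X(v_+)$, together with Lemma~\ref{LemmePointDindeterminationAlmostStandardZigzag}, Corollary~\ref{CorollaryCenterOnAlmostStandardZigzag}, and Lemma~\ref{LemmePointDIndeterminationPasSurDeuxMoinsUn}---is exactly the paper's approach, and the non-degenerate cases go through as you describe. The gap is precisely in the case $B_k^2=0$, which you correctly identify as the delicate one. Your proposal there is to first raise $B_k^2$ to $\geq 1$ by surgeries carried out \emph{away from} $c_X(v_+)$, and only afterwards blow up the center. But when no $(-1)$-curve of the zigzag is adjacent to $B_k$, this is impossible: blow-ups never raise a self-intersection, and the only boundary curves meeting $B_k$ have self-intersection $\leq -2$, hence cannot be contracted. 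An elementary transformation in the $\P^1$-fibration defined by $|B_k|$ does not help either: to produce a contractible curve in that fiber one must blow up a point of $B_k$ and then contract the strict transform of $B_k$, which contains $c_X(v_+)$.

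The paper resolves this by reversing your order of operations. One blows up $c_X(v_+)$ \emph{first}; the strict transform of $B_k$ then has self-intersection $-1$ and is adjacent to the exceptional $(-1)$-curve $\tilde E$, and Lemma~\ref{LemmePointDIndeterminationPasSurDeuxMoinsUn} gives $c_{\tilde X}(v_+)\neq \tilde E\cap B_k$. One may therefore contract $B_k$: this contraction is \emph{not} away from $c_X(v_+)$, but it \emph{is} away from $c_{\tilde X}(v_+)$, and that is exactly what condition~(2) demands. So you should drop the requirement that the blow-up of the center be the last surgery; in the $B_k^2=0$ case it must come first, and the contraction follows. Incidentally, in the boundary/free-point case with $B_k^2>0$ and $E^2\leq -2$, the paper even moves in the direction opposite to your intuition: it first performs inner blow-ups at $B_k\cap E$ to bring $B_k^2$ \emph{down} to $0$ (inserting a $(-1)$-curve next to $B_k$), precisely in order to set up the two-adjacent-$(-1)$ configuration on which Lemma~\ref{LemmePointDIndeterminationPasSurDeuxMoinsUn} bites after the blow-up of the center.
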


                      \begin{proof} Let $B$ the unique irreducible component of $\BD$ of nonnegative self
                        intersection.
                        \paragraph{\bf Case: $B$ is on the boundary of the zigzag} $\BD$ is a zigzag of the form $B \triangle E
                        \triangle Z$ where $B^2 \geq 0, E^2 \leq -1$ and $Z$ is a negative zigzag.
                        \begin{itemize}
                          \item \textbf{$c_X (v_+)$ is a free point on $B$.} If $E^2 = -1$, we blow up
                            $c_X (v_+)$ and then contract the strict transform of $E$. Let $Y$ be the new projective
                            surface obtained, it satisfies the proposition.

                            Suppose $E^2 < -1$, If $B^2 >0$ we blow up $B \cap E$ to obtain a new zigzag $B \triangle E '
                            \triangle Z'$ which is still standard. We keep blowing up the strict transform of $B$
                            with the second component of the zigzag until $B^2 = 0$. After all these blowups, let $X '$
                            be the newly obtained projective surface, we have that $X ' \setminus X_0$ is an almost
                            standard zigzag of the form $B \triangle E \triangle Z$ where $B^2 = 0, E^2 = -1$ and $Z$ is a
                            negative zigzag. We blow up
                            $c_{X '} (v_+)$ and let $\tilde E$ be the exceptional divisor, by Lemma
                            \ref{LemmePointDIndeterminationPasSurDeuxMoinsX_0n}, the center of $v_+$ cannot be the
                            intersection point of $\tilde E$ and the strict transform of $B$, therefore it is a free point
                            of $\tilde E$ and we can contract the strict transform of $B$. We call $Y$ the new obtained
                            surface it satisfies the proposition.
                          \item \textbf{$c_X(v_+)$ is the satellite point $B \cap E$.} We blow up $B \cap E$ and call
                            $\tilde E$ the exceptional divisor and $\tilde X$ the newly obtained projective surface. If
                            $B^2 >0$ in $X$, then we still have an almost standard zigzag
                            and we set $Y = \tilde X$. If $B^2=0$ in $X$, then by Lemma
                            \ref{LemmePointDIndeterminationPasSurDeuxMoinsX_0n} $c_{\tilde X} (v_+)$ is a free point of $\tilde E$ and we can
                            contract the strict transform of $B$, we call $Y$ the newly obtained surface.
                        \end{itemize}

                        \paragraph{\bf Case: $B$ is not on the boundary of the zigzag}
                        \begin{itemize}
                          \item \textbf{$c_X (v_+)$ is a free point of $B$.} By Corollary
                            \ref{CorollaryCenterOnAlmostStandardZigzag}, one of the two sides of $\BD$ is
                            contractible, so we contract it and call $X_1$ the newly obtained surface, we can now apply the
                            proof of the boundary case to find $Y$.
                          \item \textbf{$c_X(v_+)$ is the satellite point $B \cap E$.} We can suppose
                            up to contraction that if $X \setminus X_0$ contains a $(-1)$-component, it must be $E$. We start
                            by blowing up $c_X (v_+)$ and let $\tilde E$ be the exceptional divisor.
                            \begin{itemize}
                              \item If $B^2 >0$ in $X$, then we still have an almost standard zigzag and we call $Y$ the newly
                                obtained surface.
                              \item If $B^2=0$ in $X$, then by Lemma \ref{LemmePointDIndeterminationPasSurDeuxMoinsX_0n} the center of
                                $v_+$ cannot be the intersection of $\tilde E$ and the strict transform of $B$ where $\tilde
                                E$ is the exceptional divisor. So we can contract the strict transform of $B$. We get an
                                almost standard zigzag and we call $Y$ the newly obtained surface.
                            \end{itemize}
                        \end{itemize}
                      \end{proof}

                      \begin{cor}\label{cor:sequence-of-centers-zigzag}
                        Let $X$ be a completion of $X_0$ such that $X \setminus X_0$ is an almost standard zigzag and
                        let $f \in \Aut(X_0)$ be a loxodromic automorphism with eigenvaluations $v_\pm$. Let $(X_m,
                        p_m)$ be the sequence of centers and blow-ups associated to $X$ and $v_+$ then there exists a
                        completion $Y_m$ of $X_0$ such that $\partial_{Y_m} X_0$ is an almost standard zigzag and the
                        birational map $Y_m \dashrightarrow X_m$ is an isomorphism in a neighbourhood of $c_{X_m}
                        (v_+)$.
                      \end{cor}

                      \begin{cor}\label{cor:zigzag-inf-sing-eigenvaluation}
                        There exists a completion $Y$ of $X_0$ such that $\partial_Y (X_0)$ is an almost standard
                        zigzag,  $c_Y(v_+) \neq c_Y (v_-)$ and $c_Y (v_+)$ is a free point on a prime divisor at
                        infinity. In particular, $\lambda_1(f)$ is an integer and $v_+$ is infinitely singular.
                      \end{cor}
                      \begin{proof}
                        The assertions on the dynamical degree and the type of the eigenvaluation follow from the
                        following. If $c_Y (v_+) \in F$ is a free point at infinity and $c_Y (v_+) \neq c_Y (v_-)$, then
                        by Proposition \ref{PropBoundaryContracted}, there exists $n \geq 1$ such that $f^{\pm n}$
                        contracts $\partial_Y X_0$ to $p_\pm := c_Y (v_\pm)$. In particular, $f^n (p_+) = p_+$ and
                        $p_-$ is the only indeterminacy point of $f^n$. Let $z = 0$ be a local equation of $F$ at
                        $p_+$, we have that $(f^n)^* z = z^a u$ with $a \geq 2$ and $u$ a germ of invertible regular
                        function. We get that $\lambda_1(f^n)$ is an integer and by Theorem
                        \ref{ThmLocalFormOfMapAndDynamicalDegreAreQuadraticInteger} we have that $v_+$ is infinitely
                        singular and that $\lambda_1 (f)$ is an integer.

                        Start with a completion $X$ such that $\BD$ is an almost standard zigzag. By applying
                        Corollary \ref{cor:sequence-of-centers-zigzag} we can suppose that
                        $c_X (v_+) \neq c_X (v_-)$. Now, we have by Proposition
                        \ref{PropTypesEigenValuationsAutomorphisms} that $v_+$ is either infinitely singular or
                        irrational. If it is infinitely singular, then by applying again  Corollary
                        \ref{cor:sequence-of-centers-zigzag} we get a completion $X$ such that $\BD$ is an almost
                        standard zigzag and $c_X(v_+)$ is a free point on the zigzag. Otherwise, $v_+$ is irrational and
                        we can assume that $X$ is a completion such that $\BD$ is almost standard and for any completion
                        above $X$ the center of $v_+$ is a satellite point. We have by Lemma \ref{lemme:center-on-B}
                        that $c_X (v_+)$ must belong
                        to $B$, the unique component of nonnegative self-intersection of the almost standard zigzag. So
                        we write $c_X (v_+) = E \cap B$ with $B^2 \geq 0$ and $E^2 \leq -1$. First of all,
                        we can assume that if $\BD$ has a $(-1)$-curve then it must be $E$. Indeed, if the other
                        neighbouring curve of $B$ is a $(-1)$-curve, we contract it and we repeat this process until
                        there are no more $(-1)$-curves on the zigzag except maybe $E$. This does not change the fact that $\BD$ is an
                        almost standard zigzag. We show that there is a contradiction.

                        If $B^2 = 0$, let $X_1$ be the blow up of $c_X (v_+) = E \cap B$ and let $E_1$ be the
                        exceptional divisor. We have $c_{X_1} (v_+) \in E_1$ and by Lemma
                        \ref{LemmePointDIndeterminationPasSurDeuxMoinsX_0n}, the center of
                        $v_+$ cannot be $E_1 \cap B$. Since by assumption it must be a satellite point, we must have
                        $c_{X_1} (v_+) = E_1 \cap E$. Let $X_2$ be the blow up $E \cap E_1$ and let $E_2$ be the exceptional
                        divisor. Again, by Lemma \ref{LemmePointDIndeterminationPasSurDeuxMoinsX_0n}, we cannot have
                        $c_{X_2} (v_+) = E_1 \cap E_2$, so that $c_{X_2}(v_+) = E_2 \cap E$. Applying this process we get a sequence of blow-ups $X_1
                        \leftarrow X_2 \leftarrow \cdots \leftarrow X_m \leftarrow \cdots$ such that for every
                        $m \geq 1$ we have that $c_{X_m}(v_+) = E \cap E_m$. However this contradicts Proposition
                        \ref{prop:irrational-centers-not-same-divisor}.

                        If $B^2 > 0$, then by our assumptions $\BD$ is of the form
                        \begin{equation}
                          Z \vartriangleright E \vartriangleright B \vartriangleright Z'
                          \label{eq:<+label+>}
                        \end{equation}
                        with $E^2 \leq -1, B^2 >0$ and $Z, Z'$ are negative zigzags and $c_X (v_+) = B \cap E$. If $Y$
                        is the blow up of $B \cap E$ and $\tilde E$ is the exceptional divisor then $\partial_Y X_0$ is
                        of the form
                        \begin{equation}
                          \tilde Z \vartriangleright \tilde E \vartriangleright B' \vartriangleright Z'
                          \label{eq:<+label+>}
                        \end{equation}
                        and $c_Y (v_+) = B \cap \tilde E$. This shows that we can repeat this procedure of blowing the
                        center of $v_+$ until we have $B^2 = 0$ and then we fall back to the previous case.
                      \end{proof}

                      \begin{proof}[Proof of Theorem \ref{ThmSummaryDynamicalCompactificationAutomorphism} in the zigzag
                        case]
                        Suppose that $X_0$ is a normal affine surface with a loxodromic automorphism $f$ and such that
                        there exists a minimal completion $X$ of $X_0$ such that $\BD$ is a zigzag. Then, by a
                        composition of blow-ups and blow-downs we can assume that $\BD$ is a standard zigag. Corollary
                        \ref{cor:zigzag-inf-sing-eigenvaluation} implies that $\lambda_1(f)$ is an integer and $v_\pm$ are infinitely
                        singular valuations. Notice that if you start with a completion $X$ such that $\BD$ is a tree of
                        rational curves, then any completion above $X$ satisfies the same property. Since $v_+ \neq v_-$
                        there exists a finite sequence of blow-ups above $X$ such that the center of $v_+$ and $v_-$ are distinct. So
                        we replace $X$ by this completion. By Theorem
                        \ref{ThmLocalFormOfMapAndDynamicalDegreAreQuadraticInteger}, there exists a completion $Y$ above
                        $X$ such that $f^\pm$ is defined at $p_\pm := c_Y (v_\pm)$ and the local normal form of
                        $f^\pm$ at $p_\pm$ is given by \eqref{EqPseudoFormeInfSing}. The cofinalness statement also
                        follows from Theorem \ref{ThmLocalFormOfMapAndDynamicalDegreAreQuadraticInteger}.
                      \end{proof}

                      \section{A summary and applications}
                      We sum up the content of Theorem \ref{ThmSummaryDynamicalCompactificationAutomorphism} in Figure
                      \ref{fig:compactification_zigzag_case} and \ref{fig:compactification_irrational_case}
                      \begin{figure}[h]
                        \centering
                        \includegraphics[scale=0.6]{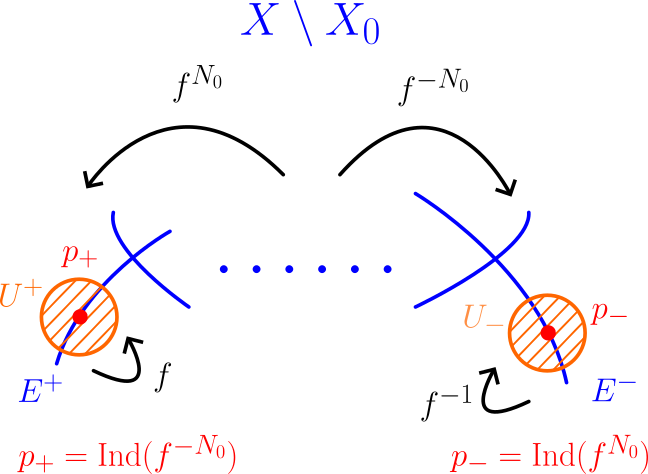}
                        \caption{Dynamics at infinity of $f$ when $\lambda_1 (f) \in \Z_{\geq 0}$}
                        \label{fig:compactification_zigzag_case}
                      \end{figure}

                      \begin{figure}[h]
                        \centering
                        \includegraphics[scale=0.8]{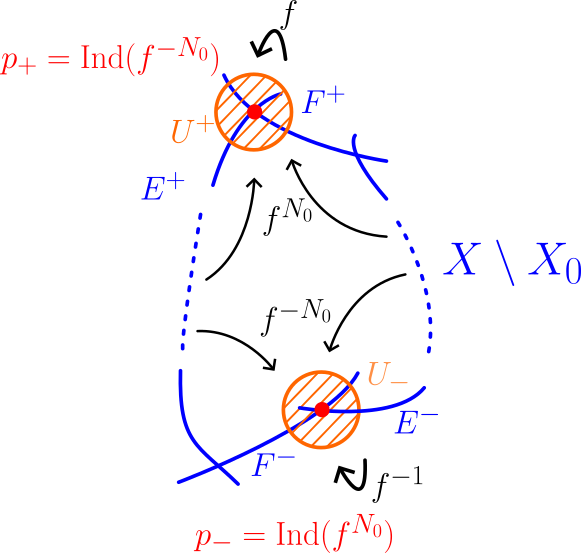}
                        \caption{Dynamics at infinity of $f$ when $\lambda_1 (f) \in \R \setminus \Q$}
                        \label{fig:compactification_irrational_case}
                      \end{figure}
                      \begin{thm}\label{ThmSummaryDynamicalCompactificationAutomorphism}
                        Let $X_0$ be a normal affine surface defined over a field $\k$ such that $\k[X_0]^\times =
                        \k^\times$ and $\Pic^0 (X_0) = 0$. Let $f$ be a loxodromic
                        automorphism of $X_0$.
                        Then, there exists two unique (up to normalization) distinct valuations centered
                        at $v_+, v_- $ such that $f_*^{\pm 1} (v_\pm) = \lambda_1 v_\pm$.
                        Let $\theta^- = Z_{v_+}$ and $\theta^+ = Z_{v_-}$. We have that $\theta^+, \theta^-$
                        are nef, effective and satisfy the following relations
                        \begin{align}
                          f^* \theta^+ &= \lambda_1 \theta^+, \quad f^* \theta^- = \frac{1}{\lambda_1} \theta^- \\
                          f_* \theta^+ &= \frac{1}{\lambda_1} \theta^+, \quad f_* \theta^- = \lambda_1
                          \theta^-.
                          \label{<+label+>}
                        \end{align}
                        Furthermore we have the following intersection relations: $(\theta^+)^2 = \left(
                        \theta^- \right)^2 = 0$ and $\theta^+ \cdot \theta^- = 1$.

                        We can find a completion $X$ of $X_0$ such that if $p_+ := c_X (v_+), p_- :=
                        c_X (v_-)$, then
                        \begin{enumerate}
                          \item $p_+ \neq p_-$.
                          \item some positive iterate of $f^{\pm 1}$ contracts $\BD$ to $p_{\pm}$.
                          \item $f^{\pm 1}$ is defined at $p_{\pm}, f^{\pm 1} = p_\pm$ and $p_\mp$ is the unique
                            indeterminacy point of $f^\pm$.
                          \item There exists an open neighbourhood $U^\pm$ of $p_\pm$ in $X$ and
                            local coordinates at $p_\pm$ such that $f^\pm_{|U^\pm}$ has a local normal form
                            of (pseudo)monomial type \eqref{EqLocalNormalFormMonomial} or (\eqref{EqPseudoFormeMonomiale})
                            if $\lambda_1(f) \not \in \Z_{\geq 0}$ or of type
                            \eqref{EqLocalNormalFormInfinitelySingular} or \eqref{EqPseudoFormeInfSing} if $\lambda_1 (f)
                            \in \Z_{\geq 0}$.
                          \item If $\lambda_1 (f) \in \Z_{\geq 0}$, then $\BD$ is always a tree of rational curves and the
                            set of such completions is cofinal in the set of completions of $X_0$. 
                          \item If $\lambda_1 (f) \not \in \Z_{\geq 0}$ then we can further assume that $\BD$ is a cycle
                            of rational curves and then such completions are cofinal in the set of cyclic completions of
                            $X_0$.
                        \end{enumerate}
                      \end{thm}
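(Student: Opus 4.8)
The statement is a synthesis of the results of Chapters~\ref{ChapterDynamicsQuasiAlbTrivial}, \ref{ChapterLocalNormalForms} and of \S\ref{SubSecGizatullinBoundary}, so the plan is to assemble them, with one genuinely new verification (the uniqueness of the indeterminacy point).

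\textbf{First, the valuations and the eigenclasses.} Since $f$ is an automorphism, $\lambda_2(f)=1$, and $f$ loxodromic gives $\lambda_1(f)>1$, so $\lambda_1^2>\lambda_2$ and Theorem~\ref{ThmExistenceEigenvaluationSurfaces} applies to both $f$ and $f^{-1}$, yielding valuations $v_+,v_-$ centered at infinity, unique up to equivalence, with $f_*v_+=\lambda_1v_+$ and $(f^{-1})_*v_-=\lambda_1v_-$. By Proposition~\ref{PropThetaEffectif} the nef eigenclasses of Theorem~\ref{ThmEigenclasses} lie in $\Winf\cap\L2$ and are effective; writing $\theta_*=w+Z_{v_*}$ as in Theorem~\ref{ThmExistenceEigenvaluationSurfaces}, the element $w$ lies in the finite-dimensional complement $V$ of $\Winf_\R$ in $\wNS_\R$ and also, by Proposition~\ref{PropThetaEffectif}, in $\Winf_\R$, hence $w=0$, so $\theta^-:=Z_{v_+}$ and $\theta^+:=Z_{v_-}$ are exactly the nef eigenclasses. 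The eigenrelations $(f^{\pm1})^*\theta^\pm=\lambda_1\theta^\pm$ are the defining ones, and since $f^*f_*=\operatorname{id}$ on $\L2$ (Proposition~\ref{PropPushForwardAndPullBackOnL2}) they yield $f^*\theta^-=\lambda_1^{-1}\theta^-$ and $f_*\theta^+=\lambda_1^{-1}\theta^+$. The relations $(\theta^\pm)^2=0$ and $\theta^+\cdot\theta^->0$ are \eqref{EqIntersectionDeTheta} (see also Corollary~\ref{CorSelfIntersectionNefClasses}), and rescaling $v_\pm$ normalizes $\theta^+\cdot\theta^-=1$ without affecting the rest.

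\textbf{Second, the completion and items (1)--(3).} By Proposition~\ref{PropTypesEigenValuationsAutomorphisms}, $v_\pm$ is never divisorial: it is infinitely singular when $\lambda_1\in\Z$ and irrational otherwise. Theorem~\ref{ThmAutomorphismCaseDynamicalCompactifications} then furnishes a completion $Y$ with \emph{distinct} points $p_\pm=c_Y(v_\pm)$, an integer $N$ with $f^{\pm N}$ contracting $\BD$ to $p_\pm$, with $f^{\pm1}$ regular at $p_\pm$ and fixing it, and with the local normal forms at $p_\pm$; this gives items (1), (2) and the first two assertions of (3), and the accompanying cofinality (zigzag case) or stability under blowing up $p_\pm$ (cycle case) lets us pass to a further such $Y$ when convenient. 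For the uniqueness of the indeterminacy point: $f$ is an automorphism of $X_0=Y\setminus\BD$, so $\operatorname{Ind}(f)\subset\BD$; it is non-empty, for otherwise $f^N$ would be an automorphism of $Y$ and could not collapse the curve $\BD$; and it has at most one point by the Gizatullin structure, via Proposition~\ref{PropCycleCaseIndeterminacyPoints} in the cycle case and Lemma~\ref{LemmePointDindeterminationAlmostStandardZigzag} (applied with $U=X_0$) in the zigzag case. Finally $f$ cannot be regular at $p_-$: since $f^{-1}$ is regular at $p_-$ with $f^{-1}(p_-)=p_-$, regularity of $f$ at $p_-$ would force $f(p_-)=f(f^{-1}(p_-))=p_-$, hence $f^N$ regular at $p_-$ and fixing it; but $f^{-N}$ contracts a prime divisor $E\subset\BD$ of $Y$ to $p_-$, so $(f^{-N})_*\ord_E$ is a divisorial valuation centered at $p_-$ whose image under $f^N_*$ is $\ord_E$, and Lemma~\ref{LemmeConditionRegularité} shows $f^N$ is \emph{not} regular at $p_-$, a contradiction. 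Hence $\operatorname{Ind}(f)=\{p_-\}$, and $\operatorname{Ind}(f^{-1})=\{p_+\}$ by symmetry.

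\textbf{Third, item (4) and the main obstacle.} The normal form of $f^{\pm1}$ at $p_\pm$ is the one provided by Theorem~\ref{ThmLocalFormOfMapAndDynamicalDegreAreQuadraticInteger} for the eigenvaluation: when $\lambda_1\notin\Z$, $v_\pm$ is irrational, $p_\pm$ is a satellite point and $f^{\pm1}$ is of pseudomonomial type~\eqref{EqPseudoFormeMonomiale}, which is genuinely monomial~\eqref{EqLocalNormalFormMonomial} since an automorphism is tamely ramified (\S\ref{SubSecTamelyRamified}); when $\lambda_1\in\Z$, $v_\pm$ is infinitely singular, $p_\pm$ is a free point and $f^{\pm1}$ is of type~\eqref{EqPseudoFormeInfSing}, becoming~\eqref{EqLocalNormalFormInfinitelySingular} over a complete algebraically closed field of characteristic $0$ after passing to rigid coordinates. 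In either non-divisorial case the germ at $p_\pm$ is contracting (proof of Theorem~\ref{ThmLocalFormOfMapAndDynamicalDegreAreQuadraticInteger}), so a neighbourhood $U^\pm$ of $p_\pm$ with $f^{\pm1}(U^\pm)\Subset U^\pm$ exists, which is item (4). The only step beyond quoting earlier results is the identification $\operatorname{Ind}(f)=\{p_-\}$ above, where the cycle/zigzag dichotomy really enters together with the compatibility criterion of Lemma~\ref{LemmeConditionRegularité}; the other point to handle with care is simply that a single completion $Y$ can be chosen realizing all five properties at once --- in particular $p_+\neq p_-$ together with the cofinality/stability under further blow-ups.
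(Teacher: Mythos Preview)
Your proof follows the same route as the paper --- both derive the theorem from Theorem~\ref{ThmAutomorphismCaseDynamicalCompactifications} --- but you unpack the argument in far more detail than the paper's one-line proof. Your treatment of the eigenclasses (including the observation that $w=0$ because $\theta_*\in\Winf_\R$ by Proposition~\ref{PropThetaEffectif} applied to $f^{-1}$, while $w\in V$) and your verification that $p_-$ \emph{is} an indeterminacy point of $f$ via Lemma~\ref{LemmeConditionRegularité} are both correct and add value over the paper's terse reference.

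There is, however, a genuine gap in your argument that $f$ has \emph{at most one} indeterminacy point. In the cycle case you invoke Proposition~\ref{PropCycleCaseIndeterminacyPoints}, but that proposition only restricts indeterminacy points to satellite points of the cycle; it does not bound their number, and a cycle has many satellite points. In the zigzag case you invoke Lemma~\ref{LemmePointDindeterminationAlmostStandardZigzag}, which does give uniqueness --- but only for an almost standard zigzag \emph{with no $(-1)$-component}, and you have not checked that the completion furnished by Theorem~\ref{ThmAutomorphismCaseDynamicalCompactifications} (obtained after further blow-ups at $p_\pm$ to get the normal forms) satisfies this hypothesis. In fairness, the paper's own proof glosses over this point entirely; but since you flagged the uniqueness of the indeterminacy point as ``the one genuinely new verification,'' the citations you give for it do not close the argument. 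One would need either to verify the zigzag hypothesis explicitly from the construction in \S\ref{SubSecProofZigzagCase}, or to give a direct valuation-theoretic argument (for instance, showing that any curve contracted by $f^{-1}$ is contracted to $p_-$, using that $f^{-N}$ contracts the whole boundary there and that $f^{-1}$ is already regular and fixing at $p_-$).
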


                      \begin{proof}
                        Any completion provided by Theorem
                        \ref{ThmAutomorphismCaseDynamicalCompactifications} satisfies Items (1)-(6).
                      \end{proof}

                      \begin{prop}\label{PropNoInvariantCurves}
                        Let $X_0$ be a normal affine surface defined over $\k$. If $f$ is a loxodromic
                        automorphism of $X_0$, then, there are no $f$-invariant algebraic curves in $X_0$.
                      \end{prop}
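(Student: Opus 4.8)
The plan is to argue by contradiction, using the attracting dynamics of the eigenvaluation together with the classification of its type. First I would dispose of the degenerate case: a surface carrying a loxodromic automorphism is rational, so $\Pic^0(X_0)=0$ automatically, and if moreover $\k[X_0]^\times\neq\k^\times$ then $\QAlb(X_0)\neq 0$ and Corollary~\ref{CorAffineSurfaceWithLoxodromicAutomorphismAndQuasiAlbanese} forces $X_0\simeq\G_m^2$; for the torus a loxodromic automorphism is $\tau_b\circ M_A$ with $A\in\GL_2(\Z)$ hyperbolic, and since $(A^{-n})^{\mathsf t}$ would preserve the Newton polytope of any invariant curve, that polytope must be a point, which is absurd. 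So I may assume $\k[X_0]^\times=\k^\times$, placing us in the setting of Chapter~\ref{ChapterDynamicsQuasiAlbTrivial}. Suppose then that $C\subset X_0$ is an irreducible $f$-invariant algebraic curve. As $C$ is invariant under every power of $f$, and powers of a loxodromic automorphism are loxodromic, I am free to replace $f$ by any iterate when convenient. Fix the completion $X$ supplied by Theorem~\ref{ThmAutomorphismCaseDynamicalCompactifications}, which we may also take to satisfy the conclusions of Propositions~\ref{PropInfinitelySingularPreparation} and~\ref{PropGoodCompactificationIrrationalCase} for $f$ and for $f^{-1}$: writing $p_\pm=c_X(v_\pm)$, one has $p_+\neq p_-$, the map $f^{\pm 1}$ is regular at $p_\pm$ with $f^{\pm1}(p_\pm)=p_\pm$, some positive iterate of $f^{\pm1}$ contracts $\BD$ onto $p_\pm$, the unique indeterminacy point of $f^{\pm1}$ on $\BD$ is $p_\mp$, and $f^{\pm n}_\bullet\mu\to v_\pm$ weakly for every $\mu\in\cV_X(p_\pm;\m_{p_\pm})$. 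By Proposition~\ref{PropTypesEigenValuationsAutomorphisms} the eigenvaluations $v_\pm$ are irrational or infinitely singular; in particular neither is a curve valuation.

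The first real step is to locate $C$ at infinity. The closure $\bar C$ of $C$ in $X$ is a complete curve with $\bar C\cap X_0=C$, hence $\bar C\cap\BD\neq\emptyset$. Let $g$ be the iterate of $f$ (respectively of $f^{-1}$) that contracts $\BD$ onto $p_+$ (respectively onto $p_-$); then, chasing the contraction through its iterates, $g$ is regular on $\BD$ away from a single point $p'\in\{p_-,p_+\}$ and maps that set to a single point $p\in\{p_+,p_-\}$ with $g(p)=p$, and $g^{n}_\bullet\mu\to w$ for all $\mu\in\cV_X(p;\m_p)$, where $w\in\{v_+,v_-\}$ is not a curve valuation. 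If $\bar C$ meets $\BD$ at some point $q\neq p_-$, apply this with $g$ an iterate of $f$: since $g$ is regular at $q$ with $g(q)=p_+$ and $g$ preserves $C$, continuity gives $p_+\in\bar C$, and I take $p=p_+$. Otherwise $\bar C\cap\BD=\{p_-\}$, and I take $g$ an iterate of $f^{-1}$ and $p=p_-\in\bar C$. In both cases $\bar C$ passes through the point $p$, which is a fixed point of $g$ at which $g$ is regular, and $g^n_\bullet\mu\to w$ on $\cV_X(p;\m_p)$.

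The second step derives a contradiction from the branches of $\bar C$ at $p$. Since $\bar C$ is irreducible and is not a component of $\BD$, it shares no local branch with $\BD$ at $p$ (a shared branch would force infinite local intersection of two distinct curves); hence each branch of $\bar C$ at $p$ is the germ of the affine curve $C$ and defines a curve valuation on $\k[X_0]$ centered at $p$, lying (after normalization by $\m_p$) in $\cV_X(p;\m_p)$. Because $g$ is an automorphism of $X_0$, it contracts no curve of $X_0$, so by Propositions~\ref{PropPreservationOfType} and~\ref{PropPushForwardOfCenter} the pushforward $g_*$ carries the curve valuation of a branch of $\bar C$ at $p$ to the curve valuation of a branch of $\bar C$ at $p$ (as $g$ preserves $C$ and fixes $p$); thus $g_*$ permutes the finitely many such branches. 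Replacing $g$ by a further iterate, I may assume $g_*$ fixes one of them, so there is a curve valuation $v_C\in\cV_X(p;\m_p)$ with $g_\bullet v_C=v_C$. Then $v_C=g^n_\bullet v_C\to w$, so $v_C=w$ in $\hat{\cV_\infty}$; but $v_C$ is a curve valuation while $w$ is irrational or infinitely singular, a contradiction. Combined with Proposition~\ref{PropTypesEigenValuationsAutomorphisms} this also yields the more precise statement about the dynamics along any invariant curve if one wishes, but the bare existence statement already follows.

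The step I expect to be the main obstacle is the geometric bookkeeping in the second paragraph: one must be certain that $\bar C$ really passes through $p_+$ or $p_-$, which rests on the precise control over the indeterminacy locus and the contraction of $\BD$ given by Theorem~\ref{ThmAutomorphismCaseDynamicalCompactifications}, and on the fact that $\bar C$ cannot share a branch with the boundary. A subsidiary point requiring care is that the various iterate-replacements (first to contract $\BD$ onto $p$, then to fix a branch of $\bar C$) are mutually compatible and preserve all the features of $X$ invoked above, in particular the full-sequence convergence $g^n_\bullet\mu\to w$ on $\cV_X(p;\m_p)$. Finally, the torus case, while elementary, must be handled separately since Theorem~\ref{ThmExistenceEigenvaluationSurfaces} does not apply to $\G_m^2$.
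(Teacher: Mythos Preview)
Your proof is correct and follows the same strategy as the paper's: reduce to $\QAlb(X_0)=0$ (the paper simply declares the torus case ``known'' where you supply the Newton-polytope argument), show that $\bar C$ must pass through $p_+$ or $p_-$ using the contraction of $\BD$ from Theorem~\ref{ThmSummaryDynamicalCompactificationAutomorphism}, and then contradict the absence of $f^{\pm1}$-invariant germs of curves at $p_\pm$. The only difference is packaging: the paper invokes Theorem~\ref{ThmLocalFormOfMapAndDynamicalDegreAreQuadraticInteger} directly for that last step, whereas you unwind its proof by taking the curve valuation of a branch of $\bar C$ and using the convergence $g^n_\bullet\mu\to v_\pm$ on $\cV_X(p_\pm;\m_{p_\pm})$---which is exactly how the non-existence of invariant germs is established inside that theorem.
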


                      \begin{proof}
                        If $\QAlb(X_0) \neq 0$, then by Theorem \ref{thm:dynamical_degree_quasi_albanese}, $X_0 \simeq \G_m^2$ and this is
                        known.

                        If $\QAlb (X_0) = 0$, let $X$ be a completion of $X_0$ given by
                        Theorem \ref{ThmSummaryDynamicalCompactificationAutomorphism}. We can replace $f$ by one its
                        iterates that contracts $\BD$ to $p_+$ and $p_-$.
                        Suppose that $C \subset X_0$ is an algebraic curve invariant by $f$. Let $\overline C$ be the
                        closure of $C$ in $X$. We must have $\left\{ p_+, p_- \right\} \cap (\overline C \cap \BD) \neq
                        \emptyset$. Indeed, $\overline C \cap \BD$ is not empty so let $p$ be a point in it. If $p \not
                        \in \left\{ p_+, p_- \right\}$, then $f$ is defined at $p$ and $f(p) =p_+$.
                        Since $\overline C$ is $f$-invariant, we get $p_+ \in \overline C$. This means that $C$
                        defines a germ of an analytic curve at $p_+$ that is invariant by $f$ but this is not possible
                        by Theorem \ref{ThmLocalFormOfMapAndDynamicalDegreAreQuadraticInteger}.
                      \end{proof}

                      \begin{rmq}\label{rmq:}
                        This is a direct generalisation of a result of Bedford and Smilie for polynomial automorphism of
                        the affine plane, see \cite{bedfordPolynomialDiffeomorphisms$C^2$1991a,
                        bedfordFatouBieberbachDomainsArising1991}.
                      \end{rmq}

                      \begin{cor}\label{CorPeriodicPointsAreAlgebraic}
                        If $X_0$ is a normal affine surface defined over field $K$ and $f$ is a
                        loxodromic automorphism of $X_0$, then all periodic points of $f$ are contained in
                        $X_0(\overline K)$.
                      \end{cor}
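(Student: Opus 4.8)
The plan is to fix $n\ge 1$, show that the fixed-point scheme $\mathrm{Fix}(f^n)\subset X_0$ is finite, and then use that it is defined over $K$. After enlarging $K$ we may assume $f$ (hence $f^n$) is defined over $K$; since $\overline K=\overline{\Q}$ no matter which number field $K$ is, this costs nothing. First I would note that $f^n$ is again a loxodromic automorphism of $X_0$: it is an automorphism because $f$ is one, and $\lambda_1(f^n)=\lambda_1(f)^n>1$ because $\lambda_1(f)>1$. In particular $f^n\ne\mathrm{id}_{X_0}$, since $\lambda_1(\mathrm{id})=1$.

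Next, let $Z_n\subset X_0$ be the closed subscheme $\mathrm{Fix}(f^n)$, i.e.\ the equalizer of $f^n$ and $\mathrm{id}_{X_0}$, equivalently the preimage $(\mathrm{id},f^n)^{-1}(\Delta)$ of the diagonal $\Delta\subset X_0\times X_0$. Since $X_0$ and $f^n$ are defined over $K$, so is $Z_n$. I claim that the support of $Z_n$ has dimension $\le 0$. It cannot be all of $X_0$, because $X_0$ is an irreducible surface and $f^n\ne\mathrm{id}$. And it cannot contain an irreducible curve $C$: such a $C$ would be pointwise fixed by $f^n$, hence $f^n$-invariant, which contradicts Proposition~\ref{PropNoInvariantCurves} applied to the loxodromic automorphism $f^n$ (that proposition is stated over an arbitrary field, so it applies after base change to $\overline K$). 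Therefore $Z_n$ is a finite $K$-scheme.

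A finite $K$-scheme has all of its points defined over $\overline K$: the residue field at each closed point is a finite extension of $K$, hence embeds into $\overline K$, so $Z_n(\C)=Z_n(\overline K)$. Every periodic point of $f$ is a closed point of $Z_m$ for its period $m$, so it is defined over $\overline K$; letting $m$ vary proves the corollary. The only real care needed, and the step I expect to be the main (if minor) obstacle, is the bookkeeping of fields of definition: checking that passing from $f$ to $f^n$ and to the equalizer subscheme keeps everything over $K$, and that the hypotheses of Proposition~\ref{PropNoInvariantCurves} survive the base change $K\hookrightarrow\overline K$.
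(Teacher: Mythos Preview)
Your proof is correct and follows essentially the same approach as the paper: both arguments reduce to the fact that the fixed locus of $f^n$ is zero-dimensional, which in turn rests on Proposition~\ref{PropNoInvariantCurves}. The only difference is packaging: the paper argues by contraposition, taking a hypothetical non-algebraic periodic point $p$, noting that its $\Gal(\C/\overline{\Q})$-orbit is infinite and contained in $\mathrm{Fix}(f^N)$, and deducing that the Zariski closure of this orbit is a positive-dimensional $f^N$-invariant subvariety; you instead observe directly that $\mathrm{Fix}(f^n)$ is a zero-dimensional $K$-scheme and hence has only $\overline K$-points. The two are equivalent, and your formulation is arguably cleaner.
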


                      \begin{proof}
                        Let $L/K$ be a transcendental extension, we can assume that $L$ contains an algebraic closure
                        $\overline K$ of $K$. Assume that there exists $p \in X_0 (L) \setminus
                        X_0(\overline K)$ such that $f^N (p)
                        = p$. Let $G := \Gal (L / \overline K )$, then for all $q \in G \cdot p$, we have
                        $f^N (q) = q$. Since $p \not \in X_0 (\overline K)$, the orbit $G \cdot p$ is
                        infinite and its Zariski closure $\overline{G \cdot p} \subset X_0 \times \spec L$
                        has dimension $>0$. If $\dim \overline{G \cdot p} = 2$, then $f^N = \id$ and this is
                        impossible because $f$ is loxodromic. If $\dim \overline{G \cdot p} = 1$, then $C =
                        \overline{G \cdot p}$ is an $f^N$-invariant curve of $X_0 \times \spec \C$. This is
                        impossible by Proposition \ref{PropNoInvariantCurves}.
                      \end{proof}

                      Recall that if $\k$ is a complete field and $X$ is a variety, we define the \emph{Euclidian
                      topology} on $X(\k)$ which is the one induced by the absolute value on $\k$.

                      \begin{cor}\label{CorBehabviourOfOrbits}
                        Let $X_0$ be a normal affine surface defined over a complete and algebraically closed field $\k$ such that $\QAlb (X_0) =
                        0$. Let $f$ be a loxodromic automorphism of $X_0$ and let $X$ be a completion of $X_0$ from Theorem
                        \ref{ThmSummaryDynamicalCompactificationAutomorphism}. If $p \in X_0 (\k)$, we have
                        two possibilities.
                        \begin{enumerate}
                          \item The forward $f$-orbit of $p$ is bounded.
                          \item $\left( f^n (p) \right)_{n \geq 0}$ converges towards $p_+$ for the Euclidian topology
                            of $X(\k)$.
                        \end{enumerate}
                      \end{cor}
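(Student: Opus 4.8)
The plan is to prove the sharper statement that $X_0(\C_v) = K^+ \sqcup B^+$, where $K^+$ is the set of points with bounded forward orbit and $B^+ = \{x \in X_0(\C_v) : f^n(x) \to p_+\}$. These two sets are disjoint, since convergence to $p_+ \in \partial_X X_0$ forces the orbit to leave every compact subset of $X_0(\C_v)$, and their union is tautologically $X_0(\C_v)$ provided one shows that every point with unbounded forward orbit lies in $B^+$. So the whole content is: if $p \in X_0(\C_v)$ and $\{f^n(p)\}_{n \ge 0}$ is unbounded, then $f^n(p) \to p_+$.

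First I fix local data. By Theorem~\ref{ThmSummaryDynamicalCompactificationAutomorphism} (items (3)--(4)) and Theorem~\ref{ThmLocalFormOfMapAndDynamicalDegreAreQuadraticInteger}, $p_+$ is an attracting fixed point of $f$ and $p_-$ is an attracting fixed point of $f^{-1}$: the local normal forms there ((pseudo)monomial with all exponents $\ge 1$, or the infinitely singular form) are contracting germs. Hence one can pick open neighbourhoods $U^\pm$ of $p_\pm$ in $X(\C_v)$ with $\overline{U^+} \cap \overline{U^-} = \emptyset$, $p_\mp \notin \overline{U^\pm}$, $\overline{f^{\pm 1}(U^\pm)} \subseteq U^\pm$, and $\bigcap_{n \ge 0} f^{\mp n}(U^\pm) = \{p_\pm\}$; in particular $f^n(y) \to p_+$ for every $y \in U^+$. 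Let $N$ be the integer of Theorem~\ref{ThmSummaryDynamicalCompactificationAutomorphism}(2), so $f^N$ contracts $\partial_X X_0$ to $p_+$.

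The first real step is a uniform neighbourhood statement: there is an open neighbourhood $W$ of $\partial_X X_0$ in $X(\C_v)$ with $f^N(W \setminus \overline{U^-}) \subseteq U^+$. For this I first observe that $f(\partial_X X_0 \setminus \{p_-\}) \subseteq \partial_X X_0$ and that $p_-$ is the only $f$-preimage of $p_-$: indeed $f$ is an automorphism of $X_0$, so no point of $\partial_X X_0$ can be sent into $X_0$, and if some $y \ne p_-$ had $f(y) = p_-$ then $f^{-1}$ --- which is defined at $p_-$, fixes it, and is a local isomorphism there since $I(f^{-1}) = \{p_+\} \ne \{p_-\}$ --- would take the value $y$ at $p_-$, a contradiction. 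Combined with $I(f) = \{p_-\}$ (Theorem~\ref{ThmSummaryDynamicalCompactificationAutomorphism}(3)), a short induction on $j$ gives $I(f^N) \cap \partial_X X_0 = \{p_-\}$. Therefore every $q \in \partial_X X_0 \setminus \{p_-\}$ lies outside the (closed) indeterminacy locus of $f^N$, so $f^N$ is regular near $q$ with $f^N(q) = p_+ \in U^+$, and by continuity there is an open $V_q \ni q$ with $f^N(V_q) \subseteq U^+$. Extracting a finite subcover of $\partial_X X_0$ from $\{V_q : q \ne p_-\} \cup \{U^-\}$ and letting $W$ be its union proves the claim.

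Finally, let $p \in X_0(\C_v)$ with unbounded forward orbit. Since $X(\C_v) \setminus W$ is closed in the compact space $X(\C_v)$ and contained in $X_0(\C_v)$, the orbit would be bounded if it avoided $W$ eventually; hence $f^n(p) \in W$ for infinitely many $n$. If for some such $n$ one has $f^n(p) \notin \overline{U^-}$, then $f^{n+N}(p) = f^N(f^n(p)) \in U^+$, whence $f^m(p) \to p_+$ by the attracting property of $U^+$, and we are in case (2). The alternative is that $f^n(p) \in U^-$ for all large $n$ with $f^n(p) \in W$, hence for some sequence $n_k \to \infty$; then $\overline{f^{-1}(U^-)} \subseteq U^-$ propagates backwards to give $f^j(p) \in U^-$ for all $0 \le j \le n_k$, so the whole forward orbit lies in $U^-$ and $p \in \bigcap_{j \ge 0} f^{-j}(U^-) = \{p_-\}$, which is absurd since $p \in X_0(\C_v)$. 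This eliminates the alternative and proves $f^n(p) \to p_+$. The main obstacle is the indeterminacy computation $I(f^N) \cap \partial_X X_0 = \{p_-\}$ feeding the neighbourhood statement; besides that, the only delicate point is that ``compact'' must be read in the Berkovich analytification of $X$ when $\C_v$ is non-archimedean, the orbit and the neighbourhoods making equally good sense there.
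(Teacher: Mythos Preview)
Your proof is correct and follows essentially the same strategy as the paper's: use the contracting normal forms at $p_\pm$ to trap orbits near $p_+$ and to rule out accumulation at $p_-$ via backward invariance of $U^-$. Your version is in fact more careful than the paper's on two points---you explicitly justify that $I(f^N)\cap\partial_X X_0\subseteq\{p_-\}$ (the paper writes ``$f(q)=p_+$'' without checking that no boundary point other than $p_-$ maps to $p_-$), and you flag the Berkovich interpretation of compactness when $\C_v$ is non-archimedean---though you should fix the sign typo $\bigcap_{n\ge 0} f^{\mp n}(U^\pm)=\{p_\pm\}$ (it should read $f^{\pm n}$) and take $U^-$ so that $f^{-1}(\overline{U^-})\subseteq U^-$ to make the $\overline{U^-}$ versus $U^-$ step clean.
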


                      \begin{proof}
                        Suppose that $(f^n (p))_n$ is not bounded. Since $X(\k)$ is compact, $(f^n (p))$ has an
                        accumulation point $q \in \BD$. Let $U_+$ be the open neighbourhood of $p_+$ given
                        by Theorem \ref{ThmSummaryDynamicalCompactificationAutomorphism}. We must have $q
                        \in \left\{ p_+ , p_- \right\}$. Otherwise, since $f(q) = p_+$, if $f^{N_0} (p)$ is sufficiently
                        close to $q$, then for all $N \geq N_0 +1, f^N (p) \in U_+$ and $q$ cannot be an accumulation
                        point. Suppose that $q = p-$. Let $(x,y)$ be the local coordinates at $p_-$ over
                        $U^-$ given by Theorem \ref{ThmSummaryDynamicalCompactificationAutomorphism}.
                        Consider the norm $\max(\left| x \right|, \left| y \right|)$ over $U^-$. Looking at the normal form of
                        $f$, for any $\epsilon >0$ small
                        enough, the ball $B (p_-, \epsilon)$ of center $p_-$ and radius $\epsilon$, with
                        respect to this norm, is ${f}^{-1}$-invariant and we have ${f}^{-1} B(p_- , \epsilon )
                        \Subset B(p_-, \epsilon)$. Therefore if $f^{N_0} (p) \in B(p_-, \epsilon)$,
                        we have $p \in B(p_-, \epsilon)$. Letting $\epsilon \rightarrow 0$ we get $p = p_-$
                        and this is a contradiction. Therefore, the only accumulation point of $(f^N (p))_N$
                        is $p_+$ and it is the limit of this sequence.
                      \end{proof}

                      The dynamics of $f$ established in Theorem \ref{ThmSummaryDynamicalCompactificationAutomorphism}
                      is a direct generalisation of the dynamics of Hénon maps, see \cite[Lemma
                      5.1]{hubbardHenonMappingsComplex1994} and \cite[Lemma
                      2.2]{bedfordPolynomialDiffeomorphisms$C^2$1991a}. This allows one to construct the \emph{Green
                      functions} of $f$, namely let $\k$ be a complete algebraically closed field and fix an embedding
                      $X_0 \hookrightarrow \A^N$ for some large $N$. Then, the Green functions of $f$ are defined by 
                      \begin{equation}
                        \forall p \in X_0 (\k), \quad G^\pm_f (p) := \lim_{n \rightarrow +\infty}
                        \frac{1}{\lambda_1(f)^n} \log^+ \parallel f^{\pm n} (p) \parallel.
                        \label{eq:convergence-Green-functions}
                      \end{equation}
                      We have the following properties: 
                      \begin{enumerate}
                        \item $G^{\pm}_f$ is well defined, continuous, $\geq 0$ and the convergence in
                          \eqref{eq:convergence-Green-functions} is locally uniform.
                        \item $G^{\pm}_f \circ f^{\pm 1} = \lambda_1 (f) G^{\pm}_f$. 
                        \item $G^+_f (p) = 0$ if and only if $\left\{ f^n (p) : n \geq 0 \right\}$ is relatively compact
                          in $X_0(\k)$.
                        \item If $\k = \C$, then $G^{\pm}_f$ is plurisubharmonic and harmonic over $\left\{
                          G^{\pm}_f > 0 \right\}$.
                      \end{enumerate}
                      This construction is done in \cite{abboudRigidityPeriodicPoints2024} where the \emph{canonical
                      height} of $f$ is constructed using the results in this section. This construction is
                      actually compatible with the theory of \emph{adelic divisors over quasiprojective varieties}
                      developed in the recent work of Yuan and Zhang in \cite{yuanAdelicLineBundles2026}.

                      \section{Affine surfaces with a cycle at infinity}
                      Let $X_0$ be a normal affine surface and suppose that there exists a loxodromic automorphism $f$ of
                      $X_0$ such that $\lambda_1(f) \not \in \Z$. Then, by Theorem
                      \ref{ThmAutomorphismCaseDynamicalCompactifications}, for any minimal completion $X$ of $X_0$,
                      $\BD$ is a cycle of rational curves and to study the dynamics of a loxodromic automorphism it
                      suffices to consider completions where the boundary remains a cycle of rational curves.

                      \subsection{The circle at infinity}
                      Let $X$ be
                      such a completion and let $E_1, \cdots, E_r$ be the irreducible components of $\BD$. Define $\cC_X
                      \subset \hat \Vinf$ by
                      \begin{equation}
                        \cC_X = \bigcup_{i = 1}^r [\ord_{E_i}, \ord_{E_{i+1}}].
                        \label{<+label+>}
                      \end{equation}
                      $\cC_X$ consists only of quasimonomial valuations hence it is a subset of $\hat \Vinf '$, the
                      subset of valuations of finite skewness. It can therefore be equipped with the strong topology.
                      \begin{prop}
                        For every completion $X$ such that $\BD$ is a cycle of rational curves, one has
                        \begin{enumerate}
                          \item $\cC_X =: \cC$ does not depend on $X$.
                          \item $\cC$ is homeomorphic to $\CS^1$.
                          \item $\cC$ is characterised as follows: for every continuous embedding $c: \CS^1 \hookrightarrow
                            \hat \Vinf$, $c(\CS^1) = \cC$.
                        \end{enumerate}
                      \end{prop}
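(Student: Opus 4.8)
The plan is to fix once and for all a good completion $X$ of $X_0$ whose boundary $\partial_X X_0 = E_1 \cup \dots \cup E_r$ is a cycle of rational curves (with $r \geq 2$ and simple normal crossings, arranged by blowing up nodes if necessary), to set $\cC := \cC_X$, and then to prove, in this order: $(\mathrm{i})$ that $\cC$ is homeomorphic to $\CS^1$, which is assertion $(2)$; $(\mathrm{ii})$ a structural description of $\hat \Vinf$ around $\cC$; $(\mathrm{iii})$ the rigidity assertion $(3)$; and $(\mathrm{iv})$ the independence assertion $(1)$, which will then be immediate. For $(\mathrm{i})$: for each node $p_i = E_i \cap E_{i+1}$ the segment $[\ord_{E_i}, \ord_{E_{i+1}}]$ is, by \S\ref{SecMonomialValuationsAtSatellitepoint}, the set of monomial valuations $v_{s,t}$ centered at $p_i$ in coordinates associated to $(E_i, E_{i+1})$, together with its two divisorial endpoints; by Lemma \ref{LemmelevelfunctionValuationMonomiale} and Proposition \ref{PropMonomialValuationIsSegment} the assignment $v_{s,t} \mapsto t/s \in [0,+\infty]$ is a homeomorphism of this segment onto a closed interval (for the weak topology, which by Lemma \ref{LemmeWeakStrongTopologiesSameOnSegments} coincides with the strong one on a segment of quasimonomial valuations), sending $\ord_{E_i}$ to $0$ and $\ord_{E_{i+1}}$ to $+\infty$. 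So each segment is an embedded closed arc, and two distinct such arcs meet only in common divisorial endpoints, since a non-divisorial common point would be a quasimonomial valuation centered at two distinct nodes at once; thus $\cC$ is a cyclic chain of $r$ closed arcs, hence a topological circle.

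The second step, which is the technical heart, is to show that $\hat \Vinf \setminus \cC$ is a disjoint union of $\R$-trees and that each of its connected components $T$ has $\overline T \cap \cC = \{p_T\}$ for a single valuation $p_T$, with $T \cup \{p_T\}$ again an $\R$-tree; in other words, $\hat \Vinf$ is obtained from the circle $\cC$ by attaching $\R$-trees at points. I would prove this from the atlas of $\hat \Vinf$ by valuation trees $\cV_Y(F;F)$ of Chapters \ref{ChapterValuationTree}--\ref{ChapterTopologiesValuations} (Theorem \ref{ThmWeakTopologyAtlas}): in each such chart $\cC$ is either empty or a sub-arc through the root --- concretely, in the charts attached to the $E_i$, the monomial arc inside $\cV_X(p_i;\m_{p_i})$ and the two incident arcs inside $\cV_X(E_i;E_i)$ --- so that removing $\cC$ from a chart leaves a disjoint union of sub-trees, each meeting $\cC$ in one point. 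One then checks that these local pieces glue without creating a new loop: a component of $\hat \Vinf \setminus \cC$ is an increasing union of finite subtrees, hence an $\R$-tree, and since across overlapping charts one only moves away from $\cC$, its closure touches $\cC$ in the unique point along which the first chart-piece was attached. Here the assumption that $\partial_X X_0$ is a cycle, together with Gizatullin's analysis of such boundaries (Theorem \ref{ThmAutomorphismCaseDynamicalCompactifications}), is precisely what guarantees that $\cC$ is the only loop of $\hat \Vinf$.

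Granting Step $2$, assertion $(3)$ follows by a purely topological argument. Let $c: \CS^1 \hookrightarrow \hat \Vinf$ be a continuous embedding and $A := c(\CS^1)$, a compact connected subspace homeomorphic to a circle. If $A \cap \cC = \emptyset$, then $A$ lies in a single component of $\hat \Vinf \setminus \cC$, an $\R$-tree; but an $\R$-tree is uniquely arcwise connected and contains no embedded circle, a contradiction, so $A \cap \cC \neq \emptyset$. If in addition $A \not\subseteq \cC$, choose a connected component $I$ of the nonempty open subset $A \setminus \cC$ of $A \cong \CS^1$; by Step $2$ the arc $I$ lies in one component $T$ of $\hat \Vinf \setminus \cC$, and the endpoints of $I$ --- or, when $A \setminus \cC = A \setminus \{q\}$, the single point $q$ --- lie in $\overline T \cap \cC = \{p_T\}$; this forces $c$ to identify two distinct points of $\CS^1$, or forces $A \subseteq T \cup \{p_T\}$, again an $\R$-tree, so it is impossible. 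Hence $A \subseteq \cC$, and a compact subspace of a circle that is itself homeomorphic to a circle must be the whole circle --- a proper compact subset of $\CS^1$ embeds in $\R$ and so contains no circle --- giving $A = \cC$.

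Finally, for $(1)$: if $X'$ is any completion of $X_0$ with $\partial_{X'} X_0$ a cycle of rational curves, then Step $1$ applied to $X'$ shows that $\cC_{X'}$ is a topological circle and that its inclusion into $\hat \Vinf$ is an embedding, so $(3)$ yields $\cC_{X'} = \cC$; therefore $\cC_X$ is independent of the chosen completion. I expect Step $2$ --- making precise the global shape of $\hat \Vinf$ as a circle with $\R$-trees glued on, and in particular ruling out a second embedded loop and showing each hanging tree meets $\cC$ in a single valuation --- to be the only genuine obstacle; the remaining steps are then essentially formal.
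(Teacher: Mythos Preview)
Your argument is correct, but it takes a heavier route than the paper's. Two points of comparison.

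For assertion~(1), the paper gives a one–line direct argument that avoids~(3) entirely: if $\pi:Y\to X$ is the blow-up of a satellite point $p=E\cap F$ with exceptional divisor $\tilde E$, then the monomial segment satisfies $[\ord_E,\ord_F]=[\ord_E,\ord_{\tilde E}]\cup[\ord_{\tilde E},\ord_F]$, so $\cC_Y=\cC_X$. Since (by Proposition~\ref{PropCycleCaseIndeterminacyPoints} applied to the identity map $X\dashrightarrow X'$) any two completions with cyclic boundary are linked by blow-ups and blow-downs at satellite points, this suffices. Your derivation of~(1) from~(3) is logically fine but buys~(1) at the price of your Step~2.

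For assertion~(3), the paper bypasses your global Step~2 altogether. The observation is that any $v\notin\cC$ has, on some completion $Y$ with cyclic boundary, a center that is a \emph{free} point $p\in E$; this is because blowing up satellite points keeps the boundary a cycle, and a valuation whose center is always a satellite point under this process is necessarily monomial along the cycle, hence in~$\cC$. Then the open set $\hat\cV_Y(p)\subset\hat\Vinf$ of (classes of) valuations centered at~$p$ has frontier $\{\ord_E\}$ in~$\hat\Vinf$ (Proposition~\ref{PropConvergenceDesCentres}), so the connected component of $c^{-1}(\hat\cV_Y(p))$ containing~$t_0$ is an arc~$(a,b)$ with $c(a)=c(b)=\ord_E$; injectivity forces $a=b$, hence $c(\CS^1)\subset\cV_Y(p;E)$, a tree, which is impossible. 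This is exactly the ``one hanging tree'' piece of your Step~2, isolated and used directly; the full decomposition of $\hat\Vinf\setminus\cC$ into trees attached at single points is true (and your sketch of it is sound), but unnecessary here.

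One minor remark: your appeal to Theorem~\ref{ThmAutomorphismCaseDynamicalCompactifications} to justify that $\cC$ is the only loop is misplaced---that theorem concerns dynamics, whereas the uniqueness of the loop is a purely topological consequence of the local tree structure and the cyclic shape of~$\partial_X X_0$. This does not affect the validity of your argument.
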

                      \begin{proof}
                        For (1) we show that if $\pi : Y \rightarrow X$ is the blow up of a satellite point, then
                        $\cC_Y = \cC_X$. Let $p = E \cap F$ be the center of the blow up and let $\tilde E$ be the
                        exceptional divisor. Then, $[\ord_E, \ord_F] = [\ord_E, \ord_{\tilde E}] \cup [\ord_{\tilde E},
                        \ord_F]$ and we see that $\cC_X = \cC_Y$.

                        For (2), recall that the segment $[\ord_E, \ord_F)$ is naturally a subsegment of $\cV_X
                        (p; E)$ parametrised with the skewness function $\alpha_E$. By Proposition
                        \ref{PropMonomialValuationIsSegment}, we have that $\alpha_E = {\alpha_F}^{-1}$ over $(\ord_E,
                        \ord_F)$. Thus, $\C$ is homeomorphic to a finite union of segments $I_i = [a_i, b_i]$ such
                        that $b_i = a_{i+1}$, hence it is homeomorphic to the circle $\CS^1$.

                        For (3), let $c : \CS^1 \hookrightarrow \hat \Vinf$ be a continuous embedding. Suppose that
                        $c(\CS^1) \neq \cC$ this means that there exists a completion $X$ and $t_0 \in \CS^1$ such that
                      $c(t_0)$ is centered at a free point $p \in E$ at infinity. Let $I_0 = ]a,b[$ be the largest subsegment of
                        $\CS^1$ containing $t_0$ such that for all $s \in I_0, c(s) \in \cV_X (p; E)$. Because $\cV_X
                        (p;E)$ is open we must have $a = b$ and $c(a) = c(b) = \ord_E$. Therefore $c$ is a continuous embedding
                        of $\CS^1$ into $\cV_X (p;E)$ but this is not possible since $\cV_X(p;E)$ is a tree.
                      \end{proof}

                      \subsection{Farey parametrisation}
                      We introduce here the Farey parametrisation which is an algorithm that allows one to list all the
                      rational numbers in $[0, 1]$. Our presentation is directly inspired by
                      \cite{cantatCommensuratingActionsBirational2019}. 
                      Let $X$ be a completion of $X_0$ and let $E$ be a prime divisor at infinity and let $p \in E$. A
                      \emph{Farey} parametrisation of $\cD_{X, p} \cup \left\{ E \right\}$ is given by the following
                      procedure. Pick positive integers $a_0,b_0$ such that $gcd(a_0,b_0) = 1$ and set $\Far_{(E,a_0,b_0)}(E)
                      = (a_0,b_0)$,
                      then do the following. Suppose that $\pi : Y \rightarrow X$ is a completion exceptional above $p$ such
                      that $\Far_{(E,a_0,b_0)}(F)$ has been defined for every $F \in \Gamma_{\pi, E}$. Then, if $q \in F$
                      is a free point with respect to $\Gamma_{\pi, E}$, set
                      \begin{equation}
                        \Far_{(E,a_0,b_0)}(\tilde F) = (a_F +1, b_F)
                      \end{equation}
                      where
                      $\Far_{(E,a_0,b_0)}(F) = (a_F, b_F)$. If $q = F \cap F'$ is a satellite point with respect to $\Gamma_{\pi, E}$
                      then set
                      \begin{equation}
                        Far(\tilde F) = \left( a+a', b+ b' \right)
                        \label{<+label+>}
                      \end{equation}
                      where $\Far_{(E,a_0,b_0)}(F) = (a,b)$ and $\Far_{(E,a_0,b_0)}(F') = (a',b')$.

                      \begin{prop}
                        Let $\Far_{(E,a_0,b_0)}$ be a Farey parametrisation of $\cD_{X, p} \cup \left\{ E \right\}$.
                        \begin{enumerate}
                          \item Set $A_{(E,a_0,b_0)} (F) = \frac{a}{b}$ where $\Far_{(E,a_0,b_0)}(F) = (a,b)$, then $A$ is
                            a parametrisation of
                            $\Gamma_E$.
                          \item For any $F_1, F_2 \in \Gamma_E$ that are adjacent such that $v_{F_1} < v_{F_2}$ we have
                            \begin{equation}
                              a_2 b_1 - a_1 b_2 = 1
                              \label{<+label+>}
                            \end{equation}
                            where $\Far_{(E,a_0, b_0)}(F_i) = (a_i, b_i)$.
                          \item If $M = \begin{pmatrix}
                              \alpha & \beta \\ \gamma & \delta
                            \end{pmatrix} \in \PSL_2 (\Z)$, then $M \circ Far (F) := (\alpha a + \beta, \gamma b +
                            \delta)$ is another Farey parametrisation of $\cD_{X, p}$.
                        \end{enumerate}
                      \end{prop}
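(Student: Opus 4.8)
The plan is to prove (1) and (2) simultaneously by induction on the number of blow-ups needed to realise a given divisor (or a given adjacent pair) inside some dual graph $\Gamma_{\pi, E}$, and then to deduce (3) by a direct covariance check. The guiding idea is that the combinatorial pair $Far_{(E,a_0,b_0)}(F)=(a_F,b_F)$ should faithfully encode the order and adjacency structure of $\Gamma_E$ already recorded by the skewness $\alpha_E$ and the relative generic multiplicity $b_E$, via the identification $\Gamma_E\simeq\cV_X(p;E)_{\div}$ (see \cite{favreValuativeTree2004}, Theorem~6.22, used above). So the first step is to import the relevant facts: $\alpha_E$ is a parametrisation of $\cV_X(p;E)_{\div}$ by Proposition~\ref{PropValuationEnFonctionDeAlphaRelative}; the adjacency identity $\alpha_E(v_{F_2})-\alpha_E(v_{F_1})=1/(b_E(F_1)b_E(F_2))$ holds for adjacent $F_1,F_2$ with $v_{F_1}<v_{F_2}$ by Proposition~\ref{PropSkewnessBlowUp}; and Proposition~\ref{PropOrderRelationAfterOneBlowUpRelativeCase} describes exactly how a new exceptional divisor $\tilde F$ is inserted into the order after a free, resp.\ satellite, blow-up, together with the matching recursion $b_E(\tilde F)=b_E(F)$, resp.\ $b_E(\tilde F)=b_E(F_1)+b_E(F_2)$, for the generic multiplicity.

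For the induction, a divisor $F\neq E$ of $\Gamma_E$ is the exceptional divisor of the last blow-up $\tau$ in some $\Gamma_{\pi\circ\tau,E}$, obtained either (i) by blowing up a free point $q\in F'$ with $F'\in\Gamma_{\pi,E}$, so that $F$ is adjacent only to $F'$, $v_{F'}<v_F$ (Proposition~\ref{PropOrderRelationAfterOneBlowUpRelativeCase}), and $Far(F)=(a_{F'}+1,b_{F'})$; or (ii) by blowing up a satellite point $q=F_1\cap F_2$ with $v_{F_1}<v_{F_2}$, so that $F$ is adjacent to both, sits strictly between them in the order (Proposition~\ref{PropOrderRelationAfterOneBlowUpRelativeCase}), and $Far(F)=Far(F_1)+Far(F_2)$ is the mediant. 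Case (ii) is the substantive one (and the only one that occurs in the cycle situation of the next section, where all centres are satellite points): here the Stern--Brocot identity
\[
\det\!\begin{pmatrix} a_1 & a_1{+}a_2 \\ b_1 & b_1{+}b_2 \end{pmatrix}=\det\!\begin{pmatrix} a_1{+}a_2 & a_2 \\ b_1{+}b_2 & b_2 \end{pmatrix}=\det\!\begin{pmatrix} a_1 & a_2 \\ b_1 & b_2 \end{pmatrix}
\]
propagates the determinant relation (2) to the two newly created adjacent pairs, while the elementary inequalities $a_1/b_1<(a_1+a_2)/(b_1+b_2)<a_2/b_2$ together with the order statement of Proposition~\ref{PropOrderRelationAfterOneBlowUpRelativeCase} show that $A=a/b$ remains strictly order-preserving and correctly placed relative to the whole of $\Gamma_E$. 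Case (i) is treated the same way, reading the free point as a mediant with the virtual divisor "at infinity" $(1,0)$ and using $b_E(\tilde F)=b_E(F')$. Once $A$ is known to be strictly increasing on $\Gamma_E$ with these increments, the fact that it restricts to a bijection onto an interval of $\Q$ on every full chain — i.e.\ that it is a parametrisation — follows from the corresponding property of $\alpha_E$ in Proposition~\ref{PropValuationEnFonctionDeAlphaRelative} and the density of the Stern--Brocot fractions; this yields (1), and the determinant computed along every adjacency in the induction is (2).

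Finally, for (3) I would observe that the two recursion steps defining $Far$ are covariant under the $\PSL_2(\Z)$-action of the statement: the satellite (mediant) rule is linear, hence commutes with $M$ on the nose, and a short check handles the free step; since $\det M=1$, the image of $(a_0,b_0)$ is again a coprime pair and the determinant identity (2) is preserved, so $M\circ Far$ satisfies the defining recursion of a Farey parametrisation with initial pair $M\cdot(a_0,b_0)$. The step I expect to be the genuine obstacle is the bookkeeping in the free-point case: correctly identifying, for a newly created leaf divisor, which of its sides carries the old neighbour $F'$ and which the virtual point at infinity, so that the increment of $A$ and the determinant with $F'$ come out as forced by Proposition~\ref{PropSkewnessBlowUp}; by contrast the satellite case is the clean Stern--Brocot mediant computation and should present no difficulty.
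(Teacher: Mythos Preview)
The paper does not give a proof of this proposition; it is stated and immediately followed by the next proposition (on $\alpha_E$ versus $Far_{E,0,1}$), which is the only part actually used downstream. So there is nothing to compare your argument against directly. Your inductive Stern--Brocot strategy is the natural one and is essentially what the omitted proof would have to be.

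That said, the difficulty you flag in the free-point case is not just a bookkeeping nuisance: the statement as written does not survive it. After the very first blow-up (which is free with respect to $\Gamma_{\id,E}=\{E\}$) one gets $Far(\tilde E)=(a_0+1,b_0)$, and the adjacency determinant with $E$ is $(a_0+1)b_0-a_0b_0=b_0$, not $1$; this value $b_0$ then propagates through all subsequent mediants. So (2) holds as stated only when $b_0=1$. Likewise, the formula in (3) is visibly mis-stated (it should be the linear action $(\alpha a+\beta b,\gamma a+\delta b)$), and even with that correction the free-point rule $a\mapsto a+1$ is not $\PSL_2(\Z)$-equivariant: applying $M$ and then incrementing the first coordinate is not the same as incrementing and then applying $M$ unless $M$ fixes the ``virtual'' pair $(1,0)$. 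Your instinct to model the free step as a mediant with $(1,0)$ is exactly right, and it makes the failure transparent.

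In the paper's actual applications (the cycle-at-infinity situation) only satellite blow-ups on the segment $[\ord_E,\ord_F]$ occur and one works with $Far_{E,0,1}$, so none of this matters there; your satellite-case induction is correct and suffices for everything the paper needs. But if you want a clean general statement you should either restrict (2) to $b_0=1$, or restate (3) for the subgroup of $\PSL_2(\Z)$ stabilising $(1,0)$, or---cleanest---replace the free-point rule by the mediant with a second initial pair and record both seeds in the notation.
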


                      \begin{prop}
                        Let $X$ be a completion of $X_0$ and $p = E \cap F$ a satellite point at infinity. Then, the
                        skewness function $\alpha_E$ is the restriction of the Farey parametrisation $\Far_{E, 0,1}$ to
                        $[\ord_E, v_F)$
                      \end{prop}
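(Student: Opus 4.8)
The plan is to reduce this equality of functions to a statement about divisorial valuations and then pass to the limit by continuity. First I fix local coordinates $(x,y)$ at $p$ associated to $(E,F)$, so that $x$ is the local equation $z$ of $E$ appearing in the statement. By Proposition \ref{PropRelativeValuativeTreeIsomorphism} and the discussion of \S\ref{SecMonomialValuationsAtSatellitepoint}, the segment $[\ord_E, v_F)$ inside $\cV_X(p;E)$ is exactly the set of monomial valuations $\{v_{1,t} : t \in [0,+\infty)\}$ normalised by $x$, with $\ord_E = v_{1,0}$ and $v_F = v_y = \lim_{t\to+\infty}v_{1,t}$ the curve valuation of $F$ (which itself does not lie in $\cV_X(p;E)$). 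By Lemma \ref{LemmelevelfunctionValuationMonomiale} one has $\alpha_E(v_{1,t}) = t$ for all $t$, so it remains to show that the Farey parametrisation takes the value $t$ at $v_{1,t}$.

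Since $\alpha_E$ and $Far_{(E,0,1)}$ are both continuous parametrisations of the $\R$-tree $\cV_X(p;E)$ — the latter by the preceding proposition, extended from the $\Q$-tree $\Gamma_E$ to its completion — and the divisorial valuations are dense in $[\ord_E, v_F)$, it suffices to prove $Far_{(E,0,1)}(v_{1,r}) = r$ for every rational $r\ge 0$; the irrational points of the segment then follow from uniqueness of the continuous extension of a parametrisation of a $\Q$-tree. By Proposition \ref{PropMonomialValuationIsSegment} applied with $\cV_* = \cV_X(p;E)$, the divisorial valuations on the segment are precisely the $v_{1,r}$ with $r\in\Q_{\ge 0}$, and when $r$ is the mediant of two Farey neighbours $r_1 = p_1/q_1 < r_2 = p_2/q_2$ (in lowest terms, with the convention $r_2 = \infty$, i.e. $(p_2,q_2) = (1,0)$, for the divisor $F$) the valuation $v_{1,r}$ is attached to the exceptional divisor $D_r$ obtained by blowing up the satellite point $D_{r_1}\cap D_{r_2}$. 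I would then induct on the Stern--Brocot depth of $r$. The base cases are $r = 0$, where $Far_{(E,0,1)}(E) = (0,1)$ so both sides vanish at $\ord_E$, and the integer slopes $r = n$, obtained by $n$ successive blow-ups of the free point towards $F$, each incrementing the first Farey coordinate by $1$, so $Far_{(E,0,1)}(v_{1,n}) = n/1 = n$; here one must note that the very first blow-up, of $p = E\cap F$, counts as a blow-up of a \emph{free} point with respect to the dual graph $\Gamma_{\id,E} = \{E\}$, even though $p$ is a satellite point of the affine surface. For the inductive step the satellite rule gives $Far_{(E,0,1)}(D_r) = (p_1+p_2,\,q_1+q_2)$, and the neighbour relation $p_2q_1 - p_1q_2 = 1$ (part (2) of the preceding proposition) shows that this pair is already in lowest terms and has ratio $(p_1+p_2)/(q_1+q_2) = r$, closing the induction.

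An alternative, perhaps cleaner route avoids Stern--Brocot bookkeeping: one first checks by an immediate induction that the second Farey coordinate of any $D\in\Gamma_E$ equals its relative generic multiplicity $b_E(D)$ — the two recursions are literally the same (value $1$ at $E$, unchanged at free blow-ups, additive at satellite blow-ups) — and then matches increments along the segment using Proposition \ref{PropSkewnessBlowUp}: for adjacent divisorial $v_{F_1} < v_{F_2}$ on the segment one gets $\alpha_E(v_{F_2}) - \alpha_E(v_{F_1}) = 1/(b_E(F_1)b_E(F_2)) = 1/(b_{F_1}b_{F_2})$, which equals $a_2/b_2 - a_1/b_1$ by the neighbour relation, i.e. $Far_{(E,0,1)}(v_{F_2}) - Far_{(E,0,1)}(v_{F_1})$; together with the common value $0$ at $\ord_E$ this forces the two functions to agree on all divisorial valuations of the segment. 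Either way, the genuine obstacle is the combinatorial--geometric dictionary of the second paragraph: pinning down, along $[\ord_E,v_F)$, which blow-up (free towards $F$, or satellite between two divisors already present) carries $v_{1,r}$, and verifying that this dichotomy is exactly the free/satellite dichotomy of the Farey recursion — in particular that free blow-ups towards $F$ occur only at divisors of relative generic multiplicity $1$. Once that dictionary is in place, the rest is a short induction and an appeal to continuity.
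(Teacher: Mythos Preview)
Your argument is correct. Both routes work: the direct Stern--Brocot induction and the cleaner alternative via the observation that the second Farey coordinate coincides with the relative generic multiplicity $b_E$ (same initial value, same free/satellite recursion), so that the adjacent-increment formula of Proposition~\ref{PropSkewnessBlowUp} matches the Farey increment $a_2/b_2 - a_1/b_1 = 1/(b_1 b_2)$ coming from the neighbour relation. Your care in noting that the first blow-up of $p=E\cap F$ is \emph{free} relative to $\Gamma_{\id,E}=\{E\}$ (even though $p$ is a satellite point of the boundary) is exactly the point one must get right.

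The paper takes a different, shorter route: it invokes the \emph{thinness} function $A_E$ from \cite{favreValuativeTree2004}, which is already known there to be the Farey parametrisation started at $(1,1)$, together with the integral formula $A_E(v)=1+\int_{\ord_E}^{v} m_E\,d\alpha_E$. On $[\ord_E,v_F)$ one has $m_E\equiv 1$, whence $A_E=1+\alpha_E$ there; since shifting the starting pair from $(1,1)$ to $(0,1)$ subtracts $1$ from the ratio (indeed one checks inductively that $Far_{(E,1,1)}(D)=(a+b,b)$ whenever $Far_{(E,0,1)}(D)=(a,b)$), this yields $\alpha_E = Far_{(E,0,1)}$ on the segment. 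The trade-off is that the paper's proof is a two-line reduction to a black box, while yours is self-contained and makes transparent the combinatorial dictionary between blow-ups on the segment and the Stern--Brocot recursion.
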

                      \begin{proof}
                        This uses another parametrisation of the valuative tree defined in \cite{favreValuativeTree2004}
                        called the \emph{thinness} function. The thinness function $A_E$ of the valuative tree $\cV_X
                        (p;E)$ is defined by the Farey parametrisation starting with $Far(E) = (1,1)$. The relation
                        between $A_E$ and $\alpha_E$ is the following. Define the multiplicity function $m_E$ by
                        \begin{equation}
                          \forall \phi \in \hat{\OO_{X, p}}, \quad m_E (\phi) = E \cdot_p \left\{ \phi = 0 \right\}.
                          \label{<+label+>}
                        \end{equation}
                        The multiplicity of a valuation is defined as
                        \begin{equation}
                          m_E (v) := \min \{m_E (\phi) : v_\phi \geq v\}.
                          \label{<+label+>}
                        \end{equation}
                        and we have
                        \begin{equation}
                          A_E (v) = 1 + \int_{\ord_E}^v m_E (\mu) d\alpha_E (\mu).
                          \label{<+label+>}
                        \end{equation}
                        see \cite{favreValuativeTree2004} Definition 3.64. It is clear that on the segment $[\ord_E,
                          v_F[$ we get that $m_E$ is constant equal to 1. Hence, over this segment $A_E = 1 + \alpha_E$
                            and $\alpha_E$ is a Farey parametrisation of the segment and it satisfies $\alpha_E
                            (\ord_E) = 0$ and $\alpha_E (v_F) = + \infty$.
                          \end{proof}
                          An interval $[a,b]$ is a \emph{Farey interval} if $a = p/q, b = r/s $ and $qr - ps = 1$. We
                          make the convention that $\pm \infty = \pm 1 / 0$ such that for every integer $a, [-\infty, a] $ and
                          $[a, + \infty]$ are Farey intervals. The image of a Farey interval by an element of $\PGL_2
                          (\Z)$ is a Farey interval (even for infinite ones).

                          \begin{prop}\label{PropFareyIntervalSkewnessComposedWithMobius}
                            Let $E,F$ be two prime divisors at infinity and let $p = E \cap F$. Let $a_E, b_E, a_F, b_F$ be
                            nonnegative integers such that $a_F b_E - a_E b_F = 1$. If $M = \begin{pmatrix}
                            a_F & a_E \\ b_F & b_E \end{pmatrix} \in \PSL_2 (\Z)$, then
                            \begin{equation}
                              \psi := M \circ \alpha_E
                              \label{<+label+>}
                            \end{equation}
                            induces a homeomorphism between $[\ord_E, \ord_F]$ and the Farey interval
                            $[\frac{a_E}{b_E}, \frac{a_F}{b_F}]$ such that $\psi (\ord_E) = \frac{a_E}{b_E}$ and
                            $\psi (\ord_F) = \frac{a_F}{b_F}$.
                          \end{prop}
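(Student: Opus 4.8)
The plan is to write $\psi$ as a composition of two homeomorphisms that are each already available, and then just track the endpoints and the orientation. First I would recall that, by the preceding proposition together with Proposition~\ref{PropMonomialValuationIsSegment}, the skewness function $\alpha_E$ restricts to an increasing homeomorphism from the segment $[\ord_E,\ord_F]$ of $\hat\Vinf$ onto the interval $[0,+\infty]\subset\P^1(\R)$, with $\alpha_E(\ord_E)=0$ (the chosen normalisation) and $\alpha_E(\ord_F)=+\infty$; the value at $\ord_F$ follows from the relation $\alpha_E=\alpha_F^{-1}$ on the interior of the segment (Proposition~\ref{PropMonomialValuationIsSegment}) applied to $\alpha_F(\ord_F)=0$. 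Note that $[0,+\infty]=[0/1,1/0]$ is itself a Farey interval by the convention fixed above.

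Next I would analyse the action of $M$ on $\P^1(\R)$. The determinant identity $a_Fb_E-a_Eb_F=1$ forces $b_E\geq 1$ (otherwise the determinant would be $-a_Eb_F\leq 0$), so the pole $-b_E/b_F$ of the Möbius map $x\mapsto (a_Fx+a_E)/(b_Fx+b_E)$ lies in $(-\infty,0]$ (it is $-\infty$ when $b_F=0$). Hence on $[0,+\infty)$ this map takes only finite values and, since its derivative equals $\det(M)/(b_Fx+b_E)^2>0$, it is strictly increasing; it therefore extends to an increasing homeomorphism
\[
  M\colon[0,+\infty]\xrightarrow{\ \sim\ }\bigl[M(0),M(\infty)\bigr]=\Bigl[\tfrac{a_E}{b_E},\tfrac{a_F}{b_F}\Bigr],
\]
where $a_F/b_F$ is read as $+\infty$ in the degenerate case $b_F=0$ (which forces $a_F=b_E=1$, so that $M$ is the translation $x\mapsto x+a_E$). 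Since $a_F/b_F-a_E/b_E=1/(b_Eb_F)>0$, the target is the correctly oriented interval from $a_E/b_E$ to $a_F/b_F$, and it is a Farey interval by the determinant identity. Conceptually, this is just the fact that $M\in\PSL_2(\Z)$ acts as an orientation-preserving homeomorphism of the circle $\P^1(\R)$ sending the Farey interval $[0,+\infty]$ to the Farey interval with endpoints $M(0)$ and $M(\infty)$, in the right cyclic order.

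Composing the two homeomorphisms, $\psi=M\circ\alpha_E$ is a homeomorphism $[\ord_E,\ord_F]\to[a_E/b_E,a_F/b_F]$ with $\psi(\ord_E)=M(0)=a_E/b_E$ and $\psi(\ord_F)=M(\infty)=a_F/b_F$, which is exactly the assertion. The only point requiring care is the orientation and endpoint bookkeeping — checking that the arc $[0,+\infty]$ traversed through the positive reals is sent onto the arc between $a_E/b_E$ and $a_F/b_F$ determined by the Farey interval, and not onto its complement in $\P^1(\R)$ — but this is precisely what the positivity of $\det M=1$ together with the nonnegativity of $b_E,b_F$ guarantees, so I do not expect any genuine obstacle.
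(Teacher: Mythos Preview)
Your argument is correct. The paper states this proposition without proof, treating it as immediate from the preceding proposition (which identifies $\alpha_E$ on $[\ord_E,\ord_F]$ with a Farey parametrisation taking values in $[0,+\infty]$) together with the remark that elements of $\PGL_2(\Z)$ send Farey intervals to Farey intervals; your write-up simply makes these two steps explicit and checks the endpoint and orientation bookkeeping carefully, which is exactly the intended content.
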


                          In particular, let $X$ be a completion such that $\BD$ is a cycle of rational curves. Let $E_1,
                          \dots, E_r$ be the irreducible components of $\BD$. Let $x_1, \dots, x_{r-1}$ be rational
                          numbers such that $[- \infty, x_1] , [x_{r-1}, + \infty], [x_i, x_{i+1}], i =1, \dots r_2$ are
                          Farey intervals (in particular $x_1$ and $x_{r-1}$ are integers). We identify $\CS^1$ with the
                          interval $[-\infty, + \infty] / (- \infty = + \infty)$. Write $x_i = p_i/ q_i$ with $\gcd (p_i,
                          q_i) = 1$, by Proposition \ref{PropFareyIntervalSkewnessComposedWithMobius}, we have a
                          homeomorphism $\phi : \cC_X \rightarrow \CS^1$ given by
                          \begin{align}
                            \phi_{|[\ord_{E_r}, v_{E_1})} &= \begin{pmatrix}
                              x_1 & -1 \\
                              1 & 0
                            \end{pmatrix} \circ \alpha_{E_r} \\
                            \phi_{|[\ord_{E_i}, v_{E_{i+1}})} &= \begin{pmatrix}
                              p_{i+1} & p_i \\
                              q_{i+1} & q_i
                            \end{pmatrix} \circ \alpha_{E_i} \\
                            \phi_{|[\ord_{E_{r-1}}, v_{E_r})} &= \begin{pmatrix}
                              1 & x_{r-1} \\
                              0 & 1
                            \end{pmatrix} \circ \alpha_{E_{r-1}}.
                            \label{<+label+>}
                          \end{align}
                          The image by $\phi$ of every interval $[\ord_{E_i}, \ord_{E_{i+1}}]$ is a Farey interval.

                          \subsection{The Thompson group}
                          Identify $\CS^1$ with the segment $[0,1] / \{0 =1 \}$. Traditionally, the Thompson group is a
                          subgroup of the group of homeomorphism of $\CS^1$ defined as follows. A
                          homeomorphism $g$ is in the Thompson group if there exists two subdivisions $\cup_{i=1}^r I_i,
                          \cup_{i = 1}^r J_i$ of $[0,1] / \{0=1 \}$ into Farey intervals such that $g$ sends $I_i$ to $J_i$
                          and $g_i : I_i \rightarrow J_i$ is given by a Möbius transformation with integer
                          coefficients (i.e given by a matrix of $\PGL_2 (\Z))$. We will instead define the Thompson
                          group with the same definition but with the identification $\CS^1 =
                        [-\infty, + \infty] / \{-\infty = + \infty \}$. We can go from one representation to the other by
                          conjugating with the (non canonical) homeomorphism $\phi : [0, 1] / \{ 0=1 \} \rightarrow
                          [-\infty, + \infty] / \{ - \infty = + \infty \}$ defined piecewise by
                          \begin{align}
                            \phi :& s \in[ 0, 1/2]  \mapsto  \frac{2s-1}{s} \in [- \infty, 0] \\
                            & s \in [1/2, 1]  \mapsto  \frac{2s-1}{1-s} \in [0, + \infty]
                            \label{<+label+>}
                          \end{align}
                          The advantage with this representation is that we can identify naturally $[\infty, +
                          \infty] /  \{ -\infty = + \infty \}$ with the boundary of the Poincaré half-plane $\HH$.
                          Thus, the group $\PGL_2 (\Z)$ acting by isometries on $\HH$ via Möbius transformations is a
                          subgroup of the Thompson group via its action on $\partial \HH$. It was first observed by
                          Diller and Lin that this group has a connection with algebraic dynamics, see
                          \cite{dillerRationalSurfaceMaps2016, dillerRotationNumbersElements2017}. We will denote by
                          $\mathbf T$ the Thomson group.

                          \begin{thm}\label{ThmAutomorphismThompsonGroup}
                            If $X_0$ is an affine surface such that $X \setminus X_0$ is a cycle of rational curves, then
                            every automorphism of $X_0$ acts on $\cC \simeq \CS^1$ via an element of the Thompson
                            group $\mathbf T$.
                          \end{thm}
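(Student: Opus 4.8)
The plan is to show that, for a given automorphism $f$ of $X_0$, the circle $\cC$ admits a finite subdivision into Farey intervals on each of which $f_\bullet$ is the action of a matrix of $\GL_2(\Z)$ and which is carried onto a Farey interval; this is exactly membership in the Thompson group. First I would check that $f_\bullet$ preserves $\cC$: as $f$ is an automorphism, $f_*$ is a homeomorphism of $\hat \Vinf$ with inverse $(f^{-1})_*$, and by the characterisation of $\cC$ obtained above (the image of \emph{every} continuous embedding $\CS^1 \hookrightarrow \hat \Vinf$ is $\cC$) the set $f_\bullet(\cC)$ equals $\cC$, so $f_\bullet$ restricts to a homeomorphism of $\cC \simeq \CS^1$. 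Since the Farey parametrisation of $\cC$ is canonical only up to post-composition by an element of the Thompson group — two cycle completions, or two admissible choices of the rationals $x_i$, give parametrisations differing by a piecewise $\PGL_2(\Z)$-Möbius map, by Proposition~\ref{PropFareyIntervalSkewnessComposedWithMobius} and its $\PGL_2(\Z)$-invariance — the statement to be proved is independent of these choices, and I may work with the parametrisation attached to one convenient completion.

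Next I would build that completion. After blowing up the node if $\BD$ happens to be an irreducible nodal curve, I may take a completion $X$ with $\BD = E_1 \cup \cdots \cup E_r$ a genuine cycle of rational curves. By Proposition~\ref{PropCycleCaseIndeterminacyPoints} the indeterminacy points of the lift $f : X \dashrightarrow X$ lie among the satellite points of the cycle; blowing those up, together with their infinitely near indeterminacy points (all satellite, by applying the same proposition at each stage, cf.~\cite{Gizatullin_1971, el-hutiCubicSurfacesMarkov1974}), produces a cycle completion $\pi : Y \rightarrow X$ with boundary $\mathcal E_1 \cup \cdots \cup \mathcal E_s$ on which $F := f \circ \pi : Y \rightarrow X$ is a morphism. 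Write $q_j := \mathcal E_j \cap \mathcal E_{j+1}$ and $I_j := [\ord_{\mathcal E_j}, \ord_{\mathcal E_{j+1}}] \subset \cC$; the interior of $I_j$ consists of the monomial valuations $v_{s,t}$ centered at $q_j$. Since $\pi$ blows up only satellite points, $\pi(q_j)$ is a satellite point $p$ of $X$, at which $f$ is defined; a short argument shows $f(p)$ is again a satellite point of $X$ — if $f$ contracts one of the two boundary germs through $p$ then $f(p)$ is an indeterminacy point of $f^{-1}$, hence satellite by Proposition~\ref{PropCycleCaseIndeterminacyPoints}, and otherwise $f$ is a local isomorphism at $p$ sending the two boundary germs at $p$ to two boundary germs of $X$. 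Thus $F(q_j)$ is a satellite point of $X$, and since $F$ maps boundary curves of $Y$ into $\BD$, in local coordinates $(x,y)$ at $q_j$ associated to $(\mathcal E_j, \mathcal E_{j+1})$ and $(z,w)$ at $F(q_j)$ associated to the two boundary curves through it, one has $F(x,y) = (x^a y^b \phi,\, x^c y^d \psi)$ with $\phi, \psi$ units.

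The crux is then that $ad - bc = \pm 1$. I would argue this from the fact that $F$ is birational (a composition of the birational maps $\pi$ and $f$) and that a pseudomonomial germ as above has local degree $|ad - bc|$ (see~\S\ref{SubSecContractingRigidGerm}); as $f$ restricts to an isomorphism on $X_0$, the germ $F$ is an open immersion away from $\{xy = 0\}$, which forces this degree to be $1$. Granting this, Lemma~\ref{LemmeActionSurMonomiale} gives $f_\bullet(v_{s,t}) = v_{as+bt,\, cs+dt}$ on $I_j$, which via Lemma~\ref{LemmelevelfunctionValuationMonomiale} and the Farey parametrisations of source and target is the action of a matrix of $\GL_2(\Z)$. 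Because $ad - bc = \pm 1$, the endpoints $f_\bullet \ord_{\mathcal E_j} = v_{a,c}$ and $f_\bullet \ord_{\mathcal E_{j+1}} = v_{b,d}$ are already primitive and form a Farey pair, so $f_\bullet(I_j)$ is a Farey interval; and as $f_\bullet$ is a homeomorphism of $\cC$, the intervals $f_\bullet(I_1), \ldots, f_\bullet(I_s)$ tile $\cC$ just as $I_1, \ldots, I_s$ do. Hence $\{I_j\}$ and $\{f_\bullet(I_j)\}$ are two Farey subdivisions of $\cC$ carried to one another by $f_\bullet$ through $\PGL_2(\Z)$-Möbius maps on each piece, i.e. $f_\bullet$ lies in the Thompson group. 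This uses only that $\BD$ is a cycle and that $f$ is an automorphism, so it also covers the case $X_0 \simeq \G_m^2$; the main obstacle is the bookkeeping in the resolution step — staying inside cycle completions and checking that each $q_j$ is sent by $F$ to a satellite point of $X$ — together with a careful treatment, valid in any characteristic, of the degree-one assertion $ad - bc = \pm 1$.
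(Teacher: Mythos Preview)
Your overall strategy matches the paper's: resolve $f$ on a cycle completion $Y$ above $X$, observe that satellite points of $Y$ map to satellite points of $X$ under the regular lift $F$, deduce the local pseudomonomial form with $ad-bc=\pm 1$, and apply Lemma~\ref{LemmeActionSurMonomiale}. Your argument for $ad-bc=\pm 1$ via the local degree of the germ is a valid alternative to the paper's, which instead obtains it from the factorisation of $F$ into blow-downs.

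There is, however, a gap in your justification that $F(q_j)$ is a satellite point of $X$. You assert that $\pi(q_j)$ is a satellite point $p$ of $X$ ``at which $f$ is defined'', and then analyse $f(p)$. But this is false precisely when $q_j$ lies over one of the indeterminacy points you blew up: if $p_0=E_i\cap E_{i+1}$ is an indeterminacy point of $f$ on $X$ and $\tilde E$ is the first exceptional divisor over it, the new satellite point $q_j=E_i\cap\tilde E$ of $Y$ satisfies $\pi(q_j)=p_0$, where $f$ is \emph{not} defined, so your dichotomy ``$f$ contracts a boundary germ through $p$ / $f$ is a local isomorphism at $p$'' does not apply. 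The fix is to reason directly with $F$ rather than through $f\circ\pi$: since $F\colon Y\to X$ is a regular birational morphism which is an isomorphism over $X_0$, it factors (Theorem~\ref{ThmCompositionBlowUps}) as a sequence of contractions of $(-1)$-curves lying in the boundary cycle followed by an isomorphism; each such contraction takes a cycle to a cycle and sends every satellite point to a satellite point, so inductively $F(q_j)$ is a satellite point of $X$. This is the argument the paper uses (and it simultaneously yields $ad-bc=\pm 1$, since a single blow-down at a satellite point has local monomial form with determinant $\pm 1$); with this correction the rest of your proof goes through.
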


                          This was already shown in \cite{cantatCommensuratingActionsBirational2019} Theorem 8.5.
                          Here, we show that the point of view of valuations is a natural one to prove this theorem.
                          This was mentioned in Remark 8.2 of \cite{cantatCommensuratingActionsBirational2019}.

                          \begin{proof}
                            This is a consequence of Proposition \ref{PropCycleCaseIndeterminacyPoints}. Fix a
                            completion $X$ and let $\phi : \cC_X \rightarrow \CS^1$ be the homeomorphism constructed
                            after Proposition \ref{PropFareyIntervalSkewnessComposedWithMobius}. Let $f \in
                            \Aut(X_0)$ be an automorphism. Suppose that $Y$ is a completion above $X$ such that
                            the lift $F : Y \rightarrow X$ is regular. Then, satellite points of $Y$ must be sent to
                            satellite points of $X$. Let $p = E \cap F$ be a satellite point
                            of $Y$ and $q = F(p) = E' \cap F' \in X$. Since $F$ is birational, $F$ is a composition of
                            blow-ups and of an automorphism of $Y$ (see e.g \cite[Proposition
                            V.5.3]{hartshorneAlgebraicGeometry1977}.  If $(x,y)$ are local coordinates associated to
                            $(E,F)$ and $(z,w)$ local coordinates associated to $(E', F')$, then $f$ is of the form
                            \begin{equation}
                              f(x,y) = (x^a y^b \phi, x^c y^d \psi).
                              \label{<+label+>}
                            \end{equation}
                            with $ad - bc = \pm 1$ and $\phi, \psi$ invertible. By Lemma \ref{LemmeActionSurMonomiale}
                            We get $f_*v_{s,t} = v_{as + bt, cs +dt}$. Hence,
                            $f_\bullet$ sends the Farey subinterval $[\ord_E, \ord_F]$ of $\cC_X$ to
                            another Farey subinterval of $\cC_X$ via a Möbius transformation. It acts as an element of
                            the Thompson group. This representation does not depend on the choice of the completion
                            $Y$. However it does depend on the initial choice of $X$ and of the homeomorphism $\cC_X
                            \simeq \CS^1$.
                          \end{proof}

                          Using this representation, we obtain a refinement of Gizatullin's theorem on automorphism
                          groups of affine surfaces completable by a cycle of rational curves from
                          \cite{gizatullinQuasihomogeneousAffineSurfaces1971}.
                          \begin{thm}\label{ThmAutoGroupIntoThompsonGroup}
                            There is a group homomorphism $\Aut(X_0) \rightarrow \mathbf T$.
                            The kernel is up to finite index an algebraic torus of dimension $d \leq 2$. And we have the
                            following
                            \begin{enumerate}
                              \item If $d = 2$, then $X_0 \simeq \G_m^2$.
                              \item If $X_0 \not \simeq \G_m^2$ and $\Aut(X_0)$ contains a loxodromic element, then $d = 0$.
                                In particular, the kernel is finite and $\Aut(X_0)$ is countable.
                              \item If $d = 1$, then up to finite index
                                \begin{equation}
                                  \Aut (X_0) \simeq \G_m, \text{ or } \Aut(X_0) \simeq \G_m \times A
                                  \label{<+label+>}
                                \end{equation}
                                where $A$ is a subgroup of $\Aut(\A^1)$ or $\Aut(\G_m)$.
                            \end{enumerate}
                          \end{thm}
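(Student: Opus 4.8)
The plan is to take for the homomorphism the action on the circle at infinity, to identify its kernel with (up to finite index) an algebraic torus, and then to split into the three cases according to $d$. I work in the setting of this chapter, so that $X_0$ admits a completion whose boundary is a cycle of rational curves and the space $\cC\simeq\CS^1$ together with the map of Theorem~\ref{ThmAutomorphismThompsonGroup} are available; I take $\rho\colon\Aut(X_0)\to G_{Thompson}$ to be the induced action on $\cC$. This is a group homomorphism by construction, and $G_{Thompson}$ is countable, because each of its elements is prescribed by two finite Farey subdivisions of $\CS^1$ together with finitely many matrices in $\PGL_2(\Z)$.

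Next I would analyse the kernel $K=\ker\rho$. If $g\in K$ then $g_\bullet$ fixes every vertex $\ord_{E_i}$ of $\cC$, so $g$ preserves each irreducible component of $\partial_X X_0$ in the minimal cycle completion $X$ and contracts no boundary curve (that would displace the corresponding divisorial valuation inside $\cC$); by Proposition~\ref{PropCycleCaseIndeterminacyPoints} the birational map $g\colon X\dashrightarrow X$ then has no indeterminacy point and extends to a biregular automorphism of $X$. Hence $K$ is a closed subgroup of the algebraic group $\Aut(X)$, and in particular has finitely many connected components, so $K$ equals its identity component $K^0=\Aut^0(X_0)$ up to finite index. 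To see that $K^0$ is a torus of rank $\le2$: restriction to the boundary components gives a homomorphism of algebraic groups $K^0\to\prod_i\G_m$ (on each $E_i\simeq\P^1$ a connected group preserving the two nodes acts through the group of dilations $\G_m$), whose kernel is finite since an automorphism of $X$ fixing the ample boundary $\partial_X X_0$ pointwise is the identity (equivalently $H^0(X,T_X(-\partial_X X_0))=0$); therefore $K^0$ is isogenous to a subtorus of $\prod_i\G_m$, hence $K^0\simeq\G_m^d$, and faithfulness of the action of $K^0$ on the surface $X_0$ forces $d\le\dim X_0=2$. This establishes the first two assertions.

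It remains to treat the three cases. If $d=2$, then $X_0$ is a normal affine surface carrying a faithful $\G_m^2$-action, hence a toric surface; among normal affine toric surfaces only $\G_m^2$ has a cycle at infinity (alternatively, a $2$-torus forces $\overline\kappa(X_0)=0$ and $\overline q(X_0)=2$ and one concludes by Theorem~\ref{ThmCaracAlgebraicTorus}), so $X_0\simeq\G_m^2$. For the second case, suppose $X_0\not\simeq\G_m^2$ and that $\Aut(X_0)$ contains a loxodromic element $f$, so that $\lambda_1(f)\notin\Z$ by Corollary~\ref{CorCycleCaseValuationIrrational}; if $d\ge1$ then, by the previous case, $d=1$ and $K^0\simeq\G_m$ is a normal subgroup of $\Aut(X_0)$, so $f$ permutes the $\G_m$-orbits and therefore preserves the fibration of $X_0$ onto the one-dimensional orbit space of the $\G_m$-action, whence $\lambda_1(f)\in\Z$ by Proposition~\ref{PropDynamicalDegreeFibrationOverCurve} — a contradiction. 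Hence $d=0$, $K$ is finite, and $\Aut(X_0)$, being a finite extension of a subgroup of the countable group $G_{Thompson}$, is countable. Finally, if $d=1$, then $X_0\not\simeq\G_m^2$ and, by the second case, $\Aut(X_0)$ has no loxodromic element, so $X_0$ is a normal affine surface with an effective $\G_m$-action, completed by a cycle, all of whose automorphisms have dynamical degree $1$; combining the classification of normal affine $\G_m$-surfaces with the fact that $\rho(\Aut(X_0))$ then sits inside a subgroup of $G_{Thompson}$ containing no hyperbolic element yields that, up to finite index, $\Aut(X_0)\simeq\G_m$ or $\Aut(X_0)\simeq\G_m\times A$ with $A$ solvable.

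The step I expect to be the main obstacle is this last one: extracting the precise solvable structure of $A$ when $d=1$ requires the explicit list of normal affine $\G_m$-surfaces together with a description of which subgroups of the Thompson group can occur as $\rho(\Aut(X_0))$ in the absence of a loxodromic automorphism, rather than any routine calculation; a secondary technical point to pin down carefully is the vanishing $H^0(X,T_X(-\partial_X X_0))=0$ (or the equivalent statement that a connected group fixing the ample boundary cycle pointwise is trivial), used to conclude that $K^0$ is a torus.
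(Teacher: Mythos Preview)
Your overall architecture matches the paper's: action on $\cC$, identification of the kernel with a torus of rank $\le 2$, then the three-way split. Two points deserve comment.

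For case~(2) you give a cleaner argument than the paper. The paper invokes a result of Delzant (\S 7 of \cite{delzantKahlerGroupsReal2012}) to the effect that if the kernel is infinite then $K^0$ has an open orbit, forcing $d=2$. Your route---$K^0$ is normal (as the identity component of a kernel), so a loxodromic $f$ normalizes $K^0$, hence descends to the GIT quotient $X_0/\!\!/K^0$, hence has integer $\lambda_1$ by Proposition~\ref{PropDynamicalDegreeFibrationOverCurve}, contradicting the cycle case---is more elementary and stays entirely within the toolkit of the memoir.

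For case~(3), however, you stop short, and the paper has a direct argument you are missing. Rather than invoking a classification of $\G_m$-surfaces or a structural fact about Thompson subgroups without hyperbolic elements, the paper simply runs the same quotient construction: set $C=X_0/\!\!/K^0$, pass to the homomorphism $\Aut(X_0)\to\Aut(C)$, and then split on the genus of the completion $\overline C$. If $g(\overline C)\ge 2$ or $g(\overline C)=1$ then $\Aut(C)$ is finite (in the elliptic case because one must fix the nonempty boundary $\overline C\setminus C$), giving $\Aut(X_0)\simeq\G_m$ up to finite index; if $g(\overline C)=0$ then $\Aut(C)$ is the stabilizer in $\PGL_2$ of a finite set of points in $\P^1$, hence finite or solvable, giving $\Aut(X_0)\simeq\G_m$ or $\G_m\times A$ with $A$ solvable. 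This is the missing step you flagged; no external classification is needed.

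On the kernel analysis: the paper sidesteps your $H^0(X,T_X(-\partial_X X_0))=0$ claim by citing Gizatullin directly (the kernel is his group $\mathfrak U_1(C)$, whose connected component he proved to be a torus of rank $\le 2$). Your hands-on argument is plausible but the vanishing is not automatic---for a single rational boundary curve it would fail---so if you keep this route you should justify it carefully using that the boundary is a cycle with at least two components and nodes.
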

                          \begin{proof} Notice that the kernel $K$ contains automorphism of $X_0$ that extend to an
                            automorphism of any cyclic completion $X$ of $X_0$. Since $X$ is projective, $\Aut
                            (X)$ is an algebraic group and $K$ is an algebraic subgroup. We denote by $K^0$ its connected component
                            of the identity. In the notations of \cite{gizatullinQuasihomogeneousAffineSurfaces1971} we have that
                            $K = \mathfrak U_1 (C)$ where $C = \BD$. Gizatullin showed
                            that the connected component of $\mathfrak U_1 (C)$ must be an algebraic torus of
                            dimension $d \leq 2$ and $d = 2$ if and only if $X_0 \simeq \G_m^2$. 

                            Now, Suppose $X_0 \not \simeq \G_m^2$. By
                            \cite{gizatullinQuasihomogeneousAffineSurfaces1971} Proposition 1, $X_0$ is rational,
                            therefore we can suppose $\Aut(X_0) \subset \Bir (\P^2)$. If $\Aut(X_0)$ contains a
                            loxodromic element, then \S 7 of \cite{delzantKahlerGroupsReal2012} shows that if $K$ is
                            infinite, then $K^0$ must have an open orbit in $X_0$ and therefore $\dim K^0 = 2$ which
                            is a contradiction. Thus $\dim K^0 = 0$ and $K$ is finite, $\Aut(X_0)$ is countable as
                            $\mathbf T$ is.

                            Finally, if $\dim K^0 = 1$, i.e $K^0 = \G_m$ then $\Aut(X_0)$ does not contain loxodromic elements by the
                            same argument as in the previous paragraph. We show that $\Aut(X_0)$ must preserve a
                            fibration over a curve. Since $K^0 = \G_m$ is reductive, we can form the GIT (Geometric
                            Invariant Theory) quotient $C:= X_0 // K^0$, it will be an affine curve because $K^0$ has an
                            orbit of dimension 1. Concretly, the ring of invariants $K[X_0]^{K^0}$ is a finitely
                            generated $K$-algebra and 
                            \begin{equation}
                              X_0 // K^0 := \spec K[X_0]^{K^0}.
                              \label{<+label+>}
                            \end{equation}
                            Any automorphism $f \in \Aut (X_0)$ acts on $K^0$ by conjugation, but
                            the action is even given by an algebraic group automorphism because $\Aut(X_0)$ has the
                            structure of an ind-algebraic group.
                             But the group of algebraic group
                            automorphism of $K^0$ is $\{\pm 1\}$ because $K^0 \simeq \G_m$. Therefore, up to an
                            index 2 subgroup, every element of $\Aut(X_0)$ commutes with the elements of $K^0$ and
                            therefore preserves the fibration to $C$. We have thus a
                            group homomorphism $\Aut(X_0) \rightarrow \Aut (C)$. We can assume that $C$ is normal by
                            taking its normalisation and let $\overline C$ be the unique
                            projective curve that is a completion of $C$.

                            If $g(\overline C) \geq 2$ (this can only happen in positive characteristic since $X_0$ is
                            rational), then $\Aut(C)$ is finite because $\overline C$ is of general type
                            and up to finite index $\Aut (X_0) \simeq \G_m$.

                            If $g(\overline C) = 1$ (again only possible in positive characteristic), then $\Aut (C)$ is
                            the subgroup of $\Aut(\overline C)$ that preserves $\overline C \setminus C$. This is a
                            finite subgroup, so up to finite index we also get $\Aut (X_0) \simeq \G_m$.

                            Finally, if $g (\overline C) = 0$, then $\overline C \simeq \P^1$ and $\Aut(C)$ is the
                            subgroup of $\Aut (\P^1) \simeq \PGL_2 (\C)$ that preserves $\P^1 \setminus C$. If $\# (\P^1
                            \setminus C) \geq 3$, then $\Aut (C)$ is finite and we get $\Aut (X_0) \simeq \G_m$ up to
                            finite index. Otherwise, $\#\overline C \setminus C = 2$ and $C \simeq \G_m$ or $\# \overline
                            C \setminus C = 1$ and $C \simeq \A^1$. So, up to finite index $\Aut(X_0) \simeq \G_m
                            \rtimes A$ where $A$ is a subgroup of $\Aut (\G_m)$ or $\Aut (\A^1)$.
                          \end{proof}

                          \section{An example: The Markov surface}\label{SecMarkovSurface}
                          Let $\k$ be an algebraically closed field with $\car k \neq 2$, let $D \in \k$ and consider the affine
                          surface $M_D \subset \A^3_\k$ of equation
                          \begin{equation}
                            x^2 + y^2 + z^2 = xyz +D.
                            \label{<+label+>}
                          \end{equation}
                          For any $D \in \k$, $M_D$ satisfies $\QAlb (M_D) = 0$. This is because if we consider the Zariski closure
                          $\oM_D$ of $M_D$ in $\P^3$, it is defined by the equation
                          \begin{equation}
                            T \left( X^2 + Y^2 + Z^2  \right) = XYZ + DT^3.
                            \label{<+label+>}
                          \end{equation}
                          Thus, $\oM_D \setminus M_D$ is the triangle of lines defined by the equations $\left\{ T=0, XYZ = 0
                          \right\}$. One shows that each line has self intersection $-1$, thus the matrix of the intersection form
                          at infinity is given by
                          \begin{equation}
                            \begin{pmatrix}
                              -1 & 1 & 1 \\
                              1 & -1 & 1 \\
                              1 & 1 & -1
                            \end{pmatrix}
                            \label{<+label+>}
                          \end{equation}
                          which is nondegenerate. Therefore, $M_D$ does not admit nonconstant invertible regular functions and it
                          admits loxodromic automorphisms, thus by Theorem
                          \ref{thm:dynamical_degree_quasi_albanese} $M_D \neq \G_m^2$ and $\QAlb(M_D) = 0$.

                          If $D \neq 0,4$, this is a smooth affine surface. If $D = 0$, $M_0$ is a normal affine surface with a
                          singularity at $(0,0,0)$. If $D = 4$, $M_4$ is a normal affine surface with 4 singularities at the points
                          \begin{equation}
                            \left( \pm 2, \pm 2, \pm 2 \right)
                            \label{<+label+>}
                          \end{equation}
                          where two of the signs must be equal.

                          We see that each surface $M_D$ falls into the category of the surface with a cycle at
                          infinity. Thus by
                          Theorem \ref{ThmAutoGroupIntoThompsonGroup}, there is a group homomorphism $\Aut (M_D) \rightarrow
                          G_{Thompson}$ with finite kernel. In the case of the Markov surface there is a very explicit
                          description
                          of the automorphism group and its image in the Thompson group.

                          Let $\sigma_x$ be the involution of $M_D$ defined by
                          \begin{equation}
                            \sigma_x (yz -x, y ,z).
                            \label{<+label+>}
                          \end{equation}
                          If we fix the coordinates $y,z$, then the equation defining $\cM_D$ becomes a polynomial equation of
                          degree 2 with respect to $x$, $\sigma_x$ permutes the 2 roots of this equation. We can define
                          $\sigma_y, \sigma_z$ in the same way. Then, $\sigma_x, \sigma_y, \sigma_z$ generate a free group
                          isomorphic to $(\Z /2 \Z) * (\Z /2 \Z) * (\Z /2 \Z)$ which is of finite index in $\Aut (\cM_D)$
                          (see \cite{el-hutiCubicSurfacesMarkov1974}).

                          Now, let $\Gamma^* \subset \PGL_2 (\Z)$ be the subgroup of matrices congruent to $\id \mod 2$.
                          Then, $\Gamma^*$ is the group generated by
                          \begin{equation}
                            M_x = \begin{pmatrix}-1 & -2 \\ 0 & 1 \end{pmatrix}, \quad M_y = \begin{pmatrix}
                              1 & 0 \\ 0 & -1
                            \end{pmatrix},
                            \quad M_z = \begin{pmatrix}
                              1 & 0 \\ -2 & -1
                            \end{pmatrix}                            \label{EqMatrixOfGenerators}
                          \end{equation}
                          and this three matrices have no non trivial relations apart from $M_x^2 = M_y^2 = M_z^2 =
                          \id$. Thus, we have an isomorphism $\langle \sigma_x, \sigma_y, \sigma_z \rangle \simeq \Gamma^*$ by
                          sending $s_u$ to $M_u$ for $u = x,y,z$. Now, $\Gamma^*$ acts by isometry on the Poincaré half
                          plane $\HH$ and also on its boundary $\partial \HH = \R \cup \left\{ \infty \right\} \simeq
                          \CS^1$.

                          \begin{thm}[\cite{cantatBersHenonPainleve2009}]\label{ThmAutoMarkov}
                            Up to finite index $Aut(M_D) \simeq \langle \sigma_x, \sigma_y, \sigma_z \rangle \simeq
                            \Gamma^*$. The group homomorphism $\langle \sigma_x, \sigma_y, \sigma_z \rangle \rightarrow
                            \mathbf T$ is injective and there
                            exists a homeomorphism $\phi : \cC \rightarrow [-\infty, + \infty]/ (- \infty = + \infty)
                            \simeq \partial \HH$ that conjugates the action of $\langle \sigma_x, \sigma_y,
                            \sigma_z \rangle$ on $\cC$ to the action of $\Gamma^*$ on $\partial \HH$.
                          \end{thm}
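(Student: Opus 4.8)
The plan is to prove Theorem~\ref{ThmAutoMarkov} by feeding the explicit geometry of the Markov surface into Theorem~\ref{ThmAutomorphismThompsonGroup} and the Farey parametrization of the circle $\cC$ from \S\ref{SecMarkovSurface}. The first assertion --- that $\langle\sigma_x,\sigma_y,\sigma_z\rangle$ has finite index in $\Aut(M_D)$ and is the free product of three copies of $\Z/2\Z$ --- is El-Huti's theorem \cite{el-hutiCubicSurfacesMarkov1974}, already quoted; combined with the classical fact that $\Gamma^{*}=\PGL_2(\Z)[2]$ is freely generated by the three involutions $M_x,M_y,M_z$ of \eqref{EqMatrixOfGenerators} (the ``no nontrivial relations'' statement recorded just above), the assignment $\sigma_u\mapsto M_u$ extends to an isomorphism $\langle\sigma_x,\sigma_y,\sigma_z\rangle\xrightarrow{\sim}\Gamma^{*}$. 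It then remains to show that this isomorphism is realised, via a homeomorphism $\phi:\cC\to\partial\HH$, by the action of $\langle\sigma_x,\sigma_y,\sigma_z\rangle$ on $\cC$.

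Next I would fix the completion $X=\oM_D\subset\P^3$, whose boundary $X\setminus M_D$ is the triangle $\{T=XYZ=0\}$, a cycle of three $(-1)$-curves $L_X,L_Y,L_Z$; this is a minimal completion in Gizatullin's sense, so $\cC=\cC_X=\bigcup_i[\ord_{L_i},\ord_{L_{i+1}}]$ is well defined and carries the Farey homeomorphism $\phi:\cC\to\partial\HH=\P^1(\R)$ built after Proposition~\ref{PropFareyIntervalSkewnessComposedWithMobius}. The core step is to compute, for each $u\in\{x,y,z\}$, the action of $\sigma_u$ on $\cC$. Starting from the homogeneous formula $\sigma_x[X:Y:Z:T]=[YZ-XT:YT:ZT:T^2]$ and its analogues, Proposition~\ref{PropCycleCaseIndeterminacyPoints} shows that the indeterminacy points of $\sigma_u$ on $\oM_D$ lie among the three vertices of the triangle; after finitely many blow-ups of vertices one reaches a completion where $\sigma_u$ is regular near the relevant points and, at each vertex $E\cap F$ it meets, is pseudomonomial, $(x,y)\mapsto(x^{a}y^{b}\phi,x^{c}y^{d}\psi)$ with $ad-bc=\pm1$ (no curve contained in $M_D$ can be mapped into the boundary). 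By Lemma~\ref{LemmeActionSurMonomiale}, $(\sigma_u)_{*}v_{s,t}=v_{as+bt,\,cs+dt}$, so $\sigma_u$ acts on the Farey arc $[\ord_E,\ord_F]$ through the matrix $\left(\begin{smallmatrix}a&b\\c&d\end{smallmatrix}\right)\in\PGL_2(\Z)$; patching the three arcs of $\cC$ yields the element of $G_{Thompson}$ representing $\sigma_u$. One then checks that, for the normalization of $\phi$ sending $\ord_{L_X},\ord_{L_Y},\ord_{L_Z}$ to the appropriate triple among $\{0,-1,\infty\}$, this patched element is exactly the M\"obius involution $M_u$ acting on $\partial\HH$ (for instance $M_x$ fixes $-1$ and $\infty$, $M_y$ fixes $0$ and $\infty$, $M_z$ fixes $0$ and $-1$).

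I expect this last verification --- tracking which arcs each $\sigma_u$ contracts or swaps, pinning down the Farey matrices at every vertex through the sequence of blow-ups at infinity, and fixing the normalization of $\phi$ so that the three matrices glue into $M_u$ --- to be the main obstacle; it is a finite but delicate computation. Once it is done, injectivity of $\langle\sigma_x,\sigma_y,\sigma_z\rangle\to G_{Thompson}$ is immediate: by the previous step this homomorphism is conjugate, via $\phi$, to the composite $\langle\sigma_x,\sigma_y,\sigma_z\rangle\xrightarrow{\sim}\Gamma^{*}\hookrightarrow\PGL_2(\Z)\hookrightarrow G_{Thompson}$, and $\PGL_2(\R)$ acts faithfully on $\P^1(\R)=\partial\HH$ by M\"obius transformations (a transformation fixing three points is the identity), so $\Gamma^{*}$ does too. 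The conjugating map $\phi$ is then precisely the homeomorphism required by the theorem, and the construction recovers the statement of \cite{cantatBersHenonPainleve2009}.
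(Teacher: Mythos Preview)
Your proposal is correct and follows essentially the same approach as the paper: fix the completion $\oM_D\subset\P^3$, use the Farey parametrization after Proposition~\ref{PropFareyIntervalSkewnessComposedWithMobius} to define $\phi$ (the paper pins down the normalization $\phi(\ord_{E_x})=0$, $\phi(\ord_{E_y})=-1$, $\phi(\ord_{E_z})=\infty$), and compute the action of each $\sigma_u$ on $\cC$ via Lemma~\ref{LemmeActionSurMonomiale} after resolving the single indeterminacy at the opposite vertex. The paper carries out the explicit computation only for $\sigma_x$ (one blow-up of $E_y\cap E_z$ suffices, yielding $S_x(u,v)=(u,uv)$ locally and hence $t\mapsto -t-2=M_x(t)$ on the relevant arc), declaring the other two analogous; this is precisely the ``finite but delicate computation'' you anticipate.
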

                          \begin{proof}
                            Take the completion $X = \overline M_D \subset \P^3$. We write $E_x$ for the prime divisor at
                            infinity $E_x = \left\{ X = T = 0 \right\}$ and define $E_y, E_z$ similarly. In $\CS^1 \simeq
                            \partial \HH$ define the three points $j_x, j_y, j_z = 0, -1, \infty$. Following the
                            construction after Proposition \ref{PropFareyIntervalSkewnessComposedWithMobius}, we define the
                            homeomorphism $\phi : \cC \rightarrow \CS^1$ as follows
                            \begin{align}
                              \psi ([\ord_{E_x}, v_{E_z})) &= \alpha_{E_x} \label{EqFareySurXZ} \\
                              \psi ([\ord_{E_y}, v_{E_x})) &= \begin{pmatrix}
                                0 & -1 \\ 1 & 1
                              \end{pmatrix} \circ \alpha_{E_y} \label{EqFareySurYX} \\
                              \psi \left( [\ord_{E_z}, v_{E_y}) \right) &= \begin{pmatrix}
                                -1 & -1 \\
                                1 & 0
                              \end{pmatrix}.
                              \label{EqFareySurZY}
                            \end{align}
                            For $u = x,y,z$, we have $\phi (\ord_{E_u}) = j_u$ and the segments
                            \begin{equation}
                              [\ord_{E_x}, \ord_{E_z}], \quad [\ord_{E_z}, \ord_{E_y}], \quad [\ord_{E_y}, \ord_{E_x}]
                            \end{equation}
                            are sent respectively to the Farey intervals
                            \begin{equation}
                              [0, + \infty] = [j_x, j_z], \quad [-\infty, -1] = [j_z, j_y], \quad [-1, 0] = [j_y, j_x].
                            \end{equation}
                            We show that $\phi$ is the desired homeomorphism by following the proof of Theorem
                            \ref{ThmAutomorphismThompsonGroup}. We will do the explicit computations only for
                            $\sigma_x$ as they are analogous for $\sigma_y$ and $\sigma_z$.

                            The only indeterminacy point of $\sigma_x $ on $X$ is the satellite point $E_y \cap E_z$.
                            Define $\tilde F_x$ to be the exceptional divisor above $E_y \cap E_z$. Let $Y$ be the
                            completion above $X$ obtained by blowing up $E_y \cap E_z$. For $u = x,y,z$, we write $F_u$ for
                            the strict transform of $E_u$ in $Y$. The lift $S_x: Y \rightarrow X$ of $\sigma_x$ is regular
                            and we have
                            \begin{equation}
                              S_x (F_x)  = E_y \cap E_z, \quad   S_x (\tilde F_x) = E_x, \quad S_x (F_y)  = E_y, \quad  S_x
                              (F_z) = E_z.
                              \label{<+label+>}
                            \end{equation}
                            We have $[\ord_{E_z}, \ord_{E_y}] = [\ord_{F_z}, \ord_{\tilde F_x}] \cup [\ord_{\tilde F_x},
                            \ord_{E_y}]$ and under the homeomorphism $\phi$ we get $\phi (\ord_{\tilde F_x}) = -2$ and
                            \begin{align}
                              \phi \left( \left[ \ord_{F_z}, \ord_{\tilde F_x} \right] \right) &= \left[ -\infty, -2 \right]
                              \\
                              \phi \left( \left[ \ord_{\tilde F_x}, \ord_{F_y}\right] \right) &= \left[ -2, -1 \right].
                              \label{<+label+>}
                            \end{align}
                            The satellite point $F_x \cap F_z$ is sent to $E_y \cap E_z$. If $(u,v)$ are local coordinates
                            associated to $(F_x, F_z)$ and $(z,w)$ local coordinates associated to $(E_y, E_z)$, then up to
                            multiplication by invertible regular functions we have
                            \begin{equation}
                              S_x (u,v) = \left( u , uv \right)
                              \label{<+label+>}
                            \end{equation}
                            and
                            \begin{equation}
                              (S_x)_* (v_{1,t}) = v_{1, 1+t}.
                            \end{equation}
                            In particular, $\alpha_{E_z}((S_x)_\bullet v_{1,t}) = \frac{1}{1+t}$.
                            Thus, using \eqref{EqFareySurXZ} and \eqref{EqFareySurZY}, $\sigma_x$ sends the interval $[0, +
                            \infty] = \phi ([\ord_{E_x}, \ord_{E_z}])$ to the interval $[-\infty, -2] = \phi ([\ord_{E_z},
                            \ord_{\tilde F_z}])$ via the Möbius map
                            \begin{equation}
                              t = \alpha_{E_x} (v_{1,t}) \mapsto -t -2 = M_x (t).
                              \label{<+label+>}
                            \end{equation}
                            One checks similarly using \eqref{EqFareySurXZ}, \eqref{EqFareySurYX} and \eqref{EqFareySurZY}
                            that, under the homeomorphism $\phi : \cC_X \simeq \CS^1$, the other intervals are sent to Farey
                            intervals by $\sigma_x$ via the Möbius map $M_x$.
                          \end{proof}
                          This sheds a new point of view on \S 2.6 of \cite{cantatBersHenonPainleve2009}. Every loxodromic
                          isometry of $g \in \Gamma^*$ admits two fixed point $\alpha (g), \omega(g) \in \partial \HH$ with
                          $\alpha(g)$ repulsive and $\omega(g)$ attracting for $g$ (they are called hyperbolic isometries in
                          \cite{cantatBersHenonPainleve2009}). Under the homeomorphism $\phi$
                          constructed in Theorem \ref{ThmAutoMarkov}, if we still denote by $g$ the automorphism induced on
                          $M_D$, then $\alpha (g)$ corresponds to the eigenvaluation $v_-$ and $\omega(g)$ to the
                          eigenvaluation $v_+$. In particular, we can recover Proposition 2.3 of
                          \cite{cantatBersHenonPainleve2009} using Theorem \ref{ThmAutoMarkov} and Lemma
                          \ref{LemmeConditionRegularité}.

                          \section{A second example, the complement of a nodal cubic}\label{sec:complement-nodal-cubic}
                          Let $C \subset \P^2_\C$ be a cubic with a nodal singularity and write $X_0 = \P^2 \setminus C$.
                          This is a smooth affine surface satisfying $\QAlb(X_0) = 0$. We are going to show that the
                          automorphism group of $X_0$ is virtually cyclic. This was proven in
                          \cite{yoshiharaProjectivePlaneCurves1985}. 

                          We have that $C^2 = 9$ and $C$ is in
                          fact rational. Let $X$ be the blow-up of $\P^2$ at the node and let $E$ be the exceptional
                          divisor. We write $C'$ for the strict transform of $C$ in $X$. We have 
                          \begin{equation}
                            Z_{\ord_{C'}} = \frac{1}{9} \pi^* C = \frac{1}{9} C' + \frac{2}{9} E, \quad Z_{\ord_E} =
                            \frac{2}{9} C' - \frac{5}{9} E.
                            \label{<+label+>}
                          \end{equation}
                          Notice that $Z_{\ord_E}^2 < 0$ so that $Z_{\ord_E}$ is not nef. Let $\left\{ p_1, p_2 \right\}
                          := C' \cap E$. We look at the monomial valuations $v_{1,s}$ centered at $p_1$, their
                          divisors are given by 
                          \begin{equation}
                            Z_s = Z_{\ord_{C'}} + s Z_{\ord_E} + Z_{v_{1,s},X,p}.
                            \label{<+label+>}
                          \end{equation}
                          And we have 
                          \begin{equation}
                            Z_s^2 = \frac{1}{9} + \frac{4s}{9} - \frac{5}{9} s^2 -s.
                            \label{<+label+>}
                          \end{equation}
                          This polynomial has only one positive root which is 
                          \begin{equation}
                            s_0 = \frac{3 \sqrt 5 - 5}{10}.
                            \label{<+label+>}
                          \end{equation}
                          The same is true for the other intersection point of $C' \cap E$. This means that the circle
                          at infinity $\cC$ is the union of two segments $I_+ \cup I_-$ such that $\ord_{C'} \in I_+$
                          and $I_+$ consists of nef valuations, $\ord_E \in I_-$ and $I_-$ consists of non-nef
                          valuations. Furthermore the intersection of these two segments is $I_+ \cap I_- = \left\{
                          v_1, v_2 \right\}$ where $v_i$ is the monomial valuation centered at $p_i$ with weight
                          $1, s_0$.

                          Now any automorphism $f \in \Aut(X_0)$ must preserve the space of nef valuations and non-nef
                          valuations because it preserves the intersection product. This means that $I_+ \cap I_- =
                          \left\{ v_1, v_2 \right\}$ must be invariant. Thus, any loxodromic automorphism of $X_0$ has
                          eigenvaluations $v_1, v_2$ and they all have the same axis. Looking at the classification of
                          birational maps of surfaces in \cite{cantatDynamiqueGroupeDautomorphismes2001}, we have that
                          any automorphism $f \in \Aut(X_0)$ is either loxodromic or elliptic. Indeed, the only remaning
                          case are parabolic transformations but they have only one invariant nef Weil class $\theta \in
                          \wNS $ which in fact is Cartier (it is the class of a fibration to a curve) and satisfies
                          $\theta^2 = 0$ so they cannot appear here because we cannot have $\theta = Z_{v_1}$ or
                          $Z_{v_2}$. Furthermore, define $\Aut^+ (X_0) \subset \Aut(X_0)$ which is the stabiliser
                          of $v_1$. For any $g \in \Aut^+(X_0)$ there
                          exists a positive number $\lambda (g) > 0$ such that 
                          \begin{equation}
                            g^* Z_{v_1} = \lambda (g) Z_{v_1}.
                            \label{<+label+>}
                          \end{equation}
                          It follows that either $\lambda (g)$ or $\lambda (g)^{-1}$ is the first dynamical degree of
                          $g$ and $\lambda (g) = 1$ if and only if $g$ is elliptic.

                          \begin{lemme}\label{lemme:complement-nodal-cubic-has-loxodromic-auto}
                            The group $\Aut^+(X_0)$ contains a loxodromic automorphism $f$.
                          \end{lemme}
                          \begin{proof}
                            Let $X$ be the blow-up of $\P^2$ at the node of $C$, then
                            $\partial_{X} X_0 = C' \cup E$ where $E$ is the exceptional divisor, $(C')^2 = 5$ and
                            $C' \cap E = \left\{ p_1, p_2 \right\}$. We do the following procedure: We blow-up $p_1$ and
                            let $E_1$ be the exceptional divisor and $X_1$ the new completion. We then do the following
                            algorithm: if the strict transform $C'$ of $C$ in $X_k$ has self-intersection we stop,
                            otherwise we blow up $X_k$ at $C' \cap E_k$ and call $X_{k+1}$ the new completion with
                            exceptional divisor $E_{k+1}$. This procedure stops after 6 blow-ups counting the blow of
                            $p_1$. So in $X_6$, the boundary $\partial_{X_6} X_0$ is a cycle of rational curves
                            $(C', E_6, E_5, E_4, E_3,E_2,E_1, E)$ with self-intersections $(-1,-1,-2,-2,-2,-2,-2,-2)$.
                            Then, we contract $C' \cup E \cup E_1 \cdots E_5 $ starting with the blow-down of $C'$. This
                              procedure yields an automorphism $f$ of $X_0$ which is not an automorphism of $\P^2$. We
                              show that $f$ is loxodromic. To do so, we have the following statement on the dynamics of
                              $f$ over $X$: 
                              \begin{enumerate}
                                \item The only indeterminacy point of $f$ is $p_1$. 
                                \item $f$ contracts $C'$ and $E$ to $p_2$.
                                \item We have 
                                  \begin{equation}
                                    f_X^* C' = 6 C' + 5E, \quad f_X^* E = C'+E.
                                    \label{<+label+>}
                                  \end{equation}
                              \end{enumerate}
                              So it is clear that $v_2$ is fixed by $f$ and therefore $v_1$ as well. At the point
                              $p_2$ the germ of $f$ is therefore given by 
                              \begin{equation}
                                f(x,y) = \left( x^6 y^5, xy \right)
                                \label{<+label+>}
                              \end{equation}
                              where $(x,y)$ are associated to $(C', E)$. In particular, the first dynamical degree of
                              $f$ is the spectral radius of the matrix $\begin{pmatrix}
                                6 & 5 \\ 1 & 1
                              \end{pmatrix}$ which is equal to $\frac{7 + 3 \sqrt 5}{2} > 1$. And in fact the direct
                              computation shows that 
                              \begin{equation}
                                f_* v_{1, s_0} = \frac{7 + 3 \sqrt 5}{2} v_{1,s_0}
                                \label{<+label+>}
                              \end{equation}
                              where $v_2 = v_{1,s_0}$.
                          \end{proof}
                          Thus we have a group homomorphism 
                          \begin{equation}
                            g \in \Aut^+ (X_0) \mapsto \log \lambda (g) \in \R.
                            \label{<+label+>}
                          \end{equation}
                          And the kernel consists only of elliptic elements. By Lemma 7.3 of
                          \cite{urechSubgroupsEllipticElements2021}, since $X_0 \neq \G_m^2$ we must have that the
                          kernel is finite. In particular, if $f\in \Aut^+(X_0)$ is the loxodromic automorphism from
                          Lemma \ref{lemme:complement-nodal-cubic-has-loxodromic-auto} then the subgroup
                          generated by $f$ is a finite index subgroup of $\Aut^+ (X_0)$.

                          \section{Berkovich skeleton and integral affine structure}\label{sec:}
                          In this section, we develop the discussion on the circle at infinity and the Thompson group
                          using Berkovich theory. The goal here is to help the reader that knows about Berkovich theory
                          to reconnect what is appearing in this memoir. We won't provide proofs. 

                          Let $X$ be a completion of $X_0$ and let $\Gamma_X$ be the dual graph of $\BD$. We assume that
                          $\BD$ is with simple normal crossings and furthermore that for any two irreducible components
                          $E,F$ that $E \cap F$ is irreducible or empty. This is always possible after a finite number
                          of blowups. If we realise
                          $\Gamma_X$ as a simplex in $\R^d$ where $d$ is the number of irreducible components of
                          $\BD := E_1 \bigcup \cdots \bigcup E_d$, then write $[E_i]$ for the point associated to
                          $E_i$ in $\R^d$. Then, $\Gamma_X$ is the simplex: 
                          \begin{equation}
                            \Gamma_X = \left\{ [E_1], \cdots, [E_d] \right\} \cup \left\{ \alpha_i [E_i] + \beta_j
                            [E_j] : i \neq j, E_i \cap E_j \neq \emptyset, 0 < \alpha_i, \beta_j, \alpha_i + \beta_j = 1
                          \right\}.
                            \label{<+label+>}
                          \end{equation}
                          The cone over $\Gamma_X$ has then two types of points, either of the form $\alpha [E_i]$ for
                          some $i =1, \dots, d$ with $\alpha > 0$ or $\alpha [E_i] + \beta [E_j]$ with $\alpha, \beta >
                          0$, we denote it by $\left| \Gamma_X \right|$. There exists a continuous map $i_X : \left|
                          \Gamma_X \right| \hookrightarrow \cV_\infty^{\qm}$ which is a homeomorphism on its image given by 
                          \begin{equation}
                            \alpha [E] \mapsto \alpha \ord_E, \quad \alpha [E] + \beta [F] \mapsto v_{\alpha, \beta}
                            \label{<+label+>}
                          \end{equation}
                          where $v_{\alpha, \beta}$ is the monomial valuation centered at $E \cap F$ with weight
                          $\alpha$ and $\beta$. One can check that on the image of $\left| \Gamma_X \right|$ the weak
                          and the strong topology coincide. We have an integral affine structure on $\left| \Gamma_X
                          \right|$ given by its realisation as the cone over the simplex $\Gamma_X$. It is the unique
                          affine structure on $\left| \Gamma_X \right|$ such that the evaluation maps $v \mapsto
                          v(P)$ are piecewise affine for every $P \in K[X_0]$. For more details on integral affine
                          structures and Berkovich spaces, see \cite{kontsevichAffineStructuresNonArchimedean2006}.

                          Now, if $\pi : Y \rightarrow X$ is a morphism of completions, there is a canonical map induced
                          by $\pi$ on the dual complexes which yields $\left| \pi \right|: \left| \Gamma_Y \right|
                          \rightarrow \left| \Gamma_X \right|$, it is continuous and in fact it is piecewise affine. If
                          $X$ and $Y$ are cyclic completions, this map is in fact an isomorphism of integral affine
                          structures. Then, $\left| \Gamma_X \right|$ is a union of upper-half quadrants in $\R^2$
                          patched along their boundaries, it is thus homeomorphic to $\R^2 \setminus \left\{ 0
                          \right\}$ and the projectivisation $\P (\left| \Gamma_X \right|) \subset \hat \cV_\infty$ is thus isomorphic to the
                          circle $\CS^1$ with the integral affine structure given by $\CS^1 \simeq \R / \Z$.

                          Suppose $X_0$ is a normal affine surface completable by a cycle of rational curves, then by
                          the work of Gizatullin we know that every minimal completion of $X_0$ is a cyclic completion.
                          Thus, for any completion $X$, the image of $\P (\left| \Gamma_X \right|)$ in $\hat \cV_\infty$
                          contains a circle $\cC$ which is canonical and can be defined as the image of $\P \left(
                            \left| \Gamma_Z \right|
                          \right)$ for any cyclic completion $Z$ of $X_0$. The subset $\cC$ is called the
                          \emph{skeleton} of $\hat \cV_\infty$. Now, we have seen that any automorphism $f \in
                          \Aut(X_0)$ must preserve $\cC$ and thus induces a homeomorphism of $\CS^1 = \R / \Z$ which
                          preserves the integral affine structure, these are exactly the piecewise affine homeomorphism of
                          $\CS^1$ which are exactly the elements of the Thompson group. We therefore obtain a group
                          homomorphism $\Aut(X_0) \rightarrow \mathbf T$.

                          \chapter{Examples}\label{ChapterExamples}
                          \section{An affine surface with a lot of nonproper endomorphisms}
                          \subsection{A family of rational affine surface with no loxodromic automorphisms}
                          In \cite{duboulozCompletionsNormalAffine2004} Example 2.23, Dubouloz gives an infinite family of
                          examples of rational complex affine surfaces that admit a minimal completion for which the dual
                          graph of the curve at infinity is neither a zigzag nor a cycle. This means by Theorem
                          \ref{ThmAutomorphismCaseDynamicalCompactifications} that these surfaces do not admit loxodromic
                          automorphism. The result is the following: Consider the affine surface $S_0 \subset \A^3_\C$ given
                          by the equation
                          \begin{equation}
                            x^n y = P(z)
                            \label{<+label+>}
                          \end{equation}
                          where $n \geq 2$ and $P$ is a degree $r$ polynomial with $r \geq 2$ distinct roots. Then, $S_0$
                          admits a minimal completion for which the dual graph at infinity is given by
                          \begin{equation}
                            \begin{tikzpicture}
                              \draw (-1,0) node{$\bullet$};
                              \draw (0,0) node{$\bullet$};
                              \draw (1,0) node{$\bullet$};
                              \draw (2,0.5) node{$\bullet$};
                              \draw (2,-0.5) node{$\bullet$};
                              \draw (3,0.5) node{$\square$};
                              \draw (3,-0.5) node{$\square$};
                              \draw (-1,0) -- (0,0);
                              \draw (0,0) -- (1,0) -- (2, 0.5) -- (2.9, 0.5);
                              \draw (1,0) -- (2, -0.5) -- (2.9, -0.5);
                            \end{tikzpicture}
                            \label{<+label+>}
                          \end{equation}
                          where $\square$ is a zigzag of $(-2)$-curves of length $n-3$ if $n \geq 3$ and $\square = \emptyset$
                          otherwise.

                          \subsection{A subfamily with a lot of endomorphisms}
                          In \cite{duboulozJacobianConjectureFails2018} \S 5A, Dubouloz and Palka study the following family
                          of surfaces
                          \begin{equation}
                            S(n) := \left\{ x^n y = z^n - 1 \right\} \quad (n \geq 2).
                            \label{<+label+>}
                          \end{equation}
                          They fall inside the previous category of affine surfaces; $S(n)$ admits a $\Z / n \Z$ action given
                          by
                          \begin{equation}
                            \forall a \in \Z / n \Z, \quad a \cdot (x,y,z) = (\epsilon^a x, y, \epsilon^{-a} z)
                            \label{<+label+>}
                          \end{equation}
                          where $\epsilon$ is a primitive $n$-th root of unity. The quotient $S(n) / (\Z / n \Z)$ is an affine
                          surface $S ' (n)$ of equation
                          \begin{equation}
                            S' (n) = \left\{ u(1+uv) = w^n \right\}
                            \label{<+label+>}
                          \end{equation}
                          and the quotient map $\pi : S(n) \rightarrow S' (n)$ is given by
                          \begin{equation}
                            \pi(x,y,z) = \left( x^n, y, xz \right).
                            \label{<+label+>}
                          \end{equation}
                          We have the surprising result
                          \begin{prop}[{\cite[Corollary 5.2]{duboulozJacobianConjectureFails2018}}]
                            For every $n \geq 2$ the affine surface $S ' (n)$ admits a strict open embedding into $S(n)$ given by
                            the following formula
                            \begin{equation}
                              j(u,v,w) = \left( w, v R_0 (-uv), R_1 (-uv) \right)
                              \label{<+label+>}
                            \end{equation}
                            where $R_1 (t) = (\epsilon-1)t + 1$ and $R_0 (t) = \frac{R_1(t)^n - 1}{t (t-1)} \in \C [t]$ where
                            $\epsilon \neq 1$ is an $n$-th root of unity. Different choices of $\epsilon$ lead to different
                            embeddings that are not conjugated by the $\Z / n \Z$ action over $S(n)$.
                          \end{prop}

                          Hence we can define the endomorphism $f : S(n) \rightarrow S(n)$ defined by $f= j \circ \pi$. This
                          yields a nonproper endomorphism of $S(n)$ of topological degree $n$. We can twist this example using
                          the following result

                          \begin{prop}[{\cite[Example 2.11 and Proposition 5.5]{duboulozJacobianConjectureFails2018}}]\label{PropAutoSN}
                            Let $n \geq 2$ be an integer. Every polynomial $P \in \C[x]$ yields an automorphism $g_P$ of
                            $S(n)$ defined by
                            \begin{equation}
                              g_P (x,y,z) := \left( x, y + \frac{(z + P(x) x^n)^n - z^n}{x^n}, z + P(x) x^n \right).
                              \label{<+label+>}
                            \end{equation}
                          \end{prop}

                          \subsection{The surface $S(2)$}
                          We treat in details the example of $S(2) = \left\{ x^2 y = z^2 -1 \right\}$. The $\Z/ 2 \Z$ action is
                          given by $(-1) \cdot (x,y,z) = (-x,y, -z)$. To find a minimal completion of $S(2)$ we follow the
                          computations of \cite{duboulozCompletionsNormalAffine2004} Example 2.23. Consider the birational morphism
                          \begin{equation}
                            \phi : (x,y,z) \in S(2) \mapsto (x,z) \in \A^2 \subset \P^1 \times \P^1.
                            \label{EqPhi}
                          \end{equation}
                          Define the following curves in $S(2), C_\epsilon = \left\{ x = 0, z = \epsilon \right\}$ where
                          $\epsilon = \pm 1, F_0 := \left\{ 0 \right\} \times \P^1, F_\infty = \left\{ \infty
                          \right\} \times \P^1$ and $L = \P^1 \times \left\{ \infty \right\}$, then

                          \begin{equation}
                            \phi : S(2) \setminus (C_1
                            \cup C_{-1}) \rightarrow \P^1 \times \P^1 \setminus (F_0 \cup F_\infty \cup L)
                          \end{equation}
                          is an isomorphism with inverse given by
                          \begin{equation}
                            {\phi}^{-1} (u,v) = \left( u, \frac{v^2 - 1}{u^2}, v \right).
                            \label{<+label+>}
                          \end{equation}
                          The curve $C_\epsilon$ is contracted by $\phi$ to $(0, \epsilon) \in F_0$. Let $F_\epsilon$ be the
                          exceptional divisor above $(0,\epsilon)$. The lift of $\phi$ contract $C_\epsilon$ to a free point
                          on $F_\epsilon$ that we call $p_\epsilon$. Let $X$ be the blow up of $p_1$ and $p_{-1}$, then
                          $\phi$ induces an open embedding $\phi : S(2) \hookrightarrow X$ as $C_\epsilon$ is sent by $\phi$
                          to the exceptional divisor above $p_\epsilon$. Hence, $X$ is a completion of $S(2)$ and the dual
                          graph of the boundary is

                          \begin{equation}
                            \begin{tikzpicture}
                              \draw (0,0) node{$\bullet$};
                              \draw (0,0) node[above]{0};
                              \draw (0,0) node[below]{$F_\infty$};
                              \draw (1,0) node{$\bullet$};
                              \draw (1,0) node[above]{0};
                              \draw (1,0) node[below]{$L$};
                              \draw (2,0) node{$\bullet$};
                              \draw (2,0) node[above]{-2};
                              \draw (2,0) node[below]{$F_0$};
                              \draw (3.5,1) node{$\bullet$};
                              \draw (3.5,1) node[above]{-2};
                              \draw (3.5,1) node[below]{$F_1$};
                              \draw (3.5,-1) node{$\bullet$};
                              \draw (3.5,-1) node[above]{-2};
                              \draw (3.5,-1) node[below]{$F_{-1}$};
                              \draw (0,0) -- (1,0) -- (2,0) -- (3.5,1);
                              \draw (2,0) -- (3.5, -1);
                            \end{tikzpicture}
                            \label{EqDualGraphX}
                          \end{equation}
                          Here, we still denote by $F_\infty, F_0, L$ their strict transform in $X$. In particular,
                          $F_\infty$ is not linearly equivalent to $F_0$ but to $F_0 + F_1 + F_{-1} + \phi (C_1) + \phi
                          (C_{-1})$ which is the strict transform of the "original" $F_0 = \left\{ 0 \right\} \times \P^1
                          \subset \P^1 \times \P^1$.

                          \begin{prop}\label{PropCenterEigenvaluationOnS2}
                            The surface $S(2)$ satisfies $\QAlb (S(2)) = 0$.
                            For every endomorphism $f$ of $S(2)$ such that $\lambda_1(f)^2 > \lambda_2(f)$, the eigenvaluation
                            $v_*$ of Theorem \ref{ThmExistenceEigenvaluationSurfaces} satisfies one of the three
                            following possibilities
                            \begin{enumerate}
                              \item $v_* = \ord_L$.
                              \item $c_X (v_*)$ is a free point on $L$.
                              \item $c_X (v_*) = F_\infty \cap L$.
                            \end{enumerate}
                          \end{prop}
                          \begin{proof}
                            Since $S(2)$ is birational to $\A^2$ we have that it is a rational surface, hence $\Pic^0 (S(2)) =
                            0$. It suffices to show that $S(2)$ does not admit any nonconstant invertible regular function. To
                            do so, we consider the intersection form on $\Div_\infty (X) =  \Z F_\infty \oplus \Z L \oplus \Z
                            F_0 \oplus \Z F_1 \oplus \Z F_{-1}$. It suffices to show that it is non
                            degenerate. The matrix of the intersection form in the basis $(F_\infty, L, F_0, F_1, F_{-1})$is given by
                            \begin{equation}
                              M = \begin{pmatrix}
                                0 & 1 & 0 & 0 & 0 \\
                                1 & 0 & 1 & 0 & 0 \\
                                0 & 1 & -2 & 1 & 1 \\
                                0 & 0 & 1 & -2 & 0 \\
                                0 & 0 & 1 & 0 & -2
                              \end{pmatrix}.
                              \label{<+label+>}
                            \end{equation}
                            It is invertible with inverse given by
                            \begin{equation}
                              {M}^{-1} =
                              \frac{1}{4} \begin{pmatrix}
                                -4 & 4 & 4 & 2 & 2 \\
                                4 & 0 & 0 & 0 & 0 \\
                                4 & 0 & -4 & -2 & -2 \\
                                2 & 0 & -2 & -3 & -1 \\
                                2 & 0 & -2 & -1 & -3
                              \end{pmatrix}.
                              \label{<+label+>}
                            \end{equation}
                            Hence the intersection form in nondegenerate on $\DivInf(X)$ which shows that $\QAlb
                            (S(2)) = 0$.

                            Therefore, we are in the condition of Theorem \ref{ThmExistenceEigenvaluationSurfaces}. Let $f$ be
                            a dominant endomorphism with $\lambda_1 (f)^2 > \lambda_2 (f)$. Let $v_*$ be its eigenvaluation.
                            Then, the invariant class $\theta_* \in \mathbb L^2 (S(2))$ is of the form
                            \begin{equation}
                              \theta_* = w + Z_{v_*}.
                              \label{<+label+>}
                            \end{equation}
                            with $w \in \Pic(X) \cap \DivInf(X)^\perp$.
                            Therefore, we must have $(Z_{v_*})^2 \geq 0$ as $w^2 \leq 0$ and since $\theta_*$ is nef, we have
                            $(\theta_*)^2 \leq (\theta_{*, Y})^2$ for every completion $Y$. This implies that
                            $(Z_{v_*, X})^2 \geq 0$.

                            Now, for all prime divisor $E$ of $X$ at infinity, $Z_{\ord_E}$ is given by a column of
                            ${M}^{-1}$. Indeed, $M$ is the matrix of the intersection form in the basis $(F_\infty, L, F_0, F_1,
                            F_{-1})$ and therefore ${M}^{-1}$ is the matrix of the intersection form in the dual basis
                            \begin{equation}
                              \left( Z_{\ord_{F_\infty}}, Z_{\ord_L}, Z_{\ord_{F_0}}, Z_{\ord_{F_1}}, Z_{\ord_{F_{-1}}}
                              \right).
                            \end{equation}
                            For example $Z_{\ord_L} = F_\infty$ and $Z_{\ord_{F_\infty}} = -F_\infty + L + F_0 +
                            \frac{1}{2} F_1 + \frac{1}{2} F_{-1}$. In particular, we have that $L$ is the unique prime divisor
                            at infinity of $X$ such that $Z_{\ord_L}^2 \geq 0$. This implies that $c_{X} (v_*)$ cannot be
                            a free point on a prime divisor $E \neq L$ otherwise we would get $(Z_{v_*, X})^2 < 0$. If
                            $c_X (v_*)$ is a satellite point, then it cannot
                            be $F_0 \cap F_\epsilon$ because in that case
                            \begin{equation}
                              Z_{v_*, X} = \lambda Z_{\ord_{F_0}} + \mu Z_{\ord_{F_\epsilon}}
                              \label{<+label+>}
                            \end{equation}
                            with $\lambda, \mu > 0$ and looking at the last three rows and columns of ${M}^{-1}$ we would get
                            $(Z_{v_*, X})^2 < 0$. For the same reason, we cannot have $c_X (v_*) = L \cap F_0$ because
                            otherwise
                            \begin{equation}
                            (Z_{v_*, X})^2 = \left( \lambda Z_{\ord_L} + \mu Z_{\ord_{F_0}} \right)^2
                            \label{<+label+>}
                          \end{equation}
                          and $(Z_{v_*, X})^2 = - \mu^2 < 0$, this is a contradiction.
                          Hence $c_X (v_*) \in L \setminus \left\{ L \cap F_0 \right\}$ or $c_X (v_*) = L$.
                        \end{proof}
                        {\textbf{First example of endomorphism.--}}
                        The endomorphism $f = j \circ \pi $ is equal to
                        \begin{equation}
                          f(x,y,z) = \left( xz, 4y, 2 z^2 -1 \right).
                          \label{<+label+>}
                        \end{equation}
                        Using the map $\phi : S(2) \rightarrow \A^2$ from Equation \eqref{EqPhi}, we have that $f$ is
                        conjugated to
                        \begin{equation}
                          \eta: (u,v) \in \A^2 \mapsto (uv, 2 v^2 - 1) \in \A^2
                          \label{<+label+>}
                        \end{equation}
                        Hence we get that $\lambda_1(f) = \lambda_2(f) = 2$. Consider the completion $X$ of $S(2)$ defined
                        above with dual graph given by Equation \eqref{EqDualGraphX}. We know that the eigenvaluation $v_*$
                        of $f$ must be centered on $L = \P^1 \times \left\{ \infty \right\}$. Therefore, we can study the
                        local dynamics of $f$ on $L$ using $\eta$. Let $[U:T], [V:W]$ be the homogeneous coordinates of $\P^1
                        \times \P^1$ such that $u = \frac{U}{T}$ and $v = \frac{V}{W}$. In homogeneous coordinates we get
                        \begin{equation}
                          \eta \left( [U:T] , [V:W] \right) = \left( [UV : TW ] , [2V^2 - W^2 : W^2] \right).
                          \label{<+label+>}
                        \end{equation}
                        Consider the affine coordinates $t = \frac{T}{U}$ and $w = \frac{W}{V}$. In particular, $t = 0$ is a
                        local equation of $F_\infty$ and $w = 0$ is a local equation of $L$. Then, in these coordinates we
                        have
                        \begin{equation}
                          \eta (t,w) = \left( tw, \frac{w^2}{2 - w^2} \right).
                          \label{<+label+>}
                        \end{equation}

                        Hence, we get that $(0,0) = F_\infty \cap L$ is a fixed point. From $\eta^* t = tw$ we infer $f_*
                        \ord_{F_\infty} = \ord_{F_\infty}$. But $\ord_{F_\infty}$ is \emph{not} the eigenvaluation by
                        Proposition \ref{PropCenterEigenvaluationOnS2}.
                        We have that $L$ is contracted to $(0,0)$ so $v_*$ must be centered at
                        $(0,0)$. We blow up $(0,0) = F_\infty \cap L$. Let $\tilde E$ be the exceptional divisor and let $s,
                        s'$ be local coordinates at $\tilde E \cap L$ (we still denote by $L$ its strict transform), the
                        blow up map is given by
                        \begin{equation}
                          \pi (s, s') = (s, s s');
                          \label{<+label+>}
                        \end{equation}
                        $s' = 0$ is a local equation of $L$ and $s = 0$ a local equation of $\tilde E$. At $\tilde E \cap
                        L$, we get
                        \begin{equation}
                          f(s, s') = \left( s^2 s', \frac{s' }{2 - s^2 (s')^2} \right).
                          \label{<+label+>}
                        \end{equation}
                        Thus, $f^* s = s^2 s'$ and therefore $f_* \ord_{\tilde E} = 2 \ord_{\tilde E}$. This implies that
                        $v_* = \ord_{\tilde E}$.

                        {\textbf{Second example.--}} Consider Proposition \ref{PropAutoSN} with $P = 1$ and $f$ the
                        endomorphism of $S(2)$ from the previous paragraph. Define $g = g_1 \circ f$, then
                        \begin{equation}
                          g(x,y,z) = \left( xz, x^2 z^2 + 4 z^2 + 4y -2, x^2 z^2 + 2 z^2 -1 \right)
                          \label{<+label+>}
                        \end{equation}
                        Let $\A^2 \subset \P^1 \times \P^1$ with affine coordinates $(u,v)$ and the birational morphism
                        $\phi : S(2) \rightarrow \A^2$. Then, $g$ is conjugated by $\phi$ to
                        \begin{equation}
                          \eta (u,v) = (uv, u^2 v^2 + 2 v^2 - 1).
                          \label{<+label+>}
                        \end{equation}
                        It is an endomorphism of $S(2)$ of topological degree 2. By Proposition
                        \ref{PropCenterEigenvaluationOnS2}, if $v_*$ is the eigenvaluation of $g$, then its center must belong to
                        $L$. Consider the affine coordinates $t = 1/u, w = 1/v$ centered at $F_\infty \cap L$.
                        In these coordinates we have
                        \begin{equation}
                          \eta (t,w) = (tw, \frac{t^2 w^2}{1 + 2 t^2 - t^2 w^2}).
                          \label{<+label+>}
                        \end{equation}
                        Hence, $F_\infty$ and $L$ are both contracted to $L \cap F_\infty = p$ so it must be equal
                        to $c_{X} (v_*)$. Blow up $p$ and let $E_1$ be the exceptional divisor. Blow up again the
                        intersection point of $E_1$ and the strict transform of $L$ and let $E_2$ be the exceptional
                        divisor. Then there exists local algebraic coordinates $(u,v)$ at $E_2 \cap L$ associated to
                        $(E_2, L)$ such that
                        \begin{equation}
                          g(u,v) = (u^3 v, \frac{1}{1 + u^2 (2 - u^4 v^2)})
                          \label{<+label+>}
                        \end{equation}
                        we see that the point $(0,1)$ in these coordinates is fixed. Consider the local analytic coordinates
                        at this point given by $(u, w) = (u, v-1)$. Then,
                        \begin{equation}
                          g(u,w) = \left( u^3( 1 + w), -2 u^2 + u^2\psi(u,w) \right)
                          \label{<+label+>}
                        \end{equation}
                        where $\psi(u,w)$ is a holomorphic function with $\psi(0,0) = 0$. We have $g^* u = u^3 (1 +w)$ this
                        implies that $\lambda_1(g) = 3$ and since $\lambda_1 (g) = 3 > \lambda_2 (g) = 2$ we have by
                        Lemma \ref{LemmaDivisorialConditionOnDegrees} that $v_*$ is not divisorial. Therefore, we get that
                        $v_*$ is infinitely singular and the center of $v_*$ is a free point on this completion.

                        \begin{rmq}\label{rmq:}
                          The fact that $v_*$ is infinitely singular also follows from \cite{favreEigenvaluations2007}
                          since the endomorphism $g$ is conjugated to a polynomial endomorphism of $\C^2$. Also the
                          computation of the first dynamical degree could have been done directly over $\P^1 \times
                          \P^1$ since the rational map $g : \P^1 \times \P^1 \dashrightarrow \P^1 \times \P^1$ is
                          algebraically stable.
                        \end{rmq}

                        \section[An elliptic curve at infinity with an action by translation]{An affine surface with an
                          elliptic curve at infinity with an action by
                        translation}\label{sec:example-elliptic-case}
                        We show that the Elliptic case in Theorem
                        \ref{ThmLocalFormOfMapAndDynamicalDegreAreQuadraticInteger} can happen. Start with a generic (2,2,2)
                        divisor $V$ in $\P^1 \times \P^1 \times \P^1$. This is a K3 surface. It is
                        given by one equation in the variables $(x,y,z)$ which is of degree 2 with respect to each $x,y,z$.
                        The projection on the plane $\P^1 \times \P^1$ induced by forgetting the coordinate $x$ yields a 2:1
                        cover $V \rightarrow \P^1 \times \P^1$. Indeed, we can rewrite the equation of $V$ as
                        \begin{equation}
                          a(y,z) x^2 + b(y,z) x + c (y,z) = 0
                          \label{<+label+>}
                        \end{equation}
                        where $b,c$ are rational functions in $y,z$ of degree 2 with respect to each variable. Let $\sigma_x$ be
                        the involution of $V$ that switches the folds of the cover. We can define similarly the involutions
                        $\sigma_y, \sigma_z$. The group generated by $\sigma_x, \sigma_y, \sigma_z$ is $\Z /2 \Z * \Z /2
                        \Z * \Z /2 \Z$ (see \cite{cantatDynamiqueGroupeDautomorphismes2001}). Now, we restrict our
                        attention to the subgroup generated by $\sigma_x, \sigma_y$. Consider the family of curves
                        defined by the hyperplane sections
                        $E_\alpha := V \cap \left\{ z = \alpha \right\}$. The involutions $\sigma_x, \sigma_y$ preserve
                        $E_\alpha$ for every $\alpha$. Thus, for a very general parameter $\alpha$ the subgroup of
                        $\Aut(E_\alpha)$ generated by ${\sigma_x}_{|E_\alpha}, {\sigma_y}_{|E_\alpha}$ is isomorphic to $\Z
                        /2 \Z * \Z / 2 \Z$.

                        \begin{prop}
                          For $\alpha \in \C$ very general, set $E:= E_\alpha, X_0 = \P^1 \times \P^1 \setminus E$ where we
                          have identified $\P^1 \times \P^1 \simeq \P^1 \times \P^1 \times \P^1 \bigcap \left\{z =
                          \alpha\right\}$. Then, $X_0$ is a smooth
                          affine surface and there exists two endomorphisms $f,g$ of $X_0$ such that
                          \begin{enumerate}
                            \item $\lambda_1(f) = \lambda_1 (g) = \lambda_2(f) = \lambda_2 (g) =2$.
                            \item $f_{|E} = \sigma_y$
                            \item $g_{|E} = \sigma_x$
                            \item let $h = g \circ f$, then $\lambda_1(h) = \lambda_2(h) = 4$ and $h_{|E}$ is a
                              translation by a non-torsion element.
                          \end{enumerate}
                        \end{prop}
                        \begin{proof}
                          We look again at Example
                          \ref{ExEllipticCurveAtInfinity}.  We write
                          the equation of $E$ in two different ways. There exists degree two rational fractions $P(x), Q(x)
                          \in \C(x), R(y), S(y) \in \C(y)$ such that the equation of $E$ is of the form
                          \begin{equation}
                            y^2 - P(x) y + Q(x) = 0
                            \label{<+label+>}
                          \end{equation}
                          and
                          \begin{equation}
                            x^2 - R(y) x + S(y) = 0
                            \label{<+label+>}
                          \end{equation}
                          Let $X = \P^1 \times \P^1$ and let $X_0 = X \setminus E$. Let $k : X_0 \rightarrow X_0$ be the
                          endomorphism from Example \ref{ExEllipticCurveAtInfinity}. That is $k$ commutes with $\pr_x$ and
                          acts as $z \mapsto z^2$ on each fiber with $0, \infty \in \P^1$ being the
                          intersection points of $E$ with the fiber. We let $f$ be the endomorphism of
                          $X_0$ that preserves the fibration $\pr_x$ and acts on each fiber $(\simeq \P^1)$ as $z \mapsto
                          1/z^2$ where $0$ and $\infty$ are the intersection points of the fiber and $E$. This defines an
                          endomorphism with $\lambda_1 = \lambda_2 = 2$ and $f_{|E}$ is an involution. Indeed, we have $f^2 =
                          k^2$, therefore the eigenvaluation of $f$ must be $\ord_E$ and $\lambda_1 (k^2) = 4$. The four points of $E$
                          where the discriminant with respect to $y$ vanishes are the four indeterminacy points of $f$ and
                          they are fixed points of $f_{|E}$. In coordinates
                          $(x,y), f$ is of the form
                          \begin{equation}
                            f(x,y) = \left( x, \frac{P(x) y^2 -2 (P(x)^2 - 2 Q(x))y +P(x) (P(x)^2 - Q(x))}{y^2 - 2 P(x) y +
                            (P(x)^2 - 2 Q(x))} \right).
                            \label{EqFormeEndormorphiseElliptique}
                          \end{equation}
                          It is clear that $f_{|E} = {\sigma_y}_{|E}$.

                          Now, if we exchange the role of the coordinates $x,y$, we obtain an endomorphism $g$ with an
                          expression similar to \eqref{EqFormeEndormorphiseElliptique} with $(R(y), S(y))$ instead of
                          $(P(x), Q(x))$. Set $h = g \circ f$, then $\lambda_1(h) = \lambda_2 (h) = 4$. Indeed, $\lambda_2
                          (h) = \lambda_2 (f) \lambda_2(g)$. And we have that $h_* Z_{\ord_E} = g_* f_* (Z_{\ord_E}) = 4
                          Z_{\ord_E} + w$ with $w \in \DivInf(X)^\perp$. Since $Z_{\ord_E} = \frac{1}{8} E$ is nef, $\ord_E$
                          must be the eigenvaluation of $h$ and $\lambda_1 (h) = 4$.

                          Now, let $\Lambda \subset \C$ be a lattice such that $E \simeq \C / \Lambda$.
                          An involution of $E$ lifts to a linear map over $\C$ of the form
                          \begin{equation}
                            z \mapsto -z + b
                            \label{EqForme1Involution}
                          \end{equation}
                          with $b \in \C$ or
                          \begin{equation}
                            z \mapsto z + b
                            \label{EqForme2Involution}
                          \end{equation}
                          with $2b \in \Lambda$. However, \eqref{EqForme2Involution} is impossible for
                          ${\sigma_x}_{|E}$ or ${\sigma_y}_{|E}$ because they admit fixed points. So, they are
                          both of the form \eqref{EqForme1Involution} and we get that ${\sigma_x}_{|E} \circ
                          {\sigma_y}_{|E}$ is a translation of infinite order because $\langle {\sigma_x}_{|E},
                          {\sigma_y}_{|E} \rangle \simeq \Z /2 \Z * \Z /2 \Z$.
                        \end{proof}

                        \begin{cor}
                          The endomorphism $h = g \circ f$ does not admit a preperiodic curve or preserve a fibration over a curve.
                        \end{cor}

                        \begin{proof}
                          Let $X = \P^1 \times \P^1$.
                          We have that $\hat \cV_\infty$ is a real tree rooted at $\ord_E$. Indeed, as in the case of
                          $\A^2 \subset \P^2$, $\cV_\infty$ is the union of the valuation trees $\cV_X(p; E)$ for $p \in E$ which are all rooted at
                          $\ord_E$. Now, for a divisorial valuation $w = \ord_F \in \hat \cV_\infty$, we have that
                          $F$ is a rational curve because it is obtained by blowing up a sequence of smooth points. This
                          implies that $\left\{ \ord_E \right\}$ is totally invariant under the action of $h = g \circ
                          f$ because no such $F$ can be sent dominantly to $E$ and that is true actually for any
                          dominant endomorphism of $X_0$ because $m^{-1}
                          (\cV_\infty) = \cV_\infty$ for any dominant endomorphism $m : X_0 \rightarrow X_0$. Now
                          suppose that $h$ admits an invariant curve $C$, up to replacing $h$ by an iterate we can
                          assume that all the branches at infinity of $C$ are fixed, call $\phi$ one of them and let
                          $v_{\phi}$ be the associated curve valuation and let $q \in E$ be its center. We have that $h_\bullet (\ord_E) = \ord_E$ and
                          $h_{\bullet} (v_\phi) = v_{\phi}$. Now, let $P_1, \dots, P_r \in \C[X_0]$ be generators of
                          $\C[X_0]$ as a $\C$-algebra, we have that 
                          \begin{equation}
                            D_h := \left\{ v \in \cV_\infty : h_* (v) \in \cV_\infty \right\} = \left\{ v : \min_i \left(
                                v(h^* (P_i)) < 0
                              \right)\right\}.
                            \label{<+label+>}
                          \end{equation}
                          Indeed, we have by Lemma \ref{LemmeCaracValuationCentreeAlInfini} that $h_*v \in \cV_\infty$
                          if and only if there exists $p \in \C[X_0]$ such that $h_* v (P) = v (h^* P) < 0$ and for any
                          $Q \in \C[X_0], v (Q) \geq \min_i v (P_i)$ by properties of valuations. Now, we have seen that
                          $\hat \cV_\infty$ is a real tree rooted at $\ord_E$. We claim that
                          $D_h$ is a subtree of $\hat \cV_\infty$. Indeed, let $P \in \C[X_0]$ and take $p \in E$, then
                          at $p$ the rational function $P$ is locally of the form 
                          \begin{equation}
                            P = z^{-\alpha} \phi_1 \cdots \phi_r
                            \label{<+label+>}
                          \end{equation}
                          where $z$ is a local equation of $E, \alpha > 0$ and the $\phi_i$'s are local regular
                          functions at $p$. We then have 
                          \begin{equation}
                            \forall v \in \cV_X (p; E), \quad v(P) = - \alpha + \sum_i v (\phi_i)
                            \label{<+label+>}
                          \end{equation}
                          and we see that the set $\left\{ v \in \cV_X (p; E) : v(P) < 0 \right\}$ is a subtree rooted at
                          $\ord_E$. Doing this
                          for every $p \in E$ we get that $\left\{ v (P) < 0 \right\}$ is a subtree of $\hat \cV_\infty$ and
                          finally 
                          \begin{equation}
                            D_h = \bigcup_i \left\{ v (h^* (P_i)) < 0  \right\}
                            \label{<+label+>}
                          \end{equation}
                          is a subtree because all these subtrees have the same root $\ord_E$. Since $\ord_E,
                          v_{\phi} \in D_h$, this implies that $[\ord_E, v_\phi] \subset D_h$ and therefore
                          \begin{equation}
                            h_\bullet \left( \left[ \ord_E, v_\phi \right] \right) \subset \hat \cV_\infty.
                            \label{<+label+>}
                          \end{equation}
                           And since $\left\{ \ord_E \right\}$ is totally invariant and
                          $v_\phi$ is centered at $q$ we actually have 
                          \begin{equation}
                            h_\bullet \left( ] \ord_E, v_{\phi} ] \right) \subset \cV_X (q;E) \setminus \left\{ \ord_E
                            \right\}
                            \label{<+label+>}
                          \end{equation}
                          Because $\cV_X (q;E) \setminus \left\{ \ord_E \right\}$ is the connected component of $\hat
                          \cV_\infty \setminus \left\{ \ord_E \right\}$ containing $v_\phi$.

                          This implies that for $v \in ] \ord_E, v_\phi]$ sufficiently close to $\ord_E$ we have that
                          $h_* v$ is centered at $q$ but this is a contradiction because it should be centered at
                          $h_{|E} (q)$ but $h_{|E} (q) \neq q$ since $h_{|E}$ is a translation.

                           Now, suppose that there exists a curve $C$ and a
                          rational transformation $\phi : C \dashrightarrow C$ such that the diagram
                          \begin{center}
                            \begin{tikzcd}
                              X_0 \ar[r, "g \circ f"] \ar[d, "\pi"] & X_0 \ar[d, "\pi"] \\
                              C \ar[r, dashed, "\phi"] & C
                            \end{tikzcd}
                          \end{center}
                          commutes. Let $\overline C$ be the unique completion of $C$. We have the commutative diagram
                          \begin{center}
                            \begin{tikzcd}
                              X \ar[r, "g \circ f", dashed] \ar[d, dashed, "\pi"] & X \ar[d, dashed, "\pi"] \\
                              \overline C \ar[r,"\phi"] & \overline C
                            \end{tikzcd}.
                          \end{center}
                          We cannot have $\text{genus}(\overline C) \geq 1$ because $X$ is rational. Now suppose that
                          $\text{genus} \left(
                            \overline C \right) =0 $ i.e that $C = \P^1$. We have that $\phi : \P^1 \rightarrow \P^1$
                            must have a fixed point $q \in \P^1$. The fiber $\pi^{-1} (q)$ is therefore an invariant curve
                            of $h : X \dashrightarrow X$ and the only possibility is therefore that $\pi^{-1}
                            (q) = E$. Since $q$ must be the only fixed point of $\phi$ we have up to composing
                            $\pi$ with an automorphism of $\P^1$ that $q = \infty \in
                            \P^1$ and $\phi : \A^1 = \P^1 \setminus \left\{ \infty \right\} \rightarrow \A^1$ is a
                            translation. Let $\eta : Y \rightarrow X$ be a morphism of completions such that $\pi: Y
                            \rightarrow \overline C$ is a morphism, then $\partial_Y X_0 = E \cup F_1 \cup \cdots F_r$.
                            By the Stein factorisation theorem from Proposition \ref{PropSteinFactorization}, we can
                            assume that $\pi: Y \rightarrow \P^1$ has connected fibers. We show that the generic fiber
                            must be irreducible, indeed let $\eta$ be the generic point of $\P^1$, if $Y_\eta =
                            \pi^{-1} (\eta)$ is not irreducible then it is a finite union of curves which have
                            intersection points because $Y_\eta$ is
                            connected. But $h$ acts on the irreducible components of $Y_\eta$ so some iterates of $h$
                            must fix all of them and therefore must fix the intersection points of the irreducible
                            components of $Y_\eta$, but they induce sections of $\pi$ which must be $h$-invariants
                            curves and this is a contradiction. So we have that the generic fiber of $\pi$ is an
                            irreducible curve which degenerates to the union of an elliptic curve and some rational
                            curves. In particular, the genus of $Y_\eta$ must be $\geq 1$. If $\text{genus}(Y_\eta) \geq 2$, then
                            $Y_\eta$ is of general type and some iterate of $h$ acts trivially on $Y_\eta$ but this
                            contradicts the fact that the topological degree of $h$ is 4. If $\text{genus}(Y_\eta) = 1$, then
                            the generic fiber is an elliptic curve, let $\Sigma$ be the set of prime divisors $F$ at
                            infinity in $Y$ such that $F$ defines a multi-section of $\pi$. For a general parameter
                            $p \in \A^1, E_p := \pi^{-1} (p)$ is an elliptic curve and 
                            \begin{equation}
                              X_0 \cap E_p = E_p \setminus \left( \bigcup_{F \in \Sigma} F \cap E_p \right).
                              \label{<+label+>}
                            \end{equation}
                            Furthermore, every $G \subset \partial_Y X_0 \setminus (\Sigma \cup \left\{ E
                            \right\} )$ is contained in a fiber of $\pi$.
                            Take a general parameter $p \in \A^1$ and consider the restriction $h_p : E_p \rightarrow
                            E_{h(p)}$ of $h$. No points of $E_p \cap X_0$ can be sent by $h_p$ to a point of
                            $ \bigcup_{F \in \Sigma} E_{h(p)} \cap F$. If we replace $h$ by a large enough iterate,
                            the topological degree of $h$ goes to infinity but not the size of $\Sigma$ and not the number
                            of intersection points of the elements of $\Sigma$ with the fiber of a general parameter,
                            this implies that $h_p$ restricted to a general $p \in \A^1$ must be of topological degree 1 but this
                            contradicts the fact that the topological degree of $h$ is not 1.
                        \end{proof}

\bibliographystyle{alpha}
\bibliography{biblio}

\end{document}